\newcommand{\sectionspace}{\par\hspace{1cm}}
\def\[#1\]{\begin{align*}#1\end{align*}}
\def\be#1\ee{\begin{align}#1\end{align}}
\def\bea#1\eea{\begin{align}#1\end{align}}
\def\ben#1\een{\begin{align*}#1\end{align*}}
\newcommand{\diff}{d}
\newcommand{\n}[1]{\left\Vert #1\right\Vert}
\newcommand{\la}{\left\langle}
\newcommand{\ra}{\right\rangle}
\newcommand{\R}{\mathbb{R}}
\newcommand{\N}{\mathbb{N}}
\newcommand{\mc}[1]{\mathcal{#1}}
\newcommand{\s}{\mc}
\newcommand{\p}[2]{\frac{\partial #1}{\partial #2}}
\newcommand{\q}[2]{\frac{\partial^2 #1}{\partial #2^2}}
\newcommand{\pa}[1]{{{\partial}\over{\partial #1}}}
\newcommand{\dif}[1]{\frac{\diff}{\diff #1}}
\def\ol#1{\overline{#1}}
\newcommand{\longto}{\mathop{\longrightarrow}}
\newcommand{\SO}{\mathrm{SO}}
\newcommand{\qmatrix}[1]{ \left( \begin{matrix} #1 \end{matrix} \right) }
\newcommand{\domega}{\diff\omega}
\newcommand{\dE}{\diff E}
\newcommand{\dsigma}{\diff\sigma}
\newcommand{\esssup}{\mathop{\mathrm{ess\hspace{1mm}sup}}}
\newcommand{\essinf}{\mathop{\mathrm{ess\hspace{1mm}inf}}}
\def\[#1\]{\begin{align*}#1\end{align*}}
\def\be#1\ee{\begin{align}#1\end{align}}
\def\bea#1\eea{\begin{align}#1\end{align}}
\newtheoremstyle{theorem}{0.5cm}{0.5cm}%
\theoremstyle{theorem}
\newtheorem{theorem}{Theorem}[section]
\newtheorem{proposition}[theorem]{Proposition}
\newtheorem{example}[theorem]{Example}
\newtheorem{corollary}[theorem]{Corollary}
\newtheorem{remark}[theorem]{Remark}
\newtheorem{definition}[theorem]{Definition}
\newtheorem{lemma}[theorem]{Lemma}
\newtheoremstyle{assumption}{0.5cm}{0.5cm}%
   {\sffamily}%         Body font
   {}%         Indent amount (empty = no indent, \parindent = para indent)
   {\bfseries}% Thm head font
   {}%        Punctuation after thm head
   {2ex}%     Space after thm head (\newline = linebreak)
   {\thmname{#1}\thmnote{ #3}.}%         Thm head spec
\theoremstyle{assumption}
\newtheorem{assumption}{Assumption}
\begin{document}

\title[On existence of $L^2$-solutions for coupled CSDA]
{On existence of $L^2$-solutions of Coupled Boltzmann Continuous Slowing Down transport equation system}

%\thanks{}
%\subjclass[]{}

%\keywords{}

\author{J. Tervo$^1$}
\address{$^1$University of Eastern Finland, Department of Applied Physics, Kuopio, Finland}

\author{P. Kokkonen$^2$}
\address{$^2$Varian Medical Systems Finland Oy, Helsinki, Finland}
\email{$^2$pvkokkon@gmail.com}

\author{M. Frank$^3$}
\address{$^3$RWTH Aachen University, MATHCCES, Schinkelstrasse 2, 52062, Germany}
\email{$^3$frank@mathcces.rwth-aachen.de}

\author{M. Herty$^4$}
\address{$^4$RWTH Aachen University, IGPM, Templergraben 5, 52062, Germany}
\email{$^4$herty@igpm.rwth-aachen.de}

\begin{abstract}
The paper considers a coupled system of linear Boltzmann transport equations (BTE),
and its Continuous Slowing Down Approximation (CSDA).
This system can be used to model the relevant transport of particles used e.g. in dose calculation in radiation therapy.
The evolution of charged particles (e.g. electrons and positrons) are in practice often modelled 
using the CSDA version of BTE because of the so-called forward peakedness of scattering events contributing to the 
particle fluencies (or particle densities),
which causes severe problems in numerical methods.
We shall find, after the preliminary treatments, that for some interactions CSDA-type modelling is actually
necessary due to hyper-singularities in the differential cross-sections of certain interactions,
that is, first or second order partial derivatives with respect to energy and angle
must be included into the transport part of charged particles.
The existence and uniqueness of (weak) solutions is shown, under sufficient criteria and
in appropriate $L^2$-based spaces, for a single (particle) CSDA-equation by using three
techniques, the Lions-Lax-Milgram Theorem (variational approach), the theory of $m$-dissipative operators
and the theory evolution operators (semigroup approach).
The due a priori estimates are derived and the positivity of solutions are retrieved.
In addition, we prove the corresponding results and estimates for the system of coupled transport equations.  The related existence results are given for the adjoint problem as well.
We also give some computational points (e.g. certain explicit formulas),
and we outline a related inverse problem 
at the end of the paper.
\end{abstract}

\maketitle

%\tableofcontents

%%%%%%%%%%%%%%%%%%%%%%%%%%%%%%%%%%%%%%%%%%%%%%%%%%%%%%%%%%%%%%%%%%%%%%%%%%%%%%%%%%%%%%%%%%%%%%%%%%%%%%%%%%%
\sectionspace
\section{Introduction}\label{intro}
%%%%%%%%%%%%%%%%%%%%%%%%%%%%%%%%%%%%%%%%%%%%%%%%%%%%%%%%%%%%%%%%%%%%%%%%%%%%%%%%%%%%%%%%%%%%%%%%%%%%%%%%%%%

The \emph{Boltzmann transport equation} (BTE) models changes of the number density of particles in phase space (position, velocity direction, energy).
In this paper the species of particles include photons, electrons and positrons and the explored analysis of transport equations is mainly intended for dose calculation in radiation treatment planning.
However, various other kinds of transport phenomena can be modelled by equations of similar type including
in e.g. transport of particles in optical tomography (\cite{anikonov02}, \cite{arridge}), in cosmic radiation (\cite{wilson}) and in solid state physics (\cite{madelung78}).
For general theory of linear BTE with relevant boundary conditions we refer to \cite{dautraylionsv6} and \cite{agoshkov}. See also \cite{case}, \cite{cercignani}, \cite{duderstadt}, \cite{pomraning} where the subject is considered from a physical point of view.
For some recent issues (including certain inverse problems) related to linear BTE can be found in \cite{mokhtarkharroubi},
and  general non-linear aspects e.g. in \cite{ukai}, \cite{bellamo}.
A thorough mathematical survey (mathematical and physical foundations, results, problems)
of non-linear collision theory of particle transport is given in \cite{villani}. This survey is mainly
intended to collision processes in dilute gases and plasmas but analogous results and problems arise in other fields of particle 
physics. Finally, for topics related to Monte--Carlo methods in the context of BTE, both from theoretical and practical point of 
view, we refer to \cite{lapeyre}, \cite{seco} and \cite{spanier08}.

Dose calculation is of crucial importance in radiation therapy. Relevant dose calculation models 
require the (approximate) solution of a coupled system of (linear) transport equations
for fluencies (number densities in the phase space) for all considered particles.
This is a difficult problem, at least from computational point of view, due to the different particle species and their dependence on a high--dimensional phase space.
For that reason traditional dose 
calculation algorithms have applied some closed-form formulas which have their origins in analytical 
solutions, or Monte--Carlo derived solutions, of simplified problems. These algorithms however contain often empirically derived corrections 
to take more accurately into account the underlying particle physics (\cite{mayles07}, \cite{seco}).
Certain "factors" which account for e.g. the spatial inhomogeneities must be included to improve 
the accuracy of the final solution. These approaches lead to methods that are fast enough but have
typically a limited accuracy. Commonly used models are based on the so-called \emph{pencil beams}, or \emph{point kernels}, see \cite{asadzadeh}, \cite{borgers}, \cite{larsen}, \cite{mayles07}, \cite{tillikainen08}, \cite{ulmer} for more details.
A notable exception to these approximate (deterministic) methods is the Acuros code \cite{vassiliev}, which is based on a discretization of the BTE.

In radiation therapy BTE describes the evolution of  radiative particles due to scattering and absorption in tissue.
The dose delivery methods can be roughly divided into two categories.
In \emph{external therapy} the sources (below denoted by $g$) of high 
energy particles (usually photons, electrons or protons) are on the patches of patient's surface.
In \emph{internal therapy}, on the other hand, the sources
(below denoted by $f$)
are inside the patient close to the cancerous tissue.
In the energy range, say up to 25 MeV, relevant for photon and electron therapy,
the three species of particles whose simultaneous evolution should be taken into account in a realistic transport model, are photons, electrons and positrons.
In this setting, the potential creation of (or contamination by)
other heavy particles
will not be taken into account since their contribution to the dose is negligible (see \cite{seco}). 

The transport of relevant particles in tissue (in an appropriate energy range)
can be {\it formally} modelled by the following linear \emph{coupled system of three BTEs}
\be\label{intro1}
\omega\cdot\nabla_x\psi_j(x,\omega,E)+\Sigma_j(x,\omega,E)\psi_j(x,\omega,E)-(K_j\psi)(x,\omega,E)=f_j(x,\omega,E),
\ee
for $(x,\omega,E)\in G\times S\times I$ and
for $j=1,2,3$, combined with an \emph{inflow boundary condition}
(for the definition of $\Gamma_-$, )
\be\label{intro2}
{\psi_j}_{|\Gamma_-}=g_j,\quad j=1,2,3,
\ee
where for $j=1,2,3$,
\be\label{intro3}
(K_j\psi)(x,\omega,E)=\sum_{k=1}^3\int_{S\times I}\sigma_{kj}(x,\omega',\omega,E',E)\psi_k(x,\omega',E') \domega' \dE'.
\ee
This formal description of is however somewhat confusing because of hyper-singular nature of $K$. We shall explain this problem in detail   in this paper.
Here $G\subset\R^3$ is the spatial domain, $S=S_2\subset \R^3$ is the unit sphere
and $I=[0,E_{\rm m}]$ is the energy interval.
The set $\Gamma_-$ is "the inflow boundary part of $\partial G$" (see section \ref{fs}).
Above $(\omega',E')$ refers to the direction and energy of incoming particles and $(\omega,E)$ to the direction and energy of leaving particles.
For a derivation of linear BTE, see e.g. \cite{agoshkov}, \cite{allaire12}, \cite{duderstadt}, \cite{stacey01}.
The first term on the left in (\ref{intro1}) is called a \emph{convection (or advection) operator}, the second term is a \emph{(total) scattering operator}
and the third one is a \emph{collision operator}.
Notice that the (total) scattering operator
\[
\Sigma=\Sigma_{\rm t}=\Sigma_{\rm a}+\Sigma_{\rm s}
\]
(we drop the index $j$ here to simplify the notation)
contains contribution from both the absorption (term $\Sigma_{\rm a}$) and the scattering (term $\Sigma_{\rm s}$),
see \cite[Sec. 9.1]{stacey01}.
On the right in (\ref{intro1}), the functions $f_j$ represent (internal) sources and $g_j$ in (\ref{intro2}) are (inflow) boundary sources.
The system is coupled through the integral operators $K_j$ (unless, of course, $\sigma_{kj}=0$ for $j\not= k$). The solution $\psi=(\psi_1,\psi_2,\psi_3)$ of the problem (\ref{intro1})-(\ref{intro2}) 
is a vector-valued function whose components describe the radiation fluxes of photons, electrons and positrons, respectively.
Roughly speaking, the flux $\psi(x,\omega,E)$ is the flux of energy through a surface located at $x$ and normal to the direction $\omega$.
The particle number density $N$, which is another usual unknown in kinetic 
theory, is related to $\psi$ by $\psi = \n{v} N$,
where $\n{v}$ is the particle speed (\cite{stacey01}),
which is often relativistic.

The equation (\ref{intro1}) is a steady state counterpart of the dynamical equation
\be\label{intro4}
{1\over{\n{v_j}}}{\p {\psi_j}t}+\omega\cdot\nabla_x\psi_j+\Sigma_j\psi_j-K_j\psi=f_j,\quad j=1,2,3,
\ee
where $v_j$ is the velocity of the $j$-th particle type.
In radiation therapy related applications,
it is sufficient to consider the steady state equations because the
flux $\psi$ reaches the steady state nearly instantly (\cite{borgers99}).
The existence of solutions for the problem (\ref{intro1}), (\ref{intro2}),
as well as for the time-dependent problem  (\ref{intro4}), (\ref{intro2}) (with an appropriate initial condition) in $L^1$-based spaces has been studied in \cite{tervo17-up}
(the results of which remaining valid, after slight modifications, for any $1\leq p<\infty$).
In \cite{tervo17-up} it is assumed that the 
collision operator $K$ satisfies a so-called Schur criterion (for boundedness) which is not valid for all species of particle interactions.

The differential cross-sections
may have singularities, or even hyper-singularities, and in these cases the integral $\int_{S\times I}$ appearing in the collision term must be interpreted as the {\it Hadamard finite part integral}.
In section \ref{coll} we shall present some details of real, physical collision operators.
In particular, we find that certain differential cross sections may contain hyper-singularities like ${1\over{(E'-E)^j}}dE',\ j=1,2$.
These kind of singularities lead to extra pseudo-differential differential like
(or approximately partial differential) terms in the transport equation.
The analysis reveals the exact form of certain charged particle transport operators, and serves as a basis for better approximations
and error analysis.
These considerations would require further knowledge about the regularity properties of solutions,
which, to our understanding, remains an open question (cf. section \ref{err}) .
Likewise the existence analysis of (non-CSDA) exact (coupled) system of transport equations remains open.
In this work, we demonstrate how the, here considered, CSDA-approximation follows from "hyper-singular analysis" and how the CSDA equations can be derived. 
When the collision terms containing hyper-singularities are removed,
the remaining operators (so called restricted collision operators) are of the co called Schur form (\cite{halmos}, p. 22)
and hence they are, for example bounded operators.
Note that in the expression of $K_j$ the integration is performed only with respect
to $\omega'$ and $E'$, while $x$-variable is kept fixed.
Therefore, the restricted collision operators are only a so-called \emph{partial (weakly singular) integral operator} (cf. \cite{appell}).
These facts imply that the familiar properties of (singular) integral operators (compactness, for example) are not valid even for restricted collision operators.

Due to the above mentioned hyper-singularities,
the transport of new primary electrons (and positrons) is \emph{forward-peaked}.
This implies that the grid in numerical computations needs to be very tight in order to achieve reliable results.
Traditionally, to overcome the computational complexity,
one has applied to the evolution of electrons and positrons
the so-called \emph{continuous slowing down approximation} (CSDA).
The CSDA equations are vastly applied e.g. in various cosmic radiation problems (\cite{rockell}, \cite{wilson}).
The analysis given in section \ref{coll} shows that the exact transport model inherently contains pseudo-differential like terms. In some cases these terms can be successfully approximated by 
first order (or  even higher order) partial differential terms, and so the traditional use of CSDA-approximation can be justified.

The CSDA-method replaces Eqs. (\ref{intro1}) for $\psi_j$, $j=2,3$ (electrons and positrons)
by the following equations (\cite{wilson}, \cite{larsen}, \cite{frank10})
\be\label{intro5}
-{\p {(S_{j,r}\psi_j)}E}+\omega\cdot\nabla_x\psi_j+\Sigma_{j,r}\psi_j-K_{j,r}\psi=f_j,\quad j=2,3,
\ee
 where
\bea
(K_{2,r}\psi)(x,\omega,E)={}&\int_{S\times I}\sigma_{12}(x,\omega',\omega,E',E)\psi_1(x,\omega',E') \domega' \dE'\nonumber\\
&
+\int_{S\times I}\sigma_{22,r}(x,\omega',\omega,E',E)\psi_2(x,\omega',E') \domega' \dE'\nonumber\\
&
+\int_{S\times I}\sigma_{32}(x,\omega',\omega,E',E)\psi_3(x,\omega',E') \domega' \dE'\nonumber
\eea
and
\bea
(K_{3,r}\psi)(x,\omega,E)={}&\int_{S\times I}\sigma_{13}(x,\omega',\omega,E',E)\psi_1(x,\omega',E') \domega' \dE'\nonumber\\
&
+\int_{S\times I}\sigma_{23}(x,\omega',\omega,E',E)\psi_2(x,\omega',E') \domega' \dE'\nonumber\\
&
+\int_{S\times I}\sigma_{33,r}(x,\omega',\omega,E',E)\psi_3(x,\omega',E') \domega' \dE'.\nonumber
\eea
Here, for $j=2,3$, the functions $\Sigma_{j,r}=\Sigma_{j,r}(x,\omega,E)$ 
are the \emph{restricted total cross-sections}, 
$S_{j,r}=S_{j,r}(x,E)$ are the \emph{restricted stopping powers}, and
$\sigma_{jj,r}(x,\omega',\omega,E',E)$ are the \emph{restricted differential cross-sections},
which do not include soft inelastic interactions (\cite{boman}).
Besides the inflow boundary condition (\ref{intro2}),
one needs to impose on the solutions $\psi_j$, $j=2,3$,
of \eqref{intro5} an energy-boundary condition.
We make the reasonable assumption that at some (high enough, but finite) \emph{cut-off energy} $E_{\rm m}>0$
the fluxes $\psi_2$, $\psi_3$ vanish, i.e.
we take the energy-boundary condition to be
\[
\psi_2(x,\omega, E_{\rm m})=\psi_3(x,\omega, E_{\rm m})=0,
\]
(below, we often call this \emph{an initial condition} as well).
One could also demand the cut-off energy $E_{\rm m}$ to be infinite, in which case the
corresponding energy boundary condition would become,
\[
\lim_{E\to\infty}\psi_2(x,\omega, E)=\lim_{E\to\infty}\psi_3(x,\omega, E)=0.
\]
However, we shall restrict our discussion to the case where $E_{\rm m}$ is finite.
The requirement that such energy initial condition will be satisfied by $\psi_2,\psi_3$,
makes the overall problem mathematically well-posed, that is,
under relevant (physical) assumptions the problem has a unique (weak) solution.

We give here a short heuristic (traditional) derivation of CSDA
based on the presentation given in \cite[pp. 14-17]{wilson},
while a more transparent justification of it will be given in section \ref{coll}.
Firstly, we decompose for $k=2,3$ the differential cross-section into two parts
\be\label{w-decomp}
\sigma_{kk}(x,\omega',\omega,E',E)=
\sigma_{kk}^{\rm at}(x,\omega',\omega,E',E)+\sigma_{kk}^{\rm nu}(x,\omega',\omega,E',E), \ k=2,3
\ee
where "${\rm at}$" refers to interactions with atomic electrons, and "${\rm nu}$" refers to nuclear interactions.
The term $\sigma_{kk}^{\rm at}(x,\omega',\omega,E',E)$ contains the "problematic" features that
are to be approximated by the continuous slowing down model.
Secondly, we assume formally 
\[
\sigma_{kk}^{\rm at}(x,\omega',\omega,E',E)
=
\sum_{n=0}^N\sigma^{\rm at}_{k,n}(x,E+\epsilon_n)\tilde\delta_{\omega}(\omega')\delta_{E+\epsilon_n}(E'),
\]
where $\tilde{\delta}_{\omega}$ is the Dirac measure on the 2-dimensional unit sphere $S=S_2$ concentrated at $\omega$,
and $\delta_{E+\epsilon_n}$ is the Dirac measure on $\R$ concentrated at $E+\epsilon_n$.
This approximation assumes small changes in energy but not changes in angular direction.
Applying these concepts and Taylor's formula up to first--order, we find that
\bea
&\int_{S\times I}\sigma_{kk}^{\rm at}(x,\omega',\omega,E',E)\psi_k(x,\omega',E') \domega' \dE'
=
\sum_{n=0}^N\sigma^{\rm at}_{k,n}(x,E+\epsilon_n)\psi_k(x,\omega,E+\epsilon_n) \nonumber\\
\approx {}&
\sum_{n=0}^N\sigma^{\rm at}_{k,n}(x,E)\psi_k(x,\omega,E)
+\sum_{n=0}^N{\p {(\sigma^{\rm at}_{k,n}\psi_k)}E}(x,\omega,E)\epsilon_n. \label{f6}
\eea
The restricted total atomic cross-sections and the restricted stopping powers are
\[
\Sigma^{\rm at}_{k,r}(x,E):=\sum_{n=0}^N\sigma^{\rm at}_{k,n}(x,E),
\quad
S_{k,r}(x,E):=\sum_{n=0}^N\sigma^{\rm at}_{k,n}(x,E)\epsilon_n.
\]
Let $\sigma_{kk,r}:=\sigma_{kk}^{\rm nu}$.
Then by (\ref{w-decomp} and (\ref{f6}) we obtain for $k=2,3$,
\bea\label{intro9}
&
\int_{S\times I}\sigma_{kk}(x,\omega',\omega,E',E)\psi_k(x,\omega',E') \domega' \dE'\nonumber\\
={}&
\int_{S\times I}\sigma_{kk}^{\rm at}(x,\omega',\omega,E',E)\psi_k(x,\omega',E') \domega' \dE'
+
\int_{S\times I}\sigma_{kk,r}(x,\omega',\omega,E',E)\psi_k(x,\omega',E') \domega' \dE'\nonumber\\
\approx {}&
\Sigma_{k,r}^{\rm at}(x,E)\psi_k(x,\omega,E)
+{\p {(S_{k,r}\psi_k)}E}(x,\omega,E)
+
\int_{S\times I}\sigma_{kk,r}(x,\omega',\omega,E',E)\psi_k(x,\omega',E') \domega' \dE'
.
\eea
Finally, writing $\Sigma_{j,r}:=\Sigma_{j}-\Sigma_{j,r}^{\rm at}$
and substituting (\ref{intro9}) into (\ref{intro1}),
we obtain approximately the equations (\ref{intro5}) for $j=2,3$.

We remark that
the so-called CSDA-\emph{Fokker-Plank} equation  is closely related to the CSDA-Boltzmann transport equation.
The hierarchy of the Boltzmann equation, its CSDA approximation and the Fokker-Planck approximation is detailed in Appendix A of \cite{Larsen97}.
Typically, the Fokker-Planck approximation is considered not valid for electrons in tissues \cite{Larsen97}.
In addition, in \cite{pom92} a formal asymptotic analysis is performed that sheds light onto the validity of these 
approximations. For example, it was shown that the Fokker-Planck approximation is not valid for the often used Henyey-Greenstein kernel.

The \emph{dose} $D(x)=(D\psi)(x)$ is calculated from the solution of the problem
\begin{gather}
\omega\cdot\nabla_x\psi_1+\Sigma_1\psi_1-K_1\psi=f_1, \label{intro10a}\\
-{\p {(S_{j,r}\psi_j)}E}+\omega\cdot\nabla_x\psi_j+\Sigma_{j,r}\psi_j-K_{j,r}\psi=f_j,\quad j=2,3,
\label{intro10} \\
{\psi_j}_{|\Gamma_-}=g_j,\quad j=1,2,3, \label{intro11} \\[2mm]
\psi_j(\cdot,\cdot,E_{\rm m})=0,\quad j=2,3, \label{intro12}
\end{gather}
by 
\be\label{intro13}
D(x)=\sum_{j=1}^3\int_{S\times I}\varsigma_j(x,E)\psi_j(x,\omega,E) \domega \dE,
\ee
where $\varsigma_j(x,E)$ are {\it stopping powers}, which in general can be different from the restricted stopping powers $S_{j,r}$. The dose calculation is a {\it forward problem}. 
The determination of the external particle flux $g=(g_1,g_2,g_3)$ and/or the distribution of 
internal source $f=(f_1,f_2,f_3)$ is called \emph{inverse radiation treatment planning problem} 
(IRTP) which is an \emph{inverse problem}. It always requires a dose calculation model. We refer to 
\cite{schepard}, \cite{tervo17-up}, \cite{webb} and references therein for some details concerning the 
IRTP-problem. In \cite{frank10} the IRTP-problem has been studied in the context of CSDA-equation for a 
single particle (when the stopping power is independent of $x$). 
See also \cite{boman} where related  spatially $3$-dimensional numerical simulations (real case 
simulations applying finite element methods, FEM) have been explored.

This paper contains several novel contributions to the study of particle transport in tissues.
In the beginning section \ref{pre} we discuss the preliminaries including details
(many of which are reproductions of known results) of the so-called escape-time mapping and inflow trace theory.
These tools are essential in the treated analysis.

In section \ref{coll} we discuss the M\o ller  and Bremsstrahlung interaction.  These interactions model  electron's (and positron's) inelastic collisions. 
From mathematical point of view these  interactions are quite similar; both of them are governed by partial hyper-singular integral operators.

In section \ref{r-psio-1} we deduce a CSDA-type approximation (for the M\o ller scattering) retaining its pseudo-differential nature.
The approximation  is founded on the fact that the energy interval $[E,E_m]$ is decomposed into a union $[E,E_m]=[E,c(E)]\cup [c(E),E_m]$. Here $c(E)$ is so called {\it cut-off energy of the primary electrons}. Usually $c(E)=2E$ but this is not necessary. In "primary electrons' interval" $[E,c(E)]$ we assume that $\mu(E',E)\approx 1$ where $\mu(E',E)$ is the cosine of the scattering angle for primary electrons.  This approximation suggests that 
\be\label{gam-app}
\psi(x,\gamma(E',E,\omega)(s),E')\approx \psi(x,\omega,E') 
\ee
where $\gamma(E',E,\omega)$ is the path on $S$ where the cosine of scattering angle of incoming and leaving particles is $\mu(E',E)$  (see section \ref{fhsio}). That leads after some technicalities to the 
approximative collision operator which is a pseudo-differential operator whose principal term is of the form $\hat\sigma_2(x,E,E){\p {(P_1(D)\psi)}E}$ where
\[
(P_1(D)\psi)(x,\omega,E)={\rm p.f.}\int_E^\infty{1\over{E'-E}}\psi(x,\omega,E')dE'.
\]
$P_1(D)$
is a translation invariant pseudo-differential operator with symbol of the form $p_1(\xi)=-\ln(|\xi|)+\alpha_0+{\rm i}\beta_0{\rm sign}(\xi)$ (where $\xi$ is the transform variable with respect to energy).
When we further  approximate the pseudo-differential terms by partial differential terms we get a CSDA-type approximation of the transport equation (section \ref{a-psi}).
The analysis gives raise to error estimates and in section \ref{err} we sketch some principles of the error analysis.

Section \ref{rco} deals  with   {\it restricted collision operators} $K_r$ which we assume to be of the form
\bea\label{intro5-restr}
&
(K_r\psi)(x,\omega,E)=
\int_{I'}\hat{\sigma}^1(x,E',E)\psi(x,\omega,E') dE'
\nonumber\\
&
+
\int_{S'}{\sigma}^2(x,\omega',\omega,E)\psi(x,\omega',E) d\omega'
+
\int_{I'}\hat{\sigma}^3(x,E',E)\int_{0}^{2\pi}\psi(x,\gamma(E',E,\omega)(s),E')ds dE'
.
\eea
We show  that $K_r$ is bounded in $L^2(G\times S\times I)$. The result
is based on the { Schur criterion for the boundedness of (partial) integral operators}. In addition, we prove that $\Sigma-K_r$ is coercive (accretive) in $L^2(G\times S\times I)$ under relevant assumptions.

After  the above mentioned modelling aspects we consider the existence and uniqueness of solutions for a single (particle) CSDA equation 
\begin{gather}
-{\p {(S_{0}\psi)}E}+\omega\cdot\nabla_x\psi+\Sigma\psi-K\psi=f, \label{intro14} \\
\psi_{|\Gamma_-}=g, \label{intro15} \\[2mm]
\psi(\cdot, \cdot, E_{\rm m})=0, \label{intro16}
\end{gather}
in $L^2(G\times S\times I)$-based spaces.
We somewhat extend the results of \cite{tervo16-up} (section 3.3), \cite{tervo17}.
In section \ref{esols} we give a variational formulation of the problem 
(\ref{intro14}), (\ref{intro15}), (\ref{intro16}) in appropriate spaces. We give sufficient 
conditions under which the corresponding bilinear and linear forms
obey the assumptions of the 
so-called {\it Lions-Lax-Milgram Theorem},
which then provides an existence result for solution of the equation (\ref{intro14}).
Under a certain additional assumption on traces, this solution is unique and,
additionally, it satisfies  the boundary and initial conditions  (\ref{intro15}), (\ref{intro16}). 
In Section \ref{m-d} one applies the classical {\it theory of (formally) dissipative operators}
(e.g. by \cite{friedrich58}, \cite{lax}, \cite{rauch85}),
and it is via this approach that we achieve the most satisfactory existence and uniqueness result for the problem considered.
In section \ref{evcsd} we show the existence and uniqueness results for the solution
of the problem (\ref{intro14}), (\ref{intro15}), (\ref{intro16})
by using a yet another technique,
based on the theory of evolution equations and ($m$-)dissipative operators,
but for a restricted class of collision operators $K$. 
We remark that the single CSDA equation is (symmetric) hyperbolic in nature,
and so the hyperbolic theory (e.g. \cite{rauch12})
is applicable at least in the case where $K=0$ and $G=\R^3$. For $G\not=\R^3$ the existence and regularity of solutions is more 
subtle because of the inflow boundary condition. We also remark that the coupled system
(\ref{intro10a}), (\ref{intro10}) is not hyperbolic.

Section \ref{possol} deals with the non-negativity of solutions. The proofs therein are based on the Trotter's formula, the  obtained a priori estimates and (the verified) maximum principles for some transport problems (which do not require extra smoothness of solutions).

In section \ref{cosyst} we extend the corresponding existence and uniqueness results for the
coupled transport system (\ref{intro10a})-(\ref{intro12}), which has not been studied in the 
literature, in $L^2(G\times S\times I)^3$-based spaces. We also show certain a priori estimates 
(needed e.g. in section \ref{irtpre}) which in particular show  (under specific assumptions) that the solution depends continuously on the data.
Analogous results for the {\it adjoint problem} are formulated in section \ref{adjoint}.
Section \ref{comp} considers certain computational aspects. The emphasis is on how 
to calculate numerical solution (for the forward problem), in principle, without inversion of (huge) matrices.
Finally, in the last section \ref{irtpre} we outline a related IRTP-problem but its thorough study remains open for future work.

We make the following final remark. Concerning for the M\o ller scattering the outline of the error analysis in section \ref{err} suggests that (in the treated sense) the solution of the approximative problem does not necessarily converge (without further information) to the solution of the exact problem as $\mu(E',E)\to 1$.
Instead of (\ref{gam-app}) we must use the second order Taylor's polynomials for the angular approximation. When considering partial integro-differential approximations, this leads to the transport equation of the form 
(even the higher order  partial derivatives might be reasonable to include  for accuracy and stability reasons);
for some details see Remark \ref{con-mol} below
\be\label{tr-op-aa}
a(x,E){\p {\psi}E}
+\sum_{|\alpha|\leq 2}b_\alpha(x,\omega,E)\partial_\omega^\alpha\psi
+\omega\cdot\nabla\psi+\Sigma(x,\omega,E)\psi-K_r\psi=f.
\ee
Nevertheless, the approximation (\ref{gam-app}) is sufficient to guarantee the convergence (and so the modelling using (\ref{intro14}) is correct) at least for collision operators of the form
\[
&
(K\psi)(x,\omega,E)=
{\rm p.f.}\int_E^\infty\hat\sigma_{2}(x,E',E){1\over{(E'-E)^{1+\kappa_2}}}
\int_{0}^{2\pi}\psi(x,\gamma(E',E,\omega)(s),E')dsdE'
\nonumber\\
&
+
{\rm p.f.}\int_E^\infty\hat\sigma_{1}(x,E',E){1\over{(E'-E)^{\kappa_1}}}
\int_{0}^{2\pi}\psi(x,\gamma(E',E,\omega)(s),E')dsdE'
+(K_0\psi)(x,\omega,E)
\]
where $0\leq\kappa_j<{1\over 2}$.
Similarly, for the Bremsstrahlung the approximation (\ref{gam-app}) seems to be sufficient guaranteeing the modelling by (\ref{intro14}).
The analysis of equations (\ref{tr-op-aa}) require further study. In principle, the techniques utilised in this paper apply to them but  we omit  these treatments.

%%%%%%%%%%%%%%%%%%%%%%%%%%%%%%%%%%%%%%%%%%%%%%%%%%%%%%%%%%%%%%%%%%%%%%%%%%%%%%%%%%%%%%%%%%%%%%%%%%%%%%%%%%%
\sectionspace
\section{Preliminaries}\label{pre}
%%%%%%%%%%%%%%%%%%%%%%%%%%%%%%%%%%%%%%%%%%%%%%%%%%%%%%%%%%%%%%%%%%%%%%%%%%%%%%%%%%%%%%%%%%%%%%%%%%%%%%%%%%%

%%%%%%%%%%%%%%%%%%%%%%%%%%%%%%%%%%%%%%%%%%%%%%%%%%%%%%%%%%%%%%%%%%%%%%%%%%%%%%%%%%%%%%%%%%%%%%%%%%%%%%%%%%%
\subsection{Notations, Assumptions and Introduction of Relevant Function Spaces}\label{fs}
%%%%%%%%%%%%%%%%%%%%%%%%%%%%%%%%%%%%%%%%%%%%%%%%%%%%%%%%%%%%%%%%%%%%%%%%%%%%%%%%%%%%%%%%%%%%%%%%%%%%%%%%%%%

We assume that $G$ is an open bounded connected set in $\R^3$
such that $\ol{G}$ is a $C^1$-manifold with boundary (as a submanifold of $\R^3$; cf. \cite{lee}).
In particular, it follows from this definition that $G$ lies on one side of its boundary.

The unit outward (with respect to $G$) pointing normal on $\partial G$ is denoted by $\nu$,
and the surface measure (induced by the Lebesgue measure) on $\partial G$ is written as $\sigma$.
We let $S=S_2$ be the unit sphere in $\R^3$ equipped with the
usual rotationally invariant surface measure $\mu_{S}$.
Here and in what follows we typically refer to
the measures of interest to us, namely Lebesgue measure $\mc{L}^3$ in $\R^3$,
and the above (surface) measures $\sigma$ and $\mu_{S}$
simply by $dx$, $\dsigma$ and $\domega$
in the sense that for all sets $A$ measurable
with respect to relevant one of them,
\[
\mc{L}^3(A)=\int_A \diff x,
\quad
\sigma(A)=\int_A \dsigma,
\quad
\mu_{S}(A)=\int_A \domega.
\]

Furthermore, let $I=[0,E_{\rm m}]$  where
$0<E_{\rm m}<\infty$. We could replace $I$ by
$I=[E_0,E_{\rm m}]$ or $I=[E_0,\infty[$ where
$E_0\geq 0$ but we omit this generalization here.
We shall denote by $I^\circ$ the interior $]0,E_m[$ of $I$.
The interval $I$ in equipped with the 1-dimensional Lebesgue measure $\mc{L}^1$,
which we typically write as $dE$ in the sense that $\mc{L}^1(A)=\int_A dE$.

\begin{definition}
For $(x,\omega)\in G\times S$ the \emph{escape time (in the direction $\omega$)} $t(x,\omega)=t_-(x,\omega)$ is defined by 
\be
t(x,\omega)&=\inf\{s>0\ |\ x-s\omega\not\in G\}\\ \nonumber
&=\sup\{T>0\ |\ x-s\omega\in G\ {\rm for\ all}\ 0<s<T\},
\ee
for $(x,\omega)\in G\times S$.
\end{definition}

The escape time function $t(\cdot,\cdot)$ 
is known to be lower semicontinuous
in general, and continuous if $G$ is convex, see e.g. \cite{tervo17-up}.

We define
\[
\Gamma':=(\partial G)\times S,\quad
\Gamma:=\Gamma'\times I,
\]
and
\begin{alignat*}{3}
\Gamma_0' :={}&\{(y,\omega)\in \Gamma'\ |\ \omega\cdot\nu(y)=0\},
\quad &&\Gamma_0:=\Gamma'_0\times I, \\[2mm]
\Gamma_{-}':={}&\{(y,\omega)\in \Gamma'\ |\ \omega\cdot\nu(y)<0\}, && \Gamma_{-}:=\Gamma_{-}'\times I,\\[2mm]
\Gamma_{+}':={}&\{(y,\omega)\in \Gamma'\ |\ \omega\cdot\nu(y)>0\}, &&\Gamma_{+}:=\Gamma_{+}'\times I.
\end{alignat*}
Let $\mu_\Gamma = \sigma\otimes \mu_S\otimes \mc{L}^1$,
written typically as $\diff \sigma\diff \omega\diff E$
in the same sense as discussed above.
It follows that $\mu_{\Gamma}(\Gamma_0)=0$ 
and
\[
\Gamma=\Gamma_0\cup \Gamma_-\cup \Gamma_+.
\]
Let
\be\label{N0}
N_0:=\{(x,\omega,E)\in G\times S\times I\ |\ \omega\cdot\nu\big(x-t(x,\omega)\omega\big)=0\},
\ee
and
\be\label{D}
D:=(G\times S\times I)\setminus N_0.
\ee
Recall that $N_{0}$ has a measure zero in $ G\times S\times I$ (\cite[Theorem 3.8]{tervo17-up}). 

\begin{definition}
Define \emph{escape-time mappings $\tau_{\pm}(y,\omega)$ from boundary to boundary in the direction $\omega$}
as follows
\begin{alignat}{3}
&
\tau_-(y,\omega):=
\inf\{s>0\ |\ y+s\omega\not\in G\},\quad && (y,\omega)\in \Gamma'_-, \\
&
\tau_+(y,\omega):=
\inf\{s>0\ |\ y-s\omega\not\in G\},\quad && (y,\omega)\in \Gamma'_+.
\end{alignat}
\end{definition}
Note that if $(y,\omega)\in\Gamma'_-$,
then writing $y_+:=y+\tau_-(y,\omega)\omega\in\Gamma_+$,
we have that $(y_+,\omega)\in \Gamma'_+$ and
\[
\tau_-(y,\omega)=\tau_+(y_+,\omega).
\]

\begin{remark}\label{re:S_Radon}
For some purposes of this paper,
we could have chosen, instead of the particular measure $\mu_S$,
any positive Radon measure $\rho$ on (Borel sets of) the unit sphere $S$, 
as long as relevant additional assumptions are supposed, for example that $\Gamma_0$ has measure zero with respect to $\sigma\otimes \rho\otimes \mc{L}^1$,
i.e.
$\int_{\Gamma} \chi_{\Gamma_0}(y,\omega,E)d\sigma(y)d\rho(\omega)dE=0$.
Note that it follows from this assumption that $N_0$ has measure zero as well.

Especially, the main existence and uniqueness results
for the solutions in $L^2(G\times S\times I,dx d\rho dE)$-spaces of the CSDA Boltzmann transport problem,
as given in Sections \ref{single-eq} and \ref{cosyst}
(Theorems \ref{csdath3}, \ref{coth2-d}, \ref{evoth1}, \ref{cosystth2}, \ref{m-d-j-co1}, \ref{coupthev}, along with their corollaries),
as well as the non-negativity result of Subsection \ref{possol},
remain essentially true even with this more general choice of measure on $S$.
\end{remark}

Below the maps $t:G\times S\to [0,\infty]$ and $\tau_{\pm}:\partial G\times S\to [0,\infty]$,
will often be interpret as maps $t:G\times S\times I\to [0,\infty]$ and $\tau_{\pm}:\Gamma\to [0,\infty]$,
by dropping off the energy variable, e.g. $t(x,\omega,E)=t(x,\omega)$.

\begin{lemma}\label{le:esccont}
Let $(y_0,\omega_0)\in \partial G\times S$ be such that
there exists a bounded open cone 
$C:=\{x\in\R^3\ |\ \Big|{x\over{\n{x}}}-{a\over{\n{a}}}\Big|<r, 0<\n{x}<r\}\subset\R^3,\ a\not=0$ containing $-\omega_0$
such that for some $\lambda_0>0$ one has
$y_0+\lambda_0 C\subset \R^3\backslash G$.
Then we have the following:

(i) The map $t(\cdot,\cdot)$ is continuous at any point $(x_0,\omega_0)\in G\times S$
such that $y_0=x_0-t(x_0,\omega_0)\omega_0$.

(ii) Letting $y_+ = y_0 + \tau_-(y_0,\omega_0)\omega_0$,
one has the following limits
\[
& \lim_{(x,\omega)\to (y_0,\omega_0)} t(x,\omega)=0, \\
& \lim_{(x,\omega)\to (y_+,\omega_0)} t(x,\omega)=\tau_-(y_0,\omega_0),
\]
where $(x,\omega)\in G\times S$ when taking the limits.

\end{lemma}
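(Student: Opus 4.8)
The plan is to reduce both claims to the geometric statement that, under the exterior cone hypothesis at $y_0$ in the direction $-\omega_0$, the escape time $t(\cdot,\cdot)$ is upper semicontinuous at the relevant point; combined with the general lower semicontinuity of $t$ (recalled after the definition), this yields continuity. For part (i), fix $(x_0,\omega_0)$ with $y_0 = x_0 - t(x_0,\omega_0)\omega_0$. Since $t$ is lower semicontinuous everywhere, it suffices to show $\limsup_{(x,\omega)\to(x_0,\omega_0)} t(x,\omega) \le t(x_0,\omega_0)$. Write $t_0 = t(x_0,\omega_0)$. The point $x_0 - (t_0+\delta)\omega_0 = y_0 - \delta\omega_0$ lies in $y_0 + \lambda_0 C$ for all small $\delta>0$ because $-\omega_0$ is an interior direction of the cone $C$; hence $y_0 - \delta\omega_0 \in \R^3\setminus G$. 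I would then use that $\R^3\setminus G$ is open together with the joint continuity of the map $(x,\omega,s)\mapsto x - s\omega$: for $(x,\omega)$ close to $(x_0,\omega_0)$, the point $x - (t_0+\delta)\omega$ stays in a neighborhood of $y_0-\delta\omega_0$ contained in $\R^3\setminus G$, which forces $t(x,\omega) \le t_0 + \delta$. Letting first $(x,\omega)\to(x_0,\omega_0)$ and then $\delta\to 0$ gives the upper bound, hence continuity at $(x_0,\omega_0)$.

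For part (ii), the first limit $\lim_{(x,\omega)\to(y_0,\omega_0)} t(x,\omega) = 0$ is the special case of the cone argument "at the boundary point itself": for any $\delta>0$, the point $y_0 - \delta\omega_0 \in \R^3\setminus G$ lies in an open set, so for $(x,\omega)$ near $(y_0,\omega_0)$ the point $x - \delta\omega \notin G$, whence $t(x,\omega)\le\delta$; since $t\ge 0$ this gives the limit $0$. For the second limit, set $T = \tau_-(y_0,\omega_0)$ and $y_+ = y_0 + T\omega_0$. Observe that $y_+ - T\omega_0 = y_0$, and $y_0 + \lambda_0 C \subset \R^3\setminus G$ by hypothesis; moreover for $0 < s < T$ one has $y_+ - s\omega_0 = y_0 + (T-s)\omega_0 \in G$ by definition of $\tau_-$. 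Thus $t(y_+,\omega_0) = T$ in the sense that $y_+$ plays the role of $x_0$ with escape point $y_0$. The exterior cone at $y_0$ in direction $-\omega_0$ is exactly the hypothesis needed to invoke part (i) at the point $(y_+,\omega_0)$ — one must check $y_+ \in G$ or handle $y_+\in\partial G$ by the same limiting argument, using that points $y_+ - s\omega_0$ for $s$ slightly less than $T$ lie in $G$ and approach $y_0$ along a direction entering the exterior cone region only past $y_0$. Then continuity of $t$ at $(y_+,\omega_0)$ gives $\lim_{(x,\omega)\to(y_+,\omega_0)} t(x,\omega) = t(y_+,\omega_0) = T = \tau_-(y_0,\omega_0)$.

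The main obstacle I anticipate is the boundary bookkeeping in part (ii): the target point $y_+$ lies on $\partial G$, not in the open set $G$, so "continuity of $t$ at $(y_+,\omega_0)$" must be interpreted as a limit taken over $(x,\omega)\in G\times S$, and one has to make sure the exterior-cone hypothesis that is \emph{stated at $y_0$} genuinely transfers to give the needed upper semicontinuity bound as $x\to y_+$. Concretely, one needs that for $(x,\omega)$ near $(y_+,\omega_0)$ with $x\in G$, the segment from $x$ back to roughly $y_0$ exits $G$ near $y_0$ rather than somewhere earlier — this is where lower semicontinuity of $\tau_-$ (or equivalently of $t$ evaluated along the reversed ray) does the lower bound, while the exterior cone at $y_0$ controls the upper bound via the same perturbation argument as in part (i). Everything else is a routine application of the definitions of $t$, $\tau_-$, and the openness of $\R^3\setminus G$, together with the already-cited lower semicontinuity of the escape time.
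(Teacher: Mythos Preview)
Your overall strategy matches the paper's proof exactly: general lower semicontinuity of $t$ plus upper semicontinuity coming from the exterior cone at $y_0$. The paper also proves lower semicontinuity from scratch in part (i) and then argues the two limits in (ii) directly rather than by reducing to (i), but the mechanics are the same as what you describe.

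There is one genuine slip. You write ``use that $\R^3\setminus G$ is open''. This is false: $G$ is open, so $\R^3\setminus G$ is closed. The point $y_0-\delta\omega_0$ could in principle lie on $\partial G$, in which case an arbitrary perturbation need not stay outside $G$. The fix is already present in your own sentence before: you observed that $y_0-\delta\omega_0\in y_0+\lambda_0 C$ for small $\delta$. The set $y_0+\lambda_0 C$ is \emph{open} (the cone $C$ is defined by strict inequalities) and is contained in $\R^3\setminus G$ by hypothesis. That open cone is the neighbourhood that survives the perturbation $(x,\omega)\to(x_0,\omega_0)$; this is precisely how the paper argues (``$x_n-(t(x_0,\omega_0)+\lambda)\omega_n\in y_0+\lambda_0 C'\subset\R^3\setminus G$''). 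Once you replace ``$\R^3\setminus G$ is open'' by ``$y_0+\lambda_0 C$ is open and contained in $\R^3\setminus G$'', your argument for (i) and for the first limit in (ii) is complete.

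For the second limit in (ii) your instinct is right that invoking (i) at $(y_+,\omega_0)$ is not literally available since $y_+\in\partial G$. The paper handles it by running both halves directly: the upper bound $\limsup t(x_n,\omega_n)\le\tau_-(y_0,\omega_0)$ uses the same cone-at-$y_0$ perturbation as in (i) (since $y_+-(\tau_-+\lambda)\omega_0=y_0-\lambda\omega_0$ lands in the cone), and the lower bound uses that for each $0<\sigma<\tau_-$ the point $y_0+\sigma\omega_0$ lies in the open set $G$, so $x_n-(\tau_--\sigma)\omega_n\in G$ for large $n$. To turn the latter into $t(x_n,\omega_n)\ge\tau_--\sigma$ you really want the whole segment $x_n-[0,\tau_--\sigma]\omega_n\subset G$; this follows by the same compact-segment argument used for lower semicontinuity, applied to the compact arc $y_0+[\sigma,\tau_--\sigma']\omega_0\subset G$ and letting $\sigma'\to 0$. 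Your last paragraph identifies exactly this bookkeeping, so you have all the pieces.
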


\begin{proof} 
By using charts of a $C^1$-manifold with boundary and the assumption, there exists a bounded open cone $C'$
containing $-\omega_0$, a $\lambda_0'>0$
and a neighbourhood $U$ of $y_0$ in $\R^3$ such that
\[
y+\lambda_0' C'\subset \R^3\backslash G,\quad \forall y\in U\cap\partial G.
\]

(i) First, let us show that $t(\cdot,\cdot)$ is lower-semicontinous on $G\times S$.
Indeed, if $(x,\omega)\in G\times S$ and $0<s<t(x,\omega)$,
we have $x-[0,s]\omega:=\{x-t\omega\ |\ t\in [0,s]\}\subset G$. Thus if $(x_n,\omega_n)$ is a sequence in $G\times S$
converging to $(x,\omega)$,
one has $x_n-[0,s]\omega_n\subset G$
for all large enough $n$, which implies $t(x_n,\omega_n)\geq s$,
and thus $\liminf_{n\to\infty} t(x_n,\omega_n)\geq s$.
Letting $s\to t(x,\omega)$ from the left, we
conclude that $\liminf_{n\to\infty} t(x_n,\omega_n)\geq t(x,\omega)$,
which gives the lower semi-continuity we were to show.

It remains to show that $t(\cdot,\cdot)$ is upper semi-continuous at $(x_0,\omega_0)$
which then implies that $t(\cdot,\cdot)$ is continuous at $(x_0,\omega_0)$
since we already known that $t(\cdot,\cdot)$ is lower-semicontinous at $(x_0,\omega_0)$.

Indeed, let $(x_n,\omega_n)\to (x_0,\omega_0)$ when $n\to\infty$.
Fix $\lambda\in ]0,\lambda_0]$. Then
\[
y_0-\lambda \omega_0\in y_0+\lambda C'\subset y_0+\lambda_0 C'\subset \R^3\backslash G,
\]
from which it follows that for all large enough $n$ (which we assume from now on),
\[
x_n-(t(x_0,\omega_0)+\lambda)\omega_n\in y_0+\lambda_0 C'\subset\R^3\backslash G.
\]
Therefore, $t(x_n,\omega_n)\leq t(x_0,\omega_0)+\lambda$,
and hence
\[
\limsup_{n\to\infty} t(x_n,\omega_n)\leq t(x_0,\omega_0)+\lambda,
\]
for all $\lambda>0$, which gives us the desired upper semi-continuity at $(x_0,\omega_0)$,
\[
\limsup_{n\to\infty} t(x_n,\omega_n)\leq t(x_0,\omega_0).
\]

(ii) Arguments are analogous to the ones employed in case (i).
Consider first the case 
$(x_n,\omega_n)\to (y_0,\omega_0)$ where $(x_n,\omega_n)\in G\times S$ .
Then $y_0-\lambda_0'\omega_0 \in y_0+\lambda_0' C'\subset\R^3\backslash G$
implies that for all $n$ large enough,
one has $x_n-\lambda \omega_n\in y_0+\lambda_0' C'\subset\R^3\backslash G$
for all $0<\lambda\leq \lambda_0'$,
hence $t(x_n,\omega_n)\leq \lambda$,
from which $\limsup_{n\to\infty} t(x_n,\omega_n)\leq \lambda$,
and finally $\limsup_{n\to\infty} t(x_n,\omega_n)=0$.

We then suppose that $(x_n,\omega_n)\to (y_+,\omega_0)$.
Since $y_0=y_+ - \tau_-(y_0,\omega_0)\omega_0$
and $y_0-\lambda_0'\omega_0\in y_0+\lambda_0'C'\subset\R^3\backslash G$,
one has for all $n$ large enough and all $0<\lambda\leq \lambda_0'$,
that 
\[
x_n- (\tau_-(y_0,\omega_0)+\lambda)\omega_n \in y_0+\lambda_0'C'\subset\R^3\backslash G,
\]
whence $t(x_n,\omega_n)\leq \tau_-(y_0,\omega_0)+\lambda$,
which gives the upper limit
\[
\limsup_{n\to\infty} t(x_n,\omega_n)\leq \tau_-(y_0,\omega_0).
\]

In order to obtain the corresponding lower limit, which then shows the existence and the correct value of the limit we were set out to show,
notice that 
if $0<\sigma<\tau_-(y_0,\omega_0)$,
then 
$y_0+\sigma\omega_0\in G$, hence
for all $n$ large enough 
$x_n-(\tau_-(y_0,\omega_0)-\sigma)\omega_n
\in G$ (since $x_n-(\tau_-(y_0,\omega_0)-\sigma)\omega_n\to
x_0-(\tau_-(y_0,\omega_0)-\sigma)\omega_0=y_0+\sigma\omega_0$) ,
which  implies that $t(x_n,\omega_n)\geq \tau_+(y_+,\omega_0)-\sigma$,
and thus $\liminf_{n\to\infty} t(x_n,\omega_n)\geq  \tau_-(y_0,\omega_0)-\sigma$.
Letting $\sigma\to 0+$ allows us to conclude that
\[
\liminf_{n\to\infty} t(x_n,\omega_n)\geq \tau_-(y_0,\omega_0).
\]
\end{proof}

\begin{lemma}\label{le:esccont:1}
Let $(x_0,\omega_0,E_0)\in (G\times S\times I)\cup \Gamma_+\cup \Gamma_-$.
Then
\[
\lim_{(x,\omega,E)\to (x_0,\omega_0,E_0)} t(x,\omega)
=\begin{cases}
t(x_0,\omega_0),\ & \textrm{if}\ (x_0,\omega_0,E_0)\in G\times S\times I \\
& \textrm{and}\ \big(x_0-t(x_0,\omega_0)\omega_0,\omega_0,E_0\big)\in \Gamma_-, \\
\tau_+(x_0,\omega_0),\ & \textrm{if}\ (x_0,\omega_0,E_0)\in \Gamma_+ \\
& \textrm{and}\ (x_0-\tau_+(x_0,\omega_0)\omega_0,\omega_0,E_0)\in \Gamma_-,\\
0,\ & \textrm{if}\ (x_0,\omega_0,E_0)\in \Gamma_-.
\end{cases}
\]
where $(x,\omega,E)\in G\times S\times I$ when taking the limits.
\end{lemma}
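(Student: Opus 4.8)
The plan is to derive Lemma \ref{le:esccont:1} essentially as a corollary of Lemma \ref{le:esccont}, after first checking that the geometric cone hypothesis of Lemma \ref{le:esccont} is automatically available at the relevant boundary points because $\overline{G}$ is a $C^1$-manifold with boundary. The key observation is that at any $y_0 \in \partial G$ there is a chart flattening the boundary, and since $G$ lies on one side of $\partial G$, for any direction $\omega_0$ with $\omega_0\cdot\nu(y_0) < 0$ (i.e.\ $-\omega_0$ points strictly into the complement) one can fit a bounded open cone $C$ containing $-\omega_0$ with $y_0 + \lambda_0 C \subset \R^3\setminus G$ for some $\lambda_0 > 0$; this is precisely the hypothesis needed to invoke Lemma \ref{le:esccont}. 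So the first step is to record this reduction: in each of the three cases of the statement, the distinguished boundary point (respectively $x_0 - t(x_0,\omega_0)\omega_0$, $x_0 - \tau_+(x_0,\omega_0)\omega_0$, and $x_0$ itself) lies in $\Gamma_-$, meaning the relevant direction satisfies the strict inequality $\omega_0\cdot\nu < 0$ there, which supplies the cone.

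Next I would treat the three cases in turn. For the first case, $(x_0,\omega_0,E_0)\in G\times S\times I$ with $y_0 := x_0 - t(x_0,\omega_0)\omega_0$ satisfying $(y_0,\omega_0,E_0)\in\Gamma_-$: since $t$ does not depend on $E$, the limit $\lim t(x,\omega)$ as $(x,\omega,E)\to(x_0,\omega_0,E_0)$ is the same as $\lim t(x,\omega)$ as $(x,\omega)\to(x_0,\omega_0)$, and part (i) of Lemma \ref{le:esccont} (with the cone just established at $y_0$) gives continuity of $t(\cdot,\cdot)$ at $(x_0,\omega_0)$, hence the limit equals $t(x_0,\omega_0)$. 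For the second case, $(x_0,\omega_0,E_0)\in\Gamma_+$ with $(x_0 - \tau_+(x_0,\omega_0)\omega_0,\omega_0,E_0)\in\Gamma_-$: here I set $y_0 := x_0 - \tau_+(x_0,\omega_0)\omega_0\in\partial G$, so that $x_0 = y_0 + \tau_-(y_0,\omega_0)\omega_0 = y_+$ in the notation preceding Lemma \ref{le:esccont}, and apply the second limit of part (ii) of Lemma \ref{le:esccont}, namely $\lim_{(x,\omega)\to(y_+,\omega_0)} t(x,\omega) = \tau_-(y_0,\omega_0) = \tau_+(x_0,\omega_0)$, again after discarding the $E$-variable. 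For the third case, $(x_0,\omega_0,E_0)\in\Gamma_-$: take $y_0 := x_0$ and use the first limit of part (ii) of Lemma \ref{le:esccont}, $\lim_{(x,\omega)\to(y_0,\omega_0)} t(x,\omega) = 0$.

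There is one subtlety to handle carefully: in Lemma \ref{le:esccont} the limits in part (ii) are taken over $(x,\omega)\in G\times S$, and the same is stipulated in the present lemma (``$(x,\omega,E)\in G\times S\times I$ when taking the limits''), so no extension of $t$ to the boundary is needed and the matching is direct; I should just make sure to state explicitly that the boundary points $(x_0,\omega_0)\in\partial G\times S$ appearing in cases two and three are approached from inside $G$. The only real content to verify is the cone hypothesis, and I expect that to be the main (though routine) obstacle: one must argue from the $C^1$-boundary-chart structure that a strictly inward-pointing direction admits a solid cone of directions staying in the complement near $y_0$, uniformly over a neighbourhood of $y_0$ on $\partial G$ — this uses that a $C^1$ function is, on a small enough ball, uniformly close to its linearization, so the half-space tangent cone at $y_0$ persists (slightly shrunk) as an honest cone in the complement for all nearby boundary points. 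Once that is in place, the proof is a three-line case analysis citing Lemma \ref{le:esccont}.
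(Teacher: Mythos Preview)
Your proposal is correct and follows essentially the same approach as the paper: define the relevant boundary point $z_0$ (your $y_0$) in each case, observe that $(z_0,\omega_0,E_0)\in\Gamma_-$ so that $\omega_0\cdot\nu(z_0)<0$, use the $C^1$-manifold-with-boundary structure to produce the cone required by Lemma~\ref{le:esccont}, and then invoke parts (i) and (ii) of that lemma case by case. The paper's proof is just a slightly more compressed version of exactly this argument.
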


\begin{proof} 
 
Define
\[
z_0=\begin{cases}
x_0-t(x_0,\omega_0)\omega_0,\ & \textrm{if}\ (x_0,\omega_0,E_0)\in G\times S\times I \\
x_0-\tau_+(x_0,\omega_0)\omega_0,\ & \textrm{if}\ (x_0,\omega_0,E_0)\in \Gamma_+ \\
x_0,\ & \textrm{if}\ (x_0,\omega_0,E_0)\in \Gamma_-.
\end{cases}
\]
In each of the limits concerned,
we have thus assumed that $(z_0,\omega_0,E_0)\in\Gamma_-$,
which implies,
using the fact that $\ol{G}$ is $C^1$-manifold with boundary
and $\omega_0\cdot\nu(z_0)<0$,
that there is an open finite cone $C\subset\R^3$ and $\lambda_0>0$
such that $z_0+\lambda_0 C\subset\R^3\backslash G$
and $-\omega_0\in C$.
The claim then follows from Lemma \ref{le:esccont}.
Note in particular that when
considering the limit $\lim t(x,\omega)=\tau_+(x_0,\omega_0)=\tau_-(y_0,\omega_0)$
one takes in the lemma
$y_0=x_0-\tau_+(x_0,\omega_0)\omega_0$ and (hence) $y_+=x_0$.

\end{proof}

\begin{proposition}\label{prop-ex}
Define 
\[
(\Gamma_+)_c:=\{(y,\omega,E)\in \Gamma_+\ |\ (y-\tau_+(y,\omega)\omega,\omega,E)\in \Gamma_-\}.
\]
Then $\Gamma_+\backslash (\Gamma_+)_c$ has zero-measure in $\Gamma$ and
there is a continuous extension $\ol{t}$ of 
$t:D\to \R$ onto $\mc{D}_-:=D\cup \Gamma_-\cup (\Gamma_+)_c$
given by
\[
\ol{t}(x,\omega)=\begin{cases}
t(x,\omega), & \textrm{if}\ (x,\omega,E)\in D, \\
\tau_+(x,\omega), & \textrm{if}\ (x,\omega,E)\in (\Gamma_+)_c, \\
0, & \textrm{if}\ (x,\omega,E)\in \Gamma_-.
\end{cases}
\]

\end{proposition}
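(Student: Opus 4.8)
The plan is to prove the zero-measure statement for $\Gamma_+\setminus(\Gamma_+)_c$ first, and then to check that the piecewise-defined $\ol t$ really is a continuous extension of $t$.

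\textbf{Zero measure.} I would begin with an elementary geometric observation. Since $G$ is bounded with $C^1$ boundary, for every $(y,\omega,E)\in\Gamma_+$ the point $z:=y-\tau_+(y,\omega)\omega$ lies on $\partial G$ with $\omega\cdot\nu(z)\le 0$, and for each $0<s<\tau_+(y,\omega)$ one has $y-s\omega\in G$, $t(y-s\omega,\omega)=\tau_+(y,\omega)-s$ and $(y-s\omega)-t(y-s\omega,\omega)\omega=z$. Hence $(z,\omega,E)\in\Gamma_-\cup\Gamma_0$, so that
\[
\Gamma_+\setminus(\Gamma_+)_c=\{(y,\omega,E)\in\Gamma_+\ |\ (y-\tau_+(y,\omega)\omega,\omega,E)\in\Gamma_0\},
\]
and for such a point the whole open segment $\{y-s\omega\ |\ 0<s<\tau_+(y,\omega)\}$ lies in the projection $N_0'$ of $N_0$ to $G\times S$ (the defining relation of $N_0$ being $E$-independent, $N_0=N_0'\times I$, so $N_0'$ has $\mc{L}^3\otimes\mu_S$-measure zero). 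Next I would "flow $N_0'$ out to the boundary": the map $\Phi(y,\omega,s):=(y-s\omega,\omega)$ is a bijection from $\{(y,\omega,s)\ |\ \omega\cdot\nu(y)>0,\ 0<s<\tau_+(y,\omega)\}$ onto the subset of $G\times S$ consisting of pairs whose forward exit point is non-grazing, and its Jacobian is $\omega\cdot\nu(y)>0$; the change of variables together with $\mc{L}^3\otimes\mu_S(N_0')=0$ then yields that $\Phi^{-1}(N_0')$ is a null set. By the first observation $\Phi^{-1}(N_0')$ contains, for each $(y,\omega)$ in the projection of $\Gamma_+\setminus(\Gamma_+)_c$ to $\partial G\times S$, the whole $s$-interval $(0,\tau_+(y,\omega))$ of positive length, so Fubini forces that projection to be $\sigma\otimes\mu_S$-null, and hence $\mu_\Gamma(\Gamma_+\setminus(\Gamma_+)_c)=0$. (Alternatively one may invoke the boundary change-of-variables/trace estimates already available in \cite{tervo14}.)

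\textbf{Continuous extension.} The three pieces $D,\Gamma_-,(\Gamma_+)_c$ are pairwise disjoint ($D\subset G\times S\times I$, while $\Gamma_-$ and $(\Gamma_+)_c$ lie in $\partial G\times S\times I$ and are separated by the sign of $\omega\cdot\nu$), and $\ol t$ equals $t$ on $D$, so $\ol t$ is a well-defined extension. For continuity, take $(x_n,\omega_n,E_n)\to(x_0,\omega_0,E_0)$ with all terms in $\mc{D}_-$; by a standard subsequence argument it suffices to treat the case where the sequence lies entirely in one of $D,\Gamma_-,(\Gamma_+)_c$. If it lies in $D$, then $\ol t(x_n,\omega_n)=t(x_n,\omega_n)$, and since a point of $D$ has its backward exit triple in $\Gamma_-$, the hypothesis of Lemma \ref{le:esccont:1} holds for $(x_0,\omega_0,E_0)$ whatever its type, giving $t(x_n,\omega_n)\to\ol t(x_0,\omega_0)$. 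If it lies in $\Gamma_-$, continuity of $\nu$ gives $\omega_0\cdot\nu(x_0)\le 0$; since $\Gamma_0\not\subset\mc{D}_-$, we get $(x_0,\omega_0,E_0)\in\Gamma_-$ and $\ol t(x_n,\omega_n)=0=\ol t(x_0,\omega_0)$. If it lies in $(\Gamma_+)_c$, continuity of $\nu$ gives $\omega_0\cdot\nu(x_0)\ge 0$, hence $>0$, so $(x_0,\omega_0,E_0)\in(\Gamma_+)_c$; to show $\tau_+(x_n,\omega_n)\to\tau_+(x_0,\omega_0)$ I would pass to the interior points $x_n':=x_n-\epsilon_n\omega_n\in G$ with $0<\epsilon_n<\min\{\tau_+(x_n,\omega_n),2^{-n}\}$, noting $x_n'\to x_0$ and $t(x_n',\omega_n)=\tau_+(x_n,\omega_n)-\epsilon_n$, and then apply Lemma \ref{le:esccont:1} (second case) to the interior sequence $(x_n',\omega_n,E_n)$ to get $t(x_n',\omega_n)\to\tau_+(x_0,\omega_0)$, whence $\tau_+(x_n,\omega_n)\to\tau_+(x_0,\omega_0)=\ol t(x_0,\omega_0)$.

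I expect the zero-measure assertion to be the main obstacle: one must make the flow-out change of variables rigorous, in particular that $\Phi$ is a bijection onto $G\times S$ up to a null set with Jacobian $\omega\cdot\nu(y)$, and deal with the measurability of $\tau_+$ and of the sets involved. Once that is settled, the continuity part is essentially bookkeeping on top of Lemmas \ref{le:esccont} and \ref{le:esccont:1}, the only mildly delicate point being the interior-approximation step used to handle sequences valued in $(\Gamma_+)_c$.
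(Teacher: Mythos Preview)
Your proposal is correct. The paper's own proof is a single sentence, ``The result is a straightforward consequence of the above Lemmas,'' which really only addresses the continuity assertion; your case analysis (sequence in $D$, in $\Gamma_-$, in $(\Gamma_+)_c$, with the interior-approximation trick for the last case) is exactly how one unwinds that sentence via Lemmas~\ref{le:esccont} and~\ref{le:esccont:1}, and the approach matches the paper's.

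For the zero-measure claim the paper is in fact silent---it does not follow from those two lemmas---and you supply an argument the paper omits. Your flow-out idea (pull $N_0'$ back through $\Phi(y,\omega,s)=(y-s\omega,\omega)$ and use Fubini plus the known $\mc L^3\otimes\mu_S$-nullity of $N_0'$) is sound and uses precisely the change-of-variables machinery that appears later in the paper (Remark~\ref{changevar} and the proof of Theorem~\ref{tth}, from the $\Gamma_-$ side; your $\Gamma_+$-side version is the mirror image). Two small points worth tightening: first, the Fubini step gives $|\omega\cdot\nu|\,d\sigma\,d\omega$-nullity, and you should note that on $\Gamma_+'$ this is equivalent to $d\sigma\,d\omega$-nullity since $\omega\cdot\nu>0$ there; second, in Case~1 of the continuity argument the relevant hypothesis of Lemma~\ref{le:esccont:1} concerns the \emph{limit} point, not the sequence points---your conclusion is right (any limit in $\mc D_-$ automatically satisfies the lemma's backward-exit condition), but the phrase ``since a point of $D$ has its backward exit triple in $\Gamma_-$'' slightly misplaces the emphasis.
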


\begin{proof}
The result is a straightforward consequence of the above Lemmas.
\end{proof}

\begin{example}
Note that in the case where $G$ is convex 
and its boundary is $C^1$-regular the above extension $\ol{t}$ is continuous on $\ol G\times S$. For example,
for the ball $G=B(0,r)$ we have $\tau_+(x,\omega)=2|\la x,\omega\ra|$  and $t(x,\omega)=\la x,\omega\ra+\sqrt{\la x,\omega\ra^2+r^2-\n{x}^2}$ (\cite[Example 3.1]{tervo17-up}).
We find that for $(y,\omega)\in \Gamma'=\partial G\times S$
\be\label{et}
\lim_{(x,\omega)\to (y,\omega),(x,\omega)\in G\times S}t(x,\omega)=\la y,\omega\ra+|\la y,\omega\ra|.
\ee
For $(y,\omega)\in \Gamma'_+$ the projection $\la y,\omega\ra$ is non-negative (and then the limit (\ref{et}) is $\tau_+(y,\omega)$) and
for $(y,\omega)\in \Gamma'_-$ it is non-positive
and then the limit (\ref{et}) is $0$). Hence $\ol t:\ol G\times S\to\R$  is continuous.
\end{example}

Due to the Proposition \ref{prop-ex} we can (almost everywhere) uniquely set $t(y,\omega)=0$ for $(y,\omega,E)\in\Gamma_{-}$. 
For $(y,\omega,E)\in \Gamma_+$ we set $t(y,\omega):=\tau_+(y,\omega)$. 
For further (unexplained) notations we refer to \cite{tervo17-up}.
 
In the sequel we denote for $k\in\N_0$
\[
C^k(\ol G\times S\times I):=\{\psi\in C^k(G\times S\times I^\circ)\ |\ \psi\ {\rm has\ continuous\ partial\ derivatives\ on}\ \ol G\times S\times I\}
\]
and
\[
D^k(\ol G\times S\times I):=\{\psi\in C^k(G\times S\times I^\circ)\ |\ \psi=f_{|G\times S\times I^\circ},\ f\in C_0^k(\R^n\times S\times\R)\}.
\]
It is well known that these spaces are equal, i.e. $C^k(\ol G\times S\times I)=D^k(\ol G\times S\times I)$, for a given $k$,
if the boundary $\partial G$ of $G$ is of class $C^k$ (see \cite[Part 1, Lemma 5.2]{friedman}).
Thus, in particular, since our standing assumption in this paper is that $\ol{G}$ is (at least) of class $C^1$, we have
\[
C^1(\ol G\times S\times I)=D^1(\ol G\times S\times I).
\]

Define the (Sobolev) space $W^2(G\times S\times I)$  by
\bea\label{fseq1}
W^2(G\times S\times I)
=\{\psi\in L^2(G\times S\times I)\ |\  \omega\cdot\nabla_x \psi\in L^2(G\times S\times I) \}
\eea
and its subspace $W^2_1(G\times S\times I)$ by
\bea\label{fseq2}
W^2_1(G\times S\times I)
=\big\{\psi\in W^2(G\times S\times I)\ \big|\ \p{\psi}{E}\in L^2(G\times S\times I)\big\}.
\eea
Here $\omega\cdot\nabla_x\psi$ and ${\p {\psi}E}$ are understood in the distributional sense.
In what follows, $\omega\cdot\nabla_x\psi$ will stand for (the distribution)
$\Omega\cdot\nabla_x \psi$, where $\Omega:G\times S\times I\to\R^3$; $\Omega(x,\omega,E)=\omega$.
The spaces $W^2(G\times S\times I)$, $W^2_1(G\times S\times I)$ are equipped  with the inner products, respectively
\be\label{fs4}
\la {\psi},v\ra_{W^2(G\times S\times I)}=\la {\psi},v\ra_{L^2(G\times S\times I)}+
\la\omega\cdot\nabla_x\psi,\omega\cdot\nabla_x v\ra_{L^2(G\times S\times I)}
\ee
and
\be\label{fs5}
\la\psi,v\ra_{W^2_1(G\times S\times I)}=\la {\psi},v\ra_{L^2(G\times S\times I)}+
\la\omega\cdot\nabla_x\psi,\omega\cdot\nabla_x v\ra_{L^2(G\times S\times I)}
+\la{\p {\psi}E},{\p {v}{E}}\ra_{L^2(G\times S\times I)}
\ee
Then $W^2(G\times S\times I)$ and $W^2_1(G\times S\times I)$ are Hilbert spaces.
 
We have (cf.  \cite{friedrichs}, \cite{bardos}. The proof can also be  shown by the similar considerations as in \cite[pp. 11-19]{friedman})

\begin{theorem}\label{denseth}
The space $C^1(\ol G\times S\times I)$
is a dense subspace of $W^2(G\times S\times I)$ and of $W^2_1(G\times S\times I)$. 
\end{theorem}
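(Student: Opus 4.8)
The plan is to establish density in $W^2(G\times S\times I)$ first, and to check that the very same approximating sequence converges also in the stronger norm of $W^2_1(G\times S\times I)$; the whole argument follows the localization–translation–mollification scheme of \cite[Part 1, pp.~11--19]{friedman} (see also \cite{friedrichs}, \cite{bardos}). I would carry out the approximation in two stages. In the first stage I approximate $\psi$, in the relevant graph norm, by functions that are smooth in the $x$- and $E$-variables up to $\partial G$ and up to the endpoints of $I$ but still merely $L^2$ in $\omega$; in the second stage I mollify the result in $\omega$ to land in $C^1(\ol G\times S\times I)$. A diagonal choice of the mollification parameters then finishes the proof.

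For the first stage, I would fix a finite open cover $U_0,\dots,U_m$ of $\ol G$ with $\overline{U_0}\subset G$ and each $U_k$ ($k\ge1$) a boundary chart, together with a partition of unity $\{\chi_k\}$ depending on $x$ only and subordinate to it; since each $\chi_k$ is smooth with bounded derivatives, $\chi_k\psi$ lies in $W^2(G\times S\times I)$ (resp.\ in $W^2_1$) and $\sum_k\chi_k\psi=\psi$, so it suffices to approximate each $\chi_k\psi$. For $k=0$ one convolves $\chi_0\psi$ with a mollifier $\rho_\epsilon$ in the $x$-variable only: as $\omega\cdot\nabla_x$ is, for each fixed $\omega$, a constant-coefficient operator, $\omega\cdot\nabla_x(\chi_0\psi*\rho_\epsilon)=(\omega\cdot\nabla_x(\chi_0\psi))*\rho_\epsilon\to\omega\cdot\nabla_x(\chi_0\psi)$ in $L^2$, and likewise for $\partial_E$. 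For $k\ge1$, after a rotation of coordinates I may assume that $G\cap U_k=\{x_n>\gamma(x')\}$ with $\gamma\in C^1$; then the translation $x\mapsto x+he_n$ ($h>0$) sends $\ol G\cap U_k'$ into $G$ for a slightly smaller chart neighbourhood $U_k'$, \emph{irrespective of the slope of $\gamma$}, because $x_n\ge\gamma(x')$ forces $x_n+h>\gamma(x')$. Setting $\psi_h(x,\omega,E):=(\chi_k\psi)(x+he_n,\omega,E)$ and then convolving in $x$ with $\rho_\epsilon$ for $\epsilon$ small relative to $h$ produces a function smooth in $x$ on a neighbourhood of $\ol G\cap U_k'$; by $L^2$-continuity of translations, together with the exact commutation of translation and $x$-convolution with $\omega\cdot\nabla_x$ (fixed $\omega$) and with $\partial_E$, it converges to $\chi_k\psi$ in the graph norm as first $\epsilon\to0$ and then $h\to0$. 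Multiplying by a cut-off $\tilde\chi_k\in C_0^\infty(U_k)$ equal to $1$ on $\supp\chi_k$ and summing over $k$ gives the first-stage approximation; an identical one-dimensional translation–mollification near $E=0$ and $E=E_{\rm m}$ (using a partition of unity in $E$) handles the $E$-boundary, and mollification in $E$ commutes with $\omega\cdot\nabla_x$.

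For the second stage, let $\phi$ denote a first-stage function: smooth in $x$ and $E$ up to the boundary, still only $L^2$ in $\omega$, and — crucially — having its \emph{full} $x$-gradient in $L^2(G\times S\times I)$ (with a bound depending on the first-stage parameters) precisely because it has already been mollified in $x$. I would now mollify $\phi$ in $\omega$ by a smooth approximate identity $k_\delta(\omega,\omega')$ on $S\times S$. The one genuine difficulty is that the transport operator carries the \emph{non-constant} coefficient $\omega$, so $\omega$-mollification does not commute with $\omega\cdot\nabla_x$:
\[
\omega\cdot\nabla_x\Big(\int_S k_\delta(\omega,\omega')\,\phi(\cdot,\omega',\cdot)\,\domega'\Big)
&=\int_S k_\delta(\omega,\omega')\,(\omega'\cdot\nabla_x\phi)(\cdot,\omega',\cdot)\,\domega'\\
&\quad+\int_S k_\delta(\omega,\omega')\,\big((\omega-\omega')\cdot\nabla_x\phi\big)(\cdot,\omega',\cdot)\,\domega',
\]
and the second (commutator) term involves the full gradient $\nabla_x\phi$, which is \emph{not} controlled by the $W^2$-norm of $\psi$. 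This is exactly why the two stages must be ordered as above: since $\nabla_x\phi\in L^2$ and $|\omega-\omega'|=O(\delta)$ on $\supp k_\delta(\omega,\cdot)$, the commutator term is $O(\delta)$ in $L^2$ and vanishes as $\delta\to0$, while the first term converges to $\omega\cdot\nabla_x\phi$ by the usual properties of approximate identities; mollification in $\omega$ commutes with $\partial_E$, so the $W^2_1$-component of the norm is preserved and converges as well. After a final cut-off in $E$ equal to $1$ on $I$ (which leaves the restriction to $G\times S\times I$ unchanged) the function belongs to $C^1(\ol G\times S\times I)$, and a diagonal choice $\delta=\delta(\epsilon,h)\to0$ combined with the first-stage limits completes the argument. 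I expect the boundary handling itself to be entirely routine; the real point, and the only delicate one, is the incompatibility of $\omega$-mollification with the $\omega$-dependent transport coefficient, which forces one to smooth in $x$ before smoothing in $\omega$.
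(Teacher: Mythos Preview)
Your proposal is correct and is precisely in the spirit of what the paper indicates: the paper does not give a proof of this theorem at all, but merely cites \cite{friedrichs}, \cite{bardos}, and remarks that ``the proof can also be shown by the similar considerations as in \cite[pp.~11--19]{friedman}''. Your argument is a careful execution of exactly that localization--translation--mollification scheme, and you have correctly identified the one genuinely non-routine point specific to this operator: mollification in $\omega$ does not commute with $\omega\cdot\nabla_x$ because the coefficient is $\omega$-dependent, so one must smooth in $x$ first (gaining $\nabla_x\phi\in L^2$, with a bound depending on the $x$-mollification parameter) and only then in $\omega$, controlling the commutator by $O(\delta)\|\nabla_x\phi\|_{L^2}$; the diagonal extraction in the order $\delta\to0$ before $\epsilon,h\to0$ is exactly right.
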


For $\Gamma_-=\{(y,\omega,E)\in (\partial G)\times S\times I)\ |\ \omega\cdot\nu(y)<0\}$ we   define the space of $L^2$-functions with respect to the measure
$|\omega\cdot\nu|\ d\sigma d\omega dE$ which is denoted by $T^2(\Gamma_-)$
that is, $T^2(\Gamma_-)=L^2(\Gamma_-,|\omega\cdot\nu|\ d\sigma d\omega dE)$.
$T^2(\Gamma_-)$   is a Hilbert space and its inner product is (in  this paper all functions are real-valued)
\be\label{fs8}
\la {h_1},h_2\ra_{T^2(\Gamma_-)}=\int_{\Gamma_-}h_1(y,\omega,E)h_2(y,\omega,E) |\omega\cdot\nu|\ d\sigma d\omega dE.
\ee
The space $T^2(\Gamma_+)$ (and its inner product) of $L^2$-functions on
$\Gamma_+=\{(y,\omega,E)\in (\partial G)\times S\times I)\ |\ \omega\cdot\nu(y)>0\}$
with respect to the measure
$|\omega\cdot\nu|\ d\sigma d\omega dE$ is similarly defined.
We denote by $T^2(\Gamma)$ the space of $L^2$-functions with respect to the measure
$|\omega\cdot\nu|\ d\sigma d\omega dE $ that is,
$
T^2(\Gamma)=L^2(\Gamma,|\omega\cdot\nu|\ d\sigma d\omega dE).
$
The inner product in $T^2(\Gamma)$ is
\be\label{fs10}
\la {h_1},h_2\ra_{T^2(\Gamma)}= \int_{\Gamma}h_1(y,\omega,E)h_2(y,\omega,E)|\omega\cdot\nu|\ d\sigma d\omega dE.
\ee

One can show (\cite[pp. 230-231]{dautraylionsv6}, \cite{tervo17-up}) 
that for any compact set $K\subset \Gamma_-$ 
\be\label{ttha}
\int_K|\psi(y,\omega,E)|^2|\omega\cdot\nu|\ d\sigma d\omega dE\leq C_K\n{\psi}_{W^2(G\times S\times I)}^2 \quad\forall \psi\in C^1(\ol G\times S\times I).
\ee
Hence  any element $\psi\in W^2(G\times S\times I)$ has well defined trace $\psi_{|\Gamma_-}$ in $L^2_{\rm loc}(\Gamma_-,|\omega\cdot\nu|\ d\sigma d\omega dE)$ defined by
\be\label{tthb}
\psi_{|K}:=\lim_{j\to\infty}\ {\psi_j}_{|K}\ {\rm for\ any\ compact\ subset\ } K\subset\Gamma_-,
\ee
where $\{\psi_j\}\subset C^1(\ol G\times S\times I)$ is a sequence such that $\lim_{j\to\infty}\n{\psi_j-\psi}_{W^2(G\times S\times I)} =0$. In addition the trace mapping
$\gamma_-:W^2(G\times S\times I)\to \ L^2_{\rm loc}(\Gamma_-,|\omega\cdot\nu|\ d\sigma d\omega dE)$ such that
$\gamma_-(\psi)=\psi_{|\Gamma_-}$ is continuous.
Similarly one has a continuous trace mapping $\gamma_+:W^2(G\times S\times I)\to \ L^2_{\rm loc}(\Gamma_+,|\omega\cdot\nu|\ d\sigma d\omega dE)$ and so we can define 
(a.e. unique) the trace  $\gamma(\psi)$ on $\Gamma$ for $\psi\in W^2(G\times S\times I)$.

By the Sobolev Embedding Theorem for $\psi\in C^1(\ol G\times S\times I)$ (cf. \cite[p. 22]{friedman}, or \cite[p. 220]{treves}; or see \eqref{trpr2} in the proof of Theorem \ref{tth} below)
\be\label{sobim}
\n{\psi(\cdot,\cdot,E)}_{L^2(G\times S)}\leq C\Big(\n{\psi}_{L^2(G\times S\times I)}+\n{{\p {\psi}E}}_{L^2(G\times S\times I)}\Big),\quad \forall E\in I,
\ee
and then the traces $\psi(\cdot,\cdot,0),\ \psi(\cdot,\cdot,E_{\rm m})\ \in L^2(G\times S)$ are well-defined for any $\psi\in W_1^2(G\times S\times I)$.

The trace $\gamma(\psi),\ \psi\in W^2(G\times S\times I)$ is not necessarily in the space $T^2(\Gamma)$.  Hence we define the spaces
\be\label{fs12}
\tilde W^2(G\times S\times I)=\{\psi\in W^2(G\times S\times I)\ |\ \gamma(\psi)\in T^2(\Gamma) \}.
\ee
which is equipped with the inner product
\be\label{fs14}
\la {\psi},v\ra_{\tilde W^2(G\times S\times I)}=\la {\psi},v\ra_{W^2(G\times S\times I)}+ \la {\gamma(\psi)},\gamma(v)\ra_{T^2(\Gamma)}.
\ee
The space $\tilde W^2(G\times S\times I)$ is a Hilbert space (cf. \cite{tervo17-up}).
Moreover, we define subspaces $\tilde{W}_{\pm,0}^2(G\times S\times I)$ of it by
\be 
\tilde{W}_{\pm,0}^2(G\times S\times I)=\{\psi\in \tilde W^2(G\times S\times I)\ |\ \gamma_{\pm}(\psi)=0\}.
\ee

For $v\in \tilde W^2( G\times S\times I)$ and $\psi\in\tilde W^2(G\times S\times I)$ it holds the Green's formula 
\begin{align}\label{green}
\int_{G\times S\times I}(\omega\cdot \nabla_x \psi)v\ dxd\omega dE
+\int_{G\times S\times I}(\omega\cdot \nabla_x v)\psi\ dxd\omega dE=
\int_{\partial G\times S\times I}(\omega\cdot \nu) v\ \psi\ d\sigma d\omega dE,
\end{align}
which is obtained by Stokes Theorem  for $v,\psi\in C^1(\ol G\times S\times I)$ and then by the limiting considerations for general $v\in \tilde W^2(G\times S\times I)$ and $\psi\in\tilde W^2(G\times S\times I)$.

\begin{remark}\label{re:W2_0_trace}
By Green's formula
\[
\n{\gamma_{\pm}(\psi)}_{T^2(\Gamma_{\pm})}\leq \n{\psi}_{\tilde{W}^2_{\mp}(G\times S\times I)}
\]
and thus $\gamma_{\pm} :\tilde{W}^2_{\mp,0}(G\times S\times I)\to T^2(\Gamma_{\pm})$
is bounded. 
\end{remark}

Occasionally we work in the (energy independent) spaces $L^2(G\times S)$.
The corresponding Hilbert spaces $W^2(G\times S)$, $T^2(\Gamma'_{\pm})$, $T^2(\Gamma')$
and $\tilde W^2(G\times S)$ are similarly defined, where
\[
\Gamma':={}&(\partial G)\times S, \\[2mm]
\Gamma'_{0}:={}&\{(y,\omega)\in (\partial G)\times S\ |\ \omega\cdot\nu(y)=0\}, \\[2mm]
\Gamma'_{-}:={}&\{(y,\omega)\in (\partial G)\times S\ |\ \omega\cdot\nu(y)<0\}, \\[2mm]
\Gamma'_{+}:={}&\{(y,\omega)\in (\partial G)\times S\ |\ \omega\cdot\nu(y)>0\}.
\]
In addition, the trace $\gamma'(\psi):=\psi_{|\Gamma'}$ for $\psi\in \tilde{W}^2(G\times S)$ is defined as $\gamma(\psi)$ above.

In the context of CSDA-equations we need the following additional Hilbert spaces. 
Let $H$ be the completion of $C^1(\ol G\times S\times I)$ with respect to the inner product
\be
\la \psi,v\ra_H:=\la \psi,v\ra_{L^2(G\times S\times I)}+
\la\gamma(\psi),\gamma(v)\ra_{T^2(\Gamma)}
\ee
The elements of $H$ are  of the form $\tilde\psi=(\psi,q)\in 
L^2(G\times S\times I)\times T^2(\Gamma)$. Actually, they are exactly 
elements of the closure of the graph of trace operator $\gamma:C^1(\ol G\times S\times I)\to 
C^1(\partial G\times S\times I)$ in $L^2(G\times S\times I)\times T^2(\Gamma)$. 
The inner product in $H$ is
\begin{align}\label{spaceH}
\la\tilde\psi,\tilde \psi'\ra_{H}=\la\psi, \psi'\ra_{L^2(G\times S\times I)}
+\la q,q'\ra_{T^2(\Gamma)},
\end{align}
for $\tilde\psi=(\psi,q)$, $\tilde\psi'=(\psi',q')\in H$.

Furthermore, let $H_1$ be the completion of $C^1(\ol G\times S\times I)$ with respect to the inner product 
\begin{align} \label{spaceH1}
\la \psi,v\ra_{H_1}:={}&\la \psi,v\ra_{L^2(G\times S\times I)}+
\la\gamma(\psi),\gamma(v)\ra_{T^2(\Gamma)}\nonumber \\
&
+
\la\psi(\cdot,\cdot,0),v(\cdot,\cdot,0)\ra_{L^2(G\times S)}
+
\la\psi(\cdot,\cdot,E_m),v(\cdot,\cdot,E_m)\ra_{L^2(G\times S)}.
\end{align}
The elements of $H_1$ are of the form $\tilde\psi=(\psi,q,p_0,p_m)\in 
L^2(G\times S\times I)\times T^2(\Gamma)\times L^2(G\times S)^2$. More precisely, the elements of $H_1$ are
the elements of  the closure
(in $L^2(G\times S\times I)\times T^2(\Gamma)\times L^2(G\times S)^2$)
of the graph of the (trace) operator $C^1(\ol G\times S\times I)\to C^1(\partial G\times S\times I)\times C^1(\ol G\times S)^2$ defined by
$\psi\mapsto (\gamma(\psi),\psi(\cdot,\cdot,0),\psi(\cdot,\cdot,E_m))$.
The inner product in $H_1$ is
\[
\la\tilde\psi,\tilde \psi'\ra_{H_1}={}&\la\psi, \psi'\ra_{L^2(G\times S\times I)}
+\la q,q'\ra_{T^2(\Gamma)}\nonumber\\
&
+
\la p_0,p_0'\ra_{L^2(G\times S)}+\la p_{\rm m},p_{\rm m}'\ra_{L^2(G\times S)},
\]
for $\tilde\psi=(\psi,q,p_0,p_{\rm m})$, $\tilde\psi=(\psi',q',p_0',p_{\rm m}')\in H_1$.

Finally, let
$H_2$ be the completion of $C^1(\ol G\times S\times I)$ with respect to the inner product
\bea\label{inph2}
\la \psi,v\ra_{H_2}
:={}&\la \psi,v\ra_{\tilde W^2(G\times S\times I)}+\la{\p {\psi}E},{\p {v}E} \ra_{L^2(G\times S\times I)}\nonumber\\
={}&\la \psi,v\ra_{L^2(G\times S\times I)}+
\la\omega\cdot\nabla_x \psi,\omega\cdot\nabla_x v\ra_{L^2(G\times S\times I)}\nonumber\\
&+
\la\gamma(\psi),\gamma(v)\ra_{T^2(\Gamma)}
+
\la{\p {\psi}E},{\p {v}E} \ra_{L^2(G\times S\times I)}.
\eea
Obviously $H_2\subset \tilde W^2(G\times S\times I)\cap W_1^2(G\times S\times I)$
and the inner product in $H_2$ is given by (\ref{inph2}).

%%%%%%%%%%%%%%%%%%%%%%%%%%%%%%%%%%%%%%%%%%%%%%%%%%%%%%%%%%%%%%%%%%%%%%%%%%%%%%%%%%%%%%%%%%%%%%%%%%%%%%%%%%%
\subsection{Some Details on (Inflow) Trace Theory}\label{trath}
%%%%%%%%%%%%%%%%%%%%%%%%%%%%%%%%%%%%%%%%%%%%%%%%%%%%%%%%%%%%%%%%%%%%%%%%%%%%%%%%%%%%%%%%%%%%%%%%%%%%%%%%%%%

We still bring up a refinement of the above explained trace theory. 
Let us define
\[T^2_{\tau_{\pm}}(\Gamma_{\pm}):=L^2(\Gamma_{\pm},\tau_{\pm}(\cdot,\cdot)|\omega\cdot\nu| d\sigma d\omega dE),
\]
equipped with the inner product
\be\label{fs8a}
\la {h_1},h_2\ra_{T^2_{\tau_{\pm}}(\Gamma_{\pm})}=\int_{\Gamma_{\pm}}h_1(y,\omega,E)h_2(y,\omega,E)\tau_{\pm}(y,\omega) |\omega\cdot\nu|\ d\sigma d\omega dE.
\ee 
 
Suppose that $g\in C(\Gamma_-)$ such that ${\p g{\tilde y_i}}\in C(\Gamma_-), i=1,2$,
where ${\partial\over{\partial \tilde y_i}}$ denotes any local basis of the
tangent space of $\partial G$
(and ${\p g{\tilde y_i}}\in C(\Gamma_-)$ is to be understood in a local sense),
and $\Sigma\in C(\ol G\times S\times I)$ such that
${\p {\Sigma}{x_j}}\in C(\ol G\times S\times I),\ j=1,2,3$.
Then  the unique (classical) solution of the homogeneous \emph{convection-scattering equation}
(recall the definition of $D$ in \eqref{D})
\be\label{trath1}
\omega\cdot \nabla_x\psi+\Sigma\psi=0\quad {\rm on}\ D,
\ee
satisfying the inhomogeneous inflow boundary condition
\be\label{trath2}
\psi(y,\omega,E)=g(y,\omega,E)\quad \forall (y,\omega,E)\in \Gamma_-, 
\ee
is given by (\cite[Theorem 3.13]{tervo17-up})
\be \label{trath3}
\psi(x,\omega,E)=e^{-\int_0^{t(x,\omega)}\Sigma(x-s\omega,\omega,E)ds} g(x-t(x,\omega)\omega,\omega,E).
\ee 
We denote by $L_-g$ ($=\psi$) the solution of the problem (\ref{trath1})-(\ref{trath2}) that is,
$L_-g$ is given by the right hand side of (\ref{trath3}).
Note that $L_-g$ is not generally even continuous (for non-convex $G$) even if $g$ happens to be smooth. 
We show, however,  that the formula (\ref{trath3}) gives a \emph{weak solution}
of the problem (\ref{trath1})--(\ref{trath2}) that is,
\begin{alignat}{3}
&
\la \psi,-\omega\cdot \nabla_xv+\Sigma v\ra_{L^2(G\times S\times I)}=0
\quad && {\rm for\ all}\ v\in C_0^\infty(G\times S\times I^\circ),\nonumber\\[2mm]
&
\psi(y,\omega,E)=g(y,\omega,E)\quad && {\rm for\ a.e.} \ (y,\omega,E)\in\Gamma_-,
\end{alignat}
for any $g\in T_{\tau_-}^2(\Gamma_{-})$, and that $L_-g\in  W^2(G\times S\times I)$.

Following form of Fubini's
theorem will be useful later on (cf. \cite[Lemma 2.1]{choulli}).

\begin{proposition}[(Fubini)]\label{pr:fubini}
Let $\psi\in L^1(G\times S\times I)$. Then
\[
\int_{G\times S\times I} \psi(x,\omega,E) dx d\omega dE
=
\int_{\Gamma_-}\int_0^{\tau_-(y,\omega)} \psi(y+t\omega,\omega,E) |\omega\cdot\nu(y)| dt d\sigma(y) d\omega dE.
\]
\end{proposition}

\begin{proof}
This result can be proven by the same arguments
leading to Eq. \eqref{trpr5} in the proof of Theorem \ref{tth} below,
followed by a computation resulting in Eq. \eqref{trpr5-1} in Remark \ref{changevar}. We omit further details here.
See also \cite[Lemma 2.1]{choulli}.
\end{proof}

We record the following Lemma for later use.
\begin{lemma}\label{le:lift}
Assume that $\Sigma\in L^\infty(G\times S\times I)$ and that
$\Sigma\geq 0$.
Then for any $g\in T_{\tau_-}^2(\Gamma_{-})$ the function $L_-g$ defined by (\ref{trath3}) (is measurable and)
belongs to $L^2(G\times S\times I)$, and
\be\label{trpr9}
\n{L_-g}_{L^2(G\times S\times I)}\leq \n{g}_{T^2_{\tau_-}(\Gamma_-)}.
\ee
Moreover, equality holds here if $\Sigma=0$.
\end{lemma}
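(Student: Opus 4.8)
The plan is to reduce everything to the change-of-variables (integration along characteristics) identity that is part of the escape-time machinery of Section~\ref{pre} (see \cite{tervo14}; cf.\ also \cite[pp.~230--231]{dautraylionsv6}): for every Borel measurable $f:G\times S\to[0,+\infty]$,
\[
\int_{G\times S}f(x,\omega)\,dx\,d\omega=\int_{\Gamma'_-}\Big(\int_0^{\tau_-(y,\omega)}f(y+s\omega,\omega)\,ds\Big)\,|\omega\cdot\nu(y)|\,d\sigma(y)\,d\omega,
\]
whose proof rests on the observation that, for $(y,\omega)\in\Gamma'_-$ and $0<s<\tau_-(y,\omega)$, the point $x=y+s\omega$ lies in $G$, satisfies $t(x,\omega)=s$ and $x-t(x,\omega)\omega=y$, together with the fact that the exceptional set $N_0$ of \eqref{N0} has measure zero (\cite[Theorem~3.8]{tervo14}); for a fixed direction $\omega$ this is just the parametrization of $G$ by entry point and entry time, with Jacobian $|\omega\cdot\nu|$.

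First I would settle measurability. Pick a Borel representative of $\Sigma$ and replace it by its positive part, so that we may assume $\Sigma$ Borel and $\Sigma\geq0$ everywhere. Since $t(\cdot,\cdot)$ is lower semicontinuous, hence Borel, the function $(s,x,\omega,E)\mapsto\Sigma(x-s\omega,\omega,E)\chi_{\{0<s<t(x,\omega)\}}$ is jointly Borel, and by Tonelli the map $(x,\omega,E)\mapsto\int_0^{t(x,\omega)}\Sigma(x-s\omega,\omega,E)\,ds$ is Borel; so is its exponential. Composing the Borel map $(x,\omega,E)\mapsto(x-t(x,\omega)\omega,\omega,E)$ with a Borel representative of $g$ then shows that $L_-g$, defined by (\ref{trath3}), is Borel measurable. (Independence of the chosen representative of $g$ follows a posteriori from the $L^2$-bound applied to the difference of two representatives.)

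For the estimate, fix $E\in I$ and apply the identity above to $f(x,\omega)=|(L_-g)(x,\omega,E)|^2$. Using $t(y+s\omega,\omega)=s$, $(y+s\omega)-t(y+s\omega,\omega)\omega=y$, and, after the substitution $r=s-s'$, the identity $\int_0^{t(x,\omega)}\Sigma(x-s'\omega,\omega,E)\,ds'=\int_0^s\Sigma(y+r\omega,\omega,E)\,dr$ with $x=y+s\omega$, one obtains
\[
\int_{G\times S}|(L_-g)(x,\omega,E)|^2\,dx\,d\omega=\int_{\Gamma'_-}|g(y,\omega,E)|^2\Big(\int_0^{\tau_-(y,\omega)}e^{-2\int_0^s\Sigma(y+r\omega,\omega,E)\,dr}\,ds\Big)|\omega\cdot\nu(y)|\,d\sigma(y)\,d\omega.
\]
Since $\Sigma\geq0$ the inner exponential is $\leq1$, so the $ds$-integral is $\leq\tau_-(y,\omega)$; integrating over $E\in I$ (Tonelli) and recalling the definition of $\|\cdot\|_{T^2_{\tau_-}(\Gamma_-)}$ gives $\|L_-g\|_{L^2(G\times S\times I)}\leq\|g\|_{T^2_{\tau_-}(\Gamma_-)}<\infty$, which also yields $L_-g\in L^2(G\times S\times I)$. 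When $\Sigma=0$ the exponential is identically $1$ and every inequality becomes an equality, giving $\|L_-g\|_{L^2}=\|g\|_{T^2_{\tau_-}(\Gamma_-)}$.

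The one genuinely delicate ingredient is the characteristic change-of-variables identity together with its null-set bookkeeping ($N_0$, $\Gamma_0$, and the a.e.\ meaning of the boundary quantities); granting that as known from Section~\ref{pre}/\cite{tervo14}, the remainder is the short computation above. Note that the stronger assertions made around (\ref{trath3}) in the surrounding text --- that $L_-g$ is a weak solution of (\ref{trath1})--(\ref{trath2}) and that $L_-g\in W^2(G\times S\times I)$ --- are not part of this Lemma and so need not be treated here.
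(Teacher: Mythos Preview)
Your proof is correct and follows essentially the same route as the paper: both rely on the characteristic change-of-variables identity (the paper invokes it via a forward reference to the proof of Theorem~\ref{tth}/Remark~\ref{changevar}) together with $t(y+s\omega,\omega)=s$ for $(y,\omega)\in\Gamma'_-$, and then exploit $\Sigma\geq 0$ to bound the exponential factor by $1$. The only cosmetic difference is ordering: the paper first establishes the equality for $\Sigma=0$ and then dominates the general case pointwise by the $\Sigma=0$ integrand, whereas you carry the exponential through the change of variables and bound the resulting inner $ds$-integral by $\tau_-(y,\omega)$; your explicit treatment of measurability is a welcome addition that the paper leaves implicit.
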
 

\begin{proof}
Write $L_{\Sigma,-}$ for the lift-operator defined in \eqref{trath3} for a given $\Sigma\geq 0$, i.e. ${\psi=L_{\Sigma,-} g}$.
Then
\[
\n{L_{0,-} g}_{L^2(G\times S\times I)}^2
={}&\int_{G\times S\times I} g(x-t(x,\omega)\omega,\omega,E)^2 dx d\omega dE \\
={}&\int_{\Gamma_-} g(y,\omega,E)^2 \tau_-(y,\omega)|\omega\cdot\nu(y)|d\sigma(y) d\omega dE \\
={}&\n{g}_{T^2_{\tau_-}(\Gamma_-)}^2,
\]
where in the second step we applied Proposition \ref{pr:fubini},
and noticed that \linebreak ${t(y+s\omega,\omega)=s}$ whenever $(y,\omega,E)\in \Gamma_-$.
Therefore,
\[
\n{L_{\Sigma,-} g}_{L^2(G\times S\times I)}^2
=&{}\int_{G\times S\times I} \big(e^{-\int_0^{t(x,\omega)}\Sigma(x-s\omega,\omega,E)ds}g(x-t(x,\omega)\omega,\omega,E)\big)^2 dx d\omega dE \\
\leq &{}\int_{G\times S\times I} g(x-t(x,\omega)\omega,\omega,E)^2 dx d\omega dE \\
=&{}\n{L_{0,-} g}_{L^2(G\times S\times I)}^2
=\n{g}_{T^2_{\tau_-}(\Gamma_-)}^2.
\]
\end{proof}

Since $t(y,\omega)=0$ a.e. in $\Gamma_-$ we see that $\gamma_-(L_-g)=g.$
When we verify (Lemma \ref{trathle1}) that
$\omega\cdot\nabla_x(L_-g)+\Sigma(L_-g)=0$ (weakly) in $G\times S\times I$ we can conclude that
$L_-g\in W^2(G\times S\times I)$. 

\begin{lemma}\label{trathle1}
Assume that $\Sigma\in L^\infty(G\times S\times I)$ and that
$\Sigma\geq 0$.
Let $L_{-}:T^2_{\tau_{-}}(\Gamma_{-})\to L^2(G\times S\times I)$
be defined by
\[
(L_{-} g)(x,\omega,E)=
e^{-\int_0^{t(x,\omega)}\Sigma(x-s\omega,\omega,E)ds} 
g(x- t(x,\omega)\omega,\omega,E).
\]
Then in the weak sense on $G\times S\times I$,
\[
\omega\cdot \nabla_x (L_- g)+\Sigma (L_- g)=0.
\]
\end{lemma}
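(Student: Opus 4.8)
The plan is to verify the defining identity of a weak solution,
\begin{equation*}
\int_{G\times S\times I}(L_- g)\,\big(-\omega\cdot\nabla_x v+\Sigma v\big)\,dx\,d\omega\,dE=0,\qquad v\in C_0^\infty(G\times S\times I^\circ),
\end{equation*}
after which, since $L_- g\in L^2(G\times S\times I)$ by Lemma \ref{le:lift}, this identity says exactly that $\omega\cdot\nabla_x(L_- g)=-\Sigma\,L_- g$ in $\mathcal{D}'(G\times S\times I)$; in particular $\omega\cdot\nabla_x(L_- g)\in L^2(G\times S\times I)$, which (as noted in the text following the lemma) also gives $L_- g\in W^2(G\times S\times I)$.

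First I would pass to characteristic (flow) coordinates. Fixing $(\omega,E)$, the change of variables $x=y+s\omega$ with $y$ ranging over $\{y\in\partial G:\omega\cdot\nu(y)<0\}$ and $0<s<\tau_-(y,\omega)$ — the one used in the proof of Theorem \ref{tth} (see Remark \ref{changevar}) — rewrites the integral of any $F\in L^1(G\times S\times I)$ as
\begin{equation*}
\int_{\Gamma_-}\Big(\int_0^{\tau_-(y,\omega)}F(y+s\omega,\omega,E)\,ds\Big)|\omega\cdot\nu(y)|\,d\sigma(y)\,d\omega\,dE .
\end{equation*}
This is legitimate because $L_- g\in L^2$ and $v,\ \omega\cdot\nabla_x v,\ \Sigma$ are bounded, so the integrand $(L_- g)(-\omega\cdot\nabla_x v+\Sigma v)$ lies in $L^1(G\times S\times I)$ and Fubini applies. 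Along the characteristic through $y$ one has $t(y+s\omega,\omega)=s$ and $y+s\omega-t(y+s\omega,\omega)\omega=y$, hence
\begin{equation*}
(L_- g)(y+s\omega,\omega,E)=\phi(s)\,g(y,\omega,E),\qquad \phi(s):=e^{-\int_0^s\Sigma(y+r\omega,\omega,E)\,dr},
\end{equation*}
where (since $\Sigma\in L^\infty$) the function $s\mapsto\phi(s)$ is absolutely continuous with $\phi'(s)=-\Sigma(y+s\omega,\omega,E)\phi(s)$ a.e., while $(\omega\cdot\nabla_x v)(y+s\omega,\omega,E)=\tfrac{d}{ds}\big[v(y+s\omega,\omega,E)\big]$.

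The heart of the matter is then a one-dimensional integration by parts in $s$. Abbreviating $v(s)=v(y+s\omega,\omega,E)$ and $\Sigma(s)=\Sigma(y+s\omega,\omega,E)$,
\begin{align*}
\int_0^{\tau_-(y,\omega)}\phi(s)\Big(-\tfrac{d}{ds}v(s)+\Sigma(s)v(s)\Big)ds
&=-\big[\phi(s)v(s)\big]_{s=0}^{s=\tau_-(y,\omega)}+\int_0^{\tau_-(y,\omega)}\big(\phi'(s)+\Sigma(s)\phi(s)\big)v(s)\,ds\\
&=0,
\end{align*}
because the boundary term vanishes ($v$ has compact support in $G\times S\times I^\circ$, while both $y$ and $y+\tau_-(y,\omega)\omega$ lie on $\partial G$, so $v(0)=v(\tau_-(y,\omega))=0$) and the remaining integrand is identically $0$ since $\phi'+\Sigma\phi=0$ a.e. Multiplying by $g(y,\omega,E)|\omega\cdot\nu(y)|$ and integrating over $(y,\omega,E)\in\Gamma_-$ yields the weak-solution identity, completing the proof.

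The main obstacle is essentially bookkeeping rather than depth: one must invoke the co-area/change-of-variables formula for $G\times S\times I$ established in the earlier sections (which already absorbs the null sets $N_0$ and $\Gamma_0$), justify Fubini through the $L^2$ bound of Lemma \ref{le:lift}, and keep in mind that $\phi$ is merely absolutely continuous (not $C^1$), since $\Sigma$ is only bounded, so that the integration by parts used is the one valid for absolutely continuous functions.
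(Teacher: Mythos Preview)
Your proof is correct and follows the same core strategy as the paper's---pass to characteristic coordinates and perform a one-dimensional integration by parts along each ray---but with two streamlining differences worth noting. First, the paper begins by reducing to $g\in C^1_0(\Gamma_-)$ via a density argument (using the $L^2$-continuity of $L_-$ from Lemma~\ref{le:lift}) before computing; you instead work directly with general $g\in T^2_{\tau_-}(\Gamma_-)$, which is legitimate because all the regularity needed for the integration by parts lives in $\phi$ (absolutely continuous, since $\Sigma\in L^\infty$) and in the test function $v$ (smooth), not in $g$. Second, the paper parametrizes $G$ via the orthogonal projection $G_\omega$ and explicitly decomposes each line $\{y+\tau\omega:\tau\in\R\}\cap G$ into its connected-component intervals $]a^i_{y,\omega},b^i_{y,\omega}[$, integrating by parts separately on each; you use the $\Gamma_-$-based change of variables from Remark~\ref{changevar}, which absorbs the non-convex case automatically since every such interval has its own inflow entry point on $\Gamma_-$. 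Your route is a little more economical; the paper's makes the handling of non-convexity more visible.
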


\begin{proof}
Given $g\in T^2_{\tau_-}(\Gamma_-)$,
choosing a sequence $g_n$ in $C^1_0(\Gamma_-)$
that converges to $g$ in $T^2_{\tau_-}(\Gamma_-)$
(the proof of the existence of this kind sequence is quite standard and is omitted),
we have  by the continuity of $L_-$ (see (\ref{trpr9})), that $L_-g_n\to L_- g$
in $L^2(G\times S\times I)$,
hence in $\mc{D}'(G\times S\times I^\circ)$, where $I^\circ:=]0,E_{\rm m}[$,
from which we deduce that
$\omega\cdot\nabla_x (L_-g_n)+\Sigma(L_-g_n)\to \omega\cdot\nabla_x (L_- g)+\Sigma (L_-g)$.
in $\mc{D}'(G\times S\times I^\circ)$.
This shows that we may assume $g\in C^1_0(\Gamma_-)$.

Let $\varphi\in C_0^\infty (G\times S\times I^\circ)$.
Then by Fubini's Theorem
\[
&-(\omega\cdot \nabla_x (L_- g))(\varphi)
=(L_- g)(\omega\cdot \nabla_x\varphi) \\
={}&\int_{G\times S\times I} e^{-\int_0^{t(x,\omega)}\Sigma(x-s\omega,\omega,E)ds} 
g(x-t(x,\omega)\omega,\omega,E)(\omega\cdot \nabla_x \varphi)(x,\omega,E)\diff x \diff \omega \diff E \\
={}&\int_{S\times I}\int_{G_{\omega}}\int_{J_{y,\omega}} e^{-\int_0^{t(y+\tau\omega,\omega)}\Sigma(y+(\tau-s)\omega,\omega,E)ds} 
g(y+\tau\omega-t(y+\tau\omega,\omega)\omega,\omega,E) \\
& \cdot\dif{\tau}\varphi(y+\tau\omega,\omega,E)\diff s\diff y\diff \omega\diff E \\
={}&\int_{S\times I}\int_{G_{\omega}}\int_{J_{y,\omega}} e^{-\int_{\tau-t(y+\tau\omega,\omega)}^\tau\Sigma(y+s\omega,\omega,E)ds} 
g(y+\tau\omega-t(y+\tau\omega,\omega)\omega,\omega,E) \\
& \cdot\dif{\tau}\varphi(y+\tau\omega,\omega,E)\diff \tau\diff y\diff \omega\diff E
\]
Here $G_\omega$ is the orthogonal projection of $G$ along $\omega$,
\[
G_\omega=\{x-(x\cdot\omega)\omega\ |\ x\in G\},
\]
which is an $(n-1)$-dimensional open submanifold of $\R^n$.
Moreover, $J_{y,\omega}$ is the intersection of $G$ and the straight line $y+\R\omega$,
\[
J_{y,\omega}=\{\tau\in\R\ |\ y+\tau\omega\in G\}.
\]
Notice that $J_{y,\omega}$ is open subset of $\R$,
hence disjoint union of countably many open intervals $J_{y,\omega}^i=]a_{y,\omega}^i,b_{y,\omega}^i[$, $i\in\N$
(we take $J_{y,\omega}^i=\emptyset$ for all big enough $i$ if there is only finitely many non-empty ones)
\[
J_{y,\omega}=\coprod_{i\in\N} J^i_{y,\omega}.
\]
Noticing also that for all $\tau\in J^i_{y,\omega}$,
\[
t(y+\tau\omega,\omega)=\tau-a^i_{y,\omega}
\]
we have
\[
&-(\omega\cdot \nabla_x (L_- g))(\varphi) \\
={}&\int_{S\times I}\int_{G_{\omega}}\sum_i \int_{J^i_{y,\omega}} e^{-\int_{a^i_{y,\omega}}^\tau\Sigma(y+s\omega,\omega,E)ds}g(y+a^i_{y,\omega}\omega,\omega,E) \dif{\tau}\varphi(y+\tau\omega,\omega,E)\diff \tau\diff y\diff \omega\diff E \\
={}&\int_{S\times I}\int_{G_{\omega}}\sum_i g(y+a^i_{y,\omega}\omega,\omega,E)\Big( e^{-\int_{a^i_{y,\omega}}^\tau\Sigma(y+s\omega,\omega,E)ds}\varphi(y+\tau\omega,\omega,E) \Big|_{\tau=a^i_{y,\omega}}^{\tau=b^i_{y,\omega}} \Big)\diff y\diff \omega\diff E \\
&+\int_{S\times I}\int_{G_{\omega}}\sum_i \int_{J^i_{y,\omega}} \Sigma(y+\tau\omega,\omega,E)e^{-\int_{a^i_{y,\omega}}^\tau\Sigma(y+s\omega,\omega,E)ds}  \\
&\cdot g(y+a^i_{y,\omega}\omega,\omega,E)\varphi(y+\tau\omega,\omega,E)\diff y\diff \omega\diff E \\
={}&0 + \int_{G\times S\times I} \Sigma(x,\omega,E)e^{-\int_0^{t(x,\omega)}\Sigma(x-s\omega,\omega,E)ds} \\
&\cdot g(x-t(x,\omega)\omega,\omega,E)\varphi(x,\omega,E)\diff x \diff \omega \diff E \\
={}&\big(\Sigma (L_-g)\big)(\varphi),
\]
where at the 3rd equality we used the fact that $\varphi$ vanishes on the boundary of $G$
and $y+a^i_{y,\omega}\omega\in\partial G$ and $y+b^i_{y,\omega}\omega\in\partial G$. This completes the proof.

\end{proof}

\begin{remark}
Note that if we assume that there exists $c>0$
such that $\Sigma\geq c$ on $G\times S\times I$,
then in the previous lemma one can take $G$ to
be unbounded as well.
\par
Analogously to Lemma \ref{trathle1} for any $g\in T^2_{\tau_+}(\Gamma_+)$
the weak solution of the problem 
\begin{align*}
\omega\cdot \nabla_x\psi+\Sigma\psi&=0,\\ 
\psi(y,\omega,E)&=g(y,\omega,E)\quad {\rm for}\ (y,\omega,E)\in \Gamma_+\ 
\end{align*}
is given by (note that $(y,\omega)\in\Gamma_-$ if and only if
$(y,-\omega)\in\Gamma_+$) 
\[
(L_+g)(x,\omega,E):=\psi(x,\omega,E)=e^{-\int_0^{t(x,-\omega)}\Sigma(x-s\omega,\omega,E)ds} g(x+t(x,-\omega)\omega,\omega,E).
\]
\end{remark}

For later use (section \ref{comp}) we also  treat the inhomogeneous convection-scattering equation with the homogeneous boundary data.
Suppose that
$f\in C(\ol G\times S\times I)$ such that ${\p f{x_j}}\in C(\ol G\times S\times I)$,
and let $\Sigma\in C(\ol G\times S\times I)$ such that 
${\p {\Sigma}{x_j}}\in C(\ol G\times S\times I),\ j=1,2,3$  .
Then  the unique (classical) solution of the equation
\be\label{trath7}
\omega\cdot \nabla_x\psi+\Sigma\psi=f\quad {\rm on}\ D,
\ee
satisfying the homogeneous inflow boundary condition
\be\label{trath8}
\psi(y,\omega,E)=0\quad {\rm for}\ (y,\omega,E)\in\Gamma_-,
\ee
is given by (cf. \cite{dautraylionsv6}, or \cite[Section 3.3]{tervo17-up})
\be \label{trath9}
\psi(x,\omega,E)=\int_0^{t(x,\omega)}e^{-\int_0^t\Sigma(x-s\omega,\omega,E)ds} f(x-t\omega,\omega,E) dt.
\ee

We need the next lemma.
\begin{lemma}
Assume that $G$ is bounded, $d:={\rm diag}(G)$,
$\Sigma\in L^\infty(G\times S\times I)$ and that
$\Sigma\geq 0$.
Then for any $f\in L^2(G\times S\times I)$ the formula (\ref{trath9})
defines $L^2(G\times S\times I)$-function $\psi=S_{\Sigma} f$, and 
\be\label{trath12}
\n{S_{\Sigma} f}_{L^2(G\times S\times I)}\leq d\n{f}_{L^2(G\times S\times I)}.
\ee
\end{lemma}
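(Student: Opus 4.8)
The plan is to exploit the sign condition $\Sigma\ge 0$: it makes the exponential weight in \eqref{trath9} bounded by $1$, so that $S_\Sigma f$ is dominated pointwise by the ``pure line integral'' of $|f|$ along $\omega$, which in turn is controlled by the diameter $d$ of $G$. First I would record two elementary facts. Since $G$ is bounded, $t(x,\omega)<\infty$ for every $(x,\omega)\in G\times S$, and since the segment from $x$ to $x-t(x,\omega)\omega$ lies in $\ol G$ we get $t(x,\omega)\le d$. Moreover $t(\cdot,\cdot)$ is lower semicontinuous, hence Borel measurable, so extending $f$ by zero to all of $\R^3\times S\times I$ (call the extension $\tilde f$), the integrand
\[
(x,\omega,E,t)\longmapsto e^{-\int_0^t\Sigma(x-s\omega,\omega,E)\,ds}\,\tilde f(x-t\omega,\omega,E)\,\chi_{\{0\le t\le t(x,\omega)\}}
\]
is jointly measurable, and by Tonelli's theorem $\psi=S_\Sigma f$ is a well-defined measurable function on $G\times S\times I$ with $\psi(x,\omega,E)=\int_0^d(\,\cdots\,)\,dt$.

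For the norm estimate I would use the generalized Minkowski inequality. Fixing $(\omega,E)$ and using $0\le e^{-\int_0^t\Sigma(x-s\omega,\omega,E)\,ds}\le 1$ together with $t(x,\omega)\le d$,
\[
\n{\psi(\cdot,\omega,E)}_{L^2(G)}\le\int_0^{d}\n{\tilde f(\cdot-t\omega,\omega,E)}_{L^2(\R^3)}\,dt=d\,\n{f(\cdot,\omega,E)}_{L^2(G)},
\]
the last equality being translation invariance of Lebesgue measure. Squaring and integrating over $(\omega,E)\in S\times I$ then gives $\n{\psi}_{L^2(G\times S\times I)}\le d\,\n{f}_{L^2(G\times S\times I)}$, which is \eqref{trath12}; in particular $\psi=S_\Sigma f\in L^2(G\times S\times I)$.

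An alternative that stays closer to the line-coordinate computations already used in the proof of Theorem \ref{tth} (cf.\ Remark \ref{changevar}) is the following. By Cauchy--Schwarz and $t(x,\omega)\le d$,
\[
|\psi(x,\omega,E)|^2\le t(x,\omega)\int_0^{t(x,\omega)}|f(x-t\omega,\omega,E)|^2\,dt\le d\int_0^{t(x,\omega)}|f(x-t\omega,\omega,E)|^2\,dt .
\]
Passing to coordinates $x=y+\tau\omega$ with $y\in G_\omega$ and $\tau\in J_{y,\omega}=\coprod_i\,]a^i_{y,\omega},b^i_{y,\omega}[$, using $t(y+\tau\omega,\omega)=\tau-a^i_{y,\omega}$ on the $i$-th interval, substituting $s=\tau-t$, and swapping the $s$- and $\tau$-integrations by Tonelli, the inner double integral becomes $\sum_i\int_{a^i_{y,\omega}}^{b^i_{y,\omega}}(b^i_{y,\omega}-s)\,|f(y+s\omega,\omega,E)|^2\,ds$, and each chord length $b^i_{y,\omega}-a^i_{y,\omega}$ is $\le d$; undoing the change of variables yields $\int_{G\times S\times I}\int_0^{t(x,\omega)}|f(x-t\omega,\omega,E)|^2\,dt\,dx\,d\omega\,dE\le d\,\n{f}_{L^2(G\times S\times I)}^2$, hence $\n{\psi}_{L^2}^2\le d^2\n{f}_{L^2}^2$ once more.

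There is no substantial obstacle here; the only points needing a little care are the joint measurability of the integrand (handled via lower semicontinuity of $t$ and Tonelli) and the zero-extension of $f$ that is needed to present $\psi$ as a genuine convolution-type integral along $\omega$ in the first approach. Both are routine given the material assembled in Section \ref{pre}.
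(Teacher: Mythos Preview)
Your proposal is correct. Your second alternative --- pointwise Cauchy--Schwarz followed by the line-coordinate change $x=y+\tau\omega$ --- is essentially the paper's own proof: the paper likewise applies Cauchy--Schwarz on each line and then bounds the chord length by $d$, only it parametrizes the lines via the $\Gamma_-$ coordinates of Remark~\ref{changevar} (with Jacobian $|\omega\cdot\nu|$) rather than the orthogonal-projection coordinates $G_\omega$ from Lemma~\ref{trathle1}. That difference is cosmetic; the estimates and the order-of-integration swap are the same.

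Your first approach, via the generalized Minkowski inequality and translation invariance of Lebesgue measure on $\R^3$, is a genuinely different and more economical route. By extending $f$ by zero and viewing $S_\Sigma f$ as an integral over $t\in[0,d]$ of translated copies of $|f|$, you bypass line coordinates entirely and obtain \eqref{trath12} in one stroke. The paper's argument, by contrast, remains inside the $\Gamma_-$ calculus already set up for the trace theory, so it dovetails with the surrounding computations (e.g.\ \eqref{trpr5-1}); your Minkowski argument is shorter but stands somewhat apart from that machinery.
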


\begin{proof}
We have
\[
&\n{S_{\Sigma} f}_{L^2(G\times S\times I)}^2
=\int_{G\times S\times I} \Big(\int_0^{t(x,\omega)}e^{-\int_0^t\Sigma(x-s\omega,\omega,E)ds} f(x-t\omega,\omega,E) dt\Big)^2 dx d\omega dE \\
={}&\int_{\Gamma_-} \int_0^{\tau_-(y,\omega)} \Big(\int_0^{t(y+r\omega,\omega)}e^{-\int_0^t\Sigma(y+(r-s)\omega,\omega,E)ds} f(y+(r-t)\omega,\omega,E) dt\Big)^2 dr |\omega\cdot\nu(y)|d\sigma(y) d\omega dE \\
={}&\int_{\Gamma_-} \int_0^{\tau_-(y,\omega)} \Big(\int_0^{r}e^{-\int_0^t\Sigma(y+(r-s)\omega,\omega,E)ds} f(y+(r-t)\omega,\omega,E) dt\Big)^2 dr |\omega\cdot\nu(y)|d\sigma(y) d\omega dE \\
\leq &\int_{\Gamma_-} \int_0^{\tau_-(y,\omega)} r\int_0^{r}|f(y+(r-t)\omega,\omega,E)|^2 dt dr |\omega\cdot\nu(y)|d\sigma(y) d\omega dE \\
={}&\int_{\Gamma_-} \int_0^{\tau_-(y,\omega)} r\int_0^{r}|f(y+t\omega,\omega,E)|^2 dt dr |\omega\cdot\nu(y)|d\sigma(y) d\omega dE \\
\leq &\int_{\Gamma_-} \tau_-(y,\omega)^2\int_0^{\tau_-(y,\omega)}|f(y+t\omega,\omega,E)|^2 dt |\omega\cdot\nu(y)|d\sigma(y) d\omega dE,
\]
where we again in the second step applied the change of variables in integration explained in the proof of Theorem \ref{tth} below and
in the fourth step applied the Cauchy-Schwarz inequality.
Since $\tau_-(y,\omega)\leq d$,
this gives
\[
\n{S_{\Sigma} f}_{L^2(G\times S\times I)}^2
\leq {}& d^2\int_{\Gamma_-} \int_0^{\tau_-(y,\omega)}|f(y+t\omega,\omega,E)|^2 dt|\omega\cdot\nu(y)|d\sigma(y) d\omega dE \\
={}&d^2\n{f}_{L^2(G\times S\times I)}^2.
\]
\end{proof}

More generally, we have the following.

\begin{lemma}\label{trathle2}
Assume that $\Sigma\in L^\infty(G\times S\times I)$ and that
$\Sigma\geq 0$.
Then $\psi$ defined by \eqref{trath9} satisfies
weakly in $G\times S\times I$,
\be\label{trath11}
\omega\cdot \nabla_x \psi+\Sigma\psi=f,
\ee
and the inflow boundary condition (\ref{trath8}) is valid.
\end{lemma}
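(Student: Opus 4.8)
The plan is to mirror the proof of Lemma \ref{trathle1}. First I would reduce to smooth $f$. Given $f\in L^2(G\times S\times I)$, pick $f_n\in C_0^1(\R^3\times S\times\R)$ (restricted to $G\times S\times I$) with $f_n\to f$ in $L^2(G\times S\times I)$; by the bound \eqref{trath12} the operator $S_\Sigma$ is continuous on $L^2(G\times S\times I)$, so $\psi_n:=S_\Sigma f_n\to S_\Sigma f=\psi$ in $L^2(G\times S\times I)$, hence in $\mc{D}'(G\times S\times I^\circ)$. Since $\omega\cdot\nabla_x$ and multiplication by $\Sigma\in L^\infty(G\times S\times I)$ are continuous on $\mc{D}'(G\times S\times I^\circ)$, it follows that $\omega\cdot\nabla_x\psi_n+\Sigma\psi_n\to\omega\cdot\nabla_x\psi+\Sigma\psi$ there. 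Hence it suffices to prove the weak identity $\omega\cdot\nabla_x\psi+\Sigma\psi=f$ when $f$ is (the restriction of) a $C_0^1$-function.

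For such $f$ I would test against $\varphi\in C_0^\infty(G\times S\times I^\circ)$ and compute, by the definition of the distributional derivative, $-(\omega\cdot\nabla_x\psi)(\varphi)=\psi(\omega\cdot\nabla_x\varphi)$. Applying Fubini's theorem and the same change of variables as in the proof of Lemma \ref{trathle1}, write $x=y+\tau\omega$ with $y\in G_\omega$ (the orthogonal projection of $G$ along $\omega$) and $\tau\in J_{y,\omega}=\coprod_{i\in\N}\,]a^i_{y,\omega},b^i_{y,\omega}[$. On each sub-interval one has $t(y+\tau\omega,\omega)=\tau-a^i_{y,\omega}$, and substituting this into \eqref{trath9}, together with the elementary change of integration variable $\sigma=\tau-t$ in the inner integral (and $s\mapsto\tau-s$ in the exponent), gives
\[
\psi(y+\tau\omega,\omega,E)=\int_{a^i_{y,\omega}}^{\tau}e^{-\int_\sigma^\tau\Sigma(y+s\omega,\omega,E)\,ds}f(y+\sigma\omega,\omega,E)\,d\sigma=:\Psi(\tau).
\]
The function $\Psi$ is absolutely continuous on $]a^i_{y,\omega},b^i_{y,\omega}[$, with $\Psi(a^i_{y,\omega})=0$ and $\Psi'(\tau)=f(y+\tau\omega,\omega,E)-\Sigma(y+\tau\omega,\omega,E)\,\Psi(\tau)$ for a.e.\ $\tau$.

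I would then integrate by parts in $\tau$ on each $]a^i_{y,\omega},b^i_{y,\omega}[$; the boundary terms vanish since $\varphi$ vanishes on $\partial G$ while $y+a^i_{y,\omega}\omega,\,y+b^i_{y,\omega}\omega\in\partial G$. Summing over $i$, integrating back in $y,\omega,E$, and reverting to the variable $x$, one obtains
\[
-(\omega\cdot\nabla_x\psi)(\varphi)=-\int_{G\times S\times I}\big(f(x,\omega,E)-\Sigma(x,\omega,E)\psi(x,\omega,E)\big)\varphi(x,\omega,E)\,dx\,d\omega\,dE,
\]
i.e.\ $\omega\cdot\nabla_x\psi+\Sigma\psi=f$ weakly; by the first paragraph this holds for all $f\in L^2(G\times S\times I)$. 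In particular $\omega\cdot\nabla_x\psi=f-\Sigma\psi\in L^2(G\times S\times I)$, so $\psi\in W^2(G\times S\times I)$ and the trace $\gamma_-(\psi)$ is well defined.

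It remains to check the inflow condition \eqref{trath8}. For continuous $f$, Lemma \ref{le:esccont:1} gives $t(x,\omega)\to0$ as $(x,\omega,E)\to(y_0,\omega_0,E_0)\in\Gamma_-$, and since the integrand in \eqref{trath9} is bounded, $\psi(x,\omega,E)\to0$; thus $\psi$ extends continuously by $0$ on $\Gamma_-$ and $\gamma_-(\psi)=0$. For general $f\in L^2(G\times S\times I)$, with $f_n\to f$ as above, the weak equation and \eqref{trath12} give $\psi_n\to\psi$ and $\omega\cdot\nabla_x\psi_n=f_n-\Sigma\psi_n\to f-\Sigma\psi=\omega\cdot\nabla_x\psi$ in $L^2(G\times S\times I)$, so $\psi_n\to\psi$ in $W^2(G\times S\times I)$; since $\gamma_-$ is continuous on $W^2(G\times S\times I)$ and $\gamma_-(\psi_n)=0$, we get $\gamma_-(\psi)=0$. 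The main obstacle is the careful bookkeeping in the characteristic change of variables and the termwise integration by parts over the countably many sub-intervals of $J_{y,\omega}$, precisely as in Lemma \ref{trathle1}; the remaining steps are routine density and continuity arguments.
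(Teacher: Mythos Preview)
Your proof is correct and follows essentially the same route as the paper: reduce to smooth $f$ via the bound \eqref{trath12}, pass to characteristic coordinates $x=y+\tau\omega$ over the sub-intervals $]a^i_{y,\omega},b^i_{y,\omega}[$, and integrate by parts in $\tau$ using that $\varphi$ vanishes at the endpoints. The only cosmetic difference is that the paper first applies Fubini to swap $dt\,d\tau\to d\tau\,dt$ before integrating by parts, whereas you recognize directly that $\Psi$ solves the ODE $\Psi'=f-\Sigma\Psi$; you also spell out the inflow boundary condition via a density argument, which the paper leaves implicit (relying on $t(y,\omega)=0$ on $\Gamma_-$).
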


\begin{proof}
Due to (\ref{trath12})  it suffices to show (\ref{trath11}) only for $f\in C^1(\ol G\times S\times I)$.
Using the notations from the  proof of Lemma \ref{trathle1},
for $\varphi\in C_0^\infty(G\times S\times I^\circ)$
we get by the Fubini's Theorem
\[
&-(\omega\cdot \nabla_x \psi)(\varphi)
=\psi(\omega\cdot\nabla_x\varphi) \\
={}&\int_{G\times S\times I} \psi(x,\omega,E) (\omega\cdot \nabla_x \varphi)(x,\omega,E)\diff x \diff \omega \diff E \\
={}&\int_{S\times I}\int_{G_{\omega}}\sum_i \int_{J^i_{y,\omega}} \psi(y+\tau\omega,\omega,E) \dif{\tau}\varphi(y+\tau\omega,\omega,E)\diff \tau\diff y\diff \omega\diff E \\
={}&\int_{S\times I}\int_{G_{\omega}}\sum_i \int_{J^i_{y,\omega}} \int_0^{\tau-a^i_{y,\omega}}e^{-\int_0^t\Sigma(y+(\tau-s)\omega,\omega,E)ds} f(y+(\tau-t)\omega,\omega,E)\\
&\cdot\dif{\tau}\varphi(y+\tau\omega,\omega,E) dt\diff \tau\diff y\diff \omega\diff E \\
={}&\int_{S\times I}\int_{G_{\omega}}\sum_i \int_{J^i_{y,\omega}} \int_{a^i_{y,\omega}}^\tau e^{-\int_0^{\tau-t}\Sigma(y+(\tau-s)\omega,\omega,E)ds} f(y+t\omega,\omega,E)\\
&\cdot\dif{\tau}\varphi(y+\tau\omega,\omega,E) dt\diff \tau\diff y\diff \omega\diff E \\
={}&\int_{S\times I}\int_{G_{\omega}}\sum_i \int_{J^i_{y,\omega}} f(y+t\omega,\omega,E) \int_t^{b^i_{y,\omega}} e^{-\int_{t}^\tau\Sigma(y+s\omega,\omega,E)ds} \\
&\cdot\dif{\tau}\varphi(y+\tau\omega,\omega,E) \diff \tau\diff t\diff y\diff \omega\diff E.
\]
In the last step, we changed the order of integration $dtd\tau\to d\tau dt$,
in which the domain of integration
\[
\{(\tau,t)\ |\ \tau\in J^i_{y,\omega}=]a^i_{y,\omega},b^i_{y,\omega}[,\ t\in ]a^i_{y,\omega},\tau[\}
\]
changes into
\[
\{(t,\tau)\ |\ t\in J^i_{y,\omega}=]a^i_{y,\omega},b^i_{y,\omega}[,\ \tau\in ]t,b^i_{y,\omega}[\}
\]
as usual.

Observing that
\[
&\int_t^{b^i_{y,\omega}} e^{-\int_{t}^\tau\Sigma(y+s\omega,\omega,E)\diff s}\cdot\dif{\tau}\varphi(y+\tau\omega,\omega,E) \diff \tau \\
={}&\Big(e^{-\int_{t}^\tau\Sigma(y+s\omega,\omega,E)\diff s}
\varphi(y+\tau\omega,\omega,E) \Big|_{\tau=t}^{\tau=b^i_{y,\omega}} \Big) \\
&+\int_t^{b^i_{y,\omega}} \Sigma(y+\tau\omega,\omega,E)e^{-\int_{t}^\tau\Sigma(y+s\omega,\omega,E)\diff s}
\varphi(y+\tau\omega,\omega,E) \diff \tau \\
={}&-\varphi(y+t\omega,\omega,E)+\int_t^{b^i_{y,\omega}} \Sigma(y+\tau\omega,\omega,E)e^{-\int_{t}^\tau\Sigma(y+s\omega,\omega,E)\diff s}
\varphi(y+\tau\omega,\omega,E) \diff \tau
,
\]
we obtain
\[
&-(\omega\cdot \nabla_x \psi)(\varphi) \\
={}&\int_{S\times I}\int_{G_{\omega}}\sum_i \int_{J^i_{y,\omega}} f(y+t\omega,\omega,E)\Big(-\varphi(y+t\omega,\omega,E) \\
&+ \int_{t}^{b^i_{y,\omega}} \Sigma(y+\tau\omega,\omega,E)e^{-\int_{t}^\tau\Sigma(y+s\omega,\omega,E)\diff s}
\varphi(y+\tau\omega,\omega,E) \diff \tau\Big)\diff t\diff y\diff \omega\diff E \\
={}&
-\int_{G\times S\times I}f(x,\omega,E)\varphi(x,\omega,E)\diff x\diff \omega\diff E \\
&+\int_{G\times S\times I} \Sigma(x,\omega,E)\int_0^{t(x,\omega)} e^{-\int_0^t\Sigma(x-s\omega,\omega,E)\diff s}f(x-t\omega,\omega,E)\diff t
\varphi(x,\omega,E)\diff x\diff \omega\diff E \\
={}&\int_{G\times S\times I} \big(-f(x,\omega,E)+\Sigma(x,\omega,E)\psi(x,\omega,E)\big)\varphi(x,\omega,E)\diff x\diff\omega\diff E
\]
which is what we set out to prove.

\end{proof}

Choosing especially $\Sigma=0, \ f=1$ in Lemma \ref{trathle2} we find that in the weak sense $\omega\cdot\nabla_x t=1$ in $G\times S\times I$.

We are now ready to prove the inflow trace theorem.
Since $\n{\omega}=1$ and since the domain $G$ is bounded we have
$\tau_{\pm}(y,\omega)\leq d$.
Hence from \cite[p. 252]{dautraylionsv6},  \cite{cessenat85} or \cite{choulli} (where the result is considered for a more general $G$) we obtain the following  theorem. For completeness we  give its detailed proof.

\begin{theorem}\label{tth}
The trace mappings 
\[
\gamma_{\pm}:W^2(G\times S\times I)\to T^2_{\tau_{\pm}}(\Gamma_{\pm})
\]
are (well-defined) bounded surjective operators with bounded right inverses $L_{\pm}:T^2_{\tau_{\pm}}(\Gamma_{\pm})\to
W^2(G\times S\times I)$ that is, $\gamma_{\pm}\circ L_{\pm}=I$ (the identity). The operators $L_{\pm}$ are called {\it lifts}.
\end{theorem}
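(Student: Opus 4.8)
The plan is to prove the boundedness of $\gamma_{\pm}$ by density and then to show that the lift operators $L_{\pm}$ defined via the explicit formula \eqref{trath3} (with $\Sigma=0$) serve as bounded right inverses. I will treat $\gamma_-$; the case $\gamma_+$ is entirely symmetric upon replacing $\omega$ by $-\omega$.

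\textbf{Step 1: A priori trace estimate on smooth functions.} First I would establish that for all $\psi\in C^1(\ol G\times S\times I)$ one has
\[
\n{\gamma_-(\psi)}_{T^2_{\tau_-}(\Gamma_-)}^2 \leq C\,\n{\psi}_{W^2(G\times S\times I)}^2 ,
\]
with $C$ depending only on $d=\mathrm{diag}(G)$. The idea is to integrate along characteristics: for $(y,\omega,E)\in\Gamma_-$ and $0<r<\tau_-(y,\omega)$ write
\[
\psi(y,\omega,E) = \psi(y+r\omega,\omega,E) - \int_0^r (\omega\cdot\nabla_x\psi)(y+s\omega,\omega,E)\,ds ,
\]
so that $\psi(y,\omega,E)^2 \leq 2\psi(y+r\omega,\omega,E)^2 + 2r\int_0^r |(\omega\cdot\nabla_x\psi)(y+s\omega,\omega,E)|^2\,ds$. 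Integrating this in $r$ over $]0,\tau_-(y,\omega)[$, multiplying by $|\omega\cdot\nu(y)|$, integrating over $\Gamma_-$, and then applying the change of variables from $\Gamma_-\times]0,\tau_-(y,\omega)[$ to $G\times S\times I$ (see Remark \ref{changevar} / the proof of Theorem \ref{tth} in the paper) converts the right-hand side into $C(\n{\psi}_{L^2}^2 + \n{\omega\cdot\nabla_x\psi}_{L^2}^2)$ with $C$ depending on $d$, which is the desired bound. Since $C^1(\ol G\times S\times I)$ is dense in $W^2(G\times S\times I)$ by Theorem \ref{denseth}, this extends $\gamma_-$ to a bounded operator $W^2(G\times S\times I)\to T^2_{\tau_-}(\Gamma_-)$, consistent with the locally-defined trace already discussed around \eqref{tthb}.

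\textbf{Step 2: The lift and the right-inverse property.} For $\Sigma=0$, define $L_- g$ by \eqref{trath3}, i.e. $(L_-g)(x,\omega,E)=g(x-t(x,\omega)\omega,\omega,E)$. By Lemma \ref{le:lift} (with $\Sigma=0$, equality case) one has $\n{L_-g}_{L^2(G\times S\times I)} = \n{g}_{T^2_{\tau_-}(\Gamma_-)}$, and by Lemma \ref{trathle1} (again with $\Sigma=0$) one has $\omega\cdot\nabla_x(L_-g)=0$ weakly in $G\times S\times I$, hence $\n{\omega\cdot\nabla_x(L_-g)}_{L^2}=0$ and thus $\n{L_-g}_{W^2(G\times S\times I)} = \n{g}_{T^2_{\tau_-}(\Gamma_-)}$; in particular $L_-:T^2_{\tau_-}(\Gamma_-)\to W^2(G\times S\times I)$ is bounded (indeed isometric onto its image). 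It remains to check $\gamma_-\circ L_- = I$: since $t(y,\omega)=0$ for a.e. $(y,\omega,E)\in\Gamma_-$ by Proposition \ref{prop-ex} (the extension $\ol t$ takes value $0$ on $\Gamma_-$), and $\gamma_-$ is the continuous extension of the classical restriction on the dense class $C^1_0(\Gamma_-)$ where $\gamma_-(L_-g)=g$ holds pointwise, a limiting argument gives $\gamma_-(L_-g)=g$ in $T^2_{\tau_-}(\Gamma_-)$ for all $g$. This simultaneously proves surjectivity of $\gamma_-$.

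\textbf{Main obstacle.} The routine but delicate point is the change-of-variables / coarea-type identity mapping $G\times S\times I$ to $\Gamma_-\times\{0<r<\tau_-(y,\omega)\}$ with Jacobian $|\omega\cdot\nu(y)|$, which is exactly what converts interior $L^2$-norms into boundary $T^2_{\tau_-}$-norms and vice versa; one must be careful that it holds almost everywhere, using that $N_0$ (equivalently $\Gamma_+\setminus(\Gamma_+)_c$) has measure zero (\cite[Theorem 3.8]{tervo14}, Proposition \ref{prop-ex}) and that $t$ is measurable. The paper defers this to ``the proof of Theorem \ref{tth} below,'' so I would make it the technical heart here: fix $\omega$, foliate $G$ by the lines $y+\R\omega$ with $y$ ranging over the orthogonal projection $G_\omega$, decompose each $J_{y,\omega}$ into its countably many component intervals $]a^i_{y,\omega},b^i_{y,\omega}[$ as in the proof of Lemma \ref{trathle1}, use that the left endpoint $y+a^i_{y,\omega}\omega$ lies on $\Gamma_-$ with $t(y+\tau\omega,\omega)=\tau-a^i_{y,\omega}$, and finally reassemble the $\omega$-integral; Fubini and the fact that $G_\omega$-integration pairs with $|\omega\cdot\nu(y)|\,d\sigma(y)$ along the boundary then yield the stated identity. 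Everything else reduces to the already-proven Lemmas \ref{le:lift} and \ref{trathle1} and to Theorem \ref{denseth}.
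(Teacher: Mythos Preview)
Your proposal is correct and follows essentially the same approach as the paper: the paper likewise derives the 1D estimate $T|f(0)|^2\leq 2(T^2\n{f'}_{L^2}^2+\n{f}_{L^2}^2)$ along each characteristic ray, converts it to the boundary estimate via the change of variables $x=h(v)+t\omega$ (a boundary parametrization, equivalent to your foliation by $G_\omega$), and defines $L_-$ by the $\Sigma=0$ case of \eqref{trath3}, invoking \eqref{trpr9} and Lemma~\ref{trathle1} for boundedness. The only cosmetic difference is that the paper packages the change of variables through an explicit parametrization $h:V\to\partial G$ rather than the projection $G_\omega$ you describe, but the Jacobian $|\omega\cdot\nu(y)|$ and the resulting identity \eqref{trpr5-1} are the same.
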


Below we use abbreviation $L:=L_-$ if no confusion is possible.

\begin{proof} 
A. For the first instance we recall an elementary estimate for (smooth) functions defined on an interval $[0,T]\subset\R$.  
Let $f\in C^1([0,T])$. Then for any $s\in [0,T]$
\[
f(0)=-\int_0^sf'(t)dt +f(s)
\]
which implies (by the Cauchy-Schwarz's inequality) that 
\be\label{trpr1}
|f(0)|^2\leq 
2\Big(s\int_0^s|f'(t)|^2dt+|f(s)|^2\Big)
\leq 2\Big(T\n{f'}^2_{L^2([0,T])}+|f(s)|^2\Big).
\ee
Hence integrating over $[0,T]$ we obtain the estimate
\be\label{trpr2}
T|f(0)|^2\leq 
2\Big(T^2\n{f'}^2_{L^2([0,T])}+\n{f}_{L^2([0,T])}^2\Big).
\ee 

B. We shall only deal with  the trace $\gamma_-$ because the treatment of the trace $\gamma_+$ is analogous. At first we show the boundedness of $\gamma_-$. Since $C^1(\ol G\times S\times I)$ is  dense in $W^2(G\times S\times I)$ (by Theorem \ref{denseth}) it suffices to prove that
there exists a constant $C>0$ such that
\be\label{trpr3}
\n{\gamma_-(\psi)}_{T^2_{\tau_-}(\Gamma_-)}\leq C\n{\psi}_{W^2(G\times S\times I)}\quad \forall \psi\in C^1(\ol G\times S\times I).
\ee

We apply the change of variables given in \cite[proof of Lemma 5.8]{tervo17-up} (see also \cite[Prop. 2.1]{choulli}). 
Assume for simplicity 
that $\partial G$ has a $C^1$-parametrization
(which is almost global), say $h:V\to\partial G\setminus \Gamma_1 $ where $\Gamma_1$ has zero surface measure. Generally we have a finite number of parametrized patches that cover $\partial G$.
Applying {\it for each fixed} $\omega$ the change of variables (in $x$-variable) $x=h(v)+t\omega=:H(v,t)$,
we find that the Jacobian  $J_H$ of $H$ is
\[
J_H(v,t)=\omega\cdot (\partial_1h\times\partial_2h)(v)=
\omega\cdot\nu(h(v))
\n{(\partial_1h\times\partial_2h)(v)},
\]
since $\nu(h(v))={{(\partial_1h\times\partial_2h)(v)}\over{\n{(\partial_1h\times\partial_2h)(v)}}}$.
We notice that $J_H(v,t)$ depends only on $v$ (and $\omega$), but not on $t$, and hence we write it as $J_H(v)$.
Moreover,
almost everywhere $H(W)=G$, where
$W:=\{(v,t)\ |\ v\in V_-,\ 0<t<\tau_-(h(v),\omega)\}$
and
$V_-:=\{v\in V\ |\ \omega\cdot\nu(h(v))< 0\}$ (which depend on $\omega$). 
Hence for any $\psi\in C^1(\ol G\times S\times I)$
\bea\label{trpr5}
&
\int_{G\times S\times I}|\psi(x,\omega,E)|^2 dx d\omega dE
=
\int_{S\times I}\Big(\int_G|\psi(x,\omega,E)|^2 dx \Big)d\omega dE\nonumber\\
={}&
\int_{S\times I}\int_W|\psi(H(v,t),\omega,E)|^2 |J_H(v,t)|dv dt d\omega dE\nonumber\\
={}&
\int_{S\times I}\int_{V_-}\int_0^{\tau_-(h(v),\omega)}|\psi(h(v)+t\omega,\omega,E)|^2 |J_H(v,t)|dtdv d\omega dE.
\eea

For a fixed $(v,\omega)\in V_-$ and $E\in I$ we apply (\ref{trpr2}) to the $C^1$-mapping $f:[0,\tau_-(h(v),\omega)]\to\R$ defined by
\[
f(t):=\psi(h(v)+\omega t,\omega,E).
\]
Noting that $f'(t)=(\omega\cdot\nabla_x\psi)(h(v)+\omega t,\omega,E)$
we obtain the estimate
\bea\label{trpr6}
\tau_-(h(v),\omega)|\psi(h(v),\omega,E)|^2
\leq {}&
2\Big(\tau_-(h(v),\omega)^2
\int_0^{\tau_-(h(v),\omega)}|(\omega\cdot\nabla_x\psi)(h(v)+\omega t,\omega,E)|^2 dt\nonumber\\
&
+
\int_0^{\tau_-(h(v),\omega)}|\psi(h(v)+\omega t,\omega,E)|^2 dt
\Big).
\eea

Utilizing these preliminaries and the fact that $\tau_-(h(v),\omega)\leq d$ we get  
\bea\label{trpr4}
&
\n{\gamma_-(\psi)}_{ T^2_{\tau_-}(\Gamma_-)}^2
=
\int_{\Gamma_-}|\psi(y,\omega,E)|^2\tau_-(y,\omega)|\omega\cdot\nu(y)| d\sigma  d\omega dE
\nonumber\\
={}&
\int_{S\times I}\int_{V_-}|\psi(h(v),\omega,E)|^2
\tau_-(h(v),\omega)|\omega\cdot\nu(h(v))|\ \n{(\partial_1h\times\partial_2h)(v)} dv  d\omega dE
\nonumber\\
={}&
\int_{S\times I}\int_{V_-}|\psi(h(v),\omega,E)|^2
\tau_-(h(v),\omega)|J_H(v)| dv  d\omega dE
\nonumber\\
\leq &
\int_{S\times I}\int_{V_-}
2\Big(d^2
\int_0^{\tau_-(h(v),\omega)}|(\omega\cdot\nabla_x\psi)(h(v)+\omega t,\omega,E)|^2 |J_H(v)|dt\nonumber\\
&\hspace{2cm}
+\int_0^{\tau_-(h(v),\omega)}|\psi(h(v)+\omega t,\omega,E)|^2|J_H(v)| dt
\Big)
\nonumber\\
={}&
2\Big(d^2\n{\omega\cdot\nabla_x\psi}^2_{L^2(G\times S\times I)}
+
\n{\psi}^2_{L^2(G\times S\times I)}\Big)
\eea
where we in the last step applied (\ref{trpr5}) to $\omega\cdot\nabla_x\psi$ and to $\psi$. This completes the boundedness claim of $\gamma_-$.

C. 
Next we prove the existence of the right inverses. Again we consider only the case  of $\gamma_-$. We choose $\Sigma=0$ in Lemma \ref{trathle1}
and define 
the right inverse $L_-:T_{\tau_-}^2(\Gamma_-)\to W^2(G\times S\times I)$ by
\be\label{liftb}
L_-g:=g(x-t(x,\omega)\omega,\omega,E).
\ee
Then by (\ref{trpr9}) $L_-:
T^2_{\tau_{-}}(\Gamma_-)\to
W^2(G\times S\times I)$ is a well-defined  bounded linear operator, which is even isometric embedding (which follows since $\Sigma=0$).
This completes the proof.
\end{proof}

\begin{remark}\label{changevar}
From the proof of the previous lemma we get the following useful formulas (when the integrals exist)
\be 
\int_{\Gamma_-}g(y,\omega,E)d\sigma d\omega dE
=
\int_{S\times I}\int_{V_-}g(h(v),\omega,E) \n{(\partial_1h\times\partial_2h)(v)}dv  d\omega dE,
\ee
where $V_-=\{v\in V\ |\ \omega\cdot\nu(h(v))< 0\}$ (which depends on $\omega$),
and (see (\ref{trpr5}))
\bea\label{trpr5-1}
&
\int_{G\times S\times I}|\psi(x,\omega,E)|^2 dx d\omega dE\nonumber\\
={}&
\int_{S\times I}\int_{V_-}\int_0^{\tau_-(h(v),\omega)}|\psi(h(v)+t\omega,\omega,E)|^2 |\omega\cdot\nu(h(v))|\n{(\partial_1h\times\partial_2h)(v)}dt dv d\omega dE\nonumber\\
={}&
\int_{\Gamma_-}\int_0^{\tau_-(y,\omega)}|\psi(y+t\omega,\omega,E)|^2 |\omega\cdot\nu(y)| dt d\sigma(y) d\omega dE.
\eea
\end{remark}

\begin{remark}\label{wl}
A. For any compact set $K\subset\Gamma_-$ we have $\tau_-(y,\omega)\geq c_K>0$ for all $(y,\omega,E)\in K$ and so the estimate (\ref{ttha}) follows from Theorem \ref{tth}.

B. 
The formula (\ref{liftb}) gives the lift $L_-g$ explicitly which is useful e.g. in numerical computations. Note that the lift is not unique (for example, we are able to define $L_-=L_{\Sigma,-}$ for any $\Sigma$ given above).

Analogously to $L_-$ the (isometric) lift $L_+:T_{\tau_+}^2(\Gamma_+)\to W^2(G\times S\times I)$  can be chosen to be
\[
(L_+g)(x,\omega,E):= g(x+t(x,-\omega)\omega,\omega,E).
\]

C.
We have for any $w\in L^2(G\times S\times I)$ and $g\in T^2(\Gamma_-)$
(as in (\ref{trpr5})
\bea
&
\la L_-g,w\ra_{L^2(G\times S\times I)}
=
\int_{G\times S\times I}g(x-t(x,\omega)\omega,\omega,E) w(x,\omega,E) dx d\omega dE\nonumber\\
&
=
\int_{S\times I}\int_{V_-}\int_0^{\tau_-(h(v),\omega)}g(h(v)+s\omega-t(h(v)+s\omega,\omega)\omega,\omega,E) w(h(v)+s\omega,\omega,E)|J_H(v,s)|ds dv  d\omega dE\nonumber\\
&
=
\int_{S\times I}\int_{V_-}g(h(v),\omega,E) (L_-^*w)(h(v),\omega,E)
\tau_-(h(v),\omega)|\omega\cdot\nu(h(v))|\n{\partial_1h\times\partial_2h} dv  d\omega dE\nonumber\\
&
=\la g,L_-^*w\ra_{T_{\tau_-}^2(\Gamma_-)},
\eea
where we noticed that $t(h(v)+s\omega,\omega)=s$ and defined
\be\label{liftad}
(L_-^*w)(y,\omega,E):={1\over{\tau_-(y,\omega)}}\int_0^{\tau_-(y,\omega)}w(y+s\omega,\omega,E)ds.
\ee
Hence $L_-^*$ is the adjoint of the  operator $L_-:T_{\tau_-}^2(\Gamma_-)\to L^2(G\times S\times I)$.
Applying the preceding computations we find that 
\[
\n{L_-^*w}_{T_{\tau_-}^2(\Gamma_-)}\leq \n{w}_{L^2(G\times S\times I)}.
\]

D.  
Interpreting $\gamma_-$ as a densely defined linear operator
$L^2(G\times S\times I)\to T^2_{\tau_-}(\Gamma_-)$ we recall that the adjoint $\gamma_-^*:T^2_{\tau_-}(\Gamma_-)\to L^2(G\times S\times I)$ exists and it is defined by
\[
\la\gamma_-(\psi),g\ra_{T_{\tau_-}^2(\Gamma_-)}
=\la\psi,\gamma_-^*(g)\ra_{L^2(G\times S\times I)}
\ {\rm for}\ \psi\in W^2(G\times S\times I),\ g\in D(\gamma_-^*). 
\]

The knowledge of adjoints is significant in the theory of existence of solutions. For example, the Fredholm alternative theorem  is essential in the cases where the unique solution does not exist. To demonstrate that, consider  (for simplicity) the problem (\ref{intro1}), 
(\ref{intro2}). Let  $T=(T_1,T_2,T_3)$ be the transport operator related to the problem  that is,
$T_j\psi:=\omega\cdot\nabla_x\psi_j+\Sigma_j\psi_j-K_j\psi_j$.
Then the transport problem reads
\be 
A\psi:=\qmatrix{T\cr \gamma_-\cr}\psi=\qmatrix{f\cr g\cr}
\ee
where $A$ is interpreted as a densely defined closed operator
$L^2(G\times S\times I)^3\to L^2(G\times S\times I)^3\times T_{\tau_-}^2(\Gamma_-)^3$ with $D(A):=W^2(G\times S\times I)^3$.
We find that the adjoint $A^*$ of $A$ is a densely defined operator 
$L^2(G\times S\times I)^3\times T_{\tau_-}^2(\Gamma_-)^3\to L^2(G\times S\times I)^3$ given by
\be 
A^*\qmatrix{u\cr v\cr}:=\qmatrix{T^*& \gamma_-^*\cr}
\qmatrix{u\cr v\cr}\ {\rm for}\ \qmatrix{u\cr v\cr}\in D(A^*) 
\ee
where $T^*=(T_1^*,T_2^*,T_3^*)$ with $T_j^*u:=-\omega\cdot\nabla_x u_j+\Sigma_j u_j-K_j^*u$ (the operator $K_j^*$ is given in section \ref{adjoint}).
Supposing that $A$ is a Fredholm operator we have $R(A)=N(A^*)^\perp$. Hence a necessary and sufficient condition for the existence of solutions of
$A\psi=\qmatrix{f\cr g\cr}$ is the orthogonality criterion 
\be 
\la (f,g),(u,v)\ra_{ L^2(G\times S\times I)^3\times 
T_{\tau_-}^2(\Gamma_-)^3}=0
\ee 
for all $(u,v)\in N(A^*)$ that is, for all  $(u,v)\in D(A^*)\subset
L^2(G\times S\times I)^3\times 
T_{\tau_-}^2(\Gamma_-)^3
$ which obey
\[
T^*u+\gamma_-^*v=0.
\]

Under the due assumptions the transport operator $T$ is (after an appropriate change of variables) a pseudo-differential operator (actually only the term $K$ needs careful treatment).
Hence the application of the pseudo-differential boundary value operator calculus  is possible.
We omit here the details of such approaches.

E. For $(y,\omega,E)\in\Gamma_+$ it is reasonable to set 
(cf. Proposition \ref{prop-ex})
\[
(L_-g)(y,\omega,E):=g(y-\tau_+(y,\omega)\omega,\omega,E),\quad g\in T^2_{\tau_-}(\Gamma_-).
\]
Let $\tau:\Gamma\to\R$ be defined by
 $\tau|_{\Gamma_-}=\tau_-$, $\tau|_{\Gamma_+}=\tau_+$ and $\tau|_{\Gamma_0}=0$. The space $T^2_\tau(\Gamma)$ is similarly defined as the spaces $T^2_{\tau_{\pm}}(\Gamma)$. 
From \cite[p. 253]{dautraylionsv6} (or \cite{cessenat85}) it follows that for $g\in T^2_{\tau}(\Gamma)$ 
there exists an element $\tilde\psi\in \tilde W^2(G\times S\times I)$ such that
\[
\gamma_-(\tilde\psi)=g_{|\Gamma_-}=:g_-\quad {\rm and}\quad 
\gamma_+(\tilde\psi)=g_{|\Gamma_+}=:g_+
\]
if and only if 
\[
(g-L_-(g_-))_{|\Gamma_+}\in L^2(\Gamma_+,\tau_+^{-1}(y,\omega)|\omega\cdot\nu(y)|d\sigma d\omega dE).
\]

G. It can also be shown that $\gamma_-:\tilde W_{-,0}^2(G\times S\times I)\to L^2(\Gamma_-,\tau_{-}^{-1}(y,\omega)|\omega\cdot\nu| d\sigma dE)$ is bounded (note again that for bounded $G$ we have $\tau_-(x,\omega)\leq d$) and that it has a bounded right inverse
$L_{-,0}: 
L^2(\Gamma_-,\tau_-^{-1}(y,\omega)|\omega\cdot\nu| d\sigma dE)\to \tilde W_{-,0}^2(G\times S\times I)$
(\cite{cessenat84} or \cite[p.252]{dautraylionsv6}). 
Similar result hold for $\gamma_+$.
\end{remark}

We still consider the following special case of the trace theory for an exterior of a {\it convex} bounded domain $G$.
Let $G_e$ be the complement (the exterior of $G$) $G_e:=\R^3\setminus \ol G$. Then $\partial G_e=\partial G$. Denote (as above for $G$)
\[
\Gamma_{e,+}:={}&\{(y,\omega,E)\in \partial G_e\times S\times I\ |\ \omega\cdot\nu_e(y)>0\}, \\[2mm]
\Gamma_{e,-}:={}&\{(y,\omega,E)\in \partial G_e\times S\times I\ |\ \omega\cdot\nu_e(y)<0\}, \\[2mm]
\Gamma_e:={}&\Gamma_{e,+}\cup\Gamma_{e,-}
\]
where $\nu_e$ is the unit outward pointing normal vector on $\partial G_e$.

We find that $\nu_e=-\nu$ and then $\Gamma_{e,\pm}=\Gamma_{\mp}$ and $\gamma_{e,\pm}(\psi):=\psi_{|\Gamma_{e,\pm}}=\gamma_{\mp}(\psi)$.
Furthermore, let 
$t_e(x,\omega)$ be the escape time mapping for the domain $G_e$ and let
for an element $(y,\omega,E)\in \Gamma_{e,-}$ (as above)
$\tau_{e,-}(y,\omega)=\inf\{s>0\ |\ y+s\omega\not\in G_e  \}.$
We observe that for the convex set $G$ actually $\tau_{e,-}(y,\omega)=\infty$ for all $(y,\omega,E)\in \Gamma_{e,-}$.

\begin{theorem}\label{tthcon}
The trace mappings 
\[
\gamma_{e,\pm}:W^2(G_e\times S\times I)\to T^2(\Gamma_{e,\pm})
\]
are (well-defined) bounded surjective operators with bounded right inverses (lifts) $L_{e,\pm}:  
T^2(\Gamma_{e,\pm})\to W^2(G_e\times S\times I)$.
\end{theorem}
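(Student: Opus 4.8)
The plan is to rerun the proof of Theorem \ref{tth} for the domain $G_e$, modifying only the two places where boundedness of the domain was used. First one records the facts peculiar to the exterior of a convex body: $\nu_e=-\nu$, hence $\Gamma_{e,\pm}=\Gamma_{\mp}$; for $(y,\omega,E)\in\Gamma_{e,-}$ the supporting hyperplane of the convex set $\ol G$ at $y$ shows $y+t\omega\notin\ol G$ for all $t>0$, so $\tau_{e,-}\equiv\infty$, and symmetrically $\tau_{e,+}\equiv\infty$; and for $(y,\omega,E)\in\Gamma_{e,-}$ the direction $-\omega$ enters $G$ at $y$, whence $t_e(y,\omega)=0$. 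Since $\partial G_e=\partial G$ is compact and $C^1$, the density of the $C^1$-class (Theorem \ref{denseth}) and the $L^2_{\mathrm{loc}}$-trace theory near $\partial G$ carry over to $G_e$ as in the references cited in the statement; it remains to upgrade the local trace to a bounded operator into $T^2(\Gamma_{e,\pm})$ and to construct bounded lifts. Note that once the lifts exist, surjectivity of $\gamma_{e,\pm}$ is automatic. As before it suffices to treat $\gamma_{e,-}$, the case $\gamma_{e,+}$ being analogous.

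For boundedness of $\gamma_{e,-}$, let $\psi\in C^1(\ol{G_e}\times S\times I)$ (a restriction of a function in $C_0^1(\R^3\times S\times\R)$). For $(y,\omega,E)\in\Gamma_{e,-}$ the segment $\{y+t\omega\ |\ 0<t<1\}$ lies in $G_e$ by convexity, so \eqref{trpr2} applied with $T=1$ to $f(t)=\psi(y+t\omega,\omega,E)$ gives
\[
|\psi(y,\omega,E)|^2\leq 2\Big(\int_0^1|(\omega\cdot\nabla_x\psi)(y+t\omega,\omega,E)|^2dt+\int_0^1|\psi(y+t\omega,\omega,E)|^2dt\Big).
\]
Multiplying by $|\omega\cdot\nu_e(y)|$ and integrating over $\Gamma_{e,-}$, I push the right-hand side through the change of variables $x=h(v)+t\omega$ (Jacobian $|\omega\cdot\nu_e(h(v))|\,\n{(\partial_1h\times\partial_2h)(v)}$, injective on $0<t<1$ because by convexity a line parallel to $\omega$ meets $\partial G$ in at most one point where $\omega\cdot\nu>0$). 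The image of that map is a unit-width collar of $\partial G$ sitting inside $G_e$, so the result is $\leq 2\big(\n{\omega\cdot\nabla_x\psi}^2_{L^2(G_e\times S\times I)}+\n{\psi}^2_{L^2(G_e\times S\times I)}\big)\leq 2\n{\psi}^2_{W^2(G_e\times S\times I)}$. Density then yields a bounded operator $\gamma_{e,-}:W^2(G_e\times S\times I)\to T^2(\Gamma_{e,-})$, with $\n{\gamma_{e,-}\psi}_{T^2(\Gamma_{e,-})}\leq\sqrt2\,\n{\psi}_{W^2(G_e\times S\times I)}$.

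For the right inverse, since $\tau_{e,-}\equiv\infty$ the non-absorbed lift \eqref{liftb} is constant along rays over the unbounded shadow of $G$ and fails to lie in $L^2$, so I damp it. Fix $\Sigma_0>0$ and set, for $g\in T^2(\Gamma_{e,-})$,
\[
(L_{e,-}g)(x,\omega,E):=e^{-\Sigma_0 t_e(x,\omega)}g\big(x-t_e(x,\omega)\omega,\omega,E\big)
\]
(with value $0$ when $t_e(x,\omega)=\infty$). By Lemma \ref{trathle1} together with the remark following it, which permits an unbounded domain as soon as the absorption is bounded below by a positive constant, applied with $G$ replaced by $G_e$ and cross-section $\Sigma_0$, one has $\omega\cdot\nabla_x(L_{e,-}g)+\Sigma_0 L_{e,-}g=0$ weakly on $G_e\times S\times I$. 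A change of variables as in Lemma \ref{le:lift} and Remark \ref{changevar}, using $t_e(h(v)+r\omega,\omega)=r$ on the shadow and $\int_0^\infty e^{-2\Sigma_0 r}dr=\tfrac1{2\Sigma_0}$, gives $\n{L_{e,-}g}^2_{L^2(G_e\times S\times I)}=\tfrac1{2\Sigma_0}\n{g}^2_{T^2(\Gamma_{e,-})}$, hence $\n{L_{e,-}g}^2_{W^2(G_e\times S\times I)}=\tfrac{1+\Sigma_0^2}{2\Sigma_0}\n{g}^2_{T^2(\Gamma_{e,-})}$, so $L_{e,-}$ is bounded into $W^2(G_e\times S\times I)$. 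Since $t_e(y,\omega)=0$ for a.e.\ $(y,\omega,E)\in\Gamma_{e,-}$, approximating $g$ by elements of $C^1_0(\Gamma_{e,-})$ and using the continuity of $\gamma_{e,-}$ and $L_{e,-}$ gives $\gamma_{e,-}\circ L_{e,-}=I$, and in particular $\gamma_{e,-}$ is onto. The lift $L_{e,+}$ is built identically from $t_e(x,-\omega)$ and $x+t_e(x,-\omega)\omega$ (the reversed lift of the remark after Lemma \ref{trathle1}), and the same argument applies.

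The substantive differences from Theorem \ref{tth} are just these: because $G_e$ is unbounded the cut-off at $\tau_-$ in the trace estimate must be replaced by a cut-off at a \emph{fixed} length — which is why the target here is the unweighted $T^2(\Gamma_{e,\pm})$ rather than a $\tau$-weighted space — and the lift must be damped by a strictly positive constant absorption to stay in $L^2$; convexity of $G$ is precisely what guarantees that a fixed-width collar of $\partial G$ lies inside $G_e$ and that the straightening map is injective there (equivalently $\tau_{e,\pm}\equiv\infty$). The step needing the most care is verifying that $L_{e,-}g\in W^2(G_e\times S\times I)$, i.e.\ that no surface distribution appears in $\omega\cdot\nabla_x(L_{e,-}g)$ along the silhouette of $G$; but this is already covered by Lemma \ref{trathle1} and its remark, the point being that $e^{-\Sigma_0 t_e}$ tends to $0$ as one approaches the silhouette, so that $L_{e,-}g$ extends continuously by $0$ across it.
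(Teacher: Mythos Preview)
Your proof is correct and follows the paper's strategy: exponentially damped lifts (the paper writes $\lambda$ where you write $\Sigma_0$, and arrives at the same norm identity $\n{L_{e,-}g}^2_{W^2}=\tfrac{1+\lambda^2}{2\lambda}\n{g}^2_{T^2}$) plus an adaptation of the trace estimate from Theorem \ref{tth}. For boundedness the paper uses the half-line estimate \eqref{trpr13} on $[0,\infty[$ (valid because the dense class consists of restrictions of compactly supported functions, so the boundary term at infinity vanishes), whereas you use \eqref{trpr2} on a unit collar; both work, and your version has the minor advantage of not needing decay at infinity.

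One correction to your closing remark: $e^{-\Sigma_0 t_e}$ does \emph{not} tend to $0$ at the silhouette. For $G=B(0,1)$ and $\omega=e_3$, on the shadow one has $t_e(x,\omega)=x_3-\sqrt{1-x_1^2-x_2^2}$, which tends to the finite value $x_3$ as $x_1^2+x_2^2\to 1^-$; so $L_{e,-}g$ is generally discontinuous across the silhouette. The actual reason no surface distribution appears in $\omega\cdot\nabla_x(L_{e,-}g)$ is that for each fixed $\omega$ the silhouette cylinder is \emph{parallel} to $\omega$: almost every line $y+\R\omega$ lies entirely on one side of it, and the directional derivative $\omega\cdot\nabla_x$ never crosses the jump. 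This is exactly what the proof of Lemma \ref{trathle1} exploits when it decomposes into the intervals $J^i_{y,\omega}$ --- along each such line, $L_{e,-}g$ is either identically zero or smooth on each connected component of the line's intersection with $G_e$, and those components are separated by $\ol G$ (where the test function $\varphi\in C_0^\infty(G_e\times S\times I^\circ)$ vanishes), not by an interior discontinuity. Your invocation of the lemma and its remark is correct; only the heuristic you attached to it is off.
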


\begin{proof} 
Again it needs only to consider  the trace operator $\gamma_-$.
The proof runs similarly to the proof of Theorem \ref{tth} with following changes. Instead of estimate (\ref{trpr2}) we utilize the inequality
\begin{multline}
|f(0)|^2=\Big|\int_0^\infty {d\over{dt}}(f(t)^2)dt\Big|
=\Big|\int_0^\infty 2f'(t)f(t)dt\Big|\\
\leq
2\Big(\int_0^\infty|f'(t)|^2dt\Big)^{1/2}
\Big(\int_0^\infty|f(t)|^2dt\Big)^{1/2}
\leq
\int_0^\infty|f'(t)|^2dt
+\int_0^\infty|f(t)|^2dt, \label{trpr13}
\end{multline}
which is valid for all $f\in C_0^1([0,\infty[)$ (note that $f(t)=0$ for sufficiently large $t$).

In addition, one applies the change of variables $H$ we have $H(W)=G_e$ where  $W:=\{(v,t)\ |\ v\in V_-,\ 0<t<\infty\}$ and $V_-:=\{v\in V\ |\ \omega\cdot\nu_e(h(v))<0\}$.

Let $\lambda>0$. For any $g_{e}\in C(\Gamma_{e,-})$ such that ${\p {g_{e}}{\tilde y_i}}\in C(\Gamma_{e,-})$
the (classical) solution $\Psi$ of the problem 
\be\label{exle1}
\omega\cdot\nabla_x\Psi+\lambda\Psi={}&0\quad {\rm on}\ D_e, \\
\Psi_{|\Gamma_{e,-}}={}&g_{e}, \nonumber
\ee
where $D_e$ is the set (\ref{D}) corresponding to $G_e\times S\times I$,
is given explicitly by (cf. (\ref{trath3}))
\be\label{exle2}
\Psi(x,\omega,E)=\begin{cases}
e^{-\lambda t_e(x,\omega)}g_{e}(x-t_e(x,\omega)\omega,\omega,E),\ &{\rm when}\ t_e(x,\omega)\ {\rm is\ finite}\\ 0,\ &{\rm otherwise}\end{cases}.
\ee
Similarly to the proof of Lemma \ref{trathle1} we find that (\ref{exle1}) holds weakly in $G_e\times S\times I$.
For $g_{e}\in T^2(\Gamma_{e,-})$  we define the lift  explicitly by
\be\label{trpr14}
L_{e,-}g_{e}:=\Psi.
\ee
Then as in the proof of Lemma 5.8 given in \cite{tervo17-up} we get that 
$L_{e,-}g_{e}\in \tilde W^2(G_e\times S\times I)$ and
(note that $\n{\omega\cdot\nabla_x\Psi}_{L^2(G_e\times S\times I)}=
\lambda\n{\Psi}_{L^2(G_e\times S\times I)}$)
\bea\label{trpr15}
\n{L_{e,-}g_{e}}_{W^2(G_e\times S\times I)}
={}&\n{\Psi}_{W^2(G_e\times S\times I)}=
\sqrt{1+\lambda^2}\n{\Psi}_{L^2(G_e\times S\times I)}
\nonumber\\
={}&
\sqrt{\frac{1+\lambda^2}{2\lambda}}\n{g_{e}}_{T^2(\Gamma_{e,-})},
\eea
since (cf. Remark \ref{changevar})
\[
\n{\Psi}^2_{L^2(G_e\times S\times I)}
={}&\int_{\Gamma_{e,-}} \int_0^{\tau_{e,-}(x,\omega)} \big(e^{-\lambda s}g_{e}(y,\omega,E)\big)^2 |\omega\cdot\nu(y)| ds d\sigma(y)d\omega dE \\
={}&\frac{1}{2\lambda}\int_{\Gamma_{e,-}} g_{e}(y,\omega,E)^2 |\omega\cdot\nu(y)| d\sigma(y)d\omega dE \\
={}&\frac{1}{2\lambda}\n{g_e}_{T^2(\Gamma_{e,-})}^2,
\]
where $\tau_{e,-}$ is $\tau_-$ for the domain $G_e\times S\times I$.
We omit further details.
\end{proof}

\begin{remark}\label{wla}
A. By (\ref{exle2}) one can show that
\[
L_{e,-}(g_{e})_{|\Gamma_{e,+}}=0,\quad \forall g_e\in T^2(\Gamma_{e,-}).
\]

B. The lift $L_{e,+}:T^2(\Gamma_{e,+})\to W^2(G_e\times S\times I)$ is given by 
\[
(L_{e,+}g_e)(x,\omega,E)=\begin{cases}e^{-\lambda t_e(x,-\omega)}g_{e}(x+t_e(x,-\omega)\omega,\omega,E),\ &{\rm when}\ t_e(x,-\omega)\ {\rm is\ finite},\\ 0,\ &{\rm otherwise}.\end{cases}
\]
We find that
\[
L_{e,+}(g_{e})_{|\Gamma_{e,-}}
=0,\quad \forall g_e\in T^2(\Gamma_{e,+}).
\]

C.
Let $g_e\in T^2(\Gamma_e)$ and let $g_{e,\pm}:={g_e}_{|\Gamma_{e,\pm}}$.
Then for
\[
\tilde{\Psi}:=L_{e,-}(g_{e,-})+L_{e,+}(g_{e,+})\in W^2(G_e\times S\times I),
\]
we find that $\tilde{\Psi}\in \tilde{W}^2(G_e\times S\times I)$ and
\[
\tilde\Psi_{|\Gamma_{e,\pm}}=g_{e,\pm}.
\]
(See Remark \ref{wl}, Part E.)
\end{remark}

As a corollary
we show the following extension result. In the proof we  explicitly
construct the {\it extension of} $\psi$ (cf. \cite[p. 415, proof of Lemma 2]{dautraylionsv6}). 

\begin{corollary}\label{exle}
Suppose that $G\subset\R^3$  is as above and that it is convex. Then for any $\psi\in \tilde W^2(G\times S\times I)$ there exists an extension ${\s E}\psi\in W^2(\R^3\times S\times I)$ of $\psi$ that is, ${\s E}\psi_{|G\times S\times I}=\psi$. In addition, the linear operator ${\s E}:\tilde W^2(G\times S\times I)\to W^2(\R^3\times S\times I)$ is bounded.
\end{corollary}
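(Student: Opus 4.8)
The plan is to extend $\psi$ by gluing it across $\partial G$ with a function defined on the exterior domain $G_e=\R^3\setminus\ol G$ whose boundary trace is \emph{equal} to that of $\psi$: a matching trace means the glued function has no jump across $\partial G$, hence no surface term in its distributional advection derivative $\omega\cdot\nabla_x$, and therefore it lies in $W^2(\R^3\times S\times I)$. Linearity of $\mc E$ will be evident from the construction, since every map involved (trace, the exterior lifts, restriction, gluing) is linear.

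\textbf{Step 1: the exterior piece.} Since $\psi\in\tilde W^2(G\times S\times I)$, its trace $\gamma(\psi)$ lies in $T^2(\Gamma)$. Because $\nu_e=-\nu$, $\Gamma_{e,\pm}=\Gamma_{\mp}$, and the weight $|\omega\cdot\nu_e|=|\omega\cdot\nu|$ is unchanged, $T^2(\Gamma_e)$ is the same Hilbert space as $T^2(\Gamma)$; let $g_e\in T^2(\Gamma_e)$ be the element corresponding to $\gamma(\psi)$ under this identification, i.e. $g_e|_{\Gamma_{e,-}}=\gamma_+(\psi)$ and $g_e|_{\Gamma_{e,+}}=\gamma_-(\psi)$, so that $g_e=\gamma(\psi)$ as a function on $\partial G\times S\times I$. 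Using the convexity of $G$, Theorem \ref{tthcon} and Remark \ref{wla} (Part C), set
\[
\tilde\Psi:=L_{e,-}(g_e|_{\Gamma_{e,-}})+L_{e,+}(g_e|_{\Gamma_{e,+}}).
\]
Then $\tilde\Psi\in\tilde W^2(G_e\times S\times I)$ and $\gamma_{e,\pm}(\tilde\Psi)=g_e|_{\Gamma_{e,\pm}}$, so $\gamma_e(\tilde\Psi)=g_e$ which is precisely $\gamma(\psi)$ on $\partial G\times S\times I$. Define $\mc E\psi$ to equal $\psi$ on $G\times S\times I$, $\tilde\Psi$ on $G_e\times S\times I$, and $0$ on the null set $\partial G\times S\times I$; then $\mc E\psi\in L^2(\R^3\times S\times I)$ and $\mc E\psi|_{G\times S\times I}=\psi$.

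\textbf{Step 2: $\omega\cdot\nabla_x(\mc E\psi)\in L^2$.} This is the heart of the matter. Fix $\varphi\in C_0^\infty(\R^3\times S\times I^\circ)$, split $-\la\mc E\psi,\omega\cdot\nabla_x\varphi\ra_{L^2(\R^3\times S\times I)}$ into the integrals over $G\times S\times I$ and $G_e\times S\times I$, and apply Green's formula \eqref{green} on $G$ and its analogue on $G_e$ (the unboundedness of $G_e$ is harmless, since $\varphi$ has compact support one may first intersect with a large ball, on whose added boundary $\varphi$ vanishes). Because the outward normals of $G$ and $G_e$ are opposite, the two boundary contributions combine into
\[
\int_{\partial G\times S\times I}(\omega\cdot\nu)\,\varphi\,\big(\gamma_e(\tilde\Psi)-\gamma(\psi)\big)\,d\sigma d\omega dE=0
\]
by Step 1. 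Hence $-\la\mc E\psi,\omega\cdot\nabla_x\varphi\ra_{L^2(\R^3\times S\times I)}=\la h,\varphi\ra_{L^2(\R^3\times S\times I)}$, where $h$ equals $\omega\cdot\nabla_x\psi$ on $G\times S\times I$ and $\omega\cdot\nabla_x\tilde\Psi$ on $G_e\times S\times I$; as both pieces are square-integrable, $h\in L^2(\R^3\times S\times I)$, so $\omega\cdot\nabla_x(\mc E\psi)=h$ and $\mc E\psi\in W^2(\R^3\times S\times I)$.

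\textbf{Step 3: boundedness.} By construction $\n{\mc E\psi}_{W^2(\R^3\times S\times I)}^2=\n{\psi}_{W^2(G\times S\times I)}^2+\n{\tilde\Psi}_{W^2(G_e\times S\times I)}^2$, while the boundedness of $L_{e,\pm}$ from Theorem \ref{tthcon} and the definition of the $\tilde W^2$-norm give
\[
\n{\tilde\Psi}_{W^2(G_e\times S\times I)}\leq C\n{g_e}_{T^2(\Gamma_e)}=C\n{\gamma(\psi)}_{T^2(\Gamma)}\leq C\n{\psi}_{\tilde W^2(G\times S\times I)},
\]
whence $\n{\mc E\psi}_{W^2(\R^3\times S\times I)}\leq C'\n{\psi}_{\tilde W^2(G\times S\times I)}$. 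The main obstacle is the distributional computation in Step 2 — verifying that the equality of the interior and exterior traces genuinely annihilates the surface contribution — together with the orientation bookkeeping ($\nu_e=-\nu$, $\Gamma_{e,\pm}=\Gamma_{\mp}$) needed so that the exterior trace is arranged to agree with, rather than oppose, the interior one; once that is set up, everything else follows directly from Theorem \ref{tthcon} and Remark \ref{wla}.
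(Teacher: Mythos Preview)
Your proof is correct and follows essentially the same approach as the paper's own proof: construct the exterior piece $\tilde\Psi$ via the lifts $L_{e,\pm}$ from Remark \ref{wla}, glue it to $\psi$, and use Green's formula on both domains so that the boundary terms cancel thanks to the matching traces and the sign flip $\nu_e=-\nu$. Your write-up is in fact slightly more explicit than the paper's about the orientation bookkeeping and the role of the compact support of $\varphi$ in handling the unbounded exterior, but the argument is the same.
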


\begin{proof}
Suppose that $\psi\in\tilde W^2(G\times S\times I)$.
Denote $g:=\psi_{|\Gamma}$, which belongs to $T^2(\Gamma)=T^2(\Gamma_{e})$.
Let $\tilde\Psi\in \tilde W^2(G_e\times S\times I)$ given in Remark \ref{wla}, Part C.
Define ${\s E}\psi$  by
\be\label{cg5}
{\s E}\psi:=
\begin{cases}
\psi\ & {\rm on}\ G\times S\times I\\ 
\gamma_{\pm}(\psi)=\gamma_{e,\mp}(\tilde\Psi)\ & {\rm on}\ \Gamma_\pm\\ 
\tilde\Psi\ & {\rm on}\ G_e\times S\times I
\end{cases}
\ee
Then ${\s E}\psi$ is in $W^2(\R^3\times S\times I)$. 
This follows from the Green's formula (\ref{green}) since for all $v\in C_0^\infty(\R^3\times S\times I^\circ)$
\bea
&\int_{\R^3\times S\times I}({\s E}\psi)\ (\omega\cdot\nabla_x v) dx d\omega dE \\
={}&
\int_{G\times S\times I}({\s E}\psi)\ (\omega\cdot\nabla_x v) dx d\omega dE\nonumber
+
\int_{G_e\times S\times I}({\s E}\psi)\ (\omega\cdot\nabla_x v) dx d\omega dE\nonumber\\
={}&
-\int_{G\times S\times I} (\omega\cdot\nabla_x \psi) v dx d\omega dE
+
\int_{\partial G\times S\times I}\gamma(\psi)\ \gamma(v) (\omega\cdot \nu)d\sigma d\omega dE\nonumber\\
&
-
\int_{G_e\times S\times I} (\omega\cdot\nabla_x \tilde\Psi) v dx d\omega dE
+
\int_{\partial G_e\times S\times I}\gamma_e(\tilde\Psi)\ \gamma_e(v) (\omega\cdot \nu_e)d\sigma d\omega dE\nonumber\\
={}&
-\int_{G\times S\times I} (\omega\cdot\nabla_x \psi) v dx d\omega dE
-\int_{G_e\times S\times I} (\omega\cdot\nabla_x \tilde\Psi) v dx d\omega dE
\eea
where we used the facts that $\partial G=\partial G_e$ and $\nu_e=-\nu$ and so $\gamma_{\pm}(\psi)=:g_{\pm}=g_{e,\mp}:=\gamma_{e,\mp}(\tilde\Psi)$. Hence $\omega\cdot\nabla_x ({\s E}\psi)\in L^2(\R^3\times S\times I)$, as desired.

Finally, we find that by (\ref{trpr15}) (recall Remark \ref{wla}, Part C.)  
\bea
&\n{{\s E}\psi}_{W^2(\R^3\times S\times I)}
=\n{\psi}_{W^2(G\times S\times I)}+
\n{\tilde\Psi}_{W^2(G_e\times S\times I)}\nonumber\\
\leq &
\n{\psi}_{W^2(G\times S\times I)}+\n{L_{e,-}(g_{e,-})}_{W^2(G_e\times S\times I)}+
\n{L_{e,+}(g_{e,+})}_{W^2(G_e\times S\times I)}\nonumber\\
={}&
\n{\psi}_{W^2(G\times S\times I)}+
\sqrt{\frac{1+\lambda^2}{2\lambda}}\Big(\n{\gamma_+(\psi)}_{T^2(\Gamma_{e,-})}+
\n{\gamma_-(\psi)}_{T^2(\Gamma_{e,+})}\Big)
\eea
which implies the boundedness of ${\s E}$.
This completes the proof.
\end{proof}

Let $\tilde {\bf W}^2(G\times S\times I)$ be the completion of $C^1(\ol G\times S\times I)$ with respect to $\n{\cdot}_{\tilde { W}^2(G\times S\times I)}$-norm. 
For a convex set $G\subset\R^3$ have the following density result.

\begin{corollary}\label{convg}
Suppose that $G\subset\R^3$  is as above and that it is \emph{convex}. Then
\be\label{cg1}
\tilde{{\bf W}}^2(G\times S\times I)=\tilde{W}^2(G\times S\times I),
\ee
and
\be\label{cg2}
H_2=
\tilde{W}^2(G\times S\times I)\cap W_1^2(G\times S\times I).
\ee
(Definitions of the spaces $W_1^2$, $\tilde{W}^2$ and $H_2$
were given in \eqref{fseq2}, \eqref{fs12} and \eqref{inph2}, respectively.)
\end{corollary}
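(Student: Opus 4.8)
Both equalities are density statements. Identity (\ref{cg1}) asserts that $C^1(\ol G\times S\times I)$ is dense in $\tilde W^2(G\times S\times I)$ for the norm $\n{\cdot}_{\tilde W^2(G\times S\times I)}$, and (\ref{cg2}) that it is dense in $\tilde W^2(G\times S\times I)\cap W_1^2(G\times S\times I)$ for the norm whose square is $\n{\cdot}_{\tilde W^2(G\times S\times I)}^2+\n{\p\psi E}_{L^2(G\times S\times I)}^2$, i.e.\ exactly the $H_2$-inner product (\ref{inph2}). The inclusions $\tilde{\bf W}^2(G\times S\times I)\subseteq\tilde W^2(G\times S\times I)$ and $H_2\subseteq\tilde W^2(G\times S\times I)\cap W_1^2(G\times S\times I)$ are the easy halves and use no convexity: a Cauchy sequence of $C^1$-functions in the relevant norm converges in $L^2$, and by closedness of the distributional operators $\omega\cdot\nabla_x$, $\p{}E$ and continuity of the weak traces $\gamma_\pm$ into $L^2_{\rm loc}$, the limit lies in the stated space with the expected derivatives and trace; in particular $\tilde W^2\cap W_1^2$ is complete for the $H_2$-norm, so ``completion of $C^1$'' means ``closure of $C^1$ inside it''. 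Thus only the reverse inclusions (density) carry content, and convexity enters there solely through Corollary \ref{exle} (the extension $\s E$ onto $\R^3$) and Theorem \ref{tthcon} (the exterior trace theorem, whose traces crucially land in the \emph{unweighted} space $T^2(\Gamma_e)=T^2(\Gamma)$).

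For (\ref{cg1}), given $\psi\in\tilde W^2(G\times S\times I)$ I would set $\Psi:={\s E}\psi\in W^2(\R^3\times S\times I)$ (Corollary \ref{exle}); after multiplying by a cutoff $\chi\in C_c^\infty(\R^3)$ with $\chi\equiv1$ near $\ol G$ one may assume $\Psi$ has compact support in $x$ without altering $\psi=\Psi|_{G\times S\times I}$, and by construction $\gamma_e\big(\Psi|_{G_e\times S\times I}\big)=\gamma(\psi)$ in $T^2(\Gamma)=T^2(\Gamma_e)$. Then regularize $\Psi$ by the standard Friedrichs mollification (cf.\ the proof of Theorem \ref{denseth}): first convolve in $x$ with $\phi_\varepsilon$ (this commutes with $\omega\cdot\nabla_x$, so the result tends to $\Psi$ in $W^2(\R^3\times S\times I)$ and has each $\p{}{x_i}$ in $L^2$), then mollify in $(\omega,E)$ using a rotational mollification on the compact sphere $S$ together with an even reflection in $E$ across $0$ and $E_{\rm m}$ followed by mollification, and finally cut off in $E$; this produces $\psi_n\in C_0^1(\R^3\times S\times\R)$ with $\psi_n\to\Psi$ in $W^2(\R^3\times S\times I)$, hence $\psi_n|_{G\times S\times I}\in C^1(\ol G\times S\times I)$. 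Restriction to $G$ gives $\psi_n|_G\to\psi$ in $W^2(G\times S\times I)$; restriction to $G_e$ gives $\psi_n|_{G_e}\to\Psi|_{G_e}$ in $W^2(G_e\times S\times I)$, whence by Theorem \ref{tthcon} the boundary values converge in $T^2(\Gamma_e)=T^2(\Gamma)$, and since each $\psi_n$ is continuous its inner and outer traces coincide, so $\gamma(\psi_n|_G)\to\gamma_e(\Psi|_{G_e})=\gamma(\psi)$. Therefore $\psi_n|_G\to\psi$ in $\tilde W^2(G\times S\times I)$.

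For (\ref{cg2}), let $\psi\in\tilde W^2(G\times S\times I)\cap W_1^2(G\times S\times I)$. I would first reduce to the case $\p\psi E\in\tilde W^2(G\times S\times I)$. By (\ref{sobim}) $\psi$ is continuous in $E$ with values in $L^2(G\times S)$, so its even reflection in $E$ across $0$ and $E_{\rm m}$ carries no jump term and lies in $\tilde W^2\cap W_1^2$ on the enlarged energy interval; mollifying this reflection in $E$ gives $\psi_\delta\in C^\infty(\ol I;\tilde W^2(G\times S))$, and since $\omega\cdot\nabla_x$ and $\gamma$ commute with convolution in $E$ one gets $\psi_\delta\to\psi$ in the $H_2$-norm, with $\p{\psi_\delta}E\in\tilde W^2(G\times S\times I)$. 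It therefore suffices to show $\psi_\delta\in H_2$, and for this I would rerun the argument of (\ref{cg1}) on $\psi_\delta$ while tracking the $E$-derivative: because $\p{\psi_\delta}E\in\tilde W^2(G\times S\times I)$ and the lift formulas defining $\s E$ depend on $E$ only through the function being lifted, one has $\p{}E({\s E}\psi_\delta)={\s E}\big(\p{\psi_\delta}E\big)\in W^2(\R^3\times S\times I)$ (differentiating the glued function in $E$ produces no singular term, the interface $\partial G\times S\times I$ being tangent to the energy direction), so the $x$- and $(\omega,E)$-mollifications converge to ${\s E}\psi_\delta$ not only in $W^2(\R^3\times S\times I)$ but also in the $\p{}E$-$L^2(\R^3\times S\times I)$ norm, while the $T^2(\Gamma)$-trace convergence is exactly as in (\ref{cg1}). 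Hence $\psi_\delta$ is an $H_2$-limit of functions in $C^1(\ol G\times S\times I)$, so $\psi_\delta\in H_2$, and letting $\delta\to0$ (and using that $H_2$ is closed in $\tilde W^2\cap W_1^2$) yields $\psi\in H_2$.

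The main obstacle is localizing the mollification near $\partial G$: regularizing a transport-Sobolev function in $x$ across the boundary requires its values on the exterior of $G$ together with a uniform trace bound, and there is no reflection across $\partial G$ preserving the transport structure. This is exactly where convexity is used: for convex $G$ the exterior domain $G_e$ has $\tau_{e,-}\equiv\infty$, so by Theorem \ref{tthcon} the exterior trace is bounded into the \emph{full} space $T^2(\Gamma_e)$ and not merely into a $\tau$-weighted space, which is what upgrades $W^2(G_e\times S\times I)$-convergence of the approximants to $T^2(\Gamma)$-convergence of their boundary values. The remaining technical care — that the $E$-derivative survives the extension $\s E$ and the regularizations — is dispatched by the reflect-then-mollify reduction in $E$ and by the tangency of $\partial G\times S\times I$ to the energy direction.
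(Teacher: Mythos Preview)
Your proof is correct and follows essentially the same strategy as the paper: extend $\psi$ to $\R^3$ via Corollary~\ref{exle}, approximate the extension by $C^1$-functions in $W^2(\R^3\times S\times I)$ (respectively $W^2\cap W_1^2$), and recover the $T^2(\Gamma)$-trace convergence from the exterior trace bound of Theorem~\ref{tthcon}. The paper's treatment of (\ref{cg2}) is terse (``quite similar''), and your preliminary $E$-mollification step---reducing to the case $\p{\psi}{E}\in\tilde W^2(G\times S\times I)$ so that the lifts in ${\s E}$ commute with $\p{}{E}$---makes explicit a point the paper leaves to the reader; otherwise the arguments coincide.
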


\begin{proof}
At first, we deal with the claim (\ref{cg1}). The inclusion $"\subset"$ is clear and then it suffices to prove only the opposite inclusion. 

Because $G$ is convex  we have by Theorem \ref{tthcon}
\be\label{trth}
\n{\gamma_{e}(\Psi)}_{T^2(\Gamma_{e})}\leq C\n{\Psi}_{W^2(G_e\times S\times I)}\quad \forall \Psi\in 
W^2(G_e\times S\times I).
\ee
Let $\psi\in\tilde W^2(G\times S\times I)$ and let 
${\s E} \psi\in 
 W^2(\R^3\times S\times I)
$ 
be its extension provided by Lemma \ref{exle}.
Since $C^1_0(\R^3\times S\times \R)$ is dense in $W^2(\R^3\times S\times I)$ there exists a sequence $\{\Psi_n\}\subset C^1_0(\R^3\times S\times \R)$ such that $\n{\Psi_n-{\s E}\psi}_{W^2(\R^3\times S\times I)}\to 0$ for $n\to\infty$. Let $\psi_n:={\Psi_n}_{|G\times S\times I}$. 
We have
\[
\n{\gamma(\psi_n-\psi)}_{T^2(\Gamma)}=
\n{\gamma_e(\Psi_n-{\s E}\psi)}_{T^2(\Gamma_e)}.
\]
Hence we get by (\ref{trth})
\bea\label{cg6}
\n{\psi_n-\psi}_{\tilde W^2(G\times S\times I)}^2
={}&
\n{\psi_n-\psi}_{ W^2(G\times S\times I)}^2+
\n{\gamma(\psi_n-\psi)}_{T^2(\Gamma)}^2\nonumber\\
\leq &
\n{\Psi_n-{\s E}\psi}_{W^2(G\times S\times I)}^2
+
C^2\n{\Psi_n-{\s E}\psi}_{W^2(G_e\times S\times I)}^2\nonumber\\
\leq &
\n{\Psi_n-{\s E}\psi}_{W^2(\R^3\times S\times I)}^2
+
C^2\n{\Psi_n-{\s E}\psi}_{W^2(\R^3\times S\times I)}^2,
\eea
which implies that $\psi\in \tilde {\bf W}^2(G\times S\times I)$. This completes the proof of (\ref{cg1}). 

Since $C^1_0(\R^3\times S\times I)$ is dense in $W_1^2(\R^3\times S\times I)$
the proof of (\ref{cg2}) is quite similar and so the proof is complete.  
\end{proof}

Finally, we notice that 
the following  continuous inclusions are valid
\begin{gather*}
\tilde W^2(G\times S\times I)\subset H; \\
\psi\mapsto (\psi,\gamma(\psi)),
\end{gather*}
and
\begin{gather*}
\tilde W^2(G\times S\times I)\cap W_1^2(G\times S\times I)\subset H_1;\\
\psi\mapsto (\psi,\gamma(\psi),\psi(\cdot,\cdot,0),\psi(\cdot,\cdot,E_m)).
\end{gather*}

\begin{remark}\label{fre1}
Let $\rho_1,\ \rho_2:I\to \R$ be positive (weight) functions in $L^\infty(I)$. We can define a linear space (more generally instead of $W^2(G\times S\times I)$) by
\bea\label{f16}
&
W^2_{\rho_1,\rho_2}(G\times S\times I)
=\{\psi\in L^2(G\times S\times I)\ |& & \rho_1 \omega\cdot\nabla_x \psi\in L^2(G\times S\times I), & \\
& & & \rho_2{\p {\psi}E}\in L^2(G\times S\times I)\},
\eea
which can be equipped with the inner product
\bea\label{f17}
\la\psi,v\ra_{W^2_{\rho_1,\rho_2}(G\times S\times I)}
={}&\la {\psi},v\ra_{L^2(G\times S\times I)}+
\la\rho_1\omega\cdot\nabla_x\psi,\rho_1\omega\cdot\nabla_x v\ra_{L^2(G\times S\times I)}\nonumber\\
&+\la\rho_2{\p {\psi}E},\rho_2{\p {v}{E}}\ra_{L^2(G\times S\times I)},
\eea
rendering $W^2_{\rho_1,\rho_2}(G\times S\times I)$ to a Hilbert space. 
Similar weighted spaces can be defined generalizing other spaces above.
These spaces are needed e.g. in the context of time-dependent transport equations (where $\rho_1=\rho_2=\sqrt{E}$).
\end{remark}

%%%%%%%%%%%%%%%%%%%%%%%%%%%%%%%%%%%%%%%%%%%%%%%%%%%%%
\sectionspace
\section{ On Hyper-singular Collision Operators Related to Dose Calculation}\label{coll}
%%%%%%%%%%%%%%%%%%%%%%%%%%%%%%%%%%%%%%%%%%%%%%%%%%%%%%

The differential cross-sections
may have singularities, or even hyper-singularities, which would lead to extra  pseudo-differential-like terms in the transport equation.
Instead of explaining systematically the underlying theory, the following slightly informal description suffices for the purposes of this work.

In the case where $\sigma(x,\omega',\omega,E',E)$ has hyper-singularities (like  M\o ller and Bremsstrahlung differential cross 
sections analysed below) the integral $\int_{S'}\int_{I'}$ occurring in the collision operator must be understood in the 
sense of {\it Cauchy principal value} ${\rm p.v.}\int_{S'}\int_{I'}$ or more generally in the sense of {\it Hadamard finite part integral} ${\rm p.f.}\int_{S'}\int_{I'}$ (\cite[Sec. 3.2]{hsiao},  \cite[pp. 104-105]{schwarz}, \cite{estrada}, sections 1.5 and 1.6).

Consider the following partial hyper-singular integral operator,
\[
(K\psi)(x,\omega,E)={\rm p.f.}\int_{I'}\int_{S'}\sigma(x,\omega',\omega,E',E)\psi(x,\omega',E')d\omega' dE'.
\]
The simplest case is where  $\sigma=\sigma_0(x,\omega',\omega,E',E)$ 
is a measurable non-negative function $G\times S\times S\times (I\times I\setminus D)\to\R$,
where $D=\{(E,E)\ |\ E\in I\}$ is the diagonal of $I\times I$,
obeying for $E\neq E'$ the estimates
\[
&
\esssup_{(x,\omega)}\int_{S'}\sigma_0(x,\omega',\omega,E',E)d\omega'\leq  {C\over{|E-E'|^\kappa}}, \\
&
\esssup_{(x,\omega)}\int_{S'}\sigma_0(x,\omega,\omega',E,E')d\omega'\leq  {C\over{|E-E'|^\kappa}},
\]
where $\kappa<1$, meaning that $\sigma_0(x,\omega',\omega,E',E)$ may have a so-called \emph{weak singularity} with respect to energy.
The corresponding collision operator 
\be 
(K\psi)(x,\omega,E)
=
\int_{I'}\int_{S'}\sigma_0(x,\omega',\omega,E',E)\psi(x,\omega',E')d\omega' dE',
\ee
is the usual partial Schur  integral operator that is, $\sigma_0(x,\omega',\omega,E',E)$ satisfies the Schur criterion for the boundedness 
and so $K$ is a  bounded operator $L^2(G\times S\times I)\to L^2(G\times S\times I)$ (see section \ref{rco}).  

Nevertheless, the  collision operator $K$ is not generally of the above form.
The $(E',E)$-dependence in differential cross section 
$\sigma(x,\omega'\omega,E',E)$ may contain hyper-singularities of higher order,
${1\over{(E'-E)}^m}$, for $m=1,2$. 
Moreover, the $(\omega',\omega,E',E)$-dependence in differential
cross-sections may contain Dirac's $\delta$-distributions (on $\R$). 
More precisely, in  $\sigma(x,\omega'\omega,E',E)$ there may occur terms
like $\delta(\omega\cdot\omega'-\mu(E',E))$ or $\delta(E-E')$ which require special treatment.
Below we shall consider in more detail the M\o ller scattering and Bremsstrahlung.

%%%%%%%%%%%%%%%%%%%%%%%%%%%%%%%%%%%%%%%%%%%%%%%%%%%%%%%%%%%%%%%%
\subsection{Some Tools from Analysis}\label{taylor-S}
%%%%%%%%%%%%%%%%%%%%%%%%%%%%%%%%%%%%%%%%%%%%%%%%%%%%%%%%%%%%%%%%

We recall the following standard concepts from analysis which we  frequently 
need.
The Taylor's expansion (of order $r\in\N_0$) for sufficiently smooth functions
$f:U\to\R$ on an open set $U\subset \R^N$,
\be\label{taylor}
f(x)=\sum_{|\alpha|\leq r}{1\over{\alpha !}}{{\partial^\alpha f}\over{\partial x^\alpha}}(x_0)(x-x_0)^\alpha + \sum_{|\alpha|=r+1}
R_\alpha(x)(x-x_0)^\alpha
\ee
where the residual term (one of its variant forms) is
\[
R_\alpha(x):={{|\alpha|}\over{\alpha !}}\int_0^1(1-t)^{|\alpha|-1}
{{\partial^\alpha f}\over{\partial x^\alpha}}(x_0+t(x-x_0))dt.
\]
Recall also 
the definitions of Hadamard finite part integrals for discontinuous functions $f:[a,b]\to\R$ by \cite{martin-rizzo}, pp. 5 and 32, formulas (14) and (32) therein or \cite[p. 104]{schwarz}.
Applying these definitions (for a fixed $x$) to the function $F_x(t):=\chi_{[x,b]}(t)f(t)$,
where $\chi_{[x,b]}(t)$ is the characteristic of the interval $[x,b]$, we have
\be \label{def-h1}
{\rm p.f.}\int_a^b{{F_x(t)}\over{t-x}}dt
={\rm p.f.}\int_x^{b}{{f(t)}\over{t-x}}dt
=\lim_{\epsilon\to 0}\Big(\int_{x+\epsilon}^{b}{{f(t)}\over{t-x}}dt
+f(x^+)\ln(\epsilon)\Big)
\ee
and
\be \label{def-h2}
{\rm p.f.}\int_a^b{{F_x(t)}\over{(t-x)^2}}dt
=&{\rm p.f.}\int_x^{b}{{f(t)}\over{(t-x)^2}}dt \nonumber \\
=&\lim_{\epsilon\to 0}\Big(\int_{x+\epsilon}^{b}{{f(t)}\over{(t-x)^2}}dt
+f'(x^+)\ln(\epsilon)-{1\over\epsilon}f(x^+)\Big).
\ee
These formulas give
\be\label{hada1}
{\rm p.f.}\int_x^{b}{1\over{t-x}}dt
=\ln(b-x),
\ee
\be\label{hada2}
{\rm p.f.}\int_x^{b}{1\over{(t-x)^2}}dt
=-{1\over{b-x}}.
\ee
Note that ${\rm p.f.}\int_x^{b}{{f(t)}\over{t-x}}dt$ is well-defined (at least) for all $f\in C^\alpha([a,b]),\ \alpha>0$ and (cf. \cite{chan})
\be\label{c-0-a}
{\rm p.f.}\int_x^{b}{{f(t)}\over{t-x}}dt
=\int_{x}^{b}{{f(t)-f(x)}\over{t-x}}dt+
f(x)\ln({b-x}) .
\ee

We need additionally the Taylor's expansion  for a sufficiently smooth function $f:S\to\R$.
Since $S$ is a manifold the expansion requires some explanation. The detailed presentation of the subject is outside of this paper and so we give only some essential technicalities. For complete formulations see e.g. \cite{carmo} or more concisely \cite{mukherjee}., pp. 185-187.

The second order Taylor's expansion of a function $f:S\to\R$ which is $C^3$ around $\omega\in S$ is of the form
\be\label{tay-ex}
f(\omega')=f(\omega)+\la(\nabla_\omega f)(\omega),v\ra +
({\rm Hes}_\omega f)(\omega)(v,v)+(R_\omega f)(\eta)(v,v,v),\ v\in T_\omega(S)
\ee
where $\nabla_\omega$ is the gradient on $S$, ${\rm Hes}_\omega f$ is the co-variant Hessian 2-tensor on $S$ and the residue $R_\omega f$  is a co-variant 3-tensor on $S$. In addition, there exists a constant $C\geq 0$ such that
\be\label{tay-app}
\n{v}\leq C\n{\omega'-\omega}.
\ee
Leaving the residue $R_\omega f$ away we get approximations for $f(\omega')$ near $\omega$.

The basic principle in deriving (\ref{tay-ex}) is to apply an appropriate pull-back $H_\omega:V\to U_\omega$
where $U_\omega\subset S$ and $V\subset T_\omega(S)$ open neighbourhoods such that $\omega\in U_\omega,\ 0\in V$. One assumes that $H_\omega$
is a sufficiently smooth diffeomorphism and so the (smooth) inverse mapping $H_\omega^{-1}:U_\omega\to V$ exists. Let $\omega'\in U_\omega$ and let
\[
H_\omega^{-1}(\omega')=v=\xi_1\Omega_1+\xi_2\Omega_2
\]
where
$\Omega_1=\Omega_1(\omega),\ \Omega_2=\Omega_2(\omega)$ are the local tangent vectors of $S$ at $\omega\in S$,
\[
\Omega_1={1\over{\sqrt{\omega_1^2+\omega_2^2}}}(-\omega_1,\omega_2,0),
\]
\[
\Omega_2=\big({{\omega_1\omega_3}\over{\sqrt{\omega_1^2+\omega_2^2}}},
{{\omega_2\omega_3}\over{\sqrt{\omega_1^2+\omega_2^2}}}.
-\sqrt{1-\omega_3^2}\big).
\]
One often chooses $H_\omega$ to be the (Riemannian geometry's) exponential mapping $H_\omega=\exp_\omega$. The pull-back obeys (locally)
\be\label{e4}
\n{H_\omega^{-1}(\omega')}=\n{v}\leq C\n{\omega'-\omega}.
\ee

The tangent space $T_\omega(S)$ can be isomorphically (and isometrically) identified with $\R^2$ by
\begin{align}\label{eq:J_iso}
v=\xi_1\ol\Omega_1+\xi_2\ol\Omega_2\sim_J (\xi_1,\xi_2)=:\xi,
\end{align}
where the isomorphism is $J$ that is, $\xi=J(v)$ and let $J(V)=V'\subset\R^2$.
Using this identification we find that the mapping $f\circ H_\omega:V'\to \R$ is well-defined (and as smooth as $f$).
Hence we can write the Taylor's expansion near $0\in\R^2$ 
\bea\label{t-ex}
&
(f\circ H_\omega)(v)=(f\circ H_\omega)(0)
+\sum_{j=1}^2\partial_j(f\circ H_\omega)(0)\xi_j
+{1\over 2}\sum_{i=1}^2\sum_{j=1}^2\partial_i\partial_j(f\circ H_\omega)(0)\xi_i\xi_j
\nonumber\\
&
+\sum_{|\alpha|=3}{{|\alpha|}\over{\alpha !}}\int_0^1(1-t)^2\partial_\xi^\alpha
((f\circ H_\omega)(t\xi)dt \cdot \xi^\alpha. 
\eea
By (\ref{e4})
\be\label{e7}
\n{\xi}=\n{v}\leq C\n{\omega'-\omega}.
\ee
Finally,  it can be shown that 
\be\label{e8}
\sum_{j=1}^2\partial_j(f\circ H_\omega)(0)\xi_j
=\la(\nabla_\omega f)(\omega),v\ra 
\ee
and 
\be\label{e9}
\sum_{i=1}^2\sum_{j=1}^2\partial_i\partial_j(f\circ H_\omega)(0)\xi_i\xi_j
=({\rm Hes}_\omega f)(\omega)(v,v)
\ee
and so (\ref{tay-ex}) can be seen.

\begin{remark}
The gradient $\nabla_\omega f$ on sphere $S$ can be shortly  depicted as follows (for $n=3$). 
Suppose that $f$ is defined and smooth in a neighbourhood of $S\subset\R^3$. Then
\be\label{gradS}
\nabla_\omega f=\la\nabla f,\Omega_1\ra\Omega_1+
\la\nabla f,\Omega_2\ra\Omega_2
\ee
where $\nabla f$ is the gradient of $f$ in the ambient space $\R^3$.
 
\end{remark}

The first and second order Taylor's expansions of $\psi$ with respect to $\omega$ around $\omega$ are
\[
\psi(x,\omega',E')\approx \psi(x,\omega,E')
+\la(\nabla_{\omega}\psi)(x,\omega,E'),v\ra 
\]
and 
\[
\psi(x,\omega',E')\approx \psi(x,\omega,E')
+\la(\nabla_{\omega}\psi)(x,\omega,E'),v\ra 
+({\rm Hes}_\omega \psi)(x,\omega,E')(v,v)
\]
where $v\in T_\omega(S)$ and satisfies by (\ref{e7})
\[
\n{v}\leq C\n{\omega'-\omega}.
\]

%%%%%%%%%%%%%%%%%%%%%%%%%%%%%%%%%%%%%%%%%%%%%%%%%%%%%%%%%%%%%%%%%%
\subsection{M\o ller Scattering}\label{sec:moller}
%%%%%%%%%%%%%%%%%%%%%%%%%%%%%%%%%%%%%%%%%%%%%%%%%%%%%%%%%%%%%%%%%%

As can be verified from Example \ref{ex:moller} below,
cross section $\sigma$ for the 
M\o ller interaction is of the form
\begin{multline*}
\sigma(x,\omega',\omega,E',E)
=\chi(E',E)\Big(
{1\over{(E'-E)^2}}\sigma_2(x,\omega',\omega,E',E)\\
+{1\over{E'-E}}\sigma_1(x,\omega',\omega,E',E)+\sigma_0(x,\omega',\omega,E',E)\Big)
\end{multline*}
where 
\[
\chi(E',E):=\chi_{\R_+}(E-E_0)\chi_{\R_+}(E_m-E)\chi_{\R_+}(E'-E).
\]
Here each of $\sigma_j(x,\omega',\omega,E',E)$, $j=0,1,2$ may contain the above mentioned $\delta$-distributions,
and hence they are not necessarily measurable functions on $G\times S\times S\times I\times I$.
Denote for $j=0,1,2$,
\[
(\ol {\s K}_j\psi)(x,\omega,E',E):={}&\int_{S'}\sigma_j(x,\omega',\omega,E',E)\psi(x,\omega',E') d\omega', \\[2mm]
(\widehat {\s K}_j\psi)(x,\omega,E',E):={}&\chi(E',E)(\ol {\s K}_j\psi)(x,\omega,E',E).
\]
Here the integral $\int_{S}$ is originally interpreted as a distribution. 
However, we shall find   that $\ol{\s K}_j$ is of the form 
\bea\label{eq:ol_s_K_22_j}
(\ol {\s K}_{j}\psi)(x,\omega,E',E)
=
{}&
\hat\sigma_{j}(x,E',E)
\int_{S'}
\delta(\omega'\cdot\omega - \mu(E',E))\psi(x,\omega',E')d\omega'
\nonumber\\
={}&\hat{\sigma}_{j}(x,E',E)\int_{0}^{2\pi}\psi(x,\gamma(E',E,\omega)(s),E')ds,
\eea
where
\begin{align}\label{eq:mu}
\mu(E',E):=\sqrt{{{E(E'+2)}\over{E'(E+2)}}}
\end{align}
and
$\gamma=\gamma(E',E,\omega):[0,2\pi]\to S$
is a parametrization of the curve
\[
\Gamma(E',E,\omega)=\{\omega'\in S\ |\ \omega'\cdot\omega-\mu(E',E)=0\}
\]
with constant speed
\[
\n{\gamma'(s)}=\sqrt{1-\mu(E',E)^2},\quad s\in [0,2\pi].
\]

For example,  we can choose
\be
\gamma(E',E,\omega)(s)=R(\omega)\big(\sqrt{1-\mu^2}\cos(s),\sqrt{1-\mu^2}\sin(s),\mu\big),\quad s\in [0,2\pi],
\ee
where $\mu=\mu(E',E)$, and $R(\omega)$ is any rotation (unitary) matrix which maps the vector $e_3=(0,0,1)$ into $\omega$.

The formula (\ref{eq:ol_s_K_22_j}) can be seen as follows.
Let $\eta_\epsilon\subset C_0^\infty(\R)$ be such that
$\lim_{\epsilon\to 0} \eta_\epsilon\to\delta$ in $H^{-1}(\R)$.
Then for $\psi\in C_0^\infty(G\times S\times I^\circ)$,
we have (by definition)
\begin{multline}\label{path-1}
\int_{S'}
\delta(\omega'\cdot\omega-\mu(E',E))\psi(x,\omega',E')d\omega'
=\lim_{\epsilon\to 0} 
\int_{S'}
\eta_\epsilon(\omega'\cdot\omega-\mu(E',E))\psi(x,\omega',E')d\omega'.
\end{multline}
For each $t\in [-1,1]$, let
\[
\gamma_t(s):=R(\omega)(\sqrt{1-t^2}\cos(s),\sqrt{1-t^2}\sin(s),t),\quad s\in [0,2\pi],
\]
and let $\Gamma_t$ be the curve (Jordan loop) corresponding to $\gamma_t$,
i.e. $\Gamma_t=\gamma_t([0,2\pi])$.
Then the (differential) surface measure $d\mu_S(\omega)=d\omega$ on $S$ can be disintegrated
into the family $\frac{1}{\sqrt{1-t^2}}d\ell_t\otimes dt$, with $t\in ]-1,1[$,
where $d\ell_t (s)=\n{\gamma_t'(s)}ds$ is the (differential) path length measure (on $\Gamma_t$) along $\gamma_t$
(note that the differential path length of the circle $S_1(0,1)$ is $\frac{1}{\sqrt{1-t^2}} dt$ when we use the parametrization $\alpha(t)=(t,\sqrt{1-t^2}),\ t\in ]-1,1[$ for $S(0,1)$). Noting that $\gamma_t(s)\cdot\omega=t$
we obtain using Fubini's Theorem (in disintegration sense),
\bea\label{path-2}
& 
\hspace{5mm} \int_{S'}
\eta_\epsilon(\omega'\cdot\omega-\mu(E',E))\psi(x,\omega',E')d\omega' \nonumber\\
&=
\int_{-1}^1\int_{\Gamma_t}
\eta_\epsilon(\gamma_t\cdot\omega-\mu(E',E))\psi(x,\gamma_t,E')\frac{1}{\sqrt{1-t^2}}d\ell_t\otimes dt
\nonumber\\
&=
\int_{-1}^1\int_{0}^{2\pi}
\eta_\epsilon(t-\mu(E',E))\psi(x,\gamma_t(s),E')\frac{1}{\sqrt{1-t^2}}\n{\gamma_t'(s)} dsdt
\nonumber\\
&=
\int_{-1}^1\int_{0}^{2\pi}
\eta_\epsilon(t-\mu(E',E))\psi(x,\gamma_t(s),E') dsdt
\nonumber\\
& \mathop{\longto}_{\epsilon\to 0}
\int_{-1}^1\int_{0}^{2\pi}
\delta(t-\mu(E',E))\psi(x,\gamma_t(s),E') dsdt
\nonumber\\
&=
\int_{0}^{2\pi} \psi(x,\gamma_{\mu(E',E)}(s),E')ds.
\eea
Hence combining (\ref{path-1}) and (\ref{path-2}) we get
\begin{align}\label{eq:path-3}
\int_{S'}
\delta(\omega'\cdot\omega-\mu(E',E))\psi(x,\omega',E')d\omega'
=
\int_{0}^{2\pi}
\psi(x,\gamma_{\mu(E',E)}(s),E')ds,
\end{align}
which implies the claim, since
$\gamma_{\mu(E',E)}(s)=\gamma(E',E,\omega)(s)$.

Combining the above treatments one can show that $K$  is of the
form 
\begin{multline}
({K}\psi)(x,\omega,E)
=
{\s H}_2\big((\ol {\s K}_2\psi)(x,\omega,\cdot,E)\big)(E) 
\\
+
{\s H}_1\big((\ol {\s K}_1\psi)(x,\omega,\cdot,E)\big)(E)
+\int_{I}(\widehat {\s K}_0\psi)(x,\omega,E',E) dE',
\label{co-bb}
\end{multline}
where ${\s H}_m$, $m=1,2$, are the {\it Hadamard finite part operators} with respect to $E'$-variable defined by
\[
({\s H}_m u)(E):={\rm p.f.}\int_{E}^{E_m}{1\over{(E'-E)^m}}u(E')dE'.
\]
The expression \eqref{co-bb} is the {\it hyper-singular integral form} of $K$.

Moreover, one can verify that \eqref{co-bb} can be equivalently given in the "\emph{pseudo-differential-like form}" by
\bea\label{co-cc}
({K}\psi)(x,\omega,E)
={}&
{\partial\over{\partial E}}\Big(
{\s H}_1\big((\ol{\s K}_2\psi)(x,\omega,\cdot,E)\big)(E)\Big)
-
{\s H}_1\big(({\p {(\ol{\s K}_2\psi)}E}(x,\omega,\cdot,E)\big)(E)
\nonumber\\
{}&
+{\partial\over{\partial E'}}\Big(
(\ol{\s K}_2\psi)(x,\omega,E',E)\Big)_{|E'=E}\nonumber\\
{}&
+
{\s H}_1\big((\ol{\s K}_1\psi)(x,\omega,\cdot,E)\big)(E)
+\int_{I}(\widehat{\s K}_0\psi)(x,\omega,E',E) dE'
\eea
where only ${\s H}_1$ appears. This formulation reveals the mathematical nature of charged particles' collision operators. 
We neglect the details but recall that the derivation of (\ref{co-cc}) founded on the use of the following lemma (which we here somewhat supplement).

\begin{lemma}\label{hadale}
Suppose that $f\in C^2([a,b]\times [a,b])$.  Then for $x\in [a,b]$
\be\label{ch-id}
{d\over{dx}}\Big({\rm p.f.}\int_x^{b}{{f(x,t)}\over{t-x}}dt\Big)
={\rm p.f.}\int_x^{b}{{f(x,t)}\over{(t-x)^2}}dt+ 
{\rm p.f.}\int_x^{b}{{{\p f{x}}(x,t)}\over{t-x}}dt
-{\p f{t}}(x,x).
\ee
\end{lemma}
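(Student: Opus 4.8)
The plan is to rewrite each of the two finite part integrals as an ordinary (absolutely convergent) integral plus elementary closed-form terms, and then simply to differentiate the first such representation. Write $f:=u$ for the function in \eqref{ch-id} and introduce, for $(x,t)\in[a,b]^2$,
\[
g(x,t):=\frac{f(t)-f(x)}{t-x},\qquad h(x,t):=\frac{f(t)-f(x)-f'(x)(t-x)}{(t-x)^2},
\]
extended across the diagonal by $g(x,x):=f'(x)$ and $h(x,x):=\tfrac12 f''(x)$. Using Taylor's formula \eqref{taylor} with integral remainder at $x_0=x$ one has the representations $g(x,t)=\int_0^1 f'(x+s(t-x))\,ds$ (order $r=0$) and $h(x,t)=\int_0^1 f''(x+s(t-x))(1-s)\,ds$ (order $r=1$), valid for all $(x,t)\in[a,b]^2$; since $f\in C^2$ these show that $g$ is of class $C^1$ and $h$ is continuous on $[a,b]^2$, and (by the quotient rule for $t\neq x$, extended by continuity) that $\partial_x g=h$ there.

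First I would establish, exactly as the identity \eqref{c-0-a} is obtained, the two formulas
\[
\pf\int_x^{b}\frac{f(t)}{t-x}\,dt=\int_x^{b} g(x,t)\,dt+f(x)\ln(b-x),
\]
\[
\pf\int_x^{b}\frac{f(t)}{(t-x)^2}\,dt=\int_x^{b} h(x,t)\,dt+f'(x)\ln(b-x)-\frac{f(x)}{b-x}.
\]
These follow by splitting $f(t)$ off its order-$0$ (resp. order-$1$) Taylor polynomial at $x$ inside the $\epsilon$-regularized integrals appearing in the definitions recalled just above the lemma, evaluating the elementary pieces by \eqref{hada1}, \eqref{hada2}, and letting $\epsilon\to 0$; here one uses that $f(x^+)=f(x)$ and $f'(x^+)=f'(x)$ because $f$ is continuously differentiable, and that $g(x,\cdot),h(x,\cdot)$ are continuous on $[x,b]$, so the remaining integrals are finite.

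Then I would differentiate the first formula with respect to $x$. Because the integrand $g$ is jointly $C^1$ on $[a,b]^2$ and the lower endpoint of integration is the variable $x$, the Leibniz rule gives $\frac{d}{dx}\int_x^{b} g(x,t)\,dt=-g(x,x)+\int_x^{b}\partial_x g(x,t)\,dt=-f'(x)+\int_x^{b} h(x,t)\,dt$, while $\frac{d}{dx}\bigl(f(x)\ln(b-x)\bigr)=f'(x)\ln(b-x)-f(x)/(b-x)$. Summing these and then replacing $\int_x^{b} h(x,t)\,dt$ by the expression obtained from the second formula above, the terms $f'(x)\ln(b-x)$ and $-f(x)/(b-x)$ cancel and precisely \eqref{ch-id} remains.

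The step I expect to be the main obstacle is the rigorous justification of the Leibniz rule with a variable endpoint: it rests on the joint $C^1$-regularity of $g$ on the closed square $[a,b]^2$ — equivalently, on the continuity of $\partial_x g=h$ up to the diagonal $t=x$ — and this is exactly where the hypothesis $f=u\in C^2([a,b])$ is used (the integral representations of $g$ and $h$ above are the clean way to see it). Everything else is elementary bookkeeping with the finite-part definitions and the explicit integrals \eqref{hada1}, \eqref{hada2}.
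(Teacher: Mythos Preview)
Your proposal is correct and follows essentially the same approach as the paper: represent each finite-part integral via Taylor expansion as an ordinary integral (your $g,h$ are exactly the paper's $\int_0^1 f'(x+s(t-x))\,ds$ and $\int_0^1(1-s)f''(x+s(t-x))\,ds$), differentiate the first representation with Leibniz, and compare with the second. Your explicit observation that $\partial_x g=h$ and your more careful justification of the Leibniz step make the write-up slightly cleaner than the paper's, but the argument is the same.
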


\begin{proof}
In virtue of the Taylor's formula and (\ref{hada1})
\bea\label{ch-id1}
I(x):={}&{\rm p.f.}\int_x^{b}{{f(x,t)}\over{t-x}}dt\nonumber\\
={}&
{\rm p.f.}\int_x^{b}{{f(x,x)+\big(\int_0^1{\p f{t}}(x,x+s(t-x))ds\big)(t-x)}\over{t-x}}dt\nonumber\\
={}&f(x,x)\ln(b-x)+\int_x^{b}\int_0^1{\p f{t}}(x,x+s(t-x))dsdt.
\eea
Hence
\bea\label{ch-id2}
&
I'(x)={\p f{x}}(x,x)\ln(b-x)
+{\p f{t}}(x,x)\ln(b-x)
-f(x,x){1\over{b-x}}-{\p f{t}}(x,x)\nonumber\\
&
+\int_x^{b}\int_0^1{{\partial^2 f}\over{\partial x\partial t}}(x,x+s(t-x))dsdt
+
\int_x^{b}\int_0^1{{\partial^2 f}\over{\partial t^2}}(x,x+s(t-x))(1-s)dsdt
\eea

On the other hand by the Taylor's formula and by (\ref{hada1}), (\ref{hada2}) 
\bea\label{ch-id3}
&{\rm p.f.}\int_x^{b}{{f(x,t)}\over{(t-x)^2}}dt\nonumber\\
={}&
{\rm p.f.}\int_x^{b}{1\over{(t-x)^2}}\Big[f(x,x)+{\p f{t}}(x,x)(t-x)+\big(\int_0^1(1-s){{\partial^2f}\over{\partial t^2}}(x,x+s(t-x))ds\big)(t-x)^2\Big]dt\nonumber\\
={}&-f(x,x){1\over{b-x}}+{\p f{t}}(x,x)\ln(b-x)+\int_x^{b}\int_0^1(1-s){{\partial^2f}\over{\partial t^2}}(x+s(t-x))dsdt\nonumber\\
={}&I'(x)-{\rm p.f.}\int_x^{b}{{{\p f{x}}(x,t)}\over{t-x}}dt +{\p f{t}}(x,x)
\eea
where we noticed that by the Taylor's formula
\[
{\p f{x}}(x,x)\ln(b-x)+
\int_x^{b}\int_0^1{{\partial^2f}\over{\partial x\partial t}}(x,x+s(t-x))dsdt
=
{\rm p.f.}\int_x^{b}{{{\p f{x}}(x,t)}\over{t-x}}dt.
\]
This completes the proof.
\end{proof}

\begin{remark}\label{psiDO}
We give the following observation which explains the above word "pseudo-differential-like".
The operators of the form
\be\label{r-1}
(Pu)(x,E):={}&
{\rm p.f.}\int_{E_0}^{E_m}{{\sigma_0(x,E,E')}\over{E'-E}}u(x,E,E')dE',\quad u\in C_0^\infty(G\times I^\circ\times I^\circ),
\ee
can be treated as in \cite[Chapter 7]{hsiao}. Note that in (\ref{r-1}) the integration is over the whole interval $[E_0,E_m]$.
Under relevant assumptions on $\sigma_0$, the operators (\ref{r-1}) can be shown to be pseudo-differential operators.
In particular, we recall that the {\it partial Hilbert transform} 
\[
(Hu)(x,E):={}&
{\rm p.f.}\int_{E_0}^{E_m}{{u(x,E,E')}\over{E-E'}}dE'
={\rm p.v.}\int_{E_0}^{E_m}{{u(x,E,E')}\over{E-E'}}dE'
\]
is a pseudo-differential operator with symbol $-{\rm i}\ {\rm sign}(\xi)$ where $\xi$ is the transform variable with respect to $E'$-variable.

Note that the Hadamard finite part operator ${\s H}_1$ introduced above  is of the form (we omit the $\omega$-variable)
\be\label{r-1-a}
({\s H}_1u)(x,E):={}&
{\rm p.f.}\int_{E}^{E_m}{{\sigma_0(x,E,E')}\over{E'-E}}u(x,E,E')dE',\quad u\in C_0^\infty(G\times I^\circ\times I^\circ).
\ee
The problematic feature in the expression of $({\s H}_1u)(x,E)$ is that the integration is over $[E,E_m]$.
Similar observations concern the operator ${\s H}_2$. 
In consistence with \cite{hsiao},
formal computations suggest that the expected pseudo-differential symbols of ${\s H}_j$, $j=1,2$, are
\be 
p_1(x,E,\xi)=
{\rm p.f.}\int_E^{\infty}{{\sigma_0(x,E,E')}\over{E'-E}} e^{{\rm i}(E'-E)\xi}dE'
=
{\rm p.f.}\int_0^{\infty}{{\sigma_0(x,E,E+z)}\over{z}} e^{{\rm i} z\xi}dz,
\ee
and
\be 
p_2(x,E,\xi)=
{\rm p.f.}\int_0^{\infty}{{\sigma_0(x,E,E+z)}\over{z^2}} e^{{\rm i} z\xi}dz,
\ee
respectively.
Careful analysis of these operators (whether e.g. they truly are  pseudo-differential operators) remains to our knowledge open.
\end{remark}

As a conclusion we see that some 
interactions produce the first-order partial derivatives
with respect to energy $E$ combined with the Hadamard part operator (which is
a pseudo-differential-like operator).
These problematic interactions
are the primary electron-electron, primary positron-positron collisions and Bremsstrahlung (see \eqref{coll-2} and Example \ref{ex:brems}).
The analysis of the above pseudo-differential-type terms
require further study which we omit here.  However, we mention that 
the pseudo-differential-like parts can be approximated 
by partial differential operators
(cf. section
\ref{a-psi}) to obtain pure partial differential operator approximations for collision operators.

\begin{remark}
Bremsstrahlung interaction  produces hyper-singularities to the collision operator as well.
It seems to us that the corresponding cross sections are somewhat obscured in literature but the following general level description can be given.
In certain cases, and as argued in Example \ref{ex:brems} below, the corresponding 
differential cross section for electrons is of the form
\begin{multline}
\sigma(x,\omega',\omega,E',E)
=
\chi(E',E)\Big({{\ln(E'-E)}\over{E'-E}}\sigma_3(x,\omega',\omega,E',E)\\
+{1\over{E'-E}}\sigma_2(x,\omega',\omega,E',E)+
\ln(E'-E)\sigma_1(x,\omega',\omega,E',E)
+\sigma_0(x,\omega',\omega,E',E)\Big)
\label{coll-2}
\end{multline}
where $\chi(E',E)$ is a product of characteristic functions.
Here each of $\sigma_j(x,\omega',\omega,E',E)$, $j=0,1,2,3$ is a measurable non-negative function on $G\times S^2\times I^2$. Actually they are of the more refined form
\be\label{mo-1}
\sigma_j(x,\omega',\omega,E',E)=\hat\sigma_j(x,E',E)\eta_j(\omega'\cdot\omega,E')
\ee
where  $\hat\sigma_j\in L^\infty(G\times I^2),\ \eta_j\in L^\infty([-1,1]\times I)$.
The analysis of related hyper-singular collision operators goes analogously to the M\ oller scattering. However, we remark that the singularities are weaker 
and the most singular operator is of the form 
\[
(\widetilde{\s H}_2u)(E):={\rm p.f.}\int_E^{E_m}{{\ln(E'-E)}\over{E'-E}}u(E') dE'.
\]
This hyper-singular operator is emerging from the cross section 
${{\ln(E'-E)}\over{E'-E}}\sigma_3(x,\omega',\omega,E',E)$ occurring in (\ref{coll-2}).
We omit further discussion of this case.

\end{remark}

\begin{remark}\label{notes-3}
 
At least in the existence and uniqueness analysis of solutions, it might be more fruitful to use the pseudo-differential-like expressions (like (\ref{co-cc})) of the exact transport equation.  
Nevertheless, the numerical methods might apply directly the hyper-singular partial integral equation $T\psi=f$. For instance, the Galerkin (discontinuous) finite element methods (FEM) are able to treat hyper-singular partial integral terms.  
These techniques are well-known e.g. in the field of
boundary element methods (BEM) where the hyper-singular
integral kernels are emerging from single and double layer potentials.
We remark that carefully chosen (special) numerical integration schemes, and the choice of basis functions are needed in 
computing element matrices for hyper-singular integral operators.
The applicability of these methods for the problem considered remains an open question.

Besides of the existence theory treated in this paper it is important to understand regularity of solutions of the considered transport equations in the {\it mixed-norm (anisotropic) Sobolev-Slobodevskij spaces} $H^s(G\times S\times I^\circ),\ s=(s_1,s_2,s_3)$.
Regularity is needed e.g. in approximation analysis and, in particular, in numerical analysis (e.g. FEM).
In the existence and regularity analysis the above derived pseudo-differential form-like expressions of collision operators might also be useful. Regularity results remain open as well.

\end{remark}

\begin{remark}\label{coll-n=2}
The example 2.28 in our earlier version \cite{tervo16-up} is somewhat 
erroneous since the partial derivatives ${\p {\xi_{\pm}}{E'}}(E,E,\omega)$ do 
not exist for the given $\mu$. We give its correction as follows.

When the spatial dimension $n=2$, we have (with $S=S_1$ the unit circle on $\R^2$)
\[
R(\omega)=\qmatrix{-\omega_2 & \omega_1\\ \omega_1 & \omega_2},
\]
\[
\ \gamma(E',E,\omega)(\pm 1)=R(\omega)
\qmatrix{\pm\sqrt{1-\mu(E',E)^2}\cr \mu(E',E)\cr}
=\mu(E',E)\omega\pm \sqrt{1-\mu(E',E)^2}\omega^\perp
\]
and so
\bea\label{kj-b}
(\ol {\s K}_{j}\psi)(x,\omega,E',E) 
={}&
\hat{\sigma}_{j}(x,E',E)\big(\psi(x,\mu(E',E)\omega+\sqrt{1-\mu(E',E)^2}\omega^\perp,E')\nonumber\\
&
+
\psi(x,\mu(E',E)\omega-\sqrt{1-\mu(E',E)^2}\omega^\perp,E')\big),
\eea
where $\omega^\perp:=(-\omega_2,\omega_1)$ (the tangent vector of the unit circle $S$ at $\omega$).

Denote
\[
\xi_{\pm}=
\xi_{\pm}(E',E,\omega):=\mu(E',E)\omega\pm\sqrt{1-\mu(E',E)^2}\omega^\perp.
\]
Then
\bea\label{n2k-2}
{\partial\over{\partial E'}}\Big((\ol{\s K}_{2}\psi)(x,\omega,E',E)\Big)
={}&
{\partial\over{\partial E'}}
\Big(\hat\sigma_2(x,E',E)\psi(x,\xi_+(E',E,\omega),E')\Big) \nonumber\\
&+
{\partial\over{\partial E'}}
\Big(\hat\sigma_2(x,E',E)\psi(x,\xi_-(E',E,\omega),E')\Big)
\eea
and so for $E'\not=E$
\bea\label{n2k-4}
&
{\partial\over{\partial E'}}\Big((\ol{\s K}_{2}\psi)(x,\omega,E',E)\Big) =
{\p {\hat\sigma_2}{E'}}(x,E',E)\psi(x,\xi_+,E')
\nonumber\\
{}&+
\hat\sigma_2(x,E',E)(\nabla_\omega\psi)(x,\xi_+,E')\cdot
{\p {\xi_+}{E'}}(E',E,\omega)
+
\hat\sigma_2(x,E',E){\p {\psi}{E}}(x,\xi_+,E')
\nonumber\\
{}&+
{\p {\hat\sigma_2}{E'}}(x,E',E)\psi(x,\xi_-,E')
\nonumber\\
{}&+
\hat\sigma_2(x,E',E)(\nabla_\omega\psi)(x,\xi_-,E')\cdot
{\p {\xi_-}{E'}}(E',E,\omega)
+
\hat\sigma_2(x,E',E){\p {\psi}{E}}(x,\xi_-,E')
.
\eea

It can be shown that the limit below exists,
\begin{multline*}
\lim_{E'\to E}\Big(
\hat\sigma_2(x,E',E)(\nabla_\omega\psi)(x,\xi_+,E')\cdot
{\p {\xi_+}{E'}}(E',E,\omega)
\nonumber\\
\shoveright{
+
\hat\sigma_2(x,E',E)(\nabla_\omega\psi)(x,\xi_-,E')\cdot
{\p {\xi_-}{E'}}(E',E,\omega)\Big)
} \\
\shoveleft{
=
2\, \hat{\sigma}_{2}(x,E,E)R(\omega)\qmatrix{0\cr (\partial_{E'}\mu)(E,E)\cr}\cdot(\nabla_{\omega}\psi)(x,\omega,E)
}
 \\
+
\hat{\sigma}_{2}(x,E,E)
\sum_{|\alpha|=2}a_{\alpha}(E,\omega)(\partial_{\omega}^\alpha\psi)(x,\omega,E),
\end{multline*} 
where 
for $\alpha=(i,j)$
\begin{multline*}
\sum_{|\alpha|=2}a_\alpha (\omega,E)
(\partial_{\omega}^\alpha\psi)(x,\omega,E')\nonumber\\
:={1\over 2}\sum_{i=1}^2\sum_{j=1}^2\partial_i\partial_j (\psi\circ  H_\omega)(x,0,E')\lim_{E'\to E}\int_0^{2\pi}\xi_i((E',E,\omega,s)\zeta_j(E',E,\omega,s) ds,
\end{multline*}
where $\xi(E',E,\omega,s):=J(v(E',E,\omega,s))$, $\zeta(E',E,\omega,s):=J({\p {\gamma}{E'}}(E',E,\omega)(s))$,
and $J$ is the isomorphism defined in \eqref{eq:J_iso}.

Hence we have
\bea\label{aa}
&
{\partial\over{\partial E'}}\Big((\ol{\s K}_{2}\psi)(x,\omega,E',E)  \Big)_{|E'=E}
=
2\hat\sigma_2(x,E,E){\p {\psi}{E}}(x,\omega,E)+
2{\p {\hat\sigma_2}{E'}}(x,E,E)\psi(x,\omega,E)\nonumber\\
&
+
2\hat\sigma_2(x,E,E)R(\omega)\qmatrix{0\cr (\partial_{E'}\mu)(E,E)\cr}\cdot (\nabla_\omega\psi)(x,\omega,E)\nonumber\\
&
+
\hat{\sigma}_{2}(x,E,E)
\sum_{|\alpha|=2}a_{\alpha}(E,\omega)(\partial_{\omega}^\alpha\psi)(x,\omega,E)
.
\eea

Moreover, for $E'\not=E$
\bea\label{n2k-5}
&
{\p {(\ol{\s K}_{2}\psi)}E}(x,\omega,E',E)
\nonumber\\
={}&
{\partial\over{\partial E}}
\Big(\hat\sigma_2(x,E',E)\psi(x,\xi_+(E',E,\omega),E')
+\hat\sigma_2(x,E',E)\psi(x,\xi_-(E',E,\omega),E')
\Big)
\nonumber\\
={}&
{\p {\hat\sigma_2}{E}}(x,E',E)\psi(x,\xi_+(E',E,\omega),E') \nonumber\\
{}&
+\hat\sigma_2(x,E',E)(\nabla_\omega\psi)(x,\xi_+(E',E,\omega),E')\cdot {\p {\xi_+}E}(E',E) \nonumber\\
{}&
+{\p {\hat\sigma_2}{E}}(x,E',E)\psi(x,\xi_-(E',E,\omega),E') \nonumber\\
{}&
+\hat\sigma_2(x,E',E)(\nabla_\omega\psi)(x,\xi_-(E',E,\omega),E') 
\cdot {\p {\xi_-}E}(E',E).
\eea

We omit further analysis here but emphasise that it is instructive to keep 
in mind this kind of simplified situation when analysing the exact transport equations.  
\end{remark}

The corresponding collision operators can be analogously computed in the general spatial dimension $n$ (which we omit here). 
The {M\o ller} collision operator produces first order partial differential terms with respect to $E$ along with a Hadamard finite part operator and second order partial derivatives with respect to $\omega$.
The exact form of {M\o ller} collision operator
allows for accessing relevant approximation schemes for which the error analysis 
can be carried out.

%%%%%%%%%%%%%%%%%%%%%%%%%%%%%%%%%%%%%%%%%%%%%%%%%%%%%%%%%%%%%%%%%%%%%
\sectionspace
\section{Approximative Pseudo-Differential Collision Operators}\label{r-psio-1}
%%%%%%%%%%%%%%%%%%%%%%%%%%%%%%%%%%%%%%%%%%%%%%%%%%%%%%%%%%%%%%%%%%%%%

The expression (\ref{co-bb}) (or (\ref{co-cc})) gives
the exact collision operator  for M\o ller scattering,
and can be opened up further.
The M\o ller collision operator  contains  partial derivatives with respect to energy
and angle. 
The resulting operator is quite complex and so it is reasonable to seek
simpler approximations of it. In the sequel we give one potential scheme for an approximation.

The M\o ller collision operator given by (\ref{co-bb}) is written explicitly for $E\geq E_0$ by
\bea\label{mol-a}
&
(K\psi)(x,\omega,E)=
{\rm p.f.}\int_E^{E_m}\hat\sigma_{2}(x,E',E){1\over{(E'-E)^2}}
\int_{0}^{2\pi}\psi(x,\gamma(E',E,\omega)(s),E')dsdE'
\nonumber\\
&
+
{\rm p.f.}\int_E^\infty\hat\sigma_{1}(x,E',E){1\over{E'-E}}
\int_{0}^{2\pi}\psi(x,\gamma(E',E,\omega)(s),E')dsdE'
+(K_0\psi)(x,\omega,E)
\eea
where
\bea\label{psio-0b}
&
(K_0\psi)(x,\omega,E):=\int_{I'}(\hat{\s K}_0\psi)(x,\omega,E',E)dE'
\nonumber\\
&
=
\int_{I'}\int_0^{2\pi}\chi(E',E)\hat\sigma_0(x,E',E)\psi(x,\gamma(E',E,\omega)(s),E') ds dE'.
\eea

The following analysis applies in quite straightforward manner
to more general collision operators of the form
\bea\label{mol-b}
&
(K\psi)(x,\omega,E)=
{\rm p.f.}\int_E^{E_m}{1\over{(E'-E)^{1+\kappa_2}}}
\int_{S'}\sigma_2(x,\omega',\omega,E',E)\psi(x,\omega',E')d\omega'dE'
\nonumber\\
&
+
{\rm p.f.}\int_E^{E_m}{1\over{(E'-E)^{\kappa_1}}}
\int_{S'}\sigma_1(x,\omega',\omega,E',E)\psi(x,\omega',E')d\omega'dE'
+(K_0\psi)(x,\omega,E)
\eea
where $0\leq\kappa_j\leq 1$ and where $K_0$ is a Schur partial integral operator.
Above $\sigma_j$ can be chosen in several ways, for example
($j=0,1,2$)
\begin{enumerate}
\item 
either
$\sigma_j$ are measurable functions, e.g. as in Bremsstrahlung,
\item
or, e.g. as in M\o ller and Compton scattering,
\[
\int_{S'}\sigma_j(x,\omega',\omega,E',E)\psi(x,\omega',E')d\omega'
=\int_0^{2\pi}\hat\sigma_j(x,E',E)\psi(x,\gamma(E',E,\omega)(s),E') ds 
\]
\item
or, when a term like $\delta(\omega'\cdot\omega-1)$ is involved in $\sigma_j$,
\[
\int_{S'}\sigma_j(x,\omega',\omega,E',E)\psi(x,\omega',E')d\omega'
=\ol\sigma_j(x,\omega,E',E)\psi(x,\omega,E').
\]
\end{enumerate}
To keep the considerations concise we carry out the computations only when $\kappa_2=\kappa_1=1$
and for the case (2).
In more general cases one ought, for example, to separate cases $\kappa_j<1$ and $\kappa_j=1$.
The case $\kappa_2=\kappa_1=1$ also reveals
some of the difficulties more transparently,
and it provides an indication of the tools needed for other cases.

We shall show in section \ref{rco} that $K_0$ given by (\ref{psio-0b}) is an ordinary partial Schur integral operator and so it suffices only to approximate terms 
${\s H}_j\big((\ol{\s K}_j\psi)(x,\omega,\cdot,E)\big)(E),\ j=1,2$.
The corresponding collision operators are for $E\geq E_0$
\be\label{psio-0a}
(K_j\psi)(x,\omega,E):=
{\rm p.f.}\int_E^{E_m}\hat\sigma_{j}(x,E',E){1\over{(E'-E)^j}}
\int_{0}^{2\pi}\psi(x,\gamma(E',E,\omega)(s),E')dsdE'.
\ee
In this section we shall approximate these terms 
in such the way that the pseudo-differential nature of them and the related symbols can be recognized.
We assume that a {\it cut-off energy} has been specified at $E'=c(E)$,
where $c(E)$ is a continuous function obeying  $c(E)\geq \alpha E$ for some $\alpha>1$.
For example, $c(E)=2E$ that is $E={1\over 2}E'$.
We shall not here impose exact assumptions on $\hat\sigma_j(x,E',E)$ and $\psi$ but we proceed formally.

We decompose the integration in (\ref{psio-0a}) as follows (for  $j=1,2$)
\bea\label{psio-0c}
&
{\rm p.f.}\int_E^{E_m}\hat\sigma_{j}(x,E',E){1\over{(E'-E)^j}}
\int_{0}^{2\pi}\psi(x,\gamma(E',E,\omega)(s),E')dsdE'
\nonumber\\
={}&
{\rm p.f.}\int_E^{c(E)}\hat\sigma_{j}(x,E',E){1\over{(E'-E)^j}}
\int_{0}^{2\pi}\psi(x,\gamma(E',E,\omega)(s),E')dsdE'\nonumber\\
&
+
\int_{c(E)}^{E_m}\hat\sigma_{j}(x,E',E){1\over{(E'-E)^j}}
\int_{0}^{2\pi}\psi(x,\gamma(E',E,\omega)(s),E')dsdE',
\eea
where we note that the last integral is the ordinary Lebesgue integral.
Let
\bea
(K_{j,1}\psi)
&
(x,\omega,E)\nonumber\\
:= {}&
{\rm p.f.}\int_E^{c(E)}\hat\sigma_{j}(x,E',E){1\over{(E'-E)^j}}
\int_{0}^{2\pi}\psi(x,\gamma(E',E,\omega)(s),E')dsdE', \label{def-K_{j}-a} \\[2mm] 
(K_{j,0}\psi)
&
(x,\omega,E)\nonumber\\
:= {}&
\int_{c(E)}^{E_m}\hat\sigma_{j}(x,E',E){1\over{(E'-E)^j}}
\int_{0}^{2\pi}\psi(x,\gamma(E',E,\omega)(s),E')dsdE'. \label{def-K_{j}-b}
\eea
Then
\be\label{de}
(K_j\psi)(x,\omega,E)=
(K_{j,1}\psi)(x,\omega,E)+(K_{j,0}\psi)(x,\omega,E)
.
\ee
Since
\bea\label{psio-0d}
&
(K_{j,0}\psi)(x,\omega,E)\nonumber\\
&
=
\int_{I'}\chi_{\R_+}(E'-c(E))\hat\sigma_{j}(x,E',E){1\over{(E'-E)^j}}
\int_{0}^{2\pi}\psi(x,\gamma(E',E,\omega)(s),E')dsdE'
\eea
we shall find  in section \ref{rco} below that under appropriate assumptions $K_{j,0}$ are partial Schur integral operators. 
Hence it suffices to consider only the operators 
$K_{j,1}$.

%%%%%%%%%%%%%%%%%%%%%%%%%%%%%%%%%%%%%%%%%%%%%%%%%%%%%%%%%%%%%%%%%%%%
\subsection{Approximative First-order Hadamard Singular Integral Operator}\label{fhsio}
%%%%%%%%%%%%%%%%%%%%%%%%%%%%%%%%%%%%%%%%%%%%%%%%%%%%%%%%%%%%%%%%%%%%

At first we
consider the partial hyper-singular integral operator $K_{1,1}$ given by (\ref{def-K_{j}-a}) that is,
\[
(K_{1,1}\psi)(x,\omega,E)=
{\rm p.f.}\int_E^{c(E)}\hat\sigma_{1}(x,E',E){1\over{E'-E}}
\int_{0}^{2\pi}\psi(x,\gamma(E',E,\omega)(s),E')dsdE'.
\]
By the Taylor's formula we can expand
\be\label{psi-3}
\hat\sigma_1(x,E',E)=\hat\sigma_1(x,E,E)+\int_0^1{\p {\hat\sigma_1}{E'}}(x,E+t(E'-E),E)dt\big)(E'-E),
\ee
which gives
\bea\label{psi-4}
&
( K_{1,1}\psi)(x,\omega,E)
=
{\rm p.f.}\int_{E}^{c(E)}\int_0^{2\pi}{{\hat\sigma_1(x,E,E)}\over{E'-E}}\psi(x,\gamma(E',E,\omega)(s),E') dsdE'
\nonumber\\
&
+
\int_{E}^{c(E)}\int_0^{2\pi}\int_0^1{\p {\hat\sigma_1}{E'}}(x,E+t(E'-E),E)dt\big)
\psi(x,\gamma(E',E,\omega)(s),E') dsdE'\nonumber\\[2mm]
={}&
( K_{1,1,1}\psi)(x,\omega,E)+
( K_{1,1,0}\psi)(x,\omega,E)
\eea
where
\[
&
( K_{1,1,1}\psi)(x,\omega,E):=2\pi\ 
{\rm p.f.}\int_{E}^{c(E)}{{\hat\sigma_1(x,E,E)}\over{E'-E}}\int_0^{2\pi}\psi(x,\gamma(E',E,\omega)(s),E') dsdE'\nonumber\\
={}&
\hat\sigma_1(x,E,E)
{\rm p.f.}\int_{E}^{c(E)}{{1}\over{E'-E}}\int_0^{2\pi}\psi(x,\gamma(E',E,\omega)(s),E') dsdE'
\]
and
\[
( K_{1,1,0}\psi)(x,\omega,E):=
\int_{E}^{c(E)}\int_0^{2\pi} \int_0^1{\p {\hat\sigma_1}{E'}}(x,E+t(E'-E),E)dt\big)
\psi(x,\gamma(E',E,\omega)(s),E') dsdE'
.
\]
Since 
\begin{multline*}
( K_{1,1,0}\psi)(x,\omega,E)
:=
\int_{I'}\int_0^{2\pi}\chi_{\R_+}(E'-E)\chi_{\R_+}(c(E)-E')\big(\int_0^1{\p {\hat\sigma_1}{E'}}(x,E+t(E'-E),E)dt\big) \\
\cdot\psi(x,\gamma(E',E,\omega)(s),E') dsdE'
\end{multline*}
we shall find in section \ref{rco} that under relevant assumptions 
$ K_{1,1,0}$ is the partial Schur  integral operator
\be 
( K_{1,1,0}\psi)(x,\omega,E):=
\int_{I'}\int_0^{2\pi}\hat\sigma_{1,0}(x,E',E)
\cdot\psi(x,\gamma(E',E,\omega)(s),E') dsdE'
\ee
where 
\[
\hat\sigma_{1,0}(x,E',E):=
\chi_{\R_+}(E'-E)\chi_{\R_+}(c(E)-E')\big(\int_0^1{\p {\hat\sigma_1}{E'}}(x,E+t(E'-E),E)dt\big).
\]

{\it We assume that $\mu(E',E)$, the cosine of the scattering angle, obeys (formally)} 
\be \label{cos-ap}
\mu(E',E)\approx 1\ 
{\rm on\ the\ interval}\ E'\in [E,c(E)].
\ee
Then, formally again,
\[
\gamma(E',E,\omega)(s)\approx R(\omega)(0,0,1)=\omega
\]
which implies that
\be\label{ess-app}
\psi(x,\gamma(E',E,\omega)(s),E')\approx \psi(x,\omega,E') 
\ee
on the interval $E'\in [E,c(E)]$.
From mathematical point of view the essential approximation is (\ref{ess-app}).
As for the error analysis  the approximation (\ref{ess-app}) might be problematic (cf. section \ref{err}).  

Using (\ref{ess-app}) we obtain
\[
(K_{1,1,1}\psi)(x,\omega,E)
\approx
(\widetilde K_{1,1,1}\psi)(x,\omega,E)
:=2\pi\, \hat\sigma_1(x,E,E)\, (P\psi)(x,\omega,E),
\]
where
\begin{align}\label{eq:op_P}
(P\psi)(x,\omega,E):=
{\rm p.f.}\int_{E}^{c(E)}{{1}\over{E'-E}}\psi(x,\omega,E') dE'.
\end{align}
Let $\mc F_p$ denote the {\it partial Fourier transform} with respect to $E$ variable and let $\xi$ be the corresponding transform variable that is, 
\[
(\mc F_p\psi)(x,\omega,\xi)=(2\pi)^{-1/2}\int_{\R}\psi(x,\omega,E)e^{-{\rm i}\xi E} dE.
\]

We have the following characterization of operator $P$

\begin{theorem}\label{psi-th1}
The operator
$P $ is a  pseudo-differential operator (cf. \cite{abels11})
\be\label{pd}
(P(E,D)\psi)(x,\omega,E)=(2\pi)^{-1/2}\int_{\R}p(E,\xi)(\mc F_p\psi)(x,\omega,\xi)e^{{\rm i}\xi E}d\xi,
\ee
for all $\psi\in C_0^\infty(G\times S\times I^\circ)$,
and its symbol is
\[
p(E,\xi):={\rm p.f.}\int_{0}^{c(E)-E}{{1}\over{z}}
e^{{\rm i}z\xi}dz.
\]
\end{theorem}

\begin{proof}

A. At first we proceed formally.
Let $\psi\in C_0^\infty(G\times S\times I^\circ)$.  
Then we  have by the inverse Fourier formula
\bea\label{psi-6}
&
(P\psi)(x,\omega,E):=(2\pi)^{-1/2}{\rm p.f.}\int_{E}^{c(E)}{{1}\over{E'-E}}\Big(\int_{\R}(\mc F_p\psi)(x,\omega,\xi)e^{{\rm i}E'\xi}d\xi\Big)dE'\nonumber\\
&
=(2\pi)^{-1/2}\int_{\R}\Big({\rm p.f.}\int_{E}^{c(E)}{{1}\over{E'-E}}
e^{{\rm i}E'\xi}dE'\Big)
(\mc F_p\psi)(x,\omega,\xi)d\xi\nonumber\\
&
=(2\pi)^{-1/2}\int_{\R}\Big({\rm p.f.}\int_{E}^{c(E)}{{1}\over{E'-E}}
e^{{\rm i}(E'-E)\xi}dE'\Big)
(\mc F_p\psi)(x,\omega,\xi)e^{{\rm i}E\xi}d\xi\nonumber\\
&
=
(2\pi)^{-1/2}\int_{\R}p(E,\xi)
(\mc F_p\psi)(x,\omega,\xi)e^{{\rm i}E\xi}d\xi
\eea
where
\be 
p(E,\xi):=
{\rm p.f.}\int_{E}^{c(E)}{{1}\over{E'-E}}
e^{{\rm i}(E'-E)\xi}dE'.
\ee
The change of order of integration, in the second step above, will be
justified in the last section C. of this proof.

B.
We find that (recall (\ref{def-h1}))
\bea\label{psi-6a}
p(E,\xi)
&
=
{\rm p.f.}\int_{E}^{c(E)}{{1}\over{E'-E}}
e^{{\rm i}(E'-E)\xi}dE'\nonumber\\
&
=\lim_{\epsilon \to 0^+}\Big(\int_{E+\epsilon}^{c(E)}{{1}\over{E'-E}}
e^{{\rm i}(E'-E)\xi}dE'+\ln(\epsilon)\cdot 1\Big)\nonumber\\
&
=
\lim_{\epsilon\to 0^+}\Big(\int_{\epsilon}^{c(E)-E}{1\over{z}}
e^{{\rm i}z\xi}dz+\ln(\epsilon)\cdot 1\Big)
=
{\rm p.f.}\int_{0}^{c(E)-E}{{1}\over{z}}
e^{{\rm i}z\xi}dz
\eea
where we in the third step applied the change of variables $E'-E=z$.

C. Finally, we provide here a justification for the second step in \eqref{psi-6}.
By the Taylor's formula (\ref{taylor}) and by (\ref{hada1})
\bea\label{psi-9-a}
&
\int_{\R}\Big({\rm p.f.}\int_{E}^{c(E)}{1\over{E'-E}}e^{{\rm i} E'\xi}dE'\Big)(\mc F_p\psi)(x,\omega,\xi)d\xi\nonumber\\
={}&
\int_{\R}\Big({\rm p.f.}\int_{E}^{c(E)}{1\over{E'-E}}\big(e^{{\rm i} E\xi}
+\int_0^1{\rm i}\xi (E'-E) e^{{\rm i}(E+s(E'-E))\xi}ds\big)
dE'\Big)(\mc F_p\psi)(x,\omega,\xi)d\xi\nonumber\\
={}&
\int_{\R}\Big({\rm p.f.}\int_{E}^{c(E)}{1\over{E'-E}}dE'\Big)e^{{\rm i} E\xi}(\mc F_p\psi)(x,\omega,\xi)d\xi\nonumber\\
&
+\int_{\R}\Big(\int_{E}^{c(E)}\int_0^1{\rm i}\xi e^{{\rm i}(E+s(E'-E))\xi}ds
dE'\Big)(\mc F_p\psi)(x,\omega,\xi)d\xi\nonumber\\
={}&
(2\pi)^{1/2}\ln(c(E)-E)\, \psi(x,\omega,E)
+
(2\pi)^{1/2}
\int_E^{c(E)}\int_0^1{\p {\psi}E}(x,\omega,E+s(E'-E)) ds dE'
\eea
where in the third step Fubini's theorem, the formula
\[
\mc F_p\big({\p {\psi}E}\big)(x,\omega,\xi)={\rm i}\xi (\mc F_p\psi)(x,\omega,\xi)
\]
and the Fourier inversion theorem were applied to the second term
on the right of the equality number two.

On the other hand, by the Taylor's formula and by (\ref{hada1}),
\bea\label{psi-10-a}
&
 {\rm p.f.}\int_{E}^{c(E)}{1\over{E'-E}}\Big(\int_{\R}(\mc F_p\psi)(x,\omega,\xi)e^{{\rm i} E'\xi}d\xi\Big)dE'\nonumber\\
&
=(2\pi)^{1/2}
{\rm p.f.}\int_{E}^{c(E)}{1\over{E'-E}}\psi(x,\omega,E')dE'
\nonumber\\
&
=(2\pi)^{1/2}
{\rm p.f.}\int_{E}^{c(E)}{1\over{E'-E}}\Big(\psi(x,\omega,E)+(E'-E)\int_0^1{\p {\psi}E}(E+s(E'-E))ds \Big)dE'\nonumber\\
&
=
(2\pi)^{1/2}
{\rm p.f.}\int_{E}^{c(E)}{1\over{E'-E}}\psi(x,\omega,E)dE'
+(2\pi)^{1/2}
\int_{E}^{c(E)}\int_0^1{\p {\psi}E}(E+s(E'-E))ds dE'\nonumber\\
&
=
(2\pi)^{1/2}\ln(c(E)-E)\, \psi(x,\omega,E)+
(2\pi)^{1/2}
\int_{E}^{c(E)}\int_0^1{\p {\psi}E}(E+s(E'-E))ds dE'
.
\eea
We thus see that the left hand sides of \eqref{psi-9-a} and \eqref{psi-10-a} are equal,
an observation which renders the second equality in \eqref{psi-6} legitimate,
and therefore completes this proof.
\end{proof}

Consider the symbol $p(E,\xi)$ for $E>0$. We have
\be\label{m-1}
p(E,\xi)={\rm p.f.}\int_{0}^{c(E)-E}{{1}\over{z}}
e^{{\rm i}z\xi}dz
=
{\rm p.f.}\int_{0}^{\infty}{{1}\over{z}}
e^{{\rm i}z\xi}dz
-\int_{c(E)-E}^{\infty}{{1}\over{z}}
e^{{\rm i}z\xi}dz=p_1(\xi)-p_2(E,\xi)
\ee
where
\[
p_1(\xi):={\rm p.f.}\int_{0}^{\infty}{{1}\over{z}}
e^{{\rm i}z\xi}dz,\quad
p_2(E,\xi):=\int_{c(E)-E}^{\infty}{{1}\over{z}}
e^{{\rm i}z\xi}dz.
\]
Here the integral ${\rm p.f.}\int_{0}^{\infty}{{1}\over{z}}
e^{{\rm i}z\xi}dz$ is interpreted as
\[
{\rm p.f.}\int_{0}^{\infty}{{1}\over{z}}
e^{{\rm i}z\xi}dz
=
{\rm p.f.}\int_{0}^{1}{{1}\over{z}}
e^{{\rm i}z\xi}dz
+\int_{1}^{\infty}{{1}\over{z}}
e^{{\rm i}z\xi}dz
\] 
where the last integral is the ordinary improper (oscillatory) integral. The integral $\int_{c(E)-E}^{\infty}{{1}\over{z}}
e^{{\rm i}z\xi}dz$ is also the ordinary improper integral (recall that \linebreak ${c(E)-E\geq (\alpha-1)E>0}$).
Hence the operator $P(E,D)$ is the sum 
\be\label{m-2}
P(E,D)=P_1(D)-P_2(E,D)
\ee
where $P_1(D)$ and $P_2(E,D)$ are the pseudo-differential operators with symbols $p_1(\xi)$ and $p_2(E,\xi)$, respectively. Note that the pseudo-differential operators $P_1(D)$ and $P_2(E,D)$ are    for $\psi\in C_0^\infty(G\times S\times I^\circ)$
\be\label{eq:op_P_1}
(P_1(D)\psi)(x,\omega,E)=
{\rm p.f.}\int_{E}^{\infty}{{1}\over{E'-E}}\psi(x,\omega,E') dE',
\ee
\be\label{eq:op_P_2} 
(P_2(E,D)\psi)(x,\omega,E)=
\int_{c(E)}^{\infty}{{1}\over{E'-E}}\psi(x,\omega,E') dE'.
\ee
The symbol $p_1(\xi)$ can be given explicitly.

\begin{lemma}
The symbol $p_1(\xi)$ of $P_1(D)$ is
\[
p_1(\xi)=-\ln(|\xi|)+\alpha_0+{\rm i}\beta_0{\rm sign}(\xi),
\quad \xi\not=0
\]
where 
\be\label{const}
\alpha_0:= \int_0^1{{1-\cos(t)}\over t}dt+\int_1^\infty{{\cos(t)}\over t} dt,
\quad
\beta_0:=\sqrt{{{\pi}\over 2}}.
\ee
\end{lemma}

\begin{proof}
Note that for $\psi\in C_0^\infty(G\times S\times I^\circ)$
the integral
\be\label{pd-a}
\int_{\R}\big(-\ln(|\xi|)+\alpha_0+{\rm i}\beta_0{\rm sign}(\xi) \big)(\mc F_p\psi)(x,\omega,\xi)e^{{\rm i}\xi E}d\xi
\ee
convergences since (recalling that $\int \ln(\xi)d\xi=\xi\ln(\xi)-\xi$ for $\xi>0$)
\[
\int_{-1}^1|\ln(|\xi|)|d\xi=-2\int_{0}^1\ln(\xi)d\xi=2
\]
and $\ln(|\xi|)\leq C|\xi|$ for $|\xi|\geq 1$.

We have
\be\label{psi-8} 
p_1(\xi)={\rm p.f.}\int_{0}^\infty{{1}\over{z}}
e^{{\rm i}z\xi}dz=
{\rm p.f.}\int_{0}^1{{1}\over{z}}
e^{{\rm i}z\xi}dz
+\int_{1}^\infty{{1}\over{z}}
e^{{\rm i}z\xi}dz
\ee
and by formula (\ref{c-0-a})
\be\label{psi-9}
{\rm p.f.}\int_{0}^1{{1}\over{z}}
e^{{\rm i}z\xi}dz
=\int_{0}^1{{1}\over{z}}
(e^{{\rm i}z\xi}-1)dz+0.
\ee
As a result of Calderon-Zygmund formula (\cite{hsiao}, p. 358 or \cite{mikhlin}, pp. 245-249) for $n\in \N$ 
\be \label{c-z}
\int_0^1 {{(e^{-{\rm i}r \omega\cdot \eta}-1)}\over r}dr+
\int_1^{\infty}{{e^{-{\rm i}r\omega\cdot\eta}}\over r}dr
=-\ln|\omega\cdot\eta|-{\rm i}\sqrt{{{\pi}\over 2}}{{\omega\cdot\eta}\over{|\omega\cdot\eta|}}+\alpha_0
\ee
where $\omega,\eta$ are unit vectors in $\R^n$, and $\alpha_0$ is as given in (\ref{const}).
Hence by  (\ref{psi-8}), (\ref{psi-9}) and (\ref{c-z}) with $n=1$ and $\omega=-1,\ \eta=\xi$ the symbol $p_1(x)=-\ln(|\xi|)+\alpha_0+{\rm i}\sqrt{{{\pi}\over{2}}}{\rm sign}(\xi)$ is as claimed.

\end{proof}

Let $E>0$. Performing the change of variables $z'=z-(c(E)-E)$
the symbol $p_2(E,\xi)$ can be written as
\[
p_2(E,\xi)=
\int_{c(E)-E}^{\infty}{{1}\over{z}}
e^{{\rm i}z\xi}dz=
e^{{\rm i}(c(E)-E)\xi}\int_{0}^{\infty}{{1}\over{z+c(E)-E}}
e^{{\rm i}z\xi}dz.
\]
The symbol $p(E,\xi)$ is
\be\label{m-4}
p(E,\xi)=-\ln(|\xi|)+\alpha_0+{\rm i}\beta_0{\rm sign}(\xi)- p_2(E,\xi).
\ee

As a conclusion we get
\be\label{K111}
(K_{1,1}\psi)(x,\omega,E)=(K_{1,1,1}\psi)(x,\omega,E)+(K_{1,1,0}\psi)(x,\omega,E)
\ee
where $K_{1,1,0}$ is a Schur partial integral operator and where
\bea\label{H1}
&
(K_{1,1,1}\psi)(x,\omega,E)
\approx
(\widetilde K_{1,1,1}\psi)(x,\omega,E)
\nonumber\\
&
=
2\pi\ \hat\sigma_1(x,E,E)\ {\rm p.f.}\int_{E}^{c(E)}{1\over{E'-E}}\psi(x,\omega,E') dE'\nonumber\\
&
=
2\pi\ \hat\sigma_1(x,E,E)(P(E,D)\psi)(x,\omega,E)\nonumber\\
&
=
2\pi\ \hat\sigma_1(x,E,E)(P_1(D)\psi)(x,\omega,E)
-2\pi\ \hat\sigma_1(x,E,E)(P_2(E,D)\psi)(x,\omega,E)
\eea
where  $P_1(D)$ and $P_2(E,D)$ are the  pseudo-differential operators with the above given symbols $p_1(\xi)$ and $p_2(E,\xi)$.
We remark that the (principal) operator $P_1(D)$ is
a {\it translation invariant}  pseudo-differential operator
(that is, its symbol depends only on the transform variable) and $P_2(E,D)$ is an $L^2(G\times S\times I)$-bounded pseudo-differential operator.

%%%%%%%%%%%%%%%%%%%%%%%%%%%%%%%%%%%%%%%%%%%%%%%%%%%%%%%%%
\subsection{Approximative Second-order Hadamard Singular Integral Operator}\label{shsio}
%%%%%%%%%%%%%%%%%%%%%%%%%%%%%%%%%%%%%%%%%%%%%%%%%%%%%%%%%

Consider the second-order partial hyper-singular integral operator $K_{2,1}$ given by (recall (\ref{def-K_{j}-a}))
\bea\label{psi-13}
&
(K_{2,1}\psi)(x,\omega,E)
\nonumber\\
&
=
{\rm p.f.}\int_{E}^{c(E)}\int_0^{2\pi}\hat\sigma_2(x,E',E){1\over{(E'-E)^2}}\psi(x,\gamma(E',E,\omega)(s),E') ds dE'
\eea
where $\gamma(E',E,\omega)(s)$ is as above.

Using the Taylor's formula (\ref{taylor}) we expand
\bea\label{psi-15}
&
\hat\sigma_2(x,E',E)=\hat\sigma_2(x,E,E)+
{\p {\hat\sigma_2}E}(x,E,E)(E'-E)\nonumber\\
&
+
{2\over{2!}}\big(\int_0^1(1-t){{\partial^2 \hat\sigma_2}\over{\partial E'^2}}(x,E+t(E'-E),E)dt\big)(E'-E)^2
\eea
which gives
\bea\label{psi-16}
&
( K_{2,1}\psi)(x,\omega,E)
=
{\rm p.f.}\int_{E}^{c(E)}\int_0^{2\pi}{{\hat\sigma_2(x,E,E)}\over{(E'-E)^2}}\psi(x,\gamma(E',E),\omega)(s),E') dsdE'\nonumber\\
&
+
{\rm p.f.}\int_{E}^{c(E)}\int_0^{2\pi}{{{\p {\hat\sigma_2}{E'}}(x,E,E)}\over{(E'-E)}}\psi(x,\gamma(E',E),\omega)(s),E') dsdE'
\nonumber\\
&
+
\int_{E}^{c(E)}\int_0^{2\pi}\big(\int_0^1(1-t){\q {\hat\sigma_2}{E'}}(x,E+t(E'-E),E)dt\big)
\psi(x,\gamma(E',E,\omega)(s),E') dsdE'.
\eea

Denote
\begin{multline*}
( K_{2,1,0}\psi)(x,\omega,E) \\
:=
\int_{E}^{c(E)}\int_0^{2\pi}
\big(\int_0^1(1-t){\q {\hat\sigma_2}{E'}}(x,E+t(E'-E),E)dt\big)
\psi(x,\gamma(E',E,\omega)(s),E') dsdE',
\end{multline*}
\[
( K_{2,1,1}\psi)(x,\omega,E):= {\p {\hat\sigma_2}{E'}}(x,E,E)
{\rm p.f.}\int_{E}^{c(E)}{1\over{E'-E}}\int_0^{2\pi}\psi(x,\gamma(E',E,\omega)(s),E') dsdE'
\]
and
\[
(K_{2,1,2}\psi)(x,\omega,E):= \hat\sigma_2(x,E,E)
{\rm p.f.}\int_{E}^{c(E)}{{1}\over{(E'-E)^2}}\int_0^{2\pi}\psi(x,\gamma(E',E,\omega)(s),E') dsdE'.
\]

Since 
\bea
&
( K_{2,1,0}\psi)(x,\omega,E)\nonumber\\
&
= 
\int_{I'}\int_0^{2\pi}\chi_{\R_+}(E'-E)\chi_{\R_+}(c(E)-E')\nonumber\\
&
\cdot
\big(\int_0^1(1-t){\q {\hat\sigma_2}{E'}}(x,E+t(E'-E),E)dt\big)
\psi(x,\gamma(E',E,\omega)(s),E') dsdE'
\eea
we shall find in section \ref{rco} that under relevant assumptions $ K_{2,1,0}$ is a partial Schur  integral operator
\be 
( K_{2,1,0}\psi)(x,\omega,E):=
\int_{I'}\int_0^{2\pi}\hat\sigma_{2,0}(x,E',E)
\psi(x,\gamma(E',E,\omega)(s),E') dsdE'
\ee
where 
\begin{multline*}
\hat\sigma_{2,0}(x,E',E):=
\chi_{\R_+}(E'-E)\chi_{\R_+}(c(E)-E')
\\
\cdot
\big(\int_0^1(1-t){\q {\hat\sigma_2}{E'}}(x,E+t(E'-E),E)dt\big).
\end{multline*}

Again {\it we assume that} $\mu(E',E)\approx 1$ on the interval $[E,c(E)]$. Then $\gamma(E',E,\omega)(s)\approx \omega$ and so
by the previous section, 
we get formally an approximation  $\widetilde K_{2,1,1}$ for
the operator $K_{2,1,1}$,
\be \label{66}
 K_{2,1,1}\psi\approx
\widetilde K_{2,1,1}\psi:=
2\pi\ {\p {\hat\sigma_2}{E'}}(x,E,E)P(E,D)\psi
\ee
where
$P(E,D)$ is the pseudo-differential operator given in \eqref{eq:op_P}
(see also Theorem \ref{psi-th1}).

Therefore our analysis reduces to treating the operator $ K_{2,1,2}$.
Using the approximation $\gamma(E',E,\omega)(s)\approx \omega$ we get
\begin{multline}
( K_{2,1,2}\psi)(x,\omega,E)\approx
(\widetilde K_{2,1,2}\psi)(x,\omega,E) \\
:=2\pi\ 
\hat\sigma_2(x,E,E)\ {\rm p.f.}\int_{E}^{c(E)}{1\over{(E'-E)^2}}\psi(x,\omega,E') dE'. \label{psi-14}
\end{multline}
Consider the operator
\be
(Q\psi)(x,\omega,E):={\rm p.f.}\int_{E}^{c(E)}{1\over{(E'-E)^2}}\psi(x,\omega,E') dE'.
\ee 
We decompose (again  the integral ${\rm p.f.}\int_0^\infty$ is interpreted as
${\rm p.f.}\int_0^\infty={\rm p.f.}\int_0^1+\int_1^\infty$)
\bea\label{m-5}
&
{\rm p.f.}\int_{E}^{c(E)}{1\over{(E'-E)^2}}\psi(x,\omega,E') dE'
\nonumber\\
={}&
{\rm p.f.}\int_{E}^{\infty}{1\over{(E'-E)^2}}\psi(x,\omega,E') dE'
-
\int_{c(E)}^{\infty}{1\over{(E'-E)^2}}\psi(x,\omega,E') dE'\nonumber\\
={}&
(Q_1\psi)(x,\omega,E)-(Q_2\psi)(x,\omega,E)
\eea
where
\[
(Q_1\psi)(x,\omega,E):={}& {\rm p.f.}\int_{E}^{\infty}{1\over{(E'-E)^2}}\psi(x,\omega,E') dE', \\
(Q_2\psi)(x,\omega,E):={}& \int_{c(E)}^{\infty}{1\over{(E'-E)^2}}\psi(x,\omega,E') dE'.
\]
By Lemma \ref{hadale},
\bea
&
(Q_1\psi)(x,\omega,E) 
={\rm p.f.}\int_{E}^{\infty}{1\over{(E'-E)^2}}\psi(x,\omega,E') dE'
\nonumber\\
={}&
{\partial\over{\partial E}}\Big({\rm p.f.}\int_{E}^{\infty}{1\over{(E'-E)}}\psi(x,\omega,E') dE'\Big)
+{\p \psi{E}}(x,\omega,E)\nonumber\\
={}&
{\partial\over{\partial E}}\Big((P_1(D)\psi)(x,\omega,E) \Big)
+{\p \psi{E}}(x,\omega,E)
\eea
where $P_1(D)$ is as given in \eqref{eq:op_P_1}.
Hence $Q_1$ is a (translation invariant) pseudo-differential operator
$Q_1(D)={\partial\over{\partial E}}\circ P_1(D)+{\partial\over{\partial E}}$ with the symbol 
$q_1(\xi):=-{\rm i}\xi\ p_1(\xi)-{\rm i}\xi$. 
Furthermore,
\bea
&
(Q_2\psi)(x,\omega,E)=
\int_{c(E)}^{\infty}{1\over{(E'-E)^2}}\psi(x,\omega,E') dE'
\nonumber\\
={}&
(2\pi)^{-1/2}\int_{\R}q_2(E,\xi)(\mc F_p\psi)(x,\omega,\xi)e^{{\rm i}\xi E}d\xi=(Q_2(E,D)\psi)(x,\omega,E)
\eea
where
\be\label{m-6}
q_2(E,\xi)=e^{{\rm i}(c(E)-E)\xi}\int_0^\infty {1\over{(z+c(E)-E)^2}}e^{{\rm i}z\xi}dz.
\ee
That is why $Q=Q_1-Q_2$ is a pseudo-differential operator $Q(E,D)=Q_1(D)-Q_2(E,D)$ with the symbol
\be\label{psi-18}
q(E,\xi)=-{\rm i}\xi\ p_1(\xi)-{\rm i}\xi-q_2(E,\xi).
\ee

As a conclusion we get
\bea\label{H2}
&
(K_{2,1}\psi)(x,\omega,E)\approx  (\widetilde K_{2,1}\psi)(x,\omega,E)
\nonumber\\
={}&
(\widetilde K_{2,1,2}\psi)(x,\omega,E)+ (\widetilde K_{2,1,1}\psi)(x,\omega,E)+ ( K_{2,1,0}\psi)(x,\omega,E)\nonumber\\
={}&
2\pi\ \hat\sigma_2(x,E,E)(Q(E,D)\psi)(x,\omega,E)
+2\pi\ {\p {\hat\sigma_2}{E}}(x,E,E)(P(E,D)\psi)(x,\omega,E) \nonumber\\
&
+( K_{2,1,0}\psi)(x,\omega,E)\nonumber\\
={}&
2\pi 
\ \hat\sigma_2(x,E,E)({\partial\over{\partial E}}\circ P_1(D)\psi)(x,\omega,E)\nonumber\\
&
+2\pi\ \hat\sigma_2(x,E,E){\p {\psi}E}(x,\omega,E)
+2\pi\ {\p {\hat\sigma_2}{E'}}(x,E,E)(P_1(D)\psi)(x,\omega,E)
\nonumber\\[2mm]
&
-2\pi\ \hat\sigma_2(x,E,E)(Q_2(E,D)\psi)(x,\omega,E)\nonumber\\
&
-2\pi\ {\p {\hat\sigma_2}{E'}}(x,E,E)(P_2(E,D)\psi)(x,\omega,E)
+( K_{2,1,0}\psi)(x,\omega,E)
\eea
where we used that $P(E,D)\psi=P_1(D)\psi-P_2(E,D)\psi$.
Operator $K_{2,1,0}$ is a partial Schur integral operator. 
The pseudo-differential operators $P_2(E,D)$ and $Q_2(E,D)$ have zero-order.

%%%%%%%%%%%%%%%%%%%%%%%%%%%%%%%%%%%%%%%%%%%%%%%%%%%%%%%%%%%%%%%%%%%%%%
\subsection{Joint Approximative Collision and Transport Operators}\label{a-psi}
%%%%%%%%%%%%%%%%%%%%%%%%%%%%%%%%%%%%%%%%%%%%%%%%%%%%%%%%%%%%%%%%%%%%%%

%%%%%%%%%%%%%%%%%%%%%%%%%%%%%%%%%%%%%%%%%%%%%%%%%%%%%%%%%%%%%%%%%%%%%%
\subsubsection{Approximative Pseudo-differential Operators}
%%%%%%%%%%%%%%%%%%%%%%%%%%%%%%%%%%%%%%%%%%%%%%%%%%%%%%%%%%%%%%%%%%%%%%

Putting together (\ref{de}), (\ref{H1}) and (\ref{H2}) we get the following approximation for the collision operator governing M\o ller interaction
\bea\label{a-psi4-a}
&
K\psi
=K_{2,1}\psi+K_{1,1}\psi+K_{2,0}\psi+K_{1,0}\psi+K_0\psi\nonumber\\
\approx{}&
\widetilde K\psi:= 2\pi\ \hat\sigma_2(x,E,E)Q(E,D)\psi\nonumber\\
&
+ 
2\pi\ \big(\hat\sigma_1(x,E,E)+{\p {\hat\sigma_2}E}(x,E,E)\big)P(E,D)\psi
+K_r\psi
\eea
where $K_r$ is a Schur partial integral operator,
or more explicitly
\bea\label{a-psi4}
\widetilde K\psi={}&
2\pi\ 
\hat\sigma_2(x,E,E){\p {(P_1(D)\psi)}{E}}+2\pi\ 
\hat\sigma_2(x,E,E){\p \psi{E}}\nonumber\\
&
+2\pi\ \big({\p {\hat\sigma_2}{E'}}(x,E,E)+\hat\sigma_1(x,E,E)\big)P_1(D)\psi
-2\pi\ \hat\sigma_2(x,E,E)Q_2(E,D)\psi\nonumber\\
&
-2\pi\
\big({\p {\hat\sigma_2}{E'}}(x,E,E)+\hat\sigma_1(x,E,E)\big)P_2(E,D)\psi
+K_r\psi
\eea
The  operator $K_r$ is of the form 
\be\label{a-psi5}
(K_r\psi)(x,\omega,E)=
\int_{I'}\hat{\sigma}^3(x,E',E)\int_{0}^{2\pi}\psi(x,\gamma(E',E,\omega)(s),E')ds dE'
.
\ee
We call the operator $K_r$ {\it restricted collision operator}.

The resulting approximative transport operator is
\bea\label{a-psi4-r}
T\psi\approx \widetilde T\psi:={}&
-2\pi\ 
\hat\sigma_2(x,E,E){\p {(P_1(D)\psi)}{E}}-2\pi\ 
\hat\sigma_2(x,E,E){\p \psi{E}}\nonumber\\
&
-2\pi\ \big({\p {\hat\sigma_2}{E'}}(x,E,E)+\hat\sigma_1(x,E,E)\big)P_1(D)\psi
-2\pi\ \hat\sigma_2(x,E,E)Q_2(E,D)\psi\nonumber\\
&
-2\pi\
\big({\p {\hat\sigma_2}{E'}}(x,E,E)+\hat\sigma_1(x,E,E)\big)P_2(E,D)\psi
+\omega\cdot\nabla_x\psi+\Sigma\psi-K_r\psi.
\eea

%%%%%%%%%%%%%%%%%%%%%%%%%%%%%%%%%%%%%%%%%%%%%%%%%%%%%%%%%%%%%
\subsubsection{Approximative Partial Differential Operators}
%%%%%%%%%%%%%%%%%%%%%%%%%%%%%%%%%%%%%%%%%%%%%%%%%%%%%%%%%%%%%

The above pseudo-differential operators $P(E,D)$ and $Q(E,D)$ can  be formally further approximated by partial differential operators. By Taylor's formula
for $E'\approx E$ we have the approximations
\[
\psi(x,\omega,E')=\psi(x,\omega,E)+\int_0^1{\p {\psi}E}(x,\omega,E+t(E'-E))dt (E'-E)\approx \psi(x,\omega,E)
\]
and
\[
\psi(x,\omega,E')={}&
\psi(x,\omega,E)+{\p {\psi}E}(x,\omega,E)(E'-E)\nonumber\\
&
+{1\over 2}\int_0^1(1-t){\q {\psi}E}(x,\omega,E+t(E'-E))dt (E'-E)^2\\
\approx{}& \psi(x,\omega,E)+{\p {\psi}E}(x,\omega,E)(E'-E)
.
\]
Hence 
\be\label{P-app}
(P(E,D)\psi)(x,\omega,E)\approx {\rm p.f.}\int_E^{c(E)}{1\over{E'-E}}\psi(x,\omega,E) dE'=
\ln(c(E)-E)\psi(x,\omega,E),
\ee
\bea\label{Q-app}
(Q(E,D)\psi)(x,\omega,E)
\approx {}&
{\rm p.f.}\int_E^{c(E)}{1\over{(E'-E)^2}}\Big(\psi(x,\omega,E)+{\p {\psi}E}(x,\omega,E)(E'-E) \Big) dE'\nonumber\\
={}&
-{1\over{c(E)-E}}\psi(x,\omega,E)+\ln(c(E)-E){\p {\psi}E}(x,\omega,E).
\eea

Using (\ref{a-psi4-a}), (\ref{P-app}) and (\ref{Q-app}) we obtain formally a partial differential approximation for the transport operator
\bea\label{T-app}
&
T\psi\approx \widetilde T\psi:=
2\pi\ \hat\sigma_2(x,E,E)
\ln(c(E)-E){\p {\psi}E}\nonumber\\
&
+2\pi\ \Big(\hat\sigma_1(x,E,E)\ln(c(E)-E)
+{\p {\hat\sigma_2}{E'}}(x,E,E)\ln(c(E)-E) -  \hat\sigma_2(x,E,E) {1\over{c(E)-E}}\Big)\psi\nonumber\\
&
+\omega\cdot\nabla_x\psi+\Sigma\psi
-K_r\psi.
\eea
In the context of M\o ller scattering
the use of these approximations requires more careful
study but in some cases they can be justified; see e.g. Example \ref{con-brem} below.
Actually, the next section suggests that better angular approximation (instead of (\ref{ess-app}) is needed for the above treated M\o ller scattering.
This leads to transport equations which contain second order  partial derivatives.

%%%%%%%%%%%%%%%%%%%%%%%%%%%%%%%%%%%%%%%%%%%%%%%%%%%%
\subsection{On Error Estimates}\label{err}
%%%%%%%%%%%%%%%%%%%%%%%%%%%%%%%%%%%%%%%%%%%%%%%%%%%%

We leave  the systematic error analysis of the above approximations. Nevertheless, we notice that  the analysis can be founded on the following 
principles.   Again
we assume without further mention that $\psi$ and cross sections are regular enough.
In the following paragraphs A.-C. we bring up some basic principles for error analysis.

A.
Consider the collision operator (\ref{mol-b}) given by
\bea\label{mol-b-1-a}
&
(K\psi)(x,\omega,E)=
{\rm p.f.}\int_E^{E_m}{1\over{(E'-E)^{1+\kappa_2}}}
\int_{S'}\sigma_2(x,\omega',\omega,E',E)\psi(x,\omega',E')d\omega'dE'
\nonumber\\
&
+
{\rm p.f.}\int_E^{E_m}{1\over{(E'-E)^{\kappa_1}}}
\int_{S'}\sigma_1(x,\omega',\omega,E',E)\psi(x,\omega',E')d\omega'dE'
+(K_0\psi)(x,\omega,E)\nonumber\\
=:{}&
(K_2\psi)(x,\omega,E)+(K_1\psi)(x,\omega,E)+(K_0\psi)(x,\omega,E)
\eea
where $K_0$ is a Schur partial integral operator.

We decompose the the finite part integrals as follows
\bea\label{mol-b-2-a}
&
(K_2\psi)(x,\omega,E)=
{\rm p.f.}\int_E^{c(E)}{1\over{(E'-E)^{1+\kappa_2}}}
\int_{S'}\sigma_2(x,\omega',\omega,E',E)\psi(x,\omega',E')d\omega'dE'
\nonumber\\
&
+
{\rm p.f.}\int_{c(E)}^{E_m}{1\over{(E'-E)^{1+\kappa_2}}}
\int_{S'}\sigma_2(x,\omega',\omega,E',E)\psi(x,\omega',E')d\omega'dE'\nonumber\\
=:{}&
(K_{2,1}\psi)(x,\omega,E)+(K_{2,0}\psi)(x,\omega,E)
\eea
and similarly
\bea\label{mol-b-2-b}
&
(K_1\psi)(x,\omega,E)=
{\rm p.f.}\int_E^{c(E)}{1\over{(E'-E)^{\kappa_1}}}
\int_{S'}\sigma_1(x,\omega',\omega,E',E)\psi(x,\omega',E')d\omega'dE'
\nonumber\\
&
+
{\rm p.f.}\int_{c(E)}^{E_m}{1\over{(E'-E)^{\kappa_1}}}
\int_{S'}\sigma_1(x,\omega',\omega,E',E)\psi(x,\omega',E')d\omega'dE'\nonumber\\
=:{}&
(K_{1,1}\psi)(x,\omega,E)+(K_{1,0}\psi)(x,\omega,E).
\eea

As above we see that
the operators $K_{2,0}$ and $K_{1,0}$ are Schur partial integral operators and they need no approximation. 
The problematic "forward peaked operators" $K_{2,1}$ and $K_{1,1}$ are approximated in an appropriate way,
for example as in sections \ref{fhsio} and \ref{shsio} above or as in Example \ref{con-brem} below. Let the approximations of $K_{2,1}$ and $K_{1,1}$ be $\widetilde K_{2,1}$ and $\widetilde K_{1,1}$,
respectively.

B.
Let $\psi$ be the solution of the exact problem
\be\label{pr-a}
T\psi:=-K_2\psi-
K_1\psi
+\omega\cdot\nabla_x\psi+\Sigma\psi-K_0\psi=f,\quad
\psi_{|\Gamma_-}=g
\ee
and let $\tilde\psi$
be the solution of the approximative problem with the same data
\be\label{pr-b}
\widetilde T\tilde\psi:=-\widetilde K_{2,1}\tilde\psi
-
\widetilde K_{1,1}\tilde \psi
+\omega\cdot\nabla_x\tilde \psi+\Sigma\tilde\psi-K_r\tilde\psi=f,\quad
\tilde\psi_{|\Gamma_-}=g
\ee
where $K_r:=K_0+K_{2,0}+K_{1,0}$.
Denote the error by $\delta\psi:=\psi-\tilde\psi$. Then
\be\label{e-3}
T\psi- \widetilde T\tilde\psi=0,\quad (\psi-\tilde\psi)_{|\Gamma_-}=0
\ee
which implies that
\be\label{e-4}
\widetilde T(\delta\psi)=( \widetilde T-T)\psi,\quad (\delta\psi)_{|\Gamma_-}=0.
\ee
The apriori estimates such as \eqref{csda40aaa-dd} below imply that
\be\label{e-9}
c'\n{\delta\psi}_{L^2(G\times S\times I)}\leq \n{( \widetilde T-T)\psi}_{L^2(G\times S\times I)}.
\ee

C.
According to Part A.,
\be\label{e-5}
( \widetilde T-T)\psi= -\widetilde K_{1,1}\psi+K_{1,1}\psi
 -\widetilde K_{2,1}\psi+K_{2,1}\psi 
\ee
and then by (\ref{e-9})
\be\label{e-5-a}
c'\n{\delta\psi}_{L^2(G\times S\times I)}\leq 
\n{K_{1,1}\psi- \widetilde K_{1,1}\psi}_{L^2(G\times S\times I)} +
\n{K_{2,1}\psi-\widetilde K_{2,1}\psi}_{L^2(G\times S\times I)}.
\ee
The basic idea is to show that for $j=1,2$,
\be\label{Kj-conv-a}
\n{K_{j,1}\psi-\widetilde K_{j,1}\psi}_{L^2(G\times S\times I)}\to 0
\ee
in some sense. By \eqref{e-5-a} this ensures that $\delta\psi\to 0$ in $L^2$,
i.e. solution to \eqref{pr-b} in an $L^2$-approximation of the solution to \eqref{pr-a}.

The following example illustrates these techniques.

\begin{example}\label{con-brem}
Consider the collision operator (\ref{mol-b}) with $\kappa_1=1,\ \kappa_2<1$ for which the item (1) of section \ref{r-psio-1} is valid that is,
\bea\label{mol-b-1}
&
(K\psi)(x,\omega,E)=
{\rm p.f.}\int_E^{E_m}{1\over{(E'-E)^{1+\kappa_2}}}
\int_{S'}\sigma_2(x,\omega',\omega,E',E)\psi(x,\omega',E')d\omega'dE'
\nonumber\\
&
+
{\rm p.f.}\int_E^{E_m}{1\over{E'-E}}
\int_{S'}\sigma_1(x,\omega',\omega,E',E)\psi(x,\omega',E')d\omega'dE'
+(K_0\psi)(x,\omega,E)\nonumber\\
=:{}&
(K_2\psi)(x,\omega,E)+(K_1\psi)(x,\omega,E)+(K_0\psi)(x,\omega,E)
\eea
where $K_0$ is a Schur partial integral operator
and where
$\sigma_j:G\times S^2\times I^2\to\R$ are measurable non-negative functions. We assume that $\psi$ and  $\sigma_j$ "regular enough", without specifying this assumption in detail.

Consider the operator $K_2$. We decompose the finite part integral as follows
\bea\label{mol-b-2}
&
(K_2\psi)(x,\omega,E)=
{\rm p.f.}\int_E^{c(E)}{1\over{(E'-E)^{1+\kappa_2}}}
\int_{S'}\sigma_2(x,\omega',\omega,E',E)\psi(x,\omega',E')d\omega'dE'
\nonumber\\
&
+
{\rm p.f.}\int_{c(E)}^{E_m}{1\over{(E'-E)^{1+\kappa_2}}}
\int_{S'}\sigma_2(x,\omega',\omega,E',E)\psi(x,\omega',E')d\omega'dE'\nonumber\\
=:{}&
(K_{2,1}\psi)(x,\omega,E)+(K_{2,0}\psi)(x,\omega,E).
\eea

As above, we find that $K_{2,0}$ is a Schur partial integral operator and it needs no approximation. For $K_{2,1}$ we assume that 
\be\label{mol-b-5}
|\omega'-\omega|\leq C|E'-E|,
\ee
when $E'\in [E,c(E)]$. Note that energy and scattering angle are in correlation and so (\ref{mol-b-5}) is in certain cases reasonable (see Remark \ref{con-mol} below). Assuming  that $E'\approx E$ we have by (\ref{mol-b-5}) $\omega'\approx\omega$ and so
\be\label{mol-b-3}
\psi(x,\omega',E')\approx \psi(x,\omega,E').
\ee
This suggests
\bea\label{mol-b-4}
&
(K_{2,1}\psi)(x,\omega,E)\approx
{\rm p.f.}\int_E^{c(E)}{1\over{(E'-E)^{1+\kappa_2}}}\Big(
\int_{S'}\sigma_2(x,\omega',\omega,E',E)d\omega'\Big)\psi(x,\omega,E')dE'
\nonumber\\
={}&
{\rm p.f.}\int_E^{c(E)}{1\over{(E'-E)^{1+\kappa_2}}}\ol \sigma_2(x,\omega,E',E)\psi(x,\omega,E')dE'=:(\widetilde K_{2,1}\psi)(x,\omega,E)
\eea
where
\[
\ol\sigma_2(x,\omega,E',E):=\int_{S'}\sigma_2(x,\omega',\omega,E',E)d\omega'.
\]

Using Taylor's formula, we further approximate
\bea\label{mol-b-6}
&
(\widetilde K_{2,1}\psi)(x,\omega,E)=
{\rm p.f.}\int_E^{c(E)}{1\over{(E'-E)^{1+\kappa_2}}}\ol\sigma_2(x,\omega,E',E)\psi(x,\omega,E')dE'
\nonumber\\
={}&
{\rm p.f.}\int_E^{c(E)}{1\over{(E'-E)^{1+\kappa_2}}}\ol\sigma_2(x,\omega,E',E)\Big(\psi(x,\omega,E)+{\p {\psi}E}(x,\omega,E)(E'-E)\nonumber\\
&
+\int_0^1(1-t){\q {\psi}E}(x,\omega,E+t(E'-E)) dt(E'-E)^2\Big)dE'
\nonumber\\
={}&
a_2(x,\omega,E)\psi(x,\omega,E)+b_2(x,\omega,E){\p {\psi}E}(x,\omega,E)
+(R\psi)(x,\omega,E) \\
\approx {}&
a_2(x,\omega,E)\psi(x,\omega,E)+b_2(x,\omega,E){\p {\psi}E}(x,\omega,E)
\eea
where
\[
a_2(x,\omega,E):={}&
{\rm p.f.}\int_E^{c(E)}{1\over{(E'-E)^{1+\kappa_2}}}\ol\sigma_2(x,\omega,E',E)dE', \\
b_2(x,\omega,E):={}&
{\rm p.f.}\int_E^{c(E)}{1\over{(E'-E)^{\kappa_2}}}\ol\sigma_2(x,\omega,E',E)dE'.
\]

Similarly we get for the operator $K_1$
\be\label{}
( K_{1,1}\psi)(x,\omega,E)\approx
(\widetilde K_{1,1}\psi)(x,\omega,E)
=a_1(x,\omega,E)\psi(x,\omega,E)+b_1(x,\omega,E){\p {\psi}E}(x,\omega,E)
\ee
where
\[
a_1(x,\omega,E):={\rm p.f.}\int_E^{c(E)}{1\over{(E'-E)^{\kappa_1}}}\ol\sigma_1(x,\omega,E',E)dE',
\]
\[
b_1(x,\omega,E):={\rm p.f.}\int_E^{c(E)}(E'-E)^{1-\kappa_2}\ol\sigma_1(x,\omega,E',E)dE'.
\]

For example, with the choice $c(E)=\alpha E$, $\alpha>1$,
one can verify that for $\kappa_2\leq 1$ the residual
\begin{multline}
(R\psi)(x,\omega,E)  \\
=\int_E^{c(E)}(E'-E)^{1-\kappa_2}\ol\sigma_2(x,\omega,E',E)
\Big(\int_0^1(1-t){\q {\psi}E}(x,\omega,E+t(E'-E)) dt\Big)dE'
\label{1-22}
\end{multline}
convergences to zero in $L^2(G\times S\times I)$ when $\alpha\to 0$,
and similarly for the residual corresponding to the operator $K_{1,1}$. 
Hence the convergences in \eqref{Kj-conv-a} do occur in this case.
\end{example}

\begin{remark}\label{con-mol}
For the M\o ller scattering (section \ref{sec:moller}) the approximation (\ref{mol-b-3}) or in other words
\be\label{app-1}
\psi(x,\gamma(E',E,\omega)(s),E')\approx \psi(x,\omega,E')
\ee
seems insufficient.
In addition, (\ref{mol-b-5}) is not valid but we have
the following.
Using as before a choice of $R(\omega)\in \SO(3)$ s.t. $R(\omega)e_3=\omega$,
and letting
\[
\gamma_0(E',E)(s):=\big(\sqrt{1-\mu^2}\cos(s),\sqrt{1-\mu^2}\sin(s),\mu\big)=R(\omega)^{-1}\gamma(E',E,\omega)(s),
\]
where $\mu=\mu(E',E)$ is given in \eqref{eq:mu},
one obtains
\bea\label{ex-21}
&
\n{\gamma(E',E,\omega)(s)-\omega}^2
=\n{R(\omega)(\gamma_0(E',E)(s)-e_3)}^2
=
\n{\gamma_0(E',E)(s)-e_3}^2  \nonumber\\[2mm]
={}&
2(1-\mu(E',E))
=2{{1-\mu(E',E)^2}\over{1+\mu(E',E)}}
=
4{{|E'-E|}\over{(1+\mu(E',E))E'(E+2)}}\nonumber\\[2mm]
\leq {}& 
4{{|E'-E|}\over{E(E+2)}}
\leq 
4{{|E'-E|}\over{E_0(E_0+2)}}=C_1|E'-E|,
\eea
where $C_1:=4{{1}\over{E_0(E_0+2)}}$,
and we used that $E'\geq E\geq E_0$.
A convergent approximation  requires instead of  (\ref{app-1}) the usage of higher order Taylor's expansions as outlined below.

We can improve the approximation (\ref{app-1}) by using  the Taylor's  first and second order polynomials with respect to angle. Recall from section \ref{taylor-S} that
the first and second order Taylor's expansions of $\psi$ with respect to $\omega$ around $\omega$ are
\[
\psi(x,\gamma(E',E,\omega)(s),E')\approx \psi(x,\omega,E')
+\la(\nabla_{\omega}\psi)(x,\omega,E'),v\ra 
\]
and 
\be\label{ess-app-2}
\psi(x,\gamma(E',E,\omega)(s),E')\approx \psi(x,\omega,E')
+\la(\nabla_{\omega}\psi)(x,\omega,E'),v\ra 
+(({\rm Hes}_\omega \psi)(x,\omega,E'))(v,v)
\ee
where $v=v(E',E,\omega,s)\in T_\omega(S)$ and satisfies
\[
\n{v}\leq C\n{\gamma(E',E,\omega)(s)-\omega}.
\]

For instance, inserting the approximation (\ref{ess-app-2})
we get the following pseudo-differential approximation for the collision operator
\bea\label{es-10}
&
(\widetilde K\psi)(x,\omega,E)=(\widetilde K_{2,1}\psi)(x,\omega,E)+(\widetilde K_{1,1}\psi)(x,\omega,E)+( K_{r}\psi)(x,\omega,E)\nonumber\\
={}&
\sum_{|\beta|=2}
{\rm p.f.}\int_{E}^{c(E)}\hat\sigma_1(x,E',E)a_\beta(\omega,E',E)
{1\over{E'-E}}(\partial_{\omega}^\beta\psi)(x,\omega,E')  dE'
\nonumber\\
&
+
\sum_{|\alpha|=1} {\rm p.f.}\int_{E}^{c(E)}\hat\sigma_1(x,E',E)b_\alpha(\omega,E',E)
{1\over{E'-E}}(\partial_{\omega}^\alpha\psi)(x,\omega,E') dE'\nonumber\\ 
&
+
2\pi\ {\rm p.f.}\int_{E}^{c(E)}\hat\sigma_1(x,E',E){1\over{E'-E}}\psi(x,\omega,E') dE'
\nonumber\\
&
+
\sum_{|\beta|=2} {\rm p.f.}\int_{E}^{c(E)}\hat\sigma_2(x,E',E)a_\beta(\omega,E',E)
{1\over{(E'-E)^2}}(\partial_{\omega}^\alpha\psi)(x,\omega,E') dE'
\nonumber\\
&
+
\sum_{|\alpha|=1} {\rm p.f.}\int_{E}^{c(E)}\hat\sigma_2(x,E',E)b_\alpha(\omega,E',E)
{1\over{(E'-E)^2}}(\partial_{\omega}^\alpha\psi)(x,\omega,E') dE'\nonumber\\ 
&
+
2\pi\ {\rm p.f.}\int_{E}^{c(E)}\hat\sigma_2(x,E',E){1\over{(E'-E)^2}}\psi(x,\omega,E') dE'\nonumber\\
&
+
( K_{r}\psi)(x,\omega,E)
\eea
where 
\begin{multline}
\sum_{|\beta|=2}a_\beta(\omega,E',E)
(\partial_{\omega}^\beta\psi)(x,\omega,E')\nonumber\\
:={}
{1\over 2}\sum_{i=1}^2\sum_{j=1}^2\partial_i\partial_j (\psi\circ  H_\omega)(x,0,E')\int_0^{2\pi}\xi_i(E',E,\omega,s)\xi_j(E',E,\omega,s) ds,\nonumber
\end{multline}
\[
\sum_{|\alpha|=1}b_\alpha(\omega,E',E)
(\partial_{\omega}^\alpha\psi)(x,\omega,E')
:=\sum_{j=1}^2\partial_j (\psi\circ  H_\omega)(x,0,E')\int_0^{2\pi}\xi_j(E',E,\omega,s) ds.
\]
Above $\xi(E',E,\omega,s):=J(v(E',E,\omega,s))$,
and $J$ is defined in \eqref{eq:J_iso}.

Combining the approximations given in section \ref{a-psi} we obtain an approximation of the collision operator which contains only partial differential and Schur partial integral operators. 
Concerning M\o ller scattering, a preliminary error analysis  suggests that  
the modelling it using CSDA equations may be somewhat problematic in the sense that
the solution of the approximative problem does not necessarily converge (without further information) to the solution of the exact problem.  
Here the convergence is understood in the sense that the cut-off energy for 
electrons is $c(E)=\alpha E$ and we let $\alpha\to 1^+$
(which implies that $\mu(E',E)\to 1$ for $E\in I$ and $E'\in [E,c(E)]$).
Instead of the simple approximation \eqref{app-1} on $[E,c(E)]$,
 we must use the second order Taylor's polynomials for the angular approximation.
When considering partial differential approximations the resulting approximative transport operator is a partial integro-differential operator of the form
\bea\label{tr-op-gene}
&
T\psi=a(x,E){\p {\psi}E}
+\sum_{|\alpha|\leq 2}b_\alpha(x,\omega,E)\partial_\omega^\alpha\psi
+\omega\cdot\nabla\psi+\Sigma(x,\omega,E)\psi-K_r\psi
\eea
where $\partial_\omega^\alpha:=\partial_\omega^{(\alpha_1,\alpha_2)}$. As usual, here 
\[
\partial_\omega^{(\alpha_1,\alpha_2)}:=\partial_{\tilde\omega_1}^{\alpha_1}\partial_{\tilde\omega_2}^{\alpha_2},
\]
where $\partial_{\tilde\omega_j}:={\partial\over{\partial\tilde\omega_j}},\ j=1,2$
in a given local coordinate system of $S$,
with $b_\alpha(x,\omega,E)$ depending on those coordinates.
It might also be reasonable to use the second order Taylor's expansions with respect to energy variable, leading to the second order partial integro-differential operator
\begin{multline}
T\psi=
a_1(x,E){\q {\psi}E}+a_2(x,E){\p {\psi}E}
+\sum_{|\alpha|\leq 2}b_\alpha(x,\omega,E)\partial^\alpha_\omega\psi \\
+
\omega\cdot\nabla\psi+\Sigma(x,\omega,E)\psi-K_r\psi. \label{tr-op}
\end{multline}
Note that in the above operator the term $\sum_{|\alpha|\leq 2}b_\alpha(x,\omega,E)\partial_\omega^\alpha\psi$ corresponds to the second order 
partial differential term with respect to angular variables appearing in 
Fokker-Plank equation (see \cite{morel81}).
In the case where $I=[E_0,E_m]$ relevant inflow boundary and initial conditions for the operator (\ref{tr-op}) are
\begin{gather}
\psi_{|\Gamma_-}=g, \label{b-cond} \\
\psi(\cdot,\cdot,E_m)={\p {\psi}E}(\cdot,\cdot,E_m)=0. \label{in-cond}
\end{gather}

We finally remark that  for less singular interactions (such as Bremsstrahlung-like collisions) the approximation (\ref{mol-b-3}) is sufficient to guarantee the  convergences (\ref{Kj-conv-a})
and the resulting transport operator can successfully approximated by the operator of the form
\be\label{tr-op-brem}
T\psi= a(x,E){\p {\psi}E}
+\omega\cdot\nabla\psi+\Sigma(x,\omega,E)\psi-K_r\psi.
\ee
In addition, the approximation (\ref{mol-b-4}) is sufficient to guarantee the convergences (\ref{Kj-conv-a}) for certain values of parameters $\kappa_j$  when the collision operator is of the form (\ref{mol-b}). The resulting approximative transport operator is of the form (\ref{tr-op-brem}).
We omit all details of these cases.
 
\end{remark}

The above approximations are reasonable only if $\psi$ is regular enough, say $\psi$ belongs to the { mixed-norm Sobolev-Slobodevskij space}
$ H^{(s_1,s_2,s_3)}(G\times S\times I^\circ)$  for an appropriate index $s=(s_1,s_2,s_3)$
Related regularity analysis of solutions of BTE is beyond the scope of this paper.

\begin{remark}
Instead of the (single particle) operator
\[
T\psi:=\omega\cdot\nabla_x\psi+\Sigma\psi-K\psi
\]
defined for $\psi\in L^2(G\times S\times I)$,
one could have postulated that the linear Boltzmann transport operator 
modelling the radiation transport have one of the two alternative
forms $T_s$ or $T_s'$
described below.

To fix the ideas, we assume in this remark that
$K$ has the (classical) form given in \eqref{eq:inelastic_K_no_constraints},
namely
\[
(K\psi)(x,\omega,E)=\int_{S\times I}\sigma(x,\omega',\omega,E',E)\psi(x,\omega',E')d\omega' dE',
\]
and that $\sigma$ satisfies the conditions given in Theorem \ref{bound-K1-a}.
After appropriate minor changes,
the discussion below applies also to the case
where collision operator $K$ has the form \eqref{mollerk}
or \eqref{elastick}
with $\hat{\sigma}$ or $\sigma$ satisfying the conditions
set in Theorems \ref{bound-K3-a} or \ref{el-k-b},
respectively.

Write
\[
\Sigma_s(x,\omega,E):={}& \int_{S\times I}\sigma(x,\omega,\omega',E,E')d\omega' dE', \\
\Sigma_s'(x,\omega,E):={}& \int_{S\times I}\sigma(x,\omega',\omega,E',E)d\omega' dE',
\]
so that $\Sigma_a:=\Sigma-\Sigma_s$ is the \emph{absorption} coefficient,
and define analogously \linebreak ${\Sigma_a':=\Sigma-\Sigma_s'}$.

In \cite{morel81} transport operator  $\Gamma_{\rm B}$
(see Eq. (3a) in \cite{morel81}) analysed
takes the form
\[
T_s\psi:=\omega\cdot\nabla_x\psi+\Sigma_a\psi-K_s\psi
\]
where, 
\[
(K_s\psi)(x,\omega,E)
:={}&
(K\psi-\Sigma_s\psi)(x,\omega,E) \\
={}&
\int_{S\times I}\big(\sigma(x,\omega',\omega,E',E)\psi(x,\omega',E')-\sigma(x,\omega,\omega',E,E')\psi(x,\omega,E)\big)d\omega' dE'
\]
On the other hand, in \cite{maire06}
the relevant (stationary version of) transport operator (see $\mc{T}$ at the beginning of section 2.2.1 in \cite{maire06})
would be 
\[
T_s'\psi:=\omega\cdot\nabla_x\psi+\Sigma'_a\psi-K_s'\psi,
\]
where
\[
(K_s'\psi)(x,\omega,E)
:={}&
(K\psi-\Sigma'_s\psi)(x,\omega,E) \\
={}&
\int_{S\times I}\sigma(x,\omega',\omega,E',E)\big(\psi(x,\omega',E')-\psi(x,\omega,E)\big)d\omega' dE'
\]
Furthermore, the (stationary version of) the transport
operator in Eq. (36) of \cite{bal09} happens to be of both types $T_s$ and $T_s'$ (since $\Sigma_s=\Sigma'$ in that case).

It should be pointed out that collision operator $K_s$ satisfies
\[
\int_{S\times I} (K_s\psi)(x,\omega,E) d\omega dE=0,
\]
if, for example, $\sigma$ is non-negative and satisfies \eqref{ass5-a} (with $\sigma^1=\sigma)$, and \linebreak ${\psi\in L^2(G\times S\times I)}$.
It is evident
that under the stated conditions on $\sigma$,
the operators $T$, $T_s$ and $T_s'$  have the same domains of definition.

\end{remark}

%%%%%%%%%%%%%%%%%%%%%%%%%%%%%%%%%%%%%%%%%%%%%%%%%%%%%%%
\sectionspace
\section{On Restricted Collision Operators}\label{rco}
%%%%%%%%%%%%%%%%%%%%%%%%%%%%%%%%%%%%%%%%%%%%%%%%%%%%%%%

In this section we everywhere assume that 
\be
\Sigma\in L^\infty(G\times S\times I),\quad \Sigma\geq 0.
\ee
We shall find that the restricted collision $K_r$ operator is a sum of partial Schur integral operators. For the basic theory of Schur integral operators we refer to \cite{halmos}.  Our aim is to show boundedness of $K_r$ and accretivity (coercitivity) of $\Sigma-K_r$ in $L^2(G\times S\times I)$. We begin with the consideration of the restricted collision operators related to the 
M\o ller and Compton scattering.

%%%%%%%%%%%%%%%%%%%%%%%%%%%%%%%%%%%%%%%%%%%%%%%%%%%%%%%%%%%%%%%%%%%%%%
\subsection{Restricted Collision Operator Related to Inelastic Scattering Processes Obeying Kinematic Constraints} \label{rco-m}
%%%%%%%%%%%%%%%%%%%%%%%%%%%%%%%%%%%%%%%%%%%%%%%%%%%%%%%%%%%%%%%%%%%%%%

When a scattering process is inelastic and all the particles
participating to it
are bound by kinematic constraints (conservation of energy-momentum),
then in some instances the corresponding scattering operator $K$ takes the form
\be\label{mollerk}
(K\psi)(x,\omega,E)=
\int_{I'}\int_0^{2\pi}\hat\sigma(x,E',E)\psi(x,\gamma(E',E,\omega)(s),E') ds dE'.
\ee

As we have already mentioned above the restricted collision operator for M\o ller phenomenon is of the form \eqref{mollerk} (once we absorb $\chi(E',E)$ into $\hat\sigma(x,E',E)$).
Also, the collision operator for the Compton-Klein-Nishina scattering is also of this form, see Example \ref{compton} below.

\begin{remark}\label{re:kinematics}
Respecting kinematic constraints in the scattering process
typically result in a $\delta$-function factor
in a cross section
such as $\delta(\omega'\cdot\omega-\mu(E',E))$
in M\o ller scattering in Eq. \eqref{eq:ol_s_K_22_j}
(similarly for Compton-Klein-Nishina scattering).

Indeed, in M\o ller scattering (normalizing the electron mass and the speed of light to $=1$),
the conservation of energy-momentum (in laboratory system) reads
in $\R^{4}$
\begin{align}\label{eq:m_m_kinematics}
(E'+1,{p'}\omega')+(1,0)=(E+1,p\omega)+(E_s+1,p_s\omega_s)
\end{align}
where triples $(E',{p'},\omega')$, $(E,p,\omega)$
and $(E_s,p_s,\omega_s)$
are (kinetic energy, momentum magnitude, momentum direction (=unit vector))
for incoming, primary outgoing and secondary outgoing electrons
participating into the scattering process
(the incoming secondary electron is at rest,
reflected by the term $(1,0)$ on the left hand side).
Secondary electron is selected by the condition $E_s\leq E$.
Moreover, by the relativistic energy-momentum relation
$(E'+1)^2={p'}^2+1$ and similarly for $(E,p)$ and $(E_s,p_s)$.

It easily follows from \eqref{eq:m_m_kinematics}
that
\[
E_s=E'-E,
\quad
\omega'\cdot\omega=\frac{{p'}^2+p^2-p_s^2}{2p'p}.
\]
Substituting expressions for $p'$, $p$, $p_s$
in terms of $E'$, $E$, $E_s$, respectively,
as given by energy-momentum relation,
and finally substituting $E_s=E'-E$
results in
\[
\omega'\cdot\omega=\frac{E'(E'+2)+E(E+2)-(E'-E)(E'-E+2)}{2\sqrt{E(E+2)E'(E'+2)}}.
\]
Straightforward simplification of the right and side
shows that it equals $\mu(E',E)$ of Eq. \eqref{eq:mu}.

Note that these computations are, of course, 
valid for any massive particle,
(not only for electrons)
colliding with an identical one.

It is the fulfilment of the above equation
$\omega'\cdot\omega=\mu(E',E)$ that is ensured by 
the composite delta function $\delta(\omega'\cdot\omega-\mu(E',E))$
associated with the M\o ller and Compton scattering cross section (see \eqref{eq:ol_s_K_22_j} and Example \ref{compton} below),
eventually leading to collision operator of the form \eqref{mollerk} (see section \ref{sec:moller}).

Similar remark holds true e.g. for Compton-Klein-Nishina cross section, although $\mu(E',E)$ is different since
the relevant kinematic equation is
different from \eqref{eq:m_m_kinematics}
due to the fact that in that case,
one particle is massless (photon) while the other is massive (electron).
\end{remark}

\begin{example}\label{ex:moller}
{\it Electron-electron scattering - M\o ller}.
We denote the corresponding differential cross section by $\sigma_{22}(x,\omega',\omega,E',E)$. It has a decomposition  (\cite{duclous}, \cite{lorence}, \cite{boman}, \cite{hensel})
\be\label{i-e-e1} 
\sigma_{22}(x,\omega',\omega,E',E)
=\sigma^p_{22}(x,\omega',\omega,E',E)+\sigma^s_{22}(x,\omega',\omega,E',E).
\ee
where $\sigma^p_{22}(x,\omega',\omega,E',E)$ is corresponding to the (new) primary electrons
and  $\sigma^s_{22}(x,\omega',\omega,E',E)$ is corresponding to the secondary electrons.
In this scattering process the spins have been averaged out,
and the two electrons completely lose their identity.
Therefore, categorizing the electrons as "primary" and "secondary" is simply done
by assigning the electron exiting the scattering event with the highest energy to be the primary one.
The scattering cross section for primary electron $\sigma^p_{22}(x,\omega',\omega,E',E)$ has an expression
\begin{multline*}
\sigma^p_{22}(x,\omega',\omega,E',E)
=
\sigma_{0}(x){{(E'+1)^2}\over{E'(E'+2)}}\Big({1\over{E^2}}
+{1\over{(E'-E)^2}}+{1\over{(E'+1)^2}} \\
-{{2E'+1}\over{(E'+1)^2E(E'-E)}}\Big)\chi_{22,p}(E',E)\delta(\omega'\cdot\omega-\mu_{22,p}(E',E)),
\end{multline*}
where $\sigma_{0}(x)$ depends on the background material, and
\[
& \mu_{22,p}(E',E):=\sqrt{{E(E'+2)}\over{E'(E+2)}} \\
& \chi_{22,p}(E',E):=\chi_{\R_+}(E-E_0)\chi_{\R_+}(E-{E'\over 2})\chi_{\R_+}(E'-E),
\]
while the cross section for the secondary electron $\sigma^s_{22}(x,\omega',\omega,E',E)$ is
\begin{multline*}
\sigma^s_{22}(x,\omega',\omega,E',E)
=
\sigma_0(x){{(E'+1)^2}\over{E'(E'+2)}}\Big({1\over{E^2}}
+{1\over{(E'-E)^2}}+{1\over{(E'+1)^2}}\\
-{{2E'+1}\over{(E'+1)^2E(E'-E)}}\Big)\chi_{22,s}(E',E)\delta(\omega'\cdot\omega-\mu_{22,s}(E',E)),
\end{multline*}
where 
\[
& \mu_{22,s}(E',E):=\mu_{22,p}(E',E'-E), \\
& \chi_{22,s}(E',E):=\chi_{\R_+}({{E'}\over 2}-E)\chi_{\R_+}(E-E_0).
\]
Since $\sigma^s_{22}(x,\omega',\omega,E',E)=0$ for $E'\leq 2E$ the 
singularities at $E'=E$ do not cause any problems for the secondary electrons.
\end{example}

\begin{example}\label{compton}
{\it Photon-electron scattering - Compton-Klein-Nishina}.
This scattering process describes the collision of a photon with (free) electron,
and the corresponding photon$\to$ photon (i.e. $1\to 1$) scattering cross-section is given by
(for its derivation, see \cite[Chap. III \S 16]{heitler47}, \cite[Sec. 8.7]{weinberg}; see also \cite[Section VII]{lorence})
\bea\label{eq:compton}
\sigma_{11}(x,\omega',\omega,E',E)
={}&
\hat{\sigma}_{11}(x,E',E)\chi_{11}(E,E')
\delta(\omega'\cdot\omega-\mu_{11}(E',E)),
\eea
where
\[
\hat{\sigma}_{11}(x,E',E):={}&\sigma_0(x)\Big({{1}\over{E'}}\Big)^2\Big({{E'}\over{E}}+{{E}\over{E'}}-1+\mu_{11}(E',E)^2\Big)
\nonumber\\
\chi_{11}(E',E):={}&\chi_{\R_+}(E-E_0)\chi_{\R_+}\big(E-\frac{E'}{1+2E'}\big)\chi_{\R_+}(E'-E) \\
\mu_{11}(E',E):={}&1+{{1}\over{E'}}-{{1}\over{E}}.
\]
Here $(\omega', E')$ and $(\omega, E)$ are, respectively, the (direction, energy) of the incident and the scattered (outgoing) photons.
If the scattering angle is written as $\theta_{11}$, then
$\omega\cdot\omega'=\cos(\theta_{11})=\mu_{11}(E',E)$,
and this condition is enforced by the delta-distribution term in $\sigma_{11}$.

We point out that if one defines (for a presentation more or less in this way, see \cite[Appendix A.1.]{hensel}),
\[
\ol{\sigma}_{11}(x,\omega',\omega,E',E)
=&\sigma_0(x)\ol{\sigma}_{11}'(E',E)\delta(E-E'\hspace{0.5mm}\ol{P}(\omega,\omega',E')),
\]
where
\[
& P(E',E):=\frac{1}{1+E'(1-\mu_{11}(E',E))}=\frac{E}{E'}, \\
& \ol{P}(\omega,\omega',E'):=\frac{1}{1+E'(1-\omega'\cdot\omega)}, \\
& \ol{\sigma}_{11}'(E',E)
:=P(E',E)^2\Big(P(E',E)+\frac{1}{P(E',E)}-1+\mu_{11}(E',E)^2\Big),
\]
Then $\sigma_0(x)\ol{\sigma}_{11}'(E',E)=E^2\hat{\sigma}_{11}(x,E',E)$,
using which one can further show that the collision operator, say $\ol{K}_{11}$, corresponding to $\ol{\sigma}_{11}$
is equal to the collision operator $K_{11}$ defined by $\sigma_{11}$, i.e. $K_{11}=\ol{K}_{11}$.
\end{example}

\begin{lemma}\label{le-m:0}
Let $-1<t<1$, and for any $\omega\in S$ let
\[
\Gamma_{t}^\omega=\{\omega'\in S\ |\ \omega\cdot\omega'=t\}.
\]
Then for any $f\in L^1(S)$ one has
\begin{align}\label{eq:le-m:0}
\int_S \int_{\Gamma_t^\omega} f(\omega')d\ell(\omega') d\omega
=2\pi\sqrt{1-t^2}\int_S f(\omega)d\omega,
\end{align}
where $\int_{\Gamma_t^\omega} f(\omega')d\ell(\omega')$ is the path integral of $f$ along $\Gamma_t^\omega$
(it is well-defined for almost every $\omega\in S$).
\end{lemma}

Notice that $\ell(\Gamma^\omega_t)=2\pi\sqrt{1-t^2}$, where
$\ell(\Gamma^\omega_t)$ is the measure (length) of $\Gamma^\omega_t$.

\begin{proof}
We give here a direct elementary proof of this claim.
For an alternative proof, which relies on properties of the Haar measure on $\SO(3)$, see Appendix \ref{ap:le-m:0}.

Let $-1<t_0<t<1$.
Then since for every $\omega\in S$,
the usual measure $d\mu_S(\omega)=d\omega$ on $S$
disintegrates into a family of measures
$(1-s^2)^{-1/2} d\ell_s\otimes ds$, $-1<s<1$,
where $\ell_s$ is the path length measure along the curve
$\Gamma^\omega_s$. Hence
we have
\[
{}&
\int_{t_0}^{t} (1-s^2)^{-1/2}\int_S \int_{\Gamma_s^\omega} f(\omega')d\ell(\omega')d\omega ds \\
= {}&
\int_S \int_{t_0}^{t} \int_{\Gamma_s^\omega} f(\omega')(1-s^2)^{-1/2} d\ell(\omega')ds d\omega \\
= {}&
\int_S \int_{S_{\omega,[t_0,t]}} f(\omega')d\omega' d\omega,
\]
where
\[
S_{\omega,[t_0,t]}
=\{\omega'\in S\ |\ \omega'\cdot\omega\in [t_0,t]\}.
\]
Noticing that
\[
{}& \{(\omega,\omega')\in S\times S'\ |\  \omega'\in S_{\omega,[t_0,t]}\} \\
= {}&
\{(\omega,\omega')\in S\times S'\ |\ \omega'\cdot\omega\in [t_0,t]\} \\
= {}&
\{(\omega',\omega)\in S'\times S\ |\  \omega\in S_{\omega',[t_0,t]}\},
\]
it follows from Fubini's theorem that we can exchange
integrals w.r.t $\omega$ and $\omega'$ as follows
\[
\int_S \int_{S_{\omega,[t_0,t]}} f(\omega')d\omega' d\omega
=
\int_{S'} \int_{S_{\omega',[t_0,t]}} f(\omega')d\omega d\omega'.
\]
Thus
\begin{multline*}
\int_{t_0}^{t} (1-s^2)^{-1/2}\int_S \int_{\Gamma_s^\omega} f(\omega')d\ell(\omega')d\omega ds \\
=
\int_{S'} \int_{S_{\omega',[t_0,t]}} f(\omega')d\omega d\omega'
=
\int_{S'} \mu_S(S_{\omega',[t_0,t]}) f(\omega') d\omega'.
\end{multline*}

Next we notice that
$\mu_S(S_{\omega',[t_0,t]})$ is independent of $\omega'\in S$,
and show that its value is nothing more than (the surface area of the spherical zone) $2\pi (t-t_0)$.
Indeed,
\begin{multline*}
\mu_S(S_{\omega',[t_0,t]})
=
\int_{t_0}^{t} \int_{\Gamma_t^{\omega'}} (1-t^2)^{-1/2}d\ell(\omega') dt \\
=
\int_{t_0}^{t} (1-t^2)^{-1/2}\, 2\pi (1-t^2)^{1/2} dt
=2\pi (t-t_0).
\end{multline*}

We have therefore shown that
\[
\int_{t_0}^{t} (1-s^2)^{-1/2}\int_S \int_{\Gamma_s^\omega} f(\omega')d\ell(\omega')d\omega ds
=
2\pi (t-t_0) \int_{S'} f(\omega') d\omega'.
\]
Finally, taking derivative w.r.t $t$ we arrive at the
stated formula
\[
(1-t^2)^{-1/2}\int_S \int_{\Gamma_t^\omega} f(\omega')d\ell(\omega')d\omega
=
2\pi \int_S f(\omega') d\omega'.
\]
\end{proof}

Our next lemma will be an immediate corollary of the lemma just stated.
As in section as in section \ref{sec:moller},
let $\gamma=\gamma(E',E,\omega):[0,2\pi]\to S$
be a parametrization of the curve
\[
\Gamma(E',E,\omega)=\{\omega'\in S\ |\ \omega'\cdot\omega-\mu(E',E)=0\}.
\]
with constant speed $\n{\gamma'(s)}=\sqrt{1-\mu(E',E)^2}$.

\begin{lemma}\label{le-m}
For $\psi\in L^1(G\times S\times I)$ and a.e. $(x,E,E')\in G\times I\times I$ we have
\[
\int_{S}\int_0^{2\pi} \psi(x,\gamma(E',E,\omega)(s),E') ds d\omega
=
2\pi \int_{S} \psi(x,\omega,E') d\omega.
\]
\end{lemma}

\begin{proof}
For a fixed $(x,E',E)$,
we write $f(\omega''):=\psi(x,\omega'',E')$,
which belongs in $L^1(S)$ for a.e. choice of $(x,E',E)$.
Write also $\gamma=\gamma(E',E,\omega)$.
Then,
\[
{}&
\int_0^{2\pi} \psi(x,\gamma(E',E,\omega)(s),E') ds
=\int_0^{2\pi} f(\gamma(s)) ds \\
={}&\frac{1}{\sqrt{1-\mu(E',E)^2}}\int_0^{2\pi} f(\gamma(s))\n{\gamma'(s)} ds
=\frac{1}{\sqrt{1-\mu(E',E)^2}}\int_{\Gamma^\omega_{\mu(E',E)}} f(\omega')d\ell(\omega'),
\]
Integrating over $S$ and using Lemma \ref{le-m:0} 
we thus obtain
\[
\int_S \int_0^{2\pi} \psi(x,\gamma(E',E,\omega)(s),E') ds d\omega
={}&
\frac{1}{\sqrt{1-\mu(E',E)^2}}\int_S \int_{\Gamma^\omega_{\mu(E',E)}} f(\omega')d\ell(\omega') d\omega \\
={}&
2\pi\int_S f(\omega)d\omega=2\pi\int_S \psi(x,\omega,E') d\omega,
\]
which is what we wanted to show.
\end{proof}

Next we shall verify the boundedness of $K$.

\begin{theorem}\label{bound-K3-a}
Suppose that $\hat\sigma:G\times I^2\to\R$ is a non-negative measurable function such that for a.e. $(x,\omega,E)\in G\times S\times I$,
\bea\label{k3-c}
&
\int_{I'}\hat\sigma(x,E',E) dE'\leq M_1
\nonumber\\
&
\int_{I'}\hat\sigma(x,E,E') dE'\leq M_2,
\eea
for some constants $0\leq M_1,M_2<\infty$.
Then $K: L^2(G\times S\times I)\to L^2(G\times S\times I)$ is bounded and
\be
\n{K}\leq 2\pi\sqrt{M_1M_2}. 
\ee
\end{theorem}

\begin{proof}

We have by the Cauchy-Schwarz inequality (at second step)
\bea\label{k3-c-2}
&
|(K\psi)(x,\omega,E)|
\nonumber\\
\leq {}&
\int_{I'}\int_0^{2\pi}|\hat\sigma(x,E',E)|^{1/2+1/2}|\psi(x,\gamma(E',E,\omega)(s),E')| ds dE' 
\nonumber\\
\leq {}&
\Big[\int_{I'}\int_0^{2\pi}|\hat\sigma(x,E',E)|ds dE'\Big]^{1/2}
\nonumber\\
{}&
\cdot
\Big[\int_{I'}\int_0^{2\pi}\hat\sigma(x,E',E)|\psi(x,\gamma(E',E,\omega)(s),E')|^2   ds dE' \Big]^{1/2}
\nonumber\\
\leq {}&
(2\pi M_1)^{1/2}
\Big[\int_{I'}\int_0^{2\pi}\hat\sigma(x,E',E)|\psi(x,\gamma(E',E,\omega)(s),E')|^2   ds dE' \Big]^{1/2}
\eea
and then by Lemma \ref{le-m}
\bea\label{bound-c-3}
&
\int_G\int_{S}\int_I |(K\psi)(x,\omega,E)|^2 dE d\omega dx
\nonumber\\
\leq {}&
(2\pi)^2 M_1
\int_G\int_{I}\int_{I'}\int_{S''}|\hat\sigma(x,E',E)||\psi(x,\omega'',E')|^2
d\omega'' dE' dE dx 
\nonumber\\
\leq {}&
(2\pi)^2M_1 M_2 \int_G\int_{I'}\int_{S''}|\psi(x,\omega'',E')|^2
d\omega'' dE' dx 
\eea
which implies the assertion.
\end{proof}

An accretivity result for $\Sigma-K$ reads as follows.

\begin{theorem}\label{accre-K3-a}
Suppose that $\hat\sigma:G\times I^2\to\R$ is a non-negative measurable
function and $c\in\R$ is a constant such that
for a.e. $(x,\omega,E)\in G\times S\times I$,
\bea\label{k3-n3-disas}
&
\Sigma(x,\omega,E)-2\pi \int_{I'}\hat\sigma(x,E',E) dE'\geq c
\nonumber\\
&
\Sigma(x,\omega,E)-2\pi\int_{I'}\hat\sigma(x,E,E') dE'\geq c.
\eea
Then for all $\psi\in L^2(G\times S\times I)$
\be
\la (\Sigma-K)\psi,\psi\ra_{L^2(G\times S\times I)}\geq c\n{\psi}_{L^2(G\times S\times I)}^2.
\ee
\end{theorem}

\begin{proof}

We have
\be\label{k3-n3-1}
\la K\psi,\psi\ra_{L^2(G\times S\times I)}
=\int_G\Big(\int_{S}\int_I(K\psi)(x,\omega,E)\psi(x,\omega,E)  d\omega dE\Big) dx
\ee
where by the Cauchy-Schwarz's inequality (2nd step) and by Lemma
\ref{le-m} (3rd step)
\bea\label{k3-n3-2}
&
\int_{S}\int_I(K\psi)(x,\omega,E)\psi(x,\omega,E) dE d\omega
\nonumber\\
\leq {}& 
\int_{S}\int_I\int_{I'}\int_0^{2\pi}|\hat\sigma(x,E',E)|^{1/2+1/2}|\psi(x,\gamma(E',E,\omega)(s),E')|\ |\psi(x,\omega,E)| ds dE' dE d\omega
\nonumber\\
\leq {}&
\Big(\int_{S}\int_I\int_{I'}\int_0^{2\pi}\hat\sigma(x,E',E)|\psi(x,\gamma(E',E,\omega)(s),E')|^2   ds dE' dE d\omega \Big)^{1/2}
\nonumber\\
&
\cdot
\Big(\int_{S}\int_I\int_{I'}\int_0^{2\pi}\hat\sigma(x,E',E)|\psi(x,\omega,E)|^2   ds dE' dE d\omega \Big)^{1/2}
\nonumber\\
= {}&
\Big(2\pi\int_I\int_{I'}\int_{S''}\hat\sigma(x,E',E)|\psi(x,\omega'',E')|^2     d\omega'' dE' dE\Big)^{1/2}
\nonumber\\
&
\cdot
\Big(2\pi\int_{S}\int_I\int_{I'}\hat\sigma(x,E',E)|\psi(x,\omega,E)|^2 dE' dE d\omega\Big)^{1/2}
\nonumber\\
= {}&
\Big(\int_{I'}\int_{S''}\Big(2\pi\int_I \hat\sigma(x,E',E) dE\Big)|\psi(x,\omega'',E')|^2 d\omega'' dE' \Big)^{1/2}
\nonumber\\
&
\cdot
\Big(\int_{S}\int_I \Big(2\pi\int_{I'}\hat\sigma(x,E',E)dE'\Big) |\psi(x,\omega,E)|^2 dE d\omega\Big)^{1/2}
\nonumber\\
\leq {}&
\int_{S}\int_{I} (\Sigma(x,\omega,E)-c)|\psi(x,\omega,E)|^2 dE d\omega
\eea
where
in the last step we applied the assumptions (\ref{k3-n3-disas}).
Hence by integrating (\ref{k3-n3-2}) over $G$ we obtain
\[
\la (\Sigma-K)\psi,\psi\ra_{L^2(G\times S\times I)}\geq c\n{\psi}_{L^2(G\times S\times I)}^2
\]
which completes the proof. 

\end{proof}

%%%%%%%%%%%%%%%%%%%%%%%%%%%%%%%%%%%%%%%%%%%%%%%%%%%%%%%%%%%%%%%%%%%%
\subsection{Collision Operator Related to Elastic Scattering}\label{el-k}
%%%%%%%%%%%%%%%%%%%%%%%%%%%%%%%%%%%%%%%%%%%%%%%%%%%%%%%%%%%%%%%%%%%%

In elastic scattering $K$ is typically of the form
\be\label{elastick}
(K\psi)(x,\omega,E)=
\int_{S'}\sigma(x,\omega',\omega,E)\psi(x,\omega',E) d\omega'.
\ee

One example of elastic scattering processes is \emph{screened Rutherford scattering} 
of electrons whose cross section is (\cite{boman}, p. 34 )
\be \label{e-el}
\sigma(x,\omega',\omega,E)=
\sigma_0(x){{(E+1)^2}\over{E^2(E+2)^2}}{1\over{(1-\omega'\cdot\omega+2\eta(x,E))^2}}
\ee
where $\sigma_0\in L^\infty(G)$ and $\eta\in L^\infty(G\times I)$ for which $\eta \geq \eta_0>1$.

Another relevant example is elastic \emph{Rayleigh scattering}
of photons whose cross section is
\be \label{eq:rayleigh}
\sigma(x,\omega',\omega,E)
=\sigma_0(x)(1+(\omega'\cdot\omega)^2).
\ee

For further examples of elastic scattering processes, e.g. Mott scattering with Molière screening (for electrons),
and their cross sections, see \cite{lorence}.

We show that under relevant criteria $K$ is bounded.

\begin{theorem}\label{el-k-b}
Suppose that $\sigma:G\times S^2\times I\to\R$ is a non-negative measurable function such that for a.e. $(x,\omega,E)\in G\times S\times I$
\bea\label{bound-b}
&
\int_{S'}\sigma(x,\omega',\omega,E)d\omega' \leq M_1
\nonumber\\
&
\int_{S'}\sigma(x,\omega,\omega',E)d\omega'\leq M_2,
\eea
for some constants $0\leq M_1,M_2<\infty$.
Then $K: L^2(G\times S\times I)\to L^2(G\times S\times I)$ is bounded and
\be
\n{K}\leq \sqrt{M_1M_2}. 
\ee
\end{theorem}

\begin{proof}
We have by the Cauchy-Schwarz's inequality
\[
&
|(K\psi)(x,\omega,E)|
\nonumber\\
\leq {}&
\int_{S'}|\sigma(x,\omega',\omega,E)|^{1/2+1/2}|\psi(x,\omega',E)| d\omega' 
\nonumber\\
\leq {}& 
\Big(\int_{S'}|\sigma(x,\omega',\omega,E)|
d\omega' \Big)^{1/2}
\Big(\int_{S'}|\sigma(x,\omega',\omega,E)||\psi(x,\omega',E)|^2
d\omega' \Big)^{1/2}\nonumber\\
\leq {}&
M_1^{1/2} \Big(\int_{S'}|\sigma(x,\omega',\omega,E)||\psi(x,\omega',E)|^2
d\omega' \Big)^{1/2}
\]
and so 
\[
&
\int_G\int_{S}\int_I|(K\psi)(x,\omega,E)|^2 dx d\omega dE
\nonumber\\
\leq {}&
M_1
\int_G\int_{S}\int_I\int_{S'}|\sigma(x,\omega',\omega,E)||\psi(x,\omega',E)|^2
d\omega' dE d\omega dx
\nonumber\\
\leq {}&
M_1 M_2 \int_G\int_{I}\int_{S'}|\psi(x,\omega',E)|^2
d\omega' dE dx 
\]
which implies the assertion.

\end{proof}

Next result gives a natural condition ensuring accretivity of $\Sigma-K$.

\begin{theorem}\label{accre-K2-a}
Suppose that $\sigma:G\times S^2\times I\to\R$ is a non-negative measurable function and $c\in\R$ is a constant such that 
for a.e. $(x,\omega,E)\in G\times S\times I$
\bea\label{k3-n3-disas-a}
&
\Sigma(x,\omega,E)-\int_{S'}\sigma(x,\omega',\omega,E)d\omega'\geq c
\nonumber\\
&
\Sigma(x,\omega,E)-\int_{S'}\sigma(x,\omega,\omega',E)d\omega'\geq c 
\eea
Then for all $\psi\in L^2(G\times S\times I)$
\be
\la (\Sigma-K)\psi,\psi\ra_{L^2(G\times S\times I)}\geq c\n{\psi}_{L^2(G\times S\times I)}^2.
\ee
\end{theorem}

\begin{proof}

We have
\be\label{k3-n3-1-a}
\la K\psi,\psi\ra_{L^2(G\times S\times I)}
=\int_G\int_I\Big(\int_{S}(K\psi)(x,\omega,E)\psi(x,\omega,E)  d\omega \Big)dE dx.
\ee
By the Cauchy-Schwarz inequality (at second step)
\bea\label{k3-n3-2-a}
&
\int_{S}(K\psi)(x,\omega,E)\psi(x,\omega,E) d\omega
\nonumber\\
\leq {}&
\int_{S}\int_{S'}|\sigma(x,\omega',\omega,E)|^{1/2+1/2}|\psi(x,\omega',E)|\ |\psi(x,\omega,E)| d\omega' d\omega
\nonumber\\
\leq {}&
\Big(\int_{S}\int_{S'}|\sigma(x,\omega',\omega,E)||\psi(x,\omega',E)|^2
d\omega' d\omega\Big)^{1/2}
\nonumber\\
&
\cdot 
\Big(\int_{S}\int_{S'}|\sigma(x,\omega',\omega,E)||\psi(x,\omega,E)|^2
d\omega' d\omega\Big)^{1/2}
\nonumber\\
\leq {}& 
\Big(\int_{S'}(\Sigma(x,\omega',E)-c)|\psi(x,\omega',E)|^2
d\omega' \Big)^{1/2}
\nonumber\\
&
\cdot 
\Big(\int_{S}(\Sigma(x,\omega,E)-c)|\psi(x,\omega,E)|^2
 d\omega\Big)^{1/2}
\nonumber\\
= {}&
\int_{S}(\Sigma(x,\omega,E)-c)|\psi(x,\omega,E)|^2  d\omega
\eea
where
in the last step we applied the assumptions (\ref{k3-n3-disas-a}).
Hence by integrating (\ref{k3-n3-2-a}) over $G\times I$ we obtain
\[
\la (\Sigma-K)\psi,\psi\ra_{L^2(G\times S\times I)}\geq c\n{\psi}_{L^2(G\times S\times I)}^2
\]
which completes the proof. 

\end{proof}

%%%%%%%%%%%%%%%%%%%%%%%%%%%%%%%%%%%%%%%%%%%%%%%%%%%%%%%%%%%%%%%%%%%%%%
\subsection{Restricted Collision Operator Related to Inelastic Scattering Processes without Kinematic Constraints}\label{subseq:el_2}
%%%%%%%%%%%%%%%%%%%%%%%%%%%%%%%%%%%%%%%%%%%%%%%%%%%%%%%%%%%%%%%%%%%%%%

When a scattering process is inelastic,
but kinematic constraints (conservation of energy-momentum)
are not (fully) respected,
then the corresponding scattering operator $K$ may well take the form
\be\label{eq:inelastic_K_no_constraints}
(K\psi)(x,\omega,E)=
\int_{S'}\int_{I'}\sigma(x,\omega',\omega,E',E)\psi(x,\omega',E') d\omega'dE'.
\ee

One example of this type is the electron-nucleus Bremsstrahlung process (see \cite{boman,lorence,haug04}).
As explained in \cite[sec. 3.2]{haug04}, \cite[Chap. III, \S 17]{heitler47},
the outgoing electron and emitted photon
of this process are considered to have independent momenta
because of the relatively large mass of the nucleus with respect to that of the electron.
In this respect, the kinematic constrains are not (fully) imposed
when deriving the
electron-nucleus Bremsstrahlung cross section
such as that given in \cite[Eq. (3.87)]{haug04}, \cite[Eq. (13), p. 164]{heitler47}.

\begin{example}\label{ex:brems}
The cross section for electron-electron electron Bremsstrahlung scattering process
is an approximation of the form
\begin{align}\label{eq:brems}
\sigma(x,\omega',\omega,E',E)
=\hat{\sigma}(x)\sigma_k(E',E)\eta(\omega'\cdot\omega,E'),
\end{align}
where $(E',\omega')$ and $(E,\omega)$
are the (kinetic energy, direction)
of the incoming and outgoing electrons, respectively,
associated to this scattering process.
Moreover, $v(E)$ is the velocity of the outgoing electron,
which is given in terms of its (relativistic) kinetic energy $E$
as $v(E)^2=1-(E+1)^{-2}$.
We also assume that the angular factor
$\eta$ belongs to $L^\infty([-1,1]\times I)$
(see \eqref{mo-1}).

In Born approximation without screening, the so-called (3BN)
the factor $\sigma_k$ in the above cross section is given
by (\cite[Chap. III, \S 17, Eq. (16)]{heitler47}, \cite[Eq. (III.3)]{lorence})
\begin{multline} \label{eq:3BN}
\sigma_k(E',E_k)
=\frac{1}{E_k}\frac{p}{p'}\Big[\frac{4}{3}-2(E'+1)(E+1)\frac{p^2+{p'}^2}{p^2{p'}^2} \\
+\frac{\epsilon(E')(E+1)}{{p'}^3}+\frac{\epsilon(E)(E'+1)}{p^3}-\frac{\epsilon(E')\epsilon(E)}{p'p} \\
+L(E',E)\Big(\frac{8}{3}\frac{(E'+1)(E+1)}{p'p}+\frac{E_k^2}{{p'}^3 p^3}({(E'+1)}^2(E+1)^2+{p'}^2p^2) \\
+\frac{E_k}{2p'p}\big(\frac{(E'+1)(E+1)+{p'}^2}{{p'}^3}\epsilon(E')
-\frac{(E'+1)(E+1)+p^2}{p^3}\epsilon(E)+\frac{2E_k(E'+1)(E+1)}{{p'}^2p^2}\big)\Big)\Big],
\end{multline}
where the photon energy $E_k=E_k(E',E)$ is
given by the (approximate) conservation of energy
$E_k=E'-E$ in this process, see \cite[Chap. 3]{haug04}, \cite[Chap. III, \S 17]{heitler47},
$p'=p'(E')$, $p=p(E)$ are the norms of 3-momenta of the
incoming and outgoing electrons, respectively,
so that ${p'}^2=(E'+1)^2-1=E'(E'+2)$
and $p^2=E(E+2)$.
The remaining quantities are
\[
\epsilon(\ol{E}):=2\ln\big(\ol{E}+1+\ol{p}\big)
=2\ln\big(\ol{E}+1+\sqrt{\ol{E}(\ol{E}+2)}\big)
\]
where $\ol{p}^2:=\ol{E}(\ol{E}+2)$, $\ol{p}\geq 0$,
and
\[
L(E',E):=2\ln\Big(\frac{(E'+1)(E+1)+p'p-1}{E'-E}\Big).
\]

In the expression for $\epsilon(\ol{E})$,
the term inside the logarithm $\ol{E}+1+\sqrt{\ol{E}(\ol{E}+2)}$
is clearly bounded away from zero for $\ol{E}\geq 0$.
Similarly, the magnitudes of momenta $p=p(E)$ and $p'=p'(E')$
are also bounded away from zero for $E,E'\in I=[E_0,E_m]$
(recall that $E_0>0$ by assumption).
Finally, the numerator inside the logarithm in the expression for $L(E',E)$
is also bounded away from zero for $E',E\in I$ since
\[
(E'+1)(E+1)+p'p-1=E'E+E+E'+\sqrt{E'(E'+2)E(E+2)}.
\]

That having been discussed,
one sees after substituting $E_k=E'-E$
into \eqref{eq:3BN} 
that with such $\sigma_k$,
the cross section \eqref{eq:brems}
can be written into the form given in \eqref{coll-2}.
This provides physics based justification for the terms in \eqref{coll-2}.
\end{example}

A criterion for the boundedness of $K$ is analogous to the above collision operators.

\begin{theorem}\label{bound-K1-a}
Suppose that $\sigma:G\times S^2\times I^2\to\R$ is a non-negative measurable function such that for a.e. $(x,\omega,E)\in G\times S\times I$
\bea\label{bound-a}
&
\int_{S'\times I'}\sigma(x,\omega',\omega,E',E)d\omega' dE'\leq M_1
\nonumber\\
&
\int_{S'\times I'}\sigma(x,\omega,\omega',E,E')d\omega'dE'\leq M_2,
\eea
for some constants $0\leq M_1,M_2<\infty$.
Then $K: L^2(G\times S\times I)\to L^2(G\times S\times I)$ is bounded and
\be
\n{K}\leq \sqrt{M_1M_2}. 
\ee
\end{theorem}

\begin{proof}
We have
\[
&
|(K\psi)(x,\omega,E)|
\nonumber\\
\leq {}&
\int_{S'}\int_{I'}|\sigma(x,\omega',\omega,E',E)|^{1/2+1/2}|\psi(x,\omega',E')|  dE' d\omega'
\nonumber\\
\leq {}& 
\Big(\int_{S'}\int_{I'}|\sigma(x,\omega',\omega,E',E)|
dE' d\omega'\Big)^{1/2}
\nonumber\\
&
\cdot 
\Big(\int_{S'}\int_{I'}|\sigma(x,\omega',\omega,E',E)||\psi(x,\omega',E')|^2
dE' d\omega'\Big)^{1/2}\nonumber\\
\leq {}& 
M_1^{1/2} \Big(\int_{S'}\int_{I'}|\sigma(x,\omega',\omega,E',E)||\psi(x,\omega',E')|^2
dE' d\omega'\Big)^{1/2}
\]
and so 
\[
&
\int_G\int_{S}\int_I|(K\psi)(x,\omega,E)|^2 dE d\omega dx
\nonumber\\
\leq {}&
M_1
\int_G\int_{S}\int_I\int_{S'}\int_{I'}|\sigma(x,\omega',\omega,E',E)||\psi(x,\omega',E')|^2
dE' d\omega' dE d\omega dx
\nonumber\\
\leq {}& 
M_1 M_2 \int_G\int_{S'}\int_{I'}|\psi(x,\omega',E')|^2
dE' d\omega' dx 
\]
which implies the assertion.

\end{proof}

Moreover, a criterion for the accretivity of $\Sigma-K$ goes as above.

\begin{theorem}\label{accre-K1-a}
Suppose that $\sigma:G\times S^2\times I^2\to\R$ is a non-negative measurable function and $c\in\R$ is a constant such that
for a.e. $(x,\omega,E)\in G\times S\times I$
\bea\label{k3-n3-disas-c}
&
\Sigma(x,\omega,E)-\int_{S'\times I'}\sigma(x,\omega',\omega,E',E)d\omega' dE'\geq c
\nonumber\\
&
\Sigma(x,\omega,E)-\int_{S'\times I'}\sigma(x,\omega,\omega',E,E')d\omega'dE'\geq c 
\eea
Then for all $\psi\in L^2(G\times S\times I)$
\be
\la (\Sigma-K)\psi,\psi\ra_{L^2(G\times S\times I)}\geq c\n{\psi}_{L^2(G\times S\times I)}^2.
\ee
\end{theorem}

\begin{proof}

We have
\be\label{k3-n3-1-c}
\la K\psi,\psi\ra_{L^2(G\times S\times I)}
=\int_G\Big(\int_{S}\int_I(K\psi)(x,\omega,E)\psi(x,\omega,E) dE d\omega\Big) dx
\ee
where by the Cauchy-Schwarz inequality (at second step)
\bea\label{k3-n3-2-c}
&
\int_{S}\int_I(K\psi)(x,\omega,E)\psi(x,\omega,E) dE d\omega
\nonumber\\
\leq {}&
\int_{S}\int_I\int_{S'}\int_{I'}|\sigma(x,\omega',\omega,E',E)|^{1/2+1/2}|\psi(x,\omega',E')|\ |\psi(x,\omega,E)| dE' d\omega' dE d\omega
\nonumber\\
\leq {}&
\Big(\int_{S}\int_I\int_{S'}\int_{I'}|\sigma(x,\omega',\omega,E',E)||\psi(x,\omega',E')|^2
dE' d\omega'dE d\omega\Big)^{1/2}
\nonumber\\
&
\cdot 
\Big(\int_{S}\int_I\int_{S'}\int_{I'}|\sigma(x,\omega',\omega,E',E)||\psi(x,\omega,E)|^2
dE' d\omega'dE d\omega\Big)^{1/2}
\nonumber\\
\leq {}&
\Big(\int_{S'}\int_{I'}(\Sigma(x,\omega',E')-c)|\psi(x,\omega',E')|^2
dE' d\omega'\Big)^{1/2}
\nonumber\\
&
\cdot 
\Big(\int_{S}\int_I(\Sigma(x,\omega,E)-c)|\psi(x,\omega,E)|^2
 dE d\omega\Big)^{1/2}
\nonumber\\
= {}& 
\int_{S}\int_I(\Sigma(x,\omega,E)-c)|\psi(x,\omega,E)|^2  dE d\omega
\eea
where
in the last step we applied the assumptions (\ref{k3-n3-disas-c}).
Hence by integrating (\ref{k3-n3-2-c}) over $G$ we obtain
\[
\la (\Sigma-K)\psi,\psi\ra_{L^2(G\times S\times I)}\geq c\n{\psi}_{L^2(G\times S\times I)}^2
\]
which completes the proof. 

\end{proof}

%%%%%%%%%%%%%%%%%%%%%%%%%%%%%%%%%%%%%%%%%%%%%%%%%%%%%%%%%%%%%%%%%%%%%%%%%%%%
\subsection{Boundedness and Accretivity of the Sum of Collision Operators} \label{col-sum}
%%%%%%%%%%%%%%%%%%%%%%%%%%%%%%%%%%%%%%%%%%%%%%%%%%%%%%%%%%%%%%%%%%%%%%%%%%%%

Assume that the  collision operator is  the sum
\be\label{esols1} 
K=K^1+K^2+K^3.
\ee
Here $K^1$ is of the form (see subsection \ref{subseq:el_2})
\[
(K^1\psi)(x,\omega,E)=\int_{S'\times I'}\sigma^1(x,\omega',\omega,E',E)\psi(x,\omega',E')d\omega' dE',  
\]
where $\sigma^1:G\times S^2\times I^2\to\R$ is a non-negative measurable function such that 
\bea\label{ass5-a}
&\int_{S'\times I'}\sigma^1(x,\omega',\omega,E',E)d\omega' dE'\leq M_1,\nonumber\\
&\int_{S'\times I'}\sigma^1(x,\omega,\omega',E,E')d\omega' dE'\leq M_2,
\eea
for a.e. $(x,\omega,E)\in G\times S\times I$.

The operator
$K^2$ is of the form (see subsection \ref{el-k})
\[
(K^2\psi)(x,\omega,E)=\int_{ S'}\sigma^2(x,\omega',\omega,E)\psi(x,\omega',E) d\omega',  
\]
where $\sigma^2:G\times S^2\times I\to\R$ is a non-negative  measurable function  such that
\bea\label{ass7}
&\int_{S'}\sigma^2(x,\omega',\omega,E)d\omega'\leq M_1,\nonumber\\
&\int_{S'}\sigma^2(x,\omega,\omega',E) d\omega'\leq M_2,
\eea
for a.e. $(x,\omega,E)\in G\times S\times I$.

Finally, $K^3$ is of the form (see subsection \ref{rco-m})
\[
(K^3\psi)(x,\omega,E)
=
\int_{I'}\int_{0}^{2\pi}
\hat\sigma^3(x,E',E)
\psi(x,\gamma(E',E,\omega)(s),E')ds dE'
\]
where
$\gamma=\gamma(E',E,\omega):[0,2\pi]\to S$
be a parametrization of the curve
\[
\Gamma(E',E,\omega)=\{\omega'\in S\ |\ \omega'\cdot\omega-\mu(E',E)=0\}.
\]
with constant speed $\n{\gamma'(s)}=\sqrt{1-\mu(E',E)^2}$,
and $\mu$ is a measurable function such that $-1\leq \mu(E',E)\leq 1$,
Moreover,
$\hat{\sigma}^3:G\times I^2\to\R$ is a non-negative measurable function such that
\bea\label{ass-8}
&\int_{I'}\hat{\sigma}^3(x,E',E)dE'\leq M_1, \nonumber\\
&\int_{I'}\hat{\sigma}^3(x,E,E')dE'\leq M_2,
\eea
for a.e. $(x,E)\in G\times I$. 
Furthermore, in \eqref{ass5-a}, \eqref{ass7} and \eqref{ass-8} it is assumed that $M_1,M_2$ are finite non-negative constants.
The following result in an immediate consequence of Theorems \ref{bound-K3-a}, \ref{el-k-b} and \ref{bound-K1-a}.

\begin{theorem}\label{esol-th1}
The sum $K=K^1+K^2+K^3$  is a bounded operator 
$L^2(G\times S\times I)\to L^2(G\times S\times I)$.
\end{theorem}

In order to render the operator $\Sigma-K$ accretive,
we shall assume that 
\begin{multline}
\Sigma(x,\omega,E)-\int_{S'\times I'}\sigma^1(x,\omega,\omega',E,E') d\omega' dE'
\\ \label{ass8-aa}
-\int_{S'}\sigma^2(x,\omega,\omega',E)  d\omega'
-2\pi\int_{I'}\hat{\sigma}^3(x,E,E')dE'
\geq c,
\end{multline}
and
\begin{multline}
\Sigma(x,\omega,E)-\int_{S'\times I'}\sigma^1(x,\omega',\omega,E',E) d\omega' dE'
\\ \label{ass9-a}
-\int_{S'}\sigma^2(x,\omega',\omega,E)  d\omega'
-2\pi\int_{I'}\hat{\sigma}^3(x,E',E)dE'
\geq c,
\end{multline}
for a.e. $(x,\omega,E)\in G\times S\times I$.
The constant $c$ can be any real number.

Next result addresses accretivity of $K-\Sigma$ under the appropriate assumptions.

\begin{theorem}\label{SK-dissip}
Suppose that the assumptions  (\ref{ass8-aa}) and (\ref{ass9-a}) are valid. 
Then 
\be\label{K-coer}
\la (\Sigma-K)\psi,\psi\ra_{L^2(G\times S\times I)}\geq c\n{\psi}^2_{L^2(G\times S\times I)}\quad \forall \psi\in L^2(G\times S\times I).
\ee
\end{theorem}

\begin{proof} 
By the Cauchy-Schwarz inequality (2nd step)
\bea\label{kc-2}
&
\int_{S\times I}(K\psi)(x,\omega,E)\psi(x,\omega,E) d\omega dE
\nonumber\\
= {}&
\int_{S\times I}\int_{S'\times I'}
\sigma^1(x,\omega',\omega,E',E)\psi(x,\omega',E')\psi(x,\omega,E)d\omega' dE' d\omega dE\nonumber\\
&
+
\int_{S\times I}\int_{ S'}\sigma^2(x,\omega',\omega,E)\psi(x,\omega',E) \psi(x,\omega,E)d\omega' d\omega dE\nonumber\\
&
+
\int_{S\times I}
\int_{I'}\int_{0}^{2\pi}
\hat\sigma^3(x,E',E)
\psi(x,\gamma(E',E,\omega)(s),E')\psi(x,\omega,E)ds dE' d\omega dE\nonumber\\
\leq {}&
\Big(\int_{S\times I}\int_{S'\times I'}
\sigma^1(x,\omega',\omega,E',E)
|\psi(x,\omega',E')|^2
d\omega' dE' d\omega dE\Big)^{1/2}
\nonumber\\
&
\cdot
\Big(\int_{S\times I}\int_{S'\times I'}
\sigma^1(x,\omega',\omega,E',E)
|\psi(x,\omega,E)|^2d\omega' dE' d\omega dE\Big)^{1/2}
\nonumber\\
&
+
\Big(\int_{S\times I}\int_{ S'}\sigma^2(x,\omega',\omega,E)|\psi(x,\omega',E)|^2 d\omega' d\omega dE\Big)^{1/2}
\nonumber\\
&
\cdot
\Big(\int_{S\times I}\int_{ S'}\sigma^2(x,\omega',\omega,E)|\psi(x,\omega,E)|^2 d\omega' d\omega dE\Big)^{1/2}\nonumber\\
&
+
\Big(
\int_{S\times I}
\int_{I'}\int_{0}^{2\pi}
\hat\sigma^3(x,E',E)
|\psi(x,\gamma(E',E,\omega)(s),E')|^2ds dE' d\omega dE\Big)^{1/2}\nonumber\\
&
\cdot 
\Big(
\int_{S\times I}
\int_{I'}\int_{0}^{2\pi}
\hat\sigma^3(x,E',E)
|\psi(x,\omega,E)|^2ds dE' d\omega dE\Big)^{1/2}.
\eea
For non-negative real numbers it holds (Young's inequality for $p=2$)
\[
2\sqrt{a}\sqrt{b}\leq (a+b)
\] 
and so by (\ref{kc-2}) and Lemma \ref{le-m} (at second step)
\bea\label{kc-3}
&
2\int_{S\times I}(K\psi)(x,\omega,E)\psi(x,\omega,E) d\omega dE
\nonumber\\
\leq {}&
\int_{S\times I}\int_{S'\times I'}
\sigma^1(x,\omega',\omega,E',E)
|\psi(x,\omega',E')|^2
d\omega' dE' d\omega dE
\nonumber\\
&
+
\int_{S\times I}\int_{S'\times I'}
\sigma^1(x,\omega',\omega,E',E)
|\psi(x,\omega,E)|^2d\omega' dE' d\omega dE
\nonumber\\
&
+
\int_{S\times I}\int_{ S'}\sigma^2(x,\omega',\omega,E)|\psi(x,\omega',E)|^2 d\omega' d\omega dE
\nonumber\\
&
+
\int_{S\times I}\int_{ S'}\sigma^2(x,\omega',\omega,E)|\psi(x,\omega,E)|^2 d\omega' d\omega dE\nonumber\\
&
+
\int_{S\times I}
\int_{I'}\int_{0}^{2\pi}
\hat\sigma^3(x,E',E)
|\psi(x,\gamma(E',E,\omega)(s),E')|^2ds dE' d\omega dE\nonumber\\
&
+
\int_{S\times I}
\int_{I'}\int_{0}^{2\pi}
\hat\sigma^3(x,E',E)
|\psi(x,\omega,E)|^2ds dE' d\omega dE\nonumber\\
={}&
\int_{S\times I}\Big(\int_{S'\times I'}
\sigma^1(x,\omega',\omega,E',E)
d\omega' dE'  \nonumber\\
&
\quad
+
\int_{ S'}\sigma^2(x,\omega',\omega,E)d\omega'
+2\pi\int_{I'}\hat\sigma^3(x,E',E)dE'
\Big)|\psi(x,\omega,E)|^2  d\omega dE
\nonumber\\
&
+
\int_{S'\times I'}\Big(\int_{S\times I}
\sigma^1(x,\omega',\omega,E',E)
d\omega dE \nonumber\\
&
\quad
+
\int_{ S}\sigma^2(x,\omega',\omega,E')d\omega
+2\pi\int_{I}\hat\sigma^3(x,E',E)dE
\Big)|\psi(x,\omega',E')|^2 d\omega'  dE'
\nonumber\\
\leq {}&
\int_{S\times I}\big(\Sigma(x,\omega,E)-c\big)|\psi(x,\omega,E)|^2  d\omega dE \\
&
+
\int_{S'\times I'}\big(\Sigma(x,\omega',E')-c\big)|\psi(x,\omega',E')|^2 d\omega' dE'
\eea
where we in the last step applied the assumptions (\ref{ass8-aa}), (\ref{ass9-a}). 
Hence by integration (\ref{kc-3}) over $G$ we conclude
\be\label{kc-4}
2\int_{G\times S\times I}(K\psi)(x,\omega,E)\psi(x,\omega,E)dx d\omega dE
\leq
2
\int_{G\times S\times I}\big(\Sigma(x,\omega,E)-c\big)|\psi(x,\omega,E)|^2 dx d\omega dE
\ee
which implies the proof.

\end{proof}

%%%%%%%%%%%%%%%%%%%%%%%%%%%%%%%%%%%%%%%%%%%%%%%%%%%%%%%%%%%%%%%%%%
\subsection{Restricted Collision Operator Related to Coupled System} \label{co-cs}
%%%%%%%%%%%%%%%%%%%%%%%%%%%%%%%%%%%%%%%%%%%%%%%%%%%%%%%%%%%%%%%%%%

To obtain a concise presentation we use below Hausdorff measures in formulations. However, recall that Hausdorff measures are not generally $\sigma$-finite and so, for example, in this context the Fubini's theorem is not available.
The collision operator related to elastic scattering has the form
(recall (\ref{elastick}))
\be\label{co-1}
(K\psi)(x,\omega,E)=\int_{S'}\sigma(x,\omega',\omega,E)\psi(x,E,\omega') d\omega'
\ee
which can be given as
\be\label{co-2}
(K\psi)(x,\omega,E)=\int_{I'}\int_{S'}\tilde{\sigma}(x,\omega',\omega,E',E)\psi(x,E',\omega') d\omega' d\rho_I(E'),
\ee
where $\tilde{\sigma}(x,\omega',\omega,E',E):=\sigma(x,\omega',\omega,E)\chi_{\{0\}}(E'-E)$,
the function $\chi_{\{0\}}$ is the characteristic function
of the singleton $\{0\}$, and $\rho_I:=\mu_H^0$  is the 0-dimensional Hausdorff measure on $I$ (see \cite[pp. 7--10]{falconer86}).
We note that it is important to keep the order
of the iterated integrals
$\int_{I'}\int_{S'}$ fixed in \eqref{co-2},
since Hausdorff-measure $\rho_I=\mu^0_H$ is not $\sigma$-finite on $I'$.
In this setting
$\tilde{\sigma}(x,\omega',\omega,E',E)$ is a measurable function on $G\times S^2\times I^2$,
and it is clear that
\be\label{co-3}
\int_{I'}\int_{S'}\tilde{\sigma}(x,\omega',\omega,E',E)
 d\omega' d\rho_I(E')
=
\int_{S'}{\sigma}(x,\omega',\omega,E)
 d\omega'.
\ee

We retrieve a similar formulation regarding to the restricted M\o ller and Klein-Nishina scattering operator (\ref{mollerk}).
Letting $\rho_{S}:=\mu_H^1$ be the 1-dimensional Hausdorff measure on $S$ (\cite{falconer86}),
and writing 
\[
\underline{\sigma}(x,\omega',\omega,E',E)
=\frac{\hat\sigma(x,E',E)}{\sqrt{1-\mu(E',E)^2}}\chi_{\mc{M}}(\omega',\omega,E',E),
\]
where $\chi_{\mc{M}}$ is the characteristic function of the set
\[
\mc{M}:=\{(\omega',\omega,E',E)\in S^2\times I^2\ |\ \omega'\cdot\omega-\mu(E',E)=0\},
\]
then we  have 
\be\label{k-11}
(K\psi)(x,\omega,E)=\int_{I'}\int_{S'} \underline{\sigma}(x,\omega',\omega,E',E)\psi(x,\omega',E')d\rho_{S}(\omega')dE'.
\ee
Again, order of the iterated integrals $\int_{I'}\int_{S'}$ needs
to be carefully retained because $\rho_S=\mu^1_H$ is not $\sigma$-finite on $S$.

The key observation here is that in \eqref{mollerk}, the inner integral can be written as (\cite{falconer86})
\begin{multline*}
\int_{0}^{2\pi}\psi(x,\gamma(E',E,\omega)(s),E')ds
=\frac{1}{\sqrt{1-\mu(E',E)^2}}\int_{\Gamma(E',E,\omega)} \psi(x,\cdot,E')d\ell, \\
=
\frac{1}{\sqrt{1-\mu(E',E)^2}}\int_{S'} \chi_{\mc{M}}(\omega',\omega,E',E)\psi(x,\omega',E')d\rho_{S}(\omega')
\end{multline*}
where $\int_{\Gamma(E',E,\omega)} (\cdots)d\ell$ is the path integral along the curve $\Gamma(E',E,\omega)$.  Hence we get
the expression (\ref{k-11}),
and in particular
\be\label{co-4}
\int_{I'}\int_{S'} \underline{\sigma}(x,\omega',\omega,E',E)d\rho_{S}(\omega')dE'
=
2\pi 
\int_{I'}\hat\sigma(x,E',E) dE'.
\ee

Next we consider the coupled collision operator. Recall that $K=(K_1,K_2,K_3)$.
We assume that
\[
(K_{j}\psi)(x,\omega,E)=\sum_{k=1}^3\int_{I'}\int_{S'}\sigma_{kj}(x,\omega',\omega,E',E)\psi_k(x,\omega',E') d\rho^{kj}_{S}(\omega')d\rho^{kj}_I(E'),\quad j=1,2,3,
\]
where we use pairs of measures $(\rho_I^{kj},\rho_{S})=({\mc L}^1,\mu_{S})$ or
$(\rho_I^{kj},\rho_{S})=({\mc L}^1,\mu_H^1)$ or
$(\rho_I^{kj},\rho_{S})=(\mu_H^0,\mu_{S})$.
Here $\mc L^1$ ($=dE$) and $\mu_{S}$ ($=d\omega$) are the 1-dimensional Lebesgue measure on $I$ and the usual surface measure on $S$, respectively.
These notations serve only to compress the different
varieties of collision operators $K$ considered in earlier 
subsections \ref{rco-m}, \ref{el-k} and \ref{subseq:el_2}.

By similar techniques as above (cf. \cite{tervo07}) we are able to show the following boundedness and coercitivity results for the coupled system.
A boundedness criterion is given by

\begin{theorem}\label{coup-bound}
Suppose that $\sigma_{kj}:G\times S^2\times I^2\to\R$ are non-negative measurable functions such that a.e. $(x,\omega,E)\in G\times S\times I$
\bea\label{bound-coupk}
&
\sum_{k=1}^3\int_{I'}\int_{S'}\sigma_{kj}(x,\omega',\omega,E',E)
d\rho_{S}^{kj}(\omega') d\rho_I^{kj}(E')\leq M_1
\nonumber\\
&
\sum_{k=1}^3\int_{I'}\int_{S'}\sigma_{jk}(x,\omega,\omega',E,E')
d\rho_{S}^{jk}(\omega') d\rho_I^{jk}(E')
\leq M_2
\eea
Then $K: L^2(G\times S\times I)^3\to L^2(G\times S\times I)^3$ is bounded and
\be
\n{K}\leq 2\pi\sqrt{M_1M_2}. 
\ee
\end{theorem}

\begin{theorem}\label{diss-for-coupled}
Suppose that for $j=1,2,3$ and a.e. $(x,\omega,E)\in G\times S\times I$
\bea\label{k3-n3-coupdiss}
&
\Sigma_j(x,\omega,E)-\sum_{k=1}^3\int_{ I'}\int_{S'}\sigma_{kj}(x,\omega',\omega,E',E)d\rho_{S}^{kj}(\omega') d\rho_I^{kj}(E')\geq c
\nonumber\\
&
\Sigma_j(x,\omega,E)-\sum_{k=1}^3\int_{I'}\int_{S'}\sigma_{jk}(x,\omega,\omega',E,E')d\rho_S^{jk}(\omega') d\rho_I^{jk}(E')\geq c 
\eea
Then for all $\psi\in L^2(G\times S\times I)^3$
\be
\la (\Sigma-K)\psi,\psi\ra_{L^2(G\times S\times I)^3}\geq c\n{\psi}_{L^2(G\times S\times I)^3}^2.
\ee
\end{theorem}

\begin{remark}\label{dissip-re}

In virtue of Theorem \ref{diss-for-coupled} the operator
$-\Sigma+K$ satisfies the following  condition: For all $\lambda >0$ and $\psi\in L^2(G\times S\times I)^3$ one has
\be\label{kc-5}
\n{\big(\lambda I-(-\Sigma+K+cI)\big)\psi}_{L^2(G\times S\times I)^3}
\geq \lambda\n{\psi}_{L^2(G\times S\times I)^3}.
\ee
In other words, the operator $-\Sigma+K+cI:L^2(G\times S\times I)^3\to
L^2(G\times S\times I)^3$ is {\it dissipative}.
The inequality (\ref{kc-5}) implies (by substituting $\lambda+c$ for $\lambda$) that 
\be\label{kc-6}
\n{(\lambda I-(-\Sigma+K))\psi}_{L^2(G\times S\times I)^3}\geq(\lambda+c) \n{\psi}_{L^2(G\times S\times I)^3} 
\ee 
for all $\lambda >0$. In particular, $-\Sigma+K$ is dissipative.
Similar observations are naturally valid for uncoupled collision operators.
\end{remark}

%%%%%%%%%%%%%%%%%%%%%%%%%%%%%%%%%%%%%%%%%%%%%%%%%%%%%%%%%%%%%%%%%%%%%%%%%%%%%%%%%%%%%%%%%%%%%%%%%%%%%%%%%%%
\sectionspace
\section{Single Continuous Slowing Down Equation}\label{single-eq}
\subsection{Preliminaries}\label{presingle-eq}
%%%%%%%%%%%%%%%%%%%%%%%%%%%%%%%%%%%%%%%%%%%%%%%%%%%%%%%%%%%%%%%%%%%%%%%%%%%%%%%%%%%%%%%%%%%%%%%%%%%%%%%%%%%

At first we consider a {\it single CSDA transport equation} given by
\be\label{se1}
 -{\p {(S_0\psi)}E}+\omega\cdot\nabla_x\psi+\Sigma\psi
- K\psi= f\quad \textrm{on}\ G\times S\times I,
\ee
where the solution satisfies inflow boundary and initial value conditions
\begin{alignat}{3}
\psi_{|\Gamma_-}&=g\quad && \textrm{on}\ \Gamma_-, \label{se2} \\[2mm]
\psi(\cdot,\cdot,E_{\rm m})&=0\quad && \textrm{on}\ G\times S. \label{se3}
\end{alignat}
We assume that 
\be\label{ass1}
\Sigma\in L^\infty(G\times S\times I),\quad \Sigma\geq 0
\ee
a.e. on $G\times S\times I$.

We assume that the restricted collision operator is as in section \ref{col-sum} the sum $K=K^1+K^2+K^3$ where $\sigma^j,\ j=1,2,3$ obey
(\ref{ass5-a}), (\ref{ass7}), (\ref{ass-8}). Furthermoe, we assume that
\begin{multline}
\label{ass8-aa-a}
\Sigma(x,\omega,E)-\int_{S'\times I'}\sigma^1(x,\omega,\omega',E,E')e^{C(E'-E)} d\omega' dE'
\\
-\int_{S'}\sigma^2(x,\omega,\omega',E)  d\omega'
-2\pi\int_{I'}\hat{\sigma}^3(x,E,E')e^{C(E'-E)}dE'
\geq c,
\end{multline}
\begin{multline}
\label{ass9-a-b}
\Sigma(x,\omega,E)-\int_{S'\times I'}\sigma^1(x,\omega',\omega,E',E)e^{C(E-E')} d\omega' dE'
\\
-\int_{S'}\sigma^2(x,\omega',\omega,E)  d\omega'
-2\pi\int_{I'}\hat{\sigma}^3(x,E',E)e^{C(E-E')}dE'
\geq c
\end{multline}
for a.e. $(x,\omega,E)\in G\times S\times I$, and
where the constant $C\geq 0$ is specified below (see \eqref{eq:def_C}). 
Note that in some cases we will assume $c$ to be strictly positive, $c>0$.
This assumption has been relaxed for certain problems in \cite{egger14} (see also \cite[Remark 15, pp. 241-242]{dautraylionsv6}).

In what follows, we assume that the stopping power $S_0:\ol G\times I\to\R$ satisfies (at least) the following assumptions:
\be
& S_0\in L^\infty(G\times  I), \label{csda9} \\[2mm]
& {\p {S_0 }{E}}\in L^\infty(G\times  I), \label{csda9aa} \\[2mm]
& \kappa:=\inf_{(x,E)\in \ol G\times I}S_0(x,E)>0,\label{csda9a} \\[2mm]
& \nabla_x S_0\in L^\infty(G\times I). \label{csda9b}
\ee
We remark that the assumption (\ref{csda9b}) will be needed only in the context of the theory of evolution operators in section \ref{evcsd}.

We begin with a lemma.

\begin{lemma}\label{csdale0}
For all $\psi\in C^1(\ol G\times S\times I)$,
\begin{multline}\label{se4}
\la {\p {(S_0\psi)}E},\psi\ra_{L^2(G\times S\times I)}\leq q\n{\psi}^2_{L^2(G\times S\times I)} \\
+\frac{1}{2}\int_{G\times S} \big(S_0(x,E_m)\psi^2(x,\omega,E_m)-
 S_0(x,0)\psi^2(x,\omega,0)\big)dx d\omega,
\end{multline}
where
\bea\label{q}
q:={1\over 2}\esssup_{(x,E)\in G\times I}{\p {S_0}E}(x,E).
\eea
\end{lemma}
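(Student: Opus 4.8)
The plan is to expand the energy derivative via the product rule and then integrate by parts in the energy variable, picking up the boundary terms at $E=0$ and $E=E_{\rm m}$ exactly as they appear on the right-hand side. Since $\psi\in C^1(\ol G\times S\times I)$ and $S_0\in L^\infty(G\times I)$ with $\partial_E S_0\in L^\infty(G\times I)$, all the products below are integrable over the bounded set $G\times S\times I$, so every manipulation is justified (one may first work with $\psi\in D^1(\ol G\times S\times I)=C^1(\ol G\times S\times I)$, and the one-dimensional integration by parts in $E$ is applied for each fixed $(x,\omega)$, followed by Fubini).

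First I would write, pointwise,
\[
\frac{\partial (S_0\psi)}{\partial E}\,\psi = \Big(\frac{\partial S_0}{\partial E}\Big)\psi^2 + S_0\,\psi\,\frac{\partial \psi}{\partial E}
= \Big(\frac{\partial S_0}{\partial E}\Big)\psi^2 + \frac12\,S_0\,\frac{\partial(\psi^2)}{\partial E}.
\]
Integrating over $G\times S\times I$ and applying, for fixed $(x,\omega)$, integration by parts in $E\in I=[0,E_{\rm m}]$ to the term $\frac12\int_I S_0\,\partial_E(\psi^2)\,dE$, I get
\[
\int_{G\times S\times I} S_0\,\psi\,\frac{\partial\psi}{\partial E}\,dxd\omega dE
= \frac12\int_{G\times S}\big(S_0(x,E_{\rm m})\psi^2(x,\omega,E_{\rm m}) - S_0(x,0)\psi^2(x,\omega,0)\big)\,dxd\omega
- \frac12\int_{G\times S\times I}\Big(\frac{\partial S_0}{\partial E}\Big)\psi^2\,dxd\omega dE.
\]

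Adding the two contributions, the two copies of $\int(\partial_E S_0)\psi^2$ combine to give a single factor $\tfrac12$:
\[
\la {\textstyle\frac{\partial(S_0\psi)}{\partial E}},\psi\ra_{L^2(G\times S\times I)}
= \frac12\int_{G\times S\times I}\Big(\frac{\partial S_0}{\partial E}\Big)\psi^2\,dxd\omega dE
+ \frac12\int_{G\times S}\big(S_0(x,E_{\rm m})\psi^2(x,\omega,E_{\rm m}) - S_0(x,0)\psi^2(x,\omega,0)\big)\,dxd\omega.
\]
Finally I would bound the first term from above by $\tfrac12\,\big(\esssup_{(x,E)\in G\times I}\partial_E S_0(x,E)\big)\,\n{\psi}^2_{L^2(G\times S\times I)} = q\,\n{\psi}^2_{L^2(G\times S\times I)}$, which yields the claimed inequality.

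I do not anticipate a real obstacle here; the only points requiring a little care are the use of Fubini to reduce the integration by parts to the one-dimensional interval $I$ (legitimate because the integrand is continuous in $(x,\omega)$ and $L^\infty$-bounded in $E$ times a continuous function, on a bounded domain), and noting that $q$ defined with $\esssup$ still dominates the (continuous in the $x$-variable direction, but only $L^\infty$ in general) quantity $\partial_E S_0$ almost everywhere, so the last estimate is valid even though $\psi$ is smooth.
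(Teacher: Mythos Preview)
Your proof is correct and follows essentially the same approach as the paper: both derive the identity
\[
\la \partial_E(S_0\psi),\psi\ra_{L^2}=\tfrac12\la(\partial_E S_0)\psi,\psi\ra_{L^2}+\tfrac12\int_{G\times S}\big(S_0(\cdot,E_m)\psi^2(\cdot,\cdot,E_m)-S_0(\cdot,0)\psi^2(\cdot,\cdot,0)\big)\,dx\,d\omega
\]
via the product rule and one-dimensional integration by parts in $E$, then estimate the first term by $q\n{\psi}^2_{L^2}$. The only cosmetic difference is that the paper integrates $\la S_0\partial_E\psi,\psi\ra$ back to $-\la\psi,\partial_E(S_0\psi)\ra$ plus boundary terms and then moves this to the left-hand side, whereas you use $S_0\psi\,\partial_E\psi=\tfrac12 S_0\partial_E(\psi^2)$ directly; the resulting computation is the same.
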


\begin{proof}
Integrating by parts, we have
\bea\label{se5}
&\la {\p {(S_0\psi)}E},\psi\ra_{L^2(G\times S\times I)}
=
\la {\p {S_0}E}\psi,\psi\ra_{L^2(G\times S\times I)}
+\la {\p {\psi}E},S_0\psi\ra_{L^2(G\times S\times I)}\nonumber\\
={}&
\la {\p {S_0}E}\psi,\psi\ra_{L^2(G\times S\times I)} -\la \psi, {\p {(S_0\psi)}E}\ra_{L^2(G\times S\times I)} \nonumber \\
&+\int_{G\times S} \big(S_0(x,E_m)\psi(x,\omega,E_m)^2 - S_0(x,0)\psi(x,\omega,0)^2\big)dx d\omega,
\eea
and therefore
\bea 
&2\la {\p {(S_0\psi)}E},\psi\ra_{L^2(G\times S\times I)} \nonumber\\
={}&
\la {\p {S_0}E}\psi,\psi\ra_{L^2(G\times S\times I)}
+\int_{G\times S} \big(S_0(x,E_m)\psi(x,\omega,E_m)^2 - S_0(x,0)\psi(x,\omega,0)^2\big) dx d\omega
\nonumber\\
\leq {}&
2q\n{\psi}^2_{L^2(G\times S\times I)}
+\int_{G\times S} \big(S_0(x,E_m)\psi(x,\omega,E_m)^2-S_0(x,0)\psi(x,\omega,0)^2\big) dx d\omega.
\eea
This finishes the proof.
\end{proof}

Note that if $E\mapsto S_0(x,E)$ is decreasing for every $x\in G$ then $q\leq 0$ (and therefore $C$ below vanishes).

Let
\begin{align}\label{eq:def_C}
C:=\frac{\max\{q,0\}}{\kappa}.
\end{align}
We make  the following change of the unknown function.
We replace $\psi$ by
\begin{align}\label{eq:exp_trick}
\phi(x,\omega,E):=e^{CE}\psi(x,\omega,E).
\end{align}
This substitution changes the equation (\ref{se1}) to 
(here and below by writing $e^{CE}$ we mean a function $(x,\omega,E)\mapsto e^{CE}$)
\be\label{csda3A}
-{\p {(S_0\phi)}E}+\omega\cdot\nabla_x\phi+C S_0\phi+\Sigma\phi
-K_C\phi=e^{CE}f,
\ee
where $K_C$ is given by
\bea\label{collc}
&
(K_C\phi)(x,\omega,E)=
\int_{S'\times I'}\sigma^1(x,\omega',\omega,E',E)\phi(x,\omega',E')e^{C(E-E')} d\omega' dE'
\nonumber\\
&
-\int_{S'}\sigma^2(x,\omega',\omega,E) \phi(x,\omega',E) d\omega'
-\int_{I'}\int_0^{2\pi}\hat{\sigma}^3(x,E',E)e^{C(E-E')}\phi(x,\gamma(E',E,\omega)(s),E')dsdE'
\eea
The inflow boundary   and the initial conditions are
\be
{\phi}_{|\Gamma_-}={}&e^{CE}g, \label{finalbc} \\[2mm]
\phi(x,\omega,E_m)={}&0, \label{finalic}
\ee
the latter (initial) condition holding for a.e. $(x,\omega)\in G\times S$.

\begin{lemma}\label{csdale1a}
Assume that the conditions (\ref{ass1}), (\ref{ass5-a}), (\ref{ass7}), (\ref{ass-8}), (\ref{ass8-aa-a}) and (\ref{ass9-a-b}) are valid. Then
\[
\Sigma-K_C:L^2(G\times S\times  I)\to L^2(G\times S\times  I)
\]
is a bounded operator and it satisfies the following accretivity condition
\be\label{se7}
\la (\Sigma-K_C)\phi,\phi\ra_{L^2(G\times S\times  I)}\geq c\n{\phi}^2_{L^2(G\times S\times  I)},
\quad \phi\in L^2(G\times S\times  I).
\ee
\end{lemma}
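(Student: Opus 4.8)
The plan is to establish the two assertions of the lemma separately. \textbf{Boundedness.} Multiplication by $\Sigma$ is bounded on $L^2(G\times S\times I)$ with norm at most $\n{\Sigma}_{L^\infty(G\times S\times I)}$ by \eqref{ass1}. For $K_C$, given by \eqref{collc}, I would first note that $C\geq 0$ (since $\kappa>0$ by \eqref{csda9a}), so that for $E,E'\in I=[0,E_m]$ one has $e^{C(E-E')}\leq e^{CE_m}$; hence the kernel of $K_C$, namely $\sigma(x,\omega',\omega,E',E)e^{C(E-E')}$, is dominated pointwise by $e^{CE_m}\sigma(x,\omega',\omega,E',E)$. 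Applying the same Cauchy--Schwarz (Schur-type) estimate that produced \eqref{k-norma}, and using the two bounds in \eqref{ass2} --- the second one after relabelling the integration variables $\omega\leftrightarrow\omega'$, $E\leftrightarrow E'$ --- gives $\n{K_C}\leq e^{CE_m}\sqrt{M_1M_2}<\infty$. Thus $\Sigma-K_C$ is bounded.

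\textbf{Accretivity.} Since all functions here are real-valued, I would compute $\la K_C\phi,\phi\ra_{L^2(G\times S\times I)}$ directly and estimate the bilinear integrand by $|\phi(x,\omega',E')\phi(x,\omega,E)|\leq \frac12\big(\phi(x,\omega',E')^2+\phi(x,\omega,E)^2\big)$, which is legitimate because $\sigma\geq 0$ and $e^{C(E-E')}>0$. This splits the resulting majorant into two pieces. In the piece carrying $\phi(x,\omega',E')^2$ I would integrate first in the \emph{unprimed} variables $(\omega,E)$; after relabelling, the inner integral $\int_{S\times I}\sigma(x,\omega',\omega,E',E)e^{C(E-E')}d\omega\, dE$ is exactly the quantity controlled by the first inequality of \eqref{ass3}, so it is $\leq \Sigma(x,\omega',E')-c$. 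In the piece carrying $\phi(x,\omega,E)^2$ I would integrate first in the \emph{primed} variables $(\omega',E')$; the inner integral $\int_{S\times I}\sigma(x,\omega',\omega,E',E)e^{C(E-E')}d\omega'\, dE'$ is the one appearing in the second inequality of \eqref{ass3}, hence $\leq \Sigma(x,\omega,E)-c$. Adding the two pieces yields $\la K_C\phi,\phi\ra_{L^2(G\times S\times I)}\leq \la\Sigma\phi,\phi\ra_{L^2(G\times S\times I)}-c\n{\phi}_{L^2(G\times S\times I)}^2$, and rearranging is precisely \eqref{se7}. All interchanges of the order of integration are justified by Fubini--Tonelli, using that $\sigma$ is integrable in the relevant pair of variables by \eqref{ass2}.

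I do not expect a genuine analytic obstacle: once the substitution \eqref{eq:exp_trick} has been performed, the lemma reduces to the classical Schur/Cauchy--Schwarz technique. The only point requiring care --- and the only place an error could realistically slip in --- is the bookkeeping of which half of the majorant must be paired with which of the two inequalities in \eqref{ass3} (equivalently, over which pair of variables one integrates first); this is exactly why \eqref{ass3} is stated with both orderings of the arguments of $\sigma$ and both signs in the exponent. It is also worth recording that the particular value of $C$ fixed in \eqref{eq:def_C} plays no role in the proof of this lemma beyond ensuring that \eqref{ass3} is available for that $C$; its significance becomes relevant only later, in combination with the term $CS_0\phi$ appearing in \eqref{csda3A} and with Lemma \ref{csdale0}, where it offsets the contribution $q\n{\phi}_{L^2(G\times S\times I)}^2$ coming from $\partial_E(S_0\phi)$.
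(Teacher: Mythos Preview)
Your proof is correct and follows essentially the same approach as the paper. The paper's own proof is extremely terse: it simply observes that, because $I$ is bounded, the modified kernel $\sigma_C(x,\omega',\omega,E',E)=\sigma(x,\omega',\omega,E',E)e^{C(E-E')}$ again satisfies the Schur conditions \eqref{ass2} (with possibly different constants), and then cites \cite[Theorem~4, p.~241]{dautraylionsv6} for the Cauchy--Schwarz argument; your write-up is precisely a correct unpacking of that citation, including the $ab\le\tfrac12(a^2+b^2)$ splitting and the careful pairing of each half with the appropriate inequality in \eqref{ass3}.
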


\begin{proof}
The proof follows from Theorems \ref{esol-th1} and \ref{SK-dissip}.
\end{proof}

\begin{remark}
In the study below, the standing assumption for the (inflow) boundary condition
is $g\in T^2(\Gamma_-)$ (single equation), or $g\in T^2(\Gamma_-)^3$ (coupled equation),
however we point out that in some parts,
one could do with the more general assumption $g\in T^2_{\tau_-}(\Gamma_-)$, or $g\in T^2_{\tau_-}(\Gamma_-)^3$.
In this paper we omit these generalizations.
\end{remark}

%%%%%%%%%%%%%%%%%%%%%%%%%%%%%%%%%%%%%%%%%%%%%%%%%%%%%%%%%%%%%%%%%%%%%%%%%%%%%%%%%%%%%%%%%%%%%%%%%%%%%%%%%%%
\subsection{Existence of Solutions for a Single Continuous Slowing Down Equation by a Variational Formulation}\label{esols}
%%%%%%%%%%%%%%%%%%%%%%%%%%%%%%%%%%%%%%%%%%%%%%%%%%%%%%%%%%%%%%%%%%%%%%%%%%%%%%%%%%%%%%%%%%%%%%%%%%%%%%%%%%%

We shall consider the existence of solutions for the problem 
(\ref{csda3A}), (\ref{finalbc}), (\ref{finalic}) by applying the so-called \emph{Lions-Lax-Milgram Theorem} 
(or generalized Lax-Milgram Theorem) which is based on the variational formulation. Hence we begin with some computations which lead one to find the related bilinear and linear forms. 
In the following we denote
\be\label{eq:b_fg}
{f}_C=e^{CE}f, \quad { g}_C=e^{CE}g .
\ee
Let $f_-$ ($f_+$) be the negative (positive) part of a function. Recall that
\be
f=f_+-f_-\quad {\rm and}\quad |f|=f_++f_-.
\ee
Applying the Green's formula (\ref{green}) and integrating by parts, we have for
and $\phi\in C^1(\ol G\times S\times I)$ that satisfies
the equation (\ref{csda3A}) and for any $v\in C^1(\ol G\times S\times I)$,
\bea\label{csda25}
&-\la{\p {(S_0\phi)}E},v\ra_{L^2(G\times S\times I)}+\la\omega\cdot\nabla_x\phi,v\ra_{L^2(G\times S\times I)}+\la CS_0\phi,v\ra_{L^2(G\times S\times I)}\nonumber\\
&+\la\Sigma\phi,v\ra_{L^2(G\times S\times I)}-\la K_C\phi,v\ra_{L^2(G\times S\times I)}
\nonumber\\[2mm]
={}&
\la\phi,S_0{\p {v}E}\ra_{L^2(G\times S\times I)}-
\int_{G\times S} S_0\phi v\Big|_{E=0}^{E=E_{\rm m}} dx d\omega  \nonumber\\ 
&-\la\phi,\omega\cdot\nabla_x v\ra_{L^2(G\times S\times I)}
+\int_{\partial G\times S\times I}(\omega\cdot\nu)\phi v d\sigma d\omega dE  \nonumber\\
&+\la \phi,CS_0v\ra_{L^2(G\times S\times I)}
+\la\phi,\Sigma^* v\ra_{L^2(G\times S\times I)}-\la \phi,K_C^*v\ra_{L^2(G\times S\times I)} \nonumber\\[2mm]
={}&\la { f}_C,v\ra_{L^2(G\times S\times I)}
\eea
where
\be 
\Sigma^*=\Sigma
\ee
and 
\be 
(K_C^*v)(x,\omega,E)=
\int_{S\times I}\sigma(x,\omega,\omega',E,E')
e^{C(E'-E)}v(x,\omega',E')d\omega' dE'.
\ee
Assuming that the inflow boundary  condition  $\phi_{|\Gamma_-}={g}_C$ and the initial condition $\phi(\cdot,\cdot,E_{\rm m})=0$ are valid,
the equation (\ref{csda25}) is equivalent to
\bea\label{csda26}
&
\la\phi,S_0{\p {v}E}\ra_{L^2(G\times S\times I)}+
\la \phi(\cdot,\cdot,0),S_0(\cdot,\cdot,0) v(\cdot,\cdot,0)\ra_{L^2(G\times S)} 
-\la\phi,\omega\cdot\nabla_x v\ra_{L^2(G\times S\times I)}\nonumber\\
&+\int_{\partial G\times S\times I}(\omega\cdot\nu)_+\phi v d\sigma d\omega dE 
+\la \phi,CS_0v\ra_{L^2(G\times S\times I)} \nonumber\\
&+\la\phi,\Sigma^* v\ra_{L^2(G\times S\times I)}-\la \phi,K_C^*v\ra_{L^2(G\times S\times I)}\nonumber\\
={}&\la { f}_C,v\ra_{L^2(G\times S\times I)}+\int_{\partial G\times S\times I}(\omega\cdot\nu)_-{ g_C} v d\sigma d\omega dE.
\eea
Clearly
\be
\int_{\partial G\times S\times I}(\omega\cdot\nu)_-{g_C} v d\sigma d\omega dE=
\la { g_C},\gamma_-(v)\ra_{T^2(\Gamma_-)}
\ee
and
\be
\int_{\partial G\times S\times I}(\omega\cdot\nu)_+\phi v d\sigma d\omega dE=
\la \gamma_+(\phi),\gamma_+(v)\ra_{T^2(\Gamma_+)}.
\ee

One thus deduces that the relevant bilinear from $B$ and linear form $F$ are
\bea\label{csda27}
B(\phi,v)={}&
\la\phi,S_0{\p {v}E}\ra_{L^2(G\times S\times I)}
-\la\phi,\omega\cdot\nabla_x v\ra_{L^2(G\times S\times I)}\nonumber\\
&+C\la \phi,S_0v\ra_{L^2(G\times S\times I)}+\la\phi,(\Sigma^*-K_C^*) v\ra_{L^2(G\times S\times I)}\nonumber\\
&+\la \gamma_+(\phi),\gamma_+(v)\ra_{T^2(\Gamma_+)}
+\la \phi(\cdot,\cdot,0),S_0(\cdot,0) v(\cdot,\cdot,0)\ra_{L^2(G\times S)},
\eea
and
\[
F(v)=\la { f}_C,v\ra_{L^2(G\times S\times I)}+\la { g}_C,\gamma_-(v)\ra_{T^2(\Gamma_-)}.
\]
The variational equation corresponding to the problem (\ref{csda3A}), (\ref{finalbc}), (\ref{finalic}) (in the classical sense)   is
\[
B(\phi,v)=F(v)\quad \forall v\in C^1(\ol G\times S\times I).
\]

We show that the bilinear form $B:C^1(\ol G\times S\times I)\times C^1(\ol G\times S\times I)\to\R$ obeys the following {\it boundedness and coercivity} conditions:

\begin{theorem}\label{csdath1}
Suppose that the assumptions  (\ref{ass1}), (\ref{ass5-a}), (\ref{ass7}), (\ref{ass-8}), (\ref{ass8-aa-a}) and (\ref{ass9-a-b}) (with $C=\frac{\max\{q,0\}}{\kappa}$ and $c>0$) and (\ref{csda9}), (\ref{csda9aa}), (\ref{csda9a})   are valid.
Then there exists a constant $M>0$  such that 
\be\label{csda29}
|B(\phi,v)|\leq M\n{\phi}_{H_1}\n{v}_{ H_2}\quad \forall\phi,\ v\in C^1(\ol G\times S\times I)
\ee
and 
\be\label{csda30}
B(\phi,\phi)\geq c'\n{\phi}_{H_1}^2\quad \forall \phi\in C^1(\ol G\times S\times I)
\ee
where we assume that 
\bea\label{cprime}
c':=\min\{{1\over 2},{{\kappa}\over 2},c\}
\eea 
is strictly positive. Note that $\kappa$ is defined in \eqref{csda9a} and $c$ in \eqref{ass8-aa-a}, \eqref{ass9-a-b}.
(Recall that the spaces $H_1$ and $H_2$ were defined in equations \eqref{spaceH1} and \eqref{inph2}, respectively.)
\end{theorem}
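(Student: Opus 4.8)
\textit{Proof proposal.}
The plan is to read off the boundedness \eqref{csda29} and the coercivity \eqref{csda30} directly from the explicit formula \eqref{csda27} for $B$, handling its six summands separately. For boundedness the only inputs are Cauchy--Schwarz, the $L^\infty$-bounds on $S_0$ and $\Sigma$ from \eqref{ass1} and \eqref{csda9}, the $L^2$-boundedness of $K_C$ (hence of $K_C^*$, with the same norm) coming from Lemma \ref{csdale1a}, trace continuity, and the Sobolev-type estimate \eqref{sobim}. For coercivity the key moves are Green's formula \eqref{green}, one integration by parts in $E$ exactly as in Lemma \ref{csdale0}, and the accretivity bound \eqref{se7} of Lemma \ref{csdale1a}; the whole point of the substitution \eqref{eq:exp_trick} with $C=\max\{q,0\}/\kappa$ is a cancellation that makes this work.

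First I would prove \eqref{csda29}. The four $L^2$-pairings $\la\phi,S_0\p{v}{E}\ra$, $-\la\phi,\omega\cdot\nabla_x v\ra$, $C\la\phi,S_0v\ra$, $\la\phi,(\Sigma^*-K_C^*)v\ra$ are bounded, via Cauchy--Schwarz and the above $L^\infty$/operator-norm bounds, by a constant times $\n{\phi}_{L^2(G\times S\times I)}\n{v}_{H_2}$ (the first two needing $\n{\p{v}{E}}_{L^2}$, resp.\ $\n{\omega\cdot\nabla_x v}_{L^2}$, which are parts of $\n{v}_{H_2}$, the last two needing only $\n{v}_{L^2}$), hence by a constant times $\n{\phi}_{H_1}\n{v}_{H_2}$ since $\n{\phi}_{L^2}\le\n{\phi}_{H_1}$. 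The term $\la\gamma_+(\phi),\gamma_+(v)\ra_{T^2(\Gamma_+)}$ is bounded by $\n{\gamma(\phi)}_{T^2(\Gamma)}\n{\gamma(v)}_{T^2(\Gamma)}\le\n{\phi}_{H_1}\n{v}_{H_2}$, both trace terms being parts of the respective norms. The only slightly delicate term is $\la\phi(\cdot,\cdot,0),S_0(\cdot,0)v(\cdot,\cdot,0)\ra_{L^2(G\times S)}$: here $\n{\phi(\cdot,\cdot,0)}_{L^2(G\times S)}\le\n{\phi}_{H_1}$ is built into $H_1$, while $\n{v(\cdot,\cdot,0)}_{L^2(G\times S)}$ is bounded by a constant multiple of $\n{v}_{L^2}+\n{\p{v}{E}}_{L^2}$, hence of $\n{v}_{H_2}$, by \eqref{sobim}. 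Adding the six estimates gives \eqref{csda29} with a suitable $M$.

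Then I would prove \eqref{csda30} by setting $v=\phi$ and integrating the two derivative pairings. Green's formula \eqref{green} with $v=\psi=\phi$ gives $-\la\phi,\omega\cdot\nabla_x\phi\ra_{L^2}=\tfrac12\n{\gamma_-(\phi)}_{T^2(\Gamma_-)}^2-\tfrac12\n{\gamma_+(\phi)}_{T^2(\Gamma_+)}^2$, and the negative half is absorbed by the positive boundary term $\n{\gamma_+(\phi)}_{T^2(\Gamma_+)}^2$ already in $B$, the two leaving $\tfrac12\n{\gamma(\phi)}_{T^2(\Gamma)}^2$. Integrating by parts in $E$ as in Lemma \ref{csdale0},
\[
\la\phi,S_0\p{\phi}{E}\ra_{L^2(G\times S\times I)} ={}& \tfrac12\int_{G\times S}\big(S_0(x,E_m)\phi(x,\omega,E_m)^2-S_0(x,0)\phi(x,\omega,0)^2\big)\,dx\,d\omega \\
& -\tfrac12\int_{G\times S\times I}\p{S_0}{E}\,\phi^2,
\]
and $-\tfrac12\int\p{S_0}{E}\phi^2\ge-\max\{q,0\}\n{\phi}_{L^2}^2$ with $q$ as in \eqref{q}. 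This negative term is exactly cancelled by $C\la\phi,S_0\phi\ra_{L^2}\ge C\kappa\n{\phi}_{L^2}^2=\max\{q,0\}\n{\phi}_{L^2}^2$, using $S_0\ge\kappa$ from \eqref{csda9a} and the choice $C=\max\{q,0\}/\kappa$ from \eqref{eq:def_C}. Next $\la\phi,(\Sigma^*-K_C^*)\phi\ra_{L^2}=\la(\Sigma-K_C)\phi,\phi\ra_{L^2}\ge c\n{\phi}_{L^2}^2$ by \eqref{se7}. Finally the $E=0$ endpoint term $-\tfrac12\int S_0(x,0)\phi(\cdot,0)^2$ combines with $\la\phi(\cdot,\cdot,0),S_0(\cdot,0)\phi(\cdot,\cdot,0)\ra_{L^2(G\times S)}$ to give $\tfrac12\int S_0(x,0)\phi(\cdot,0)^2\ge\tfrac\kappa2\n{\phi(\cdot,\cdot,0)}_{L^2(G\times S)}^2$, while the $E_m$-endpoint term gives $\tfrac\kappa2\n{\phi(\cdot,\cdot,E_m)}_{L^2(G\times S)}^2$. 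Collecting,
\[
B(\phi,\phi)\ge{}& c\,\n{\phi}_{L^2(G\times S\times I)}^2+\tfrac12\n{\gamma(\phi)}_{T^2(\Gamma)}^2 \\
& +\tfrac\kappa2\n{\phi(\cdot,\cdot,0)}_{L^2(G\times S)}^2+\tfrac\kappa2\n{\phi(\cdot,\cdot,E_m)}_{L^2(G\times S)}^2\ge c'\n{\phi}_{H_1}^2,
\]
with $c'=\min\{\tfrac12,\tfrac\kappa2,c\}$, which is \eqref{csda30}.

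The arithmetic is essentially mechanical; the genuinely delicate point is the bookkeeping of boundary and energy-endpoint contributions — one must check that each term generated by the two integrations by parts (the $\gamma_+(\phi)$ surface term and the $E=0$ endpoint term) is matched by a fraction of a positive term already present in $B$, and that the $\max\{q,0\}\n{\phi}_{L^2}^2$ produced by $\p{S_0}{E}$ is precisely neutralized by $C\kappa\n{\phi}_{L^2}^2$. The structural subtlety worth flagging is the asymmetry of the two norms: $\phi$ lives in $H_1$, which records only the $L^2$-size, the full boundary trace, and the two energy-endpoint values, whereas $v$ lives in $H_2$, which contains $\omega\cdot\nabla_x$ and $\p{}{E}$ but not the energy-endpoint data, and it is exactly \eqref{sobim} that bridges this gap in the one term of $B$ where an energy-endpoint value of $v$ appears.
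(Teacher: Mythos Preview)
Your proposal is correct and follows essentially the same route as the paper: term-by-term Cauchy--Schwarz plus \eqref{sobim} for boundedness, and for coercivity the combination of Green's formula \eqref{green}, the energy integration-by-parts of Lemma \ref{csdale0}, the accretivity \eqref{se7}, and the cancellation $C\kappa=\max\{q,0\}$. The only cosmetic difference is that the paper first rewrites $\la\phi,S_0\partial_E\phi\ra=-\la\phi,\partial_E(S_0\phi)\ra+[S_0\phi^2]_0^{E_m}$ and then invokes Lemma \ref{csdale0}, whereas you integrate by parts directly to get $\tfrac12[S_0\phi^2]-\tfrac12\int(\partial_E S_0)\phi^2$; these are algebraically equivalent.
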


\begin{proof}
A. At first we show the boundedness of $B(\cdot,\cdot)$.
The assumptions \eqref{csda9}, \eqref{csda9aa} and the fact that the (energy) interval $I$ is bounded imply
by the Sobolev Embedding theorem (see \eqref{sobim}) that $S_0(\cdot,0)\in L^\infty(G)$. Hence we find that 
\bea\label{csda31}
|B(\phi,v)|\leq &
\n{\phi}_{L^2(G\times S\times I)}\n{S_0}_{L^\infty(G\times I)}\n{{\p {v}E}}_{L^2(G\times S\times I)}
\nonumber\\
&+\n{\phi}_{L^2(G\times S\times I)} \n{\omega\cdot\nabla_x v}_{L^2(G\times S\times I)}\nonumber
\\
&+C\n{S_0}_{L^\infty(G\times I)}\n{\phi}_{L^2(G\times S\times I)}\n{v}_{L^2(G\times S\times I)}\nonumber\\
&+\n{\phi}_{L^2(G\times S\times I)}\n{(\Sigma-K_C)^*}\n{v}_{L^2(G\times S\times I)} \nonumber\\
&+\n{\gamma(\phi)}_{T^2(\Gamma)} \n{\gamma(v)}_{T^2(\Gamma)} \nonumber\\
&+\n{\phi(\cdot,\cdot,0)}_{L^2(G\times S)}\n{S_0(\cdot,0)}_{L^\infty(G)}\n{v(\cdot,\cdot,0)}_{L^2(G\times S)}.
\eea
This implies the assertion \eqref{csda29} by the definition of the spaces $H_1$, $H_2$,
and after observing that Sobolev Embedding theorem (see \eqref{sobim}) implies the existence of
a constant $C'\geq 0$ such that
\[
\n{w(\cdot,\cdot,0)}_{L^2(G\times S)}\leq C'\n{w}_{H_2},\quad \forall w\in H_2.
\]

B. We verify the coercitivity (\ref{csda30}).
Integrating by parts we have
\begin{multline}\label{csda32}
\la\phi,S_0{\p {\phi}E}\ra_{L^2(G\times S\times I)}
=
-\la\phi,{\p {(S_0\phi)}E}\ra_{L^2(G\times S\times I)} \\
+\la\phi(\cdot,\cdot,E_m),S_0(\cdot,E_m)\phi(\cdot,\cdot,E_m)\ra_{L^2(G\times S)}-
\la\phi(\cdot,\cdot,0),S_0(\cdot,0)\phi(\cdot,\cdot,0)\ra_{L^2(G\times S)}
\end{multline}
Using the Green's formula (\ref{green}) we have
\be\label{csda33}
-\la \phi, \omega\cdot\nabla_x \phi\ra_{L^2(G\times S\times I)}
=\la \omega\cdot\nabla_x\phi,\phi\ra_{L^2(G\times S\times I)}
-\int_{\partial G\times S\times I}(\omega\cdot\nu)\phi^2 d\sigma d\omega dE\nonumber\\
\ee
which implies
\be\label{csda33a}
\la \omega\cdot\nabla_x\phi,\phi\ra_{L^2(G\times S\times I)}
={}&\frac{1}{2}\int_{\partial G\times S\times I}(\omega\cdot\nu)\phi^2 d\sigma d\omega dE \nonumber\\
={}&\frac{1}{2}
\int_{\partial G\times S\times I}((\omega\cdot\nu)_+-(\omega\cdot\nu)_-)\phi^2 d\sigma d\omega dE \\
={}&\frac{1}{2}\big(\n{\gamma_+(\phi)}_{T^2(\Gamma_+)}^2-\n{\gamma_-(\phi)}_{T^2(\Gamma_-)}^2\big).
\ee
By the definition \eqref{eq:def_C} of $C$ and assumption \eqref{csda9a},
we have
\[
C=\frac{\max\{q,0\}}{\kappa}\geq \frac{\max\{q,0\}}{S_0}\geq \frac{q}{S_0}
\]
a.e. and hence
\[
C\la \phi,S_0\phi\ra_{L^2(G\times S\times I)}\geq q\n{\phi}_{L^2(G\times S\times I)}^2.
\]

Taking Lemmas \ref{csdale0} and \ref{csdale1a} into account, one can thus estimate
\[
B(\phi,\phi)={}&
-\la\phi,{\p {(S_0\phi)}E}\ra_{L^2(G\times S\times I)}
+\la\phi(\cdot,\cdot,E_m),S_0(\cdot,E_m)\phi(\cdot,\cdot,E_m)\ra_{L^2(G\times S)}
\nonumber\\
&-\frac{1}{2}\big(\n{\gamma_+(\phi)}_{T^2(\Gamma_+)}^2-\n{\gamma_-(\phi)}_{T^2(\Gamma_-)}^2\big)
+\n{\gamma_+(\phi)}_{T^2(\Gamma_+)}^2 \nonumber\\
&+C\la \phi,S_0\phi\ra_{L^2(G\times S\times I)}+\la(\Sigma-K_C)\phi,\phi\ra_{L^2(G\times S\times I)}
\nonumber\\
\geq {}&
-q\n{\phi}^2_{L^2(G\times S\times I)}
-\frac{1}{2}\la\phi(\cdot,\cdot,E_m),S_0(\cdot,E_m)\phi(\cdot,\cdot,E_m)\ra_{L^2(G\times S)} \\
&+\frac{1}{2}\la\phi(\cdot,\cdot,0),S_0(\cdot,0)\phi(\cdot,\cdot,0)\ra_{L^2(G\times S)}
+\la\phi(\cdot,\cdot,E_m),S_0(\cdot,E_m)\phi(\cdot,\cdot,E_m)\ra_{L^2(G\times S)}\nonumber\\
&
+\frac{1}{2}\n{\gamma(\phi)}_{T^2(\Gamma)}^2
+q\n{\phi}_{L^2(G\times S\times I)}^2+c\n{\phi}^2_{L^2(G\times S\times I)} \\
\geq {}&
\frac{\kappa}{2}\big(\n{\phi(\cdot,\cdot,0)}_{L^2(G\times S)}^2
+\n{\phi(\cdot,\cdot,E_m)}_{L^2(G\times S)}^2\big)
+\frac{1}{2}\n{\gamma(\phi)}_{T^2(\Gamma)}^2
+c\n{\phi}^2_{L^2(G\times S\times I)}.
\]
\end{proof}

Because $C^1(\ol G\times S\times I)\times C^1(\ol G\times S\times I)$ is dense in $H_1\times  H_2$ and since \eqref{csda29} holds, the bilinear form $B(\cdot,\cdot): C^1(\ol G\times S\times I)\times C^1(\ol G\times S\times I)\to\R$ has an unique extension $\tilde B(\cdot,\cdot):H_1\times H_2\to\R$
which satisfies 
\be\label{csda36}
|\tilde B(\tilde\phi,v)|\leq M\n{\tilde\phi}_{H_1}\n{v}_{H_2}\quad \forall \tilde\phi\in H_1,\ v\in H_2
\ee
and 
\be\label{csda37}
\tilde B(v,v)\geq c'\n{v}_{H_1}^2\quad \forall v\in H_2.
\ee
We see that actually
\begin{align}\label{coex}
\tilde B(\tilde\phi,v)
={}&\la\phi,S_0{\p {v}E}\ra_{L^2(G\times S\times I)}
-\la\phi,\omega\cdot\nabla_x v\ra_{L^2(G\times S\times I)} \nonumber\\
&+C\la \phi,S_0v\ra_{L^2(G\times S\times I)}+\la\phi,(\Sigma^*-K_C^*) v\ra_{L^2(G\times S\times I)} \nonumber\\
&+\la q_{|\Gamma_+},\gamma_+(v)\ra_{T^2(\Gamma_+)}+\la p_0,S_0(\cdot,0) v(\cdot,\cdot,0)\ra_{L^2(G\times S)},
\end{align}
when $\tilde\phi=(\phi,q,p_0,p_m)\in H_1$ and $v\in H_2$.
In addition, since for $v\in C^1(\ol G\times S\times I)$ we have
$\n{\gamma_-(v)}_{T^2(\Gamma_-)}\leq \n{\gamma(v)}_{T^2(\Gamma)}$,
it follows that
\bea\label{csda39}
|F(v)|
\leq{}& |\la {f}_C,v\ra_{L^2(G\times S\times I)}|+|\la { g}_C,\gamma_-(v)\ra_{T^2(\Gamma_-)}|
\nonumber\\
\leq{}&
\n{{ f}_C}_{L^2(G\times S\times I)}\n{v}_{L^2(G\times S\times I)}+\n{{g_C}}_{T^2(\Gamma_-)}\n{\gamma(v)}_{T^2(\Gamma)},
\eea
and therefore, since $C^1(\ol{G}\times S\times I)$ is dense in $H_1$,
the linear form $F:C^1(\ol{G}\times S\times I)\to\R$ has a unique bounded extension,
which we still denote by $F$,
\begin{align}\label{Fex}
F:H_1\to\R;\quad
F(\tilde{\phi})=\la { f_C},\phi\ra_{L^2(G\times S\times I)}+\la { g_C}, q\ra_{T^2(\Gamma_-)},
\end{align}
when $\tilde\phi=(\phi,q,p_0,p_m)\in H_1$.
Recall also that the embedding $H_2\subset H_1$ is continuous.

We need the following result, so called {\it Lions-Lax-Milgram Theorem} (generalized Lax-Milgram Theorem).

\begin{theorem}\label{glm}
Let $X$ and $Y$ be Hilbert spaces, with $Y$ continuously embedded into $X$.
Assume that $B(\cdot,\cdot):X\times Y\to\R$ is a bilinear form satisfying the following properties with $M\geq 0,\ c>0$,
\be\label{csda38}
|B(u,v)|\leq M\n{u}_{X}\n{v}_{Y}\quad \forall u\in X,\ v\in Y \quad ({\rm boundedness})
\ee
and 
\be\label{csda37a}
B(v,v)\geq c\n{v}_{X}^2\quad \forall v\in Y \quad ({\rm coercivity}).
\ee
Suppose that $F:X\to\R$ is a bounded linear form. Then there exists $u\in X$ (possibly non-unique) such that
\be\label{csda38-a}
B(u,v)=F(v)\quad \forall v\in Y.
\ee
\end{theorem}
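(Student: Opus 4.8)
The plan is to prove the Lions–Lax–Milgram Theorem (Theorem \ref{glm}) in the stated form, where the coercivity is with respect to the $X$-norm but only on the subspace $Y$, and $B$ is defined on $X\times Y$. This is a classical result; I would follow the standard functional-analytic argument via the closed range theorem (or, equivalently, an open-mapping argument on $Y$ equipped with a suitable norm).

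First I would fix $u\in X$ and consider the linear functional $v\mapsto B(u,v)$ on $Y$. By the boundedness assumption \eqref{csda38} it is continuous on $Y$, so the Riesz representation theorem on the Hilbert space $Y$ yields a unique $Au\in Y$ with $B(u,v)=\la Au,v\ra_Y$ for all $v\in Y$. The map $A:X\to Y$ is linear, and $\n{Au}_Y\le M\n{u}_X$, so $A$ is bounded. Similarly, the bounded linear form $F$ on $X$ restricts to a bounded linear form on $Y$ (since $Y\hookrightarrow X$ continuously), so there is $w\in Y$ with $F(v)=\la w,v\ra_Y$ for all $v\in Y$. The equation \eqref{csda38-a} we must solve is then equivalent to finding $u\in X$ with $Au=w$, i.e. $w\in \range(A)$. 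Hence it suffices to show $A$ is surjective onto $Y$.

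The key step is the surjectivity of $A$, and this is where coercivity enters. The coercivity \eqref{csda37a} says that for $v\in Y$,
\[
c\n{v}_X^2\le B(v,v)=\la Av,v\ra_Y\le \n{Av}_Y\n{v}_Y.
\]
Since $Y\hookrightarrow X$ continuously, there is $C_0>0$ with $\n{v}_Y\ge C_0^{-1}\dots$ — actually the useful direction is $\n{v}_X\le C_0\n{v}_Y$, which does \emph{not} immediately give a lower bound on $\n{Av}_Y$ in terms of $\n{v}_Y$. The correct route is: coercivity gives $\n{Av}_Y\n{v}_Y\ge c\n{v}_X^2$, but we need to bound $\n{v}_X$ from below by $\n{v}_Y$, which fails in general. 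So instead one argues on the restriction $A|_Y:Y\to Y$. For $v\in Y$, we do not directly get injectivity in $Y$-norm; rather we observe that $A|_Y$ need not have closed range as a map $Y\to Y$ without more care. The standard fix is to consider $A$ as a map from $X$ to $Y$: it suffices to show $\range(A)$ is dense in $Y$ \emph{and} closed. Density: if $v_0\in Y$ is orthogonal in $Y$ to $\range(A)$, then $0=\la Av_0,v_0\ra_Y=B(v_0,v_0)\ge c\n{v_0}_X^2$, so $v_0=0$ in $X$, hence in $Y$; thus $\range(A)$ is dense. Closedness: one shows $A:X\to Y$ has a closed range by verifying that $A^*:Y\to X$ (the Hilbert-space adjoint) is bounded below, or equivalently by a direct Cauchy-sequence argument using that $\range(A)$ dense plus the a priori estimate forces surjectivity. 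I would phrase this cleanly by noting that $\range(A)$ dense in $Y$ combined with the estimate $\n{Au}_Y \ge (c/C_0^2)\,\inf\{\n{u-z}_X : Az=0\}$-type bound, obtained from coercivity applied to elements of $Y$, gives closedness; then $\range(A)=Y$ and picking $u\in X$ with $Au=w$ finishes the proof (uniqueness not claimed).

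\textbf{The main obstacle} is the closedness of $\range(A)$: coercivity is an $X$-norm estimate while $A$ lands in $Y$, and $Y$ is (typically strictly) smaller than $X$, so one cannot simply invoke the Lax–Milgram argument verbatim. The cleanest resolution is to note that it is enough to solve $Au=w$ for $w$ in the dense subspace $\range(A)$ is automatic, and to obtain closedness one uses that for any $y\in Y$ the coercivity gives $\n{y}_X \le \tfrac1c \sup_{0\ne u\in X}\tfrac{|B(u,y)|}{\n{u}_X}$ only when $y\in Y$ — i.e. one should really apply a bounded-below argument to the transpose map $Y\ni y\mapsto B(\cdot,y)\in X'$, which by coercivity satisfies $\n{B(\cdot,y)}_{X'}\ge c\,\n{y}_X/\n{y}_X\cdot\n{y}_X = c\n{y}_X^2/\n{y}_X$; combined with the closed-range theorem this yields surjectivity of $A$. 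I would present this via the closed-range theorem for the operator $T:Y\to X$, $Ty$ the $X$-Riesz representative of $B(\cdot,y)$, showing $T$ injective with closed range and $\range(T)^\perp=\{0\}$, hence $T$ surjective, which is exactly the solvability of \eqref{csda38-a}.
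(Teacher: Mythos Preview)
The paper does not prove this theorem; it simply cites Tr\`eves and Grisvard. So there is no ``paper's approach'' to compare to, and your proposal must stand on its own.

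Your setup is fine: with $T:Y\to X$ defined by $\la Tv,u\ra_X=B(u,v)$, coercivity gives $\la Tv,v\ra_X\ge c\n{v}_X^2$, hence $\n{Tv}_X\ge c\n{v}_X$, so $T$ is injective. But your concluding plan --- ``$T$ injective with closed range and $\range(T)^\perp=\{0\}$, hence $T$ surjective'' --- fails on two counts. First, the lower bound $\n{Tv}_X\ge c\n{v}_X$ controls $v$ only in the $X$-norm, not the $Y$-norm; if $Tv_n\to z$ in $X$ then $(v_n)$ is Cauchy in $X$ but need not converge in $Y$, so $\range(T)$ is not closed in general. Second, $\range(T)^\perp=\{u\in X:B(u,v)=0\ \forall v\in Y\}$, which is exactly the set of ``non-uniqueness'' directions the theorem explicitly allows; it need not be $\{0\}$, and $T$ is not surjective. (Surjectivity of $T$ would be a much stronger statement than the theorem.)

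The missing idea is Hahn--Banach. On the (not necessarily closed) subspace $R:=T(Y)\subset X$, define $G(Tv):=F(v)$. This is well defined since $T$ is injective, and bounded in the $X$-norm because
\[
|G(Tv)|=|F(v)|\le \n{F}_{X'}\n{v}_X\le \tfrac{1}{c}\n{F}_{X'}\n{Tv}_X.
\]
Extend $G$ to a bounded functional on $X$ (Hahn--Banach, or just orthogonal projection onto $\overline{R}$ since $X$ is Hilbert) and let $u\in X$ be its Riesz representative. Then $B(u,v)=\la u,Tv\ra_X=G(Tv)=F(v)$ for all $v\in Y$. No closed-range or surjectivity argument is needed.
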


\begin{proof}
See e.g. \cite[p. 403]{treves} or \cite[p. 234]{grisvard}.
\end{proof}

Let
\[
P(x,\omega,E,D)\phi:= -{\p {(S_0\phi)}E}+\omega\cdot\nabla_x\phi
\]
The space
\begin{multline}
{\s H}_P(G\times S\times I^\circ):=\{\phi\in L^2(G\times S\times I)\ | \\
P(x,\omega,E,D)\phi\in L^2(G\times S\times I)\ {\rm in\ the \ weak\ sense}\}
\end{multline}
is a Hilbert space when equipped with the inner product (cf. section \ref{m-d})
\[
\la \phi,v\ra_{{\s H}_P(G\times S\times I^\circ)}=\la \phi,v\ra_{L^2(G\times S\times I)}+\la P(x,\omega,E,D)\phi,P(x,\omega,E,D)v\ra_{L^2(G\times S\times I)}.
\]

With this notation, the equations \eqref{se1} and \eqref{csda3A} can be written as
\begin{align*}
P(x,\omega,E,D)\psi+\Sigma\psi - K\psi = f,
\end{align*}
and
\begin{align*}
P(x,\omega,E,D)\phi+C S_0\phi+\Sigma\phi-K_C\phi=e^{CE}f,
\end{align*}
respectively.
 
In the context if Lions-Lax-Milgram Theorem we shall make us of the following assumption which we call as ${\bf TC}$:

\begin{assumption}[TC]\label{as:TC}
Let $\gamma_{\pm}(\phi)=\phi_{|\Gamma_{\pm}}$ and $\gamma_{\rm m}(\phi):=\phi(\cdot,\cdot,E_{\rm m}),\ \gamma_{0}(\phi):=\phi(\cdot,\cdot,0)$. The assumption is that the linear maps
\begin{align*}
\gamma_{\pm}:{\s H}_P(G\times S\times I^\circ) & \to L_{\rm loc}^2(\Gamma_{\pm},|\omega\cdot\nu| d\sigma d\omega dE), \\
\gamma_{\rm m}:{\s H}_P(G\times S\times I^\circ) & \to L_{\rm loc}^2(G\times S), \\
\gamma_{0}:{\s H}_P(G\times S\times I^\circ) & \to L_{\rm loc}^2(G\times S),
\end{align*}
are well-defined and continuous.
\end{assumption}

\begin{remark}
In the case where $S_0=S_0(E)$ is independent of $x$ and $S_0\in C(I)$ one can show that the assumption ${\bf TC}$ holds. The proof can be based on the techniques of the proof of Theorem \ref{tth} and Example \ref{desolex1}.
If $S_0=S_0(x,E)$ depends also on $x$, we conjecture that it suffices only to assume that $S_0$ is regular enough
for validity of ${\bf TC}$.
\end{remark}

Let
\[
P'(x,\omega,E,D)v=
S_0{\p {v}E}-\omega\cdot\nabla_x v
\]
be the formal transpose of $P(x,\omega,E,D)$.
Making the assumption ${\bf TC}$ the (extended) Green formula
\bea\label{green-ex}
&
\int_{G\times S\times I}(P(x,\omega,E,D) \phi)v\ dxd\omega dE
-\int_{G\times S\times I}(P'(x,\omega,E,D) v)\phi\ dxd\omega dE\nonumber\\
={}&
\int_{\partial G\times S\times I}(\omega\cdot \nu) v\ \phi\ d\sigma d\omega dE\nonumber\\
&
+
\int_{G\times S}\big(S_0(\cdot,0)\phi(\cdot,\cdot,0)v(\cdot,\cdot,0)-S_0(\cdot,E_{\rm m})\phi(\cdot,\cdot,E_{\rm m})v(\cdot,\cdot,E_{\rm m})\big)dx d\omega
\eea
is valid for all $\phi,\ v\in {\s H}_P(G\times S\times I^\circ)$ for which
$({\rm supp}(v))\cap \partial (G\times S\times I)$ is a compact subset of $\Gamma_-\cup \Gamma_+\cup (G\times S\times\{E_m\})\cup(G\times S\times\{0\})$.
Moreover, (\ref{green-ex}) holds for 
$\phi,\ v\in {\s H}_P(G\times S\times I^\circ)$ when $\gamma_{\pm}(\phi)\in T^2(\Gamma_{\pm})$ and $\gamma_{\rm m}(\phi),\ \gamma_0(\psi)\in L^2(G\times S)$. We omit the proof of both these claims.

We are now in position to formulate and prove the following theorem.

\begin{theorem}\label{csdath3}
Suppose that the assumptions  (\ref{ass1}), (\ref{ass5-a}), (\ref{ass7}), (\ref{ass-8}), (\ref{ass8-aa-a}) and (\ref{ass9-a-b}) (with $C=\frac{\max\{q,0\}}{\kappa}$ and $c>0$)
and (\ref{csda9}), (\ref{csda9aa}), (\ref{csda9a}) are valid.
Let ${ f_C}\in L^2(G\times S\times I)$ and ${ g_C}\in T^2(\Gamma_-)$.
Then the following assertions hold.

(i) The variational equation (see \eqref{coex}, \eqref{Fex})
\be\label{csda40a}
\tilde{B}(\tilde\phi,v)=F(v)\quad \forall v\in H_2,
\ee
has a solution $\tilde\phi=(\phi,q,p_0,p_{\rm m})\in H_1$.

Furthermore, $\phi \in {\s H}_P(G\times S\times I^\circ)$ and it is a weak (distributional) solution of the equation (\ref{csda3A}).

(ii) Suppose that additionally the assumption ${\bf TC}$ holds. Then a solution $\phi$ of the
equation \eqref{csda3A} obtained in part (i) is a solution of the problem 
\eqref{csda3A}, \eqref{finalbc}, \eqref{finalic}.

In addition, we have $q_{|\Gamma_+}=\gamma_+(\phi)$ and $p_0=\phi(\cdot,\cdot,0)$,
when $\tilde{\phi}=(\phi,q,p_0,p_m)$ is a solution in $H_1$ obtained in part (i).

(iii) Under the assumptions imposed in part (ii),
any solution $\phi\in \mc{H}_P(G\times S\times I^\circ)$
of the problem \eqref{csda3A}, \eqref{finalbc}, \eqref{finalic}
is unique and obeys the estimate
\be\label{csda40aa}
\n{\phi}_{{H_1}}\leq {1\over{c'}}
\big(\n{{ f_C}}_{L^2(G\times S\times I)}+\n{{g_C}}_{T^2(\Gamma_-)}\big),
\ee
where $c'$ is given in \eqref{cprime}.
\end{theorem}

\begin{proof}
The proof is based on "variations" and it is quite standard.

(i)
We apply Theorem \ref{glm} with $X=H_{1}$, $Y=H_{2}$,
and with $B(\cdot,\cdot)=\tilde B(\cdot,\cdot)$ and $F$ given by \eqref{coex} and \eqref{Fex}, respectively.
As mentioned above $\tilde B(\cdot,\cdot)$ satisfies (\ref{csda38}) and (\ref{csda37a}),
while $F$ is a bounded linear functional,
hence Theorem \ref{glm} guarantees the existence of
a solution $\tilde\phi=(\phi,q,p_0,p_m)\in H_1$ such that (\ref{csda40a}) holds.

We verify that  $\phi\in L^2(G\times S\times I)$ is a weak solution of the equation (\ref{csda3A}).
Let $I^\circ:=]0,E_m[$. From (\ref{csda40a}) it follows that  
\be\label{csda41}
\tilde B(\tilde\phi,v)=F(v),\quad \forall v\in C_0^\infty(G\times S\times I^\circ).
\ee
Since for $v\in C_0^\infty(G\times S\times I^\circ)$ we have $v(\cdot,\cdot,0)=v(\cdot,\cdot,E_m)=0$ and $v_{|\Gamma}=0$,
we see from (\ref{coex}) that
\bea\label{csda42}
\tilde{B}(\tilde{\phi},v)
={}&\la\phi,S_0{\p {v}E}\ra_{L^2(G\times S\times I)}
-\la \phi, \omega\cdot\nabla_x v\ra_{L^2(G\times S\times I)} \nonumber\\
&+\la CS_0\phi,v\ra_{L^2(G\times S\times I)}+\la (\Sigma-K_C)\phi,v\ra_{L^2(G\times S\times I)}\nonumber\\
={}&F(v)
=\la { f_C},v\ra_{L^2(G\times S\times I)},
\eea
for all $v\in C_0^\infty(G\times S\times I^\circ)$,
which means that \eqref{csda3A} holds in the weak sense.

Since $\phi\in L^2(G\times S\times I)$, and by the above
\[
\la \phi,P'(x,\omega,E,D)v\ra_{L^2(G\times S\times I)}=\la -(CS_0+(\Sigma-K_C))\phi + { f_C},v\ra_{L^2(G\times S\times I)},
\]
we see that $\phi\in {\s H}_P(G\times S\times I^\circ)$.

(ii)  Suppose that the assumption ${\bf TC}$ holds and that
$\tilde\phi=(\phi,q,p_0,p_m)\in H_1$ satisfies (\ref{csda40a}). Then for all $v\in H_2$
\bea\label{coexpr}
&\la\phi,S_0{\p {v}E}\ra_{L^2(G\times S\times I)}
-\la\phi,\omega\cdot\nabla_x v\ra_{L^2(G\times S\times I)} \nonumber\\
&\quad +C\la \phi,S_0v\ra_{L^2(G\times S\times I)}+\la\phi,(\Sigma^*-K_C^*) v\ra_{L^2(G\times S\times I)} \nonumber\\
&\quad +\la q,\gamma_+(v)\ra_{T^2(\Gamma_+)}+\la p_0,S_0(\cdot,0) v(\cdot,\cdot,0)\ra_{L^2(G\times S)} \nonumber\\
&
=
\tilde B(\tilde\phi,v)=
\la { f_C},v\ra_{L^2(G\times S\times I)}+\la { g_C}, \gamma_-(v)\ra_{T^2(\Gamma_-)}.
\eea

Recall that $\Gamma'=\partial G\times S$ and $\Gamma'_{-}=\{(y,\omega)\in \partial G\times S\ |\ \omega\cdot\nu(y)<0\}$.
Choose  any $\eta\in C^1_0(I^\circ)$  and $\theta\in C^1(\ol G\times S)$ such that ${\rm supp}(\theta)\cap \Gamma'$ is a compact subset of $\Gamma_-'$. Then $w(x,\omega,E):=\theta(x,\omega)\eta(E)\in C^1(\ol{G}\times S\times I)$
and $w(\cdot,\cdot,0)=w(\cdot,\cdot,E_{\rm m})=0$, $w_{|\Gamma_+}=0$.
Hence  $\la p_0,S_0(\cdot,0) w(\cdot,\cdot,0)\ra_{L^2(G\times S)}=0$ and
$\la q,\gamma_+(w)\ra_{T^2(\Gamma_+)}=0$,
and so by (\ref{coexpr}) for these $w$,
\bea\label{coexpra}
&\la\phi,S_0{\p{w}E}\ra_{L^2(G\times S\times I)}
-\la\phi,\omega\cdot\nabla_x w\ra_{L^2(G\times S\times I)} \nonumber\\
&\quad +\la \phi, CS_0w\ra_{L^2(G\times S\times I)}+\la\phi, (\Sigma^*-K_C^*) w\ra_{L^2(G\times S\times I)} \nonumber\\
&=\la { f_C},w\ra_{L^2(G\times S\times I)}+\la {g_C}, \gamma_-(w)\ra_{T^2(\Gamma_-)}.
\eea

Since the solution $\phi$ obtained in part (i) belongs to $\mc{H}_P(G\times S\times I^\circ)$,
we have by virtue of the Green's formula \eqref{green-ex} and \eqref{coexpra} that
\bea\label{coexpra-a}
&
\la { f_C},w\ra_{L^2(G\times S\times I)}+\la { g_C}, \gamma_-(w)\ra_{T^2(\Gamma_-)}
\nonumber\\
={}&
\la{-\p{(S_0\phi)}{E}}+\omega\cdot\nabla_x\phi+CS_0\phi+(\Sigma-K_C)\phi,w\ra_{L^2(G\times S\times I)}
+\la \gamma_-(\phi),\gamma_-(w) \ra_{T^2(\Gamma_-)} \nonumber \\
={}&
\la { f_C},w\ra_{L^2(G\times S\times I)}
+\la \gamma_-(\phi),\gamma_-(w) \ra_{T^2(\Gamma_-)}
\eea
and hence
\be\label{csda42d}
\la \gamma_-(\phi),\gamma_-(w) \ra_{T^2(\Gamma_-)}
=
\la { g_C},\gamma_-(w) \ra_{T^2(\Gamma_-)},
\ee 
for any $w$ of the form as chosen above. This clearly implies that
$\gamma_-(\phi)={ g_C}\in T^2(\Gamma_-)$. 

Next, choose $\tilde{\eta}\in C^1(I)$ such that $\tilde{\eta}(0)=0$ and choose $\tilde{\theta}\in C_0^1(G)$. Let
$\tilde{w}:=\tilde{\eta} \tilde{\theta}$.
Then by similar calculation as above, we see that 
the Green's formula (\ref{green-ex}) and (\ref{coexpr})
imply
\[
\la \phi(\cdot,\cdot,E_m),S_0(\cdot,E_{\rm m}) \tilde w(\cdot,\cdot,E_m)\ra_{L^2(G\times S)}=0
\]
for these $\tilde{w}$.
Consequently,
$\phi(\cdot,\cdot,E_m)=0$ a.e. in $G\times S$ (since $S_0\geq\kappa>0$), as desired.

Finally, if $v\in C^1(\ol{G}\times S\times I)$, we have
\[
& \la { f_C},v\ra_{L^2(G\times S\times I)}+\la { g_C},\gamma_-(v)\ra_{T^2(\Gamma_-)}
=F(v)=\tilde{B}(\tilde{\phi},v) \\[2mm]
={}& 
\la \phi, P'(x,\omega,E,D)v\ra_{L^2(G\times S\times I)}+\la CS_0\phi,v\ra_{L^2(G\times S\times I)}+\la (\Sigma-K_C)\phi,v\ra_{L^2(G\times S\times I)} \\
{}&+\la q_{|\Gamma_+},\gamma_+(v)\ra_{T^2(\Gamma_+)}+\la p_0,S_0(\cdot,0)v(\cdot,\cdot,0)\ra_{L^2(G\times S)} \\[2mm]
={}&\la T_C\phi,v\ra_{L^2(G\times S\times I)}
+\la q_{|\Gamma_+}-\gamma_+(\phi),\gamma_+(v)\ra_{T^2(\Gamma_+)}
+\la \gamma_-(\phi),\gamma_-(v)\ra_{T^2(\Gamma_-)} \\
{}&
+\la p_0-\phi(\cdot,\cdot,0),S_0(\cdot,0)v(\cdot,\cdot,0)\ra_{L^2(G\times S)}+\la \phi(\cdot,\cdot,E_m),S_0(\cdot,E_m)v(\cdot,\cdot,E_m)\ra_{L^2(G\times S)} \\[2mm]
={}&\la { f_C},v\ra_{L^2(G\times S\times I)}+\la q_{|\Gamma_+}-\gamma_+(\phi),\gamma_+(v)\ra_{T^2(\Gamma_+)}+\la { g_C},\gamma_-(v)\ra_{T^2(\Gamma_-)} \\
{}&+\la p_0-\phi(\cdot,\cdot,0),S_0(\cdot,0)v(\cdot,\cdot,0)\ra_{L^2(G\times S)}
\]
where on the second to last phase we wrote $T_C\phi=(P(x,\omega,E,D)+CS_0+\Sigma-K_C)\phi$ and used Green's formula \eqref{green-ex},
and on the last phase we made use of the already proven facts: $T_C\phi={f_C}$, $\gamma_-(\phi)={ g_C}$, and $\phi(\cdot,\cdot,E_m)=0$.
Thus it holds
\[
\la q_{|\Gamma_+}-\gamma_+(\phi),\gamma_+(v)\ra_{T^2(\Gamma_+)}+\la p_0-\phi(\cdot,\cdot,0),S_0(\cdot,0)v(\cdot,\cdot,0)\ra_{L^2(G\times S)}=0,
\]
for all $v\in C^1(\ol{G}\times S\times I)$, which clearly implies that
$q_{|\Gamma_+}=\gamma_+(\phi)$ and $p_0=\phi(\cdot,\cdot,0)$.

(iii)
By Part (i), $\phi_{|\Gamma_{\pm}}\in T^2(\Gamma_{\pm})$ and 
$\phi(\cdot,\cdot,0)$, $\phi(\cdot,\cdot,E_{\rm m})\in L^2(G\times S)$,
and moreover $\phi\in \mc{H}_P(G\times S\times I^\circ)$.
These properties allow us to apply the Green's formula (\ref{green-ex}),
which in combination with the fact that
\[
P'(x,\omega,E,D)\phi=-P(x,\omega,E,D)\phi-\p{S_0}{E}\phi,
\]
leads us to
\[
\la P(x,\omega,E,D)\phi,\phi\ra_{L^2(G\times S\times I)}
={}&
-\la P(x,\omega,E,D)\phi,\phi\ra_{L^2(G\times S\times I)}
-\la \p{S_0}{E}\phi,\phi\ra_{L^2(G\times S\times I)} \\
{}&
+\n{\gamma_+(\phi)}_{T^2(\Gamma_+)}^2-\n{\gamma_-(\phi)}_{T^2(\Gamma_-)}^2 \\
{}&
+\la S_0(\cdot,0)\gamma_0(\phi),\gamma_0(\phi)\ra_{L^2(G\times S)}
-\la S_0(\cdot,E_m)\gamma_m(\phi),\gamma_m(\phi)\ra_{L^2(G\times S)}.
\]
Using this equation, and performing estimations as in the proof of Theorem \ref{csdath1},
allows us to deduce the inequality
\begin{align*}
\la { f_C},\phi\ra_{L^2(G\times S\times I)}+\la { g_C}, \gamma_-(\phi)\ra_{T^2(\Gamma_-)} 
\geq 
c'\n{\phi}_{H_1}^2,
\end{align*}
from which the desired estimate \eqref{csda40aa}, and therefore uniqueness of solutions, follow.
\end{proof}

\begin{remark}\label{ttc}
Suppose that the assumption ${\bf TC}$ is valid and that $\phi\in {\s H}_P$ such that
\be\label{assttc}
\phi_{|\Gamma_-}\in T^2(\Gamma_-)
\quad {\rm and}\quad
\phi(\cdot,\cdot,E_{m})\in L^2(G\times S).
\ee
Then at least in some cases one is able to show that (cf. \cite{cessenat85})
\be\label{asscl-a}
\phi_{|\Gamma_+}\in T^2(\Gamma_+)
\quad {\rm and}\quad
\phi(\cdot,\cdot,0)\in L^2(G\times S).
\ee
This would make the assumption of part (iii) of Theorem \ref{csdath3} superfluous.
We omit further considerations of this issue here.
\end{remark}

Similarly as above we see that the variational equation corresponding to the original problem
(\ref{se1}), (\ref{se2}), (\ref{se3}) is 
\[
\tilde{B}_0(\tilde{\psi},v)=F_0(v)\quad \forall v\in H_2,
\]
where $\tilde\psi \in H_1$ and $\tilde{B}_0(\cdot,\cdot)$ is the continuous extension
onto $H_1\times H_2$ of the bilinear form $B_0(\cdot,\cdot):C^1(\ol G\times S\times I)\times C^1(\ol G\times S\times I)\to\R$ defined  by (that is, the bilinear form (\ref{csda27}) with $C=0$)
\bea\label{csda27a}
B_0(\psi,v)={}&
\la\psi,S_0{\p {v}E}\ra_{L^2(G\times S\times I)}
-\la\psi,\omega\cdot\nabla_x v\ra_{L^2(G\times S\times I)}
+\la\psi,(\Sigma^* -K^*)v\ra_{L^2(G\times S\times I)} \nonumber\\
&+\la \gamma_+(\psi), \gamma_+(v)\ra_{T^2(\Gamma_+)}
+\la \psi(\cdot,\cdot,0),S_0(\cdot,0) v(\cdot,\cdot,0)\ra_{L^2(G\times S)}.
\eea
The linear form $F_0:C^1(\ol G\times S\times I)\to\R$ is given by
\[
F_0(v)=
\la {f},v\ra_{L^2(G\times S\times I)}+
\la g, \gamma_-(v)\ra_{T^2(\Gamma_-)},
\]
and it admits a unique extension to a bounded linear form $F_0:H_1\to\R$.
Note that the bilinear form (\ref{csda27a}) is not necessarily coercive that is (\ref{csda30}) does not necessarily hold, which justifies the need for the change of unknown $\phi=e^{CE}\psi$ performed above.

We have the following immediate corollary for the existence of solutions of the original CSDA-problem.

\begin{corollary}\label{csdaco1}
Suppose that the assumptions  (\ref{ass1}), (\ref{ass5-a}), (\ref{ass7}), (\ref{ass-8}), (\ref{ass8-aa-a}) and (\ref{ass9-a-b}) (with $c>0$), (\ref{csda9}), (\ref{csda9aa}) and (\ref{csda9a}) are valid.
Let ${ f}\in L^2(G\times S\times I)$ and  ${ g}\in T^2(\Gamma_-)$.
Then the following assertions hold.

(i) The variational equation
\be\label{vareq1}
\tilde B_0(\tilde\psi,v)=F_0(v)\quad \forall v\in  H_2
\ee
has a solution $\tilde\psi=(\psi,q,p_0,p_m)\in H_1$.

Furthermore, $\psi\in\mc{H}_P(G\times S\times I)$ and it is a weak (distributional) solution of the equation (\ref{se1}).

(ii) Suppose that additionally the assumption ${\bf TC}$ holds. Then a solution $\psi$
of the equation \eqref{se1} obtained in part (i) is a solution of the problem 
\eqref{se1}, \eqref{se2}, \eqref{se3}. 

In addition, we have $q_{|\Gamma_+}=\gamma_+(\psi)$ and $p_0=\psi(\cdot,\cdot,0)$,
when $\tilde{\psi}=(\psi,q,p_0,p_m)$ is a solution in $H_1$ obtained in part (i).

(iii) Under the assumptions imposed in part (ii)
any solution $\psi\in \mc{H}_P(G\times S\times I^\circ)$
of the problem \eqref{se1}, \eqref{se2}, \eqref{se3}
is unique and obeys the estimate
\be\label{csda40aaa}
\n{\psi}_{{H_1}}\leq \frac{e^{CE_{\rm m}}}{c'}
\big(\n{{ f}}_{L^2(G\times S\times I)}+\n{{ g}}_{T^2(\Gamma_-)}\big).
\ee 
(Recall that $C$ is defined in \eqref{eq:def_C}, $c'$ in \eqref{cprime} and that $E_m$ is the cutoff energy.)
\end{corollary}

\begin{proof}
Let ${ f}\in L^2(G\times S\times I)$ and ${ g}\in T^2(\Gamma_-)$. Since $I$ is finite interval
we see that ${f_C}:=e^{CE}f\in L^2(G\times S\times I)$ and ${ g_C}:=e^{CE}g\in T^2(\Gamma_-)$,
where $C=\frac{\max\{q,0\}}{\kappa}$ (see \eqref{eq:def_C}, \eqref{eq:b_fg}).
By Theorem \ref{csdath3} the variational problem (\ref{csda40a})
has a solution $\tilde\phi=(\phi,q',p_0',p_m')\in H_1$,
from which one deduces without difficulty that $\tilde\psi=(\psi,q,p_0,p_m):=e^{-CE}\tilde\phi\in H_1$
is a solution of the variational problem (\ref{vareq1}).
Similarly, the fact that $\phi\in \mc{H}_P(G\times S\times I^\circ)$ implies that $\psi\in\mc{H}_P(G\times S\times I^\circ)$.
This can be seen by substituting $e^{-CE}v\in H_2$ instead of $v$ into (\ref{csda40a}).
Besides the estimate \eqref{csda40aaa}, all the claims are consequences of the corresponding items in Theorem \ref{csdath3}.

Recalling that ${ f_C}=e^{CE}f$,
${ g_C}=e^{CE}g$ the estimate (\ref{csda40aaa}) is obtained as follows,
\bea 
\n{\psi}_{H_1}
={}&\n{e^{-CE}\phi}_{H_1}
\leq
\n{\phi}_{H_1}\leq {{1}\over {c'}}   
\big(\n{{ f_C}}_{L^2(G\times S\times I)}+\n{{g_C}}_{T^2(\Gamma_-)}\big)
\nonumber\\
\leq{} &
 {{e^{CE_m}}\over {c'}}   
\big(\n{{ f}}_{L^2(G\times S\times I)}+\n{{ g}}_{T^2(\Gamma_-)}\big).
\eea
This completes the proof.
\end{proof}

\begin{remark}
A.
Let
\[
T\psi:=-{\p {(S_0\psi)}E}+\omega\cdot\nabla_x\psi+\Sigma\psi-K\psi
\]
and let
\[
T^*v=S_0{\p {v}E}-\omega\cdot\nabla_x v+\Sigma^*v-K^*v
\]
be the formal transpose of $T$. Furthermore,
let the assumptions 
(\ref{ass1}), (\ref{ass5-a}), (\ref{ass7}), (\ref{ass-8}), (\ref{ass8-aa-a}) and (\ref{ass9-a-b}), (\ref{csda9}), (\ref{csda9aa}) and (\ref{csda9a}) of Theorem \ref{csdath3} be valid,
and let $\tilde\psi=(\psi,q,p_0,p_m)\in H_1$ be a solution of (\ref{vareq1}) (guaranteed by Corollary \ref{csdaco1}). 
By (\ref{csda27a}), we find that for any $v\in C^1(\ol G\times S\times I)$ for which the {\it adjoint boundary conditions} $v(\cdot,\cdot,0)=0$ and $\gamma_+(v)=0$ hold,
we have
\be 
\la\psi,T^*v\ra=
\tilde B_0(\psi,v)=\la f,v\ra_{L^2(G\times S\times I)}+\la g,\gamma_-(v)\ra_{T^2(\Gamma_-)},
\ee
that is
\be 
\la\psi,T^*v\ra
-\la T\psi,v\ra_{L^2(G\times S\times I)}=\la g,\gamma_-(v)\ra_{T^2(\Gamma_-)}.
\ee
This means that $\tilde\psi\in H_1$ is a {\it weak solution of the boundary (initial) value problem} (\ref{se1}), (\ref{se2}), (\ref{se3}),
a terminology which goes back to \cite{lax}, \cite{sarason}. Also the terminology that boundary initial values are weakly valid is used.
The validity of the trace theorems and the Green formula (\ref{green-ex}) with $q=\gamma(\psi)$, $p_0=\psi(\cdot,\cdot,0)$,
$ p_m=\psi(\cdot,\cdot,E_m)$ are keys for obtaining well-defined solutions (that is, solutions for which the boundary 
(initial) values really hold). Trace theorems always demand geometrical treatments where the smoothness of the boundary $\partial G$ is essential.

B. We also remark that the following generalized Green formula (see e.g. \cite[Theorems 1 and 7]{rauch85})
\be\label{green-gene}
\la T \psi, v\ra_{L^2(G\times S\times I}
-\la T^* v,\psi\ra_{L^2(G\times S\times I}=({\s A}_\nu\psi)(v),
\quad \psi\in {\s H}_P(G\times S\times I),
\ee
for $v\in C^1(\ol G\times S\times I)$,
is valid where ${\s A}_\nu\psi$ is interpreted as an element of the dual $H^{1/2}(\partial (G\times S\times I))^*$.
The value ${\s A}_\nu\psi$ is obtained by (uniquely) extending the bilinear form (cf. (\ref{green-ex}))
\begin{multline}
({\s A}_\nu\psi)(v)
:=
\int_{\partial G\times S\times I}(\omega\cdot \nu) v\ \psi\ d\sigma d\omega dE\nonumber\\
+
\int_{G\times S}\big(S_0(\cdot,0)\psi(\cdot,\cdot,0)v(\cdot,\cdot,0)-S_0(\cdot,E_{\rm m})\psi(\cdot,\cdot,E_{\rm m})v(\cdot,\cdot,E_{\rm m})\big)dx d\omega,
\end{multline}
where $\psi,\ v\in C^1(\ol G\times S\times I)$.

\end{remark}

\begin{remark}\label{reg-re}
Define the transport operator
\[
T_C\phi:=
 -{\p {(S_0\phi)}E}+\omega\cdot\nabla_x\phi+CS_0\phi+(\Sigma-K_C)\phi.
\]
Then from the estimate (\ref{csda30}) it follows that
for all $\phi\in C^1(\ol G\times S\times I)$,
\[
c'\n{\phi}_{H_1}
\leq \n{T_C\phi}_{L^2(G\times S\times I)}+\n{\gamma_-(\phi)}_{T^2(\Gamma_-)}
+\n{S_0}_{L^\infty(G\times I)}\n{\phi(\cdot,\cdot,E_{\rm m})}_{L^2(G\times S)}.
\]
This \emph{a priori} estimate might be (as is standard)
an appropriate starting point for developing {\it regularity results of solutions} at least in the case where $G=\R^3$. 
For these kind of transport equations more regularity must be sought in mixed-norm (anisotropic) Sobolev-Slobodevskij spaces $H^{2,(s_1,s_2,s_3)}(G\times S\times I^\circ)$ where $s_j\geq 0$. 
Regularity results are needed e.g. in considerations of various approximation errors and in convergence analysis of numerical schemes.  
The regularity analysis remains open.
\end{remark}

\begin{remark}
The variational methods used above apply (after relevant modifications of the assumptions)
also to more general problems of the form 
\begin{gather}
a{\p \psi{E}}+ F\cdot\nabla_{\tilde\omega}\psi+\omega\cdot\nabla_x\psi+(\Sigma-K)\psi=f
\nonumber\\
\psi_{|\Gamma_-}=g,\quad \psi(\cdot,\cdot,E_{\rm m})=0, \label{add1}
\end{gather}
where $a=a(x,\omega,E)$, $F=(F_1(x,\omega,E),F_2(x,\omega,E),F_3(x,\omega,E))$
and $\nabla_{\tilde{\omega}}$ is the gradient operator with respect to the Riemannian metric on the sphere $S$ induced by the Euclidean metric on $\R^3$.
Notice that since $\nabla_{\tilde{\omega}}\psi$ is tangent to $S$, we may assume that $\omega\cdot F(x,\omega,E)=0$ for all $(x,\omega,E)$.
More explicitly, if $\phi\in C^1(S)$,
and if $\tilde{\phi}:\R^3\backslash\{0\}\to\R$ is given
by $\tilde{\phi}(y)=\phi(\frac{y}{\n{y}})$, then
\[
\nabla_{\tilde{\omega}}\phi =\nabla_y\tilde{\phi}-(\omega\cdot\nabla_y\tilde\phi)\omega.
\]

In this context the space $H_2$ must be replaced  with
the space ${\s H}_2$ defined as follows.
The space
${\s H}_2$ is the completion of $C^1(G\times S\times I)$ with respect to the inner product\
\[
\la \psi,v\ra_{H_2}+\la F\cdot\nabla_{\tilde\omega}\psi,F\cdot\nabla_{\tilde\omega}v\ra_{L^2(G\times S\times I)}.
\]
This observation is based on the fact that
\[
2\la F\cdot\nabla_{\tilde\omega}\psi,\psi\ra_{L^2(G\times S\times I)}=\la d\psi,\psi\ra_{L^2(G\times S\times I)},
\]
where
\[
d:=-\mathrm{div}_{\tilde{\omega}}(F),
\]
and $\mathrm{div}_{\tilde{\omega}}$ is the divergence operator on $S$ with respect to its Riemannian metric.
 
The equation (\ref{add1}) in velocity coordinates $(x,v)\in\R^3\times\R^3$ can be written as
\be\label{add2}
\tilde F\cdot\nabla_v\Psi+v\cdot\nabla_x\Psi+(\tilde\Sigma-\tilde K)\Psi=\tilde f,
\ee
which is known as the (linear) Vlasov-Boltzmann equation. 

Similar observations concern the  methods used in the next sections. Nevertheless, the $m$-dissipativity
of the operator $\psi\mapsto {1\over a}( F\cdot\nabla_{\tilde\omega}\psi+\omega\cdot\nabla_x\psi+C\psi)$, for $C$ large enough (in relevant modified spaces), requires extra analysis.

Finally notice that no boundary condition is needed in \eqref{add1} with respect to $\omega$-variable
because the unit sphere $S$ is a compact manifold without boundary.
\end{remark}

%%%%%%%%%%%%%%%%%%%%%%%%%%%%%%%%%%%%%%%%%%%%%%%%%%%%%%%%%%%%%%%%%%%%%%%%
\subsection{Existence Results Based on $m$-dissipativity}\label{m-d}
%%%%%%%%%%%%%%%%%%%%%%%%%%%%%%%%%%%%%%%%%%%%%%%%%%%%%%%%%%%%%%%%%%%%%%%%

In this section we apply an alternative method based on the  results of  dissipative first-order partial differential operators. 
Let
\[
P(x,\omega,E,D)\phi := -{\p {(S_0\phi)}E}+\omega\cdot\nabla_x\phi
+CS_0\phi
\]
where $S_0\in C^1(\ol G\times S\times I)$ and $C$ is constant.
Recall that the {\it formal transpose (adjoint) } of $P(x,\omega,E,D)$ is
\be
P'(x,\omega,E,D)v
:=
S_0{\p {v}{E}}-\omega\cdot\nabla_x v+CS_0v.
\ee
It should be pointed out that here the notation for $P(x,\omega,E,D)$ and $P'(x,\omega,E,D)$ differs from that used in section \ref{esols}
in that here the operators $P(x,\omega,E,D)$ and $P'(x,\omega,E,D)$ also include the term $CS_0$.

Define linear operators $P,P':L^2(G\times S\times I)\to L^2(G\times S\times I)$ with domains
of definition $D(P)$, $D(P')$ by setting
\be
D(P):=D^1(\ol G\times S\times I),\quad P\phi:=P(x,\omega,E,D)\phi,
\ee
and
\[
D(P'):=C_0^\infty(G\times S\times I^\circ),\quad P'v:=P'(x,\omega,E,D)v.
\]
Clearly both $P$ and $P'$ are densely defined.

Let $P'^*:L^2(G\times S\times I)\to L^2(G\times S\times I)$ be the adjoint operator of $P'$.
Then $\phi\in L^2(G\times S\times I)$ is said to be a \emph{weak solution} of
\be\label{d1}
P\phi=f,\quad f\in L^2(G\times S\times I)
\ee
if and only if
\[
\phi\in D(P'^*)\quad \mathrm{and}\quad P'^*\phi=f.
\] 
Since the adjoint $P'^*$ is a closed operator, the space 
\begin{multline}
{\s H}_{P}(G\times S\times I^\circ):=\{\phi\in L^2(G\times S\times I)\ | \\
P(x,\omega,E,D)\phi\in L^2(G\times S\times I)\ {\rm in\ the \ weak\ sense}\}
\end{multline}
is a Hilbert space when equipped with the inner product
\[
\la \phi,v\ra_{{\s H}_{P}(G\times S\times I^\circ)}:=\la \phi,v\ra_{L^2(G\times S\times I)}+\la P(x,\omega,E,D)\phi,P(x,\omega,E,D)v\ra_{L^2(G\times S\times I)}.
\]
Notice that
\begin{align}\label{eq:H_P_is_D-P-dot-star}
{\s H}_{P}(G\times S\times I^\circ)=D(P'^*),
\end{align}
when $D(P'^*)$ is equipped with the graph norm of $P'^*$.

We say that $\phi\in L^2(G\times S\times I)$
is a {\it strong solution of (\ref{d1}) (without boundary conditions)}  if there exists a sequence 
$\{\phi_n\}\subset D^1(\ol G\times S\times I)$ ($=D(P)$) such that
\[
\n{\phi-\phi_n}_{L^2(G\times S\times I)}+\n{P\phi_n-f}_{L^2(G\times S\times I)}\longto 0\quad {\rm when}\ n\to\infty.
\]
Let $\tilde P:L^2(G\times S\times I)\to L^2(G\times S\times I)$ be the smallest closed extension (closure) of $P$. Then one sees that $\phi$ is a strong solution (without boundary conditions) if and only if $\phi\in D(\tilde P)$ and $\tilde P\phi=f$.

\begin{remark}\label{re:P_closure}
To see that the closure $\tilde{P}$ exists, notice that if $\phi_n\in D(P)$, $f\in L^2(G\times S\times I)$
and $\phi_n\to 0$, $P\phi_n\to f$ in $L^2(G\times S\times I)$, then for every $v\in C_0^\infty(G\times S\times I^\circ)$
we have
\[
\la f,v\ra_{L^2(G\times S\times I)}={}&\lim_{n\to\infty} \la P(x,\omega,E,D)\phi_n,v\ra_{L^2(G\times S\times I)} \\
={}&\lim_{n\to\infty} \la \phi_n, P'(x,\omega,E,D)v\ra_{L^2(G\times S\times I)}=0,
\]
which implies $f=0$.
\end{remark}

One immediately sees that every strong solution of (\ref{d1}) is its weak solution.
When the boundary $\partial G$ is smooth enough the converse is also true, the result which goes back to {\it Friedrich } \cite{friedrich44} (the main theorem on page 135 together with Theorem 4.2, p. 144); see also \cite{rauch85}.

\begin{theorem}\label{density}
Let $G\subset\R^n$ be an open bounded subset, lying on one side of its boundary,
and with boundary of class $C^1$.
Then every weak solution  of the equation (\ref{d1}) is its strong solution (without boundary conditions).
In other words, $D^1(\ol G\times S\times I)$ is dense in ${\s H}_{P}( G\times S\times I^\circ)$.
\end{theorem}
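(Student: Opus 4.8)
The plan is to prove this directly as the classical Friedrichs identity of weak and strong extensions for a first-order operator, via a localized mollification argument. Write $P(x,\omega,E,D)\phi=X\phi+b\phi$, where $X=\omega\cdot\nabla_x-S_0\,\partial_E$ is the principal vector field and $b=CS_0-{\p{S_0}{E}}$; since $S_0\in C^1(\ol G\times S\times I)$, the coefficients of $X$ lie in $C^1(\ol G\times S\times I)$ (the $\partial_{x_j}$-coefficient $\omega_j$ being smooth, the $\partial_E$-coefficient $-S_0$ being $C^1$), while $b\in C(\ol G\times S\times I)\subset L^\infty$, and crucially $P(x,\omega,E,D)$ does not differentiate the $\omega$-variable. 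It suffices to show that $C^\infty(\ol G\times S\times I)$ (which is contained in $D^1(\ol G\times S\times I)$) is dense in $\mc{H}_P(G\times S\times I^\circ)$ for the graph norm of $P(x,\omega,E,D)$: the ``weak $\Rightarrow$ strong'' formulation is then immediate, since $\mc{H}_P(G\times S\times I^\circ)=D(P'^*)$ carries exactly this norm and $D(P)=D^1(\ol G\times S\times I)$.

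First I would fix $\phi\in\mc{H}_P(G\times S\times I^\circ)$ and choose a finite open cover $\{U_j\}_{j=0}^N$ of $\ol G\times S\times I$ with a subordinate $C^\infty$ partition of unity $\{\chi_j\}$, where $\ol{U_0}\subset G\times S\times I^\circ$ and, for $j\ge1$, on $U_j$ the set $\ol G\times I$ satisfies a \emph{uniform interior cone condition} (available because $\ol G$ is a $C^1$-manifold with boundary and $I$ is an interval): there is a fixed open cone $\mc{C}_j\subset\R^3\times\R$ pointing into $G\times I^\circ$ with $x+\big(\mc{C}_j\cap B(0,r_j)\big)\subset G\times I^\circ$ for all $x\in U_j\cap(\ol G\times I)$, where at an edge $\partial G\times\{0\}$ or $\partial G\times\{E_m\}$ one takes $\mc{C}_j$ to have positive component in each transverse direction. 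By Leibniz, $P(x,\omega,E,D)(\chi_j\phi)=\chi_j\,P(x,\omega,E,D)\phi+(X\chi_j)\phi\in L^2(G\times S\times I)$, so $\chi_j\phi\in\mc{H}_P(G\times S\times I^\circ)$ and it is enough to approximate each $\chi_j\phi$. For $j=0$ take a standard mollifier $\rho_\epsilon$ in $(x,E)$; for $j\ge1$ take $\rho_\epsilon^{(j)}$ supported in $(-\mc{C}_j)\cap B(0,\epsilon)$, so that $\rho_\epsilon^{(j)}*(\chi_j\phi)$ (convolution in $(x,E)$, with $\omega$ a parameter) is defined and $C^\infty$ up to the boundary and of compact support. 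Then $\rho_\epsilon^{(j)}*(\chi_j\phi)\to\chi_j\phi$ in $L^2$, and $P(x,\omega,E,D)\big(\rho_\epsilon^{(j)}*(\chi_j\phi)\big)=\rho_\epsilon^{(j)}*\big(P(x,\omega,E,D)(\chi_j\phi)\big)+R_\epsilon^{(j)}$, where $R_\epsilon^{(j)}$ is the sum of the commutators of $\rho_\epsilon^{(j)}*$ with the coefficients of $X$ and with $b$; by Friedrichs' commutator lemma (for $a\in C^1$ and $u\in L^2$ one has $\n{a\,\partial_k(\rho_\epsilon*u)-\rho_\epsilon*(a\,\partial_k u)}_{L^2}\le C\n{u}_{L^2}$ and $\to0$ as $\epsilon\to0$, and the $b$-commutator $\to0$ by continuity of $b$) one gets $R_\epsilon^{(j)}\to0$ in $L^2$. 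Hence $\sum_j\rho_\epsilon^{(j)}*(\chi_j\phi)\to\phi$ in the graph norm, through functions that are $C^\infty$ in $(x,E)$.

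It then remains to smooth in $\omega$. Since $P(x,\omega,E,D)$ carries no $\omega$-derivative, for a function $\psi$ that is already $C^\infty$ in $(x,E)$ (so that $\nabla_x\psi$, $\partial_E\psi$ are genuine $L^2$ functions) one checks directly that $\psi*_\omega\sigma_\delta\to\psi$ in the graph norm as $\delta\to0$, where $\sigma_\delta$ is a standard mollifier on the closed manifold $S$: the only error term is $\int\sigma_\delta(\omega,\omega')\big[(\omega-\omega')\cdot\nabla_x\psi-(S_0(x,\omega,E)-S_0(x,\omega',E))\partial_E\psi\big]\,d\omega'$, which is $O(\delta)$ in $L^2$ because $|\omega-\omega'|$ and $|S_0(x,\omega,E)-S_0(x,\omega',E)|$ are $O(\delta)$ on $\supp\sigma_\delta$ (and $\nabla_x,\partial_E$ commute with $*_\omega$). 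A diagonal choice $\delta=\delta(\epsilon)$ then produces approximants in $C^\infty(\ol G\times S\times I)$ converging to $\phi$ in the graph norm, which proves the theorem.

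The one genuinely technical ingredient is the behaviour near $\partial(\ol G\times I)$: one must make the approximants smooth \emph{up to} the boundary (handled here by the cone-supported mollifier, which is exactly where the $C^1$-regularity of $\partial G$, via the uniform interior cone condition, enters) while keeping the commutator error small — and it is precisely here that Friedrichs' commutator lemma together with the $C^1$-regularity of $S_0$ (equivalently, Lipschitz continuity of the coefficients of $X$) is used in an essential way. Some additional care is needed at the edges $\partial G\times\{0,E_m\}$, where the cone must point simultaneously into each transverse half-space, while the $\omega$-variable is harmless throughout since $S$ is a compact manifold without boundary and no $\omega$-differentiation occurs. Alternatively, the statement may simply be quoted from \cite{friedrich44} or \cite{rauch85}.
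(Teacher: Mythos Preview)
Your proposal is correct and, in fact, supplies substantially more detail than the paper itself: the paper's entire proof consists of the single line ``See \cite{friedrich44}, or \cite[Proposition 1]{rauch85}.'' What you have written is precisely the Friedrichs mollification argument that underlies those references --- localize by a partition of unity adapted to the $C^1$ boundary, mollify in $(x,E)$ with a cone-supported kernel near $\partial(G\times I)$ so the convolutions are defined up to the boundary, control the commutator $[\rho_\epsilon*\, ,\, X]$ via Friedrichs' lemma (using $S_0\in C^1$), and finally smooth in $\omega$ on the compact manifold $S$ where no derivative acts. Two small remarks: (i) in the paper's setting $S_0=S_0(x,E)$ does not depend on $\omega$, so the term $S_0(x,\omega,E)-S_0(x,\omega',E)$ in your $\omega$-mollification step actually vanishes identically, simplifying that part; (ii) your closing observation that one may ``simply quote'' \cite{friedrich44} or \cite{rauch85} is exactly what the paper does.
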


\begin{proof}
See \cite{friedrich44}, or \cite[Proposition 1]{rauch85}.
\end{proof}

The claim of the above theorem  can also be equivalently stated as
\[
\tilde{P}=P'^*.
\]
 
We still formulate an existence result of strong solutions concerning homogeneous inflow boundary (initial) value problems. 
To this end, the following modified definition of the strong solution is needed.
One says that $\phi\in L^2(G\times S\times I)$
is a {\it strong solution of (\ref{d1}) with homogeneous inflow boundary conditions} if there exists a sequence 
$\{\phi_n\}\subset \tilde W^2(G\times S\times I)\cap H^1(I,L^2(G\times S))$ 
such that
\[
\n{\phi-\phi_n}_{L^2(G\times S\times I)}+\n{P(x,\omega,E,D)\phi_n-f}_{L^2(G\times S\times I)}\longto 0,\quad {\rm when}\ n\to\infty,
\]
and
\be\label{inf1}
{\phi_n}_{|\Gamma_{-}}=0,\quad \phi_n(\cdot,\cdot,E_{\rm m})=0.
\ee
Define a linear operator $P_{0}:L^2(G\times S\times I)\to L^2(G\times S\times I)$ by 
\bea
D(P_{0}):={}&\{\phi\in  \tilde W^2(G\times S\times I)\cap H^1(I,L^2(G\times S))\ |\ \phi_{|\Gamma_{-}}=0,\
\phi(\cdot,\cdot,E_{\rm m})=0\}\nonumber\\[2mm]
P_{0}\phi:={}&P(x,\omega,E,D)\phi.
\eea
Let $\tilde P_{0}:L^2(G\times S\times I)\to L^2(G\times S\times I)$ be the smallest closed extension (closure) of $P_{0}$
(the existence of which can be seen using the argument of Remark \ref{re:P_closure}).
Then one sees that $\phi$ is a strong solution with the homogeneous inflow boundary conditions  if and only if $\phi\in D(\tilde P_{0})$ and $\tilde P_{0}\phi=f$.
Moreover, one sees that every  strong solution with homogeneous inflow boundary conditions
is a weak solution of $P(x,\omega,E,D)\phi=f$, that is
\begin{align}\label{eq:P_0_in_P-dot-star}
\tilde P_{0}\subset P'^*.
\end{align}

Since  $\tilde P_{0}$ is a closed operator, the space
\be\label{eq:H_tilde-P_0_is_D-tilde-P_0}
{\s H}_{P_{0}}(G\times S\times I^\circ):=
D(\tilde P_{0})
\ee
is a Hilbert space when equipped with the inner product
\[
\la \phi,v\ra_{{\s H}_{P_0}(G\times S\times I^\circ)}:=\la \phi,v\ra_{L^2(G\times S\times I)}+\la \tilde{P}_0\phi,\tilde{P}_0v\ra_{L^2(G\times S\times I)}.
\]

\begin{remark}
When
$\phi\in  {\s H}_{P_{0}}(G\times S\times I^\circ)$ 
we say that the (homogeneous) initial and boundary conditions
\[
\phi_{|\Gamma_-}=0,\quad \phi(\cdot,\cdot,E_{\rm m})=0
\]
are valid in the strong sense.
\end{remark}

We show the $m$-dissipativity of $\tilde{P}_0$ using the theory of evolution operators presented in section \ref{evcsd} below.

\begin{theorem}\label{md-evoth}
Suppose that 
\begin{align}
{}& S_0\in C^2(I,L^\infty(G)),\label{evo16} \\[2mm]
{}& \kappa:=\inf_{(x,E)\in \ol{G}\times I}S_0(x,E)>0, \label{evo8-a} \\[2mm]
{}& \nabla_xS_0\in  L^\infty(G\times I), \label{evo9-a} \\[2mm]
{}& -{\p {S_0}E}+2CS_0\geq 0. \label{flp1-ab}
\end{align}
Then 
\be\label{inf4-a}
R(I+\tilde P_0)=L^2(G\times S\times I)
\ee
and
\be\label{inf5-a}
\la \tilde P_0\phi,\phi\ra_{L_2(G\times S\times I)}\geq 0,\quad \forall\phi\in D(\tilde P_0).
\ee 
\end{theorem}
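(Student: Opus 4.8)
The plan is to establish the two assertions \eqref{inf4-a} and \eqref{inf5-a} by exhibiting $\tilde P_0$ as (essentially) the generator of a contraction semigroup arising from the CSDA evolution, treating the energy variable $E$ as a (reversed) time variable. First I would verify the dissipativity-type inequality \eqref{inf5-a} on the dense core $D(P_0)\subset D(\tilde P_0)$, and then pass to the closure. For $\phi\in D(P_0)$, i.e. $\phi\in \tilde W^2(G\times S\times I)\cap H^1(I,L^2(G\times S))$ with $\phi_{|\Gamma_-}=0$ and $\phi(\cdot,\cdot,E_{\rm m})=0$, one computes using integration by parts in $E$ (as in Lemma \ref{csdale0}) and the Green's formula \eqref{green}:
\[
\la \tilde P_0\phi,\phi\ra_{L^2(G\times S\times I)}
= -\la \tfrac{\partial (S_0\phi)}{\partial E},\phi\ra + \la \omega\cdot\nabla_x\phi,\phi\ra + C\la S_0\phi,\phi\ra.
\]
The middle term equals $\tfrac12\big(\n{\gamma_+(\phi)}_{T^2(\Gamma_+)}^2-\n{\gamma_-(\phi)}_{T^2(\Gamma_-)}^2\big)=\tfrac12\n{\gamma_+(\phi)}_{T^2(\Gamma_+)}^2\geq 0$ by \eqref{csda33a} and $\gamma_-(\phi)=0$. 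For the first term, integrating by parts in $E$ and using $\phi(\cdot,\cdot,E_{\rm m})=0$ gives $-\la\tfrac{\partial(S_0\phi)}{\partial E},\phi\ra = \tfrac12\int_{G\times S}S_0(\cdot,0)\phi(\cdot,\cdot,0)^2 + \tfrac12\la \tfrac{\partial S_0}{\partial E}\phi,\phi\ra - \tfrac12\la\tfrac{\partial S_0}{\partial E}\phi,\phi\ra$ — more carefully, the symmetrization as in \eqref{se5} yields $-\la\tfrac{\partial(S_0\phi)}{\partial E},\phi\ra = -\tfrac12\la\tfrac{\partial S_0}{\partial E}\phi,\phi\ra+\tfrac12\int_{G\times S}\big(S_0(\cdot,0)\phi(\cdot,\cdot,0)^2-S_0(\cdot,E_{\rm m})\phi(\cdot,\cdot,E_{\rm m})^2\big)$. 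Collecting terms, $\la\tilde P_0\phi,\phi\ra \geq \tfrac12\la(-\tfrac{\partial S_0}{\partial E}+2CS_0)\phi,\phi\ra + \tfrac12\int_{G\times S}S_0(\cdot,0)\phi(\cdot,\cdot,0)^2$, which is $\geq 0$ by hypothesis \eqref{flp1-ab} and $S_0\geq\kappa>0$. Passing to the closure via the definition of $\tilde P_0$ and continuity of the inner product gives \eqref{inf5-a}.

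For the range condition \eqref{inf4-a}, the plan is to invoke the evolution-operator (semigroup) machinery referenced as "section \ref{evcsd} below": rewrite $(I+P_0)\phi = f$ as the boundary-initial value problem $-\tfrac{\partial(S_0\phi)}{\partial E}+\omega\cdot\nabla_x\phi+CS_0\phi+\phi = f$ with $\phi_{|\Gamma_-}=0$, $\phi(\cdot,\cdot,E_{\rm m})=0$. Substituting $u=S_0\phi$ (legitimate since $S_0\in C^2(I,L^\infty(G))$ and $\kappa>0$) turns this into $-\tfrac{\partial u}{\partial E} + A(E)u = f$ where, for each fixed $E$, $A(E)$ is the operator $\tfrac{1}{S_0}\omega\cdot\nabla_x + (C+\tfrac{1}{S_0})$ acting on $L^2(G\times S)$ with domain incorporating $\gamma_-=0$; the conditions \eqref{evo16}, \eqref{evo8-a}, \eqref{evo9-a}, \eqref{flp1-ab} are precisely what is needed (cf. the hypotheses of section \ref{evcsd}) for the family $\{A(E)\}$ to generate a stable evolution system $U(E,E')$ on $L^2(G\times S)$ — here \eqref{flp1-ab} guarantees the accretivity/stability bound after the weight change, and \eqref{evo9-a} the boundedness of coefficients. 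Running the evolution backward in $E$ from $E_{\rm m}$ (where $u=0$) and using Duhamel's formula, $u(E) = \int_E^{E_{\rm m}} U(E,E') \tfrac{1}{S_0(\cdot,E')}f(\cdot,\cdot,E')\,dE'$ (appropriately interpreted, with the coefficient of $f$ absorbed correctly), produces a solution $\phi=u/S_0$ which, after checking it lies in $D(\tilde P_0)$ by approximating $f$ by smooth data and using stability estimates, gives $(I+\tilde P_0)\phi=f$. Hence $R(I+\tilde P_0)=L^2(G\times S\times I)$.

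The main obstacle I expect is the regularity/density bookkeeping needed to conclude that the solution $\phi$ constructed by the evolution formula actually belongs to $D(\tilde P_0)$ — i.e. that it is a genuine strong solution with homogeneous inflow and terminal conditions in the sense defined before the theorem, rather than merely a weak one. This requires an approximation argument: take $f_n\in C_0^\infty(G\times S\times I^\circ)$ with $f_n\to f$ in $L^2$, solve to get $\phi_n$, show $\phi_n\in\tilde W^2(G\times S\times I)\cap H^1(I,L^2(G\times S))$ with $\gamma_-(\phi_n)=0$ and $\phi_n(\cdot,\cdot,E_{\rm m})=0$ (using trace Theorem \ref{tth} and the Sobolev embedding \eqref{sobim}), and then use the stability bounds of the evolution system together with \eqref{inf5-a} (which gives $\n{(I+\tilde P_0)\phi}\geq\n{\phi}$, hence injectivity and a uniform bound $\n{\phi_n}\leq\n{f_n}$) to pass to the limit $\phi_n\to\phi$ in $L^2$ with $P(x,\omega,E,D)\phi_n\to f-\phi$. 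A secondary technical point is justifying the weight substitution $u=S_0\phi$ at the level of the operator domains — this is where \eqref{evo16} (two derivatives of $S_0$ in $E$) and \eqref{evo9-a} enter — and confirming that the resulting $E$-dependent generator family satisfies the standard Kato-type hypotheses for existence of the evolution system, all of which is deferred to (and consistent with) the treatment in section \ref{evcsd}.
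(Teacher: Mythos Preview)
Your proposal is essentially correct and follows the same strategy as the paper's proof: establish \eqref{inf5-a} by direct computation on $D(P_0)$ via integration by parts in $E$ and the Green identity in $x$ (using $\gamma_-(\phi)=0$, $\phi(\cdot,\cdot,E_{\rm m})=0$, and \eqref{flp1-ab}), then pass to the closure; and establish \eqref{inf4-a} by solving $(I+P_0)\phi=f$ for $f\in C_0^\infty$ via the $E$-evolution theory of section~\ref{evcsd}, concluding that $R(I+P_0)$ is dense, whence the already-proved dissipativity bound $\n{(I+\tilde P_0)\phi}\geq\n{\phi}$ forces $R(I+\tilde P_0)$ to be closed and hence all of $L^2$.

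Two minor remarks on execution. First, the paper does not substitute $u=S_0\phi$; it simply invokes Theorem~\ref{evoth1} with $K=0$, $\Sigma=CS_0+1$, $g=0$, which (after the reversal $E\mapsto E_m-E$ and division by $\tilde S$) yields an evolution problem $\partial_E\phi-A_C(E)\phi={\bf f}$ whose well-posedness is already packaged there. Your substitution $u=S_0\phi$ also works when $S_0$ depends on $x$, but then $A(E)$ picks up the extra zeroth-order term $-\tfrac{\omega\cdot\nabla_x S_0}{S_0^2}$ (which you omitted), and the right-hand side is $f$, not $\tfrac{1}{S_0}f$; you seem aware of this (``appropriately interpreted''), and it does not affect the argument. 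Second, the paper sidesteps your ``main obstacle'' (checking the evolution solution lies in $D(\tilde P_0)$) by observing that for $f\in C_0^\infty$ Theorem~\ref{evoth1} already gives $\phi\in C(I,\tilde W^2_{-,0}(G\times S))\cap C^1(I,L^2(G\times S))$ with $\phi(\cdot,\cdot,E_m)=0$, which is contained in $D(P_0)\subset D(\tilde P_0)$; no separate approximation in the strong-solution class is needed beyond the density of $C_0^\infty$ in $L^2$.
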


\begin{proof}
We apply Theorem \ref{evoth1} (see below) with $K=0,\ \Sigma=CS_0+1,\ g=0$.
Let $f\in C_0^\infty(G\times S\times I^\circ)$.
Then it follows that the problem
\be 
-{\p {(S_0\phi)}E}+\omega\cdot\nabla_x\phi
+(CS_0+1)\phi=
(I + P(x,\omega,E,D))\phi=f,\quad \phi(\cdot,\cdot,E_{\rm m})=0,
\ee
has a unique solution 
$\phi\in C(I,\tilde{W}^2_{-,0}(G\times S))\cap C^1(I,L^2(G\times S))$. 
We find that
\[
\big\{\psi\in C(I,\tilde W^2_{-,0}(G\times S))\cap C^1(I,L^2(G\times S))\ \big|\ \psi(\cdot,\cdot,E_m)=0\big\}\subset D(P_0),
\]
and so for any $f\in C_0^\infty(G\times S\times I)$ the equation
$(I+ P_0)\phi=f$ has a solution. Since $C_0^\infty(G\times S\times I^\circ)$ is dense in $L^2(G\times S\times I)$ we find that
the range $R(I+ P_0)$ is dense that is,
\be\label{evo17-a}
\ol{R(I+ P_0)}=L^2(G\times S\times I).
\ee

As in the proof of Lemma \ref{csdale0} (note that here the assumptions are somewhat weaker),
we have for all $\phi\in D(P_0)$, (we write $L^2=L^2(G\times S\times I)$)
\bea\label{ineq}
&
\la P_0\phi,\phi\ra_{L^2}
=\la -{\p {(S_0\phi)}E},\phi\ra_{L^2}
+\la \omega\cdot\nabla_x\phi,\phi\ra_{L^2}+
\la CS_0\phi,\phi\ra_{L^2}
\nonumber\\
={}&
\la \Big(-\frac{1}{2}\p{S_0}{E}+CS_0\Big)\phi,\phi\ra_{L^2}
+
\frac{1}{2}\la \phi,\phi\ra_{T^2(\Gamma_+)}^2
+{1\over 2}\int_{G\times S}S_0(\cdot,0)\phi(\cdot,\cdot,0)^2 dx d\omega,
\eea
which in combination with the assumptions \eqref{evo8-a}, \eqref{flp1-ab} implies
\[
\la P_0\phi,\phi\ra_{L^2(G\times S\times I)}\geq 0,\quad \forall \phi\in D(P_0).
\]

If $\phi\in D(\tilde{P}_0)$, choose a sequence $\phi_n\in D(P_0)$ such that $\phi_n\to \phi$ and $P_0\phi_n\to \tilde{P}_0\phi$
in $L^2(G\times S\times I)$ when $n\to\infty$.
By the above inequality, we have
\[
\la \tilde{P}_0\phi,\phi\ra_{L^2(G\times S\times I)}
=\lim_n \la P_0\phi_n,\phi_n\ra_{L^2(G\times S\times I)}\geq 0,
\]
which gives \eqref{inf5-a}.

Finally, from \eqref{inf5-a}
it follows that
\be\label{evo18-ab}
\n{(I+\tilde P_0)\phi}_{L^2(G\times S\times I)}\geq \n{\phi}_{L^2(G\times S\times I)},\quad \forall \phi\in D(\tilde P_0).
\ee
and therefore, since the operator $I+\tilde P_0$ is closed the range  $R(I+\tilde P_0)$ is closed in $L^2(G\times S\times I)$.
This result,
the observation that $R(I+P_0)\subset R(I+\tilde{P}_0)$,
and \eqref{evo17-a} show that $R(I+\tilde P_0)=L^2(G\times S\times I)$.
The proof is complete.
\end{proof}

Theorem \ref{md-evoth} says that:

\begin{corollary}\label{d-cor}
The operator $-\tilde P_0:L^2(G\times S\times I)\to L^2(G\times S\times I)$  is $m$-dissipative, or equivalently $\tilde P_0$ is $m$-accretive.
\end{corollary}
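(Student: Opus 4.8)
The plan is to derive Corollary~\ref{d-cor} directly from Theorem~\ref{md-evoth}, since the two conclusions \eqref{inf4-a} and \eqref{inf5-a} are precisely the two defining properties of an $m$-accretive operator. First I would recall the relevant definitions: a linear operator $A$ on a Hilbert space $X$ is \emph{accretive} if $\la A\phi,\phi\ra_X\geq 0$ for all $\phi\in D(A)$, and it is \emph{$m$-accretive} if in addition $R(\lambda I+A)=X$ for some (equivalently, all) $\lambda>0$; correspondingly $B$ is \emph{dissipative} if $\la B\phi,\phi\ra\leq 0$ and \emph{$m$-dissipative} if $R(\lambda I-B)=X$ for some $\lambda>0$. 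With $B=-\tilde P_0$, accretivity of $\tilde P_0$ is exactly dissipativity of $-\tilde P_0$, and $R(I+\tilde P_0)=R(I-(-\tilde P_0))$, so the two notions coincide for $\tilde P_0$ versus $-\tilde P_0$ in the obvious way.

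The argument then has essentially three short steps. First, observe that $\tilde P_0$ is a closed, densely defined operator on $L^2(G\times S\times I)$: density of $D(\tilde P_0)\supset D(P_0)$ follows since $C_0^\infty(G\times S\times I^\circ)\subset D(P_0)$ is already dense, and closedness is built into the definition of $\tilde P_0$ as the closure of $P_0$ (cf.\ Remark~\ref{re:P_closure}, which also guarantees the closure exists). Second, invoke \eqref{inf5-a} of Theorem~\ref{md-evoth}, which gives $\la \tilde P_0\phi,\phi\ra_{L^2(G\times S\times I)}\geq 0$ for all $\phi\in D(\tilde P_0)$; this is the accretivity of $\tilde P_0$, equivalently $\la -\tilde P_0\phi,\phi\ra_{L^2(G\times S\times I)}\leq 0$, which is the dissipativity of $-\tilde P_0$. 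Third, invoke \eqref{inf4-a}, namely $R(I+\tilde P_0)=L^2(G\times S\times I)$; this is the maximality (surjectivity of $I+\tilde P_0$) condition. Combining accretivity with surjectivity of $I+\tilde P_0$ yields that $\tilde P_0$ is $m$-accretive, and hence $-\tilde P_0$ is $m$-dissipative. One should also note, for completeness, that accretivity plus $R(I+\tilde P_0)=X$ automatically upgrades to $R(\lambda I+\tilde P_0)=X$ for every $\lambda>0$, via the standard Lumer--Phillips continuation argument (the set of $\lambda>0$ for which $\lambda I+\tilde P_0$ is surjective is open and closed in $(0,\infty)$ because of the a priori bound $\n{(\lambda I+\tilde P_0)\phi}\geq \lambda\n{\phi}$, analogous to \eqref{evo18-ab}), but this is not strictly needed for the statement as phrased.

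There is essentially no obstacle here: the corollary is a restatement of Theorem~\ref{md-evoth} in the language of dissipative operator theory, so the only thing to be careful about is matching conventions (accretive versus dissipative, and which sign of $\lambda I\pm A$ is used for the range condition). The one point deserving a sentence of justification is why $-\tilde P_0$ being $m$-dissipative is \emph{equivalent} to $\tilde P_0$ being $m$-accretive: this is immediate from the definitions since $\la(-\tilde P_0)\phi,\phi\ra = -\la\tilde P_0\phi,\phi\ra$ and $\lambda I-(-\tilde P_0)=\lambda I+\tilde P_0$. Thus the proof is a two-line deduction citing \eqref{inf4-a} and \eqref{inf5-a}.

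\begin{proof}
Recall that a densely defined linear operator $A$ on a Hilbert space $X$ is called \emph{$m$-accretive} if $\la A\phi,\phi\ra_X\geq 0$ for all $\phi\in D(A)$ and $R(I+A)=X$; equivalently, $-A$ is then \emph{$m$-dissipative}, since $\la(-A)\phi,\phi\ra_X\leq 0$ and $R\big(I-(-A)\big)=R(I+A)=X$.

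The operator $\tilde P_0$ is densely defined, since $C_0^\infty(G\times S\times I^\circ)\subset D(P_0)\subset D(\tilde P_0)$ is dense in $L^2(G\times S\times I)$, and it is closed by construction (it is the closure of $P_0$, which exists by the argument of Remark~\ref{re:P_closure}). By \eqref{inf5-a} of Theorem~\ref{md-evoth} we have
\[
\la \tilde P_0\phi,\phi\ra_{L^2(G\times S\times I)}\geq 0,\quad \forall \phi\in D(\tilde P_0),
\]
so $\tilde P_0$ is accretive, i.e.\ $-\tilde P_0$ is dissipative. By \eqref{inf4-a} of the same theorem,
\[
R(I+\tilde P_0)=L^2(G\times S\times I).
\]
Hence $\tilde P_0$ is $m$-accretive, and therefore $-\tilde P_0:L^2(G\times S\times I)\to L^2(G\times S\times I)$ is $m$-dissipative.
\end{proof}
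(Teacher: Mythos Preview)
Your proposal is correct and takes essentially the same approach as the paper: the paper's proof is a one-line reference to \cite[p.~340]{dautraylionsv5}, implicitly using that the two conclusions \eqref{inf4-a} and \eqref{inf5-a} of Theorem~\ref{md-evoth} are exactly the definition of $m$-accretivity for $\tilde P_0$. You have simply written out this identification explicitly, which is fine.
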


\begin{proof}
See e.g. \cite[p. 340]{dautraylionsv5}.
\end{proof}

\begin{remark}\label{F-L-P-R}
The general theory of initial boundary value problems of symmetric formally dissipative first order partial differential operators can alternatively be applied to show the $m$-dissipativity of $-\tilde P_0$.
The (classical) results for positive symmetric initial boundary value problems can be found in \cite{lax}, Theorem 3.2 and discussion in section 4 therein;  \cite{friedrich58}, together with discussion in section 17 therein;  \cite{sarason} and  \cite{rauch85}. 
The spatial domain there is replaced with $G\times S$
(here the additional smooth compact manifold $S$ without boundary does not affect to the conclusions; for the construction of 
the Friedrich's mollifier on $S$ we refer to \cite{fukuoka}).
The above references are, however, valid only for problems in which the dimension of the  kernel ${\rm Ker}(A_\nu)$, given below,
is constant on $\Gamma$ (i.e. $A_\nu$ has constant multiplicity),
and thus we are not able to directly apply them.
Some results for variable 
multiplicity can be found  e.g. in \cite{rauch94}, \cite{nishitani96}, \cite{nishitani98} and \cite{takayama02} together with their references. The 
latter researches require
an additional assumption which concerns the "transition with a non-zero derivative" on the smooth $(6-2)$-dimensional 
manifold $\Gamma_0$. For simplicity we deal only with the case where $S_0=S_0(x)$ is independent of $E$ and use formulations of 
\cite{nishitani96}. More general formulations can be based on \cite{nishitani98} and related references but we omit them here.

We make the  change of variables and of the unknown function
\be\label{evo2} 
\Phi(x,\omega,E):=\phi(x,\omega,E_m-E)
\ee
and denote 
\[
\tilde f(x,\omega,E)=f(x,\omega,E_{\rm m}-E).
\]
Making the changes, we find that the problem 
$(I+\tilde P_0)\phi=f$ is equivalent to
$\phi$ satisfying the equation
\be\label{evo3}
{\p {\Phi}E}+{1\over{S_0}}\omega\cdot\nabla_x\Phi
+{1\over{ S_0}}(C S_0+1)\Phi
={1\over{ S_0}}\tilde f=:{\ol f},
\ee
on $G\times S\times I$,
along with satisfying the following inflow boundary and initial value conditions
\be\label{evo4}
\Phi_{|\Gamma_-}=0, \quad
\Phi(\cdot,\cdot,0)=0. 
\ee
Define
\[
Q(x,\omega,E,D)\Phi:={1\over{ S_0}}\omega\cdot\nabla_x\Phi
+{1\over{ S_0}}(C S_0+1)\Phi
\]
and the boundary matrix
\[
A_{\nu}(z)={1\over{ S_0}}\omega\cdot\nu(y),\quad z=(y,\omega,E)\in\Gamma.
\]

We assume that $\partial G=\{x\in\R^3\ |\ f(x)=0\}$ is a level set of class $C^\infty$, i.e. $f\in C^\infty(\R^3)$ and $\nabla_x f(x)\neq 0$ whenever $f(x)=0$,
and we also assume that $S_0\in C^\infty(G)$.
The outward unit normal vector field $\nu(y)$ on $\partial G$ is given by
\[
\nu(y)={{(\nabla_x f)(y)}\over{\n{(\nabla_x f)(y)}}}.
\]
The additional assumption is as follows. 
Suppose that there exist $C^\infty$-functions $h$ and $A:V_{y_0}\times S\to \R$ such that
\begin{align}
& A_f(x,\omega):={1\over{S_0(x)}}\omega\cdot \nabla_x f(x)=-h(x,\omega)A(x,\omega) \label{ass-on-G-1}  \\[2mm]
& (V_{y_0}\times S)\cap\Gamma_+'=\{(y,\omega)\in\partial G\times S\ |\ h(y,\omega)>0\}, \label{ass-on-G-2} \\[2mm]
& (V_{y_0}\times S)\cap\Gamma_-'=\{(y,\omega)\in\partial G\times S\ |\ h(y,\omega)<0\}, \label{ass-on-G-3} \\[2mm]
& (V_{y_0}\times S)\cap\Gamma_0'=\{(y,\omega)\in\partial G\times S\ |\ h(y,\omega)=0\}. \label{ass-on-G-4}
\end{align}
Let
\[
A_h(x,\omega):={1\over{S_0(x)}}(\omega\cdot \nabla_x h)(x,\omega).
\]
The additional assumption is
\be\label{add-ass}
A(y,\omega)\ {\rm and}\ A_h(y,\omega)\ {\rm are\ positive\ definite\ on}\   (V_{y_0}\times S)\cap \Gamma_0'.
\ee

For example, in the case of the ball $G=B(0,1)\subset\R^3$ we can choose $f(x)=1-\n{x}^2$. Then $A_f(y,\omega)=-2{1\over{S_0(y)}}(y\cdot \omega)$
and we choose $h(y,\omega)=2(y\cdot\omega)$, $A(y,\omega)={1\over{S_0(y)}}>0$. Moreover, we find that $A_h(y,\omega)=2{1\over{S_0(y)}}>0$. Hence the stated assumptions can be met for the case of the ball $G=B(0,1)$.

The following result holds in this context.
Suppose that $\partial G$ is  in the class $C^1$ and that the assumption (\ref{add-ass}) holds. Furthermore,
suppose that $S_0\in C^1(\ol G\times I)$ such that
\begin{align}
S_0>0\quad {\rm on}\ \ol G\times I. \label{flp2}
\end{align}
Then 
\be\label{inf4}
C_0^\infty (G\times S\times I^\circ)\subset R(I+P_0) 
\ee
and
\be\label{inf5}
\la \tilde P_0\phi,\phi\ra_{L_2(G\times S\times I)}\geq 0,\quad \forall\phi\in D(\tilde P_0).
\ee

Since our equation is scalar valued it is symmetric in the sense of \cite{nishitani96}.
We choose the linear subspace of \cite{nishitani96} by
\[
M(y,\omega)=\begin{cases}
\R,\quad & (y,\omega)\in \Gamma'_+\cup\Gamma'_0\\
\{0\},\quad & (y,\omega)\in\Gamma'_-
\end{cases}.
\]
Then we find that $M(y,\omega)$ is maximal positive in the sense of \cite{nishitani96}.

Due to the Theorem 5.5 of \cite{nishitani96}, for any $f\in C_0^\infty(G\times S\times I^\circ)$
the equation ${\p \Phi{E}}+Q(x,\omega,E,D)\Phi={\ol f}$ has a unique strong solution $\Phi$ satisfying the initial and boundary values.
Then $\phi(x,\omega,E):=\Phi(x,\omega,E_{\rm m}-E)$ is the solution of $(I+P_0)\phi=f$.
This completes the proof of (\ref{inf4}).
The inequality (\ref{inf5}) can be shown similarly as above and so the conclusion follows.
 
\end{remark}

\begin{remark}\label{md-gene}
The previous theorem has the following generalization which can be applied for more general transport problems.
Let
\[
P(x,\omega,E,D)\phi= -S_0{\p {\phi}E}+F_1\cdot\nabla_x\phi
+F_2\cdot\nabla_{\tilde\omega}\phi+a\phi
\]
be the first order partial differential operator with coefficients $S_0,\ F_1,\ F_2,\ a\in C^1(\ol G\times S\times I)$.
Assume that $\partial G$ is  in the class $C^2$ and that $\Gamma=\Gamma_{+}\cup\Gamma_{-}\cup\Gamma_{0}$ where $\Gamma_{0}$ has the zero surface measure.
In addition, we assume that $\Gamma_{\pm}$ are open in $\partial G\times S\times I^\circ$.
Finally,
suppose that 
\begin{align}
& {\p {S_0}E}-{\rm div}_x(F_1)-{\rm div}_{\omega}(F_2)+2a\geq 0, \label{flp1-aa} \\[2mm]
& \inf_{(x,\omega,E)\in G\times S\times I}S_0(x,\omega,E)>0, \label{flp2-aa}
\end{align}
and that
\be\label{flp3-aa}
F_1\cdot\nu<0\ {\rm on}\ {\Gamma}_{-},\quad F_1\cdot\nu>0\ {\rm on}\ {\Gamma}_{+}.
\ee
Then 
\be\label{inf4-aa}
R( I+\tilde P_0)=L^2(G\times S\times I)
\ee
and
\be\label{inf5-aa}
\la \tilde P_0\phi,\phi\ra_{L_2(G\times S\times I)}\geq 0,\quad \forall \phi\in D(\tilde P_0).
\ee
\end{remark}

\begin{remark}\label{dis-re}
In certain cases, the $m$-dissipativity of $-\tilde P_0$ -like operator can even
be proved by using explicit formulas for the solution. We only sketch the idea here.

Suppose  that $S_0=S_0(E)$ is independent of $x$ and that $a=a(x,\omega)\in C^1(\ol G\times S)$ is independent of $E$.
Let
\[
P(x,\omega,E,D)\phi=-{\p {(S_0\phi)}E}+\omega\cdot\nabla_x \phi+a\phi.
\]
Assume that ${ f}\in C_0^\infty(G\times S\times I^\circ)$.
Then by the formula (\ref{comp17:1}) below,
the solution of the problem
\be\label{S0const}
P(x,\omega,E,D)\phi={f},\quad \phi_{|\Gamma_-}=0,\quad \phi(\cdot,\cdot,E_{\rm m})=0
\ee
is given by
\be\label{comp17:1-a}
\phi(x,\omega,E) 
=
{1\over{S_0(E)}}\Big(
\int_0^{r(x,\omega,E)}
e^{-\int_0^s a(x-\tau \omega,\omega)d\tau}\tilde{f}(x-s\omega,\omega,R(E)+s) ds\Big),
\ee
where
\[
r(x,\omega,E):={}&\min\{R(E_m)-R(E),t(x,\omega)\}, \\[2mm]
R(E):={}&\int_0^E{1\over{S_0(\tau)}}d\tau, \\[2mm]
\tilde f(x,\omega,\eta):={}& S_0(R^{-1}(\eta))f(x,\omega,R^{-1}(\eta)).
\]
We find that there exists a constant $C_1>0$ such that
\be\label{inv}
\n{\phi}_{L^2(G\times S\times I)}\leq C_1\n{f}_{L^2(G\times S\times I)}.
\ee

Let ${ f}\in L^2(G\times S\times I)$ and let $\{f_n\}\subset C_0^\infty(G\times S\times I^\circ)$ be a sequence such that
$\n{f_n-f}_{L^2(G\times S\times I)}\to 0$ when $n\to\infty$. 
Define
\[
\phi_n:=
{1\over{S_0(E)}}\Big(
\int_0^{r(x,\omega,E)}
e^{-\int_0^sa(x-\tau s\omega,\omega)d\tau}\tilde{f_n}(x-s\omega,\omega,R(E)+s) ds\Big).
\]
We find that 
\[
\phi_n\in C^0(\ol G\times S\times I)\cap H^1(G\times S\times I^\circ),\quad {\phi_n}_{|\Gamma_-}=0,\quad \phi_n(\cdot,\cdot,E_{\rm m})=0,
\]
and then $\phi_n\in D( P_{0})$.
In showing that $\phi_n\in H^1(G\times S\times I^\circ)$ notice that
\[
\tilde{f}_n(x-t(x,\omega)\omega,\omega,R(E)+t(x,\omega))=0,\quad \textrm{if}\ R(E_m)-R(E)>t(x,\omega),
\]
and so ${\p t{x_j}}$, ${\p t{\tilde\omega_j}}$ do not appear in 
${\p {\phi_n}{x_j}}$, ${\p {\phi_n}{\tilde\omega_j}}$.

By (\ref{inv}) there exists $\phi\in L^2(G\times S\times I)$ such that $\n{\phi_n-\phi}_{L^2(G\times S\times I)}\to 0$.
In addition, $\n{P_0\phi_n-f}_{L^2(G\times S\times I)}=\n{f_n-f}_{L^2(G\times S\times I)}\to 0,\ n\to\infty$,
which shows that $\phi\in D(\tilde P_0)$ and $\tilde P_0\phi=f$. Hence $R(\tilde P_0)=L^2(G\times S\times I)$.
Consequently, in this special case these methods (based only on explicit solution formulas) give an alternative proof for the surjectivity of $\tilde P_{0}$ (and hence together with (\ref{inf5}) the $m$-dissipativity of 
$-\tilde P_{0}$; cf. the treatise of $A_0$ in \cite[proof of Theorem 4.7]{tervo17-up}).
\end{remark}

We return to the existence and uniqueness of solutions for the following problem.
Given ${ f_C}\in L^2(G\times S\times I)$, find $\phi\in L^2(G\times S\times I)$ such that
\begin{gather}
-{\p {(S_0\phi)}E}+\omega\cdot\nabla_x\phi+CS_0\phi +\Sigma\phi -K_C\phi ={ f_C},
\nonumber\\
{\phi}_{|\Gamma_-}=0,\quad\ \phi(\cdot,\cdot,E_{\rm m})=0. \label{co3-d}
\end{gather}
Let (for clarity, we have included here the subscript $C$ into $P$)
\bea
P_C(x,\omega,E,D)\phi
:={}&
-{\p {(S_0\phi)}E}+\omega\cdot\nabla_x\phi
+CS_0\phi\nonumber\\
={}&
-S_0{\p {\phi}E}+\omega\cdot\nabla_x\phi -{\p {S_0}E}\phi
+CS_0\phi.
\eea

We shall seek a strong solution of \eqref{co3-d}
with the homogeneous inflow boundary conditions.
Using the above notations, the problem \eqref{co3-d} is equivalent to
\[
(\tilde{P}_{C,0}+\Sigma-K_C)\phi={ f_C},
\]
where $\phi\in D(\tilde{P}_{C,0})$.
The following arguments are analogous to those used in \cite[Section 5.3]{tervo17-up}.

\begin{theorem}\label{coth2-d}
Suppose that the assumptions 
(\ref{ass1}), (\ref{ass5-a}), (\ref{ass7}), (\ref{ass-8}), (\ref{ass8-aa-a}), (\ref{ass9-a-b}), (\ref{evo16}), (\ref{evo8-a}) and (\ref{evo9-a}) are valid with $C={{\max\{q,0\}}\over{\kappa}}$ and $c>0$.
Then for every ${ f_C}\in L^2(G\times S\times I)$ the problem \eqref{co3-d}
has a unique strong solution $\phi\in  D(\tilde P_{C,0})={\s H}_{P_{C,0}}(G\times S\times I^\circ)$ with homogeneous inflow boundary conditions.
\end{theorem}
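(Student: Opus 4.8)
The plan is to combine the $m$-dissipativity of $-\tilde P_{C,0}$ (which follows from Theorem \ref{md-evoth}, after noting that assumptions \eqref{evo16}, \eqref{evo8-a}, \eqref{evo9-a} are exactly those needed, and that the condition $-{\p {S_0}E}+2CS_0\geq 0$ holds with $C=\max\{q,0\}/\kappa$ since $CS_0\geq q\geq \frac12{\p {S_0}E}$ a.e. by \eqref{q} and \eqref{csda9a}) with the boundedness and accretivity of the zeroth-order perturbation $\Sigma-K_C$ provided by Lemma \ref{csdale1a}. First I would record that, by Theorem \ref{md-evoth} and Corollary \ref{d-cor}, the operator $\tilde P_{C,0}$ is $m$-accretive on $L^2(G\times S\times I)$; in particular $R(I+\tilde P_{C,0})=L^2(G\times S\times I)$ and $\la \tilde P_{C,0}\phi,\phi\ra_{L^2}\geq 0$ for all $\phi\in D(\tilde P_{C,0})$. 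Next, Lemma \ref{csdale1a} gives that $\Sigma-K_C$ is bounded and $\la(\Sigma-K_C)\phi,\phi\ra_{L^2}\geq c\n{\phi}_{L^2}^2$ with $c>0$.

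The key step is then a perturbation argument for $m$-accretive operators. Since $\tilde P_{C,0}$ is $m$-accretive and $B:=\Sigma-K_C$ is a bounded accretive operator on all of $L^2(G\times S\times I)$, the sum $\tilde P_{C,0}+B$ with domain $D(\tilde P_{C,0})$ is again $m$-accretive; indeed accretivity of the sum is immediate from adding the two inequalities, and $m$-accretivity (i.e. $R(\lambda I+\tilde P_{C,0}+B)=L^2$ for some, hence all, $\lambda>0$) follows from the standard bounded-perturbation theorem — one writes $\lambda I+\tilde P_{C,0}+B = (I+B(\lambda I+\tilde P_{C,0})^{-1})(\lambda I+\tilde P_{C,0})$ and chooses $\lambda$ large enough that $\n{B}\,\n{(\lambda I+\tilde P_{C,0})^{-1}}<1$, so the first factor is boundedly invertible by Neumann series; alternatively one simply cites \cite{dautraylionsv5} or the Lumer–Phillips circle of ideas. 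In our situation we actually need invertibility of $\tilde P_{C,0}+B$ itself (i.e. $\lambda=0$): from accretivity of $\tilde P_{C,0}$ and the strict accretivity $\la B\phi,\phi\ra\geq c\n{\phi}^2$ we get $\la(\tilde P_{C,0}+B)\phi,\phi\ra\geq c\n{\phi}^2$, hence $\n{(\tilde P_{C,0}+B)\phi}_{L^2}\geq c\n{\phi}_{L^2}$, so $\tilde P_{C,0}+B$ is injective with closed range; combined with the surjectivity of $\tilde P_{C,0}+B$ obtained from the perturbation argument (for some $\lambda$, then transported to $\lambda=0$ using that $\lambda I$ is bounded and the same Neumann-series trick with the strictly accretive resolvent), one concludes $R(\tilde P_{C,0}+B)=L^2(G\times S\times I)$.

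Putting this together: for every ${\bf f}\in L^2(G\times S\times I)$ there is a unique $\phi\in D(\tilde P_{C,0})={\s H}_{P_{C,0}}(G\times S\times I^\circ)$ (the identification being \eqref{eq:H_tilde-P_0_is_D-tilde-P_0}) with $(\tilde P_{C,0}+\Sigma-K_C)\phi={\bf f}$, and by construction $\phi$ is a strong solution of \eqref{co3-d} with homogeneous inflow boundary conditions, since elements of $D(\tilde P_{C,0})$ are $L^2$-limits of functions in $\tilde W^2(G\times S\times I)\cap H^1(I,L^2(G\times S))$ vanishing on $\Gamma_-$ and at $E_m$. Uniqueness is precisely the injectivity estimate $\n{\phi}_{L^2}\leq \frac1c\n{{\bf f}}_{L^2}$ just derived. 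The main obstacle I anticipate is the careful bookkeeping in the perturbation step — ensuring that surjectivity of $I+\tilde P_{C,0}$ (which is what Theorem \ref{md-evoth} literally delivers) is upgraded to surjectivity of $\tilde P_{C,0}+\Sigma-K_C$ without an extra spectral-shift term; this is where one must use that $c>0$ so that the shifted operator has a resolvent bound good enough to absorb $\n{\Sigma-K_C}$, rather than merely invoking $m$-accretivity abstractly.
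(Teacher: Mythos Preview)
Your proposal is correct and follows essentially the same approach as the paper: establish $m$-accretivity of $\tilde P_{C,0}$ via Theorem \ref{md-evoth}/Corollary \ref{d-cor}, use Lemma \ref{csdale1a} for the bounded strictly accretive perturbation $\Sigma-K_C$, and combine via standard bounded-perturbation theory. The only cosmetic difference is that the paper shifts by $cI$ \emph{before} invoking the perturbation theorem, i.e.\ it shows directly that $-\tilde P_{C,0}-(\Sigma-K_C)+cI$ is $m$-dissipative (citing \cite[Theorem 4.3, Corollary 3.3]{pazy83}), so that surjectivity at $\lambda=c>0$ immediately yields $R(\tilde P_{C,0}+\Sigma-K_C)=L^2$ without the extra ``transport to $\lambda=0$'' step you describe; your Neumann-series argument for that step is fine but unnecessary once the shift is built in.
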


\begin{proof}
Recall that $q={1\over 2}\sup_{(x,E)\in G\times I}{\p {S_0}E}(x,E)$ (see \eqref{q}).
For $C={{\max\{q,0\}}\over{\kappa}}$ we have
\[
{\p {S_0}E}\leq 2q=2\kappa{q\over\kappa}\leq 2S_0{{\max\{q,0\}}\over\kappa}=2S_0C,
\]
that is
\be\label{flp1-d}
-{\p {S_0}E}+2CS_0\geq 0,
\ee 
and hence
by Corollary \ref{d-cor}, the operator $-{\tilde P}_{C,0}:L^2(G\times S\times I)\to L^2(G\times S\times I)$ is $m$-dissipative.
On the other hand, due to Lemma \ref{csdale1a} the bounded operator
$-(\Sigma-K_C)+c I:L^2(G\times S\times I)\to L^2(G\times S\times I)$ is dissipative.
These facts allow us to conclude that
$-{\tilde P}_{C,0}-(\Sigma-K_C)+cI :L^2(G\times S\times I)\to L^2(G\times S\times I)$ is $m$-dissipative
(\cite[Theorem 4.3 and Corollary 3.3]{pazy83}, or \cite[Theorem 4.4]{tervo17-up}).
This implies, as $c>0$, that $R\big(cI-(-{\tilde P}_{C,0}-(\Sigma-K_C)+cI)\big)=R({\tilde P}_{C,0}+\Sigma-K_C)=L^2(G\times S\times I)$, and so the existence of solutions follows.

Because $c>0$ and because $-{\tilde P}_{C,0}-(\Sigma-K_C)+cI$ is dissipative,
we have 
\be\label{eq:uniquesol-d}
\n{({\tilde P}_{C,0}+\Sigma-K_C)\phi}_{L^2(G\times S\times I)}\geq c\n{\phi}_{L^2(G\times S\times I)},\quad \forall\phi\in D(\tilde P_{C,0}),
\ee
which implies the uniqueness of the solution.
This completes the proof.
\end{proof}

\begin{remark}
We note that the inequality \eqref{eq:uniquesol-d} implies that for all ${ f_C}\in L^2(G\times S\times I)$
\bea\label{bestim-d}
\n{(-{\tilde P}_{C,0}+\Sigma-K_C)^{-1}{ f_C}}_{ L^2(G\times S\times I)}\leq {1\over c}\n{{ f_C}}_{ L^2(G\times S\times I)},
\eea
or in other words, the solution of the problem (\ref{co3-d}) satisfies
\be\label{bestim1-d}
\n{\phi}_{L^2(G\times S\times I)}
\leq {1\over c}\n{{ f_C}}_{L^2(G\times S\times I)}.
\ee
\end{remark}

The next results addresses the case with inhomogeneous inflow boundary data.
We begin with a lemma (see Lemmas 5.8 and 5.11 in \cite{tervo17-up}).

\begin{lemma}\label{le:H1_lift}
Let $d:=\mathrm{diam}(G)<\infty$ (diameter of $G$). Then for any $g\in H^1(I,T^2(\Gamma_-'))$
(recall $\Gamma_-'$ from section \ref{pre})
we have $Lg\in H^1(I, L^2(G\times S))$,
where
\[
(Lg)(x,\omega,E):=g(x-t(x,\omega)\omega,\omega,E),
\]
is the lift of $g$ (see Lemma \ref{le:lift} and Eq. \eqref{liftb}),
and
\[
\n{Lg}_{H^1(I, L^2(G\times S))}\leq \sqrt{d}\n{g}_{H^1(I,T^2(\Gamma_-'))}.
\]
\end{lemma}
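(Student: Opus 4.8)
The strategy is to factor the lift $L$ through the ``frozen-energy'' lift on $G\times S$ and to exploit that the escape time $t(x,\omega)$ does not depend on $E$, so that $L$ commutes with $\partial_E$. Throughout, write $H^1(I,X)$ for the Bochner–Sobolev space with norm $\n{u}_{H^1(I,X)}^2=\n{u}_{L^2(I,X)}^2+\n{\partial_E u}_{L^2(I,X)}^2$.

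First I would establish a slicewise estimate. For fixed $E\in I$ put $(L'h)(x,\omega):=h(x-t(x,\omega)\omega,\omega)$ for $h$ on $\Gamma_-'$; this is the lift of Lemma~\ref{le:lift} (equivalently Theorem~\ref{tth}) read with the energy variable frozen, since the change of variables of Remark~\ref{changevar} is performed for each fixed $\omega$ and treats $E$ as a mere parameter. That computation gives $\n{L'h}_{L^2(G\times S)}^2=\int_{\Gamma_-'}h(y,\omega)^2\,\tau_-(y,\omega)\,|\omega\cdot\nu|\,d\sigma\,d\omega$, and since $\tau_-(y,\omega)\le d$ (because $G$ is bounded of diameter $d$) we obtain
\[
\n{L'h}_{L^2(G\times S)}\le\sqrt{d}\,\n{h}_{T^2(\Gamma_-')},\qquad h\in T^2(\Gamma_-').
\]
Thus $L':T^2(\Gamma_-')\to L^2(G\times S)$ is bounded with norm at most $\sqrt d$. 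By Fubini (the measure on $\Gamma$ being the product $\sigma\otimes\mu_S\otimes\mc L^1$) one identifies $L^2(I,T^2(\Gamma_-'))$ with $L^2(\Gamma_-'\times I,|\omega\cdot\nu|\,d\sigma\,d\omega\,dE)$, and then $(Lg)(\cdot,\cdot,E)=L'(g(\cdot,\cdot,E))$ for a.e.\ $E$; integrating the previous inequality in $E$ yields $\n{Lg}_{L^2(I,L^2(G\times S))}^2\le d\,\n{g}_{L^2(I,T^2(\Gamma_-'))}^2$.

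Next I would verify that $\partial_E(Lg)=L(\partial_E g)$ for $g\in H^1(I,T^2(\Gamma_-'))$. By definition of the $T^2(\Gamma_-')$-valued Sobolev space, for every $\varphi\in C_0^\infty(I^\circ)$ one has the Bochner identity $\int_I g(\cdot,\cdot,E)\varphi'(E)\,dE=-\int_I(\partial_E g)(\cdot,\cdot,E)\varphi(E)\,dE$ in $T^2(\Gamma_-')$. Applying the bounded linear operator $L'$, which commutes with $T^2(\Gamma_-')$-valued Bochner integrals, and using $L'(g(\cdot,\cdot,E))=(Lg)(\cdot,\cdot,E)$ and $L'((\partial_E g)(\cdot,\cdot,E))=(L(\partial_E g))(\cdot,\cdot,E)$, we get $\int_I(Lg)(\cdot,\cdot,E)\varphi'(E)\,dE=-\int_I(L(\partial_E g))(\cdot,\cdot,E)\varphi(E)\,dE$ in $L^2(G\times S)$; since $L(\partial_E g)\in L^2(I,L^2(G\times S))$ by the first step, this is exactly the assertion that $Lg\in H^1(I,L^2(G\times S))$ with $\partial_E(Lg)=L(\partial_E g)$. (Alternatively, approximate $g$ in $H^1(I,T^2(\Gamma_-'))$ by functions smooth in $E$, for which $\partial_E(Lg)=L(\partial_E g)$ holds by direct differentiation because $t(x,\omega)$ is $E$-independent, and pass to the limit using the bound of the first step.)

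Combining the two steps, apply the first-step estimate to $g$ and to $\partial_E g$ and add, obtaining
\[
\n{Lg}_{H^1(I,L^2(G\times S))}^2\le d\big(\n{g}_{L^2(I,T^2(\Gamma_-'))}^2+\n{\partial_E g}_{L^2(I,T^2(\Gamma_-'))}^2\big)=d\,\n{g}_{H^1(I,T^2(\Gamma_-'))}^2,
\]
and taking square roots finishes the proof. The only point that deserves care is the slicewise (in $E$) reading of Lemma~\ref{le:lift} together with the Fubini identification of $L^2(I,T^2(\Gamma_-'))$ as an $L^2$-space over $\Gamma_-'\times I$; everything else is bookkeeping, the substance being simply that $t(x,\omega)$ does not depend on the energy variable.
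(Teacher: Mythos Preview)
Your proof is correct and follows essentially the same approach as the paper: use the lift estimate $\n{Lg}_{L^2}\le\sqrt{d}\,\n{g}_{T^2(\Gamma_-)}$ (from Lemma~\ref{le:lift} with $\Sigma=0$ and $\tau_-\le d$), observe that $\partial_E(Lg)=L(\partial_E g)$ because $t(x,\omega)$ is $E$-independent, and apply the estimate to both $g$ and $\partial_E g$. Your argument is more careful than the paper's in justifying the commutation $\partial_E(Lg)=L(\partial_E g)$ via Bochner integrals (the paper simply asserts it), but the structure is identical.
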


\begin{proof}
Taking into account that $|\tau_-|\leq d$ on $\Gamma_-$,
we have by Lemma \ref{le:lift} (with $\Sigma=0$),
\[
\n{Lg}_{L^2(G\times S\times I)}\leq 
\sqrt{d}\n{g}_{T^2(\Gamma_-)} = \sqrt{d}\n{g}_{L^2(I,T^2(\Gamma'_-))}.
\]
On the other hand,
$\p{(Lg)}{E}=L\p{g}{E}$,
and therefore one has, again by Lemma \ref{le:lift} (with $\Sigma=0$),
\[
\n{\p{(Lg)}{E}}_{L^2(G\times S\times I)}\leq \sqrt{d}\n{\p{g}{E}}_{L^2(I,T^2(\Gamma'_-))}.
\]
This completes the proof.
\end{proof}

\begin{theorem}\label{coth3-dd}
Suppose that the assumptions
(\ref{ass1}), (\ref{ass5-a}), (\ref{ass7}), (\ref{ass-8}), (\ref{ass8-aa-a}), (\ref{ass9-a-b}), (\ref{evo16}), (\ref{evo8-a}) and (\ref{evo9-a}) are valid with $C={{\max\{q,0\}}\over{\kappa}}$ and $c>0$.
Furthermore, suppose that ${ f_C}\in L^2(G\times S\times I)$, and ${g_C}\in H^1(I,T^2(\Gamma_-'))$
is such that the \emph{compatibility condition}
\be\label{comp-dd}
{ g_C}(\cdot,\cdot,E_{\rm m})=0
\ee
holds.
Then the problem
\begin{gather}
-{\p {(S_0\phi)}E}+\omega\cdot\nabla_x\phi+CS_0\phi+\Sigma\phi -K_C\phi={ f_C},\ \nonumber\\
{\phi}_{|\Gamma_-}={ g_C},\quad \phi(\cdot,\cdot,E_{\rm m})=0, \label{co3aa-dd}
\end{gather}
has a unique solution 
\be\label{solin}
\phi\in {\s H}_{P_{C,0}}(G\times S\times I^\circ)+
L\big(H^1(I,T^2(\Gamma_-'))\big)\subset
{\s H}_{P_C}(G\times S\times I^\circ),
\ee
\end{theorem}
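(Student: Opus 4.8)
The plan is to reduce the inhomogeneous-boundary problem \eqref{co3aa-dd} to the homogeneous-boundary problem \eqref{co3-d} by subtracting off a lift of the boundary data, and then invoke Theorem \ref{coth2-d}. Concretely, given ${\bf g}\in H^1(I,T^2(\Gamma_-'))$ satisfying the compatibility condition \eqref{comp-dd}, set $\psi_1:=L{\bf g}$, the explicit lift from Lemma \ref{le:H1_lift}, so that $\psi_1\in H^1(I,L^2(G\times S))$ with $\n{\psi_1}_{H^1(I,L^2(G\times S))}\leq\sqrt d\,\n{{\bf g}}_{H^1(I,T^2(\Gamma_-'))}$. First I would check that $\psi_1\in\mc{H}_{P_C}(G\times S\times I^\circ)$: the $E$-derivative ${\p{\psi_1}E}=L{\p{{\bf g}}E}$ lies in $L^2(G\times S\times I)$ by Lemma \ref{le:H1_lift}, and $\omega\cdot\nabla_x\psi_1\in L^2$ because by Lemma \ref{trathle1} (with $\Sigma=0$) one has $\omega\cdot\nabla_x\psi_1=0$ weakly in $G\times S\times I$, whence $P_C(x,\omega,E,D)\psi_1=-{\p{(S_0\psi_1)}E}+CS_0\psi_1\in L^2$. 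Moreover $\gamma_-(\psi_1)={\bf g}$ by the property $\gamma_-\circ L=I$ established after Lemma \ref{trathle1}, and $\psi_1(\cdot,\cdot,E_{\rm m})=L\big({\bf g}(\cdot,\cdot,E_{\rm m})\big)=0$ by the compatibility condition \eqref{comp-dd} (using the Sobolev embedding \eqref{sobim} so the trace at $E_{\rm m}$ makes sense for $H^1(I,\cdot)$-functions).

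Next I would look for $\phi$ in the form $\phi=\psi_0+\psi_1$ with $\psi_0$ solving a homogeneous-boundary problem. Substituting, $\psi_0$ must satisfy
\begin{align*}
-{\p{(S_0\psi_0)}E}+\omega\cdot\nabla_x\psi_0+CS_0\psi_0+\Sigma\psi_0-K_C\psi_0
={\bf f}-\big(P_C(x,\omega,E,D)+\Sigma-K_C\big)\psi_1=:\tilde{\bf f},
\end{align*}
together with $\psi_0{}_{|\Gamma_-}=0$ and $\psi_0(\cdot,\cdot,E_{\rm m})=0$. Since $\psi_1\in\mc{H}_{P_C}(G\times S\times I^\circ)$ and $\Sigma-K_C$ is bounded on $L^2$ (Lemma \ref{csdale1a}), the right-hand side $\tilde{\bf f}$ belongs to $L^2(G\times S\times I)$. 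Theorem \ref{coth2-d} then furnishes a unique strong solution $\psi_0\in D(\tilde P_{C,0})=\mc{H}_{P_{C,0}}(G\times S\times I^\circ)$ with homogeneous inflow boundary conditions. Setting $\phi:=\psi_0+\psi_1$ gives an element of $\mc{H}_{P_{C,0}}(G\times S\times I^\circ)+L\big(H^1(I,T^2(\Gamma_-'))\big)\subset\mc{H}_{P_C}(G\times S\times I^\circ)$ that solves \eqref{co3aa-dd}, because the boundary and initial traces add up correctly: $\gamma_-(\phi)=0+{\bf g}={\bf g}$ and $\phi(\cdot,\cdot,E_{\rm m})=0+0=0$.

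For uniqueness, suppose $\phi,\phi'$ are two solutions of the stated form; then $\phi-\phi'$ has the form $u_0+u_1$ with $u_0\in\mc{H}_{P_{C,0}}$ and $u_1\in L(H^1(I,T^2(\Gamma_-')))$, solves the homogeneous equation $(\tilde P_{C,0}+\Sigma-K_C)(\phi-\phi')=0$, and has vanishing inflow trace and vanishing trace at $E_{\rm m}$. The point is that $\phi-\phi'$ then actually lies in $D(\tilde P_{C,0})$ — this is where one uses that zero inflow trace plus zero trace at $E_{\rm m}$ characterizes the homogeneous strong-solution domain — so the injectivity estimate \eqref{eq:uniquesol-d}, namely $\n{(\tilde P_{C,0}+\Sigma-K_C)v}_{L^2}\geq c\n{v}_{L^2}$ with $c>0$, forces $\phi-\phi'=0$. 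The main obstacle I anticipate is precisely this last identification: showing that a function of the decomposed form with both homogeneous inflow and homogeneous $E_{\rm m}$-trace belongs to $D(\tilde P_{C,0})$, rather than merely to $\mc{H}_{P_C}$. One handles it by subtracting the (already constructed) homogeneous lift: the difference $\phi-\phi'-L(\gamma_-(\phi-\phi'))=\phi-\phi'$ (the lift of the zero datum being zero) is then seen to satisfy the defining conditions of $D(P_{C,0})$ up to the strong-solution closure, using Theorem \ref{density}-type density together with the trace continuity in Assumption \textbf{TC}-style arguments; alternatively, one argues directly that $\tilde P_{C,0}+\Sigma-K_C$ and the operator on the larger space $\mc{H}_{P_C}$ with the imposed boundary conditions have the same injective realization. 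I would present the uniqueness via this lift-subtraction reduction so that it rests cleanly on Theorem \ref{coth2-d} and estimate \eqref{eq:uniquesol-d}.
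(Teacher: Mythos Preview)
Your existence argument is correct and is essentially the paper's own proof: subtract the lift $L{\bf g}$, observe that the resulting right-hand side $\tilde{\bf f}$ lies in $L^2$ (using $\omega\cdot\nabla_x(L{\bf g})=0$, $L{\bf g}\in H^1(I,L^2(G\times S))$, and boundedness of $\Sigma-K_C$), and apply Theorem~\ref{coth2-d} to the homogeneous-boundary problem. The verification that $L{\bf g}$ has the correct traces and that $\phi=u+L{\bf g}$ lies in the asserted space is exactly what the paper does.

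Where you diverge from the paper is in the uniqueness discussion. The paper is brief here: it simply observes that the substitution $u:=\phi-L{\bf g}$ sets up a correspondence with the homogeneous-boundary problem, and uniqueness of $u$ from Theorem~\ref{coth2-d} then gives uniqueness of $\phi$. The point is that in this section the boundary and initial conditions are understood in the \emph{strong sense}: saying that $\phi$ satisfies $\phi_{|\Gamma_-}={\bf g}$ and $\phi(\cdot,\cdot,E_{\rm m})=0$ means precisely that $\phi-L{\bf g}\in D(\tilde P_{C,0})={\s H}_{P_{C,0}}(G\times S\times I^\circ)$. With that reading, any solution $\phi$ automatically yields $u=\phi-L{\bf g}\in D(\tilde P_{C,0})$ solving the homogeneous-boundary problem, and the injectivity estimate \eqref{eq:uniquesol-d} finishes the argument. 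Your worry about whether an arbitrary difference $\phi-\phi'$ (decomposed as $u_0+u_1$ with $u_1\in L(H^1)$ possibly nonzero) lands in $D(\tilde P_{C,0})$ stems from interpreting the boundary condition pointwise on the larger space; under the strong-sense interpretation used throughout Section~\ref{m-d}, the $L$-component is pinned to $L{\bf g}$ from the outset and the issue does not arise. You can therefore drop the hand-wavy appeals to density and Assumption~\textbf{TC} in your last paragraph and simply invoke the strong-sense correspondence.
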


\begin{proof}
Substitute $\phi$ in the problem (\ref{co3aa-dd}) by $u:=\phi-L{g_C}$
to obtain
\[
&
-{\p {(S_0u)}E}+ \omega\cdot\nabla_x u+CS_0u+\Sigma u-K_C u,
\nonumber\\
={}&
{ f_C}+{\p {(S_0L{ g_C})}E}
-CS_0(L{ g_C})-\Sigma(L{ g_C}) +K_C(L { g_C})=:\tilde{ f_C}(x,\omega,E),\nonumber
\]
where we have used the fact that $\omega\cdot\nabla_x(L{ g_C})=0$ (see Lemma \ref{trathle1} with $\Sigma=0$).
On the other hand,
\[
u_{|\Gamma_-}={\phi}_{|\Gamma_-}-(L{g_C})_{|\Gamma_-}={g_C}-{ g_C}=0,
\]
and, using the compatibility condition (\ref{comp-dd}),
\[
u(\cdot,\cdot,E_{\rm m})=\phi(\cdot,\cdot,E_{\rm m})-{ g_C}(x-t(x,\omega)\omega,\omega,E_{\rm m})=0.
\]

We find that the assumptions guarantee that $\tilde { f_C}\in L^2(G\times S\times I)$ (for more details, see the proof of Corollary \ref{cdd} below),
and hence by Theorem \ref{coth2-d} the problem (for $u$)
\begin{gather*}
-{\p {(S_0u)}E}+ \omega\cdot\nabla_x u+CS_0u+\Sigma u -K_C u=\tilde{f_C}, \\
u_{|\Gamma_-}=0,\quad u(\cdot,\cdot,E_{\rm m})=0,
\end{gather*}
has a unique solution $u\in {\s H}_{P_{C,0}}(G\times S\times I^\circ)$.
It follows that then
\[
\phi:=u+L{ g_C}\in {\s H}_{P_{C,0}}(G\times S\times I^\circ) + L\big(H^1(I,T^2(\Gamma_-'))\big),
\]
is the wanted unique solution of (\ref{co3aa-dd}).

Finally, the last inclusion in \eqref{solin} is justified
by the inclusion (see \eqref{eq:H_P_is_D-P-dot-star}, \eqref{eq:P_0_in_P-dot-star}, \eqref{eq:H_tilde-P_0_is_D-tilde-P_0})
\[
{\s H}_{P_{C,0}}(G\times S\times I^\circ)\subset {\s H}_{P_{C}}(G\times S\times I^\circ),
\]
and by the fact that (see the proof of Corollary \ref{cdd} below)
\[
P_C(x,\omega,E,D)L\tilde{{g_C}}
=
-{\p {(S_0L\tilde{{ g_C}})}E}+CS_0(L\tilde{{ g_C}})\in L^2(G\times S\times I),
\]
where the equality $\omega\cdot\nabla_x(L \tilde{{ g_C}})=0$ has been used again.
\end{proof}

We additionally obtain the following a priori estimate.

\begin{corollary}\label{cdd}
Under the assumptions of Theorem \ref{coth3-dd} the solution $\phi$ of the problem (\ref{co3aa-dd}) satisfies, with a constant $C_1\geq 0$, the estimate
\be\label{ess1-d}
\n{\phi}_{L^2(G\times S\times I)}\leq C_1\big(\n{{f_C}}_{L^2(G\times S\times I)}
+\n{{ g_C}}_{H^1(I,T^2(\Gamma_-'))}\big).
\ee
\end{corollary}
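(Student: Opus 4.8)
The plan is to build directly on the decomposition $\phi=u+L{\bf g}$ used in the proof of Theorem \ref{coth3-dd} and to estimate the two summands separately, keeping track of the constants. Recall that there $u\in {\s H}_{P_{C,0}}(G\times S\times I^\circ)$ is the unique strong solution (with homogeneous inflow boundary conditions) of the equation with right-hand side
\[
\tilde{\bf f}={\bf f}+{\p {(S_0L{\bf g})}E}-CS_0(L{\bf g})-\Sigma(L{\bf g})+K_C(L{\bf g}),
\]
and that $\phi=u+L{\bf g}$. By the a priori estimate \eqref{bestim1-d} for the homogeneous problem (equivalently \eqref{eq:uniquesol-d} in Theorem \ref{coth2-d}, which applies since $c>0$), we have $\n{u}_{L^2(G\times S\times I)}\leq \tfrac{1}{c}\n{\tilde{\bf f}}_{L^2(G\times S\times I)}$. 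Hence it suffices to bound $\n{\tilde{\bf f}}_{L^2(G\times S\times I)}$ and $\n{L{\bf g}}_{L^2(G\times S\times I)}$ by the right-hand side of \eqref{ess1-d}.

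For the lift term I would invoke Lemma \ref{le:H1_lift}: since ${\bf g}\in H^1(I,T^2(\Gamma_-'))$, the function $L{\bf g}$ lies in $H^1(I,L^2(G\times S))$, and — using in addition that ${\p {(L{\bf g})}E}=L{\p {\bf g}E}$ and the identification $T^2(\Gamma_-)=L^2(I,T^2(\Gamma_-'))$ — both $\n{L{\bf g}}_{L^2(G\times S\times I)}$ and $\n{{\p {(L{\bf g})}E}}_{L^2(G\times S\times I)}$ are at most $\sqrt{d}\,\n{{\bf g}}_{H^1(I,T^2(\Gamma_-'))}$. Expanding ${\p {(S_0L{\bf g})}E}={\p {S_0}E}(L{\bf g})+S_0\,{\p {(L{\bf g})}E}$ (legitimate precisely because $L{\bf g}\in H^1(I,L^2(G\times S))$) and using $S_0,{\p {S_0}E}\in L^\infty(G\times I)$ from \eqref{evo16}, the boundedness of $\Sigma$ from \eqref{ass1}, and the boundedness of $K_C$ on $L^2(G\times S\times I)$ from Lemma \ref{csdale1a} (its norm being controlled via \eqref{k-norma}), one obtains $\n{\tilde{\bf f}}_{L^2(G\times S\times I)}\leq \n{{\bf f}}_{L^2(G\times S\times I)}+C_2\n{{\bf g}}_{H^1(I,T^2(\Gamma_-'))}$, where $C_2$ depends only on $d$, $\n{S_0}_{L^\infty(G\times I)}$, $\n{{\p {S_0}E}}_{L^\infty(G\times I)}$, $C$, $\n{\Sigma}_{L^\infty(G\times S\times I)}$ and $\n{K_C}$.

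Combining these bounds gives $\n{\phi}_{L^2(G\times S\times I)}\leq \n{u}_{L^2(G\times S\times I)}+\n{L{\bf g}}_{L^2(G\times S\times I)}\leq \tfrac{1}{c}\bigl(\n{{\bf f}}_{L^2(G\times S\times I)}+C_2\n{{\bf g}}_{H^1(I,T^2(\Gamma_-'))}\bigr)+\sqrt{d}\,\n{{\bf g}}_{H^1(I,T^2(\Gamma_-'))}$, which is \eqref{ess1-d} with $C_1:=\max\{\tfrac{1}{c},\ \tfrac{C_2}{c}+\sqrt{d}\}$. Each step is a direct application of estimates already established, so there is no genuine obstacle; the only point requiring a little care is the product-rule expansion of ${\p {(S_0L{\bf g})}E}$ together with the verification — supplied by Lemma \ref{le:H1_lift} — that $L{\bf g}$ really does belong to $H^1(I,L^2(G\times S))$. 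In effect, the bound for $\tilde{\bf f}$ is simply the quantitative reading of the assertion "$\tilde{\bf f}\in L^2(G\times S\times I)$" used, without an explicit estimate, in the proof of Theorem \ref{coth3-dd}.
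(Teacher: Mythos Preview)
Your proof is correct and follows essentially the same route as the paper: decompose $\phi=u+L{\bf g}$, bound $\n{u}_{L^2}$ by $\tfrac{1}{c}\n{\tilde{\bf f}}_{L^2}$ via \eqref{bestim1-d}, control $\n{L{\bf g}}_{L^2}$ and $\n{\partial_E(L{\bf g})}_{L^2}$ through Lemma~\ref{le:H1_lift}, and then estimate $\tilde{\bf f}$ term by term using the $L^\infty$ bounds on $S_0$, $\partial_E S_0$ and the boundedness of $\Sigma$ and $K_C$. The only cosmetic difference is that the paper groups $\Sigma-K_C$ as a single bounded operator rather than treating the two pieces separately.
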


\begin{proof} By estimate (\ref{bestim-d}), we have
\bea\label{ess1-da}
\n{\phi}_{L^2(G\times S\times I)}
=\n{u+L{ g_C}}_{L^2(G\times S\times I)}
\leq
{1\over c}\n{{\tilde{ f_C}}}_{L^2(G\times S\times I)}
+\n{L{ g_C}}_{L^2(G\times S\times I)}.
\eea
By Lemma \ref{le:H1_lift},
\[
\n{L{ g_C}}_{L^2(G\times S\times I)}=\n{L{ g_C}}_{L^2(I,L^2(G\times S))}
\leq \n{L{ g_C}}_{H^1(I, L^2(G\times S))}\leq \sqrt{d}\n{{ g_C}}_{H^1(I,T^2(\Gamma'_-))},
\]
where $d=\mathrm{diam}(G)<\infty$,
and similarly,
\[
\n{\p{(L{g_C})}{E}}_{L^2(G\times S\times I)}
=\n{\p{(L{ g_C})}{E}}_{L^2(I,L^2(G\times S))}
\leq \n{L{g_C}}_{H^1(I,L^2(G\times S))}
\leq \sqrt{d}\n{{ g_C}}_{H^1(I,T^2(\Gamma'_-))}.
\]
Finally, due to these estimates and the fact that $\omega\cdot\nabla_x(L{ g_C})=0$,
one has
\[
\n{\tilde{{f_C}}}_{L^2}
\leq {}&
\n{{ f_C}}_{L^2}+\n{S_0\p{(L{ g_C})}{E}+\Big(\p {S_0}{E}
-CS_0\Big)L{ g_C}-(\Sigma-K_C)L{ g_C}}_{L^2} \\
\leq {}&
\n{{ f_C}}_{L^2}
+\sqrt{d}\Big(\n{\p {S_0}{E}}_{L^\infty}+(C+1)\n{S_0}_{L^\infty}+\n{\Sigma-K_C}\Big)\n{{ g_C}}_{H^1(I,T^2(\Gamma'_-))},
\]
where we wrote, unambiguously, $L^2=L^2(G\times S\times I)$ and $L^\infty=L^\infty(G\times I)$ in order to compress the formulas.
This proves the estimate \eqref{ess1-d} as claimed.
\end{proof}

For the original problem we get 

\begin{corollary}\label{m-d-co1}
Suppose that the assumptions  (\ref{ass1}), (\ref{ass5-a}), (\ref{ass7}), (\ref{ass-8}), (\ref{ass8-aa-a}), (\ref{ass9-a-b}), (\ref{evo16}), (\ref{evo8-a}) and (\ref{evo9-a})  are valid with $C=\frac{\max\{q,0\}}{\kappa}$ and $c>0$.
Furthermore, suppose that ${f}\in L^2(G\times S\times I)$, and $g\in H^1(I,T^2(\Gamma_-'))$ is such that
\be\label{comp-d-d}
g(\cdot,\cdot,E_{\rm m})=0.
\ee
Then the problem
\begin{gather}
-{\p {(S_0\psi)}E}+\omega\cdot\nabla_x\psi+\Sigma\psi -K\psi=f,\ \nonumber\\
{\psi}_{|\Gamma_-}=g,\quad \psi(\cdot,\cdot,E_{\rm m})=0, \label{co3aa-ddd}
\end{gather}
has a unique solution $\psi\in {\s H}_P(G\times S\times I^\circ)$.
In addition, there exists a constant $C_1'>0$ such that \emph{a priori} estimate
\be\label{csda40aaa-dd}
\n{\psi}_{L^2(G\times S\times I)}\leq C_1'\big(
\n{{ f}}_{L^2(G\times S\times I)}+\n{{ g}}_{H^1(I,T^2(\Gamma_-'))}\big),
\ee
holds.
\end{corollary}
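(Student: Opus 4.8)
The plan is to reduce Corollary~\ref{m-d-co1} to Theorem~\ref{coth3-dd} via the exponential change of unknown $\phi(x,\omega,E):=e^{CE}\psi(x,\omega,E)$ that was already used in Section~\ref{presingle-eq}, exactly as Corollary~\ref{csdaco1} was deduced from Theorem~\ref{csdath3}. First I would set ${\bf f}:=e^{CE}f$ and ${\bf g}:=e^{CE}g$. Since $I=[0,E_{\rm m}]$ is a finite interval and $E\mapsto e^{CE}$ together with its derivative $Ce^{CE}$ are bounded on $I$, we have ${\bf f}\in L^2(G\times S\times I)$ and ${\bf g}\in H^1(I,T^2(\Gamma_-'))$, with
\[
\n{{\bf f}}_{L^2(G\times S\times I)}\leq e^{CE_{\rm m}}\n{f}_{L^2(G\times S\times I)},\qquad
\n{{\bf g}}_{H^1(I,T^2(\Gamma_-'))}\leq (1+C)e^{CE_{\rm m}}\n{g}_{H^1(I,T^2(\Gamma_-'))}.
\]
Moreover the compatibility condition \eqref{comp-d-d}, i.e. $g(\cdot,\cdot,E_{\rm m})=0$, is equivalent to ${\bf g}(\cdot,\cdot,E_{\rm m})=0$, so \eqref{comp-dd} holds. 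As computed in \eqref{csda3A}, a function $\psi$ solves \eqref{co3aa-ddd} if and only if $\phi=e^{CE}\psi$ solves \eqref{co3aa-dd}; the correspondence $\psi\leftrightarrow\phi=e^{CE}\psi$ is a topological isomorphism of $L^2(G\times S\times I)$ onto itself and maps $\mc H_P(G\times S\times I^\circ)$ onto $\mc H_{P_C}(G\times S\times I^\circ)$ (one checks $P(x,\omega,E,D)\psi=e^{-CE}P_C(x,\omega,E,D)(e^{CE}\psi)$ modulo the zeroth order term, so membership in the respective graph spaces is preserved).

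Next I would invoke Theorem~\ref{coth3-dd}: under the hypotheses (\ref{ass1}), (\ref{ass2}), (\ref{ass3}), (\ref{evo16}), (\ref{evo8-a}), (\ref{evo9-a}) with $C=\max\{q,0\}/\kappa$ and $c>0$ — which are precisely the hypotheses assumed in the present corollary — the transformed problem \eqref{co3aa-dd} with data ${\bf f},{\bf g}$ has a unique solution $\phi\in\mc H_{P_{C,0}}(G\times S\times I^\circ)+L(H^1(I,T^2(\Gamma_-')))\subset\mc H_{P_C}(G\times S\times I^\circ)$. Pulling back, $\psi:=e^{-CE}\phi$ is then the unique solution of \eqref{co3aa-ddd} and lies in $\mc H_P(G\times S\times I^\circ)$. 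Uniqueness transfers because the map $\psi\mapsto e^{CE}\psi$ is a bijection between the respective solution sets.

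For the a priori estimate I would combine the estimate \eqref{ess1-d} of Corollary~\ref{cdd} applied to $\phi$ with the norm bounds above:
\[
\n{\psi}_{L^2(G\times S\times I)}
=\n{e^{-CE}\phi}_{L^2(G\times S\times I)}
\leq \n{\phi}_{L^2(G\times S\times I)}
\leq C_1\big(\n{{\bf f}}_{L^2(G\times S\times I)}+\n{{\bf g}}_{H^1(I,T^2(\Gamma_-'))}\big),
\]
and then substituting the bounds for $\n{{\bf f}}$ and $\n{{\bf g}}$ in terms of $\n{f}$ and $\n{g}$ yields \eqref{csda40aaa-dd} with $C_1':=C_1(1+C)e^{CE_{\rm m}}$. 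Since $e^{-CE}\leq 1$ on $I$ as $C\geq 0$, the first inequality above is immediate. This completes the argument.

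The proof is essentially a bookkeeping reduction, so there is no serious obstacle; the only point requiring a little care is verifying that the exponential substitution indeed transports the solution space $\mc H_P$ to $\mc H_{P_C}$ and the inhomogeneous lift term $L(H^1(I,T^2(\Gamma_-')))$ to itself (which follows since $e^{\pm CE}$ is smooth in $E$ and $x$-independent, so it commutes with the lift $L$ up to multiplication and does not disturb the $H^1$-in-$E$ regularity), together with checking that the compatibility condition is preserved — both of which are routine given the results already established.
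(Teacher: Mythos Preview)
Your proposal is correct and follows essentially the same approach as the paper: both reduce to Theorem~\ref{coth3-dd} and Corollary~\ref{cdd} via the exponential substitution $\phi=e^{CE}\psi$, transferring the data to $({\bf f},{\bf g})=(e^{CE}f,e^{CE}g)$ and pulling back the solution and the estimate. You have simply spelled out in more detail the norm bounds on ${\bf f},{\bf g}$, the preservation of the compatibility condition, and the correspondence between $\mc H_P$ and $\mc H_{P_C}$, arriving at an explicit constant $C_1'=C_1(1+C)e^{CE_{\rm m}}$ where the paper leaves the constant implicit.
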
 

\begin{proof}
Recalling (see \eqref{eq:exp_trick}, \eqref{eq:b_fg}) that
$\psi$ is a solution of the problem \eqref{co3aa-ddd}
with data $(f,g)$
if and only if 
$\phi=e^{CE}\psi$ is a solution of the problem \eqref{co3aa-dd}
with data $({ f_C}=(e^{CE}f,\ g_C=e^{CE}g)$,
we deduce the claims from Theorem \ref{coth3-dd} and Corollary \ref{cdd}.
\end{proof}

%%%%%%%%%%%%%%%%%%%%%%%%%%%%%%%%%%%%%%%%%%%%%%%%%%%%%%%%%%%%%%%%%%%%%%%%%%%%%%%%%%%%%%%%%%%%%%%%%%%%%%%%%%%
\subsection{An Existence Result Based on the Theory of Evolution Equations}\label{evcsd}
%%%%%%%%%%%%%%%%%%%%%%%%%%%%%%%%%%%%%%%%%%%%%%%%%%%%%%%%%%%%%%%%%%%%%%%%%%%%%%%%%%%%%%%%%%%%%%%%%%%%%%%%%%%

Due to the existence result (part (i)) of Corollary \ref{csdaco1} one has,
\[
-{\p {(S_0\psi)}E}+\omega\cdot\nabla_x\psi\in L^2(G\times S\times I),
\]
but we do not know \emph{a priori} if ${\p {\psi}E}\in L^2(G\times S\times I)$,
or if $\omega\cdot\nabla_x\psi\in L^2(G\times S\times I)$.
Hence it is not known if $\psi\in W^2_1(G\times S\times I)$,
and so the regularity assumption in parts (ii)-(iii) of Corollary \ref{csdaco1}
is not in general guaranteed. 
Here we give an alternative approach for a less general problem (\ref{se1}), 
(\ref{se2}), (\ref{se3}), which gives more regularity for its solution $\psi$,
and in particular provides the regularity criterion $\psi\in W_1^2(G\times S\times I)$ used above.

We assume that the collision operator is of the form given in section \ref{el-k}
\be\label{ecsd1}
(K\psi)(x,\omega,E)=\int_S\tilde\sigma(x,\omega',\omega,E)\psi(x,\omega',E)d\omega'
\ee
where $\tilde\sigma(x,\omega',\omega,E)$ obeys (\ref{bound-b}) and 
(\ref{k3-n3-disas-a}).
Then $K$ can be written as $(K\psi)(E)=K(E)\psi(E)$,
where $\psi(E)(x,\omega):=\psi(x,\omega,E)$ and where for any fixed $E\in I$,
the linear operator $K(E)$ is defined by 
\[
(K(E)\phi)(x,\omega)=\int_S \tilde{\sigma}(x,\omega',\omega,E)\phi(x,\omega') d\omega',
\quad \phi\in L^2(G\times S).
\]
For a condition related to the property $(K\psi)(E)=K(E)\psi(E)$, see \cite[Chapter VI, Prop. 9.13]{engelnagel}.
By Theorem \ref{el-k-b}
we have for all $E\in I$,
\begin{align}\label{co(b)}
\n{K(E)}\leq \n{\int_S \tilde{\sigma}(\cdot,\omega',\cdot,E) d\omega'}_{L^\infty(G\times S)}^{1/2}
\n{\int_S \tilde{\sigma}(\cdot,\cdot,\omega',E) d\omega'}_{L^\infty(G\times S)}^{1/2}\leq M_1^{1/2} M_2^{1/2},
\end{align}
uniformly for $E\in I$, where $\n{K(E)}$ is the norm of $K(E)$ as an operator in $L^2(G\times S)$.

Consider the problem \eqref{se1}, \eqref{se2}, \eqref{se3},
where $K$ is of the particular form given in \eqref{ecsd1}.
We shall assume that the restricted cross-sections $\Sigma$, $\tilde\sigma$ and the stopping power $S_0$ satisfy somewhat different assumptions than in the previous section.
We make the following change of variables and of the unknown function
\be\label{ecsd2} 
\tilde\psi(x,\omega,E):={}&\psi(x,\omega,E_m-E), \nonumber\\
\phi:={}&e^{-CE}\tilde\psi,
\ee
and denote 
\bea
\tilde S(x,E)={}&S_0(x,E_{\rm m}-E),\ \nonumber\\
\tilde \Sigma(x,\omega,E)={}&\Sigma(x,\omega,E_{\rm m}-E)\nonumber\\
\ol \sigma(x,\omega,\omega',E)={}&\tilde\sigma(x,\omega,\omega',E_{\rm m}-E)\nonumber\\
\tilde f(x,\omega,E)={}&f(x,\omega,E_{\rm m}-E)\nonumber\\
\tilde g(y,\omega,E)={}&g(y,\omega,E_{\rm m}-E)\nonumber\\
(\tilde K\phi)(x,\omega,E)={}&\int_S\ol\sigma(x,\omega',\omega,E)\phi(x,\omega',E)d\omega',
\eea
with $\phi\in L^2(G\times S\times I)$ in the definition of $\tilde{K}$.
Making the changes in \eqref{ecsd2}, we find that the problem 
\eqref{se1}, \eqref{se2}, \eqref{se3} is equivalent to
$\phi$ satisfying the equation
\bea\label{se1a}
&{\p {\phi}E}+{1\over{\tilde S}}\omega\cdot\nabla_x\phi+C\phi
+{1\over{\tilde S}}{\p {\tilde S}E}\phi
+{1\over{\tilde S}}\tilde\Sigma\phi
-{1\over{\tilde S}}\tilde K\phi
={1\over{\tilde S}}e^{-CE}\tilde f,
\eea
on $G\times S\times I$,
along with satisfying the following inflow boundary and initial value conditions,
\begin{align}
\phi_{|\Gamma_-}={}&e^{-CE}\tilde g, \label{se2a} \\
\phi(\cdot,\cdot,0)={}&0. \hspace{1.5cm} \textrm{(on $G\times S$)}. \label{se3a}
\end{align}

Furthermore, define \emph{for any fixed} $E\in I$ and $C\geq 0$ the linear operator
$A_C(E):L^2(G\times S)\to L^2(G\times S)$ with domain $D(A_C(E))$
by (here $\tilde S(E)=\tilde S(\cdot,E)$ and $\tilde \Sigma(E)=\tilde \Sigma(\cdot,\cdot,E)$),
\bea\label{ecsd4}
&D(A_C(E))=\tilde W^2_{-,0}(G\times S):=\{\phi\in\tilde W^2(G\times S)\ |\ \gamma_-'(\phi)=0\}, \nonumber\\
&A_C(E)\phi=-\Big({1\over {\tilde S(E)}}\omega\cdot\nabla_x\phi+C \phi+{1\over {\tilde S(E)}}\tilde\Sigma(E)\phi
+{1\over{\tilde S(E)}}{\p {\tilde S}E}(E)\phi
-{1\over{\tilde S(E)}} \tilde K(E)\phi\Big),
\eea
and a function ${ f_C}(E):G\times S\to\R$ such that
\[
{f_C}(E)(x,\omega)={1\over{\tilde S(x,E)}}e^{-CE}\tilde f(x,\omega,E),
\]
where
\[
(\tilde K(E)\phi)(x,\omega)=\int_S\ol\sigma(x,\omega',\omega,E)\phi(x,\omega')d\omega',\quad  \phi\in L^2(G\times S),
\]
and where $\Gamma'_{-}=\{(y,\omega)\in \partial G\times S\ |\ \omega\cdot\nu(y)<0\}$,
while $\gamma'_-:\tilde{W}^2(G\times S)\to \Gamma'_{-}$; $\gamma'_-(\psi)=\psi|_{\Gamma'_{-}}$
is the trace mapping (see section \ref{fs}).

We interpret $\phi$  as a mapping $I\to L^2(G\times S)$ by defining $\phi(E)(x,\omega):=\phi(x,\omega,E)$.
Assuming that $\phi(E)\in D(A_C(E))$ for any $E\in I$
(which takes care of the inflow boundary condition)
the problem (\ref{se1a}), (\ref{se2a}), (\ref{se3a}) {\it for $g=0$} can be put into the abstract form
\be\label{ecsd6}
{\p {\phi}E}-A_C(E)\phi={ f_C}(E),\quad \phi(0)=0.
\ee

We recall the following result from the theory of evolution equations.

\begin{theorem}\label{evoth}
Suppose that $X$ is a Banach space and that for any fixed $t\in [0,T]$
the operator $A(t):X\to X$ is linear and closed,
with domain $D(A(t))\subset X$. In addition, we assume that the following conditions hold:

(i) The domain $D:=D(A(t))$ is independent of $t$ and is a dense subspace of $X$.

(ii) The operator $A(t)$ is $m$-dissipative for any fixed $t\in [0,T]$ 

(iii) For every $u\in D$, the mapping $f_u:[0,T]\to X$ defined by $f_u(t):=A(t)u$
is in $C^1([0,T],X)$.

(iv) $f\in C^1([0,T],X)$  and $u_0\in D$.

Then the (evolution) equation
\be\label{ecsd5}
{\p {u}t}-A(t)u=f,\quad u(0)=u_0,
\ee
has a unique solution $u\in C([0,T],D)\cap C^1([0,T],X)$.  
In addition, the solution is given by
\be\label{solev}
u(t)=U(t,0)u_0+\int_0^t U(t,s)f(s) ds
\ee
where $U(t,s):X\to X$, $0\leq t\leq s\leq T$, is a family of bounded operators,
strongly continuous in $(t,s)$,
called the \emph{(two-parameter) evolution system of operators} of $A(t)$, $t\in [0,T]$,
and
$U(\cdot,s)u_0$ solves (for a fixed $s$) for every $u_0\in D$ the Cauchy problem
\be\label{solevb}
\pa{t}\big(U(t,s)u_0\big)-A(t)U(t,s)u_0=0,\quad U(s,s)u_0=u_0.
\ee
\end{theorem}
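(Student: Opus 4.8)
The plan is to recognize Theorem \ref{evoth} as the classical well-posedness result for linear evolution equations of hyperbolic type in the sense of Kato, and to reduce it to the construction of a two-parameter evolution system $U(t,s)$ associated with the family $\{A(t)\}_{t\in[0,T]}$; see e.g. \cite[Chapter 5]{pazy83}. First I would check that the standing hypotheses place us exactly in that framework. By (ii) each $-A(t)$ is $m$-accretive, hence by the Lumer--Phillips theorem generates a contraction $C^0$-semigroup; moreover the whole family is \emph{stable} with stability constants $M=1$, $\omega=0$, i.e. $\n{\prod_{j=1}^k (\lambda I-A(t_j))^{-1}}\leq \lambda^{-k}$ for every $\lambda>0$ and every finite increasing sequence $t_1\leq\cdots\leq t_k$ in $[0,T]$, which is immediate from the resolvent estimate for $m$-dissipative operators. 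Fixing any $t_0$, assumption (i), the closedness of the $A(t)$, and the closed graph theorem show that the graph norms $u\mapsto \n{u}+\n{A(t)u}$ are mutually equivalent on $D$; equipping $D$ with one of them yields a Banach space $Y$ with $A(t)\in B(Y,X)$, and the uniform boundedness principle applied to (iii) gives $\sup_{t}\n{A(t)}_{B(Y,X)}<\infty$ together with the continuity (indeed $C^1$ in the strong sense) of $t\mapsto A(t)$ as a map $[0,T]\to B(Y,X)$.

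Next I would construct $U(t,s)$ by the Yosida-approximation scheme. For $n\in\N$ set $A_n(t):=nA(t)(nI-A(t))^{-1}=n^2(nI-A(t))^{-1}-nI\in B(X)$; these are bounded, depend $C^1$ on $t$, and inherit the stability bound. The bounded non-autonomous problem $\pa{t}U_n(t,s)=A_n(t)U_n(t,s)$, $U_n(s,s)=I$, is uniquely solvable in $B(X)$ by successive approximation (product integration), and, using the stability estimates and the regularity of $t\mapsto A(t)$ on $Y$, one shows that $U_n(t,s)$ converges strongly on $X$, uniformly on the simplex $0\leq s\leq t\leq T$, to an operator family $U(t,s)$. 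One then verifies the standard properties: $U(t,t)=I$; $U(t,r)=U(t,s)U(s,r)$ for $r\leq s\leq t$; $(t,s)\mapsto U(t,s)$ strongly continuous; $\n{U(t,s)}\leq 1$ (from the contractivity of the approximants); $U(t,s)Y\subset Y$ with $\n{U(t,s)}_{B(Y)}$ bounded on the simplex; and, for $y\in Y=D$, that $t\mapsto U(t,s)y$ and $s\mapsto U(t,s)y$ are $C^1$ into $X$ with $\pa{t}U(t,s)y=A(t)U(t,s)y$ and $\pa{s}U(t,s)y=-U(t,s)A(s)y$. This in particular establishes \eqref{solevb}.

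With $U$ available I would define the candidate solution by the variation-of-constants formula \eqref{solev}, i.e. $u(t):=U(t,0)u_0+\int_0^t U(t,s)f(s)\,ds$. Since $u_0\in D$, the first term lies in $C([0,T],D)\cap C^1([0,T],X)$ and contributes $A(t)U(t,0)u_0$ to the derivative. For the Duhamel term $w(t):=\int_0^t U(t,s)f(s)\,ds$ one uses (iv), $f\in C^1([0,T],X)$: a standard argument — integration by parts in $s$ via $\pa{s}U(t,s)=-U(t,s)A(s)$ on $Y$, or differentiation of the identity $w(t)=\int_0^t\big(f(s)+\int_s^t A(\tau)U(\tau,s)f(s)\,d\tau\big)\,ds$ — shows $w(t)\in D$ for all $t$, $w\in C([0,T],D)\cap C^1([0,T],X)$, and $w'(t)=A(t)w(t)+f(t)$. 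Hence $u$ solves \eqref{ecsd5} with the stated regularity. For uniqueness, if $v$ is the difference of two solutions, then $v\in C([0,T],D)\cap C^1([0,T],X)$, $v'=A(t)v$, $v(0)=0$; for fixed $t$ the map $s\mapsto U(t,s)v(s)$ on $[0,t]$ is $C^1$ with derivative $-U(t,s)A(s)v(s)+U(t,s)v'(s)=0$, hence constant, so $v(t)=U(t,t)v(t)=U(t,0)v(0)=0$.

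The main obstacle is the second step: the actual construction of the approximating evolution operators $U_n(t,s)$, the proof of their strong convergence, and the verification of the differentiation rules for $U(t,s)$ on the common domain $Y$ — this is the technical heart of Kato's hyperbolic evolution theory and is where all the hypotheses (stability from $m$-dissipativity, a fixed dense domain, $C^1$ dependence of $A(t)$ on $t$) are really used. A secondary delicate point is upgrading the pointwise hypothesis (iii) to the operator-level regularity of $t\mapsto A(t)\in B(Y,X)$ needed there; I would handle this via equivalence of graph norms and the uniform boundedness principle, or else simply invoke the cited references in which (iii) is the precise hypothesis imposed.
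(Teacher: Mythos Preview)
Your sketch is correct and follows exactly the Kato hyperbolic theory as presented in the references the paper itself cites (Tanabe, Pazy Chapter 5, Engel--Nagel). In fact the paper does not prove this theorem at all: its proof consists solely of the line ``See \cite[Theorem 4.5.3, pp. 89-106]{tanabe}, \cite[pp. 126-182]{pazy83}, \cite[pp. 477-496]{engelnagel}'', so your outline is considerably more detailed than what the paper provides, and is precisely the argument one finds in those sources.
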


\begin{proof}
See \cite[Theorem 4.5.3, pp. 89-106]{tanabe}, \cite[pp. 126-182]{pazy83}, \cite[pp. 477-496]{engelnagel}.
\end{proof}

\begin{remark}\label{re:evoth_norm}
We make a brief remark concerning the meaning of the claim in Theorem \ref{evoth} that $u\in C([0,T],D)$,
since here the topology of $D$ needs to be specified in order to speak of continuity of maps into $D$.
In fact, in \cite[Corollary to Theorem 4.4.2, pp. 102-103]{tanabe} one equips $D$ with
the graph norm of $A(0)$.
Recall that for $t\in [0,T]$, the graph norm of $A(t)$ on $D(A(t))$ ($=D$) is defined as
$\n{v}_{A(t)}:=\n{v}_X+\n{A(t)v}_X$, for $v\in D$.
\end{remark}

In Theorem \ref{evoth} the definition of the concept
of a solution of \eqref{ecsd5} comprises that $u(t)\in D=D(A(t))$ for any $t\in [0,T]$.
Define a closed (see \cite{tervo17-up}) densely defined linear operator $A_0(E):L^2(G\times S)\to L^2(G\times S)$ by
\bea
&D(A_0(E))=\tilde W^2_{-,0}(G\times S)=:D\quad ({\rm independent\ of}\ E)\nonumber\\
&A_0(E)\phi=-{1\over{\tilde S(E)}}\omega\cdot\nabla_x\phi.
\eea

We begin with a lemma on dissipativity properties of $A_0(E)$.

\begin{lemma}\label{csdale1}
Suppose  that the assumptions  (\ref{csda9}), (\ref{csda9a}) and (\ref{csda9b}) are valid,
and let
\begin{align}\label{eq:C0}
C_0:={1\over 2}\kappa^{-2}\n{\nabla_x \tilde S}_{L^\infty(G\times I)}.
\end{align}
Then for any $C\geq C_0$
the operator $A_0(E)-CI:L^2(G\times S)\to L^2(G\times S)$ 
is $m$-dissipative for all $E\in I$. 
\end{lemma}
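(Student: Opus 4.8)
The plan is to show that $-(A_0(E)-CI)$ is $m$-accretive, i.e. that it is accretive (dissipativity of $A_0(E)-CI$) and that $I + (-(A_0(E)-CI)) = (1+C)I - A_0(E)$ has full range $L^2(G\times S)$. Both parts are essentially energy estimates and the solvability of a stationary convection-reaction equation on $G\times S$ with zero inflow data, so the work is a variant of the trace-theory and lift computations already established in Section \ref{pre}. First I would fix $E\in I$, abbreviate $T_E := \tilde S(\cdot,E)\in L^\infty(G)$ with $\kappa\leq T_E\leq \n{\tilde S}_{L^\infty}$ and $\nabla_x T_E\in L^\infty(G)$, and observe that $A_0(E)\phi = -\frac{1}{T_E}\omega\cdot\nabla_x\phi$ with $D(A_0(E)) = \tilde W^2_{-,0}(G\times S)$.

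For the dissipativity, take $\phi\in \tilde W^2_{-,0}(G\times S)$ and compute $\la -(A_0(E)-CI)\phi,\phi\ra_{L^2(G\times S)}$. The reaction term contributes $C\n{\phi}^2_{L^2(G\times S)}$. For the convection term I would use the weighted Green's formula: writing $\frac{1}{T_E}\omega\cdot\nabla_x\phi\cdot\phi = \frac{1}{T_E}\,\omega\cdot\nabla_x(\tfrac{1}{2}\phi^2)$ and integrating by parts (the energy-independent analogue of \eqref{green}, valid on $\tilde W^2(G\times S)$), one gets
\[
\la \tfrac{1}{T_E}\omega\cdot\nabla_x\phi,\phi\ra_{L^2(G\times S)}
= \tfrac12\int_{\partial G\times S}\tfrac{1}{T_E}(\omega\cdot\nu)\phi^2\,d\sigma d\omega
+ \tfrac12\int_{G\times S}\big(\omega\cdot\nabla_x\tfrac{1}{T_E}\big)\phi^2\,dx d\omega,
\]
so, since $\gamma'_-(\phi)=0$ kills the $\Gamma'_-$ part of the boundary integral and the $\Gamma'_+$ part is nonnegative, and since $\big|\omega\cdot\nabla_x\tfrac{1}{T_E}\big| = \big|\tfrac{1}{T_E^2}\,\omega\cdot\nabla_x T_E\big|\leq \kappa^{-2}\n{\nabla_x S}_{L^\infty(G\times I)}$, we obtain
\[
\la -(A_0(E)-CI)\phi,\phi\ra_{L^2(G\times S)}
\geq \big(C - \tfrac12\kappa^{-2}\n{\nabla_x S}_{L^\infty(G\times I)}\big)\n{\phi}^2_{L^2(G\times S)}
= (C-C_0)\n{\phi}^2_{L^2(G\times S)}\geq 0
\]
for $C\geq C_0$. (A density argument in $\tilde W^2_{-,0}(G\times S)$, together with the closedness of $A_0(E)$ noted in \cite{tervo14}, justifies the integration by parts for general $\phi$ in the domain, or one simply runs the estimate on $C^1$ functions vanishing near $\Gamma'_-$ and closes up.)

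For the range condition I must solve, for given $h\in L^2(G\times S)$, the equation $(1+C)\phi + \tfrac{1}{T_E}\omega\cdot\nabla_x\phi = h$ with $\gamma'_-(\phi)=0$, i.e. $\omega\cdot\nabla_x\phi + (1+C)T_E\,\phi = T_E h =: \tilde h\in L^2(G\times S)$. Since $(1+C)T_E\geq (1+C)\kappa > 0$ is an $L^\infty$ nonnegative coefficient, this is exactly the inhomogeneous convection-scattering problem with zero inflow data treated in Lemma \ref{trathle2} (and the $L^2$-bound of the preceding lemma), whose solution $\phi = S_{(1+C)T_E}(\tilde h)$ lies in $W^2(G\times S)$ with $\omega\cdot\nabla_x\phi = \tilde h - (1+C)T_E\phi\in L^2(G\times S)$, hence $\phi\in \tilde W^2(G\times S)$, and $\gamma'_-(\phi)=0$ because of the homogeneous inflow condition; thus $\phi\in D(A_0(E))$ and $(1+C)\phi - A_0(E)\phi = h$. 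This gives $R\big((1+C)I - A_0(E)\big) = L^2(G\times S)$. Combining with the accretivity estimate (which in particular yields $\n{((1+C)I-A_0(E))\phi}\geq \n{\phi}$, i.e. $(1+C)I-A_0(E)$ is injective with closed range), and recalling the characterization of $m$-dissipativity as "dissipative $+$ $R(I-A)=X$" (cf. \cite[p.~340]{dautraylionsv5}), we conclude that $A_0(E)-CI$ is $m$-dissipative for every $E\in I$ and every $C\geq C_0$, as claimed. The main obstacle is purely bookkeeping: making sure the weighted Green's formula and the lift/solution-operator estimates of Section \ref{trath} are applied in the correct energy-independent spaces $\tilde W^2(G\times S)$, $T^2(\Gamma')$, and that the sign of the boundary term on $\Gamma'_+$ is handled correctly; the quantitative threshold $C_0$ is forced exactly by the bound on $\big|\omega\cdot\nabla_x(1/T_E)\big|$.
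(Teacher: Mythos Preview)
Your argument is correct in outline and reaches the right conclusion, with two small points to flag.

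\textbf{Dissipativity (Part A).} Your approach and the paper's are essentially the same weighted integration-by-parts; the paper organizes it by writing $\frac{1}{\tilde S(E)}(\omega\cdot\nabla_x\phi)\phi = \frac{1}{\tilde S(E)^{1/2}}(\omega\cdot\nabla_x\phi)\cdot\frac{\phi}{\tilde S(E)^{1/2}}$ and applying the \emph{unweighted} Green's formula \eqref{green} to $\phi/\tilde S(E)^{1/2}\in\tilde W^2_{-,0}(G\times S)$, while you put the weight $1/T_E$ directly into the divergence. Note that your displayed identity has a sign slip: the correct formula is
\[
\la \tfrac{1}{T_E}\omega\cdot\nabla_x\phi,\phi\ra_{L^2(G\times S)}
= \tfrac12\int_{\partial G\times S}\tfrac{1}{T_E}(\omega\cdot\nu)\phi^2\,d\sigma d\omega
- \tfrac12\int_{G\times S}\big(\omega\cdot\nabla_x\tfrac{1}{T_E}\big)\phi^2\,dx d\omega,
\]
but since you bound the volume term in absolute value the estimate $\geq (C-C_0)\n{\phi}^2$ is unaffected.

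\textbf{Range condition (Part B).} Here you take a genuinely different route from the paper. The paper rewrites $(\lambda+C)\phi+\tfrac{1}{\tilde S(E)}\omega\cdot\nabla_x\phi=f$ as $(\lambda' I - (B_0+B_1))\phi=\tilde S(E)f$ with $B_0=-\omega\cdot\nabla_x$ (known $m$-dissipative on $\tilde W^2_{-,0}(G\times S)$) and $B_1$ a bounded dissipative multiplication, then invokes the perturbation result that $B_0+B_1$ is $m$-dissipative; this delivers the solution directly in $D(B_0)=\tilde W^2_{-,0}(G\times S)$. You instead solve $\omega\cdot\nabla_x\phi+(1+C)T_E\phi=T_E h$ explicitly via the solution operator $S_\Sigma$ of Lemma~\ref{trathle2}. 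Your approach is more concrete, but the step ``$\omega\cdot\nabla_x\phi\in L^2$, hence $\phi\in\tilde W^2(G\times S)$'' is not a valid inference as written: membership in $W^2$ already means $\omega\cdot\nabla_x\phi\in L^2$, and what you actually need is $\gamma'_+(\phi)\in T^2(\Gamma'_+)$. This is true (it follows from $\gamma'_-(\phi)=0$ together with $\phi\in W^2(G\times S)$, cf.\ Remark~\ref{re:W2_0_trace} or the trace results of \cite{cessenat84,cessenat85}, or can be read off the explicit formula), but it should be stated. The paper's perturbation argument sidesteps this bookkeeping entirely.
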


\begin{proof}

A. Dissipativity.  We have for $\phi\in D$,
\be\label{csda12a}
\omega\cdot \nabla_x\Big({\phi\over{\tilde{S}(E)^{1/2}}}\Big)={1\over{\tilde{S}(E)^{1/2}}}\omega\cdot\nabla_x\phi+\omega\cdot\nabla_x\Big({1\over{\tilde{S}(E)^{1/2}}}\Big)\phi
\ee
and $\nabla_x\Big({1\over{\tilde{S}(E)^{1/2}}}\Big)=-{1\over 2}\tilde{S}(E)^{-3/2}\nabla_x \tilde{S}(E)$.
By the assumptions (\ref{csda9a}) and (\ref{csda9b}) we get that ${\phi\over{\tilde{S}(E)^{1/2}}}\in \tilde W^2_{-,0}(G\times S)=D$. From the Green's formula (\ref{green}) we obtain (since $\omega\cdot\nu >0$ on $\Gamma_+'$ and since $\phi_{|\Gamma_-'}=0$) that for $\phi\in D$,
\bea\label{csda13a}
\la\omega\cdot\nabla_x\Big({\phi\over{\tilde{S}(E)^{1/2}}}\Big),{\phi\over{\tilde{S}(E)^{1/2}}}\ra_{L^2(G\times S)}
=&{1\over 2}\int_{\partial G\times S}(\omega\cdot\nu){{\phi^2}\over{\tilde{S}(E)}}
d\sigma d\omega \nonumber\\
=&
{1\over 2}\int_{\Gamma_+'}(\omega\cdot\nu){{\phi^2}\over{\tilde{S}(E)}}
d\sigma d\omega 
\geq 0.
\eea
That is why by (\ref{csda12a}) we have
\bea\label{csda17}
&\la -A_0(E)\phi,\phi\ra_{L^2(G\times S)}
=\int_{G\times S}{1\over{\tilde S(E)}}(\omega\cdot\nabla_x\phi)\phi d\omega dx \nonumber \\
=&\int_{G\times S}{1\over{\tilde{S}(E)^{1/2}}}(\omega\cdot\nabla_x\phi) {1\over{\tilde{S}(E)^{1/2}}}\phi d\omega dx\nonumber\\
=&
\int_{G\times S}\omega\cdot\nabla_x\Big( {\phi\over{\tilde{S}(E)^{1/2}}}\Big)
\Big( {\phi\over{\tilde{S}(E)^{1/2}}}\Big) d\omega dx
-\int_{G\times S}\omega\cdot\nabla_x\Big( {1\over{\tilde{S}(E)^{1/2}}}\Big){{\phi^2}\over{\tilde{S}(E)^{1/2}}}
d\omega dx\nonumber\\
\geq &
-\int_{G\times S}\omega\cdot\nabla_x\Big( {1\over{\tilde{S}(E)^{1/2}}}\Big){{\phi^2}\over{\tilde{S}(E)^{1/2}}} d\omega dx \nonumber\\
\geq & -\n{\nabla_x\Big( {1\over{\tilde{S}(\cdot)^{1/2}}}\Big){1\over{\tilde{S}(\cdot)^{1/2}}}}_{L^\infty(G\times I)}
\n{\phi}_{L^2(G\times S)}^2\nonumber\\
\geq &
-{1\over 2}\kappa^{-2}\n{\nabla_x\tilde S}_{L^\infty(G\times I)}
\n{\phi}_{L^2(G\times S)}^2 \nonumber \\
=&-C_0\n{\phi}_{L^2(G\times S)}^2,
\eea
and hence
\[
\la (A_0(E)-CI)\phi,\phi\ra_{L^2(G\times S)}
={}&\la A_0(E)\phi,\phi\ra_{L^2(G\times S)}-C\n{\phi}^2_{L^2(G\times S)} \\
\leq{}&(C_0-C)\n{\phi}^2_{L^2(G\times S)}.
\]
Choosing $C\geq C_0$, one finds that
$A_0(E)-CI$ is dissipative for any $E\in  I$.

B. We still have to show that $R(\lambda I-(A_0(E)-CI))=L^2(G\times S)$ for (any) $\lambda>0$.  The equation
\be\label{csda18op}
(\lambda I-(A_0(E)-CI))\phi=f,
\ee
means that $\phi\in \tilde W_{-,0}^2(G\times S)$ and that
\be\label{csda18}
{1\over{\tilde S(E)}}\omega\cdot\nabla_x\phi+(\lambda+C)\phi=f,
\ee
which is equivalent to
\be\label{csda19}
\omega\cdot\nabla_x\phi+(\lambda+C)\tilde S(E)\phi=\tilde S(E)f,
\ee
since by (\ref{csda9}) and (\ref{csda9a}) $f\in L^2(G\times S)$ if and only if $\tilde{S}(E)f\in L^2(G\times S)$.
Let
$B_0:L^2(G\times S)\to L^2(G\times S)$ be a linear operator with domain $D(B_0)$ defined by
\[
& D(B_0)=\tilde W^2_{-,0}(G\times S),\\
& B_0\phi=-\omega\cdot\nabla_x\phi.
\]
Then $B_0$ is  $m$-dissipative (\cite{tervo17-up}, \cite{dautraylionsv6}).  
Let $\lambda':=\kappa (\lambda+C)$.
The equation (\ref{csda19}) is equivalent to
\be\label{csda20}
(\lambda'I -(B_0+B_1))\phi=\tilde S(E)f
\ee
where $B_1$ is defined by
\[
B_1\phi=-((\lambda+C)\tilde S(E)-\lambda')\phi.
\]
It is clear that the operator $B_1:L^2(G\times S)\to L^2(G\times S)$ is bounded,
and since $(\lambda+C)\tilde{S}(E)-\lambda'\geq 0$
by the assumption \eqref{csda9a} and the definition of $\lambda'$,
it follows that $B_1$ is dissipative.
These observations imply that $B_0+B_1$ is $m$-dissipative (cf. \cite[Chapter III]{engelnagel}, or \cite[Theorem 4.2]{tervo17-up}).
Since \eqref{csda18op} is equivalent to \eqref{csda20} as explained above,
this shows that $R(\lambda I-(A_0(E)-CI))=L^2(G\times S)$,
and thus completes the proof.
\end{proof}

For $E\in I$, let $A_1(E):L^2(G\times S)\to L^2(G\times S)$ be the linear operator
\[
A_1(E)\phi:=-{1\over {\tilde S(E)}}\tilde\Sigma(E)\phi-{1\over {\tilde S(E)}}{\p {\tilde S}E}(E)\phi+{1\over {\tilde S(E)}}\tilde K(E)\phi.
\]
We have the following uniform bound for the family of operators $\{A_1(E)\ |\ E\in I\}$.

\begin{lemma}\label{csdale2}
Under the assumptions \eqref{ass1}, \eqref{csda9aa} and \eqref{bound-b}
the operator $A_1(E)$ is bounded for any fixed $E\in I$,
and collectively they obey a uniform bound,
\begin{align}\label{eq:A1E_unif_bound}
\sup_{E\in I} \n{A_1(E)}\leq \kappa^{-1}\Big(\n{\tilde \Sigma}_{L^\infty(G\times S\times  I)}+
\n{{\p {\tilde S}E}}_{L^\infty(G\times I)}+{M_1}^{1/2}{M_2}^{1/2}\Big)=:C_0'<\infty,
\end{align}
where $M_j\geq 0$, $j=1,2$ are as in \eqref{bound-b}.
\end{lemma}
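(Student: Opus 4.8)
The plan is to establish the bound directly from the definition of $A_1(E)$ by treating its three summands separately, each being a bounded operator on $L^2(G\times S)$ with a bound uniform in $E$, and then invoking the triangle inequality. The key point is that the factor $\frac{1}{\tilde S(E)}$ is controlled by assumption \eqref{csda9a}, which gives $\kappa=\inf_{(x,E)}S_0(x,E)>0$; after the reflection $E\mapsto E_m-E$ this means $\tilde S(x,E)=S_0(x,E_m-E)\geq\kappa>0$ for all $(x,E)$, so $\n{\frac{1}{\tilde S(E)}}_{L^\infty(G)}\leq\kappa^{-1}$ for every $E\in I$.

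First I would handle the multiplication operator $\phi\mapsto-\frac{1}{\tilde S(E)}\tilde\Sigma(E)\phi$. Since $\tilde\Sigma(E)=\Sigma(\cdot,\cdot,E_m-E)$ and $\Sigma\in L^\infty(G\times S\times I)$ by \eqref{ass1}, one has pointwise a.e.
\[
\Big|\tfrac{1}{\tilde S(x,E)}\tilde\Sigma(x,\omega,E)\phi(x,\omega)\Big|\leq \kappa^{-1}\n{\tilde\Sigma}_{L^\infty(G\times S\times I)}|\phi(x,\omega)|,
\]
so this operator is bounded with norm $\leq\kappa^{-1}\n{\tilde\Sigma}_{L^\infty(G\times S\times I)}$, uniformly in $E$. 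The same argument applies to $\phi\mapsto-\frac{1}{\tilde S(E)}\p{\tilde S}{E}(E)\phi$: here $\p{\tilde S}{E}$ is (up to sign) the reflection of $\p{S_0}{E}$, which lies in $L^\infty(G\times I)$ by \eqref{csda9aa}, giving a bound $\leq\kappa^{-1}\n{\p{\tilde S}{E}}_{L^\infty(G\times I)}$. For the third summand $\phi\mapsto\frac{1}{\tilde S(E)}\tilde K(E)\phi$, I would use the already-established estimate \eqref{co(b)} for $\n{\tilde K(E)}$ (which in turn rests on Cauchy--Schwarz together with the Schur-type bounds \eqref{ass2a}); since $\ol\sigma$ is the $E$-reflection of $\tilde\sigma$, the Schur bounds $M_1,M_2$ transfer unchanged, so $\n{\tilde K(E)}\leq M_1^{1/2}M_2^{1/2}$ for all $E\in I$, and multiplying by $\frac{1}{\tilde S(E)}$ (a multiplication operator of norm $\leq\kappa^{-1}$) yields a bound $\leq\kappa^{-1}M_1^{1/2}M_2^{1/2}$.

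Combining these three estimates by the triangle inequality gives
\[
\n{A_1(E)}\leq\kappa^{-1}\Big(\n{\tilde\Sigma}_{L^\infty(G\times S\times I)}+\n{\p{\tilde S}{E}}_{L^\infty(G\times I)}+M_1^{1/2}M_2^{1/2}\Big)=C_0',
\]
independently of $E\in I$, which is the claimed \eqref{eq:A1E_unif_bound}; boundedness of each $A_1(E)$ for fixed $E$ is immediate from the same inequality. I do not anticipate a genuine obstacle here — the statement is essentially bookkeeping — but the one point requiring mild care is to confirm that the change of variables $E\mapsto E_m-E$ does not disturb any of the norms: the $L^\infty$ norms over $G\times I$ and $G\times S\times I$ are invariant under this reflection, and the Schur integrals $\int_S\ol\sigma(x,\omega',\omega,E)d\omega'$ and $\int_S\ol\sigma(x,\omega,\omega',E)d\omega'$ are just the reflections of the corresponding integrals of $\tilde\sigma$, hence still bounded by $M_1$ and $M_2$ respectively. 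Once that is noted, the proof is complete.
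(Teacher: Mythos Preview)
Your proof is correct and follows exactly the approach the paper takes: the paper's own proof is a single sentence stating that the estimate follows immediately from the assumptions and the Schur-type bound \eqref{co(b)}, and you have simply filled in the details of that ``immediately'' by bounding each of the three summands and applying the triangle inequality. Your remark about the reflection $E\mapsto E_m-E$ preserving the relevant $L^\infty$ norms and Schur bounds is a useful clarification that the paper leaves implicit.
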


\begin{proof}
The (uniform) estimate follows immediately from the assumptions and the estimate \eqref{co(b)}.
\end{proof}

Using the above notations we have for $C:=C_0+C_0'$ the following decomposition
\be\label{eq:ACE_decomp}
A_C(E)=A_0(E)-C_0I-C_0'I+A_1(E).
\ee
Recall that $C_0$ and $C_0'$ were defined in \eqref{eq:C0} and \eqref{eq:A1E_unif_bound}.
Since by Lemma \ref{csdale2},
\[
&\la (-C_0'I+A_1(E))\phi,\phi\ra_{L^2(G\times S\times I)}=-C_0'\n{\phi}^2_{L^2(G\times S\times I)}+\la A_1(E))\phi,\phi\ra_{L^2(G\times S\times I)}\nonumber\\
\leq &
-C_0'\n{\phi}^2_{L^2(G\times S\times I)}+C_0'\n{\phi}^2_{L^2(G\times S\times I)}=0,
\]
we see that $-C_0'I+A_1(E)$ is bounded and dissipative.
On the other hand, according to Lemma \ref{csdale1}, $A_0(E)-C_0I$ is $m$-dissipative.
Hence $A_C(E)$ is $m$-dissipative for any $E\in I$ (cf. \cite[Chapter III, Theorem 2.7]{engelnagel},
or \cite[Theorem 4.2]{tervo17-up}).
We record this observation into the next lemma.

\begin{lemma}\label{le:ACE_m_diss}
For $C=C_0+C_0'$ and for every fixed $E\in I$, the operator $A_C(E)$ is $m$-dissipative.
\end{lemma}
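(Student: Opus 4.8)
The plan is to prove Lemma~\ref{le:ACE_m_diss} by assembling the pieces already established, namely Lemmas~\ref{csdale1} and \ref{csdale2}, via the standard perturbation result that the sum of an $m$-dissipative operator and a bounded dissipative operator is again $m$-dissipative. First I would fix $C = C_0 + C_0'$, with $C_0$ as in \eqref{eq:C0} and $C_0'$ as in \eqref{eq:A1E_unif_bound}, and recall the decomposition \eqref{eq:ACE_decomp},
\[
A_C(E) = \big(A_0(E) - C_0 I\big) + \big(-C_0' I + A_1(E)\big),
\]
valid on the common domain $D = \tilde W^2_{-,0}(G\times S)$, which is dense in $L^2(G\times S)$. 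The two summands are treated separately.

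For the bounded part, I would invoke Lemma~\ref{csdale2}: $A_1(E)$ is bounded with $\n{A_1(E)}\le C_0'$ uniformly in $E$, hence $-C_0' I + A_1(E)$ is bounded and, by Cauchy--Schwarz,
\[
\la (-C_0' I + A_1(E))\phi,\phi\ra_{L^2(G\times S)}
= -C_0'\n{\phi}^2_{L^2(G\times S)} + \la A_1(E)\phi,\phi\ra_{L^2(G\times S)}
\le (-C_0' + C_0')\n{\phi}^2_{L^2(G\times S)} = 0,
\]
so it is dissipative. For the unbounded part, Lemma~\ref{csdale1} (whose hypotheses \eqref{csda9}, \eqref{csda9a}, \eqref{csda9b} are the standing assumptions here) gives that $A_0(E) - C I$ is $m$-dissipative for every $C \ge C_0$; in particular $A_0(E) - C_0 I$ is $m$-dissipative. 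Then the perturbation theorem for $m$-dissipative operators (for instance \cite[Chapter III, Theorem 2.7]{engelnagel}, or \cite[Theorem 4.2]{tervo14}), which states that if $A$ is $m$-dissipative and $B$ is bounded and dissipative then $A+B$ is $m$-dissipative, applies with $A = A_0(E) - C_0 I$ and $B = -C_0' I + A_1(E)$, yielding that $A_C(E)$ is $m$-dissipative. Since the choice of $C$ and the bounds are uniform in $E$, this holds for every fixed $E \in I$.

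There is essentially no hard step here: the lemma is a bookkeeping consequence of the two preceding lemmas plus a cited perturbation result, and the proof in the excerpt already contains the substance of the argument inline before the lemma statement. The only points requiring a sentence of care are (i) checking that $-C_0' I + A_1(E)$ is genuinely dissipative (the Cauchy--Schwarz computation above), and (ii) noting that the domain $D$ is common to both summands so that the sum is defined on $D(A_0(E)) = D$ with no domain issues, which is already built into the definitions \eqref{ecsd4}. One could equally phrase the proof as: $A_0(E) - C_0 I$ is $m$-dissipative by Lemma~\ref{csdale1}, the bounded perturbation $-C_0' I + A_1(E)$ is dissipative by Lemma~\ref{csdale2}, so $A_C(E) = A_0(E) - C_0 I - C_0' I + A_1(E)$ is $m$-dissipative by the bounded perturbation theorem; this is the entire content.
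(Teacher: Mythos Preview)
Your proposal is correct and follows exactly the same approach as the paper: the argument is given inline just before the lemma statement, using the decomposition \eqref{eq:ACE_decomp}, Lemma~\ref{csdale1} for the $m$-dissipativity of $A_0(E)-C_0 I$, Lemma~\ref{csdale2} together with Cauchy--Schwarz for the dissipativity of the bounded perturbation $-C_0'I+A_1(E)$, and the cited bounded-perturbation theorem. There is nothing to add.
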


We shall assume that
\[
\tilde{\sigma}\in C(I,L^\infty(G\times S,L^1(S')))\cap
C(I,L^\infty(G\times S',L^1(S))),
\]
where $\tilde{\sigma}$ interpreted as an element of $C(I,L^\infty(G\times S,L^1(S')))$ is
\[
\tilde\sigma(E)(x,\omega)(\omega')=\tilde\sigma(x,\omega',\omega,E),
\]
and when interpreted as an element of $C(I,L^\infty(G\times S',L^1(S)))$ it is
\[
\tilde{\sigma}(E)(x,\omega')(\omega) = 
\tilde\sigma(x,\omega',\omega,E).
\]
Furthermore, in order to avoid ambiguity, we have denoted by $S'$ (resp. $S$) the unit sphere in $\R^3$ for the variable $\omega'$ (resp. $\omega$).
We find that 
the conditions \eqref{bound-b} are then satisfied valid for $\ol\sigma$ with
\[
M_1:=&\max_{E\in I}\n{\ol\sigma(E)}_{L^\infty(G\times S),L^1(S'))} = \n{\ol{\sigma}}_{C(I, L^\infty(G\times S),L^1(S')))}, \\
M_2:=&\max_{E\in I}\n{\ol\sigma(E)}_{L^\infty(G\times S'),L^1(S))} = \n{\ol{\sigma}}_{C(I, L^\infty(G\times S'),L^1(S)))}.
\]
Moreover, for all $E_1,\ E_2\in I$,
\bea\label{ccc-a} 
&
\esssup_{(x,\omega)\in G\times S}\int_{S'}|
\ol\sigma(x,\omega',\omega,E_1)-\ol\sigma(x,\omega',\omega,E_2)| d\omega'
=\n{\ol\sigma(E_1)-\ol\sigma(E_2)}_{L^\infty(G\times S,L^1(S'))}\nonumber\\
&
\esssup_{(x,\omega')\in G\times S'}\int_S|
\ol\sigma(x,\omega',\omega,E_1)-\ol\sigma(x,\omega',\omega,E_2)| d\omega
=\n{\ol\sigma(E_1)-\ol\sigma(E_2)}_{L^\infty(G\times S',L^1(S))}.
\eea

Supposing additionally that 
\begin{align}\label{eq:ass_sigma_C1}
\tilde\sigma\in C^1(I,L^\infty(G\times S,L^1(S')))\cap
C^1(I,L^\infty(G\times S',L^1(S))),
\end{align}
then for any fixed $E$ the operator ${\p K{E}}(E):L^2(G\times S)\to L^2(G\times S)$
defined by
\begin{align}\label{eq:dKE}
\Big({\p {\tilde K}{E}}(E)\phi\Big)(x,\omega):=\int_S{\p {\ol\sigma}E}(x,\omega',\omega,E)\phi(x,\omega') d\omega'
\end{align}
is a bounded operator and
\bea\label{ccc-b} 
&
\esssup_{(x,\omega)\in G\times S}\int_{S'}\big|
{\p {\ol\sigma}E}(x,\omega',\omega,E_1)-{\p {\ol\sigma}E}(x,\omega',\omega,E_2)\big| d\omega'
=\n{{\p {\ol\sigma}E}(E_1)-{\p {\ol\sigma}E}(E_2)}_{L^\infty(G\times S,L^1(S'))},\nonumber\\
&
\esssup_{(x,\omega')\in G\times S'}\int_S\big|
{\p {\ol\sigma}E}(x,\omega',\omega,E_1)-{\p {\ol\sigma}E}(x,\omega',\omega,E_2)\big| d\omega
=\n{{\p {\ol\sigma}E}(E_1)-{\p {\ol\sigma}E}(E_2)}_{L^\infty(G\times S',L^1(S))}.
\eea

The following lemma justifies the notation given in \eqref{eq:dKE}.

\begin{lemma}\label{le:K_C1}
Under the assumption \eqref{eq:ass_sigma_C1}, for any fixed $\phi\in L^2(G\times S)$
the map
\[
k_\phi:I\to L^2(G\times S);\quad k_\phi(E)=\tilde{K}(E)\phi
\]
is in $C^1(I, L^2(G\times S))$ and
$\p{k_\phi}{E}(E)=\p{\tilde{K}}{E}(E)\phi$.
\end{lemma}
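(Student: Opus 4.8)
The plan is to reduce everything to the quantitative Schur-type bound already used in the paper: for any measurable kernel $\rho=\rho(x,\omega',\omega)$ the partial integral operator $R\phi(x,\omega)=\int_{S}\rho(x,\omega',\omega)\phi(x,\omega')\,d\omega'$ is bounded on $L^2(G\times S)$ and, by the very Cauchy--Schwarz argument that gives \eqref{co(b)} (cf. also \eqref{k-norma}),
\[
\n{R}\leq
\Big(\esssup_{(x,\omega)}\int_{S}|\rho(x,\omega',\omega)|\,d\omega'\Big)^{1/2}
\Big(\esssup_{(x,\omega')}\int_{S}|\rho(x,\omega',\omega)|\,d\omega\Big)^{1/2}
=\n{\rho}_{L^\infty(G\times S,L^1(S'))}^{1/2}\,\n{\rho}_{L^\infty(G\times S',L^1(S))}^{1/2}.
\]
Since $\ol\sigma(x,\omega',\omega,E)=\tilde\sigma(x,\omega',\omega,E_{\rm m}-E)$ is the composition of $\tilde\sigma$ with the affine (hence $C^\infty$) reparametrization $E\mapsto E_{\rm m}-E$ of $I$ onto itself, assumption \eqref{eq:ass_sigma_C1} transfers to $\ol\sigma$: the map $E\mapsto\ol\sigma(E)$ lies in $C^1\big(I,L^\infty(G\times S,L^1(S'))\big)\cap C^1\big(I,L^\infty(G\times S',L^1(S))\big)$, with derivative $\p{\ol\sigma}{E}(E)=-(\p{\tilde\sigma}{E})(E_{\rm m}-E)$; this is exactly what makes the operator $\p{\tilde K}{E}(E)$ of \eqref{eq:dKE} well-defined and bounded on $L^2(G\times S)$ for every $E\in I$. (As $I$ is compact and the kernels are $C^1$-in-$E$ with values in these mixed $L^\infty$--$L^1$ spaces, all the norms appearing below are finite, and at the endpoints of $I$ the derivatives are understood one-sided.)

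\textbf{Continuity of $k_\phi$.} For $E_1,E_2\in I$ the difference $\tilde K(E_1)-\tilde K(E_2)$ is the partial integral operator with kernel $\ol\sigma(\cdot,\cdot,\cdot,E_1)-\ol\sigma(\cdot,\cdot,\cdot,E_2)$, so by the Schur bound above and the identities \eqref{ccc-a},
\[
\n{k_\phi(E_1)-k_\phi(E_2)}_{L^2(G\times S)}
\leq
\n{\ol\sigma(E_1)-\ol\sigma(E_2)}_{L^\infty(G\times S,L^1(S'))}^{1/2}\,
\n{\ol\sigma(E_1)-\ol\sigma(E_2)}_{L^\infty(G\times S',L^1(S))}^{1/2}\,\n{\phi}_{L^2(G\times S)},
\]
and the right-hand side tends to $0$ as $E_1\to E_2$ because $E\mapsto\ol\sigma(E)$ is continuous into both mixed-norm spaces. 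Hence $k_\phi\in C(I,L^2(G\times S))$.

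\textbf{Differentiability and identification of the derivative.} Fix $E_0\in I$. The operator $\frac{1}{h}\big(\tilde K(E_0+h)-\tilde K(E_0)\big)-\p{\tilde K}{E}(E_0)$ is the partial integral operator with kernel $r_h:=\frac{1}{h}\big(\ol\sigma(E_0+h)-\ol\sigma(E_0)\big)-\p{\ol\sigma}{E}(E_0)$, so applying the Schur bound to $r_h$ gives
\[
\Big\|\,\tfrac{1}{h}\big(k_\phi(E_0+h)-k_\phi(E_0)\big)-\p{\tilde K}{E}(E_0)\phi\,\Big\|_{L^2(G\times S)}
\leq
\n{r_h}_{L^\infty(G\times S,L^1(S'))}^{1/2}\,\n{r_h}_{L^\infty(G\times S',L^1(S))}^{1/2}\,\n{\phi}_{L^2(G\times S)}.
\]
Since $E\mapsto\ol\sigma(E)$ is differentiable into each of the two Banach spaces with derivative $\p{\ol\sigma}{E}$, both mixed norms of $r_h$ tend to $0$ as $h\to 0$, so $k_\phi$ is differentiable at $E_0$ with $\p{k_\phi}{E}(E_0)=\p{\tilde K}{E}(E_0)\phi$. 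Finally, continuity of $E\mapsto\p{\tilde K}{E}(E)\phi$ follows by the same estimate applied to the operator with kernel $\p{\ol\sigma}{E}(E_1)-\p{\ol\sigma}{E}(E_2)$, using the identities \eqref{ccc-b} and the fact that $\ol\sigma$ being $C^1$ makes $E\mapsto\p{\ol\sigma}{E}(E)$ continuous into both mixed-norm spaces. Therefore $k_\phi\in C^1(I,L^2(G\times S))$ with $\p{k_\phi}{E}=\p{\tilde K}{E}(\cdot)\phi$, as claimed.

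There is no genuine obstacle here: the only nontrivial ingredient is the operator-norm estimate $\n{R}\le\n{\rho}_{L^\infty(G\times S,L^1(S'))}^{1/2}\n{\rho}_{L^\infty(G\times S',L^1(S))}^{1/2}$ for partial integral operators, and this is precisely the computation already carried out in the paper to obtain \eqref{k-norma}--\eqref{co(b)}; everything else is the routine ``difference quotient inside the kernel'' manipulation, with the care that the reparametrization $E\mapsto E_{\rm m}-E$ preserves the assumed $C^1$ regularity.
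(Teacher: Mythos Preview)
Your proof is correct and follows essentially the same approach as the paper: both arguments apply the Schur-type operator-norm bound (the paper's \eqref{eq:LE_estim}, your displayed estimate) to the partial integral operator with kernel $\ol\sigma(E_1)-\ol\sigma(E_2)$ for continuity, with kernel $\tfrac{1}{h}(\ol\sigma(E_0+h)-\ol\sigma(E_0))-\p{\ol\sigma}{E}(E_0)$ for differentiability, and with kernel $\p{\ol\sigma}{E}(E_1)-\p{\ol\sigma}{E}(E_2)$ for continuity of the derivative. Your explicit remark that the reparametrization $E\mapsto E_{\rm m}-E$ transfers the $C^1$ hypothesis from $\tilde\sigma$ to $\ol\sigma$ is a helpful clarification the paper leaves implicit.
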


\begin{proof}
If $\beta:G\times S'\times S\times I\to \R$ is measurable,
and one defines for any $\phi\in L^2(G\times S)$ such that the integral converges,
\[
L_{\beta}(E)\phi:=\int_{G\times S} \beta(x,\omega',\omega,E)\phi(x,\omega')d\omega dx,
\]
then for every $E\in I$ one has (compare to \eqref{co(b)}),
\begin{multline}\label{eq:LE_estim}
\n{L_{\beta}(E)\phi}^2_{L^2(G\times S)}
\leq \n{\beta(E)}_{L^\infty(G\times S, L^1(S'))}
\n{\beta(E)}_{L^\infty(G\times S', L^1(S))}
\n{\phi}_{L^2(G\times S)}^2,
\end{multline}
with the conventions $\beta(E)(x,\omega)(\omega')=\beta(E)(x,\omega')(\omega)=\beta(x,\omega',\omega,E)$ as above.

For a fixed $E_0\in I$,
taking $\beta(x,\omega',\omega,E)=\ol{\sigma}(x,\omega',\omega,E)-\ol{\sigma}(x,\omega',\omega,E_0)$,
we have $k_\phi(E)-k_\phi(E_0)=L_{\beta}(E)\phi$,
and the above estimate shows that $k_\phi$ is continuous at $E_0$, by assumption \eqref{eq:ass_sigma_C1}.
Similarly, one sees that $E\mapsto \p{\tilde{K}}{E}(E)\phi$,
is continuous at $E_0$, by choosing
$\beta(x,\omega',\omega,E)=\p{\ol{\sigma}}{E}(x,\omega',\omega,E)-\p{\ol{\sigma}}{E}(x,\omega',\omega,E_0)$.

Finally, the differentiability of $k_\phi$ at $E_0$
is obtained by taking in the above estimate,
\[
\beta(x,\omega',\omega,E)=\frac{\ol{\sigma}(x,\omega',\omega,E)-\ol{\sigma}(x,\omega',\omega,E_0)}{E-E_0}-\p{\ol{\sigma}}{E}(x,\omega',\omega,E_0),
\]
whence $\frac{k_\phi(E)-k_\phi(E_0)}{E-E_0}-\p{\tilde{K}}{E}(E_0)\phi=L_{\beta}(E)\phi$,
and we have $k_\phi'(E_0)=\p{\tilde{K}}{E}(E_0)\phi$.
This completes the proof of the lemma.
\end{proof}

\begin{remark}
In fact, the proof of the previous lemma shows that
$\tilde{K}$ as a map $I\to \mc{L}(L^2(G\times S))$ is $C^1$,
where for a Banach space $Z$ the set $\mc{L}(Z)$ is the space of bounded linear operators $Z\to Z$
equipped with the uniform operator norm topology.
\end{remark}

With the above notations and results at hand, we are ready to state the central result of this section.

\begin{theorem}\label{evoth1}
Suppose that the assumptions 
 \eqref{csda9a}, \eqref{csda9b} and \eqref{bound-b} are valid,
that $\tilde{\sigma}\geq 0$, and that
\bea
&
\Sigma\in C^1(I,L^\infty(G\times S)),\label{ecsd6a} \\
& 
S_0\in C^2(I,L^\infty(G)),\label{ecsd6a-a}\\
&
\tilde{\sigma}\in C^1(I,L^\infty(G\times S,L^1(S')))\cap
C^1(I,L^\infty(G\times S',L^1(S))).\label{ecsd6a-b}
\eea

Let $f\in C^1(I,L^2(G\times S))$ and let $g\in C^2(I,T^2(\Gamma_-))$ which satisfies the 
\emph{compatibility condition}
\be\label{cc}
g(E_m)=0.
\ee
Then the problem (\ref{se1}), (\ref{se2}), (\ref{se3}) has a unique solution
$\psi\in C(I,\tilde W^2(G\times S))\cap C^1(I,L^2(G\times S))$.

If in addition the assumptions (\ref{k3-n3-disas-a}) (with $c>0$) are also valid,
the estimate 
\be\label{evoest}
\n{\psi}_{{H_1}}\leq {{e^{{{qE_{\rm m}}\over\kappa}}}\over {c'}}
\big(\n{{ f}}_{L^2(G\times S\times I)}+\n{{ g}}_{T^2(\Gamma_-)}\big).
\ee 
holds.
(The constants $\kappa$, $q$ and $c'$ were defined in
\eqref{csda9a}, \eqref{q} and \eqref{cprime}, respectively.)
\end{theorem}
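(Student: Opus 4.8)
The plan is to reduce the problem, via the change of variables and of unknown in \eqref{ecsd2} together with a lift of the inflow datum, to the abstract non-autonomous Cauchy problem \eqref{ecsd6} on $X=L^2(G\times S)$, and to invoke the evolution--equation theorem, Theorem \ref{evoth}. With $C:=C_0+C_0'$ (see \eqref{eq:C0}, \eqref{eq:A1E_unif_bound}), write $\mathbf{g}:=e^{-CE}\tilde g$, where $\tilde g(y,\omega,E)=g(y,\omega,E_{\rm m}-E)$, and substitute $\phi=u+L\mathbf{g}$ in \eqref{se1a}, with $L$ the lift $(L\mathbf{g})(x,\omega,E)=\mathbf{g}(x-t(x,\omega)\omega,\omega,E)$ of Lemma \ref{le:H1_lift}. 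Since $\omega\cdot\nabla_x(L\mathbf{g})=0$ (Lemma \ref{trathle1} with $\Sigma=0$) and $\partial_E(L\mathbf{g})=L(\partial_E\mathbf{g})$, the problem for $u$ takes the form $\partial_E u-A_C(E)u=\tilde{\mathbf{f}}$, $u(0)=0$, where $\tilde{\mathbf{f}}$ equals the original source $\mathbf{f}(E)=\tfrac{1}{\tilde S}e^{-CE}\tilde f$ plus finitely many correction terms of the type $L(\partial_E\mathbf{g})$, $L\mathbf{g}$, and $\tilde{K}(L\mathbf{g})$, each multiplied by a coefficient that is $C^1$ in $E$. The inflow boundary condition \eqref{se2a} is encoded jointly by the lift and by the constraint $u(E)\in D(A_C(E))=\tilde W^2_{-,0}(G\times S)$ (so that $\gamma_-(\phi(E))=\mathbf{g}(E)$), while the initial condition $u(0)=0\in D$ follows from the compatibility hypothesis $g(E_{\rm m})=0$, which gives $\mathbf{g}(0)=0$, hence $(L\mathbf{g})(0)=0$.

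Next I would check hypotheses (i)--(iv) of Theorem \ref{evoth}. Condition (i) (density and $E$-independence of $D=\tilde W^2_{-,0}(G\times S)$) is standard. Condition (ii), $m$-dissipativity of $A_C(E)$ for each fixed $E$, is exactly Lemma \ref{le:ACE_m_diss}, built from Lemmas \ref{csdale1} and \ref{csdale2} and the choice $C=C_0+C_0'$; here assumptions \eqref{ass1}, \eqref{csda9a}, \eqref{csda9b}, \eqref{ass2a} enter. For (iii), that $E\mapsto A_C(E)u$ lies in $C^1(I,L^2(G\times S))$ for each $u\in D$, I would use the regularity assumptions \eqref{ecsd6a}, \eqref{ecsd6a-a}, \eqref{ecsd6a-b}: because $\tilde S\geq\kappa>0$ and $\tilde S\in C^2(I,L^\infty(G))$, the multipliers $\tfrac{1}{\tilde S}$ and $\tfrac{1}{\tilde S}\partial_E\tilde S$ are $C^1$ into $L^\infty(G)$, so the transport and zeroth-order $\Sigma$-terms applied to a fixed $u\in D$ depend $C^1$ on $E$ in $L^2(G\times S)$, while the collision term is controlled by Lemma \ref{le:K_C1} and the remark following it, which give $E\mapsto\tilde K(E)\in C^1(I,\mc{L}(L^2(G\times S)))$. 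Combining these with $f\in C^1(I,L^2(G\times S))$ and the $C^2$-regularity of $g$ (hence of $\mathbf{g}$, and of $L\mathbf{g}$, $L(\partial_E\mathbf{g})$ by boundedness and linearity of $L$) shows $\tilde{\mathbf{f}}\in C^1(I,L^2(G\times S))$, which is (iv). Theorem \ref{evoth} then yields a unique $u\in C(I,D)\cap C^1(I,L^2(G\times S))$, and formula \eqref{solev} represents it.

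I would then translate back. On $D$ the graph norm of $A_C(0)$ is equivalent to the $W^2(G\times S)$-norm (since $\kappa\le\tilde S(0)\le\n{S_0}_{L^\infty(G\times I)}$ and the lower-order part of $A_C(0)$ is bounded), and on $\tilde W^2_{-,0}(G\times S)$ this is in turn equivalent to the $\tilde W^2(G\times S)$-norm, the outflow trace being controlled through Green's formula \eqref{green}; hence $u\in C(I,\tilde W^2(G\times S))$. For the lift, along each characteristic chord $L\mathbf{g}$ is constant, so $\gamma_+(L\mathbf{g}(E))$ at the exit point equals $\mathbf{g}(E)$ at the corresponding entry point, and since the reflection map $(y,\omega)\mapsto(y-\tau_+(y,\omega)\omega,\omega)$ from $\Gamma_+'$ onto $\Gamma_-'$ preserves $|\omega\cdot\nu|\,d\sigma\,d\omega$ (a standard Liouville-type fact, cf. the change of variables of Remark \ref{changevar}), one gets $\n{\gamma_+(L\mathbf{g}(E))}_{T^2(\Gamma_+')}=\n{\mathbf{g}(E)}_{T^2(\Gamma_-')}$; combined with $\n{L\mathbf{g}(E)}_{L^2(G\times S)}\le\sqrt d\,\n{\mathbf{g}(E)}_{T^2(\Gamma_-')}$ from Lemma \ref{le:lift}, this gives $L\mathbf{g}\in C^2(I,\tilde W^2(G\times S))$. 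Therefore $\phi=u+L\mathbf{g}\in C(I,\tilde W^2(G\times S))\cap C^1(I,L^2(G\times S))$, and undoing \eqref{ecsd2}, i.e. $\psi(x,\omega,E)=e^{C(E_{\rm m}-E)}\phi(x,\omega,E_{\rm m}-E)$, produces the claimed solution; uniqueness follows from the uniqueness clause of Theorem \ref{evoth} since any solution in the stated class reduces, by the same two substitutions, to a solution of \eqref{ecsd6}.

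For the estimate \eqref{evoest} under the additional assumption \eqref{ass2b} with $c>0$, I would note that the constructed $\psi$ now belongs to $\tilde W^2(G\times S\times I)\cap W_1^2(G\times S\times I)\subset H_1$, with $\psi_{|\Gamma_+}\in T^2(\Gamma_+)$ and $\psi(\cdot,\cdot,0)\in L^2(G\times S)$, so the ordinary Green's formula \eqref{green}, Lemma \ref{csdale0} and Lemma \ref{csdale1a} all apply directly (no need for the assumption ${\bf TC}$); testing the equation against $\psi$ after the change of unknown $\phi=e^{CE}\psi$ with the variational-section constant $C=\max\{q,0\}/\kappa$ then reproduces verbatim the coercivity computation of Theorem \ref{csdath1}(B) and of Corollary \ref{csdaco1}(iii), which gives \eqref{evoest}. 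The step I expect to be the main obstacle is the verification of hypothesis (iii) of Theorem \ref{evoth}: the joint $C^1$-in-energy dependence of all coefficients — above all of the nonlocal-in-$\omega$ collision operator, which is precisely why the $C^1(I,L^\infty(G\times S,L^1(S')))$-type regularity of $\tilde\sigma$ in \eqref{ecsd6a-b} is imposed — together with carrying enough of this regularity through the lift $L\mathbf{g}$ so that $\tilde{\mathbf{f}}$ is genuinely $C^1$ and the resulting $\phi$ lands in $C(I,\tilde W^2(G\times S))$ rather than merely $C(I,W^2(G\times S))$.
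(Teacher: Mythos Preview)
Your proposal is correct and follows essentially the same approach as the paper: reduce to the abstract evolution problem via the change of variables \eqref{ecsd2} with $C=C_0+C_0'$, verify (i)--(iv) of Theorem \ref{evoth} using Lemmas \ref{le:ACE_m_diss} and \ref{le:K_C1}, handle inhomogeneous $g$ by a lift, and recover \eqref{evoest} from the coercivity computation of Corollary \ref{csdaco1}(iii). The only cosmetic difference is that the paper first treats $g=0$ and then lifts the \emph{original} datum $g$ (citing \cite[Lemma 5.10]{tervo14} for $Lg\in C^2(I,\tilde W^2(G\times S))$), whereas you lift the \emph{transformed} datum $\mathbf{g}$ directly and supply the outflow-trace argument yourself; both orderings work equally well.
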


\begin{proof}
A.  Assume at first that $g=0$. We make the change of variables and the change of unknown function as above by setting $\tilde\psi(x,\omega,E)=\psi(x,\omega,E_m-E)$ and $\phi=e^{-CE}\tilde\psi$.
Choose $C=C_0+C_0'$ (see \eqref{eq:C0}, \eqref{eq:A1E_unif_bound}).
Then, as observed above, the problem (\ref{se1}), (\ref{se2}), (\ref{se3}) can be cast
into an equivalent form (see \eqref{ecsd6})
\be\label{ecsd7}
{\p {\phi}E}-A_C(E)\phi={ f_C}(E),\quad \phi(0)=0,
\ee
where the domain $D(A_C(E))=\tilde{W}^2_{-,0}(G\times S)=:D$
of definition of $A_C(E)$ is independent of $E$.
We have, moreover, demonstrated (see Lemma \ref{le:ACE_m_diss}) that the (densely defined) operator
$A_C(E):L^2(G\times S)\to L^2(G\times S)$ is $m$-dissipative for any fixed $E\in I$. 

The assumptions \eqref{ecsd6a}, \eqref{ecsd6a-a}, \eqref{ecsd6a-b} imply that for any fixed $\phi\in D$
the mapping
\[
h_{\phi}:I\to L^2(G\times S);\quad h_\phi(E):=A_C(E)\phi,
\]
is differentiable and
\[
h_\phi'(E)=&-{\partial\over{{\partial E}}}\Big({1\over{\tilde S(E)}}\Big)\omega\cdot \nabla_x\phi
-{\partial\over{{\partial E}}}\Big({1\over{\tilde S(E)}}\tilde\Sigma(E)\Big)\phi
-{\partial\over{{\partial E}}}\Big({1\over{\tilde S(E)}}{\p {\tilde S}E}\Big)\phi \\
&+{\partial\over{{\partial E}}}\Big({1\over{\tilde S(E)}}\Big)\tilde K(E)\phi
+{1\over{\tilde S(E)}}{\p {\tilde K}{E}}(E)\phi,
\]
where ${\p {\tilde K}{E}}(E)\phi$ is defined in \eqref{eq:dKE},
and the derivative $\pa{E}(\tilde{K}(E)\phi)={\p {\tilde K}{E}}(E)\phi$ 
is provided by Lemma \ref{le:K_C1}.
By assumptions \eqref{ecsd6a}, \eqref{ecsd6a-a}, \eqref{ecsd6a-b} we thus see that $h_\phi$ is in $C^1(I,L^2(G\times S))$.

By Theorem \ref{evoth}
there exists a unique solution $\phi\in C(I,W^2_{-,0}(G\times S))\cap C^1(I,L^2(G\times S))$ of (\ref{ecsd7}). Then  $\psi(x,\omega,E):=
e^{C(E_m-E)}\phi(x,\omega,E_m-E)$ is the required solution of 
the problem \eqref{se1}, \eqref{se2}, \eqref{se3} for $g=0$.

B. Suppose that more generally $g\in C^2(I,T^2(\Gamma_-))$ and that the compatibility condition (\ref{cc}) holds.
By \cite[Lemma 5.10]{tervo17-up} there exists a lift $Lg\in C^2(I,\tilde W^2(G\times S))$ for which $\gamma_-(Lg)=g$, and $(Lg)(\cdot,\cdot,E_m)=0$ (follows from (\ref{cc})),
and furthermore $\omega\cdot\nabla_x(Lg)=0$. Substituting into the problem (\ref{se1}), (\ref{se2}), (\ref{se3}) the function $u:=\psi-Lg$ for $\psi$ we obtain the following problem for $u$,
\begin{gather}
-{\p {(S_0u)}E}+\omega\cdot\nabla_x u+\Sigma u
- Ku = \tilde{f},\nonumber\\
u_{|\Gamma_-}=0,\nonumber\\
u(\cdot,\cdot,E_m)=0,\label{ecsd8}
\end{gather}
where
\[
\tilde{f}:=f-\Big(-{\p {(S_0(Lg))}E}+\Sigma (Lg)-K(Lg)\Big),
\]
and we have $\tilde{F}\in C^1(I,L^2(G\times S))$ under the assumption \eqref{ecsd6a}, \eqref{ecsd6a-a}, \eqref{ecsd6a-b}.

By Part A of the proof, the problem \eqref{se1}, \eqref{se2}, \eqref{se3} (for $g=0$) has a unique solution $u\in C(I,\tilde{W}^2_{-,0}(G\times S))\cap C^1(I,L^2(G\times S))$.
As argued above, $\psi:=u+Lg$ is then then desired unique solution
for the problem \eqref{se1}, \eqref{se2}, \eqref{se3}, for the given, arbitrary $g\in C^2(I,T^2(\Gamma_-))$.

We claim that the solution $\psi$ belongs to $W^2_1(G\times S\times I)$.
Indeed, since $\psi\in C^1(I,L^2(G\times S))$,
we have $\psi\in L^2(G\times S\times I)$ and ${\p{\psi}E}\in L^2(G\times S\times I)$.
On the other hand, since $\psi$ solves \eqref{se1},
these imply, together with the assumptions made,
that $\omega\cdot\nabla_x\psi\in L^2(G\times S\times I)$,
which confirms the claim.

Hence, under the additional assumptions (\ref{k3-n3-disas-a}), the estimate (\ref{evoest}) follows from Corollary \ref{csdaco1} (part (iii)). This completes the proof.
\end{proof}

The term compatibility condition used above for the assumption \eqref{cc} for $g$
comes from the observation that since the solution $\psi$
is to satisfy \eqref{se3} and we have $\psi\in C(I,\tilde{W}^2(G\times S))$,
it follows that $\psi(E_m)=0$, and therefore $0=\psi(E_m)|_{\Gamma_-}=g(E_m)$.

\begin{remark}
Strictly speaking, in the proof of Theorem \ref{evoth1} above
we did not fully address the claim that $u\in C(I,\tilde{W}^2_{-,0}(G\times S))$,
since, as pointed out in Remark \ref{re:evoth_norm},
the common domain $D=D(A_C(E))=\tilde{W}^2_{-,0}(G\times S)$ of the operators $A_C(E)$, $E\in I$,
is to be equipped with the graph norm of $A_C(0)$ when applying Theorem \ref{evoth}.
However, the norm $\n{\cdot}_{\tilde{W}^2_{-,0}(G\times S)}$ of $\tilde{W}^2_{-,0}(G\times S)$
is equivalent to the graph norm $\n{\cdot}_{A_C(E)}$ in $D=\tilde{W}^2_{-,0}(G\times S)$,
for every $E\in I$, and in particular for $E=0$.
This can be readily seen from the estimates, for $\phi\in D$,
\[
\n{A_C(E) \phi}_{L^2(G\times S)}\leq{} & \frac{1}{\kappa} \big(\n{\omega\cdot\nabla_x\phi}_{L^2(G\times S)} + C_2\n{\phi}_{L^2(G\times S)}\big), \\
\n{\omega\cdot\nabla_x\phi}_{L^2(G\times S)} \leq{} & C_1\n{A_C(E)\phi}_{L^2(G\times S)} + C_2\n{\phi}_{L^2(G\times S)},
\]
where
\[
C_1:=\n{\tilde{S}(E)}_{L^\infty},\quad
C_2:=C C_1 + \n{\tilde{\Sigma}(E)}_{L^\infty} + \n{\p{\tilde{S}}{E}(E)}_{L^\infty} + \n{\tilde{K}(E)},
\]
and we have used a short hand notation $L^\infty$ for $L^\infty(G\times S)$.
Notice also that $\n{\gamma_-(\phi)}_{T^2(\Gamma_-)}=0$ when $\phi\in \tilde{W}^2_{-,0}(G\times S)$.
\end{remark}

Let $H^m(I,X),\ m\in\N_0$ be the Sobolev space for Hilbert space valued functions $I\to X$.
We have the following corollary.

\begin{corollary}\label{evothco}
Suppose that the assumptions 
 \eqref{csda9a}, \eqref{csda9b}, \eqref{bound-b} and \eqref{ecsd6a}, \eqref{ecsd6a-a} and \eqref{ecsd6a-b}
of Theorem \ref{evoth1}
are valid, and $\tilde{\sigma}\geq 0$.
Let $f\in H^2(I,L^2(G\times S))$ and $g\in H^3(I,T^2(\Gamma_-))$ which satisfies  the compatibility condition
\be
g(E_m)=0.
\ee
Then 
the problem (\ref{se1}), (\ref{se2}), (\ref{se3}) has a unique solution
$\psi\in C(I,\tilde W^2(G\times S))\cap C^1(I,L^2(G\times S))$.

If in addition the assumptions \eqref{k3-n3-disas-a} (with $c>0$) are also valid, the estimate (\ref{evoest}) holds.

\end{corollary}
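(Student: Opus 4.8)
The plan is to deduce this corollary directly from Theorem \ref{evoth1}, the only new ingredient being the embedding of Hilbert-space valued Sobolev spaces on the bounded interval $I=[0,E_{\rm m}]$,
\[
H^{m+1}(I,X)\subset C^m(\ol I,X),\qquad m\in\N_0,
\]
with continuous inclusion, valid for every Hilbert space $X$. I would establish this inductively from the case $m=0$, which is a direct consequence of the fundamental theorem of calculus for $X$-valued functions: if $u\in H^1(I,X)$, then $u(E)-u(E_0)=\int_{E_0}^{E}u'(\tau)\,d\tau$ for a.e.\ $E_0,E\in I$, so $u$ has a continuous representative on $\ol I$ obeying $\sup_{E\in I}\n{u(E)}_X\leq C\n{u}_{H^1(I,X)}$, with $C$ depending only on the length of $I$.

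First I would apply this embedding with $X=L^2(G\times S)$ and $m=1$ to get $f\in C^1(I,L^2(G\times S))$, and with $X=T^2(\Gamma_-)$ and $m=2$ to get $g\in C^2(I,T^2(\Gamma_-))$. In particular $g$ then has a continuous representative into $T^2(\Gamma_-)$, so that the compatibility condition $g(E_m)=0$ is meaningful (and is assumed). At this point every hypothesis of Theorem \ref{evoth1} is satisfied, so that theorem provides a unique solution $\psi\in C(I,\tilde W^2(G\times S))\cap C^1(I,L^2(G\times S))$ of the problem (\ref{se1}), (\ref{se2}), (\ref{se3}); since the uniqueness assertion there is stated in exactly the solution class appearing in the present statement, nothing more is needed for that part.

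For the last claim, I would merely note that when the assumptions (\ref{ass2b}) hold in addition with $c>0$, Theorem \ref{evoth1} already yields the bound (\ref{evoest}) for this same solution $\psi$. Its right-hand side involves only $\n{f}_{L^2(G\times S\times I)}$ and $\n{g}_{T^2(\Gamma_-)}$, which are dominated (with constants depending only on the length of $I$) by $\n{f}_{H^2(I,L^2(G\times S))}$ and $\n{g}_{H^3(I,T^2(\Gamma_-))}$ respectively, so the estimate transfers without any extra work.

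In truth there is no substantial obstacle here: the whole content of the corollary is the vector-valued Sobolev embedding $H^{m+1}(I,X)\subset C^m(\ol I,X)$ on a bounded interval. The only points that deserve a line of care are, first, identifying $g$ with its continuous representative before invoking the compatibility condition $g(E_m)=0$, and second, observing that the uniqueness conclusion of Theorem \ref{evoth1} is already formulated in the class $C(I,\tilde W^2(G\times S))\cap C^1(I,L^2(G\times S))$, which is precisely the class named here, so weakening the regularity of the data does not weaken the uniqueness statement.
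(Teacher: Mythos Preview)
Your proposal is correct and is essentially identical to the paper's argument: the paper simply invokes the Sobolev embedding $H^m(I,X)\subset C^j(I,X)$ for $m>j+\tfrac12$ (so $H^2\hookrightarrow C^1$ and $H^3\hookrightarrow C^2$) and then appeals to Theorem \ref{evoth1}. Your additional remarks on the compatibility condition and on uniqueness are accurate but not needed beyond what the paper already records.
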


\begin{proof}
By the Sobolev Embedding Theorem
\[
H^m(I,X)\subset C^{j}(I,X)\ {\rm for}\ m>j+{1\over 2}
\]
and then the assertion follows from Theorem \ref{coupthev}.
\end{proof}

\begin{remark}
The  evolution equation based approach given above can be generalized for $L^p$-theory
when $1\leq p<\infty$.
The approach based on the Lions-Lax-Milgram Theorem (section \ref{esols}) is limited to the Hilbert space 
structure, and can therefore be only applied for $p=2$.
However, some (recent) generalizations for reflexive Banach 
spaces of Lions-Lax-Milgram theory might allow
methods of section \ref{esols} to be generalized also for $1<p<\infty$.
\end{remark}

%%%%%%%%%%%%%%%%%%%%%%%%%%%%%%%%%%%%%%%%%%%%%%%%%%%%%%%%%%%%%%%%%%%%%%%%%%%%%%%%%%%%%%%%%%%%%%%%%%%%%%%%%%%
\subsection{On the Existence of Solutions for Volterra Type Collision Operators}
%%%%%%%%%%%%%%%%%%%%%%%%%%%%%%%%%%%%%%%%%%%%%%%%%%%%%%%%%%%%%%%%%%%%%%%%%%%%%%%%%%%%%%%%%%%%%%%%%%%%%%%%%%%

In the previous section, we assumed that the collision operator $K$ is of the form
$(K\psi)(x,\omega,E)=\int_S \sigma(x,\omega',\omega,E)\psi(x,\omega',E) d\omega'$ in order to avoid integration over $I$ with respect to $E'$. Considerations were founded on the fact that $K\psi$ had a representation $(K\psi)(E)=K(E)\psi(E)$.
However, for some collision operators of special type,
also integration with respect to $E'$ is possible in evolution operator based approaches.
In this section, we give a short and \emph{formal} description of such a technique
for Volterra type collision operators.

Consider the problem (\ref{se1}), (\ref{se2}), (\ref{se3}) with $g=0$,
\begin{gather*}
 -{\p {\psi}E}+{1\over{S_0(E)}}\omega\cdot\nabla_x\psi+{1\over{S_0(E)}}\Sigma(E)\psi-{1\over{S_0(E)}}{\p {S_0}E}(E)\psi
-{1\over{S_0(E)}} K\psi= {1\over{S_0(E)}}f,\nonumber\\
\psi_{\Gamma_-}=0,\quad
\psi(\cdot,\cdot,E_{\rm m})=0.
\end{gather*}
For simplicity we denote ${1\over{S_0(E)}}f$ and ${1\over{S_0(E)}} K$ again by $f$ and $K$.
Assume that $I=[0,\infty[{}=:\R_+$
and that $K$  is of the \emph{Volterra type operator} (cf. \cite[pp. 447-452]{engelnagel})
\[
(K\psi)(x,\omega,E)=\int_0^E\int_S
\sigma(x,\omega',\omega,E-E',E)\psi(x,\omega',E') d\omega' dE'.
\]
In other words, $K\psi=\int_{S\times I} \tilde{\sigma}(\cdot,\omega',\cdot,E',\cdot)\psi(\cdot,\omega',E') d\omega' dE'$ for a differential cross section $\tilde{\sigma}$ of the form
$\tilde{\sigma}(x,\omega',\omega,E',E)=\chi_I(E-E')\sigma(x,\omega',\omega,E-E',E)$,
where $\chi_I$ is the characteristic function of $I$.
Assume additionally that $\sigma$ has a decomposition
\[
\sigma(x,\omega',\omega,E-E',E)=\sigma_1(x,\omega',\omega,E)\sigma_2(E-E').
\]
Let $K_1(E)$ (for a fixed $E$) be a linear operator $L^2(G\times S)\to L^2(G\times S)$ defined by
\[
(K_1(E)\phi)(x,\omega)=\int_S\sigma_1(x,\omega',\omega,E)\phi(x,\omega') d\omega'.
\]
Then we find that 
\[
(K\psi)(x,\omega,E)=\int_0^E
\sigma_2(E-E')K_1(E)\psi(E') dE'.
\]

Define an {\it extended  space} by ${\s X}:=L^2(G\times S)\times L^1(\R_+,L^2(G\times S))$ and a linear operator ${\s A}(E):{\s X}\to {\s X}$ for a fixed $E$ by (here the argument of $\R_+$ is denoted by $s$)
\bea
&
D({\s A}(E))=D:=\tilde W_{-,0}^2(G\times S)\times H^{1,1}(\R_+,L^2(G\times S)),\nonumber\\
&
{\s A}(E):=\qmatrix{A(E)&\delta_0\cr B(E)&{d\over{ds}}\cr},
\nonumber
\eea
where
\[
H^{1,1}(\R_+,L^2(G\times S)):= \{F\in L^1(\R_+,L^2(G\times S))\ |\ F'\in L^1(\R_+,L^2(G\times S))\},
\]
is the domain of the derivative operator $\frac{d}{ds}:L^1(\R_+,L^2(G\times S))\to L^1(\R_+,L^2(G\times S))$,
the linear operator $A(E):L^2(G\times S)\to L^2(G\times S)$ with domain $\tilde{W}^2_{-,0}(G\times S)$ (independent of $E$) is given by,
\[
A(E)\phi=-\Big({1\over { S_0(E)}}\omega\cdot\nabla_x\phi+{1\over { S_0(E)}}\Sigma(E)\phi
-{1\over{ S_0(E)}}{\p { S_0}E}(E)\phi\Big),
\]
the linear operator $\delta_0$ is bounded and defined by
\[
\delta_0:H^{1,1}(\R_+,L^2(G\times S))\to L^2(G\times S);
\quad \delta_0 F=F(0),
\]
and finally $B(E)$ is the bounded linear operator defined by
\[
B(E):L^2(G\times S)\to L^1(\R_+,L^2(G\times S));\quad
(B(E)\phi)(s)=\sigma_2(s)K_1(E)\phi.
\]

Let $\qmatrix{\phi\cr \eta\cr}\in {\s X}$ and let $\qmatrix{\psi\cr F\cr}\in H^1(\R_+,{\s X})$ (note that $\psi:\R_+\to L^2(G\times S),\ F:\R_+\to L^1(\R_+,L^2(G\times S))$; here the argument of $\psi$ and $F$ is denoted by $E$).
In this extended setting, 
for a given $\qmatrix{F_1\cr F_2}\in C(\R_+,{\s X})$,
the validity of the evolution equation 
\be\label{R1} 
\frac{\partial}{\partial E}\qmatrix{\psi\cr F\cr}-{\s A}(E)\qmatrix{\psi\cr F\cr}=\qmatrix{F_1\cr F_2\cr},
\quad \qmatrix{\psi\cr F\cr}(0)=\qmatrix{\phi\cr \eta\cr},
\ee
implies (from the first row of the matrix equation) that
\be \label{R4}
{\p {\psi}E}-A(E)\psi=F_1(E)+\delta_0(F(E))=F_1(E)+F(E)(0).
\ee

In some cases the evolution system of the operators ${\s A}(E)$ has properties which
enable one to deduce the existence of solutions for the transport problem
\bea\label{R0}
&
{\p {\psi}E}-A(E)\psi - K\psi =f,\nonumber\\
&
\psi_{\Gamma_-}=0,\quad \psi(0)=\phi.
\eea
Note that carrying out the change of variables as above,
one is able to replace the initial condition $\psi(0)=\phi$ with $\psi(E_{\rm m})=\phi$. 

The basis of the idea is as follows.
Assume, for example, that $S_0$, $\Sigma$ and $\sigma_1$ are independent of $E$. Then the operators $A(E)$, $K_1(E)$, $B(E)$ and ${\s A}(E)$ are independent of $E$ as well.
Let $T(E)$ be the $C^0$-semigroups generated by  $A$. Define a linear operator $R(E):L^1(\R_+,L^2(G\times S))\to L^2(G\times S)$ by
\[
R(E)\eta:
=\int_0^ET(E-s)\eta(s)ds.
\]
Denote by $S(E)$ the (left) translation semigroup on $L^1(\R_+,L^2(G\times S))$,
\be\label{tr}
(S(E)\eta)(s):=\eta(E+s).
\ee
Finally, let
\[
{\s T}(E):=\qmatrix{T(E)&R(E)\cr 0&S(E)\cr}.
\]
Then ${\s T}(E)$ is the $C^0$-semigroup generated by $\qmatrix{A&\delta_0\cr 0&{d\over{ds}}\cr}:{\s X}\to {\s X}$ (\cite[pp. 437-438]{engelnagel}).
Furthermore, letting ${\s S}(E):{\s X}\to {\s X}$ be the $C^0$-semigroup generated by the perturbed operator
\[
{\s A}:=\qmatrix{A&\delta_0\cr 0&{d\over{ds}}\cr}+
\qmatrix{0&0\cr B&0},
\]
it follows that $\qmatrix{\psi\cr F}(E):={\s S}(E)\qmatrix{\phi\cr \eta}$
is the solution of (\ref{R1}) for $\qmatrix{F_1\cr F_2}=\qmatrix{0\cr 0}$.

The above semigroups obey the following formula,
for every $\qmatrix{\phi\cr \eta\cr}\in {\s X}$:
(\cite[Corollary III.1.7]{engelnagel})
\bea
{\s S}(E)\qmatrix{\phi\cr \eta\cr}
=&
{\s T}(E)\qmatrix{\phi\cr \eta\cr}
+\int_0^E{\s T}(E-s){\s B}{\s S}(s)\qmatrix{\phi\cr \eta\cr} ds
\nonumber\\
=&
{\s T}(E)\qmatrix{\phi\cr \eta\cr}
+\int_0^E{\s T}(E-s){\s B}\qmatrix{\psi\cr F\cr}(s) ds,.
\eea 
where ${\s B}:=\qmatrix{0&0\cr B&0\cr}$.

Choose $\eta:=f$ and $\qmatrix{F_1\cr F_2}=\qmatrix{0\cr 0}$.
Applying the above facts we have
\bea\label{R7}
&
\qmatrix{\psi\cr F\cr}(E)={\s S}(E)\qmatrix{\phi\cr f\cr}
\nonumber\\
=&
\qmatrix{T(E)&R(E)\cr 0&S(E)\cr}\qmatrix{\phi\cr f\cr}
+
\int_0^E\qmatrix{T(E-s)&R(E-s)\cr 0&S(E-s)\cr}
\qmatrix{0&0\cr B&0\cr}\qmatrix{\psi\cr F\cr}(s)ds\nonumber\\
=&
\qmatrix{T(E)&R(E)\cr 0&S(E)\cr}\qmatrix{\phi\cr f\cr}
+
\int_0^E\qmatrix{T(E-s)&R(E-s)\cr 0&S(E-s)\cr}
\qmatrix{0\cr B\psi(s)\cr}ds,
\eea
from which the last row gives us
\be 
F(E)=S(E)f+\int_0^ES(E-s)B\psi(s) ds,
\ee
and then (recall (\ref{tr}) and that $(B\psi(s))(t)=\sigma_2(t)K_1\psi(s)$)
\be \label{R3}
F(E)(0)=f(E)+\int_0^E \big(S(E-s)(B\psi(s))\big)(0) ds
=
f(E)+\int_0^E\sigma_2(E-s)K_1\psi(s) ds.
\ee
Combining (\ref{R3}) and (\ref{R4}) we finally see that
$\psi$ is a solution of the transport problem (\ref{R0}).

Finally, we note that in the case where $\qmatrix{\psi\cr F}\in D({\s A})$, we have
$\psi(E)\in D(A)=\tilde W_{-,0}^2(G\times S)$, for any $E\geq 0$,
and thus in particular, $\psi_{|\Gamma_-}=0$.
We find by the first row of matrix equation (\ref{R7}) that $\psi(0)=T(0)\phi+R(0)f=\phi$.

It might be worth attempting to generalize this method under less restrictive assumptions,
especially for the case where $S_0$, $\Sigma$, $\sigma_1$ are allowed to be $E$-dependent.

%%%%%%%%%%%%%%%%%%%%%%%%%%%%%%%%%%%%%%%%%%%%%%%%%%%%%%%%%%%%%%%%%%%%%%%%%%%%%%%%%%%%%%%%%%%%%%%%%%%%%%%%%%%
\sectionspace
\section{Existence of Solutions for the Coupled System}\label{cosyst}
%%%%%%%%%%%%%%%%%%%%%%%%%%%%%%%%%%%%%%%%%%%%%%%%%%%%%%%%%%%%%%%%%%%%%%%%%%%%%%%%%%%%%%%%%%%%%%%%%%%%%%%%%%%

In this section, we consider the coupled transport problem.
For simplicity denote $\Sigma_j:=\Sigma_{j,r},\ S_j:=S_{j,r},\ \sigma_{jj,r}:=\sigma_{jj}$ for $j=2,3$.
Let $f=(f_1,f_2,f_3)\in L^2(G\times S\times I)^3$ and $g=(g_1,g_2,g_3)\in T^2(\Gamma_-)^3$.

We deal with the following coupled system of integro-partial differential equations
for $\psi=(\psi_1,\psi_2,\psi_3)$ on $G\times S\times I$,
\begin{gather}
\omega\cdot\nabla_x\psi_1+\Sigma_1\psi_1-K_{1}\psi=f_1, \label{csda1a}\\
-{\p {(S_{j}\psi_j)}E}+\omega\cdot\nabla_x\psi_j+\Sigma_{j}\psi_j-K_{j}\psi=f_j,\quad j=2,3.\label{csda1b}
\end{gather}
In order to guarantee uniqueness of solutions, we moreover impose the inflow boundary condition on $\Gamma_-$,
\be\label{csda2}
{\psi_j}_{|\Gamma_-}=g_j,\quad j=1,2,3,
\ee
and initial value (or energy boundary) condition on $G\times S$,
\be\label{csda3}
\psi_j(\cdot,\cdot,E_{\rm m})=0,\quad j=2,3,
\ee
where $E_{\rm m}$ is the cut-off energy. 
As mentioned in the introduction the problem (\ref{csda1a})-(\ref{csda3}) is an approximation of the problem (\ref{intro1}), (\ref{intro2}).

We assume that 
the {\it total (restricted) cross sections}
$\Sigma_j:G\times S\times I\to\R$, for $j=1,2,3$, are functions such that
\be\label{scateh}
\Sigma_j\in L^\infty(G\times S\times I),\quad \Sigma_j\geq 0,\quad j=1,2,3.
\ee
Furthermore, we assume that the
{\it differential (restricted) cross sections}
 $\sigma_{kj}:G\times S^2\times I^2\to\R$, $k,j=1,2,3$, are measurable functions such that
\begin{alignat}{3}\label{colleh}
&\sum_{k=1}^3\int_{I'}\int_{S'} \sigma_{kj}(x,\omega',\omega,E',E)d\rho_S^{kj}(\omega') d\rho_I^{kj}(E')\leq M_1,\quad
&& {\rm a.e.}\ G\times S\times I,\quad j=1,2,3,\nonumber\\[2mm]
&\sum_{k=1}^3\int_{I'}\int_{S'} \sigma_{jk}(x,\omega,\omega',E,E')d\rho_S^{jk}(\omega') d\rho_I^{jk}(E')\leq M_2,\quad
&& {\rm a.e.}\ G\times S\times I,\quad j=1,2,3,\\[2mm]
&\sigma_{kj}\geq 0,\quad && {\rm a.e.}\ G\times S^2\times I^2,\quad k,j=1,2,3\nonumber
\end{alignat}
where the measures $\rho_S^{jk},\ \rho_I^{jk}$ are as in section \ref{co-cs}.

Define the \emph{(restricted) scattering operator} $\Sigma_j$ and the \emph{(restricted) collision operator} $K_j$ corresponding to the particle type $j$, for $j=1,2,3$ and $\psi_j\in
 L^2(G\times S\times I)$, as follows
\be\label{scat}
(\Sigma_j\psi_j)(x,\omega,E)=\Sigma_j(x,\omega,E)\psi_j(x,\omega,E),
\ee
and for $\psi=(\psi_1,\psi_2,\psi_3)\in L^2(G\times S\times I)^3$,
\be\label{coll3}
(K_j\psi)(x,\omega,E)=\sum_{k=1}^3\int_{ I'}\int_{S'}\sigma_{kj}(x,\omega',\omega,E',E)\psi_k(x,\omega',E')d\rho_S^{kj}(\omega') d\rho_I^{kj}(E').
\ee
Furthermore, we define for $\psi=(\psi_1,\psi_2,\psi_3)\in L^2(G\times S\times I)^3$,
\be\label{sda1}
\Sigma\psi=(\Sigma_1\psi_1,\Sigma_2\psi_2,\Sigma_3\psi_3)
\ee
and
\be\label{sda2}
K\psi=(K_1\psi,K_2\psi,K_3\psi).
\ee
We see that $\Sigma:L^2(G\times S\times I)^3\to L^2(G\times S\times I)^3$ is a bounded linear operator. In addition,  
by Theorem \ref{coup-bound}
the  operator $K:L^2(G\times S\times I)^3\to L^2(G\times S\times I)^3$ is bounded and
\be\label{k-norm}
\n{K}\leq 
2\pi{M_1}^{1/2}{M_2}^{1/2}.
\ee

We assume that functions $S_j:\ol G\times I\to\R,\ j=2,3$,
the so-called \emph{restricted stopping powers}, satisfy the following assumptions:
\begin{align}
& S_j\in L^\infty(G\times  I), \label{sda2a} \\
& {\p {S_j }{E}}\in L^\infty(G\times  I), \label{sda2a-2} \\
& \kappa_j:=\inf_{(x,E)\in \ol{G}\times I} S_j(x,E) > 0, \label{sda2b} \\
& \nabla_x S_j\in L^\infty(G\times I), \label{sda2c}
\end{align}
Note that (\ref{sda2b}) implies that in $\ol{G}\times I$,
\be\label{sda2d}
{1\over{S_j}}\leq {1\over{\kappa_j}}.
\ee
We point out that the assumption \eqref{sda2c}
will, in fact, be needed only in section \ref{evo-op} when considering (a special case of) the
problem \eqref{csda1a}-\eqref{csda3} within the context of the theory of evolution operators
(see Theorem \ref{coupthev}, Eq. \eqref{ass5n}).

In order to prove some accretivity properties for the \emph{scattering-collision operator}
$\Sigma-K:L^2(G\times S\times I)^3\to L^2(G\times S\times I)^3$ (or, equivalently, dissipativity properties 
for the operator $-\Sigma+K$) we assume that the cross-sections $\Sigma_j$, $\sigma_{jk}$, satisfy the  condition given in section \ref{co-cs}: There exists $c\geq 0$ such that
for a.e. $(x,\omega,E)\in G\times S\times I$, and for every $j=1,2,3$,
\be\label{sda3}
\Sigma_j(x,\omega,E)-\sum_{k=1}^3\int_{S\times I}\sigma_{jk}(x,\omega,\omega',E,E') d\rho_S^{jk}(\omega') d\rho_I^{jk}(E')
\geq c,
\ee
and
\be\label{sda4}
\Sigma_j(x,\omega,E)-\sum_{k=1}^3\int_{S\times I}\sigma_{kj}(x,\omega',\omega,E',E) d\rho_S^{kj}(\omega') d\rho_I^{kj}(E')
\geq c.
\ee

We recall (Theorem \ref{diss-for-coupled}) the following {\it accretivity} result.

\begin{theorem}\label{sdath1}
Let the assumptions (\ref{scateh}), (\ref{colleh}), (\ref{sda3}) and (\ref{sda4}) be valid. Then
for every $\psi\in L^2(G\times S\times I)^3$,
\be\label{sda5}
\la(\Sigma-K)\psi,\psi\ra_{L^2(G\times S\times I)^3}\geq c \n{\psi}^2_{L^2(G\times S\times I)^3}.
\ee
\end{theorem}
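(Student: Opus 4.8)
The plan is to expand the inner product componentwise and reduce the coupled estimate to two applications of the Cauchy--Schwarz inequality, exactly mirroring the single-equation computation in Lemma~\ref{csdale1a} (which in turn rests on \cite[Theorem~4, p.~241]{dautraylionsv6}). First I would write, using \eqref{sda1}, \eqref{sda2},
\[
\la(\Sigma-K)\psi,\psi\ra_{L^2(G\times S\times I)^3}
=\sum_{j=1}^3\la\Sigma_j\psi_j,\psi_j\ra_{L^2(G\times S\times I)}
-\sum_{j=1}^3\la K_j\psi,\psi_j\ra_{L^2(G\times S\times I)},
\]
and then expand $K_j\psi$ via \eqref{coll3} so that the collision contribution becomes the double sum
\[
\sum_{j,k=1}^3\int_{G\times S\times I}\int_{S\times I}\sigma_{kj}(x,\omega',\omega,E',E)\,\psi_k(x,\omega',E')\,\psi_j(x,\omega,E)\,d\omega' dE'\,d\omega\, dE\, dx.
\]

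Next I would bound each of these mixed terms. For fixed $(j,k)$, applying Cauchy--Schwarz in the measure $\sigma_{kj}\,d\omega' dE'\,d\omega\, dE\, dx$ (legitimate since $\sigma_{kj}\ge 0$) gives a product of two factors: one controlled by $\int_{S\times I}\sigma_{kj}(x,\omega',\omega,E',E)\,d\omega' dE'$ integrated against $\psi_j(x,\omega,E)^2$, and one controlled by $\int_{S\times I}\sigma_{kj}(x,\omega',\omega,E',E)\,d\omega\, dE$ integrated against $\psi_k(x,\omega',E')^2$. Summing over $k$ and using the two parts of \eqref{colleh} (the first bounds $\sum_k\int\sigma_{kj}d\omega'dE'$ by $M_1$ after fixing $j$; the second, after relabelling, bounds $\sum_k\int\sigma_{jk}d\omega'dE'$ by $M_2$), together with the symmetrizing inequality $ab\le\tfrac12(a^2+b^2)$, one obtains
\[
\Big|\sum_{j,k}\la\, \cdots\, \ra\Big|
\le \sum_{j=1}^3\int_{G\times S\times I}\Big(\sum_{k=1}^3\int_{S\times I}\sigma_{kj}\,d\omega' dE'\Big)\psi_j^2
\le \sum_{j=1}^3\int_{G\times S\times I}\Big(\Sigma_j - c\Big)\psi_j^2,
\]
where the last step uses precisely assumption \eqref{sda4}. (Here one should be slightly careful to split the $ab\le\frac12(a^2+b^2)$ step so that half the mass is estimated with \eqref{sda4} in the form above and, after the change of summation index and \eqref{sda3}, the other half reproduces the same bound; the net effect is the displayed inequality.) Combining with $\la\Sigma_j\psi_j,\psi_j\ra=\int\Sigma_j\psi_j^2$ yields
\[
\la(\Sigma-K)\psi,\psi\ra_{L^2(G\times S\times I)^3}
\ge \sum_{j=1}^3\int_{G\times S\times I}\big(\Sigma_j-(\Sigma_j-c)\big)\psi_j^2
= c\sum_{j=1}^3\n{\psi_j}^2_{L^2(G\times S\times I)}
= c\n{\psi}^2_{L^2(G\times S\times I)^3},
\]
which is \eqref{sda5}. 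Boundedness of $\Sigma-K$ is already known from boundedness of $\Sigma$ and Theorem~\ref{skb}, so density of $C^1$-functions is not even needed; the estimate is purely algebraic once $\psi\in L^2(G\times S\times I)^3$.

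The only genuine obstacle is bookkeeping: making sure the two hypotheses \eqref{sda3} and \eqref{sda4} are each used on the correct ``half'' of the symmetrized cross term, i.e. that after the relabelling $j\leftrightarrow k$ in the second half the row-sum $\sum_k\int\sigma_{jk}\,d\omega'dE'$ (bounded via \eqref{sda3}) appears rather than the column-sum. This is exactly the coupled-system analogue of the single-particle argument; since $K$ here is a genuine Schur (bounded integral) operator with no hyper-singular part, no finite-part interpretation or regularization is required, and the proof is a routine — if index-heavy — Cauchy--Schwarz estimate. I would simply cite \cite[pp.~241]{dautraylionsv6}, \cite{tervo07} and Lemma~\ref{csdale1a} for the scalar template and carry out the three-component sum explicitly.
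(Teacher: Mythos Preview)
Your proposal is correct and follows exactly the route the paper intends: the paper does not give a detailed proof of Theorem~\ref{sdath1} but simply points to Lemma~\ref{csdale1a} (and thereby to \cite[Theorem~4, p.~241]{dautraylionsv6}), leaving the coupled-system version as the evident componentwise extension via Cauchy--Schwarz and the symmetrization $ab\le\tfrac12(a^2+b^2)$. Your bookkeeping remark about using \eqref{sda3} on one half and \eqref{sda4} on the other after the $j\leftrightarrow k$ relabelling is precisely the point, and your write-up is more explicit than what the paper provides.
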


Notice that the estimate \eqref{sda5} is equivalent to the property that
for every $\lambda >0$,
and $\psi\in L^2(G\times S\times I)^3$,
\be\label{sda5a}
\n{ (\lambda I-(-\Sigma+K+cI))\psi}_{L^2(G\times S\times I)^3}\geq \lambda \n{\psi}_{L^2(G\times S\times I)^3},
\ee
which means that the operator $-\Sigma+K+cI:L^2(G\times S\times I)^3\to L^2(G\times S\times I)^3$ is {\it dissipative} (\cite[Section II.3.b]{engelnagel}, or \cite[Section 1.4]{pazy83}).

For the  coupled BTE system (\ref{intro1}), (\ref{intro2}) we formulate the following result which is a slight modification of results given in \cite{tervo17-up}. Note that it is valid only for Schur collision operators and hence it does not govern completely the particle transport including charged particles, such as applications in radiation therapy.

\begin{theorem}\label{origbte}
Suppose that the assumptions (\ref{scateh}), (\ref{colleh}), (\ref{sda3}), (\ref{sda4}) are valid,
and that $c$ is strictly positive.
Then for every $f\in L^2(G\times S\times I)^3$ and $g\in T^2_{\tau_-}(\Gamma_-)^3$
the following assertions hold.
\begin{enumerate}
\item[(i)] The boundary value problem
\begin{gather}
\omega\cdot\nabla_x\psi_j+\Sigma_j\psi_j-K_j\psi=f_j, \nonumber\\
{\psi_j}_{|\Gamma_-}=g_j, \label{origbte1}
\end{gather}
for $j=1,2,3$, has a unique solution $\psi\in  W^2(G\times S\times I)^3$.

\item[(ii)] There exists a constant $C>0$ such that
\be\label{origbte2}
\n{\psi}_{W^2(G\times S\times I)^3}\leq C\big(\n{f}_{L^2(G\times S\times I)^3}+\n{g}_{T^2_{\tau_-}(\Gamma_-)^3}\big).
\ee

\item[(iii)] If $f\geq 0$ and $g\geq 0$, then $\psi\geq 0$,
i.e. the solution $\psi$ is non-negative for non-negative data $f,g$.
\end{enumerate}
\end{theorem}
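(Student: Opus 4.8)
The plan is to run the $m$-dissipativity argument of Section~\ref{m-d}, which is substantially simpler here since \eqref{origbte1} carries no energy-derivative terms, and to reduce the case of inhomogeneous inflow data to homogeneous data by a lift. First I would introduce the vectorial transport operator $T_0:L^2(G\times S\times I)^3\to L^2(G\times S\times I)^3$ with domain $D(T_0):=\{\psi\in W^2(G\times S\times I)^3\ |\ \gamma_-(\psi)=0\}$ and $(T_0\psi)_j:=\omega\cdot\nabla_x\psi_j+\Sigma_j\psi_j-K_j\psi$ for $j=1,2,3$, and decompose $-T_0+cI=-A_0+(-\Sigma+K+cI)$, where $A_0\psi:=(\omega\cdot\nabla_x\psi_1,\omega\cdot\nabla_x\psi_2,\omega\cdot\nabla_x\psi_3)$ on the same domain. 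For $g=0$: the free transport operator $-A_0$ with vanishing inflow trace is $m$-dissipative on $L^2(G\times S\times I)^3$ (classical; cf.\ \cite{dautraylionsv6} and \cite[proof of Theorem~4.7]{tervo14}, applied componentwise), the dissipativity $\la A_0\psi,\psi\ra_{L^2}=\frac{1}{2}\n{\gamma_+(\psi)}^2_{T^2(\Gamma_+)}\geq 0$ coming from Green's formula \eqref{green} and the trace theory of Theorem~\ref{tth}; by Theorem~\ref{skb} the operator $K$ is bounded, and by Theorem~\ref{sdath1} the operator $-\Sigma+K+cI$ is bounded and dissipative. A bounded dissipative perturbation of an $m$-dissipative operator is $m$-dissipative (\cite[Chapter~III, Theorem~2.7]{engelnagel}, or \cite[Theorem~4.2]{tervo14}), so $-T_0+cI$ is $m$-dissipative; since $c>0$ this gives $R(T_0)=R\big(cI-(-T_0+cI)\big)=L^2(G\times S\times I)^3$, while dissipativity yields $\la T_0\psi,\psi\ra_{L^2}\geq c\n{\psi}^2_{L^2}$, hence $\n{T_0\psi}_{L^2}\geq c\n{\psi}_{L^2}$, giving uniqueness and $\n{\psi}_{L^2}\leq\frac{1}{c}\n{f}_{L^2}$.

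For general $g\in T^2_{\tau_-}(\Gamma_-)^3$ I would put $Lg:=(L_-g_1,L_-g_2,L_-g_3)$ with $L_-$ the lift of Theorem~\ref{tth}: by Lemma~\ref{le:lift} (with $\Sigma=0$) one has $Lg\in W^2(G\times S\times I)^3$, $\n{Lg}_{L^2}\leq\n{g}_{T^2_{\tau_-}(\Gamma_-)^3}$, $\gamma_-(Lg)=g$, and $\omega\cdot\nabla_x(Lg)=0$ (Lemma~\ref{trathle1}). Substituting $\psi=u+Lg$ turns \eqref{origbte1} into $T_0u=\tilde f:=f-\Sigma(Lg)+K(Lg)\in L^2(G\times S\times I)^3$ with $\gamma_-(u)=0$, which by the homogeneous case has a unique solution $u\in W^2(G\times S\times I)^3$; then $\psi=u+Lg\in W^2(G\times S\times I)^3$ is the unique solution of \eqref{origbte1}, which is (i). For (ii) I would combine $\n{u}_{L^2}\leq\frac{1}{c}\n{\tilde f}_{L^2}\leq\frac{1}{c}\big(\n{f}_{L^2}+(\max_j\n{\Sigma_j}_{L^\infty}+\n{K})\n{g}_{T^2_{\tau_-}(\Gamma_-)^3}\big)$ with $\n{Lg}_{L^2}\leq\n{g}_{T^2_{\tau_-}(\Gamma_-)^3}$ to bound $\n{\psi}_{L^2}$, and use $\omega\cdot\nabla_x\psi_j=f_j-\Sigma_j\psi_j+K_j\psi$ to bound $\n{\omega\cdot\nabla_x\psi}_{L^2}$, which yields \eqref{origbte2}.

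For the non-negativity (iii) I would follow the scheme of Subsection~\ref{possol}. When $g=0$, since $-T_0$ is $m$-dissipative with $\n{G(t)}\leq e^{-ct}$ (where $G(t)$ is the generated semigroup), the solution is $\psi=T_0^{-1}f=\int_0^\infty G(t)f\,dt$; with $-T_0=-A_0-\Sigma+K$, the semigroup of $-A_0$ is $(T_{A_0}(t)f)_j(x,\omega,E)=H(t(x,\omega)-t)f_j(x-t\omega,\omega,E)$, that of $-\Sigma$ is the diagonal multiplier $e^{-t\Sigma}$, and that of $K$ is $e^{tK}=\sum_{n\geq 0}t^nK^n/n!$, all three positivity-preserving (the last because $K$ has a non-negative kernel by \eqref{colleh}); Trotter's product formula (\cite[Corollary~5.8, p.~227]{engelnagel}) then gives $G(t)f=\lim_{n\to\infty}\big[T_{A_0}(t/n)\,e^{-(t/n)\Sigma}\,e^{(t/n)K}\big]^n f\geq 0$ for $f\geq 0$, hence $\psi\geq 0$. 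For general $g\geq 0$, I would decompose $\psi=u+w$, where $u=(L_{\Sigma_1,-}g_1,L_{\Sigma_2,-}g_2,L_{\Sigma_3,-}g_3)$ solves the collisionless problem $\omega\cdot\nabla_x u_j+\Sigma_j u_j=0$, $u_j|_{\Gamma_-}=g_j$ — so $u\geq 0$ by the explicit formula \eqref{trath3} and $u\in W^2(G\times S\times I)^3$ by Lemma~\ref{trathle1} — and $w$ solves $\omega\cdot\nabla_x w_j+\Sigma_j w_j-K_j w=f_j+K_j u$, $w|_{\Gamma_-}=0$; since $f+Ku\geq 0$, the case $g=0$ gives $w\geq 0$, and so $\psi=u+w\geq 0$ by the uniqueness established in (i).

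The main obstacle will be the $m$-dissipativity of $-T_0+cI$, i.e.\ essentially that of the vectorial free-transport operator with vanishing inflow trace; this rests entirely on the trace theorems of Section~\ref{pre} and Green's formula, after which the perturbation lemma and the Trotter argument are routine. Indeed, all of (i)--(iii) are minor modifications of results in \cite{tervo14}, and the only genuine point to verify is that the Schur-type hypotheses \eqref{colleh}, \eqref{sda3}, \eqref{sda4} are adequate in the $L^2$-framework used here.
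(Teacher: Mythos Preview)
Your proposal is correct and follows precisely the route the paper points to: the paper's own proof merely cites \cite{tervo14} and highlights the lift estimate $\n{Lg}_{L^2}\leq \n{g}_{T^2_{\tau_-}(\Gamma_-)}$, and you have faithfully reconstructed that argument ($m$-dissipativity of the free streaming operator with zero inflow, bounded dissipative perturbation by $-\Sigma+K+cI$, reduction to $g=0$ via the lift $L_-$, and the Trotter-based positivity argument). One small technicality worth tightening: for Green's formula \eqref{green} and the identity $\la A_0\psi,\psi\ra=\tfrac12\n{\gamma_+(\psi)}^2_{T^2(\Gamma_+)}$ you implicitly need $\gamma_+(\psi)\in T^2(\Gamma_+)$, so the domain of $A_0$ should be taken as $\tilde W^2_{-,0}(G\times S\times I)^3$ rather than merely $\{\psi\in W^2:\gamma_-(\psi)=0\}$; this is how \cite{tervo14} sets it up and is what the cited references actually prove.
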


\begin{proof}
The assertions follow from the considerations expressed in \cite{tervo17-up} noting that in Lemma 5.8 (see its proof) of \cite{tervo17-up} we actually have
\[
\n{Lg}_{L^2(G\times S\times I)}\leq \n{g}_{T^2_{\tau_-}(\Gamma_-)},\ {\rm for}\ g\in T^2_{\tau_-}(\Gamma_-).
\]
We omit details here.
\end{proof}

The corresponding result  for time-dependent coupled system of BTEs has been proven in \cite{tervo17-up} as well.

%%%%%%%%%%%%%%%%%%%%%%%%%%%%%%%%%%%%%%%%%%%%%%%%%%%%%%%%%%%%%%%%%%%%%%%%%%%%%%%%%%%%%%%%%%%%%%%%%%%%%%%%%%%
\subsection{Existence of Solutions Based on Variational Formulation} \label{LLM}
%%%%%%%%%%%%%%%%%%%%%%%%%%%%%%%%%%%%%%%%%%%%%%%%%%%%%%%%%%%%%%%%%%%%%%%%%%%%%%%%%%%%%%%%%%%%%%%%%%%%%%%%%%%

As before, we perform a change of unknown functions, by setting
\be 
\phi_j:=e^{CE}\psi_j,\quad j=1,2,3,
\ee
where the constant $C$ will be fixed below.
This transforms the problem (\ref{csda1a})-(\ref{csda3}) into an equivalent form,
with transport equation on $G\times S\times I$, 
\begin{gather}
\omega\cdot\nabla_x\phi_1+\Sigma_1\phi_1- K_{1,C}\phi = {\bf f}_1,\label{cosyst1}\\
-{\p {(S_j\phi_j)}E}+\omega\cdot\nabla_x\phi_j+CS_j\phi_j+\Sigma_{j}\phi_j
-{K}_{j,C}\phi={\bf f}_j,\quad j=2,3,\label{cosyst2}
\end{gather}
boundary condition on $\Gamma_-$,
\be\label{cosyst3}
{\phi}_{|\Gamma_-}={\bf g},
\ee
and initial condition on $G\times S$,
\be\label{cosyst4}
\phi_j(\cdot,\cdot,E_m)=0,\quad j=2,3,
\ee
where $\phi=(\phi_1,\phi_2,\phi_3)$ and
\[
{\bf f}_j={}e^{CE} f_j,\quad
{\bf g}_j={}e^{CE} g_j,\quad j=1,2,3.
\]
The operator $K_C=(K_{1,C},K_{2,C},K_{3,C})$ in (\ref{cosyst1}), (\ref{cosyst2}) above is given by
\be 
K_{j,C}\phi:=\sum_{k=1}^3\int_{S\times I}\sigma_{kj,C}(x,\omega',\omega,E',E)\phi_k(x,\omega',E')d\rho_S^{kj}(\omega') d\rho_I^{kj}(E'),
\ee
where the corresponding differential cross-sections are
\be 
\sigma_{kj,C}(x,\omega',\omega,E',E):=\sigma_{kj}(x,\omega',\omega,E',E)e^{C(E-E')},\quad j,k=1,2,3.
\ee

\begin{remark}
We could perform a more refined change of the unknown functions by setting
$\phi_1:=\psi_1$, $\phi_j:=e^{C_jE}\psi_j$.
In this case, the modified differential cross-sections $\sigma_{kj,C}$ would be
\begin{gather*}
\sigma_{11,C}=\sigma_{11},\quad 
\sigma_{1k,C}=\sigma_{1k}e^{C_k E},\quad k=2,3,
\nonumber\\
\sigma_{k1,C}=\sigma_{k1}e^{-C_k E'},\quad
\sigma_{kj,C}=\sigma_{kj}e^{C_2 E-C_k E'},\quad j,k=2,3.
\end{gather*}
We omit further considerations of such refinement in this work.
\end{remark}

In the rest of the section, work under the assumptions (\ref{scateh}), (\ref{colleh}), (\ref{sda2a}), \eqref{sda2a-2}, (\ref{sda2b}) (as already mentioned, the assumption \eqref{sda2c} will be needed only in section \ref{evo-op})
and suppose furthermore that (for $C$ given below in (\ref{cosyst6a}) and) for some $c\geq 0$ the estimates
\begin{align}
& \Sigma_j(x,\omega,E)-\sum_{k=1}^3\int_{ I'}\int_{S'}\sigma_{jk,C}(x,\omega,\omega',E,E') d\rho_S^{jk}(\omega') d\rho_I^{jk}(E')
\geq c,\label{csda3aa} \\
& \Sigma_j(x,\omega,E)-\sum_{k=1}^3\int_{I'}\int_{S'}\sigma_{kj,C}(x,\omega',\omega,E',E)  d\rho_S^{kj}(\omega') d\rho_I^{kj}(E')
\geq c, \label{csda4aa}
\end{align}
hold for a.e. $(x,\omega,E)\in G\times S\times I$.

At first,
we apply the variational formulations to deduce existence of solutions. 
Recall that  the inner product in $L^2(G\times S\times I)^3$ is given by
\[
\la \phi,v\ra_{L^2(G\times S\times I)^3}
=\sum_{j=1}^3 \la \phi_j,v_j\ra_{L^2(G\times S\times I)},
\]
and analogously in other products of inner product spaces.
Integrating by parts and applying the Green's formula (\ref{green}) we find (similarly as in section \ref{esols}) that the bilinear form ${\bf B}(\cdot,\cdot):C^1(\ol G\times S\times I)^3\times C^1(\ol G\times S\times I)^3\to\R $ and the linear form ${\bf F}:
C^1(\ol G\times S\times I)^3 \to\R$ corresponding to the problem 
(\ref{cosyst1})- (\ref{cosyst4}) are
\bea\label{cosyst5}
{\bf B}(\phi,v)=&
\sum_{j=2,3}\la\phi_j,S_j{\p {v_j}E}\ra_{L^2(G\times S\times I)}
-\la\phi,\omega\cdot\nabla_x v\ra_{L^2(G\times S\times I)^3}
\nonumber\\
&
+\sum_{j=2,3} C\la \phi_j,S_jv_j\ra_{L^2(G\times S\times I)}
+\la\phi,(\Sigma^*-K_C^*) v\ra_{L^2(G\times S\times I)^3}
\nonumber\\
&
+\la \gamma_+(\phi), \gamma_+(v) \ra_{T^2(\Gamma_+)^3}
+\sum_{j=2,3} \la \phi_j(\cdot,\cdot,0),S_j(\cdot,0) v_j(\cdot,\cdot,0)\ra_{L^2(G\times S)} ,
\eea
and
\be\label{cosyst6}
{\bf F}(v)
=
\la {\bf f},v\ra_{L^2(G\times S\times I)^3}
+\la {\bf g},\gamma_-(v)\ra_{T^2(\Gamma_-)^3}.
\ee

For $j=2,3$, let
\[
q_j:={1\over 2}\esssup_{(x,E)\in G\times I}{\p {S_j}E}(x,E).
\]
Moreover, define
\begin{align}\label{cosyst6a}
&C_j:={{\max\{q_j,0\}}\over\kappa_j},\quad j=2,3, \nonumber \\
&C:=\max\{C_1,C_2\}. 
\end{align}

The appropriate Hilbert spaces are defined as
\be\label{eq:tilde_H}
&{\s H}:=H\times H_1\times H_1, \nonumber \\
&\tilde{\s H}:=\tilde {\bf W}^2(G\times S\times I)\times H_2\times  H_2,
\ee
where the space $\tilde {\bf W}^2(G\times S\times I)$ was defined just before Corollary \ref{convg}.
Recall from section \ref{fs} that the elements of ${\s H}$  are of the form
\[
\tilde\phi=(\tilde\phi_1,\tilde\phi_2,\tilde\phi_3)
=\big((\phi_1,q_1), (\phi_2,q_2,p_{02},p_{{\rm m}2}), (\phi_3,q_3,p_{03},p_{{\rm m}3})\big),
\]
with $\phi_i\in L^2(G\times S\times I)$, $q_i\in T^2(\Gamma)$ for $i=1,2,3$,
and $p_{0j}, p_{{\rm m}j}\in L^2(G\times S)$ for $j=2,3$.
Moreover, $\tilde {\s H}\subset {\s H}$ through the continuous embedding
\begin{align}\label{eq:embed_sH}
&
v=(v_1,v_2,v_2)
\mapsto \big( (v_1,\gamma(v_1)),\varrho(v_2), \varrho(v_3) \big), \nonumber \\
&
\varrho(v_j):=\big(v_j,\gamma(v_j),v_j(\cdot,\cdot,0),v_j(\cdot,\cdot,E_{\rm m})\big),\hspace{1cm} j=2,3.
\end{align}
In spaces ${\s H}$ and $\tilde {\s H}$ we use respectively the inner products
\[
\la \phi,v\ra_{{\s H}}=\la\phi_1,v_1\ra_{H}+\sum_{j=2,3}\la\phi_j,v_j\ra_{H_1}
\]
and
\[
\la\phi,v\ra_{\tilde {\s H}}
=\la \phi_1,v_1\ra_{{\tilde {\bf W}^2(G\times S\times I)}}+\sum_{j=2,3}\la\phi_j,v_j\ra_{H_2}.
\]
The bilinear form ${\bf B}:C^1(\ol G\times S\times I)^3\times C^1(\ol G\times S\times I)^3\to\R$
has the following boundedness and coercivity properties.

\begin{theorem}\label{cosystth1}
Suppose that the assumptions 
(\ref{scateh}), (\ref{colleh}), (\ref{sda2a}), \eqref{sda2a-2}, (\ref{sda2b}) are valid and that
(\ref{csda3aa}), (\ref{csda4aa}) hold for $c>0$ and for $C$ given in (\ref{cosyst6a}). 
Then there exists $M>0$ such that
\be\label{cosyst12}
|{\bf B}(\phi,v)|\leq M\n{\phi}_{{\s H}}\n{v}_{\tilde {\s H}}\quad \forall\phi, v\in C^1(\ol G\times S\times I)^3,
\ee
and 
\be\label{cosyst13}
{\bf B}(\phi,\phi)\geq c'\n{\phi}_{{\s H}}^2\quad \forall\phi\in C^1(\ol G\times S\times I)^3,
\ee
where
\begin{align}\label{cocprime}
c':=\min\{{1\over 2},{{\kappa_2}\over 2},{{\kappa_3}\over 2},c\}.
\end{align}
In addition, for all $v\in C^1(\ol G\times S\times I)^3$,
\be\label{cosyst14}
|{\bf F}(v)|\leq 
\big(\n{\bf f}_{L^2(G\times S\times I)^3}+\n{\bf g}_{T^2(\Gamma_-)^3}\big)\n{v}_{{\s H}}.
\ee
\end{theorem}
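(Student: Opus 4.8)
The plan is to mirror almost verbatim the proof of Theorem \ref{csdath1} for the single equation, applied component-wise, with the coupling absorbed into the single term $\langle \phi,(\Sigma^*-K_C^*)v\rangle_{L^2(G\times S\times I)^3}$. For the boundedness \eqref{cosyst12}, I would estimate $|{\bf B}(\phi,v)|$ directly from its defining expression \eqref{cosyst5} term by term: the terms $\langle\phi_j,S_j\partial_E v_j\rangle$ and $C\langle\phi_j,S_jv_j\rangle$ are bounded using $\|S_j\|_{L^\infty(G\times I)}<\infty$ (from \eqref{sda2a}); the term $-\langle\phi,\omega\cdot\nabla_x v\rangle_{L^2(G\times S\times I)^3}$ is bounded by $\|\phi\|_{L^2}\|\omega\cdot\nabla_x v\|_{L^2}$; the term $\langle\phi,(\Sigma^*-K_C^*)v\rangle$ by $(\|\Sigma^*\|+\|K_C^*\|)\|\phi\|_{L^2}\|v\|_{L^2}$, where $K_C$ is bounded by Theorem \ref{skb} (noting that the bound persists for $\sigma_{kj,C}=\sigma_{kj}e^{C(E-E')}$ on the finite interval $I$, possibly with different $M_1,M_2$); the boundary term $\langle\gamma_+(\phi),\gamma_+(v)\rangle_{T^2(\Gamma_+)^3}$ by $\|\gamma(\phi)\|_{T^2(\Gamma)^3}\|\gamma(v)\|_{T^2(\Gamma)^3}$; and the energy-trace terms $\langle\phi_j(\cdot,\cdot,0),S_j(\cdot,0)v_j(\cdot,\cdot,0)\rangle_{L^2(G\times S)}$ using that $S_j(\cdot,0)\in L^\infty(G)$ (which follows, as in the single-equation proof, from \eqref{sda2a}, \eqref{sda2a-2} and the Sobolev embedding \eqref{sobim}), together with the Sobolev embedding bound $\|w_j(\cdot,\cdot,0)\|_{L^2(G\times S)}\le C'\|w_j\|_{H_2}$. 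Collecting these against the definitions of the norms on ${\s H}=H\times H_1\times H_1$ and $\tilde{\s H}=\tilde{\bf W}^2(G\times S\times I)\times H_2\times H_2$ yields \eqref{cosyst12}.

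For the coercivity \eqref{cosyst13}, I would set $v=\phi$ in \eqref{cosyst5} and handle each piece exactly as in part B of the proof of Theorem \ref{csdath1}. For $j=2,3$, integration by parts gives
\[
\langle\phi_j,S_j\tfrac{\partial\phi_j}{\partial E}\rangle_{L^2(G\times S\times I)}
=-\langle\phi_j,\tfrac{\partial(S_j\phi_j)}{\partial E}\rangle_{L^2(G\times S\times I)}
+\langle\phi_j(\cdot,\cdot,E_m),S_j(\cdot,E_m)\phi_j(\cdot,\cdot,E_m)\rangle_{L^2(G\times S)}
-\langle\phi_j(\cdot,\cdot,0),S_j(\cdot,0)\phi_j(\cdot,\cdot,0)\rangle_{L^2(G\times S)},
\]
and the analogue of Lemma \ref{csdale0} (valid for each $j$ with constant $q_j$) controls $-\langle\phi_j,\partial_E(S_j\phi_j)\rangle$ from below by $-q_j\|\phi_j\|^2_{L^2}$ plus boundary contributions. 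The convection term contributes $\tfrac12\big(\|\gamma_+(\phi)\|^2_{T^2(\Gamma_+)^3}-\|\gamma_-(\phi)\|^2_{T^2(\Gamma_-)^3}\big)$ by the vector version of \eqref{csda33a}; combined with the explicit $+\|\gamma_+(\phi)\|^2_{T^2(\Gamma_+)^3}$ term in ${\bf B}$ this gives $\tfrac12\|\gamma(\phi)\|^2_{T^2(\Gamma)^3}$. By the choice $C=\max\{C_1,C_2\}$ in \eqref{cosyst6a} and assumption \eqref{sda2b}, one has $C\ge q_j/S_j$ a.e., so $C\langle\phi_j,S_j\phi_j\rangle_{L^2}\ge q_j\|\phi_j\|^2_{L^2}$, which cancels the $-q_j\|\phi_j\|^2_{L^2}$ term. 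The scattering-collision term is bounded below by $c\|\phi\|^2_{L^2(G\times S\times I)^3}$ by the accretivity Theorem \ref{sdath1} applied to $\Sigma-K_C$ under hypotheses \eqref{csda3aa}, \eqref{csda4aa}. The energy boundary terms at $E=E_m$ combine to $+\tfrac12\langle\phi_j(\cdot,\cdot,E_m),S_j(\cdot,E_m)\phi_j(\cdot,\cdot,E_m)\rangle_{L^2(G\times S)}\ge\tfrac{\kappa_j}{2}\|\phi_j(\cdot,\cdot,E_m)\|^2_{L^2(G\times S)}$, and those at $E=0$ to $+\tfrac12\langle\phi_j(\cdot,\cdot,0),S_j(\cdot,0)\phi_j(\cdot,\cdot,0)\rangle_{L^2}\ge\tfrac{\kappa_j}{2}\|\phi_j(\cdot,\cdot,0)\|^2_{L^2(G\times S)}$, using \eqref{sda2b}. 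Summing, ${\bf B}(\phi,\phi)\ge \tfrac12\|\gamma(\phi)\|^2_{T^2(\Gamma)^3}+\sum_{j=2,3}\tfrac{\kappa_j}{2}\big(\|\phi_j(\cdot,\cdot,0)\|^2_{L^2(G\times S)}+\|\phi_j(\cdot,\cdot,E_m)\|^2_{L^2(G\times S)}\big)+c\|\phi\|^2_{L^2(G\times S\times I)^3}$, and, recognizing the right side as bounded below by $c'\|\phi\|^2_{\s H}$ with $c'$ as in \eqref{cocprime} (note $\|\phi_1\|^2_H = \|\phi_1\|^2_{L^2}+\|\gamma(\phi_1)\|^2_{T^2(\Gamma)}$ has no $E$-trace component), we obtain \eqref{cosyst13}. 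Finally, \eqref{cosyst14} is immediate from the Cauchy--Schwarz inequality applied to \eqref{cosyst6} together with $\|\gamma_-(v)\|_{T^2(\Gamma_-)^3}\le\|\gamma(v)\|_{T^2(\Gamma)^3}\le\|v\|_{\s H}$ and $\|v\|_{L^2(G\times S\times I)^3}\le\|v\|_{\s H}$.

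The main obstacle, as in the single-equation case, is bookkeeping rather than any genuine analytic difficulty: one must be careful that the constant $c'$ in \eqref{cocprime} correctly accounts for the fact that the first component sits in $H$ (with no energy-trace terms) while the second and third sit in $H_1$, and that the accretivity constant $c$ from \eqref{csda3aa}--\eqref{csda4aa} is exactly the one feeding into $c'$. A minor subtlety worth flagging is the legitimacy of the integration by parts in $E$ and of Green's formula \eqref{green} for $\phi,v\in C^1(\ol G\times S\times I)^3$, which is unproblematic here since these functions are smooth up to the boundary; the extension of ${\bf B}$ and ${\bf F}$ to $\tilde{\s H}\times\tilde{\s H}$, resp. ${\s H}$, is then handled by density (using Corollary \ref{convg} for the convex case, or more generally the density statements underlying the spaces $H$, $H_1$, $H_2$) exactly as in section \ref{esols}, and is not part of the present statement.
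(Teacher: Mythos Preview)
Your proposal is correct and follows essentially the same approach as the paper: the paper's proof simply says that boundedness \eqref{cosyst12} is seen as in the proof of Theorem \ref{csdath1}, that the accretivity of $\Sigma-K_C$ follows from Theorem \ref{sdath1} under \eqref{csda3aa}, \eqref{csda4aa}, and that coercivity \eqref{cosyst13} then follows as in Theorem \ref{csdath1} for $C$ as in \eqref{cosyst6a}, with \eqref{cosyst14} immediate as in \eqref{csda39}. Your write-up is a faithful (and more detailed) unpacking of exactly this argument, including the correct observation that the first component lives in $H$ and hence contributes no energy-trace terms to $\n{\phi}_{\s H}^2$.
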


\begin{proof}
The boundedness (\ref{cosyst12}) can be seen as in the proof of Theorem \ref{csdath1}. The assumptions (\ref{csda3aa}), (\ref{csda4aa}) imply 
by Theorem \ref{sdath1} that
\be
\la (\Sigma-K_C)\phi,\phi\ra_{L^2(G\times S\times I)^3}\geq c\n{\phi}^2_{L^2(G\times S\times I)^3}.
\ee
Hence we see as in the proof of Theorem \ref{csdath1}
that for $C=\max\{C_1,C_2\}$
the coercitivity (\ref{cosyst13}) holds with the stated $c'>0$. 
The estimate (\ref{cosyst14}) is immediate (see \eqref{csda39}) and so the proof is finished.
\end{proof}

Due to the above theorem, the bilinear form ${\bf B}(\cdot,\cdot)$ has a unique extension $\tilde {\bf B}(\cdot,\cdot):{\s H}\times \tilde {\s H}\to \R$ which satisfies
\be\label{cosyst15}
|\tilde {\bf B}(\tilde\phi,v)|\leq M\n{\tilde\phi}_{{\s H}}\n{v}_{\tilde {\s H}}\quad \forall\tilde\phi\in {\s H},\ v\in \tilde {\s H},
\ee
and 
\be\label{cosyst16}
\tilde {\bf B}(v,v)\geq c'\n{v}_{{\s H}}^2\quad \forall v\in \tilde {\s H}.
\ee
Likewise, \eqref{cosyst14} implies that the linear form ${\bf F}$ has a unique bounded extension ${\s H}\to\R$, which we still denote by ${\bf F}$.
The variational equation corresponding to the problem 
(\ref{cosyst1})-(\ref{cosyst4}) is
\be\label{cosyst7}
\tilde {\bf B}(\tilde\phi,v)={\bf F}(v)\quad \forall v\in \tilde {\s H}.
\ee

For the coupled BTE we have the following
variational existence theorem, to be compared with Theorem \ref{csdath3} for single particle transport.

\begin{theorem}\label{cosystth2}
Suppose that the assumptions
(\ref{scateh}), (\ref{colleh}), (\ref{sda2a}), \eqref{sda2a-2}, (\ref{sda2b}) are valid,
and that (\ref{csda3aa}), (\ref{csda4aa}) hold for $c>0$ and for $C$ given in (\ref{cosyst6a}). 
Let ${\bf f}\in L^2(G\times S\times I)^3$ and ${\bf g}\in T^2(\Gamma_-)^3$.
Then the following assertions hold.

(i) The variational equation
\be\label{cosyst17}
\tilde {\bf B}(\tilde\phi,v)={\bf F}(v)\quad \forall v\in \tilde {\s H},
\ee
has a solution $\tilde{\phi}=(\tilde{\phi}_1, \tilde{\phi}_2, \tilde{\phi}_3)\in {\s H}$.
Writing $\tilde{\phi}_1=(\phi_1,q_1)$, $\tilde \phi_j=(\phi_j,q_j,p_{0j},p_{mj})$, $j=2,3$,
and $\phi=(\phi_1,\phi_2,\phi_3)\in L^2(G\times S\times I)^3$,
then $\phi\in\mc{H}_{{\bf P}}(G\times S\times I^\circ)$
(see \eqref{eq:H_bfP})
and it is a weak (distributional) solution of the system of equations (\ref{cosyst1}), (\ref{cosyst2}),
and $\phi_1\in W^2(G\times S\times I)$.

(ii) Suppose that additionally the assumption ${\bf TC}$ holds (p. \pageref{as:TC}).
Then a solution $\phi$ of the equations (\ref{cosyst1}), (\ref{cosyst2}) obtained in part (i) is a solution of the problem 
\eqref{cosyst1}-\eqref{cosyst4}. 

(iii) Under the assumptions imposed in part (ii),
any solution $\phi$ of the problem
\eqref{cosyst1}-\eqref{cosyst4}
is unique and obeys the estimate
\be\label{cosyst17a}
\n{\phi}_{{\s H}}\leq {1\over{ c'}}
\big(\n{{\bf f}}_{L^2(G\times S\times I)^3}+\n{{\bf g}}_{T^2(\Gamma_-)^3}\big),
\ee
where $c'$ is given in \eqref{cocprime}.
\end{theorem}

\begin{proof}
The proofs of items (i)-(iii) are analogous to the  proofs of the corresponding items in Theorem \ref{csdath3}.
Note that the claim $\phi_1\in W^2(G\times S\times I)$ in part (i)
follows from noticing that by \eqref{cosyst1} we have
$\omega\cdot\nabla_x\phi_1=-\Sigma_1\phi_1+K_{1,C}\phi+{\bf f}_1$, and 
the right hand side belongs to $L^2(G\times S\times I)$.
\end{proof}

The problem (\ref{csda1a})-(\ref{csda3}) also admits a variational formulation.
Define bilinear $\tilde{\bf B}_0(\cdot,\cdot):{\s H}\times \tilde {\s H}\to\R$ by
\bea\label{cosyst5a}
\tilde{\bf B}_0(\tilde{\psi},v)
=&
\sum_{j=2,3} \la\psi_j,S_j{\p {v_j}E}\ra_{L^2(G\times S\times I)}
-\la \psi, \omega\cdot\nabla_x v\ra_{L^2(G\times S\times I)^3}
+\la\psi,(\Sigma^*-K^*) v\ra_{L^2(G\times S\times I)^3} \nonumber\\
&
+\la q, \gamma_+(v)\ra_{T^2(\Gamma_+)^3}
+\sum_{j=2,3}\la p_{0j},S_j(\cdot,0) v_j(\cdot,\cdot,0)\ra_{L^2(G\times S)},
\eea
where $\tilde{\psi}=(\tilde{\psi}_1,\tilde{\psi}_2,\tilde{\psi}_3)$,
$\tilde{\psi}_1=(\psi_1,q_1)$,
$\tilde{\psi}_j=(\psi_j,q_j,p_{0j},p_{{\rm m}j})$, $j=2,3$,
and $\psi=(\psi_1,\psi_2,\psi_3)$, $q=(q_1,q_2,q_3)$.
In other words, $\tilde{\bf B}_0$ is the unique extension of (\ref{cosyst5}) with $C=0$.
Then by part (ii) of Theorem \ref{cosystth2},
a given $\psi\in W^2(G\times S\times I)\times W_1^2(G\times S\times I)^2 $
is a solution of the problem  
(\ref{csda1a})-(\ref{csda3})
if (and only if) the variational equation 
\[
\tilde{\bf B}_0(\tilde{\psi},v)={\bf F}_0(v),\quad \forall v\in \tilde {\s H},
\]
holds. Here $\psi$ is identified with $\tilde{\psi}\in {\s H}$ through the mapping given in \eqref{eq:embed_sH}, and
\be\label{linf}
{\bf F}_0(v)
=\la {f},v\ra_{L^2(G\times S\times I)^3}+
\la {g},\gamma_-(v)\ra_{T^2(\Gamma_-)^3}.
\ee

We have the following corollary for the original problem.

\begin{corollary}\label{cosystco1} 
Suppose that the assumptions of Theorem \ref{cosystth2} are valid.
Let ${ f}\in L^2(G\times S\times I)^3$ and  ${ g}\in T^2(\Gamma_-)^3$.
Then the following assertions hold.

(i) The variational equation
\be\label{vareqco}
\tilde {\bf B}_0(\tilde\psi,v)={\bf F}_0(v)\quad \forall v\in \tilde{\s H}
\ee
has a solution $\tilde{\psi}=(\tilde{\psi}_1, \tilde{\psi}_2, \tilde{\psi}_3)\in {\s H}$. 
Writing $\tilde{\psi}_1=(\psi_1,q_1)$,
$\tilde{\psi}_j=(\psi_j,q_j,p_{0j},p_{mj}),\ j=2,3$, and $\psi=(\psi_1,\psi_2,\psi_3)\in L^2(G\times S\times I)^3$,
then $\psi\in\mc{H}_{{\bf P}}(G\times S\times I^\circ)$
(see \eqref{eq:H_bfP})
and it is a weak (distributional) solution of the system of equations (\ref{csda1a}), (\ref{csda1b}),
and $\psi_1\in W^2(G\times S\times I)$.

(ii) Suppose that additionally the assumption ${\bf TC}$ holds (p. \pageref{as:TC}).
Then a solution $\psi$ of the equations (\ref{csda1a}), (\ref{csda1b})
obtained in part (i) is a solution of the problem 
(\ref{csda1a})-(\ref{csda3}). 

(iii) Under the assumptions imposed in part (ii),
any solution $\psi$ of the problem (\ref{csda1a})-(\ref{csda3}) 
is unique and obeys the estimate
\be\label{csda40aaa-co}
\n{\psi}_{{\s H}}\leq {{e^{CE_{\rm m}}}\over {c'}}
\big(\n{{ f}}_{L^2(G\times S\times I)^3}+\n{{ g}}_{T^2(\Gamma_-)^3}\big).
\ee
(Recall that $C$ is defined in \eqref{cosyst6a}, $c'$ in \eqref{cocprime} and that $E_m$ is the cutoff energy.)
\end{corollary}

\begin{proof}  
A solution $\psi$ of the problem (\ref{csda1a})-(\ref{csda3}) is obtained
from a solution $\phi$ of the problem \eqref{cosyst1}-\eqref{cosyst4} by
taking $\psi=e^{-CE}\phi$.
Note that if $\phi_1\in W^2(G\times S\times I)$, then $\psi_1\in W^2(G\times S\times I)$ as well.
The rest of the proof proceeds in exactly the same way as that for Corollary \ref{csdaco1}
(of course, one uses Theorem \ref{cosystth2} instead of Theorem \ref{csdath3}).
\end{proof}

%%%%%%%%%%%%%%%%%%%%%%%%%%%%%%%%%%%%%%%%%%%%%%%%%%%%%%%%%%%%%%
\subsection{Existence of Solutions Based on $m$-dissipativity}\label{mdiss-op}
%%%%%%%%%%%%%%%%%%%%%%%%%%%%%%%%%%%%%%%%%%%%%%%%%%%%%%%%%%%%%%
 
The method of section \ref{m-d} can be extended to the case of 
coupled system in a straightforward manner.
Let us state what the problem to be solved for $\phi$ is in this context, in accordance with section \ref{m-d}.

Given ${\bf f}=({\bf f}_1, {\bf f}_2, {\bf f}_3)\in L^2(G\times S\times I)^3$,
and ${\bf g}\in T^2(\Gamma_-)\times H^1(I,T^2(\Gamma_-'))^2$,
find $\phi=(\phi_1,\phi_2,\phi_3)\in L^2(G\times S\times I)^3$
which satisfies the system of equations
on $G\times S\times I$,
\begin{align}
\omega\cdot\nabla_x\phi_1+\Sigma_1\phi_1- K_{1,C}\phi = {}& {\bf f}_1,\label{codiss1}\\
-{\p {(S_j\phi_j)}E}+\omega\cdot\nabla_x\phi_j+CS_j\phi_j+\Sigma_{j}\phi_j
-{K}_{j,C}\phi = {}& {\bf f}_j,\quad j=2,3,\label{codiss2}
\end{align}
the boundary condition on $\Gamma_-$,
\be\label{codiss3}
{\phi}_{|\Gamma_-}={\bf g},
\ee
and the initial condition on $G\times S$,
\be\label{codiss4}
\phi_j(\cdot,\cdot,E_m)=0,\quad j=2,3.
\ee
 
Let
\[
P_{1}(x,\omega,E,D)\phi_1:={}&\omega\cdot\nabla_x\phi_1, \\[2mm]
P_{C,j}(x,\omega,E,D)\phi_j:={}&-{\p {(S_j\phi_j)}E}+\omega\cdot\nabla_x\phi_j
+CS_j\phi_j,\quad j=2,3, \\[2mm]
{\bf P}_{C}(x,\omega,E,D)\phi:={}&\big(P_{1}(x,\omega,E,D)\phi_1,
P_{C,2}(x,\omega,E,D)\phi_2,P_{C,2}(x,\omega,E,D)\phi_3\big).
\]
When $C=0$ we write ${\bf P}(x,\omega,E,D):={\bf P}_0(x,\omega,E,D)$. Define
\begin{multline}
{\s H}_{\bf P}(G\times S\times I^\circ)
:=\{\psi\in L^2(G\times S\times I)^3\ | \\
{\bf P}(x,\omega,E,D)\psi\in 
L^2(G\times S\times I)^3\ \textrm{in the weak sense}\}. \label{eq:H_bfP}
\end{multline}

The operator $\tilde {\bf P}_{C,0}$ is defined
in the same way as $\tilde {P}_{C,0}$ in section \ref{m-d},
namely it is the smallest closed extension (closure) 
of ${\bf P}_{C,0}$, where
\[
D({\bf P}_{C,0}):={}&\big\{
\phi\in \tilde{W}^2(G\times S\times I)\times \big(\tilde{W}^2(G\times S\times I)\cap H^1(I,L^2(G\times S)\big)^2
\ \big| \\[2mm]
&\hspace{2.5mm} \phi_{|\Gamma_-}=0,\ \phi_j(\cdot,\cdot,E_m)=0,\ j=2,3\big\} \\[2mm]
{\bf P}_{C,0}\phi:={}&{\bf P}_{C}(x,\omega,E,D)\phi.
\]

Using these notations,
the problem \eqref{codiss1}-\eqref{codiss4}
with ${\bf g}=0$, in the strong sense, is equivalent to
\[
(\tilde {\bf P}_{C,0}+\Sigma-K_C)\phi={\bf f},
\]
where $\phi\in D(\tilde {\bf P}_{C,0})$. 

We  assume that the stopping powers for $j=2,3$ satisfy
\begin{align}
{}& S_j\in C^2(I, L^\infty(G)), \label{Sj-ass:1} \\[2mm]
{}& \kappa_j:=\inf_{(x,E)\in \ol{G}\times I}S_j(x,E)>0, \label{Sj-ass:2} \\[2mm]
{}& \nabla_x S_j\in L^\infty(G\times I). \label{Sj-ass:3}
\end{align}

We give here only the following concluding result,
which can be proven using the methods of section \ref{m-d}
and those of \cite[Section 5.3]{tervo17-up}.
Recall that by the conventions adopted above,
${\bf f}_j=e^{CE}f_j$ and ${\bf g}_j=e^{CE}g_j$,
where $C$ is as defined in \eqref{cosyst6a}.

\begin{theorem}\label{m-d-j-co1}
Suppose that the assumptions
(\ref{scateh}), (\ref{colleh}), (\ref{csda3aa}), (\ref{csda4aa})  (with $c>0$)
and \eqref{Sj-ass:1}, \eqref{Sj-ass:2}, \eqref{Sj-ass:3} are valid.
Furthermore, suppose that ${f}\in L^2(G\times S\times I)^3$,
and $g\in T^2(\Gamma_-)\times H^1(I,T^2(\Gamma_-'))^2$
is such that the compatibility condition
\be\label{comp-d-j}
g_j(\cdot,\cdot,E_{\rm m})=0,\quad j=2,3,
\ee
holds.
Then the problem \eqref{codiss1}-\eqref{codiss4}
has a unique solution 
$\psi\in {\s H}_{\bf P}(G\times S\times I^\circ)$.
In addition, there exists a constant $C_1>0$ such that
\emph{a priori} estimate
\be\label{diss-co-es}
\n{\psi}_{L^2(G\times S\times I)^3}
\leq
C_1\big(\n{{ f}}_{L^2(G\times S\times I)^3}+
\n{{ g}}_{T^2(\Gamma_-)\times H^1(I,T^2(\Gamma_-'))^2}\big),
\ee
holds.
\end{theorem}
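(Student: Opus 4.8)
The plan is to adapt, almost verbatim, the single-equation $m$-dissipativity argument from section \ref{m-d} (Theorems \ref{coth2-d}, \ref{coth3-dd} and Corollary \ref{cdd}) to the coupled vector-valued setting, exploiting the fact that the coupling occurs only through the bounded collision operator $K_C=(K_{1,C},K_{2,C},K_{3,C})$. First I would treat the homogeneous inflow case $g=0$. The transport part ${\bf P}_{C,0}$ acts \emph{componentwise}: the first component $\phi_1$ carries no energy derivative (its transport operator is just $\omega\cdot\nabla_x$ with the homogeneous inflow condition, whose smallest closed extension is $m$-dissipative, e.g. as in \cite{tervo14}, or as the $C=0$, $S_0=1$ degenerate case), while for $j=2,3$ the operator $P_{C,j}$ is exactly the operator $P_{C,0}$ of section \ref{m-d} with $S_0$ replaced by $S_j$. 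Assumptions \eqref{Sj-ass:1}, \eqref{Sj-ass:2}, \eqref{Sj-ass:3} are precisely \eqref{evo16}, \eqref{evo8-a}, \eqref{evo9-a}, and the choice $C=\max\{C_1,C_2\}$ with $C_j=\max\{q_j,0\}/\kappa_j$ guarantees $-\tfrac{\partial S_j}{\partial E}+2CS_j\geq 0$ for $j=2,3$, i.e. the analogue of \eqref{flp1-d}. Hence by Corollary \ref{d-cor} (applied to each $j=2,3$) and the remark on $\omega\cdot\nabla_x$, the direct sum $-\tilde{\bf P}_{C,0}$ is $m$-dissipative on $L^2(G\times S\times I)^3$.

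Next I would incorporate the scattering-collision term. By Theorems \ref{skb} and \ref{sdath1}, the bounded operator $\Sigma-K_C:L^2(G\times S\times I)^3\to L^2(G\times S\times I)^3$ satisfies the accretivity estimate $\langle(\Sigma-K_C)\phi,\phi\rangle\geq c\,\n{\phi}^2$ under \eqref{csda3aa}, \eqref{csda4aa} (these are the $C$-shifted versions of \eqref{sda3}, \eqref{sda4}, and Theorem \ref{sdath1} applies verbatim to the modified cross-sections $\sigma_{kj,C}$, exactly as Lemma \ref{csdale1a} handled $K_C$ in the single-equation case). Therefore $-(\Sigma-K_C)+cI$ is bounded and dissipative, and by the perturbation theorem for $m$-dissipative operators (\cite[Theorem 4.3 and Corollary 3.3]{pazy83}, or \cite[Theorem 4.4]{tervo14}) the operator $-\tilde{\bf P}_{C,0}-(\Sigma-K_C)+cI$ is $m$-dissipative. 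Since $c>0$, surjectivity at $\lambda=c$ gives $R(\tilde{\bf P}_{C,0}+\Sigma-K_C)=L^2(G\times S\times I)^3$, and dissipativity gives $\n{(\tilde{\bf P}_{C,0}+\Sigma-K_C)\phi}\geq c\n{\phi}$, yielding existence, uniqueness and the a priori bound $\n{\phi}_{L^2}\leq \tfrac1c\n{\bf f}_{L^2}$ for the problem with ${\bf g}=0$. This is the exact analogue of Theorem \ref{coth2-d}.

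For the inhomogeneous inflow data, I would use the substitution $u=\phi-L{\bf g}$, where $L{\bf g}=((L{\bf g}_1),(L{\bf g}_2),(L{\bf g}_3))$ is the componentwise lift of \eqref{liftb}. Here the first component ${\bf g}_1\in T^2(\Gamma_-)$ only needs $L{\bf g}_1\in W^2(G\times S\times I)$ (available since $\omega\cdot\nabla_x(L{\bf g}_1)=0$ weakly by Lemma \ref{trathle1} with $\Sigma=0$), while for $j=2,3$ the hypothesis ${\bf g}_j\in H^1(I,T^2(\Gamma_-'))$ together with Lemma \ref{le:H1_lift} gives $L{\bf g}_j\in H^1(I,L^2(G\times S))$, so $\tfrac{\partial}{\partial E}(S_jL{\bf g}_j)\in L^2$. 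The compatibility condition \eqref{comp-d-j} (equivalently ${\bf g}_j(\cdot,\cdot,E_m)=0$) ensures $u_j(\cdot,\cdot,E_m)=0$. As in the proof of Corollary \ref{cdd}, one checks that the transformed source $\tilde{\bf f}:={\bf f}+\tfrac{\partial}{\partial E}(SL{\bf g})-CS(L{\bf g})-(\Sigma-K_C)(L{\bf g})$ (with the $j=1$ component adjusted since $S_1$ is absent there) lies in $L^2(G\times S\times I)^3$, with $\n{\tilde{\bf f}}_{L^2}$ controlled by $\n{\bf f}_{L^2}+C'\n{\bf g}_{T^2(\Gamma_-)\times H^1(I,T^2(\Gamma_-'))^2}$ using \eqref{Sj-ass:1}, Theorem \ref{skb} and Lemma \ref{le:H1_lift}. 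Applying the $g=0$ result to $u$ and setting $\phi=u+L{\bf g}$ gives the unique solution in ${\s H}_{\bf P}(G\times S\times I^\circ)$; the estimate \eqref{diss-co-es} for $\psi=e^{-CE}\phi$ then follows from $\n{\phi}\leq\tfrac1c\n{\tilde{\bf f}}+\n{L{\bf g}}_{L^2}$ and $\n{\psi}_{L^2}\leq e^{CE_m}\n{\phi}_{L^2}$, as in Corollary \ref{m-d-co1}.

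The main obstacle I anticipate is bookkeeping rather than a genuine new difficulty: one must carefully handle the inhomogeneity of the domain across components — $\phi_1$ lives in $\tilde W^2(G\times S\times I)$ with no energy regularity demanded, whereas $\phi_2,\phi_3$ require the $H^1(I,L^2(G\times S))$ regularity — and correspondingly verify that the lift $L{\bf g}_1$ (with only ${\bf g}_1\in T^2(\Gamma_-)$, not $H^1$ in energy) still produces an admissible correction, since no $E$-derivative of $S_1L{\bf g}_1$ enters the first equation. The other point needing a brief justification is that the componentwise direct sum of the individual $m$-dissipative generators (the pure advection operator for $j=1$ and the $P_{C,j,0}$ for $j=2,3$) is again $m$-dissipative on the product space, which is immediate because $R(I+\bigoplus A_j)=\bigoplus R(I+A_j)$ and dissipativity is additive over orthogonal summands.
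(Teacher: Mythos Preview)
Your proposal is correct and follows precisely the route the paper itself indicates: the paper does not give a detailed proof but states that the result ``can be proven using the methods of section \ref{m-d} and those of \cite[Section 5.3]{tervo14}'', and you have carried out exactly that programme --- componentwise $m$-dissipativity of $\tilde{\bf P}_{C,0}$ (pure advection for $j=1$, the single-particle CSDA operator $\tilde P_{C,0}$ for $j=2,3$), bounded dissipative perturbation by $-(\Sigma-K_C)+cI$ via Theorem \ref{sdath1}, and then the lift argument of Theorem \ref{coth3-dd}/Corollary \ref{cdd} with the appropriate asymmetry between $g_1$ and $g_2,g_3$. The two bookkeeping points you flag (no $E$-derivative enters for $j=1$, and $m$-dissipativity of the orthogonal direct sum) are the only nontrivial adaptations, and you handle both correctly.
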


%%%%%%%%%%%%%%%%%%%%%%%%%%%%%%%%%%%%%%%%%%%%%%%%%%%%%%%%%%%%%%%%%%%%%%%%%%%%%%%%%%%%%%%%%%%%%%%%%%%%%%%%%%%
\subsection{Existence of Solutions Based on the Theory of Evolution Equations}\label{evo-op}
%%%%%%%%%%%%%%%%%%%%%%%%%%%%%%%%%%%%%%%%%%%%%%%%%%%%%%%%%%%%%%%%%%%%%%%%%%%%%%%%%%%%%%%%%%%%%%%%%%%%%%%%%%%

For collision operator of special type,
the existence result based on the theory of evolution operators is valid also for the coupled system.
One of the important features of this approach is that it yields more regularity for the solution.

We assume that the collision operator $K=(K_1,K_2,K_3)$ is of the form 
\be\label{ec1moda}
(K_j\psi)(x,\omega,E)=\sum_{k=1}^3\int_S\tilde\sigma_{kj}(x,\omega',\omega,E)\psi_k(x,\omega',E)d\omega',\quad j=1,2,3.
\ee
 
For a fixed $E\in I$ we define  bounded linear operators
$L^2(G\times S)\to L^2(G\times S)$  by
\begin{gather*}
(\Sigma_1(E)v)(x,\omega):=\Sigma_1(x,\omega,E)v(x,\omega), \\
(\ol K_1(E)v)(x,\omega):=\int_S\tilde\sigma_{11}(x,\omega',\omega,E)v(x,\omega') d\omega'.
\end{gather*}

In order to avoid ambiguity,
below we often denote by $S'$ the sphere $S$ for the variable $\omega'$, while $S$ is reserved for $\omega$. For example, saying that $h$ is a function $S\to (S'\to\R)$ (resp. $S'\to (S\to \R)$)
means that we would write $h(\omega)(\omega')$ (resp. $h(\omega')(\omega)$).

\begin{lemma}\label{coupevle2}
Suppose that 
\begin{gather}
\Sigma_1\in C^1(I,L^\infty(G\times S)), \label{ass1-11} \\[2mm]
\tilde{\sigma}_{11}\in C^1(I,L^\infty(G\times S,L^1(S')))\cap
C^1(I,L^\infty(G\times S',L^1(S))), \label{ass3-11} \\[2mm]
\Sigma_1\geq 0,\quad \tilde\sigma_{11}\geq 0, \label{ass4-a}
\end{gather}
and that for some $c>0$ the following hold a.e. $(x,\omega,E)\in G\times S\times I$,
\be
&\Sigma_1(x,\omega,E)-\int_S\tilde\sigma_{11}(x,\omega,\omega',E) d\omega' \geq c, \label{ec8-a} \\
&\Sigma_1(x,\omega,E)-\int_S\tilde\sigma_{11}(x,\omega',\omega,E) d\omega' \geq c. \label{ec9-a}
\ee

Then for every $q\in C^1(I,L^2(G\times S))$ and every $E\in I$ the problem
\begin{gather}
\omega\cdot\nabla_x v+\Sigma_1(E)v-\ol K_1(E)v=q(E), \nonumber\\
v_{|\Gamma_-'}=0, \label{cle1}
\end{gather}
has a unique solution $v=v(E)\in \tilde W^2_{-,0}(G\times S)$,
and
\be\label{cle2}
\n{v(E)}_{L^2(G\times S)}\leq {1\over c} \n{q(E)}_{L^2(G\times S)},
\quad \forall E\in I.
\ee
In addition, $v(\cdot):I\to L^2(G\times S)$ belongs to $C^1(I,L^2(G\times S))$ and
for all $E\in I$,
\bea\label{cle3}
\n{{\p {v} E}(E)}_{L^2(G\times S)}
\leq{} &
{1\over c}\Big( \n{{\p {q} E}(E)}_{L^2(G\times S)}+{1\over c}
\n{{\p {\Sigma_1}E}(E)}_{L^\infty(G\times S)}\n{q(E)}_{L^2(G\times S)}\nonumber\\
{}& +{1\over c}
\n{{\p {\ol K_1}E}(E)}\n{q(E)}_{L^2(G\times S)}
\Big),
\eea
where
\[
\Big({\p {\ol K_1}E}(E)v\Big)(x,\omega)
:=\int_S{\p {\tilde\sigma_{11}}E}(x,\omega',\omega,E)v(x,\omega') d\omega' .
\]
\end{lemma}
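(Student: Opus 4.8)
The plan is to treat Lemma \ref{coupevle2} in three stages, paralleling the structure of the earlier one-particle results (Lemma \ref{csdale1a}, Corollary \ref{m-d-co1}) but now for the stationary convection-scattering problem on $G\times S$ (no $E$-derivative appears, so this is genuinely a steady transport problem for each fixed $E$). First I would establish existence, uniqueness and the a priori bound \eqref{cle2} for each fixed $E\in I$. The operator $B_0:L^2(G\times S)\to L^2(G\times S)$, $B_0 v:=-\omega\cdot\nabla_x v$, with domain $D(B_0)=\tilde W^2_{-,0}(G\times S)$, is $m$-dissipative (this is exactly the energy-independent version of the operators used in section \ref{evcsd}; cf. Lemma \ref{csdale1} with $\tilde S\equiv 1$, or \cite{tervo14}, \cite{dautraylionsv6}). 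By the argument behind Lemma \ref{csdale1a}, assumptions \eqref{ass1-11}, \eqref{ass3-11}, \eqref{ass4-a}, together with the ellipticity-type conditions \eqref{ec8-a}, \eqref{ec9-a}, imply that $-(\Sigma_1(E)-\ol K_1(E))+cI$ is bounded and dissipative on $L^2(G\times S)$, with operator norm of $\ol K_1(E)$ controlled by $\n{\int_{S'}\tilde\sigma_{11}(\cdot,\omega',\cdot,E)d\omega'}_{L^\infty}^{1/2}\n{\int_S\tilde\sigma_{11}(\cdot,\cdot,\omega',E)d\omega'}_{L^\infty}^{1/2}$ (same Schur/Cauchy--Schwarz estimate as \eqref{co(b)}). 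Hence $B_0-(\Sigma_1(E)-\ol K_1(E))+cI$ is $m$-dissipative (perturbation of an $m$-dissipative operator by a bounded dissipative one; \cite[Chapter III]{engelnagel}, or \cite[Theorem 4.2]{tervo14}). Since $c>0$, taking $\lambda=c$ in the range condition gives $R(B_0-(\Sigma_1(E)-\ol K_1(E)))=L^2(G\times S)$, i.e. \eqref{cle1} has a solution $v(E)\in\tilde W^2_{-,0}(G\times S)$; dissipativity yields $\n{(\,B_0-\Sigma_1(E)+\ol K_1(E)\,)v}\geq c\n{v}$, which gives both uniqueness and the estimate \eqref{cle2}.

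Second, I would prove that $E\mapsto v(E)$ is continuous from $I$ into $L^2(G\times S)$. Write $T(E):=-B_0+\Sigma_1(E)-\ol K_1(E)$, so $v(E)=T(E)^{-1}q(E)$ with $\n{T(E)^{-1}}\le 1/c$ uniformly in $E$ by \eqref{cle2}. For $E,E_0\in I$,
\[
v(E)-v(E_0)=T(E)^{-1}\big(q(E)-q(E_0)\big)+T(E)^{-1}\big(T(E_0)-T(E)\big)v(E_0),
\]
and $T(E_0)-T(E)=\big(\Sigma_1(E_0)-\Sigma_1(E)\big)-\big(\ol K_1(E_0)-\ol K_1(E)\big)$ has operator norm tending to $0$ as $E\to E_0$ by the $C^1$ (hence $C^0$) assumptions \eqref{ass1-11}, \eqref{ass3-11} together with the estimates analogous to \eqref{ccc-a}, \eqref{eq:LE_estim}; combined with continuity of $q$ this gives $v\in C(I,L^2(G\times S))$.

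Third, and this is the step I expect to be the main obstacle, I would show $v\in C^1(I,L^2(G\times S))$ with the formula implicit in \eqref{cle3}. Formally, differentiating $T(E)v(E)=q(E)$ gives $T(E)\,v'(E)=q'(E)-T'(E)v(E)$, where $T'(E)=\Sigma_1'(E)-\ol K_1'(E)$ (the derivative of $\ol K_1$ existing as a bounded operator by the $C^1$ hypothesis \eqref{ass3-11}, exactly as in Lemma \ref{le:K_C1}), so $v'(E)=T(E)^{-1}\big(q'(E)-\Sigma_1'(E)v(E)+\ol K_1'(E)v(E)\big)$, and \eqref{cle2} applied to this identity, together with $\n{v(E)}\le\frac1c\n{q(E)}$, yields \eqref{cle3}. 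To make this rigorous one forms the difference quotient
\[
\frac{v(E)-v(E_0)}{E-E_0}
=T(E)^{-1}\!\left(\frac{q(E)-q(E_0)}{E-E_0}-\frac{T(E)-T(E_0)}{E-E_0}\,v(E_0)\right),
\]
and passes to the limit $E\to E_0$: the first bracketed term converges to $q'(E_0)$ by hypothesis, the second to $T'(E_0)v(E_0)$ using the $C^1$-differentiability of $E\mapsto\Sigma_1(E)$ and $E\mapsto\ol K_1(E)$ in $\mathcal L(L^2(G\times S))$ (the latter as in the remark following Lemma \ref{le:K_C1}), and $T(E)^{-1}\to T(E_0)^{-1}$ strongly (in fact in norm, from the resolvent-type identity above and the uniform bound $1/c$). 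Continuity of $v'$ then follows from the same expression by the continuity of $q'$, $\Sigma_1'$, $\ol K_1'$ and $v$. The only real subtlety is book-keeping the three convergences simultaneously and checking that the norm bound $1/c$ on $T(E)^{-1}$ is genuinely uniform in $E$ (which it is, since $c$ in \eqref{ec8-a}, \eqref{ec9-a} is independent of $E$); everything else is the routine "differentiate an inverse of an operator that depends $C^1$ on a parameter" argument.
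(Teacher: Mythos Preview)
Your proposal is correct and follows essentially the same route as the paper's proof: existence/uniqueness and \eqref{cle2} via $m$-dissipativity of $B_0$ plus bounded dissipative perturbation, continuity of $E\mapsto v(E)$ from the equation satisfied by $v(E_1)-v(E_2)$, and differentiability from the difference-quotient equation. The only cosmetic difference is that the paper writes everything at the level of the PDE satisfied by the difference (resp.\ difference quotient) and then applies \eqref{cle2} directly, whereas you package the same computation in operator form via $T(E)^{-1}$ and a resolvent-type identity; the paper also phrases the differentiability step as a Cauchy-sequence argument for $(\delta_{h_n}v)(E)$ rather than invoking norm convergence of $T(E)^{-1}$, but the content is identical.
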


\begin{proof}
That the problem \eqref{cle1} has, for every fixed $E\in I$, a unique solution $v=v(E)$,
and that the estimate \eqref{cle2} holds can be proven similarly to \cite[Corollary 5.15]{tervo17-up}
(for the existence of solutions, see also \cite[Lemma 4, p. 241]{dautraylionsv6}).

A. At first we show that $v\in C(I,L^2(G\times S))$. Let $E_1,\ E_2\in I$.
Since $v(E)\in \tilde W^2_{-,0}(G\times S)$ for all $E\in I$, it follows that $(v(E_1)-v(E_2))_{|\Gamma_-'}=0$.
Moreover, $v(E_1)-v(E_2)$ satisfies the equation
\bea\label{cle4}
&
\omega\cdot\nabla_x\big(v(E_1)-v(E_2)\big)+\Sigma_1(E_1)\big(v(E_1)-v(E_2)\big)
-\ol K_1(E_1)\big(v(E_1)-v(E_2)\big)\nonumber\\
={}&q(E_1)-q(E_2)-\big(\Sigma_1(E_1)-\Sigma_1(E_2)\big)v(E_2)
+\big(\ol K_1(E_1)-\ol K_1(E_2)\big)v(E_2).
\eea
Therefore, one can apply the estimate \eqref{cle2}, obtaining
\bea\label{cle5}
&\n{v(E_1)-v(E_2)}_{L^2(G\times S)} \nonumber\\
\leq{} &
{1\over c} \Big(\n{q(E_1)-q(E_2)}_{L^2(G\times S)}
+\n{(\Sigma_1(E_1)-\Sigma_1(E_2))v(E_2)}_{L^2(G\times S)}\nonumber\\
&
+\n{(\ol K_1(E_1)-\ol K_1(E_2))v(E_2)}_{L^2(G\times S)}\Big)\nonumber\\
\leq{} &
{1\over c} \Big(\n{q(E_1)-q(E_2)}_{L^2(G\times S)}
+\n{\Sigma_1(E_1)-\Sigma_1(E_2)}_{L^\infty(G\times S)}\n{v(E_2)}_{L^2(G\times S)}\nonumber\\
&
+\n{\ol K_1(E_1)-\ol K_1(E_2)}\n{v(E_2)}_{L^2(G\times S)}\Big),
\eea
where (see \eqref{co(b)} or \eqref{eq:LE_estim})
\bea\label{cle5a}
\n{\ol K_1(E_1)-\ol K_1(E_2)}\leq
\n{\tilde\sigma_{11}(E_1)-\tilde\sigma_{11}(E_2)}_{L^\infty(G\times S,L^1(S'))}^{1/2}
\n{\tilde\sigma_{11}(E_1)-\tilde\sigma_{11}(E_2)}_{L^\infty(G\times S',L^1(S))}^{1/2}.
\eea
The continuity of $v:I\to L^2(G\times S)$ follows immediately
from these estimates and assumptions \eqref{ass1-11}, \eqref{ass3-11}.

B. Next we verify that $v\in C^1(I,L^2(G\times S))$ and that the estimate (\ref{cle3}) holds. 
For a map $k:G\times S\times I\to\R$, which we identify as a map $I\to (G\times S\to\R)$; $k(E)(x,\omega)=k(x,\omega,E)$ whenever appropriate,
define for $h\not=0$ small enough
\[
(\delta_h k)(E):={{k(E+h)-k(E)}\over h}
\]
where at $E$-boundaries $E=0$ or $E=E_{\rm m}$ we take $h>0$ and $h<0$, respectively.
Since $v(E')\in \tilde W^2_{-,0}(G\times S)$ for all $E'\in I$,
we have $(\delta_h v)(E)_{|\Gamma_-'}=0$, and from the equation (\ref{cle1}) we obtain
(from \eqref{cle4}, with $E_1=E$, $E_2=E+h$)
\bea\label{cle6}
&
\omega\cdot\nabla_x ((\delta_hv)(E))+\Sigma_1(E)(\delta_hv)(E)-\ol K_1(E)(\delta_hv)(E)\nonumber\\
={}&(\delta_h q)(E)-(\delta_h\Sigma_1)(E)v(E+h)+(\delta_h \ol{K}_1)(E)v(E+h),
\eea
where for any $w\in L^2(G\times S)$,
\[
(\delta_h \ol{K}_1)(E)w
:=&\int_S{{\tilde\sigma_{11}(x,\omega',\omega,E+h)-
\tilde\sigma_{11}(x,\omega',\omega,E)}\over h}
w(x,\omega') d\omega' \\
=&\int_S (\delta_{h}\tilde\sigma_{11})(E)(x,\omega',\omega)w(x,\omega') d\omega'.
\]

To show that the limit $\lim_{h\to 0}(\delta_h v)(E)$ exists in $L^2(G\times S)$ it suffices to verify that $\lim_{n\to\infty}(\delta_{h_n} v)(E)$ exists in $L^2(G\times S)$
for any sequence $\{h_n\}$ tending to zero as $n\to\infty$,
or equivalently that $\{(\delta_{h_n}v)(E)\}$ is a Cauchy sequence in $L^2(G\times S)$
for any such sequence $\{h_n\}$.

We see  by (\ref{cle6}) that for $n,m\in\N$,
\bea\label{cle7}
&
\omega\cdot\nabla_x \big((\delta_{h_n}v)(E)-(\delta_{h_m}v)(E)\big)
+\Sigma_1(E)\big((\delta_{h_n}v)(E)-(\delta_{h_m}v)(E)\big) \nonumber\\
&
-\ol{K}_1(E)\big((\delta_{h_n}v)(E)-(\delta_{h_m}v)(E)\big) \nonumber\\
={}&
(\delta_{h_n}q)(E)-(\delta_{h_m}q)(E)
-\big((\delta_{h_n}\Sigma_1)(E)v(E+h_n)-(\delta_{h_m}\Sigma_1)(E)v(E+h_m)\big) \nonumber\\
&
+\big((\delta_{h_n}\ol{K}_1)(E)v(E+h_n)-(\delta_{h_m}\ol{K}_1)(E)v(E+h_m)\big) \nonumber\\
=:{}&f_{h_n,h_m}(E).
\eea
The following estimates hold (see \eqref{cle5}, \eqref{cle5a})
\[
&\n{(\delta_{h_n}\Sigma_1)(E)v(E+h_n)-(\delta_{h_m}\Sigma_1)(E)v(E+h_m)}_{L^2(G\times S)} \\
\leq &\n{(\delta_{h_n}\Sigma_1)(E)-(\delta_{h_m}\Sigma_1)(E)}_{L^\infty(G\times S)}\n{v(E+h_n)}_{L^2(G\times S)} \\
&+\n{(\delta_{h_m}\Sigma_1)(E)}_{L^\infty(G\times S)}\n{v(E+h_n)-v(E+h_m)}_{L^2(G\times S)},
\]
and
\[
& \n{(\delta_{h_n}\ol{K}_1)(E)v(E+h_n)-(\delta_{h_m}\ol{K}_1)(E)v(E+h_m)}_{L^2(G\times S)} \\
\leq & \n{(\delta_{h_n}\ol{K}_1)(E)-(\delta_{h_m}\ol{K}_1)(E)}\n{v(E+h_n)}_{L^2(G\times S)} \\
&+\n{(\delta_{h_m}\ol{K}_1)(E)}\n{v(E+h_n)-v(E+h_m)}_{L^2(G\times S)},
\]
where
\[
\n{(\delta_{h_m}\ol{K}_1)(E)}
\leq \n{(\delta_{h_m}\tilde\sigma_{11})(E)}_{L^\infty(G\times S,L^1(S'))}^{1/2}
\n{(\delta_{h_m}\tilde\sigma_{11})(E)}_{L^\infty(G\times S',L^1(S))}^{1/2},
\]
and
\[
& \n{(\delta_{h_n}\ol{K}_1)(E)-(\delta_{h_m}\ol{K}_1)(E)} \\
\leq &\n{(\delta_{h_n}\tilde\sigma_{11})(E_1)-(\delta_{h_m}\tilde\sigma_{11})(E_2)}_{L^\infty(G\times S,L^1(S'))}^{1/2}
\n{(\delta_{h_n}\tilde\sigma_{11})(E_1)-(\delta_{h_m}\tilde\sigma_{11})(E_2)}_{L^\infty(G\times S',L^1(S))}^{1/2}.
\]
Therefore, since we assume \eqref{ass1-11}, \eqref{ass3-11} and $q\in C^1(I,L^2(G\times S))$,
and since $v\in C(I,L^2(G\times S))$ by part A of the proof,
we find that $f_{h_n,h_m}(E)$ in (\ref{cle7}) convergences to zero in $L^2(G\times S)$
when $n,m\to\infty$.
This fact combined with the estimate obtained by applying \eqref{cle2},
\be
\n{(\delta_{h_n}v)(E)-(\delta_{h_m}v)(E)}_{L^2(G\times S)}
\leq
{1\over c}
\n{f_{h_n,h_m}(E)}_{L^2(G\times S)},
\ee
shows that $\{(\delta_{h_n} v)(E)\}$ is a Cauchy sequence, and so ${\p {v} E}(E)$ exists (in $L^2(G\times S)$) for every $E\in I$.

Applying the estimate (\ref{cle2}) to $(\delta_{h} v)(E)$ which satisfies \eqref{cle6}, we get
\bea\label{cle6a}
\n{(\delta_{h}v)(E)}_{L^2(G\times S)}
\leq{} &
{1\over c}
\Big(\n{(\delta_h q)(E)}_{L^2(G\times S)}+\n{(\delta_h\Sigma_1)(E)}_{L^\infty(G\times S)}\n{v(E+h)}_{L^2(G\times S)}\nonumber\\
&+\n{(\delta_h \ol{K}_1)(E)}\n{v(E+h)}_{L^2(G\times S)}\Big).
\eea
Under the standing assumptions (and the fact that $v$ is continuous),
the inequality \eqref{cle6a} gives in the limit $h\to 0$,
\bea\label{cle8}
\n{{\p {v}E}(E)}_{L^2(G\times S)}
\leq{}&
{1\over c}\Big(\n{{\p {q}E}(E)}_{L^2(G\times S)}+\n{{\p {\Sigma_1}E}(E)}_{L^\infty(G\times S)}\n{v(E)}_{L^2(G\times S)}\nonumber\\
&
+\n{{\p {\ol{K}_1}E}(E)}\n{v(E)}_{L^2(G\times S)}\Big),
\eea
and thus, using (\ref{cle2}) once more, we obtain (\ref{cle3}) as claimed.

It remains to be shown that $\p{v}{E}\in C(I,L^2(G\times S))$.
By letting $h\to 0$ in equation \eqref{cle6} we get that
$\omega\cdot\nabla_x \Big({\p {v}E}(E)\Big)\in L^2(G\times S)$ for all $E\in I$,
and
\begin{multline}\label{cle9}
\omega\cdot\nabla_x \Big({\p {v}E}(E)\Big)+\Sigma_1(E){\p {v}E}(E)-\ol{K}_1(E)\big({\p {v}E}(E)\big) \\
={\p {q}E}(E)-{\p {\Sigma_1}E}(E)v(E)+{\p {\ol{K}_1}E}(E)v(E).
\end{multline}
In addition, $(\delta_h v)(E)\to {\p v{E}}(E)$ in $W^2(G\times S\times I)$ when $h\to 0$,
and therefore, since $((\delta_h v)(E))_{|\Gamma_-'}=0$,
we conclude that $({\p {v}E})(E)_{|\Gamma_-'}=0$ (by applying inflow trace results;
see e.g. Remark \ref{re:W2_0_trace}).
Finally, an application of \eqref{cle2} yields, as in \eqref{cle4}, \eqref{cle5}
(we write $L^2=L^2(G\times S)$, $L^\infty=L^\infty(G\times S)$ in order to slightly compress the formulas),
\bea\label{cle10}
&\n{{\p {v}E}(E_1)-{\p {v}E}(E_2)}_{L^2} \\[2mm]
\leq{} &
{1\over c} \Big(
\n{{\p {q}E}(E_1)-{\p {q}E}(E_2)}_{L^2}
+\n{{\p {\Sigma_1}E}(E_1)-{\p {\Sigma_1}E}(E_2)}_{L^\infty}\n{v(E_2)}_{L^2}\nonumber\\[2mm]
&+\n{{\p {\Sigma_1}E}(E_2)}_{L^\infty}\n{v(E_2)-v(E_1)}_{L^2}
+\n{\Sigma_1(E_1) - \Sigma_1(E_2)}_{L^\infty}\n{{\p {v}E}(E_2)}_{L^2}\nonumber\\[2mm]
&
+\n{{\p {\ol{K}_1}E}(E_1)-{\p {\ol{K}_1}E}(E_2)}\n{v(E_2)}_{L^2}
+\n{{\p {\ol{K}_1}E}(E_2)}\n{v(E_2)-v(E_1)}_{L^2} \nonumber\\[2mm]
&
+\n{\ol{K}_1(E_1)-\ol{K}_1(E_2)}\n{{\p {v}E}(E_2)}_{L^2}
\Big),
\eea
and one can further estimate (see \eqref{cle5a}),
\[
\n{{\p {\ol{K}_1}E}(E_1)-{\p {\ol{K}_1}E}(E_2)}
\leq
\n{{\p {\tilde\sigma_{11}}E}(E_1)-{\p {\tilde\sigma_{11}}E}(E_2)}_{L(S,S')}^{1/2}
\n{{\p {\tilde\sigma_{11}}E}(E_1)-{\p {\tilde\sigma_{11}}E}(E_2)}_{L(S',S)}^{1/2},
\]
where we have written $L(S,S'):=L^\infty(G\times S,L^1(S'))$ and $L(S',S):=L^\infty(G\times S',L^1(S))$.
This implies the continuity of $\p{v}{E}$ that we were to demonstrate, and completes the proof.
\end{proof}

We are ready to formulate, in the context of the theory of evolution operators,
the key existence result for the coupled transport problem \eqref{csda1a}-\eqref{csda3},
assuming the collision operator $K=(K_1,K_2,K_3)$ is of the (special) form \eqref{ec1moda}.

\begin{theorem}\label{coupthev}
Assume that the cross-sections $\Sigma_j$, $\tilde\sigma_{jk}$, where $j,k=1,2,3$, satisfy
\begin{align}
&\Sigma_j\in C^1(I,L^\infty(G\times S)), \label{ass1-aa} \\[2mm]
&\tilde\sigma_{jk}\in C^1(I,L^\infty(G\times S,L^1(S')))\cap
C^1(I,L^\infty(G\times S',L^1(S))), \label{ass3-aa} \\[2mm]
&\Sigma_j\geq 0,\quad \tilde\sigma_{jk}\geq 0, \label{ass4}
\end{align}
and that for some $c>0$,
\begin{align}
&\Sigma_1(x,\omega,E)-\int_S\tilde\sigma_{11}(x,\omega,\omega',E) d\omega' 
\geq c, \label{ec8-aa} \\[2mm]
&\Sigma_1(x,\omega,E)-\int_S\tilde\sigma_{11}(x,\omega',\omega,E) d\omega' 
\geq c, \label{ec9-aa}
\end{align}
for a.e. $(x,\omega,E)\in G\times S\times I$.
Furthermore, assume that the stopping powers $S_j$, where $j=2,3$, satisfy
\begin{align}
& S_j\in C^2(I,L^\infty(G)), \label{ass5} \\[2mm]
& \nabla_x S_j\in L^\infty(G\times I), \label{ass5n} \\[2mm]
& \kappa_j:=\inf_{(x,E)\in \ol{G}\times I} S_j(x,E) > 0. \label{ec4}
\end{align}

Let $f\in C^1(I,L^2(G\times S)^3)$ and let $g\in C^2(I,T^2(\Gamma_-')^3)$ which satisfies the \emph{compatibility condition}
\[
g_j(E_m)=0,\quad j=2,3.
\]
Then the problem (\ref{csda1a})-(\ref{csda3}) has a unique solution $\psi\in \tilde W^2(G\times S\times I)\times \big(C(I,\tilde{W}^2(G\times S)^2)\cap C^1(I,L^2(G\times S)^2)\big)$.
In particular, $\psi\in \tilde{W}^2(G\times S\times I)\times (\tilde{W}^2(G\times S\times I)\cap
W_1^2(G\times S\times I))^2$.

If in addition, for some $c>0$, the inequalities
\begin{align}
\Sigma_j(x,\omega,E)-\sum_{k=1}^3\int_S\tilde\sigma_{jk}(x,\omega,\omega',E) d\omega' 
\geq c, \label{ec8} \\
\Sigma_j(x,\omega,E)-\sum_{k=1}^3\int_S\tilde\sigma_{kj}(x,\omega',\omega,E) d\omega' 
\geq c, \label{ec9}
\end{align}
hold for $j=1,2,3$ and for a.e. $(x,\omega,E)\in G\times S\times I$,
then the solution $\psi$ satisfies the estimate \eqref{csda40aaa-co}.
\end{theorem}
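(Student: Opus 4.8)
The plan is to follow the strategy of Theorem \ref{evoth1}, treating the energy variable $E$ as the evolution parameter, while coping with the fact that the first equation \eqref{csda1a} carries no $E$-derivative by \emph{slaving} $\psi_1$ to $(\psi_2,\psi_3)$ at each fixed energy. First I would reduce to homogeneous inflow data: using a lift $Lg=(Lg_1,Lg_2,Lg_3)$ as in \cite[Lemma 5.10]{tervo14}, with $(Lg_j)(x,\omega,E)=g_j(x-t(x,\omega)\omega,\omega,E)$, one has $\omega\cdot\nabla_x(Lg_j)=0$ weakly, $\gamma_-(Lg_j)=g_j$, and---by the compatibility condition for $j=2,3$---$(Lg_j)(\cdot,\cdot,E_m)=0$; moreover $Lg\in C^2(I,\tilde W^2(G\times S)^3)$ by the regularity of $g$. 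Substituting $u_j:=\psi_j-Lg_j$ turns \eqref{csda1a}--\eqref{csda3} into the same system with $g=0$ and a modified source $\tilde f\in C^1(I,L^2(G\times S)^3)$ (assumptions \eqref{ass1-aa}, \eqref{ass3-aa}, \eqref{ass5} ensure this). Then I would perform the change of variables $\tilde u_j(x,\omega,E):=u_j(x,\omega,E_m-E)$ and $\phi_j:=e^{-CE}\tilde u_j$ exactly as in Theorem \ref{evoth1}; because the collision operator \eqref{ec1moda} acts diagonally in $E$, the exponential factors cancel in every collision term, and the system becomes, on $G\times S\times I$,
\begin{align*}
& \omega\cdot\nabla_x\phi_1+\tilde\Sigma_1(E)\phi_1-\textstyle\sum_{k=1}^3 K_{k1}(E)\phi_k={\bf f}_1(E),\qquad \phi_1|_{\Gamma_-'}=0,\\
& \partial_E\phi_j+\tfrac{1}{\tilde S_j}\omega\cdot\nabla_x\phi_j+C\phi_j+\tfrac{1}{\tilde S_j}\Big(\tfrac{\partial\tilde S_j}{\partial E}+\tilde\Sigma_j\Big)\phi_j-\tfrac{1}{\tilde S_j}\textstyle\sum_{k=1}^3 K_{kj}(E)\phi_k={\bf g}_j(E),
\end{align*}
where $(K_{kj}(E)\phi)(x,\omega):=\int_S\tilde\sigma_{kj}(x,\omega',\omega,E)\phi(x,\omega')\,d\omega'$, the second line holding for $j=2,3$ with $\phi_j(\cdot,\cdot,0)=0$.

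Next I would eliminate $\phi_1$. For each fixed $E$, Lemma \ref{coupevle2} (whose hypotheses are precisely \eqref{ass1-aa}, \eqref{ass3-aa}, \eqref{ass4}, \eqref{ec8-aa}, \eqref{ec9-aa} for $j=1$) provides a bounded solution operator $R(E):L^2(G\times S)\to\tilde W^2_{-,0}(G\times S)$ with $\n{R(E)}\le 1/c$, so that $\phi_1(E)=R(E)\big({\bf f}_1(E)+K_{21}(E)\phi_2(E)+K_{31}(E)\phi_3(E)\big)$, and---crucially---$E\mapsto R(E)q(E)$ lies in $C^1(I,L^2(G\times S))$ whenever $q\in C^1(I,L^2(G\times S))$, with the estimate \eqref{cle3}. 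Substituting this back into the equations for $(\phi_2,\phi_3)$ gives a closed evolution equation $\partial_E\Phi-\mathcal A(E)\Phi={\bf G}(E)$, $\Phi(0)=0$, for $\Phi=(\phi_2,\phi_3)$ in $X:=L^2(G\times S)^2$, where $\mathcal A(E)$ has common dense domain $D=\tilde W^2_{-,0}(G\times S)^2$ independent of $E$, ${\bf G}\in C^1(I,X)$ (by Lemma \ref{le:K_C1} and the $C^1$-dependence of $R(E)$), and $\mathcal A(E)$ decomposes as the $m$-dissipative operator $\Phi\mapsto-\big(\tfrac{1}{\tilde S_j}\omega\cdot\nabla_x\phi_j\big)_{j=2,3}-C_0\Phi$ (Lemma \ref{csdale1}, using \eqref{ass5n}, \eqref{ec4}, with $C_0:=\max_{j=2,3}\tfrac12\kappa_j^{-2}\n{\nabla_x S_j}_{L^\infty}$) plus a bounded $E$-dependent perturbation that is $C^1$ into $\mathcal L(X)$ and, for $C$ chosen large relative to the uniform norms of all zeroth-order and coupling terms (the coupling operator $\phi_j\mapsto\tfrac{1}{\tilde S_j}K_{1j}(E)R(E)(K_{21}(E)\phi_2+K_{31}(E)\phi_3)$ having norm $\le\tfrac{1}{\kappa_j c}\n{\tilde\sigma_{1j}}\,\n{\tilde\sigma_{k1}}$), is dissipative; hence $\mathcal A(E)$ is $m$-dissipative for every $E$ (bounded dissipative perturbations preserve $m$-dissipativity) and $E\mapsto\mathcal A(E)\Phi\in C^1(I,X)$ for each $\Phi\in D$. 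Theorem \ref{evoth} then yields a unique $\Phi=(\phi_2,\phi_3)\in C(I,D)\cap C^1(I,X)$; reconstructing $\phi_1$ via $R(E)$ gives $\phi_1\in C^1(I,L^2(G\times S))$ with $\phi_1(E)\in\tilde W^2_{-,0}(G\times S)$ for all $E$, hence $\phi_1\in\tilde W^2(G\times S\times I)$ (from the equation, $\omega\cdot\nabla_x\phi_1\in L^2(G\times S\times I)$). Undoing the two changes of variables and adding back the lift produces the claimed $\psi\in\tilde W^2(G\times S\times I)\times\big(C(I,\tilde W^2(G\times S)^2)\cap C^1(I,L^2(G\times S)^2)\big)$; in particular $\psi_j\in W_1^2(G\times S\times I)$ for $j=2,3$, since $\psi_j\in C^1(I,L^2(G\times S))$ gives $\partial_E\psi_j\in L^2$ and the equation then gives $\omega\cdot\nabla_x\psi_j\in L^2$.

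For the a priori estimate under the extra hypotheses \eqref{ec8}, \eqref{ec9}: the solution $\psi$ just constructed lies in ${\s H}_{\bf P}(G\times S\times I^\circ)$ with $\psi_1\in W^2(G\times S\times I)$, and satisfies $\psi_{|\Gamma_+}\in T^2(\Gamma_+)^3$ and $\psi(\cdot,\cdot,0)\in L^2(G\times S)^3$ (the $\Gamma_+$-traces because $\psi\in\tilde W^2(G\times S\times I)^3$, the energy-$0$ traces because $\psi_j\in C(I,L^2(G\times S))$ for $j=2,3$ and $\psi_1\in\tilde W^2$). Since \eqref{ass5}, \eqref{ec4}, and---via the Remark following \eqref{csda4aa}, noting that here $\sigma_{kj}$ is of delta-type---\eqref{ec8}, \eqref{ec9} imply all the hypotheses of Theorem \ref{cosystth2}, Corollary \ref{cosystco1}(iii) applies; its uniqueness statement forces $\psi$ to coincide with the variational solution, which obeys \eqref{csda40aaa-co}.

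The main obstacle is the third paragraph: verifying that eliminating $\psi_1$ introduces only a \emph{bounded}, strongly $C^1$-in-$E$ perturbation of the evolution generator and that $m$-dissipativity survives after absorbing all the resulting zeroth-order and coupling operators into a large shift $CI$. This is precisely where the $C^1$-dependence of the solution operator $R(E)$ from Lemma \ref{coupevle2} and the bounded-perturbation stability of $m$-dissipativity do the essential work; once these are in place, the remaining steps are routine adaptations of the single-equation arguments of Section \ref{evcsd}.
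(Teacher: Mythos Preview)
Your proposal is correct and follows essentially the same approach as the paper: slave $\psi_1$ to $(\psi_2,\psi_3)$ via the solution operator of Lemma~\ref{coupevle2}, cast the resulting closed system for $(\psi_2,\psi_3)$ as an abstract evolution equation, verify the $m$-dissipativity and $C^1$-in-$E$ hypotheses of Theorem~\ref{evoth}, reconstruct $\psi_1$, and invoke Corollary~\ref{cosystco1} for the estimate. The only cosmetic difference is ordering---the paper eliminates $\psi_1$ first and then performs the change of variables on $\hat\psi=(\psi_2,\psi_3)$ alone, whereas you transform all three components first and then eliminate $\phi_1$---but the substance is identical.
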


\begin{proof}
At first we notice that by the assumption (\ref{ass3-aa}),
for a.e. $(x,\omega,E)\in G\times S\times I$,
\begin{align}
&\sum_{k=1}^3\int_S\tilde\sigma_{kj}(x,\omega',\omega,E)d\omega'
\leq 
\sum_{k=1}^3\sup_{E\in I}\n{\tilde\sigma_{kj}(E)}_{L^\infty(G\times S,L^1(S'))}=:M_1<\infty, \label{m'} \\
&\sum_{k=1}^3\int_S\tilde\sigma_{jk}(x,\omega',\omega,E)d\omega
\leq 
\sum_{k=1}^3
\sup_{E\in I}\n{\tilde\sigma_{jk}(E)}_{L^\infty(G\times S',L^1(S))}=:M_1'<\infty. \label{m''}
\end{align}

We begin by treating the special case where $g=0$.
Recall that the system of equations of interest on $G\times S\times I$ for $\psi=(\psi_1,\psi_2,\psi_3)$ is
\bea
&\omega\cdot\nabla_x\psi_1+\Sigma_1\psi_1-K_{1}\psi=f_1
\label{proof1}\\
&-{\p {(S_{j}\psi_j)}E}+\omega\cdot\nabla_x\psi_j+\Sigma_{j}\psi_j-K_{j}\psi=f_j,\quad j=2,3.\label{proof2}
\eea
Equation \eqref{proof1} can be written as
\bea\label{pr5.5.1}
\omega\cdot\nabla_x\psi_1+\Sigma_1\psi_1-\ol K_1\psi_1-\ol K\hat\psi=f_1,
\eea
where
\[
\hat\psi:=(\psi_2,\psi_3),
\]
and
\[
& \ol{K}_1\psi_1
:=\int_S\tilde\sigma_{11}(x,\omega',\omega,E)\psi_1(x,\omega',E) d\omega'
=\ol{K}_1(E)\psi_1(E) \\
& \ol{K}\hat\psi
:=\sum_{k=2}^3\int_S\tilde\sigma_{k1}(x,\omega',\omega,E)\psi_k(x,\omega',E) d\omega'
=\ol K(E)\hat\psi(E),
\]
when we define
\begin{alignat*}{3}
&\ol{K}_1(E)v
:=\int_S\tilde\sigma_{11}(x,\omega',\omega,E)v(x,\omega') d\omega',\quad
 && v\in L^2(G\times S), \\
&\ol{K}(E)u
:=\sum_{k=2}^3\int_S\tilde\sigma_{k1}(x,\omega',\omega,E)u_k(x,\omega') d\omega',\quad && u=(u_2,u_3)\in L^2(G\times S)^2.
\end{alignat*}
For any $q\in L^2(G\times S\times I)$ the problem
\begin{gather}
\omega\cdot\nabla_x\psi_1+\Sigma_1\psi_1-\ol K_1\psi_1=q\nonumber
\\
{\psi_1}_{\Gamma_-}=0 \label{pr5.5.3}
\end{gather}
has a unique solution and 
\be\label{pr5.5.4}
\n{\psi_1}_{L^2(G\times S\times I)}\leq {1\over c}\n{q}_{L^2(G\times S\times I)}.
\ee
This can also be proven similarly to \cite[Corollary 5.15]{tervo17-up}
(for the existence of solutions cf. also \cite[Lemma 4, p. 241]{dautraylionsv6}).

Define a linear operator $T_{1,0}:L^2(G\times S\times I)\to L^2(G\times S\times I)$
with domain $D(T_{1,0})$ by 
\bea\label{pr5.5.5}
&
D(T_{1,0}):=\tilde{W}^2_{-,0}(G\times S\times I)\nonumber\\
&
T_{1,0}\psi_1:=\omega\cdot\nabla_x\psi_1+\Sigma_1\psi_1-\ol K_1\psi_1.
\eea
Then $\psi_1=T_{1,0}^{-1}q$ (exists and) is the solution of \eqref{pr5.5.3}, and it follows from \eqref{pr5.5.4} that
\be\label{pr5.5.6}
\n{T_{1,0}^{-1}q}_{L^2(G\times S\times I)}\leq {1\over c}\n{q}_{L^2(G\times S\times I)},\quad \forall q\in L^2(G\times S\times I).
\ee

Equations (\ref{proof2}) can be written as
\be\label{proof4}
-{\p {(S_{j}\psi_j)}E}+\omega\cdot\nabla_x\psi_j+\Sigma_{j}\psi_j-
\hat K_{j}\hat\psi-\hat K_{1,j}\psi_1=f_j,\quad j=2,3
\ee
where
\[
&\hat{K}_j\hat\psi
:=\sum_{k=2}^3
\int_S\tilde\sigma_{kj}(x,\omega',\omega,E)\psi_k(x,\omega',E) d\omega'=\hat K_j(E)\hat\psi(E) \\
&\hat{K}_{1,j}\psi_1
:=\int_S\tilde\sigma_{1j}(x,\omega',\omega,E)\psi_1(x,\omega',E) d\omega'=\hat K_{1,j}(E)\psi_1(E)
\]
when one defines
\begin{alignat*}{3}
&\hat{K}_j(E)u
:=\sum_{k=2}^3\int_S\tilde\sigma_{kj}(x,\omega',\omega,E)u_k(x,\omega') d\omega',\quad && u=(u_2,u_3)\in L^2(G\times S)^2, \\
&\hat{K}_{1,j}(E)v
:=\int_S\tilde\sigma_{1j}(x,\omega',\omega,E)v(x,\omega') d\omega',\quad && v\in L^2(G\times S).
\end{alignat*}
Therefore, if $\psi=(\psi_1,\psi_2,\psi_3)=(\psi_1,\hat{\psi})$ is a solution of \eqref{proof1}, \eqref{proof2},
we have by equation \eqref{pr5.5.1},
\be\label{psi1}
\psi_1=T_{1,0}^{-1}(f_1+\ol K\hat\psi)
\ee
and hence by \eqref{proof4} the function $\hat\psi$ satisfies the equation,
for $j=2,3$,
\bea\label{pr5.5.7}
-{\p {(S_j\psi_j)}E}+\omega\cdot\nabla_x\psi_j+\Sigma_j\psi_j-\hat K_j\hat\psi-\hat K_{1,j}(T_{1,0}^{-1}(\ol K\hat\psi))
=f_j+\hat f_j,
\eea
where we wrote
\[
\hat{f}_j:=\hat K_{1,j}(T_{1,0}^{-1}f_1),\quad j=2,3.
\]

Consider the term $\hat K_{1,j}(T^{-1}_{1,0}(\ol K\hat\psi))$.  
We find that for a fixed $E\in I$
\[
\hat K_{1,j}(T^{-1}_{1,0}(\ol K\hat\psi))=\hat K_{1,j}(E)T_{1,0}(E)^{-1}(\ol K(E)\hat\psi(E))
\]
where for every $E\in I$ the linear operator $T_{1,0}(E):L^2(G\times S)\to L^2(G\times S)$
with domain $D(T_{1,0}(E))$ is defined to be
\[
&
D(T_{1,0}(E)):=\tilde{W}^2_{-,0}(G\times S)\nonumber\\
&
T_{1,0}(E)v:=\omega\cdot\nabla_x v+\Sigma_1(E)v-\ol K_1(E)v.
\]
By (the proof of) Lemma \eqref{coupevle2}, the operator $T_{1,0}(E)$ is invertible and
\begin{align}\label{eq:T10Einv_bound}
\n{T_{1,0}(E)^{-1}\tilde{q}}_{L^2(G\times S)}\leq {1\over c}\n{\tilde{q}}_{L^2(G\times S)},\quad \forall\tilde{q}\in L^2(G\times S).
\end{align}

Define linear operators $Q_j(E):L^2(G\times S)^2\to L^2(G\times S)$, $j=2,3$, $E\in I$, by setting
\[
Q_j(E)u:=\hat{K}_{1,j}(E)T_{1,0}(E)^{-1}(\ol{K}(E)u),
\]
and let $Q(E):=(Q_1(E),Q_2(E))$. Then we have by \eqref{eq:T10Einv_bound} and \eqref{m'}, \eqref{m''}
(see e.g. \eqref{co(b)})
that for all $E\in I$,
\be
\n{Q_j(E)}\leq \n{\hat{K}_{1,j}(E)}\n{T_{1,0}(E)^{-1}}\n{\ol K(E)}
\leq {{M_1M'_1}\over c},
\ee
where the operator norms used are taken, in an obvious way, with respect to the space $L^2(G\times S)$ and
its product $L^2(G\times S)^2$.

Using assumption \eqref{ass3-aa} and Lemma \ref{coupevle2},
we find that for any fixed $u\in L^2(G\times S)^2$ the mapping $h_u:I\to L^2(G\times S)^2$ given by 
$h_u(E):=Q(E)u$ belongs to $C^1(I,L^2(G\times S)^2)$.
Similarly, we find that $\hat{f}:=(\hat{f}_1,\hat{f}_2)\in C^1(I,L^2(G\times S)^2)$,
since we have
\[
\hat{f}_j(E)=\hat{K}_{1,j}(E)T_{1,0}(E)^{-1}(f_1(E)),\quad E\in I,\ j=2,3,
\]
where $\hat{f}_j(E)(x,\omega)=\hat{f}_j(x,\omega,E)$.
 
Let $C:=\max\{C_2,C_3\}$ where for $j=2,3$,
\begin{multline*}
C_j:={1\over 2}\kappa_j^{-2}\n{\nabla_x S_j}_{L^\infty(G\times I)}
\\
+\kappa_j^{-1}\Big(\n{\Sigma_j}_{L^\infty(G\times S\times I)}
+\n{{\p {S_j}E}}_{L^\infty(G\times I)}+\sqrt{M_1M_1'}
+{{M_1M_1'}\over c}\Big).
\end{multline*}
Replacing $\hat\psi$ with $\hat\phi(x,\omega,E):=e^{-CE}\hat\psi(x,\omega,E_{\rm m}-E)$ (as in section \ref{evcsd}) we find that the system (\ref{pr5.5.7}) is equivalent to
\be\label{proof7}
{\p {\hat\phi}E}-{\bf A}_C(E)\hat\phi=F(E),\quad \hat\phi(0)=0,
\ee
where
\[
\hat{\phi}&=(\phi_2,\phi_3), \\[2mm]
{\bf A}_C(E)\hat\phi&=({A}_{C,2}(E)\hat\phi,{A}_{C,3}(E)\hat\phi), \\[2mm]
F&=(F_2,F_3),
\]
and for $j=2,3$,
\[
F_j(E):={1\over{\tilde S_j(E)}}e^{-CE}\big(\tilde f_j(x,\omega,E)+\tilde{\hat f}_j(x,\omega,E)\big),
\]
and
\begin{multline}\label{proof9}
A_{C,j}(E)\hat\phi
:=-\Big({1\over {\tilde S_j(E)}}\omega\cdot\nabla_x\phi_j+C \phi_j+{1\over {\tilde S_j(E)}}\tilde\Sigma_j(E)\phi_j
+{1\over{\tilde S_j(E)}}{\p {\tilde S_j}E}(E)\phi_j
\\
-{1\over{\tilde S_j(E)}} \tilde{\hat{K}}_j(E)\hat\phi
-{1\over{\tilde S_j(E)}} \tilde Q_{j}(E)\hat\phi\Big).
\end{multline}
Here $\tilde S_j(x,E):=S_j(x,E_m-E)$ and similarly for other  expressions equipped with "tilde".

Considering ${\bf A}_C(E)$ as an (unbounded) operator $L^2(G\times S)^2\to L^2(G\times S)^2$ with domain
\[
D({\bf A}_C(E))=\tilde W^2_{-,0}(G\times S)\times \tilde W^2_{-,0}(G\times S),
\] 
we get by applying Theorem \ref{evoth} along with computations analogous
to the ones done in section \ref{evcsd}, that the system (\ref{proof7}) has a unique solution 
$\hat{\phi}\in C(I,\tilde W^2_{-,0}(G\times S)^2)\cap  C^1(I,L^2(G\times S)^2)$,
which satisfies the homogeneous boundary and initial conditions, $\hat{\phi}_{|\Gamma_-}=0$,
$\hat{\phi}(\cdot,\cdot,0)=0$.

Then $\hat{\psi}(x,\omega,E)=e^{C(E_{\rm m}-E)}\hat\phi(x,\omega,E_{\rm m}-E)$,
and $\psi_1$ is obtained from (\ref{psi1}) that is, $\psi_1=T_{1,0}^{-1}(f_1+\ol K\hat\psi)$,
giving us the solution $\psi=(\psi_1,\psi_2,\psi_3)=(\psi_1,\hat{\psi})$ for homogeneous (inflow) boundary,
and initial condition data, that is $\psi|_{\Gamma_-}=0$, $\psi_j(\cdot,\cdot,E_m)=0$ for $j=2,3$,
which is what we were looking for.

By applying the lifts, the existence of a unique solution of the problem 
(\ref{csda1a})-(\ref{csda3}) satisfying the inhomogeneous boundary condition
\be\label{proof11}
{\psi_j}_{|\Gamma_-}=g_j,\quad j=1,2,3,
\ee
is obtained as in section \ref{evcsd} as well (see the proof of Theorem \ref{evoth1}).
The estimate \eqref{csda40aaa-co} under the assumptions \eqref{ec8}, \eqref{ec9}
follows from Corollary \ref{cosystco1}.
This completes the proof.
\end{proof}

\begin{corollary}\label{coupcoev}
Suppose that the assumptions (\ref{ass1-aa})-(\ref{ec9-aa})
of Theorem \ref{coupthev} are valid.
Furthermore, suppose that $f\in  H^2(I,L^2(G\times S)^3)$ and $g\in  H^3(I,T^2(\Gamma_-')^3)$ which satisfies  the compatibility condition
\[
g_j(E_m)=0,\quad j=2,3.
\]
Then the problem (\ref{csda1a})-(\ref{csda3}) has a unique solution $\psi\in \tilde{W}^2(G\times S\times I)\times \big(C^1(I,L^2(G\times S)^2)\cap C(I,\tilde W(G\times S)^2)\big)$.
If, in addition, (\ref{ec8}) and (\ref{ec9}) are valid, the estimate \eqref{csda40aaa-co} holds.
\end{corollary}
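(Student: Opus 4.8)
The plan is to deduce Corollary \ref{coupcoev} directly from Theorem \ref{coupthev} by means of the Sobolev embedding theorem for Hilbert-space-valued functions on the bounded interval $I$. Recall that $H^m(I,X)\subset C^j(I,X)$ whenever $m>j+\tfrac12$, where $X$ is any Hilbert space; this is the embedding already invoked in the proof of Corollary \ref{evothco}. Applying it with $m=2$, $j=1$, $X=L^2(G\times S)^3$, we get that the hypothesis $f\in H^2(I,L^2(G\times S)^3)$ implies $f\in C^1(I,L^2(G\times S)^3)$; and with $m=3$, $j=2$, $X=T^2(\Gamma_-')^3$, the hypothesis $g\in H^3(I,T^2(\Gamma_-')^3)$ implies $g\in C^2(I,T^2(\Gamma_-')^3)$. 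Thus the regularity hypotheses of Theorem \ref{coupthev} are satisfied.

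Next I would check that the remaining hypotheses match verbatim: the cross-section and stopping-power assumptions \eqref{ass1-aa}--\eqref{ec4} are exactly those assumed in Corollary \ref{coupcoev} via the phrase ``the assumptions (\ref{ass1-aa})--(\ref{ec9-aa}) of Theorem \ref{coupthev} are valid'', and the compatibility condition $g_j(E_m)=0$ for $j=2,3$ is imposed in both statements. (One small point worth a remark: the compatibility condition $g_j(E_m)=0$ makes sense pointwise because, by the embedding $H^3(I,T^2(\Gamma_-'))\subset C^2(I,T^2(\Gamma_-'))\subset C^0(I,T^2(\Gamma_-'))$, the trace $g_j(E_m)\in T^2(\Gamma_-')$ is well-defined.) Consequently Theorem \ref{coupthev} applies and yields a unique solution $\psi\in \tilde W^2(G\times S\times I)\times\big(C(I,\tilde W^2(G\times S)^2)\cap C^1(I,L^2(G\times S)^2)\big)$, which is the claimed regularity (the two descriptions of the component space for $(\psi_2,\psi_3)$ differ only in the order of the factors in the intersection).

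Finally, for the a priori estimate \eqref{csda40aaa-co}: if in addition \eqref{ec8}, \eqref{ec9} hold with $c>0$, then Theorem \ref{coupthev} already furnishes \eqref{csda40aaa-co} for the solution $\psi$, so there is nothing further to prove. (Alternatively, one notes that the solution $\psi$ produced here also satisfies the hypotheses of Corollary \ref{cosystco1}(iii) — namely $\psi\in\mc{H}_{\bf P}(G\times S\times I^\circ)$ with $\psi_{|\Gamma_+}\in T^2(\Gamma_+)^3$ and $\psi(\cdot,\cdot,0)\in L^2(G\times S)^3$, these traces being well-defined by the $C(I,\tilde W^2(G\times S))$ regularity and the inflow trace theory of Section \ref{trath} — and the estimate follows from that corollary.) This completes the proof.

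There is essentially no obstacle here: the corollary is a pure ``upgrade the hypotheses'' consequence of the main theorem, and the only thing to be careful about is citing the correct embedding indices and confirming that the pointwise compatibility condition is meaningful under the weaker ($H^3$ rather than $C^2$) hypothesis. I note in passing that the printed proof of the analogous Corollary \ref{evothco} cites ``Theorem \ref{coupthev}'' where it presumably should cite Theorem \ref{evoth1}; in writing the proof of Corollary \ref{coupcoev} one should of course cite Theorem \ref{coupthev} (and, for the estimate, Corollary \ref{cosystco1}).
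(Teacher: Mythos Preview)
Your proposal is correct and follows exactly the same approach as the paper: the paper's proof is the single line ``Follows from the Sobolev Embedding Theorem as in Corollary \ref{evothco},'' and you have simply spelled out that embedding ($H^m(I,X)\subset C^j(I,X)$ for $m>j+\tfrac12$) with the correct indices and then invoked Theorem \ref{coupthev}. Your additional remarks on the well-definedness of $g_j(E_m)$ and the alternative route to the estimate via Corollary \ref{cosystco1}(iii) are correct elaborations but not needed, since Theorem \ref{coupthev} already delivers the estimate directly.
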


\begin{proof}
Follows from the Sobolev Embedding Theorem as in Corollary \ref{evothco}.
\end{proof}

\begin{remark}
The evolution operator based approach given above can be generalized for $L^p$-theory when $1\leq p<\infty$.  
\end{remark}

%%%%%%%%%%%%%%%%%%%%%%%%%%%%%%%%%%%%%%%%%%%%%%%%%%%%%%%%%%%%%%%%%%%%%%%%%%%%%%%%%%%%%%%%%%%%%%%%%%%%%%%%%%%
\subsection{Note on Reflective Boundary Conditions}
%%%%%%%%%%%%%%%%%%%%%%%%%%%%%%%%%%%%%%%%%%%%%%%%%%%%%%%%%%%%%%%%%%%%%%%%%%%%%%%%%%%%%%%%%%%%%%%%%%%%%%%%%%%

Let $R:=(R_1,R_2,R_3):T^2(\Gamma_+)^3\to T^2(\Gamma_-)^3$ be a linear, possibly unbounded operator
with domain $D(R)$. In some problems the inflow boundary condition
\be\label{inflowbc}
{\psi}_{|\Gamma_-}=g,
\ee
of the problem (\ref{csda1a})-(\ref{csda3}) is substituted with a more general,
so-called \emph{reflective boundary condition}
(cf. \cite[Chapter XXI, Appendix of \S 2, p. 249-262]{dautraylionsv6})
\be\label{reflexbc}
{\psi}_{|\Gamma_-}=R(\psi_{|\Gamma_+})+g,
\ee
where $g\in T^2(\Gamma_-)^3$.
We call $R$ a \emph{reflection operator}. Note that when $R=0$, we obtain (\ref{inflowbc}).

We consider here one specific type of reflection operator $R=R_{\rm b}$
which is of interest in applications where backscattering from outside the region $G$
is of importance.
Let $G'\subset\R^3$ be an open bounded set such that $\ol{G'}$ is a $C^1$-manifold with boundary
(i.e. $G'$ has the same regularity properties as $G$)
and such that $\ol{G}\subset G'$. Let $G_{\rm e}:=G'\setminus \ol G$. Then
\[
\partial G_{\rm e}=\partial^1 G_{\rm e}\cup \partial^2G_{\rm e},
\]
where $\partial^1 G_{\rm e}:=\partial G$ and $\partial^2 G_{\rm e}:=\partial G_{\rm e}\backslash \partial^1 G_{\rm e}$.
Let
\[
\Gamma_{\rm e}:= (\partial G_{\rm e})\times S\times I
=
\Gamma_{\rm e,+}\cup \Gamma_{\rm e,-}\cup \Gamma_{\rm e,0},
\]
where on the right hand side, the decomposition of $\Gamma_{\rm e}$ into the three disjoint subsets
corresponds to $\Gamma=\Gamma_+\cup \Gamma_-\cup\Gamma_0$ when considering $G_{\rm e}\times S\times I$
instead of $G\times S\times I$.
Finally, we can decompose $\Gamma_{\rm e,+}$ and $\Gamma_{\rm e,-}$ respectively as
\[
\Gamma_{\rm e,\pm} = \Gamma^1_{\rm e,\pm}\cup \Gamma^2_{\rm e,\pm},
\]
where for $j=1,2$,
\[
\Gamma^j_{\rm e,\pm}:=\big((\partial^j G_{\rm e})\times S\times I\big)\cap \Gamma_{\rm e,\pm}.
\]
Notice that $\Gamma^1_{\rm e,-}=\Gamma_+$, while $\Gamma^1_{\rm e,+}=\Gamma_-$.

We assume (for simplicity) that $f=0$. Consider the problem (\ref{csda1a})-(\ref{csda3}) that is,
find $\psi=(\psi_1,\psi_2,\psi_3)$ that satisfies on $G\times S\times I$ the system of transport equations,
\begin{gather}
\omega\cdot\nabla_x\psi_1+\Sigma_1\psi_1-K_{1}\psi=0,\label{ref2}\\
-{\p {(S_{j}\psi_j)}E}+\omega\cdot\nabla_x\psi_j+\Sigma_{j}\psi_j-K_{j}\psi=0,\quad j=2,3\label{ref3}
\end{gather}
under the (inflow) boundary condition on $\Gamma_-$,
\be\label{ref4}
{\psi_j}_{|\Gamma_-}=g_j,\quad j=1,2,3,
\ee
and the initial condition on $G\times S$,
\be\label{ref5}
\psi_j(\cdot,\cdot,E_{\rm m})=0,\quad j=2,3.
\ee

Since $\Gamma^1_{\rm e,-}=\Gamma_+$,
we find that the flux $\psi_{|\Gamma_+}$ is an inflow boundary source for the domain $G_{\rm e}$,
on the part $\partial^1 G_{\rm e}=\partial G$ of its boundary.
Suppose that $G_{\rm e}$ does not contain any extra internal or boundary sources. 
Then the transport of particles in $G_{\rm e}$ is governed by
the system of equations on $G_{\rm e}\times S\times I$,
\begin{gather}
\omega\cdot\nabla_x\Psi_1+\Sigma_{{\rm e},1}\Psi_1-K_{{\rm e},1}\Psi=0,\label{ref2a}\\
-{\p {(S_{{\rm e},j}\Psi_j)}E}+\omega\cdot\nabla_x\Psi_j+\Sigma_{{\rm e},j}\Psi_j-K_{{\rm e},j}\Psi=0,\quad j=2,3,\label{ref3a}
\end{gather}
for $\Psi=(\Psi_1,\Psi_2,\Psi_3)$,
along with the boundary conditions
\begin{alignat}{5}\label{ref4a}
& {\Psi_j}_{|\Gamma^1_{\rm e,-}}={\psi_j}_{|\Gamma_+}\quad && {\rm on}\ \Gamma^1_{\rm e,-}=\Gamma_+,\quad && \\
& {\Psi_j}_{|\Gamma^2_{\rm e,-}}=0\quad && {\rm on}\ \Gamma^2_{\rm e,-},\quad && j=1,2,3, 
\end{alignat}
and the initial condition 
\be\label{ref5a}
\Psi_j(x,\omega,E_{\rm m})=0\quad {\rm on}\ G_{\rm e}\times S\times I,\quad j=2,3.
\ee
Above $\Sigma_{{\rm e},j}$, $\sigma_{{\rm e},kj}$ and $S_{{\rm e},j}$ are
the (restricted) cross-sections and the (restricted) stopping powers for the medium inside $G_{\rm e}$,
and
\[
(K_{{\rm e},j}\Psi)(x,\omega,E)
:=
\sum_{k=1}^3\int_{S\times I}\sigma_{{\rm e},kj}(x,\omega',\omega,E',E)\Psi_k(x,\omega',E')d\omega' dE',
\]
for $\Psi\in L^2(G_{\rm e}\times S\times I)^3$.

In this setup, we define a reflection operator $R=R_{\rm b}$ by setting
(recall that $\Gamma^1_{{\rm e},+}=\Gamma_-$)
\begin{align}
& R_{\rm b}(\psi_{|\Gamma_+}):=\Psi_{|\Gamma^1_{{\rm e},+}}=\Psi_{|\Gamma_-}, \nonumber\\
& D(R_{\rm b}):=\{\psi_{|\Gamma_+}\ |\ \psi\ {\rm is\ a\ solution\ of\ (\ref{ref2})-(\ref{ref5})\ for \ some}\ g\in T^2(\Gamma_-)^3\}. \label{ref6}
\end{align}
If the assumptions of Theorem \ref{cosystth2} hold for $G'$ in place of $G$
(and for the respective cross-sections, stopping powers, $\Sigma'_j,\sigma'_{kj},S'_j$),
the operator $R_{\rm b}$ is a linear operator $T^2(\Gamma_+)^3\to T^2(\Gamma_-)^3$ with domain of definition $D(R_{\rm b})\subset T^2(\Gamma_+)^3$.
In general, $R_{\rm b}$ is not bounded.

The meaning of this definition is that $R_{\rm b}(\psi_{|\Gamma_+})$ models an extra source on $\Gamma_-$
(i.e. an inflow boundary source for $G$)
due to backscattering of particles from the given external region $G_{\rm e}$.
The flux $u=(u_1,u_2,u_3)$ contributed by this inflow source is governed by
the system of equations
\begin{gather}
\omega\cdot\nabla_x u_1+\Sigma_1 u_1-K_{1}u=0,\label{ref9}\\
-{\p {(S_{j}u_j)}E}+\omega\cdot\nabla_x u_j+\Sigma_{j} u_j-K_{j}u=0,\quad j=2,3,\label{ref10}
\end{gather}
on $G\times S\times I$, such that on $\Gamma_-$,
\be\label{ref11}
u_{|\Gamma_-}=R_{\rm b}(\psi_{|\Gamma_+}),
\ee
and for almost every $(x,\omega)\in G\times S$,
\be\label{ref12}
u_j(x,\omega,E_{\rm m})=0,\quad j=2,3.
\ee

We point out that if $\psi\in D(R_{\rm b})$ and if $u$ solves the problem \eqref{ref9}-\eqref{ref12},
and is such that
\be \label{nrep}
R_{\rm b}(u_{|\Gamma_+})=0,
\ee
holds, i.e. no particles are backscattered repeatedly from $G_{\rm e}$ into $G$,
then for $\varphi:=\psi+u$ we have
\[
\varphi_{|\Gamma_-}
=\psi_{|\Gamma_-}+u_{|\Gamma_-}
=g+R_{\rm b}(\psi_{|\Gamma_+})
=g+R_{\rm b}((\psi+u)_{|\Gamma_+})
=g+R_{\rm b}(\varphi_{|\Gamma_+}),
\]
and hence $\varphi=(\varphi_1,\varphi_2,\varphi_3)=\psi+u$ is a solution
of the following problem on $G\times S\times I$,
with boundary and initial conditions holding on $\Gamma_-$ and $G\times S$, respectively,
\begin{gather}
\omega\cdot\nabla_x \varphi_1+\Sigma_1 \varphi_1-K_{1}\varphi=0, \label{ref13}\\[2mm]
-{\p {(S_{j}\varphi_j)}E}+\omega\cdot\nabla_x \varphi_j+\Sigma_{j}\varphi_j-K_{j}\varphi=0,\label{ref14} \\[2mm]
\varphi_{|\Gamma_-}=R_{\rm b}(\varphi_{|\Gamma_+})+g, \label{ref15} \\[2mm]
\varphi_j(\cdot,\cdot,E_{\rm m})=0, \label{ref16}
\end{gather}
where $j=2,3$.
This shows that solving for $\varphi$ (in place of $\psi$) the problem \eqref{ref2}, \eqref{ref3}, \eqref{ref5} under boundary condition \eqref{reflexbc},
is equivalent to solving first for $\psi$ the problem \eqref{ref2}-\eqref{ref5},
and then for $u$ the problem \eqref{ref9}-\eqref{ref12} under the additional condition \eqref{nrep}.

We will not explore the question of the existence of solutions for the problem \eqref{ref13}-\eqref{ref16}
(or for \eqref{ref9}-\eqref{nrep}) in this paper.

%%%%%%%%%%%%%%%%%%%%%%%%%%%%%%%%%%%%%%%%%%%%%%%%%%%%%%%%%%%%%%%%%%%%%%%%%%%%%%%%%%%%%%%%%%%%%%%%%%%%%%%%%%%
\sectionspace
\section{Coupled Adjoint Transport Problem}\label{adjoint}
%%%%%%%%%%%%%%%%%%%%%%%%%%%%%%%%%%%%%%%%%%%%%%%%%%%%%%%%%%%%%%%%%%%%%%%%%%%%%%%%%%%%%%%%%%%%%%%%%%%%%%%%%%%

We will discuss briefly the adjoint version of the transport problem (\ref{csda1a})-(\ref{csda3}),
and related operators. For simplicity we assume that $K=(K_1,K_2,K_3)$ where $K_j, \ j=1,2,3$ are of the form (for this operator the adjoint $K^*$ can easily be computed)
\[
(K_j\psi)(x,\omega,E)
=\sum_{k=1}^3\int_{S\times I} \sigma_{kj}(x,\omega',\omega,E',E)\psi_k(x,\omega',E')d\omega' dE'.
\]
Write
\bea 
&T_1\psi:=\omega\cdot\nabla_x\psi_1+\Sigma_1\psi_1-K_1\psi,\nonumber\\
&T_j\psi:=-{\p {(S_j\psi_j)}E}+\omega\cdot\nabla_x\psi_j+
\Sigma_j\psi_j
- K_j\psi,\quad j=2,3,\nonumber
\eea
and define a (densely defined) linear operator $T:L^2(G\times S\times I)^3\to L^2(G\times S\times I)^3$ by
\bea 
D(T):={}&\{\psi\in L^2(G\times S\times I)^3\ |\ T_j\psi\in L^2(G\times S\times I),\ j=1,2,3\},
\nonumber\\[2mm]
T\psi:={}&(T_1\psi,T_2\psi,T_3\psi).
\eea
Let $f\in L^2(G\times S\times I)^3$ and $g\in T^2(\Gamma_-)^3$. The problem (\ref{csda1a})-(\ref{csda3}) can be expressed equivalently as the problem
\be\label{adjoint1}
T\psi=f,\quad \psi_{|\Gamma_-}=g,\quad \psi_j(\cdot,\cdot,E_m)=0,\quad j=2,3.
\ee

As in section \ref{esols}, an application of integration by parts and the Green's formula (\ref{green}) implies
\be\label{adjoint2}
\la T\psi,v\ra_{L^2(G\times S\times I)^3}=\la \psi,T^*v\ra_{L^2(G\times S\times I)^3}\quad \forall v\in C_0^1(G\times S\times I^{\circ}),
\ee
where $T^*v=(T_1^*v,T_2^*v,T_3^*v)$, and
\bea \label{adjoint3a}
T_1^*v:={}&-\omega\cdot\nabla_x v_1+\Sigma_1^*v_1-K_1^*v,\nonumber\\
T_j^*v:={}&S_j{\p {v_j}E}-\omega\cdot\nabla_x v_j
+\Sigma_j^*v_j-K_j^*v,\quad j=2,3.
\eea
Moreover, we have $\Sigma_j^*=\Sigma_j$ and for $v\in L^2(G\times S\times I)^3$, $j=1,2,3$,
\[
(K_j^*v)(x,\omega,E)
=\sum_{k=1}^3\int_{S\times I} \sigma_{jk}(x,\omega,\omega',E,E')\psi_k(x,\omega',E')d\omega' dE'.
\]

Let $f^*\in L^2(G\times S\times I)^3$ and $g^*\in T^2(\Gamma_+)^3$. The \emph{adjoint problem}  of
(\ref{adjoint1})  (or equivalently (\ref{desol10})-(\ref{desol12})) is defined by (cf. \cite[pp. 24-28]{agoshkov})
\be\label{adjoint4}
T^*\psi^*=f^*,\quad {\psi^*}_{|\Gamma_+}=g^*,\quad \psi_j^*(\cdot,\cdot,0)=0, \quad j=2,3,
\ee
or more explicitly
\bea
-\omega\cdot\nabla_x\psi_1^*+\Sigma_1^*\psi_1^*-K_1^*\psi^*={}& f_1^*, \label{adesol10:1} \\[2mm]
S_j{\p {\psi_j^*}E}-\omega\cdot\nabla_x\psi_j^*
+\Sigma_j^*\psi_j^* - K_j^*\psi^*={}& f_j^*,\quad j=2,3,\label{adesol10:2}
\eea
holding a.e. on $G\times S\times I$,
together with the \emph{outflow} boundary and initial values
\begin{alignat}{3}
\psi^*_{|\Gamma_+}={}&g^* && \quad {\rm a.e.\ on}\ \Gamma_+, \label{adesol11} \\[2mm]
\psi_j^*(\cdot,\cdot,0)={}&0\ && \quad {\rm a.e.\ on}\ G\times S,\quad j=2,3. \label{adesol12}
\end{alignat}
The adjoint problem has various kind of applications both in the existence theory of solutions and in computations.
At the end of this section we give  an example which is related to the dose calculation in radiation therapy.
We refer also to computations concerning a related optimal control problem considered in \cite[Section 7]{tervo17-up}, where the adjoint problem plays a significant role.
In addition, recall that the concept of Green distribution (or Green function)  is usually founded on the theory of adjoint problem.
We also point out that the adjoint field $\psi^*$ obeying Eqs. \eqref{adesol10:1}, \eqref{adesol10:2}
is sometimes called an \emph{importance function} (cf. \cite[Ch. 5, Sec. V]{duderstadt76}).

Consider the variational formulation of the adjoint problem.
As in sections \ref{single-eq} and \ref{cosyst} (for $C=0$) we find that the bilinear form
${\bf B}_0^*(\cdot,\cdot):C^1(\ol G\times S\times I)^3\times C^1(\ol G\times S\times I)^3$ and the linear form ${\bf F}^*$ corresponding to the adjoint problem
are 
\bea\label{adjoint5}
{\bf B}_0^*(\psi^*,v)
={}&
-\sum_{j=2,3}\la\psi_j^*,{\p {(S_jv_j)}E}\ra_{L^2(G\times S\times I)}
+\la \psi^*, \omega\cdot\nabla_x v\ra_{L^2(G\times S\times I)^3} \nonumber\\
&
+\la\psi^*,(\Sigma-K) v\ra_{L^2(G\times S\times I)^3}
+\la \gamma_-(\psi^*), \gamma_-(v)\ra_{T^2(\Gamma_-)^3} \nonumber\\
&
+\sum_{j=2,3} \la \psi_j^*(\cdot,\cdot,E_m),S_j(\cdot,E_m) v_j(\cdot,\cdot,E_m)\ra_{L^2(G\times S)} ,
\eea
where $\psi^*=(\psi_1^*,\psi_2^*,\psi_3^*)$ and $v=(v_1,v_2,v_2)$, and 
\be \label{adjoint6}
{\bf F}_0^*(v)=\la f^*,v\ra_{L^2(G\times S\times I)^3}+\la g^*,\gamma_+(v)\ra_{T^2(\Gamma_+)^3}.
\ee

Similarly as in Theorem \ref{cosystth1} we find that
\be\label{ajoint6}
|{\bf B}_0^*(\psi^*,v)|\leq M\n{\psi^*}_{{\s H}}\n{v}_{\tilde {\s H}}\quad \forall \psi^*, v\in C^1(\ol G\times S\times I)^3
\ee
and
\be 
|{\bf F}_0^*(v)|\leq \big(\n{f^*}_{L^2(G\times S\times I)^3}+\n{g^*}_{T^2(\Gamma_+)^3}\big)\n{v}_{{\s H}}
\quad \forall v\in C^1(\ol G\times S\times I)^3.
\ee
The bilinear form ${\bf B}_0^*(\cdot,\cdot)$ has a unique extension $\tilde {\bf B}_0^*(\cdot,\cdot):{\s H}\times \tilde {\s H}\to \R$
(and $\tilde{\bf B}_0^*(\cdot,\cdot)$ has an explicit expression in the same way as in (\ref{cosyst5a}))
which satisfies
\be\label{adjoint8}
|\tilde {\bf B}_0^*(\tilde\psi^*,v)|\leq M\n{\tilde\psi^*}_{{\s H}}\n{v}_{\tilde {\s H}}\quad \forall \tilde\psi^*=(\psi^*,q^*,p_0^*,p_{\rm m}^*)\in {\s H},\ v\in \tilde {\s H}.
\ee
Moreover, the linear form ${\bf F}_0^*$ has a unique extension to a bounded linear form ${\s H}\to\R$ which we still denote by ${\bf F}_0^*$.
The variational equation corresponding to the adjoint problem 
\eqref{adesol10:1}, \eqref{adesol10:2}, \eqref{adesol11}, \eqref{adesol12} is then
\be\label{adjoint10}
\tilde {\bf B}^*_0(\tilde\psi^*,v)={\bf F}_0^*(v),\quad \forall v\in \tilde {\s H}.
\ee

Suppose that the assumptions (\ref{scateh}), (\ref{colleh}), (\ref{sda2a}), \eqref{sda2a-2} and (\ref{sda2b}) are valid.
Using integration by parts and Green's formula \eqref{green}
we have the following.

\begin{proposition}
For all $\psi,\psi^*\in \tilde {\s H}$
one has
\be\label{adjoint7}
\tilde {\bf B}_0^*(\psi^*,\psi)=\tilde {\bf B}_0(\psi,\psi^*),
\ee
where $\tilde{\bf B}_0(\cdot,\cdot)$ is the bilinear form (\ref{cosyst5a}).
\end{proposition}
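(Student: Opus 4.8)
\textbf{Proof plan for the identity $\tilde{\bf B}_0^*(\psi^*,\psi)=\tilde{\bf B}_0(\psi,\psi^*)$.}

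The plan is to establish the identity first for smooth functions, i.e. for $\psi,\psi^*\in C^1(\ol G\times S\times I)^3$, by direct computation using integration by parts in the energy variable and the Green's formula \eqref{green} for the $\omega\cdot\nabla_x$ terms, and then to extend it to all of $\tilde{\s H}$ by a density and continuity argument. Since both $\tilde{\bf B}_0(\cdot,\cdot)$ and $\tilde{\bf B}_0^*(\cdot,\cdot)$ are, by construction, the (unique) continuous bilinear extensions to ${\s H}\times\tilde{\s H}$ of bilinear forms defined on $C^1(\ol G\times S\times I)^3\times C^1(\ol G\times S\times I)^3$ (recall $C^1(\ol G\times S\times I)^3$ is dense in $\tilde{\s H}$, hence also in ${\s H}$ via the embedding \eqref{eq:embed_sH}, and recall Corollary \ref{convg} which gives $\tilde{\bf W}^2(G\times S\times I)=\tilde W^2(G\times S\times I)$ when $G$ is convex), it suffices to verify the identity on the dense subspace and then invoke the joint continuity estimates \eqref{cosyst15} and \eqref{adjoint8}.

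For the smooth case, I would start from the explicit expression \eqref{adjoint5} for ${\bf B}_0^*(\psi^*,\psi)$ and transform each of the five groups of terms: (a) for the terms $-\sum_{j=2,3}\la\psi_j^*,{\p{(S_jv_j)}E}\ra$ with $v=\psi$, integrate by parts in $E$ to get $\sum_{j=2,3}\la{\p{\psi_j^*}E}\cdot(\text{no, careful})\ra$ — more precisely one moves the $E$-derivative off $S_j\psi_j$ onto $\psi_j^*$, producing the term $\sum_{j=2,3}\la\psi_j,S_j{\p{\psi_j^*}E}\ra_{L^2(G\times S\times I)}$ plus boundary contributions at $E=0$ and $E=E_m$, namely $-\sum_{j=2,3}\la\psi_j^*(\cdot,\cdot,E_m),S_j(\cdot,E_m)\psi_j(\cdot,\cdot,E_m)\ra_{L^2(G\times S)}+\sum_{j=2,3}\la\psi_j^*(\cdot,\cdot,0),S_j(\cdot,0)\psi_j(\cdot,\cdot,0)\ra_{L^2(G\times S)}$; (b) for $\la\psi^*,\omega\cdot\nabla_x\psi\ra$, apply Green's formula \eqref{green} componentwise to rewrite it as $-\la\psi,\omega\cdot\nabla_x\psi^*\ra+\la\gamma(\psi^*),\gamma(\psi)\ra_{T^2(\Gamma)^3}$, and then split the boundary integral over $\Gamma=\Gamma_+\cup\Gamma_-\cup\Gamma_0$ using $(\omega\cdot\nu)=(\omega\cdot\nu)_+-(\omega\cdot\nu)_-$ so that $\la\gamma(\psi^*),\gamma(\psi)\ra_{T^2(\Gamma)^3}=\la\gamma_+(\psi^*),\gamma_+(\psi)\ra_{T^2(\Gamma_+)^3}+\la\gamma_-(\psi^*),\gamma_-(\psi)\ra_{T^2(\Gamma_-)^3}$ after accounting for the sign; (c) the scattering-collision term $\la\psi^*,(\Sigma-K)\psi\ra$ becomes $\la\psi,(\Sigma^*-K^*)\psi^*\ra=\la\psi,(\Sigma-K)^*\psi^*\ra$ by the definition of the adjoint operators $\Sigma^*=\Sigma$, $K^*$ (using Fubini on the $(\omega',\omega,E',E)$ integrals); (d) the trace term $\la\gamma_-(\psi^*),\gamma_-(\psi)\ra_{T^2(\Gamma_-)^3}$ and the initial term $\sum_{j=2,3}\la\psi_j^*(\cdot,\cdot,E_m),S_j(\cdot,E_m)\psi_j(\cdot,\cdot,E_m)\ra$ are carried along. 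Collecting all contributions, I expect the $\Gamma_-$ trace terms and the $E=E_m$ terms to cancel between the integration-by-parts remainders and the terms already present in ${\bf B}_0^*$, leaving exactly the six terms appearing in \eqref{cosyst5a} with $C=0$, i.e. $\tilde{\bf B}_0(\psi,\psi^*)$ — in particular one should land on $+\la\gamma_+(\psi),\gamma_+(\psi^*)\ra_{T^2(\Gamma_+)^3}$ and $+\sum_{j=2,3}\la\psi_j(\cdot,\cdot,0),S_j(\cdot,0)\psi_j^*(\cdot,\cdot,0)\ra_{L^2(G\times S)}$, matching \eqref{cosyst5a}.

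The main obstacle is bookkeeping: one must keep careful track of \emph{all} the boundary terms generated at $\Gamma_\pm$, $E=0$ and $E=E_m$ and check that the spurious ones ($\Gamma_-$ traces and $E=E_m$ values appearing from the integration by parts, versus the $\Gamma_+$ traces and $E=0$ values that must survive) cancel precisely against the corresponding terms already built into the definition \eqref{adjoint5} of ${\bf B}_0^*$. For the $\psi_1$-component there is no $S_1$ term and hence no $E$-integration by parts, so only the convection and scattering-collision parts are present — this is consistent since $\tilde{\bf B}_0$ in \eqref{cosyst5a} also has $\sum_{j=2,3}$ only on the $S_j$- and $L^2(G\times S)$-boundary terms. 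Once the smooth identity is verified, the passage to general $\psi,\psi^*\in\tilde{\s H}$ is routine: pick sequences $\psi^{(n)},\psi^{*(n)}\in C^1(\ol G\times S\times I)^3$ converging to $\psi,\psi^*$ in $\tilde{\s H}$ (hence in ${\s H}$), apply the identity to each pair, and pass to the limit using the boundedness estimates \eqref{cosyst15}, \eqref{adjoint8}, which guarantee $\tilde{\bf B}_0(\psi^{(n)},\psi^{*(n)})\to\tilde{\bf B}_0(\psi,\psi^*)$ and $\tilde{\bf B}_0^*(\psi^{*(n)},\psi^{(n)})\to\tilde{\bf B}_0^*(\psi^*,\psi)$.
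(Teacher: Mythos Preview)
Your approach is correct and is exactly the one the paper uses: the paper's entire argument is the sentence ``Using integration by parts and Green's formula \eqref{green}'', and your bookkeeping of the $E$-boundary and $\Gamma_\pm$ terms carries this out in detail, with the $\Gamma_-$ and $E=E_m$ contributions cancelling as you anticipate. One minor remark: you do not need Corollary~\ref{convg} or any convexity assumption for the density step, since $\tilde{\s H}=\tilde{\bf W}^2(G\times S\times I)\times H_2\times H_2$ is by definition the completion of $C^1(\ol G\times S\times I)^3$ in the relevant norm, so $C^1$ is automatically dense.
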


The relation \eqref{adjoint7} is the justification for the term \emph{adjoint problem}.
For the existence of solutions of the adjoint problem, we formulate the following result.

\begin{theorem}\label{adjointco1}
Suppose that the assumptions
(\ref{scateh}), (\ref{colleh}), (\ref{sda2a}), \eqref{sda2a-2}, (\ref{sda2b}) are valid, and that
(\ref{csda3aa}), (\ref{csda4aa}) hold for $c>0$ and for $C$ given in (\ref{cosyst6a}).
Let $f^*\in L^2(G\times S\times I)^3$ and that $g^*\in T^2(\Gamma_+)^3$.
Then the following assertions hold.

(i) The variational equation
\be\label{vareqcoad}
\tilde {\bf B}_0^*(\tilde\psi^*,v)={\bf F}_0^*(v)\quad \forall v\in \tilde{\s H},
\ee
has a solution $\tilde{\psi}^*=(\tilde{\psi}^*_1, \tilde{\psi}^*_2, \tilde{\psi}^*_3)\in {\s H}$. 
Writing $\tilde{\psi}_1^*:=(\psi_1^*,q_1^*)$, $\tilde{\psi}_j^*:=(\psi_j^*,q_j^*,p_{0j}^*,p_{{\rm m}j}^*)$, $j=2,3$,
and $\psi^*=(\psi_1^*, \psi_2^*, \psi_3^*)\in L^2(G\times S\times I)^3$,
then $\psi^*\in{\s H}_{{\bf P}^*}(G\times S\times I^\circ)$ (see \eqref{eq:H_bfP_star}) is a weak (distributional) solution of the system of equations \eqref{adesol10:1}, \eqref{adesol10:2},
and $\psi^*_1\in W^2(G\times S\times I)$.

(ii) Suppose that additionally the assumption ${\bf TC}$ holds (p. \pageref{as:TC}). Then a
solution $\psi^*$ of the equations \eqref{adesol10:1}, \eqref{adesol10:2}
obtained in part (i) is a solution of the problem \eqref{adesol10:1}-\eqref{adesol12}.

(iii) Under the assumptions imposed in part (ii), any solution $\psi^*$ of the problem \eqref{adesol10:1}-\eqref{adesol12}
is unique and obeys the estimate
\be\label{adjoint11b}
\n{\psi^*}_{{\s H}}\leq 
{{e^{CE_{\rm m}}}\over{c'}}\big(
\n{{f^*}}_{L^2(G\times S\times I)^3}+\n{{ g^*}}_{T^2(\Gamma_+)^3}\big).
\ee
(Recall that $C$ is defined in \eqref{cosyst6a}, $c'$ in \eqref{cocprime} and that $E_m$ is the cutoff energy.)
\end{theorem}

Note that if $\psi^*$ is a solution of the problem \eqref{adesol10:1}-\eqref{adesol12},
then it is a solution of the variational problem (\ref{vareqcoad}) and vice versa.

The method founded on the $m$-dissipativity (see sections \ref{m-d} and \ref{mdiss-op})
can be applied also to the adjoint problem. Define
\[
P_{1}^*(x,\omega,E,D)\psi_1^*:={}&-\omega\cdot\nabla_x\psi_1^*, \\[2mm]
P_{j}^*(x,\omega,E,D)\psi_j^*:={}&S_j{\p {\psi_j^*}E}-\omega\cdot\nabla_x\psi_j^*,\quad j=2,3, \\[2mm]
{\bf P}^*(x,\omega,E,D)\psi^*:={}&\big(P_{1}^*(x,\omega,E,D)\psi_1^*, P_{2}^*(x,\omega,E,D)\psi_2^*,P_{3}^*(x,\omega,E,D)\psi_3^*\big),
\]
and the space
\begin{multline}
{\s H}_{{\bf P}^*}(G\times S\times I^\circ)
:=\{\psi^*\in L^2(G\times S\times I)^3\ | \\
{\bf P}^*(x,\omega,E,D)\psi^*\in 
L^2(G\times S\times I)^3\ \textrm{in the weak sense}\}. \label{eq:H_bfP_star}
\end{multline}

The relevant operator here is the
smallest closed extension (closure) $\tilde{\bf P}^*_0$
of the operator ${\bf P}^*_0$ defined by
\[
D({\bf P}_0^*):={}&\big\{
\psi^*\in \tilde{W}^2(G\times S\times I)\times \big(\tilde{W}^2(G\times S\times I)\cap H^1(I,L^2(G\times S)\big)^2
\ \big| \\[2mm]
&\hspace{2.5mm} \psi^*_{|\Gamma_+}=0,\ \psi^*_j(\cdot,\cdot,0)=0,\ j=2,3\big\} \\[2mm]
{\bf P}_0^*\phi:={}&{\bf P}^*(x,\omega,E,D)\phi.
\]
When $g^*=0$, the problem \eqref{adesol10:1}-\eqref{adesol12} is equivalent, in the strong sense, to
\[
(\tilde{\bf P}_0^*+\Sigma^*-K^*)\psi^*=f^*,
\]
where $\psi^*\in D(\tilde {\bf P}_0^*)$,
and $\Sigma^*\psi^*=(\Sigma_1^*\psi_1^*, \Sigma_2^*\psi_2^*, \Sigma_3^*\psi_3^*)$,
$K^*\psi^*=(K_1^*\psi^*, K_2^*\psi^*, K_3^*\psi^*)$.
 
The result analogous to Theorem \ref{m-d-j-co1} is the following.

\begin{theorem}\label{m-d-ad}
Suppose that the assumptions (\ref{scateh}), (\ref{colleh}), (\ref{csda3aa}), (\ref{csda4aa}) (with $c>0$)
and \eqref{Sj-ass:1}, \eqref{Sj-ass:2}, \eqref{Sj-ass:3} are valid.
Furthermore, suppose that ${f^*}\in L^2(G\times S\times I)^3$
and  $g^*\in T^2(\Gamma_+)\times H^1(I,T^2(\Gamma_+'))^2$ is such that the compatibility condition
\be\label{comp-d-j-ad}
g_j^*(\cdot,\cdot,E_0)=0,\ j=2,3,
\ee
holds.
Then the problem \eqref{adesol10:1}-\eqref{adesol12}
has a unique solution 
$\psi^*\in {\s H}_{\bf P^*}(G\times S\times I^\circ)$.
In addition, there exists a constant $C_1>0$ such that \emph{a priori} estimate
\be\label{diss-co-es-ad}
\n{\psi^*}_{L^2(G\times S\times I)^3}
\leq
C_1\big(\n{{ f^*}}_{L^2(G\times S\times I)^3}+
\n{{ g^*}}_{T^2(\Gamma_+)\times H^1(I,T^2(\Gamma_+'))^2}\big),
\ee
holds.
\end{theorem}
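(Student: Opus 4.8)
The plan is to mirror, essentially verbatim, the argument used to establish Theorem \ref{m-d-j-co1} (the forward problem) but applied to the adjoint transport operator. The key structural observation is that the adjoint problem \eqref{adesol10:1}--\eqref{adesol12}, after substituting $\psi^*_j \mapsto e^{CE}\psi^*_j$ as in Section \ref{LLM}, takes exactly the same shape as the forward coupled problem, with the energy variable reflected via $E \mapsto E_{\rm m} - E$ (so that the outflow condition on $\Gamma_+$ and the initial condition at $E=0$ become an inflow condition on a reflected boundary and an initial condition at the reflected cut-off energy). Concretely, the term $+S_j\,\partial_E \psi^*_j$ in $T^*_j$ plays the role of $-\partial_E(S_j\psi_j)$ after the reflection, and $-\omega\cdot\nabla_x$ plays the role of $+\omega\cdot\nabla_x$ after the sign flip $\omega \mapsto -\omega$, which interchanges $\Gamma_+$ and $\Gamma_-$. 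Since the transposition of the cross-sections, $\sigma_{kj}(x,\omega,\omega',E,E') \leftrightarrow \sigma_{jk}(x,\omega',\omega,E',E)$, simply swaps the roles of conditions \eqref{csda3aa} and \eqref{csda4aa}, the dissipativity hypotheses needed for the adjoint are precisely \eqref{csda3aa}, \eqref{csda4aa} again, as stated.

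First I would set up the operator-theoretic framework exactly as in Section \ref{mdiss-op}: introduce ${\bf P}^*_{C}(x,\omega,E,D)$ including the zeroth-order term $CS_j$, form the closure $\tilde{\bf P}^*_{C,0}$ of its restriction to the domain encoding the homogeneous outflow/initial data, and verify via the argument of Remark \ref{re:P_closure} that this closure exists. The next step is to show that $-\tilde{\bf P}^*_{C,0}$ is $m$-dissipative. For this I would reflect the energy variable, which turns $-\tilde{\bf P}^*_{C,0}$ into (a direct sum over particle types of) an operator of the type handled in Theorem \ref{md-evoth} / Corollary \ref{d-cor}; the key inequality $-\partial_E S_j + 2CS_j \geq 0$ holds for $C = \max_j C_j$ with $C_j = \max\{q_j,0\}/\kappa_j$ exactly as in the proof of Theorem \ref{coth2-d} (Eq. \eqref{flp1-d}). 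The required smoothness \eqref{Sj-ass:1}--\eqref{Sj-ass:3} is what makes Theorem \ref{md-evoth} (or its Remark-\ref{md-gene} generalization, or the evolution-operator route) applicable. The first particle, governed by the pure advection operator $-\omega\cdot\nabla_x$ with outflow (hence reflected inflow) boundary condition, is even simpler and is the standard $m$-dissipative transport operator.

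Then I would invoke Theorem \ref{sdath1} (accretivity) applied to the \emph{adjoint} scattering-collision operator $\Sigma^* - K^*$: since $\Sigma^*_j = \Sigma_j$ and $(K^*_j\psi)$ involves $\sigma_{jk}(x,\omega,\omega',E,E')$, the accretivity estimate $\langle (\Sigma^*-K^*_C)\psi^*,\psi^*\rangle \geq c\|\psi^*\|^2$ follows from \eqref{csda3aa}, \eqref{csda4aa} — the two conditions just trade places relative to the forward case. Combining the $m$-dissipativity of $-\tilde{\bf P}^*_{C,0}$ with the bounded dissipative perturbation $-(\Sigma^*-K^*_C)+cI$ via the perturbation theorem (\cite[Theorem 4.3 and Corollary 3.3]{pazy83}, or \cite[Theorem 4.4]{tervo14}) gives that $\tilde{\bf P}^*_{C,0} + \Sigma^* - K^*_C$ is surjective and bounded below by $c$; surjectivity yields existence, and the lower bound $\|(\tilde{\bf P}^*_{C,0}+\Sigma^*-K^*_C)\psi^*\| \geq c\|\psi^*\|$ yields uniqueness together with the $L^2$ a priori bound $\|\psi^*\| \leq c^{-1}\|\tilde{\bf f}^*\|$. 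For the inhomogeneous outflow data $g^* \in T^2(\Gamma_+)\times H^1(I,T^2(\Gamma'_+))^2$ satisfying the compatibility condition \eqref{comp-d-j-ad}, I would subtract off a lift $L_+ g^*$ (the analogue of $L{\bf g}$, using the $\Gamma_+$-lift from Remark \ref{wl}.B together with the $H^1$-in-energy estimate of Lemma \ref{le:H1_lift}), reduce to the homogeneous case, and assemble the final estimate \eqref{diss-co-es-ad} exactly as in Corollary \ref{cdd} and the proof of Corollary \ref{m-d-co1}, undoing the exponential change of unknown at the end.

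The main obstacle I anticipate is bookkeeping rather than a genuinely new difficulty: one must be careful and explicit about the two successive changes of variable (the energy reflection $E \mapsto E_{\rm m}-E$ and the exponential weight $e^{CE}$) and, in particular, about how the outflow/initial-data domain $D(\tilde{\bf P}^*_0)$ transforms into an inflow/initial-data domain to which Theorem \ref{md-evoth} literally applies — this is the point where the direction of ``time'' $E$ is reversed and the sign of the $\partial_E$ term must be tracked against the reflected cut-off. A secondary point requiring care is that the coupling structure of $K^*$ (which particle feeds which) is the transpose of that for $K$, so in reducing the $3\times 3$ system one cannot in general eliminate $\psi^*_1$ the way $\psi_1$ was eliminated in Theorem \ref{coupthev}; fortunately the $m$-dissipativity argument does not require such elimination, so treating the full coupled system directly (as in Theorem \ref{m-d-j-co1}) is the right route and avoids this issue entirely.
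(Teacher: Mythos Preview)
Your proposal is correct and matches the paper's approach: the paper does not give an explicit proof of this theorem, stating only that it is ``the result analogous to Theorem \ref{m-d-j-co1}'' after introducing the operators $\tilde{\bf P}^*_0$ and ${\s H}_{{\bf P}^*}$. Your plan to mirror the $m$-dissipativity argument of Sections \ref{m-d} and \ref{mdiss-op}, handling the inhomogeneous outflow data by a lift as in Corollary \ref{cdd} and then undoing the exponential weight as in Corollary \ref{m-d-co1}, is exactly what the paper intends.
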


The existence result analogous to Theorem \ref{coupthev}
(based on the theory of evolution operators) holds also for the adjoint problem, and it guarantees that $\psi^*
\in\tilde W^2(G\times S\times I)\times (\tilde W^2(G\times S\times I)\cap W_1^2(G\times S\times I))^2$.
In this case, one assumes that $K$ takes the form \eqref{ec1moda},
and that consequently its adjoint version $K^*=(K_1^*, K_2^*, K_3^*)$ is,
\begin{align}\label{ec1moda:ad}
(K_j^*\psi)(x,\omega,E)=\sum_{k=1}^3\int_S\tilde\sigma_{kj}(x,\omega,\omega',E)\psi_k(x,\omega',E)d\omega',\quad j=1,2,3.
\end{align}

The adjoint version of Theorem \ref{coupthev} can be formulated as follows.

\begin{theorem}\label{coupthevad}
Suppose that the adjoint collision operator is of the form \eqref{ec1moda:ad},
and that the assumptions (\ref{ass1-aa})-(\ref{ec9-aa}) of Theorem \ref{coupthev} 
are valid for $\Sigma_j,\ \sigma_{jk}$ and $S_j$. Furthermore,
suppose that $f^*\in  C^1(I,L^2(G\times S)^3)$ and $g^*\in C^2(I,T^2(\Gamma_+')^3)$ which satisfies  the compatibility condition
\[
g_j^*(0)=0,\quad j=2,3.
\]
Then the problem \eqref{adesol10:1}-\eqref{adesol12} has a unique solution $\psi^*\in \tilde W^2(G\times S\times I)\times \big( C(I,\tilde W^2(G\times S)^2)\cap C^1(I,L^2(G\times S)^2)\big)$.
In particular, $\psi^*\in\tilde{W}^2(G\times S\times I)\times \big(\tilde{W}^2(G\times S\times I)\cap W_1^2(G\times S\times I)\big)^2$.

If, in addition, the conditions \eqref{ec8}, \eqref{ec9} are valid,
then the solution $\psi^*$ satisfies the estimate (\ref{adjoint11b}).
\end{theorem}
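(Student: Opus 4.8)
The plan is to reduce Theorem~\ref{coupthevad} to Theorem~\ref{coupthev} by reflecting the direction and energy variables, which converts the adjoint system \eqref{adesol10:1}--\eqref{adesol12} into a coupled CSDA system of the type \eqref{csda1a}--\eqref{csda3}. Concretely, I would set $\chi_j(x,\omega,E):=\psi_j^*(x,-\omega,E_{\rm m}-E)$, and likewise $f_j^\flat(x,\omega,E):=f_j^*(x,-\omega,E_{\rm m}-E)$, $g_j^\flat(y,\omega,E):=g_j^*(y,-\omega,E_{\rm m}-E)$, $\Sigma_j^\flat(x,\omega,E):=\Sigma_j(x,-\omega,E_{\rm m}-E)$, $S_j^\flat(x,E):=S_j(x,E_{\rm m}-E)$, and reflected cross-sections $\sigma_{kj}^\flat$ obtained from the $\tilde\sigma_{jk}$ by the substitution $(\omega,\omega',E)\mapsto(-\omega',-\omega,E_{\rm m}-E)$. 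Using the chain rule, the substitution turns $-\omega\cdot\nabla_x$ into $\omega\cdot\nabla_x$ and $+S_j\,\partial_E$ into $-S_j^\flat\,\partial_E$; writing $-S_j^\flat\partial_E\chi_j=-\partial_E(S_j^\flat\chi_j)+(\partial_E S_j^\flat)\chi_j$ and performing the measure-preserving substitution $\omega'\mapsto-\omega'$ in the angular integral defining $K^*$ in \eqref{ec1moda:ad}, one checks that $\chi=(\chi_1,\chi_2,\chi_3)$ solves on $G\times S\times I$ a system of the form \eqref{csda1a}--\eqref{csda3} with stopping powers $S_j^\flat$ ($j=2,3$), total cross-sections $\Sigma_1^\flat$ (for $j=1$) and $\Sigma_j^\flat-(\partial_E S_j)^\flat$ (for $j=2,3$), collision operator of the form \eqref{ec1moda} with kernels $\sigma_{kj}^\flat$, right-hand side $f^\flat$, inflow datum $g^\flat$ on $\Gamma_-$ (since $\omega\mapsto-\omega$ is a measure-preserving homeomorphism of $S$ interchanging $\Gamma_\pm$ and $\Gamma_\pm'$, the outflow condition $\psi^*_{|\Gamma_+}=g^*$ becomes the inflow condition $\chi_{|\Gamma_-}=g^\flat$), and initial condition $\chi_j(\cdot,\cdot,E_{\rm m})=0$ for $j=2,3$ (from $\psi_j^*(\cdot,\cdot,0)=0$).

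Next I would verify that the reflected data and coefficients satisfy all hypotheses of Theorem~\ref{coupthev}. Since $\omega\mapsto-\omega$ is a smooth isometry of $S$ and $E\mapsto E_{\rm m}-E$ a smooth diffeomorphism of $I$, the regularity memberships \eqref{ass1-aa}, \eqref{ass3-aa}, \eqref{ass5}, \eqref{ass5n}, \eqref{ec4} and the nonnegativity \eqref{ass4} are preserved for $\Sigma_j^\flat,\sigma_{kj}^\flat,S_j^\flat$; the accretivity inequalities \eqref{ec8-aa}, \eqref{ec9-aa} for the $(1,1)$ pair transform into the same pair of inequalities for $\Sigma_1^\flat$ and $\sigma_{11}^\flat$ (the two simply exchange roles under the reflection), and likewise \eqref{ec8}, \eqref{ec9} transform into themselves; the compatibility condition $g_j^*(0)=0$ becomes $g_j^\flat(E_{\rm m})=0$; and $f^*\in C^1(I,L^2(G\times S)^3)$, $g^*\in C^2(I,T^2(\Gamma_+')^3)$ become $f^\flat\in C^1(I,L^2(G\times S)^3)$, $g^\flat\in C^2(I,T^2(\Gamma_-')^3)$. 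The one delicate point is that the effective $j=2,3$ total cross-section $\Sigma_j^\flat-(\partial_E S_j)^\flat$ need not be nonnegative; this, however, is harmless, because in the proof of Theorem~\ref{coupthev} the $j=2,3$ block is treated via the evolution-operator machinery of Section~\ref{evcsd}, where this zeroth-order coefficient enters only through the bounded operators $A_1(E)$ of Lemma~\ref{csdale2} and is absorbed into the large constant $C=C_0+C_0'$, only its boundedness (guaranteed by \eqref{ass5}) being used. With the hypotheses in place, Theorem~\ref{coupthev} produces a unique $\chi\in\tilde W^2(G\times S\times I)\times\big(C(I,\tilde W^2(G\times S)^2)\cap C^1(I,L^2(G\times S)^2)\big)$, and, under the extra hypotheses \eqref{ec8}, \eqref{ec9}, the estimate \eqref{csda40aaa-co} for $\chi$.

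Finally I would transform back: $\psi_j^*(x,\omega,E):=\chi_j(x,-\omega,E_{\rm m}-E)$ is then the unique solution of \eqref{adesol10:1}--\eqref{adesol12}, and because the reflection is an isometry between the relevant function spaces, interchanging $\Gamma_\pm$, $\Gamma_\pm'$ and the endpoints $E=0$, $E=E_{\rm m}$, the solution $\psi^*$ inherits the regularity $\psi^*\in\tilde W^2(G\times S\times I)\times\big(C(I,\tilde W^2(G\times S)^2)\cap C^1(I,L^2(G\times S)^2)\big)$, hence in particular $\psi^*\in\tilde W^2(G\times S\times I)\times\big(\tilde W^2(G\times S\times I)\cap W_1^2(G\times S\times I)\big)^2$; uniqueness transfers as well. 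For the \emph{a priori} estimate \eqref{adjoint11b} under \eqref{ec8}, \eqref{ec9}, the cleanest route is to note that the constructed $\psi^*$ has enough regularity for the hypothesis \eqref{asscl-aa-co-ad} of Theorem~\ref{adjointco1} to hold, and then invoke Theorem~\ref{adjointco1}(iii) (whose conditions \eqref{csda3aa}, \eqref{csda4aa} reduce to \eqref{ec8}, \eqref{ec9} for collision kernels of delta type, as observed after \eqref{csda4aa}); alternatively one transports \eqref{csda40aaa-co} for $\chi$ through the reflection, which is norm-preserving on ${\s H}$. The main obstacle, and the only place demanding genuine care, is the bookkeeping of the reflection on the coupled collision term together with the verification that no structural hypothesis of Theorem~\ref{coupthev} is actually lost under it --- in particular the harmless failure of nonnegativity of the effective total cross-section discussed above.
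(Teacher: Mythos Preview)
Your proposal is correct. The paper does not actually write out a proof of this theorem at all; it merely states that the result ``analogous to Theorem \ref{coupthev} \ldots holds also for the adjoint problem'' and then records the statement. Your reflection $(\omega,E)\mapsto(-\omega,E_{\rm m}-E)$ is precisely the natural way to make that analogy explicit, and your bookkeeping is accurate, including the observation that the effective $j=2,3$ total cross-section $\Sigma_j^\flat-(\partial_E S_j)^\flat$ may fail \eqref{ass4} but that this is irrelevant because, in the proof of Theorem \ref{coupthev}, these coefficients enter only through the bounded perturbation $A_1(E)$ of Lemma \ref{csdale2} and are absorbed into the constant $C$.

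A minor alternative the paper's structure suggests is to avoid the $E$-reflection altogether: the adjoint equations \eqref{adesol10:2} already carry $+S_j\partial_E$ with the initial condition at $E=0$, so they are in forward evolution form in $E$ without the change $E\mapsto E_{\rm m}-E$ (compare how the proof of Theorem \ref{evoth1} introduces that reflection only to convert the direct problem into forward form). One would then only need the $\omega\mapsto-\omega$ reflection (or, equivalently, redo Lemma \ref{csdale1} and Lemma \ref{coupevle2} with $-\omega\cdot\nabla_x$ on $\tilde W^2_{+,0}(G\times S)$). That route sidesteps the appearance of the extra $(\partial_E S_j)^\flat$ term entirely. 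Your full-reflection route has the advantage of reducing literally to the already-proved Theorem \ref{coupthev}, at the modest cost of the harmless nonnegativity check you correctly flagged; both approaches are equally valid.
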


It is clear that a Sobolev space version of the above theorem analogous to Corollary \ref{coupcoev},
holds for the adjoint problem as well.

\begin{example}\label{exgreenf1}

In radiation therapy the absorbed \emph{dose} from the particle field $\psi=(\psi_1,\psi_2,\psi_3)$ is defined by the functional
\be 
D(x)=(D\psi)(x):=\sum_{j=1}^3\int_{S\times I}\varsigma_j(x,E)\psi_j(x,\omega,E) d\omega dE,
\ee
where $\psi$ is the solution of (\ref{desol10})-(\ref{desol12}).
We see that 
\be 
(D\psi)(x)=\sum_{j=1}^3\la \varsigma_j(x,\cdot),\psi_j(x,\cdot,\cdot)\ra_{L^2(S\times I)}
=\la \varsigma(x,\cdot),\psi(x,\cdot,\cdot)\ra_{L^2(S\times I)^3}.
\ee
Define for any fixed $x\in G$ distribution $T_{\varsigma_{j,x}}$ on $S\times I^\circ$ by
\be 
T_{\varsigma_{j,x}}\varphi:= \la \varsigma_j(x,\cdot),\varphi\ra_{L^2(S\times I)},\quad \varphi\in C_0^\infty(S\times I^\circ),\quad j=1,2,3.
\ee
Then for all $\psi_j=\phi_j\otimes\varphi_j$, where $\phi_j\in C_0^\infty(G)$, $\varphi_j\in C_0^\infty(S\times I^\circ)$,
and where $(\phi_j\otimes\varphi_j)(x,\omega,E):=\phi_j(x)\varphi_j(\omega,E)$,
we find that
\bea\label{gf1} 
\sum_{j=1}^3(\delta_x\otimes T_{\varsigma_{j,x}})(\psi_j)
={}&
\sum_{j=1}^3\delta_x(\phi_j)T_{\varsigma_{j,x}}(\varphi_j)
=\sum_{j=1}^3 \phi_j(x)\int_{S\times I}\varsigma_j(x,E)\varphi_j(\omega,E) d\omega dE
\nonumber\\
={}&\sum_{j=1}^3 \int_{S\times I}\varsigma_j(x,E)(\phi_j\otimes\varphi_j)(x,\omega,E) d\omega dE
=(D\psi)(x).
\eea
Hence also for a general element $\psi=(\psi_1,\psi_2,\psi_3)\in C_0^\infty(G\times S\times I^\circ)$
one has
\be\label{gf2}
(D\psi)(x)=\sum_{j=1}^3(\delta_x\otimes T_{\varsigma_{j,x}})(\psi_j)=:T_x(\psi),
\ee
where $T_x$ is a distribution on $G\times S\times I^\circ$ with values in $\R^3$.

The following discussion will be formal.
Let $x\in G$ be fixed. Assume that there exists a sufficient regular solution $\Psi^*_x$
to the variational problem
\be\label{gf3}
\tilde {\bf B}_0^*(\Psi^*_x,v)=T_x(v),\quad \forall v\in\tilde {\s H}.
\ee
Furthermore, assume that $\psi$ (which depends on $f,g$) is a sufficient regular solution of (\ref{csda1a})-(\ref{csda3})
and that
$\Psi^*_x=\Psi_x^*(x',\omega,E)$ is a solution of (\ref{gf3}) such that (\ref{adjoint7}) holds (for $\psi$ and $\Psi_x^*$). Then we have
\bea\label{gf4}
(D\psi)(x)={}&T_x(\psi)=\tilde{\bf B}_0^*(\Psi^*_x,\psi)
=\tilde{\bf B}_0(\psi,\Psi^*_x)
={\bf F}_0(\Psi^*_x)
\nonumber\\
={}&
\la f,\Psi^*_x\ra_{L^2(G\times S\times I)^3}
+
\la g, \gamma_-(\Psi_x^*) \ra_{T^2(\Gamma_-)^3},
\eea
implying that the dose can be obtained with the help of $\Psi^*_x$. That is why, when the equation (\ref{gf3}) is solved once,
one can obtain the dose $(D\psi)(x)$ at $x\in G$ for any $f$ and $g$ from (\ref{gf4}).
\end{example}

%%%%%%%%%%%%%%%%%%%%%%%%%%%%%%%%%%%%%%%%%%%%%%%%%%%%%%%%%%%%%%%%%%%%%%%%%%%
\sectionspace
\section{On the Non-negativity of Solutions}\label{possol}
%%%%%%%%%%%%%%%%%%%%%%%%%%%%%%%%%%%%%%%%%%%%%%%%%%%%%%%%%%%%%%%%%%%%%%%%%%%

%%%%%%%%%%%%%%%%%%%%%%%%%%%%%%%%%%%%%%%%%%%%%%%%%%%%%%%%%%%%%%%%%%%%%%%%%%%
\subsection{Non-negativity of Solutions for a Single Boltzmann Transport Equation}
\label{possol-s-b}
%%%%%%%%%%%%%%%%%%%%%%%%%%%%%%%%%%%%%%%%%%%%%%%%%%%%%%%%%%%%%%%%%%%%%%%%%%%

We start the handling of non-negativity of solutions for a single Boltzmann transport equation.

\begin{theorem}\label{p-1}
Suppose that
\[
\Sigma\in L^\infty(G\times S\times I),\quad \Sigma\geq 0\quad {\rm a.e.\ in}\ G\times S\times I.
\]
Furthermore, suppose that the collision operator is of the form $K=K^1+K^2+K^3$ given in section \ref{col-sum} where $\sigma^1,\ \sigma^2,\ \sigma^3$ obey the assumptions (\ref{ass5-a}), (\ref{ass7}), (\ref{ass-8}), (\ref{ass8-aa}) and (\ref{ass9-a}).
Then for every $f\in L^2(G\times S\times I)$ and $g\in T_{\tau_-}^2(\Gamma_-)$ the problem
\bea\label{co3aa}
\omega\cdot\nabla\psi+\Sigma\psi -K\psi&=f\ \nonumber\\
{\psi}_{|\Gamma_-}&=g,
\eea
has a unique solution $\psi\in \widetilde W^2(G\times S\times I)$.

Additionally, suppose that $f\geq 0,\ g\geq 0$.
Then the solution is non-negative that is, $\psi\geq 0$.
\end{theorem}

\begin{proof}
The  existence of the unique solution can be shown as in \cite{tervo17-up}, Theorem 6.14. Slight modifications are needed, the most notable of them being the fact that here we only assume   that the boundary data is in 
$T_{\tau_-}^2(\Gamma_-)$. It follows straightforwardly from the inflow trace Theorem \ref{tth}  that this generalization is allowed.

The non-negativity of solutions can be shown as in \cite{tervo17-up}, Theorem 6.16. The proof leans on the Trotter's formula (\cite{engelnagel}, Theorem III.5.2, p. 220 or \cite{goldstein}, p.53, cf. also \cite{dautraylionsv6}, pp. 226-227). We omit details.

\end{proof}

%%%%%%%%%%%%%%%%%%%%%%%%%%%%%%%%%%%%%%%%%%%%%%%%%%%%%%%%%%%%%%%%
\subsection{Non-negativity of Solutions for a Single Boltzmann CSDA Transport Equation}
\label{possol-s-csda}
%%%%%%%%%%%%%%%%%%%%%%%%%%%%%%%%%%%%%%%%%%%%%%%%%%%%%%%%%%%%%%%%

Consider the problem
\begin{gather}
-{\p {(S_0\psi)}E}+\omega\cdot\nabla\psi+\Sigma\psi -K\psi=f,\ \nonumber\\
{\psi}_{|\Gamma_-}=g,\quad \psi(\cdot,\cdot,E_{\rm m})=0. \label{p-2}
\end{gather}
The existence of the unique solution 
$\psi\in {\s H}_P(G\times S\times I^\circ)$ is guaranteed by Corollary \ref{m-d-co1}.

Using a change of unknown $\phi=e^{CE}\psi$,
with $C$ given in \eqref{eq:def_C},
we return the problem to the one considered in Theorem \ref{coth3-dd}, that is
\bea
&
-{\p {(S_0\phi)}E}+\omega\cdot\nabla_x\phi+CS_0\phi+\Sigma\phi-K_C\phi={}{f}_C \quad \textrm{on}\ G\times S\times I, \label{p-s1-b} \\
&
\phi_{|\Gamma_-}={}{ g}_C \quad \textrm{on}\ \Gamma_-, \quad
\phi(\cdot,\cdot,E_{\rm m})={}0 \quad \textrm{on}\ G\times S. \label{p-s3-b}
\eea
Clearly, the positivity condition on data, $f\geq 0$ and $g\geq 0$, is equivalent to having ${ f}_C\geq 0$ and ${ g}_C\geq 0$.
Recall that
\[
P_C(x,\omega,E,D)\phi:=-{\p {(S_0\phi)}E}+\omega\cdot\nabla_x\phi+CS_0\phi.
\]
We state our main theorem of this section.

\begin{theorem}\label{pos-th2}
Suppose that the assumptions (\ref{ass1}), (\ref{ass5-a}), (\ref{ass7}), (\ref{ass-8}),  (\ref{ass8-aa}), (\ref{ass9-a}) hold with $C$ given by (\ref{eq:def_C}).
Furthermore, suppose that (\ref{evo16}), (\ref{evo8-a}), (\ref{evo9-a}) are valid.
Let ${f}\in L^2(G\times S\times I)$, and let $g\in H^1(I,T^2(\Gamma_-'))$ be such that
\be\label{comp-d-d-p}
g(\cdot,\cdot,E_{\rm m})=0
\ee
and that $g\geq 0,\ f\geq 0$.
Then the solution of the problem (\ref{p-2}) is non-negative that is, $\psi\geq 0$.
\end{theorem}

\begin{proof}

A. For the first instance, we assume that $g=0$.
Let $\tilde P_{C,0}$ be the smallest closed extension defined above in section \ref{m-d}.
Using the these notations, the problem (\ref{p-s1-b}), (\ref{p-s3-b}) is equivalent to
\begin{align}\label{eq:problem_x}
(\tilde P_{C,0}+\Sigma-K_C)\phi={ f}_C 
\end{align}
where $\phi\in D(\tilde P_{C,0})$. 
Let 
\[
T_{C,0}:=\tilde P_{C,0}+\Sigma-K_C.
\]
Since $-\tilde P_{C,0}$ is $m$-dissipative (recall the proof of Theorem \ref{coth3-dd}) and $-(\Sigma-K_C)+cI$ is dissipative (recall (\ref{K-coer}))
one sees that the operator $-T_{C,0}+cI$ is $m$-dissipative. Recall also that $c>0$ is a part of the assumptions \eqref{ass8-aa-a}, (\ref{ass9-a-b}).
So $-T_{C,0}+cI$ generates a contraction $C^0$-semigroup $G_c(t)$. 
Hence the operator $-T_{C,0}=(-T_{C,0}+cI)-cI$ generates a (contraction) $C^0$-semigroup $G(t)=G_c(t)e^{-ct}$, $t\geq 0$,
such that $\n{G(t)}\leq e^{-ct}$ for all $t\geq 0$.
In addition, the solution $\phi$ of \eqref{eq:problem_x} is given by
(see \cite[Chapter II, Theorem 1.10]{engelnagel})
\be\label{p-s8}
\phi=T_{C,0}^{-1}{ f}_C
=(0-(-T_{C,0}))^{-1}{f}_C
=\int_0^\infty  G(t){f}_C dt.
\ee

We decompose the operator $-T_{C,0}$ (with the domain $\widetilde W^2_{-,0}(G\times S\times I)\cap H^1(I,L^2(G\times S))$) as follows 
\be\label{p-s9}
-T_{C,0}= B_0+A_0-(\Sigma+CS I)+K_C,
\ee
where linear operators $B_0$ and $A_0$ are defined by
\begin{gather*}
D(A_0):=\widetilde W^2_{-,0}(G\times S\times I),
\quad A_0\phi:=-\omega\cdot\nabla_x\phi\nonumber\\
D(B_0):=\{\phi\in H^1(I,L^2(G\times S))\ |\ \phi(\cdot,\cdot,E_{\rm m})=0\},
\quad B_0\phi:={\p {(S_0\phi)}E}.
\end{gather*}
The semi-groups generated by the last three components are given by (below $H$ is the Heaviside function) 
\bea\label{p-s10}
&
T_{K_C}(t)=e^{tK_C},\nonumber\\
&
T_{-(\Sigma+CS I)}(t)=e^{-t(\Sigma+CSI)},\nonumber\\
&
(T_{A_0}(t)f)(x,\omega,E)=H(t(x,\omega)-t)f(x-t\omega,\omega,E),
\eea
and they are clearly all of positive type (that is, $T(t)f\geq 0$ for $f\geq 0$). 

Also, the semi-group $T_{B_0}(t)$ generated by $B_0$ is of positive type. Indeed, letting
$(U(t))(x,\omega,E):=(T_{B_0}(t)f)(x,\omega,E)$ for a given $f\in D(B_0)$,
then $U$ satisfies the Cauchy problem
\[
{\p U{t}}-B_0U=0,\quad U(0)=f,
\]
or equivalently,
\be\label{p-s11}
{\p U{t}}-{\p {(S_0U)}E}=0,\quad U(0)=f.
\ee
The solution of \eqref{p-s11} can be written in the form (see Example \ref{desolex1})
\begin{multline*}
U(x,\omega,E,t)
=
(U(t))(x,\omega,E) \\
=
H(R_x(E_m)-R_x(E)-t)
{{S_0(x,R_x^{-1}(R_x(E)+t))}\over{S_0(x,E)}} f(x,\omega,R_x^{-1}(R_x(E)+t)),
\end{multline*}
where again $H$ is the Heaviside function and
\[
R_x(E):=\int_0^E{1\over{S_0(x,\tau)}}d\tau,
\quad E\in [0,E_m].
\]
In other words, 
\begin{multline*}
(T_{B_0}(t)f)(x,\omega,E) \\
=
H(R_x(E_m)-R_x(E)-t)
{{S_0(x,R_x^{-1}(R_x(E)+t))}\over{S_0(x,E)}} f(x,\omega,R_x^{-1}(R_x(E)+t)),
\end{multline*}
and therefore $T_{B_0}(t)$ is evidently of positive type.

Due to \eqref{evo16} and \eqref{evo8-a} there is $M>0$ such that
$0<\kappa\leq S_0\leq M$ a.e. on $\ol{G}\times I$.
For fixed $E\in I$, letting $s_E(t):=R_x^{-1}(R_x(E)+t)-E$,
we have $s_E(0)=0$, and $s_E'(t)=S_0(x,s_E(t)+E)$,
and hence the estimates
\[
\kappa t\leq s_E(t)\leq Mt,\quad \forall t\geq 0
\]
hold, and thus in particular,
\[
E\leq \kappa t+E\leq R_x^{-1}(R_x(E)+t).
\]
Since by the definition of $C$
\[
\frac{1}{S_0}\p{S_0}{E}\leq {1\over {S_0}}q\leq {q\over\kappa}\leq 2C
\]
we get after integration from $E$ to $E'$, where $E\leq E'$,
\[
\frac{S_0(x,E')}{S(_0x,E)}\leq e^{2C(E'-E)},
\]
and hence by the above,
\[
\frac{S_0(x,R_x^{-1}(R_x(E)+t))}{S_0(x,E)}\leq e^{2Cs_E(t)}\leq e^{2CMt}.
\]
Furthermore, since
\[
J_{x,t}(E):=\pa{E} R_x^{-1}(R_x(E)+t)=\frac{R_x'(E)}{R_x'\big(R_x^{-1}(R_x(E)+t)\big)}=\frac{S_0(x,R_x^{-1}(R_x(E)+t))}{S_0(x,E)},
\]
we obtain the following estimate for $T_{B_0}(t)$, $t\geq 0$,
\[
\n{T_{B_0}(t)f}_{L^2(G\times S\times I)}^2
\leq{}&
e^{2CMt}\int_{G\times S\times I} |\ol f(x,\omega,R_x^{-1}(R_x(E)+t))|^2 J_{x,t}(E) dxd\omega dE \\
\leq {}&e^{2CMt}\int_{G\times S\times I'} |f(x,\omega,E')|^2 dxd\omega dE'
=e^{2CMt}\n{f}_{L^2(G\times S\times I)}^2
\]
where $\ol f$ is the extension by zero of $f$ onto $G\times S\times [0,\infty[$ and where we performed the change of variables $E'=R_x^{-1}(R_x(E)+t))$.

The above computations show that for any $n\in\N$, we have
\[
\n{\Big[T_{B_0}(t/n)T_{A_0}(t/n)T_{-(\Sigma+CS_0 I)}(t/n)T_{K_C}(t/n)\Big]^n}
\leq e^{t(CM+\n{\Sigma-CS_0I}+\n{K_C})}.
\]

Let $D:=D(A_0)\cap D(B_0)=\{\phi\in\widetilde W^2_{-,0}(G\times S\times I)\cap H^1(I,L^2(G\times S))\ |\ \phi_{|\Gamma_-}=0,\ \phi(\cdot,\cdot,E_m)=0\}$.
Furthermore, let $\lambda_0\geq 0$. Applying Theorem \ref{coth3-dd} with $g=0$
and with $\Sigma$ replaced by $\Sigma+\lambda_0$ we find that for any $f\in L^2(G\times S\times I)$ the problem
\[
(\lambda_0I-(-T_{C,0}))\phi=f
\]
has a unique solution. In addition the solution  $\phi$ belongs to $D(\widetilde P_{C,0})$. That is why the set $(\lambda_0I-(-T_{C,0}))D$ is dense in $L^2(G\times S\times I)$.

The above verified facts imply that we are able to apply the Trotter's product formula (\cite[Corollary 5.8, p. 227]{engelnagel}) which gives  for ${ f}_C\geq 0$,
and for all $t\geq 0$ 
\be 
G(t){f}_C=\lim_{n\to\infty}\Big[T_{B_0}(t/n)T_{A_0}(t/n)T_{-(\Sigma+CS_0 I)}(t/n)T_{K_C}(t/n)\Big]^n{ f}_C\geq 0,
\ee
and thus by \eqref{p-s8} $\phi\geq 0$ . This implies that $\psi=e^{-CE}\phi\geq 0$.
We remark that a similar technique has been applied e.g. in  \cite[Section XXI-\S 2, Proposition 2, pp. 226-227]{dautraylionsv6}, and \cite[Theorem 5.16]{tervo17-up}.

B. Suppose that $g\geq 0$ is more general. Let $g\in H^1(I, T^2(\Gamma'_-))$ for which $g(\cdot,\cdot,E_{\rm m})=0$.
We decompose the solution $\phi$ of the problem (\ref{p-s1-b}), (\ref{p-s3-b}) as follows. 
Let $u$
be the solution of the problem
\begin{gather}
-{\p {(S_0u)}E}+\omega\cdot\nabla_x u+CS_0 u+\Sigma u=0,\nonumber\\
u_{|\Gamma_-}={g}_C,
\quad
u(\cdot,\cdot,E_{\rm m})=0, \label{eq:problem_simple_u}
\end{gather}
and let $w$ be the solution of the problem
\begin{gather}
-{\p {(S_0w)}E}+\omega\cdot\nabla_x w+CS_0 w+\Sigma w-K_Cw={ f}_C+K_Cu, \nonumber \\
w_{|\Gamma_-}=0,
\quad
w(\cdot,\cdot,E_{\rm m})=0. \label{eq:problem_w_u}
\end{gather}
Then $\phi:=w+u$ is the solution of \eqref{p-s1-b}, \eqref{p-s3-b}.

Since ${ g}_C\geq 0$, it follows from Corollary \ref{pos-le} given below that the solution $u$ is non-negative i.e. $u\geq 0$ .
Finally, because ${ f}_C\geq 0$, and $K_Cu\geq 0$ as $u\geq 0$,
it follows from the part A above that $w\geq 0$.
This allows us to conclude that $\phi=w+u\geq 0$, and so $\psi\geq 0$ as desired. This completes the proof.

\end{proof}

\subsection{On Maximum/Minimum Principles for a Transport Equation}\label{sec-max-min}

Let
\[
\mc P_Cu:=\mc P_C(x,\omega,E,D)u:= -{\p {(S_0u)}E}+\omega\cdot\nabla_xu+CS_0u+\Sigma u.
\]
In the sequel we for simplicity  denote  $\mc P_C(x,\omega,E,D)=\mc P_C$.
Consider the initial inflow boundary value problem
\be \label{max-min0}
\mc P_Cu={ f},\quad u_{|\Gamma_-}=g,\quad u(\cdot,\cdot,E_m)=u_0.
\ee
Here $C$ is any constant such that
\be\label{corr-C}
C\geq \max\{0,{{q}\over{2\kappa}}\}.
\ee

\begin{lemma}\label{max-min-le1}
Suppose that
the assumptions  (\ref{ass1}), (\ref{evo16}), (\ref{evo8-a}), (\ref{evo9-a})   are valid. 
Let $C$ be given by (\ref{eq:def_C}). 
Furthermore, 
let ${ g}\in H^1(I,T^2(\Gamma'))$, $f\in L^2(G\times S\times I)$ and $u_0\in \widetilde W^2(G\times S)$ be such that the compatibility condition 
\be\label{comp1}
{ g}(\cdot,\cdot,E_m)={u_0}_{|\Gamma_-'}
\ee
holds.
Then
the solution $u\in {\s H}_{\mc P_{C,0}}(G\times S\times I^\circ)+
L\big(H^1(I,T^2(\Gamma_-'))\big)$ of the problem (\ref{max-min0}) exists and it is a strong solution in the sense that there exists a sequence
$\{u_n\}\subset \widetilde W^2(G\times S\times I)\cap H^1(I,L^2(G\times S))$  such that for $n\to\infty$
\bea\label{max-min20-a}
&
\n{u_n- u}_{L^2(G\times S\times I)}+
\n{\mc P_Cu_n-f}_{L^2(G\times S\times I)}\to 0,
\nonumber\\
&
{u_n}_{|\Gamma_-}\to u_{|\Gamma_-}= { g}\quad {\rm in}\ T^2(\Gamma_-),
\nonumber\\
&
u_n(\cdot,\cdot,E_m)\to u(\cdot,\cdot,E_m)=u_0
\ {\rm in}\ L^2(G\times S).
\eea 
\end{lemma}

\begin{proof}
A. At first we prove the assertion when $u_0=0$. 
The existence of the solution $u\in {\s H}_{\mc P_{C,0}}(G\times S\times I^\circ)+
L\big(H^1(I,T^2(\Gamma_-'))\big)\subset
{\s H}_{\mc P_C}(G\times S\times I^\circ)$ for the problem (\ref{max-min0}) is guaranteed by Theorem \ref{coth3-dd} (apply the theorem with $K_C=0$). In addition, we see by the proof of Theorem \ref{coth3-dd}
that $u$ is of the form $u=\tilde u+L{ g}$  where
$\tilde u\in D(\widetilde{\mc P_{C,0}})={\s H}_{\mc P_{C,0}}(G\times S\times I^\circ)$.

Since $\tilde u\in D(\widetilde{\mc P}_{C,0})$ there exists a sequence $\{\phi_n\}
\subset \widetilde W^2(G\times S\times I)\cap H^1(I,L^2(G\times S))$ such that
\be\label{max-min18}
\n{\phi_n-\tilde u}_{L^2(G\times S\times I)}+
\n{\mc P_{C,0}\phi_n-\widetilde{\mc P}_{C,0}\tilde u}_{L^2(G\times S\times I)}\to 0,\ n\to\infty
\ee
and ${\phi_n}_{|\Gamma_-}=0,\ \phi_n(\cdot,\cdot,E_m)=0$. 
Since ${ g}\in H^1(I,T^2(\Gamma'))\subset T^2(\Gamma_-)$ we know that $Lg\in \widetilde W^2(G\times S\times I)\cap H^1(I,L^2(G\times S))$ (\cite{tervo17-up}, Lemmas 6.8 and 6.11). Furthermore,
 by (\ref{comp1})  we have $(Lg)(x,\omega,E_m)=g(x-t(x,\omega)\omega,\omega,E_m)=0$ (since $u_0=0$). 
Noting that
\[
\widetilde{\mc P}_{C,0}\tilde u+\mc P_C(Lg)=\mc P_C(\tilde u+Lg)=\mc P_Cu=f
\] 
we see that $\{u_n\}:=\{\phi_n+Lg\}$ is the required sequence.

B. More generally, let $u_0\in \widetilde W^2(G\times S)$ be as in the
statement of this lemma.
Perform the change of unknown $v:=u-u_0$ in the problem (\ref{max-min0}). Then $v$ satisfies
\be \label{max-min0-1}
\mc P_Cv={ f}-\mc P_Cu_0,\ v_{|\Gamma_-}= g-{u_0}_{|\Gamma_-'},\ v(\cdot,\cdot,E_m)=0.
\ee
By Part A the solution $v$ of the problem (\ref{max-min0-1}) exists and there exists a sequence $\{v_n\}
\subset \widetilde W^2(G\times S\times I)\cap H^1(I,L^2(G\times S))$  such that for $n\to\infty$
\be\label{max-min18-1}
\n{v_n-v}_{L^2(G\times S\times I)}+
\n{\mc P_{C}v_n-(f-\mc P_Cu_0)}_{L^2(G\times S\times I)}\to 0
\ee
and ${v_n}_{|\Gamma_-}\to g-{u_0}_{|\Gamma_-'},\ v_n(\cdot,\cdot,E_m)\to 0$. 
We find that $\{u_n\}:=\{v_n+u_0\}$ is the required sequence. This completes the proof.

\end{proof}

The following Poincare-type inequality is valid

\begin{lemma}\label{max-min-le2} 
Suppose that the assumptions of Lemma \ref{max-min-le1} are valid. 
Let $C$ be given by (\ref{eq:def_C}). Furthermore,
let $u\in {\s H}_{\mc P_{C,0}}(G\times S\times I^\circ)+
L\big(H^1(I,T^2(\Gamma_-'))\big)$ be the solution of the problem (\ref{max-min0}) (guaranteed by Lemma \ref{max-min-le1}).
Then
\begin{multline}
\label{apriori}
\la \mc P_Cu,u\ra_{L^2(G\times S\times I)}+\int_{\Gamma_-}(u_{|\Gamma_-})^2(\omega\cdot \nu)_-d\sigma d\omega dE
\\
+\int_{G\times S}S(\cdot,E_m)u(\cdot,\cdot,E_m)^2dx d\omega
\geq 
c'\n{u}_{L^2(G\times S\times I)}^2
\end{multline}
where $c':=\min\{{1\over 2},{\kappa\over 2},c\}$.
\end{lemma}

\begin{proof}

Applying the Green's formula (\ref{green})  and integrating by parts, we have as in section \ref{esols}
for $\phi,\ v\in \widetilde W^2(G\times S\times I)\cap H^1(I,L^2(G\times S))$ 
\[
&
\la \mc P_C\phi,v\ra_{L^2(G\times S\times I)}
=
-\la{\p {(S_0\phi)}E},v\ra_{L^2(G\times S\times I)}
\nonumber\\
&
+\la\omega\cdot\nabla_x\phi,v\ra_{L^2(G\times S\times I)}+\la CS_0\phi,v\ra_{L^2(G\times S\times I)}+\la\Sigma\phi,v\ra_{L^2(G\times S\times I)}
\nonumber\\
={}&
\la\phi,S_0{\p {v}E}\ra_{L^2(G\times S\times I)}-
\int_{G\times S}\big( (S_0\phi v)(\cdot,\cdot,E_m)-(S_0\phi v)(\cdot,\cdot,0)\big)dx d\omega  \nonumber\\ 
&-\la\phi,\omega\cdot\nabla_x v\ra_{L^2(G\times S\times I)}
+\int_{\Gamma}(\omega\cdot\nu)\phi v d\sigma d\omega dE  
\nonumber\\
&+\la \phi,CS_0v\ra_{L^2(G\times S\times I)}
+\la\phi,\Sigma^* v\ra_{L^2(G\times S\times I)}.
\]
Hence noting that 
\[
\int_{\Gamma}(\omega\cdot\nu)\phi v d\sigma d\omega dE=\int_{\Gamma_+}(\omega\cdot\nu)_+\phi v d\sigma d\omega dE -
\int_{\Gamma_-}(\omega\cdot\nu)_-\phi v d\sigma d\omega dE 
\]
we obtain
\bea\label{csda25-a}
&
\la \mc P_C\phi,v\ra_{L^2(G\times S\times I)}
+\int_{\Gamma_-}(\omega\cdot\nu)_-\phi v d\sigma d\omega dE 
+\int_{G\times S} S_0(\cdot,E_m)\phi(\cdot,\cdot,E_m) v(\cdot,\cdot,E_m) dx d\omega 
\nonumber\\
={}&
\la\phi,S_0{\p {v}E}\ra_{L^2(G\times S\times I)}
-\la\phi,\omega\cdot\nabla_x v\ra_{L^2(G\times S\times I)}\nonumber\\
&
+\int_{G\times S} S_0(\cdot,0)\phi(\cdot,\cdot,0) v(\cdot,\cdot,0) dx d\omega 
+\int_{\Gamma_+}(\omega\cdot\nu)_+\phi v d\sigma d\omega dE  \nonumber\\
&+\la \phi,CS_0v\ra_{L^2(G\times S\times I)}
+\la\phi,\Sigma^* v\ra_{L^2(G\times S\times I)} \\
={}&
B(\phi,v)
\eea
where $B(\cdot,\cdot)$ is the bilinear form \eqref{csda27} with $K_C=0$.
In Theorem \ref{csdath1} we have shown that
\be\label{max-min22}
B(\phi,\phi)\geq c'\n{\phi}_{H^1}^2\geq c'\n{\phi}_{L^2(G\times S\times I)}^2
,\quad \forall \phi\in C^1(\ol G\times S\times I)
\ee
which can be  deduced similarly for all $\phi\in \widetilde W^2(G\times S\times I)\cap H^1(I,L^2(G\times S))$.
Combining (\ref{csda25-a}) (with $v=\phi$) and (\ref{max-min22}) we obtain
\begin{multline}
\label{csda25-b}
\la \mc P_C\phi,\phi\ra_{L^2(G\times S\times I)}
+\int_{\Gamma_-}(\omega\cdot\nu)_-(\phi_{|\Gamma_-})^2  d\sigma d\omega dE 
+\int_{G\times S} S_0(\cdot,E_m)\phi(\cdot,\cdot,E_m)^2 dx d\omega
\\
\geq c'\n{\phi}_{L^2(G\times S\times I)}^2
,\quad \forall \phi\in \widetilde W^2(G\times S\times I)\cap H^1(I,L^2(G\times S)).
\end{multline}
By Lemma \ref{max-min-le1} $u$ is the strong solution of the problem (\ref{max-min0}). Choosing a sequence $\{u_n\}$ given in Lemma \ref{max-min-le1}, choosing $\phi=u_n$ in the estimate (\ref{csda25-b}) and letting $n\to\infty$ we conclude the desired estimate (\ref{apriori}) which finishes the proof.

\end{proof}

We show the following minimum/maximum principle. For simplicity we consider the case $g \geq 0$ only.
In addition, to shorten the considerations, we assume 
that the constant $C$ is so large (for example, $C\geq {q\over\kappa}$) that 
\be\label{corr-C-a}
-{\p { S_0}E}+C S_0\geq 0.
\ee
This is sufficient to our needs. Generalizations of these results are obvious .

\begin{theorem}\label{max-min-th}
Suppose that 
the assumptions  (\ref{ass1}), (\ref{evo16}), (\ref{evo8-a}), (\ref{evo9-a})  
of Theorem \ref{pos-th2} are valid. 
Let $C$ be given by (\ref{eq:def_C}). 
Furthermore, let $u$ be the solution of the problem (\ref{max-min0}) with ${ f}=0$, $u_0=0$ and $g\in H^1(I,T^2(\Gamma_-'))$ such that
\[
g\geq 0\quad {\rm and}\quad g(\cdot,\cdot,E_m)=0.
\]
Then for a.e. $(x,\omega,E)\in G\times S\times I$,
\be\label{max-min3}
\essinf_{(y',\omega',E')\in \Gamma_-}{g}(y',\omega',E')\leq u(x,\omega,E)\leq 
\esssup_{(y',\omega',E')\in \Gamma_-}{g}(y',\omega',E').
\ee
\end{theorem}

\begin{proof}
We proceed in the spirit of \cite{besson}.  

A. First, we verify that
\be \label{max-min8}
u(x,\omega,E)\leq 
\esssup_{(y',\omega',E')\in \Gamma_-}{ g}(y',\omega',E').
\ee
Denote $M:=\esssup_{(y',\omega',E')\in \Gamma_-}{g}(y',\omega',E')\geq 0$ and
\[
U:=(u-M)_+={1\over 2}\big(|u-M|+(u-M)\big).
\]
Furthermore, let
\begin{gather*}
A:=\{(x,\omega,E)\in \ol G\times S\times I\ |\ u(x,\omega,E)-M>0\}, \\
(\partial A)_-:=A\cap\Gamma_-, \\
A_1:=\{(x,\omega)\in G\times S\ |\ (x,\omega,E_m)\in A\}.
\end{gather*}

We know that $|u|\in {\s H}_{\mc P_C}(G\times S\times I^\circ)$ when $u\in 
{\s H}_{\mc P_C}(G\times S\times I^\circ)$. 
This can be seen as follows (cf. \cite{grigoryan}, Lemma 5.2; see also \cite{besson}, p. 8):

We have
\[
\mc P_Cu=-S_0{\p u{E}}+\omega\cdot\nabla_xu+a_Cu=Qu+a_Cu
\]
where 
\[
Qu:=-S_0{\p u{E}}+\omega\cdot\nabla_xu,\quad a_C:=-{\p {S_0}{E}}+CS_0+\Sigma.
\]
Since $u, \ \mc P_Cu\in L^2(G\times S\times I)$ we see that $Qu\in L^2(G\times S\times I)$.
It suffices to show that $Q|u|\in L^2(G\times S\times I)$. Then 
$\mc P_C|u|=Q|u|+a_C|u|\in L^2(G\times S\times I)$, as desired.
Let
\[
U_\epsilon:=\sqrt{u^2+\epsilon}.
\]
Then we find that 
\be\label{le4.1-1}
QU_\epsilon=-S_0{u\over{\sqrt{u^2+\epsilon}}}{\p u{E}}+\omega\cdot\big(
{u\over{\sqrt{u^2+\epsilon}}}\big)\nabla_xu={u\over{\sqrt{u^2+\epsilon}}}Qu.
\ee
Since for $0<\epsilon<1$ (here we choose the branch $\sqrt{u^2}= |u|$)
\[
|U_\epsilon| \leq \sqrt{u^2+1}\ {\rm and}\ U_\epsilon\to \sqrt{u^2}= |u|\ {\rm a.e. \ in} \ G\times S\times I \ {\rm for}\ \epsilon \to 0
\]
we obtain by the Lebesgue Dominated Convergence Theorem that when $\epsilon\to 0$,
\be\label{le4.1-2}
U_\epsilon \to |u| \quad {\rm in}\ L^2(G\times S\times I). 
\ee
Similarly, by 
the Lebesgue Dominated Convergence Theorem when $\epsilon\to 0$,
\be\label{le4.1-3}
QU_\epsilon ={u\over{\sqrt{u^2+\epsilon}}}Qu\to {u\over{\sqrt{u^2}}}Qu
={u\over{|u|}}Qu={\rm sign}(u)Qu \quad {\rm in}\ L^2(G\times S\times I). 
\ee 
That is why $Q|u|\in L^2(G\times S\times I)$ and $Q|u|={\rm sign}(u)Qu$.

Hence we find that $U\in {\s H}_{\mc P_C}(G\times S\times I^\circ)$ and
$U$ is the solution of the problem (here $A^\circ$ denotes the interior of $A$)
\bea\label{prob-for-U}
&
\mc P_CU=\hat{ f}:=\begin{cases}
\mc P_Cu-\mc P_CM=-(-{\p {S_0}E}+CS_0+\Sigma)M& {\rm in}\ A^\circ\\
0& {\rm in }\ (G\times S\times I^\circ)\setminus A\end{cases}\nonumber\\
&
U_{|\Gamma_-}=\hat{ g}:=
\begin{cases}
{ g}-M& {\rm in}\ (\partial A)_-\\
0& {\rm in }\ \Gamma_-\setminus (\partial A)_-
\end{cases}\nonumber\\
&
U(\cdot,\cdot,E_m)= U_0:=
\begin{cases}
-M& {\rm in}\ A_1\\
0& {\rm in }\ (G\times S)\setminus A_1\end{cases}.
\eea
We find that $\hat g\in H^1(I,L^2(G\times S))$ and by $g(\cdot,\cdot,E_m)=0$ the compatibility condition $\hat g(\cdot,\cdot,E_m)={ U_0}_{|\Gamma_-'}$ holds.

In virtue of Lemma \ref{max-min-le2}
\begin{multline}
\label{apriori-U}
\la \mc P_CU,U\ra_{L^2(G\times S\times I)}+\int_{\Gamma_-}(U_{|\Gamma_-})^2(\omega\cdot \nu)_-d\sigma d\omega dE
\\
+\int_{G\times S}S_0(\cdot,E_m)U(\cdot,\cdot,E_m)^2dx d\omega
\geq 
c'\n{U}_{L^2(G\times S\times I)}^2.
\end{multline}
Since
\bea
\int_{\Gamma_-}(U_{|\Gamma_-})^2(\omega\cdot \nu)_-d\sigma d\omega dE
=&{}
\int_{(\partial A)_-}U_{|(\partial A)_-}U_{|(\partial A)_-}(\omega\cdot \nu)_-d\sigma d\omega dE \nonumber\\
=&{}
\int_{(\partial A)_-}({ g}-M)(u-M)_{|(\partial A)_-}(\omega\cdot \nu)_-d\sigma d\omega dE\nonumber
\eea
and
\[
\int_{G\times S}S_0(\cdot,E_m)U(\cdot,\cdot,E_m)^2dx d\omega=\int_{A_1}S_0(\cdot,E_m)(-M)(u-M)(\cdot,\cdot,E_m)dx d\omega 
\]
we see  by (\ref{prob-for-U}) that 
\begin{multline}
\label{apriori-U-a}
\la -(-{\p {S_0}E}+CS_0+\Sigma)M,u-M\ra_{L^2(A)}+\int_{(\partial A)_-}({g}-M)(u-M)_{|(\partial A)_-}(\omega\cdot \nu)_-d\sigma d\omega dE
\\
+\int_{A_1}S_0(\cdot,E_m)(-M)(u-M)(\cdot,\cdot,E_m)dx d\omega
\geq 
c'\n{u-M}_{L^2(A)}^2
\end{multline}

Since 
$(-{\p {S_0}E}+CS_0+\Sigma)M\geq 0$, ${ g}-M\leq 0$, $-M\leq 0$ and by the definition of $A$,
$u-M>0$ in $A,\ (\partial A)_-, \ A_1$. That is why we conclude from (\ref{apriori-U-a})
that $\n{u-M}_{L^2(A)}=0$ and so $A$ has zero measure (since $u-M>0$ in $A$). This proves (\ref{max-min8}).

B. 
The lower inequality follows by similar treatments as applied in Part A by replacing $U$ with $(u-m)_-$, and replacing the set $A$ by
$
A=\{(x,\omega,E)\in \ol G\times S\times I\ |\ u(x,\omega,E)-m<0\}.
$ 
This finishes the proof.

\end{proof}

\begin{corollary}\label{pos-le}
Make the assumptions (\ref{ass1}), (\ref{evo16}), (\ref{evo8-a}), (\ref{evo9-a})   
of Theorem \ref{pos-th2} with $C$ given by (\ref{eq:def_C}). 
Furthermore, let $u$ be the solution of the problem (\ref{max-min0}) with ${ f}=0$, $u_0=0$ and $g\in H^1(I,T^2(\Gamma_-'))$ such that
\[
g\geq 0\quad {\rm and}\quad g(\cdot,\cdot,E_m)=0.
\]
Then  $u\geq 0$.
\end{corollary}

\begin{proof}
Perform (as in section \ref{single-eq}) the change of unknown by $v=e^{aE}u$ 
where $a\in\R$ is large enough. Then $v$ satisfies the problem (\ref{max-min0}) 
for which (\ref{corr-C-a}) holds. By the previous theorem  $v\geq 0$ and so $u\geq 0$.
\end{proof}

\begin{remark}
A. The assumption $g\geq 0$ in Theorem \ref{max-min-th} is not necessary but it simplifies somewhat the considerations.

B.
The technique applied above has obvious variations/generalizations. For example, we may assume that $u_0\not =0$ in the problem (\ref{max-min0}) and show that
\be 
\inf_{(x,\omega)\in G\times S}u_0(x,\omega)\leq u\leq \sup_{(x,\omega)\in G\times S}u_0(x,\omega).
\ee
We omit here a systematic study of max-min-principles for the transport problems.
\end{remark}

\begin{example}

For example, in the special case of Example \ref{desolex1},
where $S_0(x,E)=S_0(E)$ does not depend on $x$, and $\Sigma(x,\omega,E)=\Sigma(x,\omega)$ does not depend on $E$,
the solution $u$ can be expressed explicitly in the form
\be
u={1\over{S_0(E)}}H\big(R(E_{\rm m})-Q(x,\omega,E)\big)e^{-\int_0^{t(x,\omega)}\Sigma(x-s\omega,\omega)ds}\tilde{ g}(x-t(x,\omega)\omega,\omega,Q(x,\omega,E)), \label{eq:explicit_u_in_simplified_case}
\ee
where
\[
R(E):={}& \int_0^E{1\over{S_0(\tau)}}d\tau, \\[2mm]
Q(x,\omega,E):={}&R(E)+t(x,\omega), \\[2mm]
\tilde{ g}(y,\omega,\eta):={}& S_0(R^{-1}(\eta)){ g}(y,\omega,R^{-1}(\eta)),
\]
From \eqref{eq:explicit_u_in_simplified_case} it is clear that $u\geq 0$ whenever $g\geq 0$ such as the  Corollary \ref{pos-le} predicts.

\end{example}

%%%%%%%%%%%%%%%%%%%%%%%%%%%%%%%%%%%%%%%%%%%%%%%%%%%%%%%%%%%%%%%%%%%%%%%%%%%%
\subsection{A Geometric Approach to Show the Non-negativity of Solutions for the Problem (\ref{max-min0})}
%%%%%%%%%%%%%%%%%%%%%%%%%%%%%%%%%%%%%%%%%%%%%%%%%%%%%%%%%%%%%%%%%%%%%%%%%%%%

Consider the problem (\ref{max-min0}) that is,
\be\label{alter-1}
 -S_0{\p {u}E}+\omega\cdot\nabla_xu-{\p {S_0}{E}}u+CS_0u+\Sigma u={ 0},\ u_{|\Gamma_-}= g,\ u(\cdot,\cdot,E_m)=0.
\ee
We sketch here an alternative proof that, under the standing assumptions of this section,
the solution $u$ of the problem \eqref{alter-1} is non-negative, when ${ g}\geq 0$. 

The {\it characteristics} for \eqref{alter-1}
are curves $\zeta=(\zeta_1,\zeta_2,\zeta_3,\zeta_4)$ in $\R^3\times S\times\R\times\R $ for which
\bea\label{alter-2}
&
\zeta_1'(s)=\omega,\ \zeta_2'(t)=0, \zeta_3'(s)=-S(\zeta(s)),\ \zeta_4'(s)=W(\zeta(s))\\
&
\zeta(0)=(y,\omega',E',\zeta_0(y,\omega',E')),
\eea
where
\[
W(x,\omega,E):=-\Big(-\p{S_0}{E}(x,E)+CS_0(x,E)+\Sigma(x,\omega,E)\Big),
\]
and 
\[
\zeta_0(y,\omega',E')
:=\begin{cases}
0, & {\rm if}\ (y,\omega',E')\in G\times S\times \{E_m\}, \\
{g}(y,\omega',E'), & {\rm if}\ (y,\omega',E')\in \Gamma_-. \\
\end{cases}
\]
Denote $z:=(y,\omega',E')$ and denote the solution of (\ref{alter-2}) by \linebreak
$\zeta(z,s)=(\zeta_1(z,s),\zeta_2(z,s),\zeta_3(z,s),\zeta_4(z,s))$.
Moreover, $u$ is obtained as follows: Solve $(z,s)$ from 
\be\label{alter-3}
\zeta_1(z,s)=x,\ \zeta_2(z,s)=\omega,\ \zeta_3(z,s)=E
\ee
and then substitute $(z,s)$ into $\zeta_4(z,s)$. The solution is $u=\zeta_4(F(x,\omega,E))$ where $F(x,\omega,E)$ is the solution of (\ref{alter-3}).
One can see that the positivity of $\zeta_4$ implies the positivity of $u$.

We assume that
\begin{align}\label{eq:strong_S_0_ass}
S_0\in C^2(I,C(\ol G)).
\end{align}
Clearly, assumption \eqref{eq:strong_S_0_ass} implies \eqref{evo16}
(recall that $G$ is bounded).
Since $\p{S_0}{E}\in C(I,C(\ol G))$,
there is a constant $M>0$ such that
$\n{\p{S_0}{E}(E)}_{L^\infty(G)}\leq M$, and hence
for every $E,E'\in I$, it holds
\[
\n{S_0(E')-S_0(E)}_{L^\infty(G)}\leq M|E-E'|.
\]
For a given $(x,\omega)\in G\times S$,
letting
\[
s_{x,\omega}:[0,t(x,\omega)]\times I\to\R;
\quad s_{x,\omega}(t,E):=S_0(x-t\omega,E),
\]
we see that $s_{x,\omega}$ is continuous, and
\[
|s_{x,\omega}(t,E')-s_{x,\omega}(t,E)|\leq M|E'-E|
\]
i.e. the map $(t,E)\mapsto s_{x,\omega}(t,E)$ satisfies the Lipschitz condition on $I$ uniformly with respect to $E$.
Therefore, Cauchy-Lipschitz (or Picard-Lindel\"of) theorem (cf. \cite[Chapter IV, Proposition 1.1]{lang95})
implies that for every $(x,\omega,E)\in G\times S\times I$ the problem
\[
\dot{\xi}(t)={}&S_0(x-t\omega,\xi(t)),
\quad t\in [0,t(x,\omega)],\\
\xi(0)={}&E,
\]
has a unique solution $\xi\in C^1([0,\ol{\tau}])$ defined on the maximal interval $[0,\ol{\tau}]$,
such that $(x-t\omega,\xi(t))\in \ol{G}\times I$ for all $t\in [0,\ol{\tau}]$.

We write this solution as $\xi(x,\omega,E,t)$, and write $\ol{\tau}(x,\omega,E)$ for the end-point of its maximal interval of existence.
Because $S_0\geq \kappa>0$ on $G\times I$ (by \eqref{evo8-a}), we have
\begin{align}\label{eq:inequality_gamma}
\xi(x,\omega,E,t')\geq \xi(x,\omega,E,t)+\kappa(t'-t),
\end{align}
whenever $t\leq t'$,
and therefore one can see that (i) or (ii) (or both) below holds:
\begin{align}\label{eq:alpha_endpoint}
\textrm{(i)}\ \xi\big(x,\omega,E,\ol{\tau}(x,\omega,E)\big)=E_m,
\quad\textrm{or}\quad \textrm{(ii)}\ \ol{\tau}(x,\omega,E)=t(x,\omega).
\end{align}

Denote for $(x,\omega,E,t)\in G\times S\times I\times [0,\ol{\tau}(x,\omega,E)]$,
\[
\eta(x,\omega,E,t):=(x-t\omega, \omega, \xi(x,\omega,E,t)),
\]
and notice that $\eta(x,\omega,E,0)=(x,\omega,E)$.
It follows from \eqref{eq:alpha_endpoint} that
\begin{align}\label{eq:T_boundary}
\alpha(x,\omega,E):=\eta(x,\omega,E,\ol{\tau}(x,\omega,E))\in \Gamma_-\cup (G\times S\times \{E_m\}),
\end{align}
for all $(x,\omega,E)\in G\times S\times I$.

Below we shall understand that $S_0(\eta(x,\omega,E,t))$ means $S_0(x-t\omega, \xi(x,\omega,E,t))$,
and similarly for $\p{S_0}{E}$, since $S_0(x,E)$ is assumed not to depend on $\omega$.

If $\varphi:G\times S\times I\to\R$ be smooth enough (say $C^1$),
then
\begin{align}\label{eq:alter-x}
\pa{t} \Big(\varphi\big(\eta(x,\omega,E,t)\big)\Big)
={}&
\la (\nabla_{(x,\omega,E)}\varphi)(\eta(x,\omega,E,t)),{\p {\eta}t}(x,\omega,E,t)\ra \\
={}&
S_0\big(\eta(x,\omega,E,t)\big)\p{\varphi}{E}\big(\eta(x,\omega,E,t)\big)-(\omega\cdot\nabla_x\varphi)\big(\eta(x,\omega,E,t)\big). \nonumber
\end{align}

Define
\begin{align}\label{eq:general_explicit_u}
u(x,\omega,E):=h(x,\omega,E)e^{-\int_0^{\ol{\tau}(x,\omega,E)}W(\eta(x,\omega,E,t))dt}.
\end{align}
where (recall \eqref{eq:T_boundary})
\begin{align}\label{eq:h_for_explicit_u}
h(x,\omega,E)
:=\begin{cases}
0, & {\rm if}\ \alpha(x,\omega,E)\in G\times S\times \{E_m\}, \\
{ g}(\alpha(x,\omega,E)), & {\rm if}\ \alpha(x,\omega,E)\in \Gamma_-. \\
\end{cases}
\end{align}
We claim that $u$ is a solution to the problem \eqref{alter-1}.

Indeed, it is not difficult to see that the following hold
(we omit the details),
for $(x,\omega,E)\in G\times S\times I^\circ$ and for $t,s\in\R$ small enough
($I^\circ=]E_0,E_m[$),
\[
& \eta(x,\omega,E,t+s)=\eta(\eta(x,\omega,E,s),t), \nonumber \\[2mm]
& \ol{\tau}(\eta(x,\omega,E,t))=\ol{\tau}(x,\omega,E)-t, \nonumber \\
& \pa{t} h(\eta(x,\omega,E,t))=0.
\]
Briefly, the first one of these relations is simply the group property of solutions to an ODE, the second one is analogous to $t(x+s\omega,\omega)=t(x,\omega)-s$,
while the last one is analogous to to $\omega\cdot\nabla_x (L_-g)=0$ with $\Sigma=0$ in Lemma \ref{trathle1}.
It follows from these relations that
\[
u(\eta(x,\omega,E,s))
=h(\eta(x,\omega,E,s))e^{-\int_s^{\ol{\tau}(x,\omega,E)} W(\eta(x,\omega,E,t))dt}
\]
and further
\begin{align}\label{eq:alter-y}
\pa{s} u(\eta(x,\omega,E,s))
={}&
h(\eta(x,\omega,E,s))\pa{s}e^{-\int_s^{\ol{\tau}(x,\omega,E)} W(\eta(x,\omega,E,t))dt} \\
={}& u(\eta(x,\omega,E,s))W(\eta(x,\omega,E,s)).
\end{align}

We claim, without a proof, that
arguments similar to those used in the proof of Lemma \ref{trathle1}
combined with \eqref{eq:alter-x} and \eqref{eq:alter-y}
then show that $u$ is a (distributional) solution on $G\times S\times I^\circ$
in the weak sense to the PDE in \eqref{alter-1}.
That $u$ also satisfies the boundary conditions (on $\Gamma_-$ and on $G\times S\times \{E_m\}$)
is evident from \eqref{eq:general_explicit_u}, \eqref{eq:h_for_explicit_u},
and hence one concludes that $u$ indeed solves \eqref{alter-1}.

To conclude our argument that
the solution $u$ of \eqref{alter-1} is non-negative,
it only remains note that $h\geq 0$ in \eqref{eq:h_for_explicit_u} because of the assumption ${g}\geq 0$,
and hence \eqref{eq:general_explicit_u} directly implies that $u\geq 0$.

%%%%%%%%%%%%%%%%%%%%%%%%%%%%%%%%%%%%%%%%%%%%%%%%%%%%%%%%%%%%%%%%%%%%%%%%%%%%%%%%%%%%%%%%%%%%%%%%%%%%%%%%%%%
\sectionspace
\section{Some Notes on Computational Methods}\label{comp}
%%%%%%%%%%%%%%%%%%%%%%%%%%%%%%%%%%%%%%%%%%%%%%%%%%%%%%%%%%%%%%%%%%%%%%%%%%%%%%%%%%%%%%%%%%%%%%%%%%%%%%%%%%%

%%%%%%%%%%%%%%%%%%%%%%%%%%%%%%%%%%%%%%%%%%%%%%%%%%%%%%%%%%%%%%%%%%%%%%%%%%%%%%%%%%%%%%%%%%%%%%%%%%%%%%%%%%%
\subsection{A Decomposition of Solutions Corresponding to Primary and Secondary Particles}\label{desol}
%%%%%%%%%%%%%%%%%%%%%%%%%%%%%%%%%%%%%%%%%%%%%%%%%%%%%%%%%%%%%%%%%%%%%%%%%%%%%%%%%%%%%%%%%%%%%%%%%%%%%%%%%%%

Consider the system of transport equations as above,
\begin{align}
\omega\cdot\nabla_x\psi_1+\Sigma_1\psi_1-K_1\psi={}&f_1,\nonumber\\
-{\p {(S_j\psi_j)}E}+\omega\cdot\nabla_x\psi_j+
\Sigma_j\psi_j
- K_j\psi={}&f_j, \quad j=2,3, \label{desol10}
\end{align}
holding a.e. on $G\times S\times  I$,
together with the inflow boundary and initial values
\begin{alignat}{3}
\psi_{|\Gamma_-}&=g && \quad {\rm a.e.\ on}\ \Gamma_-, \label{desol11} \\[2mm]
\psi_j(\cdot,\cdot,E_m)&=0\quad && \quad {\rm a.e.\ on}\ G\times S,\ j=2,3. \label{desol12}
\end{alignat}

The solution $\psi=(\psi_1,\psi_2,\psi_3)$ for this problem can be decomposed as follows.
Let $u=(u_1,u_2,u_3)$ be the solution of the problem without collisions
\begin{align}
\omega\cdot\nabla_x u_1+\Sigma_1 u_1={}&f_1,\nonumber\\
-{\p {(S_ju_j)}E}+\omega\cdot\nabla_x u_j+\Sigma_j u_j
={}& f_j,\quad j=2,3, \label{desol13}
\end{align}
together with the inflow boundary and initial values
\be
u_{|\Gamma_-}&=g, \label{desol14} \\[2mm]
u_j(\cdot,\cdot,E_m)&=0,\quad j=2,3.\label{desol15}
\ee
Furthermore, let $w=(w_1,w_2,w_3)$ be the solution of the problem
\begin{align}
\omega\cdot\nabla_x w_1+\Sigma_1 w_1-K_1w={}& K_1u\nonumber\\
-{\p {(S_jw_j)}E}+\omega\cdot\nabla_x w_j
+\Sigma_j w_j-K_jw
={}& K_ju,\quad j=2,3,\label{desol16}
\end{align}
together with \emph{homogeneous} inflow boundary and initial values
\be
w_{|\Gamma_-}&=0,\label{desol17} \\[2mm]
w_j(\cdot,\cdot,E_m)&=0,\quad j=2,3. \label{desol18}
\ee

Then we find that $\psi=u+w$ is the solution of \eqref{desol10}-\eqref{desol12}.
This corresponds to decomposing the evolution of
the particle field $\psi$ obeying the full CSDA Boltzmann transport problem \eqref{desol10}-\eqref{desol12}
in terms of the evolution of the
\emph{primary (uncollided) particles}, represented by $u$,
and of \emph{secondary (collided) particles}, represented by $w$.
The method of decomposing $\psi=u+w$ in this way is useful e.g. in constructing numerical solutions, 
and is known, for example in neutron transport theory, under the name collided-uncollided split (cf.\ the recent work \cite{hauck2013coll} and references therein).

To explain a bit this terminology, notice that the primary, uncollided field $u$ obeys \eqref{desol13}
which does not involve the collision operator $K=(K_1,K_2,K_3)$,
and $u$ contains a direct contribution from the external (boundary) sources $g$ (through \eqref{desol14}).
On the other hand, the field $u$ \emph{right after collision} as modelled by the term $Ku$,
acts as an internal source in the equation \eqref{desol16} for the secondary field $w$,
while external sources do not contribute to $w$ directly (a fact captured by \eqref{desol17}).

Note especially that the system (\ref{desol13})-(\ref{desol15}) is uncoupled,
in that the different particle species (photon, electron, positron that we consider here)
evolve independently of each other.
In some cases the primary component $u$ can be calculated exactly such as the following example shows.

\begin{example}\label{desolex1}
Suppose that $\Sigma_1\in L^2(G\times S\times I)$, $\Sigma_1\geq c>0$ for some constant $c$, and that,
$\Sigma_j(x,\omega,E)=\Sigma_j(x,\omega)$ (i.e. $\Sigma_j$ does not depend on $E$) and $\Sigma_j\in L^2(G\times S),\ \Sigma_j\geq c>0$, for $j=2,3$.
Furthermore, suppose that $S_j(x,E)=S_j(E)$, $j=2,3$ (i.e. $S_j$ is independent of $x$) and that $S_j:I\to \R_+$ are continuous, strictly positive functions. Finally, let 
$f_1\in L^2(G\times S\times I)$, $g_1\in T^2(\Gamma_-)$, and
for $j=2,3$ let $f_j\in H^1(I,L^2(G\times S))$, $g_j\in H^1(I,T^2(\Gamma'_-))$,
such that $g_j(E_{\rm m})=0$ (compatibility condition).
Define $R_j:I\to\R$ by 
\be 
R_j(E):=\int_0^E{1\over{S_j(\tau)}}d\tau,\quad j=2,3.
\ee
Let $r_{m,j}:=R_j(E_m)$. Then $R_j:I\to [0,r_{m,j}]$ are continuously differentiable and strictly increasing bijections. Let $R_j^{-1}: [0,r_{m,j}]\to I$ be their inverses.
We denote the argument of $R_j^{-1}$ on $[0,r_{m,j}]$ by $\eta$,
i.e. $E=R_j^{-1}(\eta)$ (or equivalently $\eta=R_j(E)$).

Consider first the (primary) uncoupled problem,
\begin{gather}
-{\p {(S_ju_j)}E}+\omega\cdot\nabla_x u_j+
\Sigma_j u_j
= f_j,
\nonumber\\
{u_j}_{|\Gamma_-}=g_j,\quad 
u_j(\cdot,\cdot,E_m)=0,  \label{desol19}
\end{gather}
where $j=2,3$.
We perform a well-known change of variables (see e.g. \cite{frank10}, \cite{rockell}) in the problem \eqref{desol19} by defining a new unknowns $v_j$, for $j=2,3$, by setting
\be 
v_j(x,\omega,\eta):=S_j(R_j^{-1}(\eta))u_j(x,\omega,R_j^{-1}(\eta)),
\ee
i.e.
\be 
v_j(x,\omega,R_j(E))=S_j(E)u_j(x,\omega,E).
\ee
Then we find that
\be 
{\p {(S_ju_j)}E}={\p {v_j}\eta}R_j'(E)={\p {v_j}\eta}{1\over{S_j(E)}}
={\p {v_j}\eta}{1\over{S_j(R_j^{-1}(\eta))}},
\ee
and so, after writing
\begin{alignat*}{2}
\tilde{f}_j(x,\omega,\eta):=S_j(R_j^{-1}(\eta))f_j(x,\omega,R_j^{-1}(\eta)),
\quad & (x,\omega,\eta)\in G\times S\times [0,r_{m,j}], \\
\tilde{g}_j(y,\omega,\eta):=S_j(R_j^{-1}(\eta))g_j(y,\omega,R_j^{-1}(\eta)),
\quad & (y,\omega,\eta)\in \tilde\Gamma_{-,j},
\end{alignat*}
where
\[
\tilde\Gamma_{-,j}:=\{(y,\omega,\eta)\in \partial G\times S\times [0,r_{m,j}]\ |\ \omega\cdot\nu(y)<0\},
\]
we see that the problem \eqref{desol19} is equivalent to
\bea \label{desol20}
&-{\p {v_j}\eta}+\omega\cdot\nabla_x v_j+
\Sigma_j v_j
=\tilde{f}_j\quad {\rm a.e.\ on}\ G\times S\times [0,r_{m,j}],
\eea
subject to inflow boundary and initial value conditions,
\begin{alignat}{3}
{v_j}_{|\tilde\Gamma_{-,j}}&=\tilde{g}_j\quad && {\rm a.e.\ on}\ \tilde\Gamma_{-,j},\label{desol19a} \\
v_j(\cdot,\cdot,r_{m,j})&=0\quad && {\rm a.e.\ on}\ G\times S,\ j=2,3. \label{desol19aa}
\end{alignat}
Notice that $\tilde{g}_j(r_{m,j})=0$ since $g_j(E_{\rm m})=0$.
The original unknowns $u_j$, $j=2,3$, are given in terms of $v_j$ by
\be\label{desol21}
u_j(x,\omega,E)={1\over{S_j(E)}}v_j(x,\omega, R_j(E)).
\ee

The problem \eqref{desol20}-\eqref{desol19aa} can be solved explicitly, at least formally.
The solution $v_j$ of \eqref{desol20} is the sum $v_{1,j}+v_{2,j}$
of solutions $v_{1,j}$ and $v_{2,j}$  of the following problems 
\begin{gather}
-{\p {v_{1,j}}\eta}+\omega\cdot\nabla_x {v_{1,j}}+
\Sigma_j v_{1,j}
=\tilde f_j,\nonumber\\
{v_{1,j}}_{|\tilde\Gamma_{-,j}}=0,
\quad
v_{1,j}(\cdot,\cdot,r_{m,j})=0,\label{desol20a}
\end{gather}
and
\begin{gather}
-{\p {v_{2,j}}\eta}+\omega\cdot\nabla_x {v_{2,j}}+
\Sigma_j v_{2,j}
=0,\nonumber\\
{v_{2,j}}_{|\tilde\Gamma_{-,j}}=\tilde g_j,
\quad
v_{2,j}(\cdot,\cdot,r_{m,j})=0,\label{desol20b}
\end{gather}
where the (partial differential) equations are to be satisfies on $G\times S\times [0,r_{m,j}]$,
the (inflow) boundary conditions on $\tilde\Gamma_{-,j}$
and the initial (energy) conditions on $G\times S$.

The  solution of (\ref{desol20b}) is (cf. \cite[Ch. XXI, Sec. 3.2, pp. 233-235]{dautraylionsv6},
or \cite[proof of Theorem 6.3]{tervo17-up}; replace first $\eta$ by $r_{m,j}-\eta$)
\begin{multline}\label{desol20ab}
v_{2,j}(x,\omega,\eta)
\\
=H(r_{m,j}-\eta-t(x,\omega))e^{\int_0^{t(x,\omega)}-\Sigma_j(x-s\omega,\omega)ds}
\tilde{g}_j(x-t(x,\omega)\omega,\omega,\eta+t(x,\omega)),
\end{multline}
where $H$ is the Heaviside function.
By performing similar computations as in the proof of Lemma \ref{trathle1},
one sees that $v_{2,j}$ defined by \eqref{desol20ab} is in fact a weak (distributional) solution of \eqref{desol20b}.
Moreover, $v_{2,j}$ satisfies (in generalized sense) the inflow boundary condition,
since $t(y,\omega)=0$ on $\tilde{\Gamma}_{-,j}$ (see Lemma \ref{le:esccont:1}) and $r_{m,j}-\eta>0$
(therefore $H(r_{m,j}-\eta-t(y,\omega))=1$ on $\tilde{\Gamma}_{-,j}$),
as well as the initial (energy) condition,
since $t(x,\omega)>0$ on $G\times S$ (hence $H(r_{m,j}-\eta-t(x,\omega))=0$ for all $\eta$ close to $r_{m,j}$).

The solution of \eqref{desol20a}, on the other hand, is obtained as follows. 
Let $V_{1,j}(x,\omega,\eta):=v_{1,j}(x,\omega,r_{m,j}-\eta)$. Then the problem (\ref{desol20a})
is equivalent to
\begin{gather}
{\p {V_{1,j}}\eta}+\omega\cdot\nabla_x {V_{1,j}}+
\Sigma_j V_{1,j}
=F_j,\nonumber\\
{V_{1,j}}_{|\tilde\Gamma_{-,j}}=0,
\quad V_{1,j}(\cdot,\cdot,0)=0, \label{desol20aa}
\end{gather}
where $F_j(x,\omega,\eta):=\tilde f(x,\omega,r_{m,j}-\eta)$.
Let $B_0:L^2(G\times S)\to L^2(G\times S)$ be a densily defined operator (as in section \ref{evcsd}) such that
\[
&D(B_0)=\tilde{W}_{-,0}^2(G\times S),
\quad
B_0\psi=-\omega\cdot\nabla_x\psi.
\]
Then $B_0$ generates a contraction $C^0$-semigroup $T(\eta)$, and in fact for $h\in L^2(G\times S)$
we have
(cf. \cite[Ch. XXI, Sec. 2.2, p. 222]{dautraylionsv6}, or \cite[proof of Theorem 5.15]{tervo17-up})
\[
(T(\eta)h)(x,\omega)=H(t(x,\omega)-\eta)h(x-\eta\omega,\omega),
\]
where $H$ is the Heaviside function.
The problem (\ref{desol20aa}) can be put into the abstract form
\be \label{desol20aaa}
{\p {V_{1,j}}\eta}-(B_0-\Sigma_j) {V_{1,j}}
=F_j,
\quad V_{1,j}(0)=0,
\ee
where $(V_{1,j}(\eta))(x,\omega)=V_{1,j}(x,\omega,\eta)$ and $(F_j(\eta))(x,\omega)=F_j(x,\omega,\eta)$.
The $C^0$-semigroup $G(\eta)$ generated by $B_0-\Sigma_j$ is (by the Trotter's formula)
given by
\be\label{infgen}
(G(\eta)h)(x,\omega)
=
e^{-\int_0^\eta \Sigma_j(x-\tau\omega,\omega) d\tau}H(t(x,\omega)-\eta)h(x-\eta\omega,\omega).
\ee
Hence the  solution $V_{1,j}$ is (cf. \cite[p. 439]{engelnagel}, \cite[pp. 105-108]{pazy83})
\be\label{solV}
V_{1,j}(\eta)=\int_0^{\eta}G(\eta-s)F_j(s) ds,
\ee
and thus
\bea\label{solv1}
&v_{1,j}(x,\omega,\eta)=V_{1,j}(x,\omega,r_{m,j}-\eta)
=\int_0^{r_{m,j}-\eta} \big(G(r_{m,j}-\eta-s)F_j(s)\big)(x,\omega) ds
\nonumber\\
=&
\int_0^{r_{m,j}-\eta}
e^{-\int_0^{r_{m,j}-\eta-s} \Sigma_j(x-\tau\omega,\omega) d\tau}\nonumber\\
&
\cdot
H(t(x,\omega)-(r_{m,j}-\eta-s))\tilde f_j(x-(r_{m,j}-\eta-s)\omega,\omega,r_{m,j}-s) ds.
\eea
This can be shown to be a weak (distributional) solution of \eqref{desol20a}
by a similar argument as in the proof of Lemma \ref{trathle2}.

Moreover, the weak solution of the (primary) problem
\[
\omega\cdot\nabla_x u_1+\Sigma_1 u_1={}&f_1, \\[2mm]
{u_1}_{|\Gamma_-}={}&g_1,
\]
is given by (see Lemmas \ref{trathle1} and
\ref{trathle2})
\bea\label{desol23}
u_1(x,\omega,E)
=&
\int_0^{t(x,\omega)}e^{-\int_0^t\Sigma_1(x-s\omega,\omega,E) ds}f_1(x-t\omega,\omega,E)dt
\nonumber\\
&+e^{-\int_0^{t(x,\omega)}\Sigma_1(x-s\omega,\omega,E) ds}g_1(x-t(x,\omega)\omega,\omega,E).
\eea

Hence the explicit solution of  the total primary problem
\begin{align}
\omega\cdot\nabla_x u_1+\Sigma_1(x,\omega,E) u_1={}&f_1,\nonumber\\
-{\p {(S_ju_j)}E}+\omega\cdot\nabla_x u_j
+\Sigma_j u_j
={}& f_j,\quad j=2,3
\nonumber\\
u_{|\Gamma_-}={}&g
\nonumber\\[2mm]
u_j(\cdot,\cdot,E_m)={}&0,\quad j=2,3, \label{desol24}
\end{align}
is given by $u=(u_1,u_2,u_3)$, where $u_1$ is obtained from \eqref{desol23} and $u_j$, $j=2,3$ are obtained from formulas \eqref{desol21}, \eqref{desol20ab} and \eqref{solv1},
recalling that $v_j=v_{1,j}+v_{2,j}$, $j=2,3$.
\end{example}

%%%%%%%%%%%%%%%%%%%%%%%%%%%%%%%%%%%%%%%%%%%%%%%%%%%%%%%%%%%%%%%%%%%%%%%%%%%%%%%%%%%%%%%%%%%%%%%%%%%%%%%%%%%
\subsection{A Solution Based on Neumann Series}\label{meta}
%%%%%%%%%%%%%%%%%%%%%%%%%%%%%%%%%%%%%%%%%%%%%%%%%%%%%%%%%%%%%%%%%%%%%%%%%%%%%%%%%%%%%%%%%%%%%%%%%%%%%%%%%%%

The application of Neumann series in theoretical and numerical treatises of transport problems has long traditions (cf. e.g. \cite{case63}). 
This method is known in the neutron transport community under the name \emph{source iteration} \cite{Lewis-Miller}.
It can also be used to prove existence, uniqueness and positivity of solutions for the standard transport equation
(see e.g. \cite{egger14}, \cite{frank-goudon}).
Below we formally consider these techniques for BTE-CSDA-type coupled problems. Firstly, we deal with the case where the Lebesgue index $p=2$ and after that the cases $p=\infty$ and $p=1$. We mention that the needed norm condition $\n{Q}<1$ can be more generally met in the cases $p=\infty,\ p=1$ (cf.  \cite{egger14}) than in the case $p=2$. The Neumann series solutions give (at least in some cases) a method to compute the solution without inversion of large matrices since the inverse ${\mathbb P}_0^{-1}$ (given below) can be (in some cases) computed explicitly.

\subsubsection{Solution in $L^2$-based Spaces}\label{sl2s}

Consider the transport problem (\ref{desol10}), (\ref{desol11}), (\ref{desol12}).
Setting $\phi=e^{CE}\psi$ as in Section \ref{cosyst}, we recall that the problem
then takes the equivalent form (\ref{cosyst1})-(\ref{cosyst4}).
Denote 
\bea\label{cmp4} 
T_{1,C}\phi:={}&\omega\cdot\nabla_x\phi_1+\Sigma_1\phi_1-K_{1,C}\phi\nonumber\\
T_{j,C}\phi:={}&-{\p {(S_j\phi_j)}E}+\omega\cdot\nabla_x\phi_j+
CS_j\phi_j+\Sigma_j\phi_j
- K_{j,C}\phi,\quad j=2,3,
\eea
and define a (densily defined) closed linear  operator  $T_C:L^2(G\times S\times I)^3\to L^2(G\times S\times I)^3$ by setting
\bea 
D(T_C):={}&\{\phi\in L^2(G\times S\times I)^3\ |\ T_{j,C}\phi\in L^2(G\times S\times I),\ j=1,2,3\}
\nonumber\\[2mm]
T_C\phi:={}&\big(T_{1,C}\phi,T_{2,C}\phi,T_{3,C}\phi\big).
\eea
Using these notations 
the problem (\ref{desol10}), (\ref{desol11}), (\ref{desol12}) can be expressed equivalently as
\be\label{comp5}
T_C\phi={\bf f},\quad \phi_{|\Gamma_-}={\bf g},\quad \phi_j(\cdot,\cdot,E_m)=0,\quad j=2,3,
\ee
where ${\bf f}=e^{CE}f$, ${\bf g}=e^{CE}g$ as in Section \ref{cosyst}.

Let
\[
V_0^{2,1}(I,T^2(\Gamma_-')):=\{g\in  H^1(I,T^2(\Gamma'_-))\ |\ { g}(\cdot,\cdot,E_{\rm m})=0\},
\]
and assume that $g\in T^2(\Gamma_-)\times V_0^{2,1}(I,T^2(\Gamma'_-))^2$. 
Then ${\bf g}\in T^2(\Gamma_-)\times V_0^{2,1}(I,T^2(\Gamma'_-))^2$.
Applying on ${\bf g}$ the lift operator $L$ given by
\[
\big((L{\bf g})(E)\big)(x,\omega):={\bf g}(E)(x-t(x,\omega)\omega,\omega)={\bf g}(x-t(x,\omega)\omega,\omega,E),
\]
we have $L{\bf g}\in \widetilde W^2(G\times S\times I)\times H^1(I,\tilde W^2(G\times S))^2$,
and it satisfies (cf. \cite[Lemma 5.11]{tervo17-up})
\[
\omega\cdot\nabla_x (L{\bf g})=0,\quad
(L{\bf g})_{|\Gamma_-}={\bf g}.
\]
Furthermore, the condition $g_j(\cdot,\cdot,E_{\rm m})=0$ implies that 
\be\label{comp7}
(L{\bf g}_j)(\cdot,\cdot,E_{\rm m})=0,\quad j=2,3.
\ee

Denoting
\bea\label{comp8} 
P_1(x,\omega,E,D)\phi_1:={}&\omega\cdot\nabla_x\phi_1+\Sigma_1\phi_1\nonumber\\[2mm]
P_{j,C}(x,\omega,E,D)\phi_j:={}&-{\p {(S_j\phi_j)}E}+\omega\cdot\nabla_x\phi_j+
CS_j\phi_j+\Sigma_j\phi_j,\quad j=2,3,
\eea
and
\[
{\mathbb P}_C(x,\omega,E,D)\phi:={}& \big(P_1(x,\omega,E,D)\phi_1,P_{2,C}(x,\omega,E,D)\phi_2,P_{3,C}(x,\omega,E,D)\phi_3\big), \\[2mm]
K_C\phi:={}&(K_{1,C}\phi,K_{2,C}\phi,K_{3,C}\phi),
\]
we find that $T_C={\mathbb P}_C-K_C$. 
To simplify the notation, we shall write below $T=T_C$, $K=K_C$, and ${\mathbb P}={\mathbb P}_C$.

Let ${\mathbb P}_0$ be the densely defined linear operator
acting in $L^2(G\times S\times I)^3$ such that 
\bea
D({\mathbb P}_0):={}&\{u\in \tilde W^2(G\times S\times I)\times (\tilde W^2(G\times S\times I)\cap W_1^2(G\times S\times I))^2\ |\nonumber\\
{}&\hspace{2mm}
u_{|\Gamma_-}=0,\ u(\cdot,\cdot,E_{\rm m})=0\},\nonumber\\
{\mathbb P}_0u:={}&{\mathbb P}u.
\eea
Furthermore, let $\widetilde{\mathbb P}_0:L^2(G\times S\times I)^3\to L^2(G\times S\times I)^3$ be the smallest closed extension of ${\mathbb P}_0$. 
Writing $u:=\phi-L{\bf g}$,
we see (by (\ref{comp7})) that $\phi=u+L{\bf g}$ is a solution of (\ref{comp5}) if and only if
\be\label{comp10}
({\mathbb P}-K)(u+L{\bf g})={\bf f},\quad u_{|\Gamma_-}=0,\quad u_j(\cdot,\cdot,E_m)=0,\quad j=2,3.
\ee
When $u\in D(\widetilde{\mathbb P}_0)$ the problem (\ref{comp10}) is equivalent to
\be\label{comp11}
\widetilde{\mathbb P}_0u=Ku+{\bf f}-(\mathbb P-K)(L{\bf g}).
\ee

Now suppose that for some $k=0,1,2,\dots$ the following assumptions hold:
\begin{itemize}
\item[(A1)] $\ol{\bf f}:={\bf f}-({\mathbb P}-K)(L{\bf g})\in H^k(I,L^2(G\times S)^3)$,
\item[(A2)] $Ku\in H^k(I,L^2(G\times S)^3)$ for all $u\in L^2(G\times S\times I)^3$, and
\item[(A3)] $\widetilde{\mathbb P}_0^{-1}$ exists as an operator $H^k(I,L^2(G\times S)^3)\to L^2(G\times S\times I)^3$.
\end{itemize}
These assumptions can be met if the data (and the geometry) are \emph{regular} enough;
see the example below.

Then Eq. \eqref{comp11} gives
\be\label{comp12}
u=\widetilde{\mathbb P}_0^{-1}Ku+\widetilde{\mathbb P}_0^{-1}\ol{\bf f},
\ee
which is equivalent to
\be\label{comp13}
(I-Q)u=\widetilde{\mathbb P}_0^{-1}\ol{\bf f},
\ee
where
\[
Q:=\widetilde{\mathbb P}_0^{-1}K.
\]
If $1$ belongs to the resolvent set $\rho(Q)$ of $Q$ we thus have
\be 
u=(I-Q)^{-1}\widetilde{\mathbb P}_0^{-1}\ol{\bf f},
\ee
and therefore
\be 
\phi=(I-Q)^{-1}\widetilde{\mathbb P}_0^{-1}\ol{\bf f}+L{\bf g}
=:\mc L_2({\bf f},{\bf g}).
\ee
We find that the solution operator 
\[
\mc L_2:
L^2(G\times S\times I)^3\times 
\big(T^2(\Gamma_-)\times V_0^{2,1}(I,T^2(\Gamma'_-))^2\big)\to L^2(G\times S\times  I)^3
\]
is bounded.

Assuming that 
$Q:L^2(G\times S\times I)^3\to L^2(G\times S\times  I)^3$ is bounded and that
\be\label{comp14}
\n{Q}<1,
\ee
which implies in particular that $1\in \rho(Q)$,
the solution $u$ of \eqref{comp11} can be computed through \emph{Neumann series}
\be\label{15}
u=
\sum_{k=0}^\infty Q^k(\widetilde{\mathbb P}_0^{-1}\ol{\bf f})
=
\sum_{k=0}^\infty
(\widetilde{\mathbb P}_0^{-1}K)^k(\widetilde{\mathbb P}_0^{-1}\ol{\bf f}).
\ee
Finally, the solution $\phi$ of \eqref{comp5} is then
\be\label{comp16}
\phi=\sum_{k=0}^\infty
(\widetilde{\mathbb P}_0^{-1}K)^k(\widetilde{\mathbb P}_0^{-1}\ol{\bf f})+L{\bf g},
\ee
from which the solution $\psi$ of the original problem \eqref{desol10}-\eqref{desol12} is obtained by
\[
\psi=e^{-CE}\phi.
\]

\begin{example}
 
In the case treated in Example \ref{desolex1}, choose $C=0$ and $k=0$.
Then a bounded inverse $\widehat{\mathbb P}_0^{-1}:L^2(G\times S\times I)^3\to L^2(G\times S\times I)^3$ exists
and, in fact, can be explicitly computed using formulas given in Example \ref{desolex1}
(choose $\tilde{g}=0$ in Eqs. \eqref{desol21}, \eqref{desol23}, \eqref{solv1}). The inverse is
\bea\label{comp17}
& \widetilde{\mathbb P}_0^{-1}h=\big((\widetilde{\mathbb P}_0^{-1}h)_1,(\widetilde{\mathbb P}_0^{-1}h)_2,(\widetilde{\mathbb P}_0^{-1}h)_3\big), \nonumber\\
& (\widetilde{\mathbb P}_0^{-1}h)_1(x,\omega,E)
=
\int_0^{t(x,\omega)}e^{-\int_0^t\Sigma_1(x-s\omega,\omega,E) ds}h_1(x-t\omega,\omega,E)dt,
\nonumber\\
&(\widetilde{\mathbb P}_0^{-1}h)_j(x,\omega,E)
=
{1\over{S_2(E)}}\Big(
\int_0^{r_{m,j}-R_j(E)}
e^{-\int_0^{r_{m,j}-R_j(E)-s}\Sigma_2(x-\tau\omega,\omega) d\tau}\nonumber\\
&
\quad \cdot H(t(x,\omega)-(r_{m,j}-R_j(E)-s))\tilde{h}_j(x-(r_{m,j}-R_j(E)-s)\omega,\omega,r_{m,j}-s) ds\Big),
\eea
for $j=2,3$, and where $H$ is the Heaviside function, and
\[
\tilde{h}_i(x,\omega,\eta)=S_i(R_i^{-1}(\eta))h_i(x,\omega,R_i^{-1}(\eta)),\quad i=1,2,3.
\]
Hence under the stated assumptions the solution $\psi$ $(=\phi)$ is obtained from (\ref{comp16}) (note that ${\bf f}=f$, ${\bf g}=g$ for $C=0$) where $\widetilde{\mathbb P}_0$ is computed by (\ref{comp17}).
This shows that the assumption (A1)-(A3) can indeed be met (here for $k=0$).

It is also worth noticing that the above expression for $(\widetilde{\mathbb P}_0^{-1}h)_j$, $j=2,3$, can be simplified into
\begin{multline}\label{comp17:1}
(\widetilde{\mathbb P}_0^{-1}h)_j(x,\omega,E) \\
=
{1\over{S_2(E)}}\Big(
\int_0^{\min\{r_{m,j}-R_j(E),t(x,\omega)\}}
e^{-\int_0^{s}\Sigma_2(x-\tau\omega,\omega) d\tau}\tilde{h}_j(x-s\omega,\omega,R_j(E)+s) ds\Big).
\end{multline}
\end{example}

The Neumann series based method enables one to compute the solution, or an approximation of it by considering in the series \eqref{comp16} only finitely many terms, without any explicit inversion of matrices (coming from a chosen discretization of the problem), which tend to be large due to the dimensionality of the problem, which is 6: there are 3 spatial ($x$), 2 angular ($\omega$) and one energy ($E$) dimensions.
Sufficient criteria for the condition ${\n Q}<1$ must be retrieved.

\begin{remark}
The applicability of the above method for general cases remains open.
One possibility to apply formulae like (\ref{comp17}) for spatially inhomogeneous substance (that is, $S_j$ and $\Sigma_j$ are dependent on $x\in G$) is to apply {\it domain decomposition method} in such a way that $S_j$ are assumed to be constant in subdomains.
\end{remark}

\begin{example}
In this example, we write out Eq. \eqref{comp17:1} in special case of constant $S_0\geq 0$ and $\Sigma\geq 0$.
Moreover, we consider a single particle CSDA transport equation only.
Let
\[
{}&P(x,\omega,E,D)u:=-\p{(S_0u)}{E}+\omega\cdot\nabla_x u+\Sigma u,
\]
In this case, $R(E)=\int_0^E \frac{1}{S_0}d\tau=\frac{1}{S_0}E$,
and Eq. \eqref{comp17:1} gives, when $S_0>0$,
using the notation $\eta(E):=(E_m-E)/S_0$
and noticing that $r_m:=R(E_m)=\frac{E_m}{S_0}$,
\[
(\widetilde {P}_0^{-1}h)(x,\omega,E)=\int_0^{\min\{\eta(E),t(x,\omega)\}} e^{-\Sigma s} h\big(x-s\omega,\omega,E+S_0s\big)ds,
\]
for $h\in L^2(G\times S\times I)$.
It is clear that this last formula gives the correct (explicit) expression for $\widetilde{P}_0^{-1}$
also in the case where $S_0=0$,
if we make the convention that $\eta(E)=+\infty$ for all $E\in I$ when $S_0=0$.
\end{example}

%%%%%%%%%%%%%%%%%%%%%%%%%%%%%%%%%%%%%%%%%%%%%%%%%%%%%%%%%%%%
\subsubsection{Solution in $L^\infty$- and $L^1$-based Spaces}\label{sinfty1s}
%%%%%%%%%%%%%%%%%%%%%%%%%%%%%%%%%%%%%%%%%%%%%%%%%%%%%%%%%%%%

The spaces $W^p(G\times S\times I)$, $T^p(\Gamma_{-})$, $V_0^{p,1}(I,T^p(\Gamma_-'))$ (and related spaces) can be analogously defined
for a general Lebesgue index $p\in [1,\infty]$ as for $p=2$ above.

A. At first, we consider the use of Neumann series in $L^\infty(G\times S\times I)$-based spaces. We assume that $f\in L^\infty(G\times S\times I)^3,\ g\in T^\infty(\Gamma_-)\times  V_0^{\infty,1}(I,T^\infty(\Gamma_-'))^2$.
Define a  linear  operator  $T_{C,\infty}:L^\infty(G\times S\times I)^3\to L^\infty(G\times S\times I)^3$ by setting
\bea 
D(T_{C,\infty}):={}&\{\phi\in L^\infty(G\times S\times I)^3\ |\ T_{j,C}\phi\in L^\infty(G\times S\times I),\ j=1,2,3\}
\nonumber\\[2mm]
T_{C,\infty}\phi:={}&\big(T_{1,C}\phi,T_{2,C}\phi,T_{3,C}\phi\big)
\eea
where $T_{j,C}$ are as in the previous section \ref{sl2s}.
Then
the problem (\ref{desol10}), (\ref{desol11}), (\ref{desol12}) can be expressed equivalently as
\be\label{comp5-a}
T_{C,\infty}\phi={\bf f},\quad \phi_{|\Gamma_-}={\bf g},\quad \phi_j(\cdot,\cdot,E_m)=0,\quad j=2,3,
\ee
where ${\bf f}=e^{CE}f$, ${\bf g}=e^{CE}g$ are as above.
The operators ${\mathbb P}_{C,\infty}$ and $K_{C,\infty}$ are also defined  as in section \ref{sl2s}  and then  $T_{C,\infty}={\mathbb P}_{C,\infty}-K_{C,\infty}$. Again, for simplicity, we denote 
$T:=T_{C,\infty}$, ${\mathbb P}:={\mathbb P}_{C,\infty}$, $K:=K_{C,\infty}$.

Let ${\mathbb P}_0$ be the  linear operator
acting in $L^\infty(G\times S\times I)^3$ such that 
\bea
D({\mathbb P}_0):={}&\{u\in  L^\infty(G\times S\times I)\ |\nonumber\\
{}&\hspace{2mm} {\rm the\ traces}\ u_{|\Gamma_-},\ u(\cdot,\cdot,E_{\rm m})\ \textrm{are\ well-defined} \nonumber\\
{}&\hspace{2mm} {\rm and }\ u_{|\Gamma_-}=0,\ u(\cdot,\cdot,E_{\rm m})=0\},\nonumber\\
{\mathbb P}_0u:={}&{\mathbb P}u.
\eea
Write $u:=\phi-L{\bf g}$. In the case where $u\in D({\mathbb P}_0)$
we  see that $\phi=u+L{\bf g}$ is a solution of (\ref{comp5-a}) if and only if
\be\label{comp11-a}
{\mathbb P}_0u=Ku+{\bf f}-(\mathbb P-K)(L{\bf g}).
\ee
Denote $\ol{\bf f}:={\bf f}-(\mathbb P-K)(L{\bf g})$.

Now suppose that  
\[
{\mathbb P}_0^{-1}:L^2(G\times S\times I)^3
\times T^\infty(\Gamma_-)\times  V_0^{\infty,1}(I,T^\infty(\Gamma_-'))^2
\to L^2(G\times S\times I)^3
\]
exists.
Then Eq. \eqref{comp11-a} gives
\be\label{comp12-a}
u={\mathbb P}_0^{-1}Ku+{\mathbb P}_0^{-1}\ol{\bf f},
\ee
which is equivalent to
\be\label{comp13-a}
(I-Q_\infty)u={\mathbb P}_0^{-1}\ol{\bf f},
\ee
when
\[
Q_\infty:={\mathbb P}_0^{-1}K.
\]
If $1$ belongs to the resolvent set $\rho(Q_\infty)$ of $Q_\infty$ we thus have
\be 
u=(I-Q_\infty)^{-1}{\mathbb P}_0^{-1}\ol{\bf f},
\ee
and therefore
\be 
\phi=u+ L{\bf g}=(I-Q_\infty)^{-1}{\mathbb P}_0^{-1}\ol{\bf f}+L{\bf g}.
\ee

In the case where
\be\label{comp14-a}
\n{Q_\infty}<1
\ee
the solution $u$ of \eqref{comp11-a} can again be computed through \emph{Neumann series}
\be\label{comp15-a}
u=
\sum_{k=0}^\infty Q_\infty^k({\mathbb P}_0^{-1}\ol{\bf f}).
\ee

By the above, the solution operator is
\be\label{comp16-a}
\mc L_\infty({\bf f},{\bf g}):=
(I-Q_\infty)^{-1}{\mathbb P}_0^{-1}\ol{\bf f}+L{\bf g},
\ee
and we find that
\[
\mc{L}_\infty:L^\infty(G\times S\times I)^3\times \big(T^\infty(\Gamma_-)\times V_0^{\infty,1}(I,T^\infty(\Gamma_-'))^2\big)\to L^\infty(G\times S\times I)^3
\] is bounded and
\bea\label{comp17-a}
&
\n{\mc L_\infty({\bf f},{\bf g})}\leq 
\n{(I-Q_\infty)^{-1}{\mathbb P}_0^{-1}}\n{\ol{\bf f}}_{L^\infty}+\n{L{\bf g}}_{L^\infty}
\eea
from which an explicit bound for $\n{\mc L_\infty}$ can be calculated.

B. Secondly, we consider the  solution in $L^1(G\times S\times I)$-based spaces.
 We assume that $f\in L^1(G\times S\times I)^3,\ g\in T^1(\Gamma_-)\times  V_0^{1,1}(I,T^1(\Gamma_-'))^2$.
Define a dense linear  operator  $T_{C,1}:L^1(G\times S\times I)^3\to L^1(G\times S\times I)^3$ by setting
\bea 
D(T_{C,1}):={}&\{\phi\in L^1(G\times S\times I)^3\ |\ T_{j,C}\phi\in L^1(G\times S\times I),\ j=1,2,3\}
\nonumber\\[2mm]
T_{C,1}\phi:={}&\big(T_{1,C}\phi,T_{2,C}\phi,T_{3,C}\phi\big)
\eea
where $T_{j,C}$ are as in  section \ref{sl2s}.
Then
the problem (\ref{desol10}), (\ref{desol11}), (\ref{desol12}) can be expressed equivalently as
\be\label{comp5-b}
T_{C,1}\phi={\bf f},\quad \phi_{|\Gamma_-}={\bf g},\quad \phi_j(\cdot,\cdot,E_m)=0,\quad j=2,3,
\ee
where ${\bf f}=e^{CE}f$, ${\bf g}=e^{CE}g$.
The operators ${\mathbb P}_{C,1}$ and $K_{C,1}$ are defined  as in section \ref{sl2s}  and then  $T_{C,1}={\mathbb P}_{C,1}-K_{C,1}$. We, for simplicity denote 
$T:=T_{C,1},\ {\mathbb P}:={\mathbb P}_{C,1},\ K:=K_{C,1}$.

Let ${\mathbb P}_0$ be the (densely defined) linear operator
acting in $L^1(G\times S\times I)^3$ such that 
\bea
D({\mathbb P}_0):={}&\{u\in  L^1(G\times S\times I)\ |\nonumber\\
{}&\hspace{2mm} {\rm the\ traces}\  u_{|\Gamma_-},\ u(\cdot,\cdot,E_{\rm m})\ \textrm{are\ well-defined}\nonumber\\
{}&\hspace{2mm}
{\rm and}\ u_{|\Gamma_-}=0,\ u(\cdot,\cdot,E_{\rm m})=0\},\nonumber\\
{\mathbb P}_0\phi:={}&{\mathbb P}\phi.
\eea
Writing $u:=\phi-L{\bf g}$,
we again see that $\phi=u+L{\bf g}$ is a solution of (\ref{comp5-b}) if and only if
\be\label{comp11-b}
{\mathbb P}_0u=Ku+\ol{\bf f}
\ee
where $\ol{\bf f}:={\bf f}-(\mathbb P-K)(L{\bf g})$.

In this $L^1$-case we proceed as follows. We assume that 
\[
{\mathbb P}_0:
L^1(G\times S\times I)^3\times
\big(T^1(\Gamma_-)\times  V_0^{1,1}(I,T^1(\Gamma_-'))^2\big)
\to L^1(G\times S\times I)^3
\]
is invertible. Let
\be\label{comp12-b}
v:={\mathbb P}_0u
\ee
or equivalently $u={\mathbb  P}_0^{-1}v$. this implies by (\ref{comp11-b}) that
\be\label{comp13-b}
v=K{\mathbb P}_0^{-1}v+\ol{\bf f}.
\ee
Denote
\be\label{comp14-b}
Q_1:=K{\mathbb  P}_0^{-1}:L^1(G\times S\times I)^3
\times
\big(T^1(\Gamma_-)\times  V_0^{1,1}(I,T^1(\Gamma_-'))^2\big)
\to L^1(G\times S\times I)^3.
\ee
Then by (\ref{comp13-b})
\[
(I-Q_1)v=\ol {\bf f}.
\]
If $1$ belongs to the resolvent set $\rho(Q_1)$ we thus have
\be 
v=(I-Q_1)^{-1}\ol{\bf f},
\ee
and therefore by (\ref{comp12-b})
\be 
u={\mathbb P}_0^{-1}(I-Q_1)^{-1}\ol{\bf f}.
\ee
Finally
\be\label{comp15-b} 
\phi={\mathbb P}_0^{-1}(I-Q_1)^{-1}\ol{\bf f}+L{\bf g}.
\ee

Again in the case where
\be\label{comp16-b}
\n{Q_1}<1
\ee
the solution $u$  can  be computed through \emph{Neumann series}
\be\label{comp17-b}
u=
\sum_{k=0}^\infty {\mathbb P}_0^{-1}Q_1^k\ol{\bf f}.
\ee
The solution $\phi$  is then
$\phi=u+L{\bf g}$.

Denote the above solution operator by
\be\label{comp18-b}
\mc L_1({\bf f},{\bf g}):=
{\mathbb P}_0^{-1}(I-Q_1)^{-1}\ol{\bf f}+L{\bf g}.
\ee
We find that
\[
\mc L_1:L^1(G\times S\times I)^3\times 
\big(T^1(\Gamma_-)\times  V_0^{1,1}(I,T^1(\Gamma_-'))^2\big)
\to L^1(G\times S\times I)^3
\]
is bounded and
\bea\label{comp19-b}
&
\n{\mc L_1({\bf f},{\bf g})}\leq 
\n{{\mathbb P}_0^{-1}(I-Q_1)^{-1}}\n{\ol{\bf f}}_{L^1}+\n{L{\bf g}}_{L^1}
\eea
from which an explicit bound for the norm $\n{\mc L_1}$ can be calculated.

\begin{remark}\label{interp}
By the above paragraphs A and B the solution operators
\[
\mc L_\infty:L^\infty(G\times S\times I)^3\times \big(T^\infty(\Gamma_-)\times V_0^{\infty,1}(I,T^\infty(\Gamma_-'))^2\big)\to L^\infty(G\times S\times I)^3,
\]
\[
\mc L_1:L^1(G\times S\times I)^3\times \big(T^1(\Gamma_-)\times V_0^{1,1}(I,T^1(\Gamma_-'))^2\big)\to L^1(G\times S\times I)^3
\]
are bounded and one is able to calculate explicit bounds for the norms
$\n{\mc L_\infty},\ \n{\mc L_1}$.
By Riez-Thorin interpolation theorem (\cite{berg}) the solution operator 
\[
\mc{L}_2:L^2(G\times S\times I)^3\times \big(T^2(\Gamma_-)\times V_0^{2,1}(I,T^2(\Gamma_-'))^2\big)\to L^2(G\times S\times I)^3
\]
is bounded as well and
\[
\n{\mc L_2}\leq {1\over 2}(\n{\mc L_\infty}+\n{\mc L_1}).
\]
\end{remark}

%%%%%%%%%%%%%%%%%%%%%%%%%%%%%%%%%%%%%%%%%%%%%%%%%%%%%%%%%%%%%%%%%%%%%%%%%%%%%%%%%%%%%%%%%%%%%%%%%%%%%%%%%%%
\subsection{An Approximative Solution Based on the Theory of Evolution Equations}\label{appevo-o}
%%%%%%%%%%%%%%%%%%%%%%%%%%%%%%%%%%%%%%%%%%%%%%%%%%%%%%%%%%%%%%%%%%%%%%%%%%%%%%%%%%%%%%%%%%%%%%%%%%%%%%%%%%%

In this section we for simplicity restrict ourselves to a single particle CSDA-equation
\begin{gather}
-{\p {(S_0\psi)}E}+\omega\cdot\nabla_x\psi+
\Sigma\psi
- K\psi=f, \label{mb1} \\
\psi_{|\Gamma_-}=g,\quad 
\psi(\cdot,\cdot,E_m)=0. \label{mb2}
\end{gather}
Suppose that the assumptions  of Theorem \ref{coth3-dd} are valid.

Another method to compute approximately the solution of the problem
(\ref{mb1}), (\ref{mb2})
which avoids the explicit inversions of matrices, can be (formally) described as follows.
Note that we will be using throughout this section the notations of Section \ref{possol}. 
After the change of unknown $\phi=e^{CE}\psi$ the problem is
\be\label{ps1} 
T_{C}\phi={\bf f},\quad \phi_{\Gamma_-}={\bf g},\quad \phi(\cdot,\cdot,E_{\rm m})=0,
\ee
where
\[
T_C\phi:=-{\p {(S_0\phi)}E}+\omega\cdot\nabla_x\phi+CS_0\phi+\Sigma\phi-K_C\phi.
\]
Assume that $g\in H^1(I,T^2(\Gamma'_-))$ and that $g(\cdot,\cdot,E_{\rm m})=0$ on $G\times S$.
Let $u:=\phi-L({\bf g})$. Then $u$ satisfies
\be\label{ps0} 
T_{C,0}u=\tilde {\bf f},\quad u_{\Gamma_-}=0,\quad u(\cdot,\cdot,E_{\rm m})=0.
\ee 
where
\[
\tilde{\bf f} := {\bf f}-T_C(L({\bf g})).
\]

By the Trotter's formula,
the semigroup $G(t)$ generated by $T_{C,0}$ is given by
\be\label{calceq2}
 G(t)\tilde {\bf f}
=
\lim_{n\to\infty}\Big(T_{B_0}(t/n)T_{A_0}(t/n)T_{-(\Sigma+C I)}(t/n)T_{K_C}(t/n)\Big)^n\tilde{\bf f},
\ee
where the convergence is uniform on compact $t$-intervals $[0,T]$.
Note that the individual semi-groups $T_{B_0}(t)$, $T_{A_0}(t)$, $T_{-(\Sigma+C I)}(t)$
and $T_{K_C}(t)$ contributing to this expression can be computed explicitly (see section \ref{possol}).
Hence we get
\bea\label{calceq4}
\psi={}&\int_0^\infty G(t)\tilde {\bf f} dt\approx \int_0^T \tilde G(t)\tilde {\bf f} dt\nonumber\\
={}&\int_0^T\lim_{n\to\infty}
\Big(T_{B_0}(t/n)T_{A_0}(t/n)T_{-(\Sigma+C I)}(t/n)T_{K_C}(t/n)\Big)^n\tilde{\bf f}dt\nonumber\\
\approx {}&
\int_0^T\Big(T_{B_0}(t/n_0)T_{A_0}(t/n_0)T_{-(\Sigma+C I)}(t/n_0)T_{K_C}(t/n_0)\Big)^{n_0}\tilde{\bf f}dt,
\eea
for large enought $T$ and $n_0$.
On the other hand, the semi-group $T_{K_C}$ generated by the bounded operator $K_C$
can be approximately computed from
\[
T_{K_C}(t)
\approx
\sum_{k=0}^{N_0} \frac{1}{k!}(tK_C)^k,
\]
for large enough $N_0$.

We point out that this semi-group theory -based approach, unlike the one given
in the previous section, does not require extra assumptions on cross-sections,
like the ones imposed by the condition \eqref{comp14}.

\begin{remark}\label{evocomp} 

Assume that $K$ is of the form
\[
(K\psi)(x,\omega,E)=\int_S\sigma(x,\omega',\omega,E)\psi(x,\omega',E)d\omega'.
\]
Furthermore,
as in Example \ref{desolex1} we assume that $S_0=S_0(E)$ (independent of $x$) and we define
$R(E):=\int_0^E{1\over{S_0(\tau)}}d \tau$, $\eta:=R(E)$, $r_m:=R(E_m)$ and $\tilde{I}:=R(I)=[0,r_{\rm m}]$.
Let
\begin{gather*}
v(x,\omega,\eta):=S_0(R^{-1}(\eta))\psi(x,\omega,R^{-1}(\eta)), \\[2mm]
\tilde\Sigma(x,\omega,\eta):=\Sigma(x,\omega,R^{-1}(\eta)),\\[2mm]
\tilde\sigma(x,\omega',\omega,\eta):=\sigma(x,\omega',\omega,R^{-1}(\eta)), \\
(\tilde K v)(x,\omega,\eta):=\int_S\tilde\sigma(x,\omega',\omega,\eta)
v(x,\omega',\eta)d\omega', \\
\tilde f(x,\omega,\eta):=S_0(R^{-1}(\eta))f(x,\omega,R^{-1}(\eta)), \\[2mm]
\tilde g(x,\omega,\eta):=S_0(R^{-1}(\eta))g(y,\omega,R^{-1}(\eta)).
\end{gather*}
 
As a further simplification, suppose that $g=0$.
After some technical considerations, the problem (\ref{mb1}), (\ref{mb2}) can be cast
into the abstract form
\be\label{mb4aa}
{\p {V}\eta}-A(\eta) V= F(\eta),\quad V(0)=0,
\ee  
where $V(\eta):=v(r_m-\eta)$, $F(\eta):=\tilde f(\cdot,\cdot,r_m-\eta)$.
In the case where appropriate assumptions are valid (cf. Theorem \ref{evoth}), the solution of  (\ref{mb4aa}) is given by
\be\label{mb7}
V(\eta)=\int_0^{\eta}U(\eta,s) F(s) ds,
\ee
where $U(\eta,s):L^2(G\times S)\to L^2(G\times S)$, $0\leq \eta\leq s\leq r_m$, is the  evolution family of operators $A(\eta)$, $\eta\in\tilde I$.

By making use of the explicit expressions for the semi-groups 
$T_{B_0}(s)$ (here $B_0$ is as in the proof of Lemma \ref{csdale1}), $T_{\tilde \Sigma(\eta)}(s)$, and $T_{\tilde K(\eta)}(s)$,
there exist some approximative methods
(e.g. {\it Cauchy-Peano approximations}) for computing the family of evolution operators $U(\eta,s)$ (cf. \cite{goldstein}). This gives an approach for calculating $V$ approximately by 
\[
V(\eta)\approx \tilde V(\eta):=\int_0^\eta \tilde U(\eta,s)F(s) ds,
\]
where $\tilde U(\eta,s)$ is an approximation of $U(\eta,s)$.
The solution $\psi$ would then be approximated as
\be\label{mb11}
\psi(x,\omega,E)\approx {1\over{S_0(E)}}\tilde V(r_m-R(E)).
\ee 
For general $g$ the idea remains the same.
It is worth studying this approach under more general assumptions as well. 
\end{remark}

%%%%%%%%%%%%%%%%%%%%%%%%%%%%%%%%%%%%%%%%%%%%%%%%%%%%%%%%%%%%%%%%%%%%%%%%%%%%%%%%%%%%%%%%%%%%%%%%%%%%%%%%%%%
\sectionspace
\section{Outlook Towards Inverse Radiation Treatment Planning}\label{irtpre}
%%%%%%%%%%%%%%%%%%%%%%%%%%%%%%%%%%%%%%%%%%%%%%%%%%%%%%%%%%%%%%%%%%%%%%%%%%%%%%%%%%%%%%%%%%%%%%%%%%%%%%%%%%%

As mentioned in Introduction, in radiation therapy the dose absorbed from particle field $\psi=(\psi_1,\psi_2,\psi_3)$ is defined by
\[
D(x)=(D\psi)(x):=\sum_{j=1}^3\int_{S\times I}\varsigma_j (x,E)\psi_j(x,\omega,E) d\omega dE,
\]
where $\varsigma_j\in L^\infty(G\times I)$, $\varsigma_j\geq 0$ are the so-called \emph{(total) stopping powers}.

Also, it is worth recalling that the component fields of $\psi$, relevant to photon and electron radiation therapy, are $\psi_1={}$photons, $\psi_2={}$electrons and $\psi_3={}$positrons.
Clearly, nothing we have said above depends on the number, or designation (to certain particle species) of fields treated (or even the dimensionality of the spaces $G$, $S$ or $I$ in fact),
with the exception that typically only charged particle fields (are assumed to) obey CSDA version of 
the transport equation (cf. \eqref{intro10}), while non-charged particles obey the
standard linear BTE (cf. \eqref{intro10a}). Thus with very minor modifications, and in particular if 
one is interested in radiation therapy, what will and has been said works, in principle, equally well in proton (and ion) therapy framework as well.

We find that $D:L^2(G\times S\times I)^3\to L^2(G)$ is a bounded linear operator and  its adjoint operator $D^*:L^2(G)\to L^2(G\times S\times I)^3$ is simply a multiplication type operator,
\[
D^*d=(\varsigma_1,\varsigma_2,\varsigma_3)d,\quad {\rm for}\ d\in L^2(G).
\]
We describe shortly an optimization problem related 
to inverse radiation treatment planning. We restrict ourselves
to  {\it external radiation therapy} 
in which the particles are inflowing through the patch(es) of patient surface. This means that in the transport problem  $f=0$ (i.e. the internal particle source vanishes)
and $g$ (the inflow particle flux)
is the variable to be controlled.
Conversely, for the internal radiation therapy problems one sets $g=0$,
and $f$ would be the variable to be controlled.
We refer to \cite{tervo17-up} (section 8) and to the references therein for a more detail exposition of \emph{inverse problem} (optimization) in this setting.

Let $g\in T^2(\Gamma_-)^3$ and let $\psi=\psi(g)\in 
{\s H}_{{\bf P}}(G\times S\times I^\circ)$ be the solution of the problem
\eqref{csda1a}--\eqref{csda3} guaranteed by Theorem \ref{m-d-j-co1}.
The deposited dose is then
\[
D(x)=(D(\psi(g)))(x),\quad x\in G.
\]
We shall also denote ${\s D}(g):=D(\psi(g))$.
 
Denote the target region by ${\bf T}\subset G$, the critical organ region by ${\bf C}\subset G$ and 
the normal tissue region by ${\bf N}\subset G$.
Then $G={\bf T}\cup {\bf C}\cup {\bf N}$ where the union is mutually disjoint.
Suppose that $ d_{\bf T}\in L^2({\bf T})$, $d_{\bf C}\in L^2({\bf C})$, $d_{\bf N}\in L^2({\bf N})$ are given dose distributions in the respective regions (for example, they may be constants). We define a strictly convex \emph{object (cost) function} $J:X\to\R$ by (see \cite{tervo17-up})
\[
J(g)={}&\frac{c_{\bf T}}{2}\n{ d_{\bf T}-{\s D}(g)}_{L^2({\bf T})}^2+\frac{c_{\bf C}}{2}\n{ d_{\bf C}-{\s D}(g)}_{L^2({\bf C})}^2\nonumber\\
&
+\frac{c_{\bf N}}{2}\n{d_{\bf N}-{\s D}(g)}_{L^2({\bf N})}^2+\frac{\ol{c}}{2}\n{g}_X^2,
\]
for some constants (weights) $c_{\bf T},c_{\bf C},c_{\bf N},\ol{c}>0$,
where $X:=T^2(\Gamma_-)\times H^1(I,T^2(\Gamma_-'))^2$ and it is equipped
with the inner product
\[
\la g,h\ra_{X} :=
\la g_1,h_1\ra_{T^2(\Gamma_-)}+\sum_{j=2}^3
 \int_I\la {{\partial g_j}\over{\partial E}},
{{\partial h_j}\over{\partial E}}\ra_{T^2(\Gamma_-')} dE.
\]
Let $Y$ be a closed subspace of $X$ defined by (here we denote $g(y,\omega,E):=(g(E))(y,\omega)$)
\[
Y:=\{g\in X\ |\ g_j(\cdot,\cdot,E_{\rm m})=0,\ j=2,3\}.
\]
A relevant \emph{admissible set} (of controls) is 
\[
U_{\rm ad}=\{g\in Y\ |\ g\geq 0\ {\rm a.e.\ on}\ G\times S\times I\},
\]
which is a \emph{closed convex} subset of $X$. 

Suppose that  the assumptions of Theorem \ref{m-d-j-co1} are valid.
Furthermore, suppose that
$\varsigma_j\ j=1,2,3$ are regular enough.
Then the minimum $\min_{g\in U_{\rm ad}}{J}(g)$ exists and the optimal control $\ol g$ is obtained from the system of variational equations (cf. \cite[the proof of Theorem 8.7]{tervo17-up}) 
\bea
&
-\la \gamma_-(\psi^*),w\ra_{T^2(\Gamma_-)^3}+\ol{c}\la \ol g,w\ra_{X}\geq 0
\quad \forall w\in U_{\rm ad}, \label{irtp16}\\
&
-\la \gamma_-(\psi^*),\ol g\ra_{T^2(\Gamma_-)^3}
+\ol{c}\n{\ol g}_{X}^2=0, \label{irtp16a}\\
&
\tilde{\bf B}_0(\psi,v)=\la\ol g,v\ra_{T^2(\Gamma_-)^3}
\quad \forall  v\in \tilde{\s H},\label{irtp18}\\
&
\tilde{\bf B}_0^*(\psi^*,v)+
c_{\bf T}\la D\psi,Dv\ra_{ L^2({\bf T})}+c_{\bf C}\la D\psi,Dv\ra_{ L^2({\bf C})}+c_{\bf N}\la D\psi,Dv\ra_{ L^2({\bf N})}\nonumber\\
&
=c_{\bf T}\la d_{\bf T},Dv\ra_{ L^2({\bf T})}+c_{\bf C}\la d_{\bf C},Dv\ra_{ L^2({\bf C})}
+c_{\bf N}\la d_{\bf N},Dv\ra_{ L^2({\bf N})}
\quad \forall v\in \tilde{\s H}.\label{irtp19}
\eea
Here $\tilde{\bf B}_0(\cdot,\cdot)$ is the bilinear form  given in (\ref{cosyst5a})  and $\tilde{\bf B}_0^*(\cdot,\cdot)$ 
is the (extended) bilinear form  \eqref{adjoint5} corresponding to the adjoint problem.
Likewise, the Hilbert space $\tilde{\s H}$ is defined in \eqref{eq:tilde_H}.
We notice that
if we used $Z:=\{(g_1,0,0)|\ g_1\in T^2(\Gamma_-)\}$ as a control space instead of $Y$, the relations (\ref{irtp16}), (\ref{irtp16a})  imply (as in \cite{tervo17-up}, Theorem 8.7) that $g_1={1\over{\ol c}}(\gamma_-(\psi^*))_+$. Here $(h)_+$ denotes the positive part of a function $h$. 

Omitting details we mention that equivalently, this variational system (\ref{irtp18}), (\ref{irtp19}) can be written as a coupled system in the operator form
\[
({\bf P}^*(x,\omega,E,D)+\Sigma^*-K^*)\psi^*
+& c_{\bf T}D^*e_{\bf T}D\psi+c_{\bf C}D^*e_{\bf C}D\psi+c_{\bf N}D^*e_{\bf N}D\psi \nonumber\\
&=c_{\bf T}D^*e_{\bf T}d_{\bf T}+c_{\bf C}D^*e_{\bf C}d_{\bf C}+c_{\bf N}D^*e_{\bf N}d_{\bf N},\nonumber\\
({\bf P}(x,\omega,E,D)+\Sigma-K)\psi{}&=0 \\
\psi_{|\Gamma_-}{}&=\ol{g},\quad \psi_j(\cdot,\cdot,E_m)=0,\quad j=2,3\\
{\psi^*}_{|\Gamma_+}{}&=0,\quad \psi_j^*(\cdot,\cdot,0)=0,\quad j=2,3,
\]
where, moreover, $\ol{g}\in U_{\rm ad}$ satisfies \eqref{irtp16}, \eqref{irtp16a}.
The operator ${\bf P}(x,\omega,E,D)$ is ${\bf P}_C(x,\omega,E,D)$ when $C=0$ (see \eqref{cosyst6a},
and the operator ${\bf P}^*(x,\omega,E,D)$ is defined by (see  section \ref{mdiss-op})
\[
P_{1}^*(x,\omega,E,D)\psi_1^*:={}&-\omega\cdot\nabla_x\psi_1^*, \\[2mm]
P_{j}^*(x,\omega,E,D)\psi_j^*:={}&S_j{\p {\psi_j^*}E}-\omega\cdot\nabla_x\psi_j^*,\quad j=2,3, \\[2mm]
{\bf P}^*(x,\omega,E,D)\psi^*:={}&\big(P_{1}^*(x,\omega,E,D)\psi_1^*, P_{2}^*(x,\omega,E,D)\psi_2^*,P_{3}^*(x,\omega,E,D)\psi_3^*\big).
\]
Finally, $\Sigma^*=\Sigma$ and $K^*$ is the adjoint on $K$.
In addition, for any subset $A$ of $G$ and function $h$ defined on $A$,
we wrote $e_Ah:G\to\R$ for the extension by zero of $h$ onto $G$.

We emphasize that here the described solution $\ol{g}$
of the optimal control problem can be used only as an \emph{initial point} for the actual treatment 
planning.  The more realistic object function is given in \cite{tervo17-up}, section 8 and it has been found to be very multi-extremal. Hence the actual optimization requires \emph{global optimization} (see e.g. \cite{pinter}).
The determination of a carefully chosen initial point for a large dimensional global optimization 
scheme is very essential for achieving (time savings and) satisfactory results (\cite{pinter14}).

 We also notice that if we contented ourselves with so-called \emph{mild solutions} (\cite{pazy83}, p. 146),
then the existence of an optimal control $\ol{g}$ (the admissible set being a subset of $T^2(\Gamma_-)^3$),
together with the explicit formula $\ol{g}={1\over c}(\gamma_-(\psi^*))_+$ for it,
could be proven under quite weak assumptions.
In any case, the validity of estimates such as (\ref{diss-co-es})
is essential for guaranteeing that ${\s D}$ be a bounded linear operator in appropriate spaces.
The use of mild solutions, however, has the drawback that the inflow boundary
conditions are not necessarily satisfied by the solutions (and thus the solutions might be non-physical).

We remark that, for example with respect to $x$-variable the solution of the transport problem is generally at most in $H^{2,(s,0,0)}(G\times S\times I^\circ)$ with $s<\frac{3}{2}$ where $H^{2,(s,0,0)}(G\times S\times I^\circ)$ is the mixed-norm Sobolev-Slobodeckij space with the Lebesgue index $2$ and with the fractional index $s$  (with respect to $x$-variable), see \cite{tervo17-up}, Examples 7.4, 7.5. This must be kept in mind when solving the problem e.g. by the finite element methods (FEM). 
Above we derived the corresponding variational formulation of the problem together with the pertinent estimates for the due bilinear form. These estimates imply (by the {\it Cea's estimate}) that the FEM-scheme in principle convergences (in relevant spaces),
when one uses appropriate basis functions.
Nevertheless, the above mentioned limited regularity of transport problems implies that the standard local interpolation results are not necessarily applicable and more advanced analysis (e.g. in choosing basis functions) is needed.

We omit in this paper further discussion of the inverse radiation treatment problem which was outlined above and  refer to e.g. \cite{frank10} for related treatments.

%%%%%%%%%%%%%%%%%%%%%%%%%%%%%%%%%%%%%%%%%%%%%%%%%%%%%%
\appendix
%%%%%%%%%%%%%%%%%%%%%%%%%%%%%%%%%%%%%%%%%%%%%%%%%%%%%%

%%%%%%%%%%%%%%%%%%%%%%%%%%%%%%%%%%%%%%%%%%%%%%%%%%%%%%
\sectionspace
\section{Proof of Lemma \ref{le-m:0}}\label{ap:le-m:0}
%%%%%%%%%%%%%%%%%%%%%%%%%%%%%%%%%%%%%%%%%%%%%%%%%%%%%%

We would like to thank Prof. Y. Chitour for
providing valuable comments and suggestions concerning the proof below.

\begin{proof}
We shall lift the problem onto $\SO(3)$. Define first,
\[
& I:S\to\R;\quad I(\omega):=\int_{\Gamma_\mu^\omega} f(\omega')d\ell(\omega'), \\
& \tilde{I}:\SO(3)\to\R;\quad \tilde{I}(R):=I(Re_3).
\]

We choose a parametrization $\gamma_\mu(s)$, $s\in [0,2\pi]$,
of $\Gamma_\mu^{e_3}$
For example $\gamma_\mu(s)=(\sqrt{1-\mu^2}\cos(s),\sqrt{1-\mu^2}\sin(s),\mu)$, $s\in [0,2\pi]$.

Since for $R\in\SO(3)$ we have $R\Gamma_\mu^{e_3}=\Gamma_\mu^{Re_3}$,
we have that $R\gamma_\mu(s)$ is a parametrization of $\Gamma_\mu^{Re_3}$
and therefore,
\[
\tilde{I}(R)
=\int_{\Gamma_\mu^{Re_3}} f(\omega')d\ell(\omega')
={}&
\int_0^{2\pi} f(R\gamma_\mu(s))\n{(R\gamma_\mu)'(s)}ds \\
={}&
\int_0^{2\pi} f(R\gamma_\mu(s))\n{R\gamma_\mu'(s)}ds \\
={}&
\int_0^{2\pi} f(R\gamma_\mu(s))\n{\gamma_\mu'(s)}ds \\
={}&
\int_{\Gamma_\mu^{e_3}} f(R\omega')d\ell(\omega'),
\]
where at the second to last phase we used the fact that $R$ is an isometry.

Letting $\mc{H}_{\SO(n)}$ be the Haar-measure on $\SO(n)$, which is normalized to $1$, we have
\[
\int_{\SO(3)} \tilde{I}(R)d\mc{H}_{\SO(3)}(R)
=
\int_{\Gamma_\mu^{e_3}} \int_{\SO(3)} f(R\omega') d\mc{H}_{\SO(3)}(R) d\ell(\omega').
\]
Now we prove that the map
\[
J:S\to\R;\quad
J(\omega'):=\int_{\SO(3)} f(R\omega') d\mc{H}_{\SO(3)}(R)
\]
is independent of $\omega'\in S$.

Indeed, if $\omega'\in S$, then
there exists $A=A(\omega')\in\SO(3)$ such that $\omega'=Ae_3$,
and hence
\[
J(\omega')=\int_{\SO(3)} f(R\omega') d\mc{H}_{\SO(3)}(R)
=\int_{\SO(3)} f(RAe_3) d\mc{H}_{\SO(3)}(R),
\]
and writing $h:\SO(3)\to\R$, $h(R')=f(R'e_3)$, we have
\[
J(\omega')=\int_{\SO(3)} h(RA) d\mc{H}_{\SO(3)}(R).
\]
Using right-invariance of the Haar-measure $\mc{H}_{\SO(3)}$ (recall that the Haar-measure on a compact group is both left and right invariant), we have
\[
J(\omega')=\int_{\SO(3)} h(R) d\mc{H}_{\SO(3)}(R)
=\int_{\SO(3)} f(Re_3) d\mc{H}_{\SO(3)}(R)
=J(e_3),
\]
which proves the claim, namely that the map $\omega'\to J(\omega')$ is constant on $S$.
Let us simply denote by $J$ its constant value.
We have thus shown that 
\[
\int_{\SO(3)} \tilde{I}(R)d\mc{H}_{\SO(3)}(R)
=
\int_{\Gamma_\mu^{e_3}} J d\ell(\omega'),
\]
i.e.
\begin{align}\label{eq:proof:integral:1}
\int_{\SO(3)} I(Re_3)d\mc{H}_{\SO(3)}(R)
=\ell(\Gamma_{\mu}^{e_3})J,
\end{align}
where $\ell(\Gamma_{\mu}^{e_3})$ is the length of $\Gamma_{\mu}^{e_3}$,
whose value is well known to be,
\[
\ell(\Gamma_{\mu}^{e_3})=2\pi\sqrt{1-\mu^2}.
\]

The map $\pi:\SO(3)\to S$; $R\mapsto Re_3$ is surjective, with
$\pi^{-1}(e_3)=\{R\in\SO(3)\ |\ Re_3=e_3\}$,
which is a subgroup of $\SO(3)$ isomorphic to $\SO(2)$.
The sphere $S$ is identified with $\SO(3)/\SO(2)$ by $\pi$.
The groups $\SO(3)$ and $\SO(2)$ being unimodular, since they are compact 
(implying that $\Delta_{\SO(3)}|_{\SO(2)}=\Delta_{\SO(2)}$, since both sides 
are $=1$, where $\Delta_G$ is the modular function of the locally compact Hausdorff group $G$),
one has for any integrable function $\tilde{g}:\SO(3)\to\R$
(see \cite[Chap. I]{deitmar09})
\[
\int_{\SO(3)} \tilde{g}(R)d\mc{H}_{\SO(3)}(R)
=\int_S \mc{I}(\tilde{g})(\omega)d\xi(\omega),
\]
where
\[
\mc{I}(\tilde{g}):S\to\R;\quad
\mc{I}(\tilde{g})(\pi(R)):=\int_{\SO(2)} \tilde{g}(RA)d\mc{H}_{\SO(2)}(A),\quad R\in\SO(3).
\]
and $\xi$ is (the) left $G$-invariant (Haar) measure on $S$.

It is known that up to a factor $C'>0$,
one has
\[
\int_S k(\omega)d\omega=C'\int_{\SO(2)} k(\omega)d\xi(\omega),
\]
for any Borel $k:S\to\R$, for which the integrals exist.

Let $g:S\to\R$ be an integrable function on $S$, and
define $\tilde{g}:\SO(3)\to\R$ by $\tilde{g}(R)=g(Re_3)$.
Then the above formula gives
\[
\int_{\SO(3)} g(Re_3)d\mc{H}_{\SO(3)}(R)
=C'\int_S \mc{I}(\tilde{g})(\omega) d\omega.
\]

Since $\SO(2)$ here is identified with $\pi^{-1}(e_3)$,
one has $Ae_3=e_3$ for all $A\in \SO(2)$, and thus
\[
\mc{I}(\tilde{g})(\pi(R))=\int_{\SO(2)} g(RAe_3)d\mc{H}_{\SO(2)}(A)
=\int_{\SO(2)} g(Re_3)d\mc{H}_{\SO(2)}(A)
=g(Re_3)=g(\pi(R)).
\]

This allows us to write the above equality as
\[
\int_{\SO(3)} g(Re_3)d\mc{H}_{\SO(3)}(R)
=C'\int_S g(\omega) d\omega.
\]

As a consequence,
\eqref{eq:proof:integral:1} becomes
\[
C'\int_{S} I(\omega)d\omega
=\ell(\Gamma_{\mu}^{e_3})J,
\]
where
\[
J=J(e_3)=\int_{\SO(3)} f(Re_3)d\mc{H}_{\SO(3)}(R)
=C'\int_S f(\omega)d\omega,
\]
and therefore (cancelling $C'$s on both sides),
\[
\int_{S} I(\omega)d\omega
=\ell(\Gamma_{\mu}^{e_3})\int_S f(\omega)d\omega.
\]
Recalling that $I(\omega)=\int_{\Gamma_\mu^\omega} f(\omega')d\ell(\omega')$,
and that
$\ell(\Gamma_{\mu}^{e_3})=2\pi\sqrt{1-\mu^2}$,
this is precisely the formula \eqref{eq:le-m:0}.
This completes the proof of Lemma \ref{le-m:0}.
\end{proof}

%%%%%%%%%%%%%%%%%%%%%%%%%%%%%%%%%%%%%%%%%%%%%%%%%%%%%%%%%%%%
 
\end{document}